\numberwithin{equation}{section}
\theoremstyle{plain}
\newtheorem{theorem}{Theorem}[section]
\newtheorem{lemma}[theorem]{Lemma}
\newtheorem{corollary}[theorem]{Corollary}
\newtheorem{conjecture}[theorem]{Conjecture}
\newtheorem{proposition}[theorem]{Proposition}
\theoremstyle{definition}
\newtheorem{definition}[theorem]{Definition}
\newtheorem{challenge}[theorem]{Challenge}
\theoremstyle{remark}
\newtheorem{remark}[theorem]{Remark}
\newtheorem{remarks}[theorem]{Remarks}
\newtheorem{example}[theorem]{Example}
\DeclareSymbolFont{AMSb}{U}{msb}{m}{n}
\DeclareMathSymbol{\N}{\mathalpha}{AMSb}{"4E}
\DeclareMathSymbol{\R}{\mathalpha}{AMSb}{"52}
\DeclareMathSymbol{\Z}{\mathalpha}{AMSb}{"5A}
\DeclareMathSymbol{\C}{\mathalpha}{AMSb}{"43}
\DeclareMathSymbol{\D}{\mathalpha}{AMSb}{"44}
\DeclareMathSymbol{\s}{\mathalpha}{AMSb}{"53}
\DeclareMathOperator{\diam}{diam}
\DeclareMathOperator{\Cpl}{Cpl}
\DeclareMathOperator{\Opt}{Opt}
\DeclareMathOperator{\Par}{Par}
\DeclareMathOperator{\Id}{Id}
\DeclareMathOperator{\Inv}{Inv}
\DeclareMathOperator{\Sym}{Sym}
\DeclareMathOperator{\size}{size}
\DeclareMathOperator{\supp}{supp}
\DeclareMathOperator{\sym}{sym}
\DeclareMathOperator{\Exp}{Exp}
\DeclareMathOperator{\Lip}{Lip}
\DeclareMathOperator{\Ric}{Ric}
\DeclareMathOperator{\dis}{dis}
\DeclareMathOperator{\length}{length}
\DeclareMathOperator{\EExp}{{\mathbb E}xp}
\newcommand{\borel}{\mathcal{B}}
\newcommand{\prob}{\mathcal{P}}
\renewcommand{\d}{{\sf{d}}}
\newcommand{\f}{{\sf{f}}}
\newcommand{\m}{\mathfrak{m}}
\newcommand{\X}{\mathcal{X}}
\newcommand{\Leb}{\mathfrak{L}}
\newcommand{\XX}{\mathbb{X}}
\newcommand{\LL}{\mathbb{L}}
\renewcommand{\Ric}{{\sf{Ric}}}
\newcommand{\Riem}{{\sf{R}}}
\newcommand{\scal}{{\sf{s}}}
\newcommand{\TT}{\mathbb{T}}
\newcommand{\XXn}{\mathbb{X}^{(n)}}
\newcommand{\YY}{\mathbb{Y}}
\newcommand{\YYn}{\mathbb{Y}^{(n)}}
\newcommand{\M}{\mathbb{M}}
\newcommand{\Mn}{\mathbb{M}^{(n)}}
\newcommand{\Mnl}{\mathbb{M}^{(n)}_\leq}
\newcommand{\spec}{:\,}
\newcommand{\bfdelta}{\bm\delta}
\newcommand{\Expf}[1]{\ensuremath{\Exp_{\X_{#1}}}}
\newcommand{\dol}[1]{\ensuremath{\bar{\bar{#1}}}}
\newcommand{\geod}[2]{\ensuremath{(\X_t)_{#1\leq t\leq #2}}}
\newcommand{\DD}{%\bb
\Delta\!\!\!\!\Delta}
\newcommand{\Grad}{\nabla\!\!\!\!\nabla}
\newcommand{\ol}[1]{\ensuremath{\bar{#1}}}
\newcommand{\aslope}[1]{\ensuremath{\left|D^+\U({#1})\right|}}
\newcommand{\F}{\mathcal{F}}
\newcommand{\G}{\mathcal{G}}
\renewcommand{\H}{\mathcal{H}}
\newcommand{\U}{\mathcal{U}}
\newcommand{\be}{{(\varepsilon)}}
\newcommand{\auf}{{[\:\!\![}}
\newcommand{\zu}{{]\:\!\!]}}
\newcommand{\PP}{\mathbb{P}}
\newcommand{\base}{\flat}
\newcommand{{\push}}{*}
\newcommand{\wichtig}{\textcolor[rgb]{0.00,0.00,1.00}}
\begin{document}

\title{The space of spaces: \\  curvature bounds and gradient flows on the space of\\ metric measure spaces\\[.6cm]
}

\medskip

	\author{Karl-Theodor Sturm\\[.6cm]}
\date{}

\maketitle

%\tableofcontents

\begin{abstract}
\noindent
Equipped with the $L^{2,q}$-distortion distance $\DD_{2,q}$, the space $\XX_{2q}$ of all %(isom. classes of)
metric measure spaces $(X,\d,\m)$ is proven to have nonnegative curvature in the sense of Alexandrov.
%Explicit representations for
Geodesics and tangent spaces are characterized in detail.
Moreover, classes of semiconvex functionals %(e.g. modifications of the Einstein-Hilbert functional)
and their gradient flows on $\ol\XX_{2q}$ are presented.\\[.6cm]
\end{abstract}

\section*{Introduction and Main Results at a Glance}

\subsubsection*{}
{\bf I.}\
The basic object of this paper is the space $\XX_{pq}$ of isomorphism classes %$\X=[X,\d,\m]$
of metric measure spaces
for given numbers $p,q\in[1,\infty)$. % $(X,\d,\m)$.
A \emph{metric measure space} is a triple $(X,\d,\m)$ consisting of a space $X$, a complete separable metric $\d$ on $X$ and a Borel probability measure on it (more precisely,  a probability measure  on the Borel $\sigma$-field  induced by the metric $\d$ on $X$).
We will always require that its $L^{pq}$-size
$\left( \int_X \int_X \d^{pq}(x,y) d\m(x) d\m(y)\right)^{1/p}$ is finite.
Two metric measure spaces with full supports are \emph{isomorphic} if there exists a measure preserving isometry between them. \\
We will consider $\XX_{pq}$ as a metric space equipped with the so-called $L^{p,q}$-distortion distance $\DD_{p,q}$ to be presented below.
One of our main results is that for each $q\in[1,\infty)$
\begin{itemize}\item[$\wichtig\blacktriangleright$]\wichtig{the metric space $(\XX_{2q},\DD_{2,q})$ has nonnegative curvature in the sense of Alexandrov.}
\end{itemize}
Both the triangle comparison and the quadruple comparison will be verified.

	\subsubsection*{}
{\bf II.}\
The \emph{$L^{p,q}$-distortion distance}  between two  metric measure spaces $(X_0,\d_0,\m_0)$ and $(X_1,\d_1,\m_1)$  is
	defined for $p,q\in[1,\infty)$ as
	\begin{eqnarray*}
	\lefteqn{\DD_{p,q}\Big((X_0,\d_0,\m_0),(X_1,\d_1,\m_1)\Big)}\\
&=& \inf_{\ol{\m}\in \Cpl(\m_0,\m_1)}   \bigg( \int_{X_0\times X_1} \int_{X_0\times X_1}
		\Big| \d_0^q(x_0,y_0)-\d_1^q(x_1,y_1) \Big|^p d\ol{\m}
		(x_0,x_1)d\ol{\m}(y_0,y_1)\bigg)^{1/p}
	\end{eqnarray*}
where the infimum is taken over all \emph{couplings} of $\m_0$ and $\m_1$, i.e. over all probability measures $\ol\m$ on $X_0\times X_1$ with prescribed marginals
$(\pi_0)_{\push}\ol{\m}=\m_0$ and $(\pi_1)_{\push} \ol{\m}=\m_1$. There always exists an \emph{optimal coupling} for which the infimum is attained.
Convergence w.r.t. the $L^{p,q}$-distortion distance can be characterized as convergence w.r.t. the $L^{0,q}$-distortion distance together with convergence of the $L^{pq}$-size. The $L^{0,1}$-distortion distance induces the same topology as the $L^0$-transportation distance (also known as Prohorov-Gromov metric) which in turn is equivalent to Gromov's box metric $\underline\square_\lambda$.
\\
One of our fundamental results -- with far reaching applications --    is a complete, explicit characterization of $\DD_{p,q}$-geodesics in $\XX_{pq}$:
\begin{itemize}\item[$\wichtig\blacktriangleright$]\wichtig{				
For  each  optimal coupling
  $\ol{\m}$, the family of metric
			measure spaces
\[\Big({X_0\times X_1},\big((1-t)\,{\d}_0^q+t\,\d_1^q\big)^{1/q},\ol{\m}\Big)\qquad\text{for }t\in (0,1)\]
				defines a geodesic  in $\XX_{pq}$ connecting $(X_0,\d_0,\m_0)$ and $(X_1,\d_1,\m_1)$.}
\end{itemize}
	\begin{itemize}\item[$\wichtig\blacktriangleright$]\wichtig{	
If $p\in (1,\infty)$, then each geodesic  in $\XX_{pq}$ is of this form.}
\end{itemize}
For each  metric measure space $(X,\d,\m)$, a geodesic ray through it is given by $(X,t\cdot\d,\m)$ for $t\ge0$.
Its initial point is the \emph{one-point space} $\bfdelta$ (= the equivalence class of metric measure spaces whose supports consist of one point).
In the particular case $p=2$,
%\begin{itemize}\item[$\wichtig\blacktriangleright$]	
$(\XX_{pq},\DD_{p,q})$ is a \emph{cone} with apex $\bfdelta$ over its unit sphere.
%\end{itemize}

\subsubsection*{}
{\bf III.}\
$\XX_{pq}$ is quite a huge space: it contains all Riemannian manifolds, GH-limits of Riemannian manifolds (cf. \cite{CC97, CC00a, CC00b}), Finsler spaces (cf. \cite{Sh01}, \cite{OSt09}), finite dimensional Alexandrov spaces (cf.  \cite{BGP92}, \cite{OS94}), groups (cf. \cite{Woe00}),   graphs (cf. \cite{Del}), fractals (cf. \cite{Kig}) as well as many infinite dimensional spaces (cf. \cite{BSC05}) --
provided the respective spaces, manifolds, graphs etc. have finite volume (which then is assumed to be normalized). In particular, it contains all metric measure spaces with generalized lower bounds for the Ricci curvature in the sense of Lott-Sturm-Villani \cite{St06}, \cite{LV09}.

However, $\XX_{pq}$  is not complete w.r.t. $\DD_{p,q}$.
Fortunately,
 each element in its completion $\ol\XX_{pq}$ again can be represented as a triple $(X,\d,\m)$ -- more precisely, as an equivalence class (`homomorphism class') of such triples --  where $X$ is a Polish space, $m$ a Borel probability measure on $X$ and $\d$ a symmetric, $L^{pq}$-integrable Borel  function on $X\times X$ which satisfies the triangle inequality almost everywhere. That is,
\begin{itemize}\item[$\wichtig\blacktriangleright$]\wichtig{
the completion of $\XX_{pq}$ is the space $\ol\XX_{pq}$ of pseudo metric measure spaces.} \end{itemize}
In the particular case $p=2$, the `space of spaces' $(\ol\XX_{2q},\DD_{2,q})$  is a complete, geodesic %metric
space of nonnegative curvature (infinite dimensional Alexandrov space) and as such
allows for a variety of geometric concepts
%has a rich geometric structure
including  space of geodesic directions, %(with well-defined angels between geodesics),
tangent cones, exponential maps,  gradients   of semiconvex functions, and (downward) gradient flows.

\subsubsection*{}
{\bf IV.}\
 A deeper insight into the tangent structure of $\ol\XX_{2q}$
is obtained by embedding $\ol\XX_{2q}$ isometrically as a closed convex subset into a complete metric space $\YY$ which consists of equivalence classes of triples $(X,\f,\m)$ -- called \emph{gauged measure spaces} --
with $X$ being Polish, $\f$ a symmetric $L^2$-function on $X^2$ (no longer required to satisfy the triangle inequality) and $\m$ a Borel probability measure on $X$.
%as before but now no longer requiring that $\d$ satisfies the triangle inequality.
It turns out that
\begin{itemize}\item[$\wichtig\blacktriangleright$]\wichtig{
the metric space $(\YY,\DD_{2,1})$
is isometric to the quotient space
$L^2_s(I^2,\Leb^2)/\Inv(I,\Leb)$}
\end{itemize}
where $L^2_s(I^2,\Leb^2)$ denotes the space of symmetric $L^2$-functions on the unit square %$I^2=[0,1]^2$
and $\Inv(I,\Leb)$ denotes the space of measure preserving transformations of the unit interval $I=[0,1]$.
Being isometric to the quotient of a Hilbert space under the action of a semigroup (acting isometrically via pull back), it comes as no surprise that
$(\YY,\DD_{2,1})$ is again a complete, geodesic metric space of nonnegative curvature.

A more detailed analysis of the tangent structure allows to regard $\YY$ as an \emph{infinite dimensional Riemannian orbifold}. In fact, one always may choose a homomorphic representative $(X,\f,\m)$ without atoms.
Then
\begin{itemize}\item[$\wichtig\blacktriangleright$]\wichtig{
the tangent space of the triple $(X,\f,\m)$ is given by
\[\TT_{(X,\f,\m)}\YY=L^2_s(X^2,\m^2)/\Sym(X,\f,\m)\]
where $\Sym(X,\f,\m)$ denotes the  {symmetry group} (or isotropy group) of
$(X,\f,\m)$.}
\end{itemize}
In particular, if the given space $(X,\f,\m)$ has no non-trivial symmetries then its tangent space is Hilbertian and for $\f'\in L^2_s(X^2,\m^2)$
\[\EExp_{(X,\f,\m)}(\f')=(X,\f+\f',\m).\]
These results are very much in the spirit of Otto's Riemannian calculus \cite{Ot01} on the $L^2$-Wasserstein space ${\mathcal P}_2(\R^n)$ which also leads to lower bounds on the sectional curvature (cf. \cite{Lo08}) and quite detailed structural assertions on the tangent space (cf. \cite{ags}). The latter, however, is essentially limited to `regular' points (i.e. absolutely continuous measures) whereas the above results also provide precise assertions on the tangent structure for `non-regular' points (i.e. spaces with non-trivial symmetries).

\subsubsection*{}
{\bf V.}\  
To simplify the presentation, let us now restrict to the case $p=2, q=1$ and put $\XX:= \XX_{2,1}$,  $\DD:= \DD_{2,1}$.
For major classes of functionals on $\ol\XX$ one can explicitly calculate directional derivatives (of any order) and thus obtains sharp bounds for gradients and Hessians.
For each Lipschitz continuous, semiconvex $\U: \ol\XX\to\R$ there exists a unique downward gradient flow in $\ol\XX$. Any lower bound $\kappa$ for the Hessian of $\U$ yields an  %explicit
\begin{itemize}\item[$\wichtig\blacktriangleright$]\wichtig{
 Lipschitz estimate for the downward gradient flow
\begin{equation}\label{grad-lip}
\DD\Big((X_t,\d_t,\m_t),(X'_t,\d'_t,\m'_t)\Big)\le e^{-\kappa\,t}\cdot \DD\Big((X_0,\d_0,\m_0),(X'_0,\d'_0,\m'_0)\Big).\end{equation}
}
\end{itemize}
Among these functionals are `polynomials' of order $n\in\N$. They are of the form
\begin{equation*}
\label{U}
\U\Big(( X,\d,\m)\Big)=\int_{X^n}u\bigg( \Big( \d(x^i,x^j)\Big)_{1\le i<j\le n}\bigg)d\m^n(x^1,\ldots,x^n)
\end{equation*}
where $u$ is some   smooth function  on $\R^{\frac{n(n-1)}2}$. Of particular interest will be polynomials of order $n=4$ which allow to determine whether a given curvature bound (either from above or from below) in the sense of Alexandrov is satisfied. For each $K\in\R$, there
 \begin{itemize}\item[$\wichtig\blacktriangleright$]\wichtig{
exist Lipschitz continuous,  semiconvex functionals $\G_K$ and $\H_0: \ \ol\XX\to\,[0,\infty)$ with the property that
for each geodesic metric measure space $(X,\d,\m)$
\begin{eqnarray*}
\G_K\Big((X,\d,\m)\Big)=0\quad&\Longleftrightarrow&\quad (X,\d,\m) \text{  has curvature }\ge K\\
\H_0\Big((X,\d,\m)\Big)=0\quad&\Longleftrightarrow&\quad (X,\d,\m) \text{  has curvature }\le 0.
\end{eqnarray*}}
\end{itemize}

	\subsubsection*{}
{\bf VI.}\
Given any `model space' $(X^\star,\d^\star,\m^\star)$ within $\ol\XX$, we define a functional $\F:\ol\XX\to\R_+$ whose downward gradient flow will push each pseudo metric measure space $(X,\d,\m)$ towards the given model space.
We put
\[\F\Big((X,\d,\m)\Big)=\frac12
 \int_0^\infty \int_X \left[\int_0^r \big(v_t(x)-v^\star_t\big)\,dt\right]^2 d\m(x) \rho_r dr.
\]
Here $v_r(x)=m(B_r(x))$ denotes the volume growth of balls in the  space $(X,\d,\m)$ whereas $r\mapsto v^\star_r$ is the volume growth in $(X^\star,\d^\star,\m^\star)$ and $r\mapsto\rho_r$ is some  positive ('weight') function on  $\R_+$.
\begin{itemize}\item[$\wichtig\blacktriangleright$]\wichtig{
The functional $\F$ is $\lambda$-Lipschitz and $\kappa$-convex}
\end{itemize}
with $\lambda=\int_0^\infty r\rho_r\,dr$ and $\kappa=-\sup_{r>0}[r\rho_r]$.
In particular, the downward gradient flow for $\F$ satisfies a Lipschitz bound (\ref{grad-lip}) with constant $e^{|\kappa|\,t}$.
\begin{itemize}\item[$\wichtig\blacktriangleright$]\wichtig{
The functional $\F$ will vanish if and only if
\[v_r(x)=v^\star_r\qquad\text{for every }r\ge0 \text{ and $\m$-a.e. } x\in X.\]}
\end{itemize}
If $X$ is a Riemannian manifold and $v^\star$ denotes the volume growth of the Riemannian model space $\M^{n,\kappa}$ for $n\le3$ and $\kappa>0$ then the previous property implies that $X$ is the model space $\M^{n,\kappa}$.
\begin{itemize}\item[$\wichtig\blacktriangleright$]\wichtig{
The gradient of $-\F$ at the point $(X,\d,\m)$ is explicitly given as the function
$\f\in L^2_s(X^2,\m^2)$ with
	\[
		\f(x,y) =\int_0^\infty \left( \frac{v_r(x)+v_r(y)}{2}-v^\star_r\right) \bar{\rho}\big(r\vee\d(x,y)\big)dr
	\]
where $\bar{\rho}(a)= \int_a^\infty  \rho_r dr$.}
\end{itemize}
The infinitesimal evolution of $(X,\d,\m)$ under the downward gradient flow for $\F$ on $\ol\XX$ is given by $(X,\d_t,\m)$ with
	\[
		\d_t(x,y)=\d(x,y)+ t\f(x,y)+O(t^2)
	\]
	and $\f$ as above.
	That is, $\d(x,y)$ will be enlarged if -- in average w.r.t. the radius $r$ -- the volume  of balls $B_r(x)$ and $B_r(y)$ in $X$ is too large (compared with the volume $v^\star_r$ of balls in the model space), and
	$\d(x,y)$ will be reduced if the volume of balls is too small.

\medskip

%\subsection*{}
{%\bf VII.}\
In a broader sense, the downward gradient flow for $\F$ is related to Ricci flow. Indeed, on the space of Riemannian manifolds, the functionals $\F^{(\epsilon)}$ for a suitable
sequence of weight functions $\rho^{(\epsilon)}$ (converging to $\delta_0$)
 will converge  to
\[\frac12 \int_X (\scal(x)-\scal^\star)^2 d\m(x),\]
a modification of the Einstein-Hilbert functional which plays a key role in Perelman's program \cite{Per02},
cf. \cite{MT07}, \cite{KL08}.

Note that Ricci flow  does \emph{not} depend continuously on the initial data, in particular, no Lipschitz estimate of the form (\ref{grad-lip}) will hold. Also note that no ``regularizing'' gradient flow is known
%(on whatever space)
which respects lower curvature bounds in the sense of  Alexandrov (Petrunin \cite{Pet07}: ``Please deform an Alexandrov's space'').  Similarly,
no ``regularizing'' gradient flow is known
%(on whatever space)
which respects lower Ricci bounds in the sense of  Lott-Sturm-Villani
 \cite{St06}, \cite{LV09}.

\subsubsection*{}
{\bf VII.}\  
With respect to the parameter $p$, only the value $p=2$ plays a specific role in the analysis of the $L^{p,q}$-distortion distance. It is the only value of $p$ for which $\big(\XX_{pq}, \DD_{p,q}\big)$ becomes a space of nonnegative curvature in the sense of Alexandrov.

With respect to $q$, two values are of interest. The value $q=1$ is  the most natural one from the point of view of transportation theory and image analysis. And, of course, it also leads to the most simple formulas.
  The value $q=2$ is the only value for which 
\begin{itemize}\item[$\wichtig\blacktriangleright$]
\wichtig{geodesic interpolations of spaces with nonnegative (or nonpositive) pre-curvature are again spaces with
nonnegative (or nonpositive, resp.) pre-curvature.}
\end{itemize}
Moreover,
 geodesic interpolations of distances in the case $q=2$ may be regarded as the metric counterpart to
 linear interpolations of metric tensors in Riemannian geometry.
To see this, assume that the optimal coupling  of two Riemannian spaces
 $(M_0,\d_0,\m_0)$ and  $(M_1,\d_1,\m_1)$
is given as $\ol\m=(\Id,\phi)_*\m_0$ with a diffeomorphism $\phi: M_0\to M_1$. Then 
\begin{itemize}\item[$\wichtig\blacktriangleright$]
\wichtig{
the connecting geodesic is
 $(M_0, \d_t,\m_0)$, $t\in (0,1)$, with
\begin{equation*}
 \d_t=\sqrt{(1-t)\d^2_0+t \phi^*\d_1^2}
\end{equation*}
for which the induced 
{length metric} $\d_t^*$    coincides with   the {Riemannian distance} for  the metric tensor
\begin{equation*}
 g_t=(1-t)g_0+t \phi^*g_1.
\end{equation*}}
\end{itemize}

\subsection*{}
\emph{Acknowledgement.}
The author would like to thank
Fabio Cavalletti, Matthias Erbar, Martin Huesmann, Christian Ketterer and in particular Nora Loose for carefully reading  early drafts of this paper and for many valuable comments.
He also gratefully acknowledges stimulating discussions on topics of this paper with  Nicola Gigli, Jan Maas, Shin-ichi Ohta, Takashi Shioya, Asuka Takatsu and Anatoly Vershik in Bonn as well as during conferences in Pisa, Oberwolfach and Sankt Petersburg (May, June 2012).  In particular, he is greatly indebted to Andrea Mondino for
enlightening discussions on criteria for Riemannian manifolds to be balanced (Theorem, 8.13).

\medskip

The author also gratefully acknowledges financial support by the European Union through the ERC-AdG ``RicciBounds''
and by the DFG through the Excellence Cluster ``Hausdorff Center for Mathematics'' as well as through the Collaborative Research Center 1060.
\newpage

\tableofcontents

\newpage

\section{The Metric Space $(\XX_p,\DD_p)$}

%\section{The $L^p$-Distortion Distance $\DD_p$}
%\input{spaceofmms}
\subsection{Metric Measure Spaces and Couplings}

Throughout this paper, a \emph{metric measure space} (briefly: \emph{mm-space})\index{mm-space $(X,\d,\m)$} will always be a triple
$(X,\d,\m)$ where
\begin{itemize}
	\item
		$(X,\d)$ is a complete separable metric space,
	\item
		$\m$ is a Borel probability measure on $X$.
	\end{itemize}
The latter means that $\m$ is a measure on $\borel(X)$ --
 the Borel $\sigma$-field associated with the Polish topology on $X$ induced by the metric $\d$ --
 with normalized total mass $\m(X)=1$.
In the literature, metric measure spaces are also called \emph{metric triples}.

The \emph{support} $\supp(X,\d,\m)$ of such a metric measure space -- or simply the support $\supp(\m)$ of the
measure $\m$ -- is the smallest closed set $X_0\subset X$ such that $\m(X\setminus X_0)=0$.
Occasionally, it will also be denoted by $X^\base$.\index{$X^\base$}
We say that $(X,\d,\m)$ has full support if $\supp(X,\d,\m)=X$. This, however, will not be required in general.
The \emph{diameter} or \emph{$L^\infty$-size} of a metric measure space $(X,\d,\m)$ is defined as the diameter of its support:
\[
	\diam(X,\d,\m)=\sup\Big\{ \d(x,y) \spec x,y\in \supp(X,\d,\m)\Big\}.
\]
For any $p\in[1,\infty)$, the \emph{$L^p$-size}\index{size} of  $(X,\d,\m)$ is defined as
\[
			\size_p(X,\d,\m):=\left( \int_X \int_X \d^p(x,y) d\m(x) d\m(y)\right)^{1/p}.
		\]
Obviously, $\size_p(X,\d,\m)\leq\size_{q}(X,\d,\m)\leq\diam(X,\d,\m)$ for all $1\le p \le q\le \infty$.

Given two mm-spaces $(X_0,\d_0,\m_0)$ and $(X_1,\d_1,\m_1)$ and a map $\psi: X_0\to X_1$, we define
\begin{itemize}
\item
the \emph{pull back}\index{${\psi}^*$}\index{push forward}\index{pull back} of the metric $\d_1$ through $\psi$ as the pseudo metric $\psi^\push\d_1$ on $X_0$ given by
\[(\psi^*\d_1)(x_0,y_0)=\d_1(\psi(x_0),\psi(y_0))\qquad\big(\forall x_0,y_0\in X_0\big);\]
\item
the \emph{push forward}\index{${\psi}_*$} of the probability measure $\m_0$ through $\psi$ -- provided $\psi$ is Borel measurable --  as the probability measure $\psi_{\push}\m_0$ on $(X_1,\borel(X_1))$ given by
\[(\psi_{\push}\m_0)(A_1)=\m_0\big(\psi^{-1}(A_1)\big)=\m_0\Big(\Big\{x_0\in X_0\spec \psi(x_0)\in A_1\Big\}\Big)\qquad \big(\forall A_1\in\borel(X_1)\big).\]
\end{itemize}

\begin{definition}
Given two mm-spaces $(X_0,\d_0,\m_0)$ and $(X_1,\d_1,\m_1)$, any probability measure $\ol{\m}$ on the product space $X_0\times X_1$ (equipped with the product topology and product $\sigma$-field) satisfying
\begin{equation}\label{coupling}
	(\pi_0)_{\push}\ol{\m}=\m_0, \quad (\pi_1)_{\push} \ol{\m}=\m_1
\end{equation}
is called \emph{coupling} of the measures $\m_0$ and $\m_1$. The measures $\m_0$ and $\m_1$ in turn will be called \emph{marginals} of $\ol\m$.
\end{definition}
Here $\pi_0$ and $\pi_1$ denote the projections from $X_0\times X_1$ to $X_0$ and $X_1$, resp.
Condition~\eqref{coupling} can be restated as: \[\ol{\m}(A_0\times X_1)=\m_0(A_0), \quad \ol{\m}(X_0\times A_1)=\m_1(A_1)\] for all $A_0\in \borel(X_0)$, $A_1\in \borel(X_1)$.
The set of all couplings of $\m_0$ and $\m_1$ will be denoted by $\Cpl(\m_0,\m_1)$.\index{c@$\Cpl(.,.)$}\index{coupling}

The set $\Cpl(\m_0,\m_1)$ is non-empty: it always contains the
\emph{product coupling} $\ol{\m}=\m_0\otimes \m_1$ (being uniquely defined by the requirement $\ol{\m}(A_0\times A_1)=\m_0(A_0)\cdot\m_1(A_1)$ for all $A_0\in \borel(X_0)$, $A_1\in \borel(X_1)$).
If one of the measures $\m_0$ and $\m_1$ is a Dirac then the product coupling is indeed the only coupling: $\Cpl(\delta_{x_0},\m_1)=\{\delta_{x_0}\otimes\m_1\}$.

\begin{lemma}\label{cpl-comp}
Given $\m_0$ and $\m_1$, the set of couplings $\Cpl(\m_0,\m_1)$ is a non-empty
	compact subset of $\prob(X_0\times X_1)$, the set of probability measures on $X_0\times X_1$ equipped with the weak topology.
\end{lemma}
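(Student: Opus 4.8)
The plan is to combine Prokhorov's compactness criterion with the observation that the marginal constraints \eqref{coupling} are closed under weak convergence. Non-emptiness requires nothing new: as already noted, the product coupling $\m_0\otimes\m_1$ lies in $\Cpl(\m_0,\m_1)$. For compactness I would proceed in two steps — first show that $\Cpl(\m_0,\m_1)$ is uniformly tight, hence relatively compact in $\prob(X_0\times X_1)$ for the weak topology, and then show that it is closed, so that the closure adds nothing.

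For the tightness step I would invoke Ulam's theorem: since $(X_0,\d_0)$ and $(X_1,\d_1)$ are Polish, the measures $\m_0$ and $\m_1$ are Radon, so for every $\varepsilon>0$ there are compact sets $K_0\subset X_0$ and $K_1\subset X_1$ with $\m_0(X_0\setminus K_0)<\varepsilon/2$ and $\m_1(X_1\setminus K_1)<\varepsilon/2$. The set $K_0\times K_1$ is then compact in the product space, and for \emph{every} $\ol\m\in\Cpl(\m_0,\m_1)$ one has, by subadditivity and \eqref{coupling},
\[
\ol\m\big((X_0\times X_1)\setminus(K_0\times K_1)\big)\le \ol\m\big((X_0\setminus K_0)\times X_1\big)+\ol\m\big(X_0\times(X_1\setminus K_1)\big)=\m_0(X_0\setminus K_0)+\m_1(X_1\setminus K_1)<\varepsilon.
\]
Thus the whole family $\Cpl(\m_0,\m_1)$ is tight with one and the same choice of compact sets, and Prokhorov's theorem yields relative compactness in the weak topology.

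For closedness, suppose $\ol\m_n\to\ol\m$ weakly with each $\ol\m_n\in\Cpl(\m_0,\m_1)$. For any bounded continuous $f:X_0\to\R$ the composition $f\circ\pi_0$ is bounded and continuous on $X_0\times X_1$, so $\int f\,d\big((\pi_0)_{\push}\ol\m\big)=\int f\circ\pi_0\,d\ol\m=\lim_n\int f\circ\pi_0\,d\ol\m_n=\lim_n\int f\,d\m_0=\int f\,d\m_0$; since bounded continuous functions determine a Borel probability measure on a metric space, $(\pi_0)_{\push}\ol\m=\m_0$, and symmetrically $(\pi_1)_{\push}\ol\m=\m_1$. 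Hence $\ol\m\in\Cpl(\m_0,\m_1)$, the set is closed, and a closed subset of a relatively compact set in the (metrizable) weak topology is compact.

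The argument is essentially routine; the only points deserving care are bookkeeping ones, namely that a finite product of Polish spaces is again Polish and that the product Borel $\sigma$-field coincides with the Borel $\sigma$-field of the product topology, so that Ulam's and Prokhorov's theorems apply on $X_0\times X_1$ verbatim. I do not expect a genuine obstacle.
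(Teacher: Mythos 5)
Your argument is correct and is exactly the route the paper takes, only spelled out: the paper notes closedness because the projections are continuous and refers the tightness of $\Cpl(\m_0,\m_1)$ to Prohorov's theorem (citing Villani's Lemma 4.4, which is precisely your Ulam-plus-marginal-bound computation). Nothing to add.
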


\begin{proof} Obviously, $\Cpl(\m_0,\m_1)$ is a closed subset within $\prob(X_0\times X_1)$. (The projection maps are continuous functions.) The relative compactness (`tightness') follows from a simple application of Prohorov's theorem, see \cite{Vi09}, Lemma 4.4.
\end{proof}

For each measurable map $\psi:X_0\to X_1$ with $\psi_{\push} \m_0=\m_1$, a coupling of $\m_0$ and $\m_1$ is given by
\[\ol{\m}=(\Id,\psi)_{\push} \m_0.\]
In the particular case $X_0=X_1$, $\m_0=\m_1$, the choice $\psi=\Id$ leads to the \emph{diagonal coupling}\index{diagonal coupling}
\[d\ol\m(x,y)=d\delta_x(y)\,d\m_0(x).\]
More generally, for each mm-space $(X,\d,\m)$ and measurable maps $\psi_0:X\to X_0$, $\psi_1:X\to X_1$ with $(\psi_0)_{\push} \m=\m_0$,  $(\psi_1)_{\push} \m=\m_1$,
a coupling of $\m_0$ and $\m_1$ is given by
\[\ol{\m}=(\psi_0,\psi_1)_{\push} \m.\]
Indeed, any coupling is of this form -- and without restriction one may choose $(X,\d,\m)$ to be the unit interval $X=[0,1]$ equipped with the standard distance $\d(x,y)=|x-y|$ and the 1-dimensional Lebesgue measure $\m=\Leb^1$ on $[0,1]$, cf. Lemma \ref{standard borel}.

\begin{remark}
The concept of coupling of mm-spaces extends and improves (in an 'optimal' quantitative manner) the concepts of correspondence and $\varepsilon$-isometries between mm-spaces.
\begin{itemize}
\item
Every coupling $\ol{\m}$ of measures $\m_0$ and $\m_1$ induces a \emph{correspondence} between the supports of $(X_0,\d_0,\m_0)$ and $(X_1,\d_1,\m_1)$ by means of
\[ {\mathcal R}=\supp(\ol\m)\ \subset \ X_0\times X_1.\]
But of course, the measure $\ol\m$ itself bears much more information than its support.

\begin{figure}[h!]

	\begin{subfigure}{0.3\textwidth}
		
%		\begin{center}
\psfrag{24}{$\frac{2}{4}$}
\psfrag{14}{$\frac{1}{4}$}
\psfrag{36}{$\frac{3}{6}$}
\psfrag{16}{$\frac{1}{6}$}
\psfrag{424}{$\frac{4}{24}$}
\psfrag{124}{$\frac{1}{24}$}
\psfrag{224}{$\frac{2}{24}$}
\psfrag{924}{$\frac{9}{24}$}
\psfrag{324}{$\frac{3}{24}$}
\psfrag{X0}{$X_0$}
\psfrag{X1}{$X_1$}
\psfrag{m}{$\ol{m}$}
\psfrag{m0}{$m_0$}
\psfrag{m1}{$m_1$}
\includegraphics[scale=0.3]{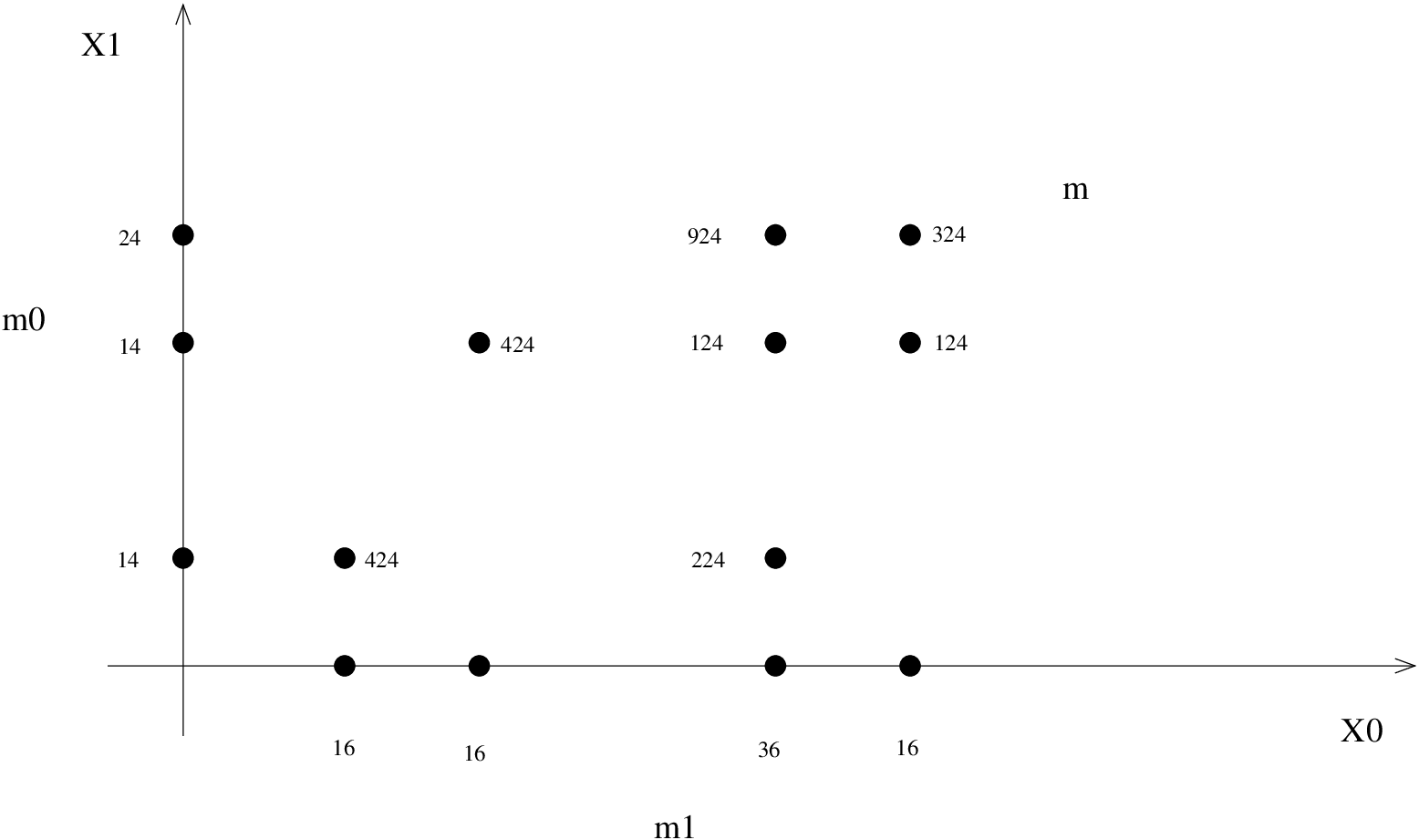}

\caption{\wichtig{Coupling}}

%\end{center}
	\end{subfigure}
	\qquad \qquad \qquad \qquad
	\begin{subfigure}{0.3\textwidth}

%		\begin{center}
\psfrag{X0}{$X_0$}
\psfrag{X1}{$X_1$}
\psfrag{suppm}{$\mathcal{R}=\supp(\ol{m})$}
\psfrag{suppm0}{$\supp(m_0)$}
\psfrag{suppm1}{$\supp(m_1)$}
\includegraphics[scale=0.3]{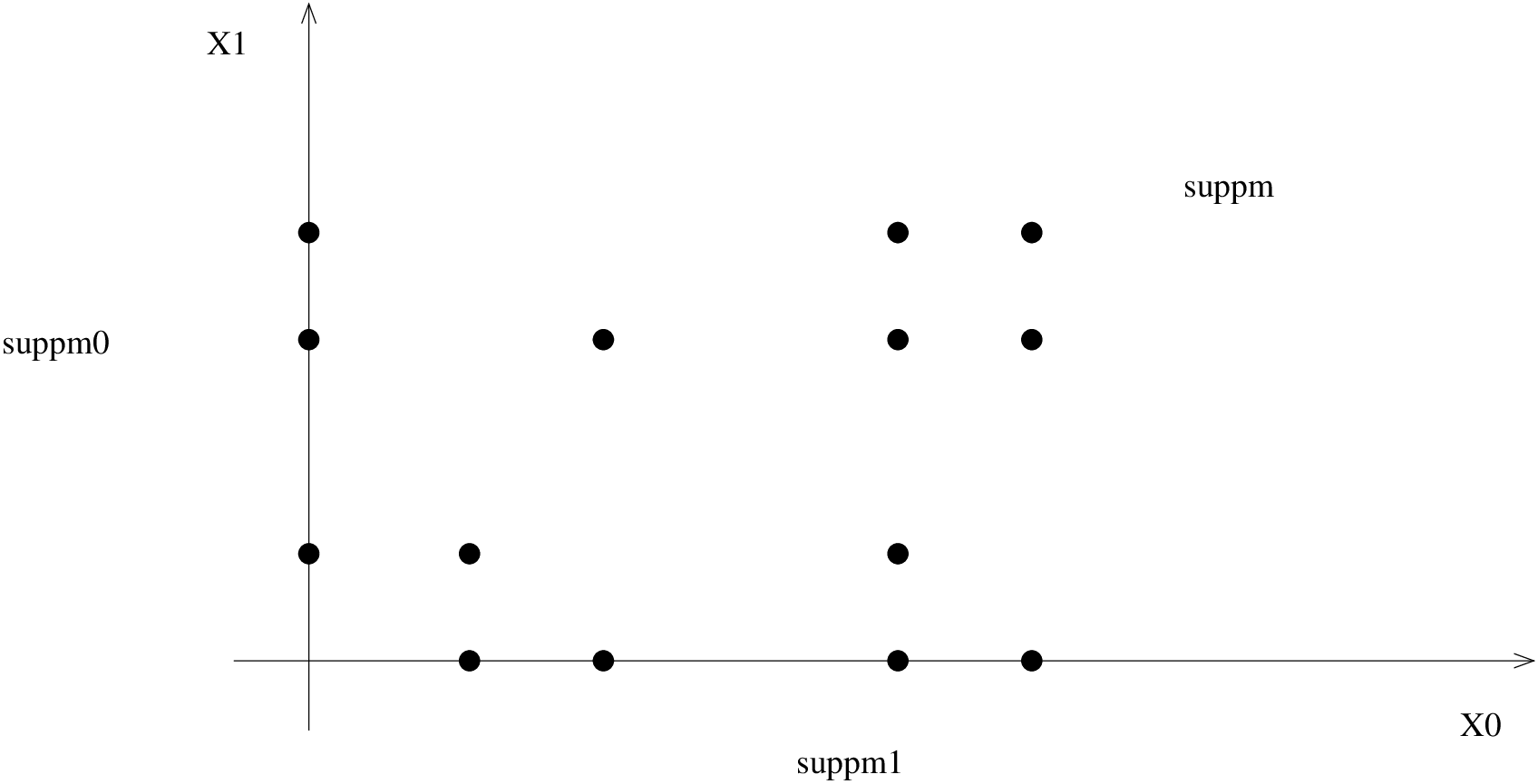}

\caption{\wichtig{Correspondence}}

%\end{center}
		\end{subfigure}
		\caption{\wichtig{Coupling vs. Correspondence}}
\end{figure} 
\item
Every coupling $d\ol{\m}(x_0,x_1)$ of measures $d\m_0(x_0)$ and $d\m_1(x_1)$ admits a disintegration $d\ol\m_{x_0}(x_1)$ w.r.t. $d\m_0(x_0)$. That is there exist probability measures $d\ol\m_{x_0}(.)$ on $X_1$ s.t.
\[d\ol{\m}(x_0,x_1)=d\ol\m_{x_0}(x_1)\,d\m_0(x_0)\]
as measures on $X_0\times X_1$. This Markov kernel (`disintegration kernel') $d\ol\m_{x_0}(x_1)$ may be regarded as a replacement of $\varepsilon$-isometries $\psi: X_0\to X_1$. Instead of mapping points $x_0$ in $X_0$ to points $\psi(x_0)$ in $X_1$ --  or to $\varepsilon$-neighborhoods in $X_1$ -- we now map points $x_0$ in $X_0$ to probability measures
$\ol\m_{x_0}(.)$ on $X_1$.
\end{itemize}
\end{remark}

\begin{lemma}[Gluing lemma]\label{glue1}
Let $X_0,X_1,\ldots,X_k$ be Polish spaces and $\m_0,\m_1,\ldots,\m_k$ probability measures, defined on the respective $\sigma$-fields. Then for every choice of couplings
$\mu_i\in \Cpl(\m_{i-1},\m_i)$, $i=1,\ldots,k$, there exists a unique probability measure $\mu\in\prob(X_0\times X_1\times\ldots\times X_k)$ s.t.
\begin{equation}
\label{multiplecoupling}(\pi_{i-1},\pi_i)_{\push} \mu=\mu_i\qquad(\forall i=1,\ldots,k).\end{equation}
$\mu$ is called \emph{gluing}\index{gluing}\index{$\boxtimes$} of the couplings $\mu_1,\ldots,\mu_k$ and denoted by
\[\mu=\mu_1 \boxtimes\ldots\boxtimes\mu_k.\]

In particular,  $\mu$ has marginals $\m_0,\m_1,\ldots,\m_k$. That is,
$(\pi_i)_{\push} \mu=\m_i$ for all $i=0,1,\ldots,k$.
Note, however, that the latter (in contrast to (\ref{multiplecoupling})) does not determine $\mu$ uniquely.
\end{lemma}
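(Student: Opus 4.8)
The plan is to prove the Gluing Lemma by induction on $k$, with the base case $k=1$ being trivial (take $\mu=\mu_1$). For the inductive step, the essential tool is disintegration of measures on Polish spaces, exactly as introduced in the Remark preceding the statement: every coupling $\mu_k\in\Cpl(\m_{k-1},\m_k)$ admits a disintegration $d\mu_k(x_{k-1},x_k)=d(\mu_k)_{x_{k-1}}(x_k)\,d\m_{k-1}(x_{k-1})$, where $x_{k-1}\mapsto(\mu_k)_{x_{k-1}}$ is a Markov kernel from $X_{k-1}$ to $X_k$, determined $\m_{k-1}$-almost everywhere.

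First I would apply the inductive hypothesis to the couplings $\mu_1,\ldots,\mu_{k-1}$ to obtain a measure $\nu=\mu_1\boxtimes\cdots\boxtimes\mu_{k-1}\in\prob(X_0\times\cdots\times X_{k-1})$ with $(\pi_{i-1},\pi_i)_{\push}\nu=\mu_i$ for $i=1,\ldots,k-1$; in particular $(\pi_{k-1})_{\push}\nu=\m_{k-1}$. Next I would define $\mu$ on $X_0\times\cdots\times X_k$ by gluing $\nu$ with the disintegration kernel of $\mu_k$ along the common marginal $\m_{k-1}$, i.e.
\[
d\mu(x_0,\ldots,x_k)=d(\mu_k)_{x_{k-1}}(x_k)\,d\nu(x_0,\ldots,x_{k-1}),
\]
which is a well-defined probability measure because $(\pi_{k-1})_{\push}\nu=\m_{k-1}$ and the kernel is defined $\m_{k-1}$-a.e. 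One then checks the two families of marginal conditions: for $i\le k-1$ the projection $(\pi_{i-1},\pi_i)_{\push}\mu$ equals $(\pi_{i-1},\pi_i)_{\push}\nu=\mu_i$ since the extra coordinate $x_k$ is integrated out against a probability kernel; for $i=k$, integrating out $x_0,\ldots,x_{k-2}$ from the displayed formula and using $(\pi_{k-1})_{\push}\nu=\m_{k-1}$ recovers precisely the disintegration formula for $\mu_k$, so $(\pi_{k-1},\pi_k)_{\push}\mu=\mu_k$. The marginal statements $(\pi_i)_{\push}\mu=\m_i$ follow immediately by composing projections.

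For uniqueness I would argue that the conditions \eqref{multiplecoupling} force, for any bounded measurable $f$ on the product, a factorization of $\int f\,d\mu$ that is dictated coordinate by coordinate: by a monotone-class argument it suffices to treat $f(x_0,\ldots,x_k)=\prod_j f_j(x_j)$, and then the pair-marginal constraints determine the iterated conditional expectations uniquely up to the relevant almost-everywhere ambiguities, which do not affect the integral. Equivalently, one observes that any $\mu$ satisfying \eqref{multiplecoupling} must disintegrate over $X_0\times\cdots\times X_{k-1}$ via the kernel $(\mu_k)_{x_{k-1}}$ (because its $(k-1,k)$-marginal is $\mu_k$ and the kernel only depends on $x_{k-1}$), reducing uniqueness of $\mu$ to uniqueness of its $(X_0,\ldots,X_{k-1})$-marginal, which is handled by the inductive hypothesis.

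The main obstacle is purely technical rather than conceptual: justifying the existence and measurable dependence of the disintegration kernel $(\mu_k)_{x_{k-1}}$ on a Polish space, and being careful that the "gluing along a kernel" construction is legitimate — i.e. that the map $x_{k-1}\mapsto(\mu_k)_{x_{k-1}}$ is jointly measurable so that the integral defining $\mu$ makes sense and Fubini-type interchanges are valid. This is standard (disintegration theorem for Radon measures on Polish spaces), but it is the one place where the Polish hypothesis is genuinely used; everything else is bookkeeping with pushforwards. One should also note explicitly, as the statement already flags, that the weaker conclusion $(\pi_i)_{\push}\mu=\m_i$ does not pin down $\mu$, so the uniqueness claim must be tied specifically to the pair-marginal conditions \eqref{multiplecoupling}.
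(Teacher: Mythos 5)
Your existence construction is correct and is essentially the paper's own proof: the paper treats $k=2$ by disintegrating both $\mu_1$ and $\mu_2$ with respect to the common marginal $\m_1$ and setting $d\mu(x_0,x_1,x_2)=dp_{x_1}(x_0)\,dq_{x_1}(x_2)\,d\m_1(x_1)$, then iterates; your formula $d\mu=d(\mu_k)_{x_{k-1}}(x_k)\,d\nu(x_0,\ldots,x_{k-1})$ is the same measure written with a single disintegration, and your verification of the pair marginals is the standard one.

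The uniqueness argument, however, contains a genuine error, and indeed the uniqueness assertion read literally --- that the conditions \eqref{multiplecoupling} alone determine $\mu$ --- is false for $k\ge 2$. The step that fails is the claim that any $\mu$ with $(\pi_{k-1},\pi_k)_*\mu=\mu_k$ ``must disintegrate over $X_0\times\cdots\times X_{k-1}$ via the kernel $(\mu_k)_{x_{k-1}}$.'' That marginal condition only identifies the conditional law of $x_k$ given $x_{k-1}$; it says nothing about the conditional law of $x_k$ given the full vector $(x_0,\ldots,x_{k-1})$, and the two coincide exactly when $\mu$ has the Markov (conditional independence) property that your construction builds in by hand. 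Concretely: take $k=2$, $X_0=X_1=X_2=\{0,1\}$, all $\m_i$ uniform, and $\mu_1=\mu_2$ the product couplings. Both the uniform measure on $\{0,1\}^3$ and the uniform measure on the four points $(a,b,a)$ satisfy \eqref{multiplecoupling}, so $\mu$ is not determined; the same example shows that the pair marginals do not fix $\int f_0(x_0)f_2(x_2)\,d\mu$, which defeats the monotone-class variant of your argument as well. What is true, and what makes the notation $\mu_1\boxtimes\cdots\boxtimes\mu_k$ well defined, is that the glued measure is the unique one satisfying \eqref{multiplecoupling} \emph{together with} conditional independence of the past and the future given each $x_i$; any honest uniqueness proof must add that property to the hypotheses. (Note that the paper's own proof only constructs the measure and is silent on uniqueness, so the defect here lies partly in the statement as printed --- but the argument you propose would not close the gap.)
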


\begin{proof}
The proof in the case $k=2$ is well-known, see e.g. \cite{Dud}, proof of Lemma 11.8.3, \cite{Vi03}, Lemma 7.6.
For convenience of the reader, let us briefly recall the construction: disintegration of $d\mu_1(x_0,x_1)$ w.r.t.\ $d\m_1(x_1)$ yields a Markov kernel $dp_{x_1}(x_0)$ such that
\[
	d\mu_1(x_0,x_1) = dp_{x_1}(x_0) d\m_1(x_1).
\]
Similarly, disintegration of $d\mu_2(x_1,x_2)$ w.r.t.\ $d\m_1(x_1)$ leads to a kernel $dq_{x_1}(x_2)$. In terms of these kernels the probability measure $\mu=\mu_1\boxtimes\mu_2$ on $X_0\times X_1 \times X_2$ is defined as
\[
	d\mu(x_0,x_1,x_2)= dp_{x_1}(x_0)dq_{x_1}(x_2)d\m_1(x_1).
\]

The solution for general $k$ is constructed iteratively.
Assume that $\mu^{(i)}:=\mu_1\boxtimes\ldots\boxtimes\mu_i$ is already constructed. By definition/construction it is a coupling of  $\mu^{(i-1)}$ and $\m_i$ whereas $\mu_{i+1}$ is a coupling of $\m_i$ and $\m_{i+1}$. The previous step thus allows to construct the gluing of $\mu^{(i)}$ and $\mu_{i+1}$ which is the desired
$\mu^{(i+1)}=\mu^{(i)}\boxtimes\mu_{i+1}$.
\end{proof}

\begin{lemma}%[Second gluing lemma]
\label{sec-glue}
Let $X_0$ and $X_k$, $k\in\N$, be Polish spaces and $\m_0$ and $\m_k$, $k\in\N$, probability measures, defined on the respective $\sigma$-fields. Then for every choice of couplings
$\mu_k\in \Cpl(\m_0,\m_k)$, $k\in\N$, there exists a probability measure $\mu\in\prob\big(\prod_{k=0}^\infty X_k\big)$ s.t.
\begin{equation}
\label{countablecoupling}(\pi_{0},\pi_k)_{\push} \mu=\mu_k\qquad(\forall k\in\N).\end{equation}
\end{lemma}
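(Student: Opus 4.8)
The plan is to construct $\mu$ as a projective (inverse) limit of the finite gluings provided by Lemma~\ref{glue1}. For each $n\in\N$ we wish to build a measure $\nu_n\in\prob(X_0\times X_1\times\ldots\times X_n)$ whose $(0,k)$-marginal is $\mu_k$ for each $k\le n$, in such a way that the family $(\nu_n)_n$ is consistent, i.e. $(\pi_0,\ldots,\pi_{n-1})_{\push}\nu_n=\nu_{n-1}$. Then, since all the $X_k$ are Polish, the Kolmogorov extension theorem (in its standard form for countable products of Polish spaces, see e.g. \cite{Dud}) yields a unique $\mu\in\prob\big(\prod_{k=0}^\infty X_k\big)$ with $(\pi_0,\ldots,\pi_n)_{\push}\mu=\nu_n$ for all $n$, and in particular $(\pi_0,\pi_k)_{\push}\mu=\mu_k$ for all $k$, which is exactly \eqref{countablecoupling}.

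The construction of the consistent family $(\nu_n)_n$ is where the gluing lemma does the work, and it is essentially an iterated conditional-independence construction over the common ``hub'' coordinate $x_0$. First I would disintegrate each $\mu_k$ with respect to its first marginal $\m_0$, obtaining Markov kernels $x_0\mapsto (\mu_k)_{x_0}(\cdot)$ on $X_k$ with $d\mu_k(x_0,x_k)=d(\mu_k)_{x_0}(x_k)\,d\m_0(x_0)$. I then define $\nu_n$ by declaring the coordinates $x_1,\ldots,x_n$ to be conditionally independent given $x_0$, each $x_k$ distributed according to $(\mu_k)_{x_0}$:
\[
d\nu_n(x_0,x_1,\ldots,x_n)=d(\mu_1)_{x_0}(x_1)\cdots d(\mu_n)_{x_0}(x_n)\,d\m_0(x_0).
\]
One checks directly that $(\pi_0,\pi_k)_{\push}\nu_n=\mu_k$ for every $k\le n$, that $(\pi_0)_{\push}\nu_n=\m_0$, and that marginalizing out $x_n$ returns $\nu_{n-1}$, so consistency holds. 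Equivalently, and perhaps more cleanly for the write-up, one can obtain the same $\nu_n$ inductively from Lemma~\ref{glue1}: having $\nu_{n-1}$ with $(\pi_0)_{\push}\nu_{n-1}=\m_0$, glue $\nu_{n-1}$ and $\mu_n$ along the shared marginal $\m_0$ (viewing $x_0$ as the common coordinate) to get $\nu_n=\nu_{n-1}\boxtimes_{x_0}\mu_n$ on $(X_0\times\ldots\times X_{n-1})\times X_n$; here one must be slightly careful that the two-fold gluing is performed over the $X_0$-coordinate, which is legitimate since $x_0$ is the first coordinate of $\nu_{n-1}$ and the first coordinate of $\mu_n$.

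The main point requiring care — and the place I would spend the most attention in a full proof — is the passage to the infinite product, specifically checking the hypotheses of the Kolmogorov extension theorem. For an arbitrary (uncountable-index) consistency theorem one needs topological regularity assumptions, but here the index set $\N$ is countable and each $X_k$ is Polish, so the standard version applies without extra hypotheses; still, one should verify that the $\nu_n$ are genuinely consistent as a projective system over the projections $\prod_{k\le n}X_k\to\prod_{k\le n-1}X_k$, which is immediate from the construction above since dropping the last factor in the product-of-kernels formula gives exactly $\nu_{n-1}$. I would also remark explicitly — mirroring the closing sentence of Lemma~\ref{glue1} — that $\mu$ is \emph{not} unique, since only the bivariate marginals $(\pi_0,\pi_k)$ are prescribed and nothing constrains the joint law of $(x_j,x_k)$ for $j,k\ge 1$; the conditional-independence choice above is merely one canonical selection. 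Finally, one could note that the same argument gives, more generally, a measure on $\prod_{k\ge 0}X_k$ realizing any consistent family of finite gluings, not just the ``star-shaped'' ones coupled through $x_0$.
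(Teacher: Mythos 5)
Your proposal is correct and follows essentially the same route as the paper: disintegrate each $\mu_k$ with respect to $\m_0$, form the conditionally independent product of the resulting kernels over the hub coordinate $x_0$ to get a consistent family on the finite products, and pass to the projective (Kolmogorov) limit over the countable product of Polish spaces. Your additional remarks on consistency and non-uniqueness are accurate but not needed beyond what the paper's one-line construction already contains.
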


\begin{proof}
 Let $\mu_k\in\Cpl(\m_0,\m_k)$ for $k\in\N$ be given and define for each $n\in\N$ a probability measure $\mu^{(n)}$ on $X=X_0\times X_1\times \ldots  X_n$ by
\[d\mu^{(n)}(x_0,x_1,x_2,\ldots,x_n)=d\mu_{1,x_0}(x_1)\,d\mu_{2,x_0}(x_2)\ldots d\mu_{n,x_0}(x_n)\,d\m_0(x_0)\]
where $d\mu_{k,x_0}(x_k)$ denotes the disintegration of $d\mu_k(x_0,x_k)$ w.r.t. $d\m_0(x_0)$.
The projective limit of these probability measures $\mu^{(n)}$ as $n\to\infty$ is the requested $\mu$.
\end{proof}

\subsection{The $L^p$-Distortion Distance}

\begin{definition}
	For any $p\in[1,\infty)$, the \emph{$L^p$-distortion distance}\index{distortion distance $\DD_p$} between two  metric measure spaces $(X_0,\d_0,\m_0)$ and $(X_1,\d_1,\m_1)$ is
	defined as
	\begin{eqnarray*}
	\lefteqn{\DD_p((X_0,\d_0,\m_0),(X_1,\d_1,\m_1))}\\
&=& \inf \Bigg\{  \bigg( \int_{X_0\times X_1} \int_{X_0\times X_1}
		\left| \d_0(x_0,y_0)-\d_1(x_1,y_1) \right|^p d\ol{\m}
		(x_0,x_1)d\ol{\m}(y_0,y_1)\bigg)^{1/p} \spec \ol{\m}\in \Cpl(\m_0,\m_1)
		\Bigg\}.
	\end{eqnarray*}
Similarly, the \emph{$L^\infty$-distortion distance} is defined as
\begin{eqnarray*}
		\lefteqn{\DD_{\infty}((X_0,\d_0,\m_0),(X_1,\d_1,\m_1))}\\
&=& \inf \Bigg\{\sup\bigg\{
\left| \d_0(x_0,y_0)-\d_1(x_1,y_1) \right| \spec (x_0,x_1), (y_0,y_1)\in \supp(\ol{\m})\bigg\}
\spec  \ol{\m}\in \Cpl(\m_0,\m_1)
		\Bigg\}.
	\end{eqnarray*}
\end{definition}

The $L^{p}$-distortion distance is the particular case of the more general $L^{p,q}$-distortion distance for the choice $q=1$.
Instead of overloading notations and proofs with additional technicalities we try to keep the presentation as simple as possible by first restricting to the most simple case $q=1$. The modifications for general $q\ge1$ will be summarized in Chapter 9.

\begin{lemma}\label{optcoupl1}
			For each $p\in [1,\infty]$ and each pair of metric measure spaces $(X_0,\d_0,\m_0)$ and $(X_1,\d_1,\m_1)$, the infimum in the
			definition of $\DD_p((X_0,\d_0,\m_0),(X_1,\d_1,\m_1))$ will be attained. That is, there exists a measure $\ol\m\in\Cpl(\m_0,\m_1)$ such
			that
\begin{equation}\label{optimal}
				\DD_p((X_0,\d_0,\m_0),(X_1,\d_1,\m_1))=  \bigg( \int_{X_0\times X_1} \int_{X_0\times X_1}
				\left| \d_0(x_0,y_0)-\d_1(x_1,y_1) \right|^p  d\ol{\m}(x_0,x_1) d\ol{\m}(y_0,y_1)
				\bigg)^{1/p}
		\end{equation}
in the case $p<\infty$ and
\begin{equation*}
				\DD_{\infty}((X_0,\d_0,\m_0),(X_1,\d_1,\m_1))=
\sup\bigg\{
\left| \d_0(x_0,y_0)-\d_1(x_1,y_1) \right| \spec (x_0,x_1), (y_0,y_1)\in \supp(\ol{\m})\bigg\}.
\end{equation*}		
		\end{lemma}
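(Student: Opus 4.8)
The plan is to obtain the existence of an optimal coupling as a routine consequence of the compactness of $\Cpl(\m_0,\m_1)$ (Lemma~\ref{cpl-comp}) together with lower semicontinuity of the functional being minimized. For $p\in[1,\infty)$, set
\[
J_p(\ol\m)=\int_{X_0\times X_1}\int_{X_0\times X_1}\bigl|\d_0(x_0,y_0)-\d_1(x_1,y_1)\bigr|^p\,d\ol\m(x_0,x_1)\,d\ol\m(y_0,y_1),
\]
so that $\DD_p=\inf\{J_p(\ol\m)^{1/p}\spec\ol\m\in\Cpl(\m_0,\m_1)\}$. Take a minimizing sequence $(\ol\m_n)\subset\Cpl(\m_0,\m_1)$; by Lemma~\ref{cpl-comp} the set of couplings is weakly compact, so after passing to a subsequence we may assume $\ol\m_n\rightharpoonup\ol\m$ weakly for some $\ol\m$, which again lies in $\Cpl(\m_0,\m_1)$ since that set is closed. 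It then suffices to show $J_p(\ol\m)\le\liminf_n J_p(\ol\m_n)$, whence $\ol\m$ attains the infimum.

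The key step is therefore the lower semicontinuity of $J_p$ under weak convergence of couplings. Note $\ol\m_n\rightharpoonup\ol\m$ implies $\ol\m_n\otimes\ol\m_n\rightharpoonup\ol\m\otimes\ol\m$ on $(X_0\times X_1)^2$ (products of weakly convergent sequences of probability measures converge weakly to the product). The integrand $\Phi(x_0,x_1,y_0,y_1):=\bigl|\d_0(x_0,y_0)-\d_1(x_1,y_1)\bigr|^p$ is a nonnegative continuous (hence lower semicontinuous) function on $(X_0\times X_1)^2$, so by the standard portmanteau property of weak convergence applied to nonnegative lower semicontinuous integrands,
\[
J_p(\ol\m)=\int\Phi\,d(\ol\m\otimes\ol\m)\le\liminf_{n\to\infty}\int\Phi\,d(\ol\m_n\otimes\ol\m_n)=\liminf_{n\to\infty}J_p(\ol\m_n),
\]
which is exactly what is needed. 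Since $x\mapsto x^{1/p}$ is continuous and increasing on $[0,\infty)$, this yields \eqref{optimal}.

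For the case $p=\infty$, the functional $J_\infty(\ol\m)=\sup\{|\d_0(x_0,y_0)-\d_1(x_1,y_1)|\spec(x_0,x_1),(y_0,y_1)\in\supp(\ol\m)\}$ is likewise lower semicontinuous with respect to weak convergence of couplings: if $\ol\m_n\rightharpoonup\ol\m$, then every point of $\supp(\ol\m)$ is a limit of points of $\supp(\ol\m_n)$ (a neighborhood of a support point has positive $\ol\m$-mass, hence positive $\ol\m_n$-mass for large $n$), so for any $(x_0,x_1),(y_0,y_1)\in\supp(\ol\m)$ one can choose approximating pairs in $\supp(\ol\m_n)$ and pass to the limit by continuity of $\d_0,\d_1$, giving $|\d_0(x_0,y_0)-\d_1(x_1,y_1)|\le\liminf_n J_\infty(\ol\m_n)$; taking the supremum over such pairs gives $J_\infty(\ol\m)\le\liminf_n J_\infty(\ol\m_n)$. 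The same compactness argument then produces a minimizer. Alternatively, one can deduce the $p=\infty$ case from the finite-$p$ cases via $J_\infty(\ol\m)=\lim_{p\to\infty}J_p(\ol\m)^{1/p}$ combined with a diagonal argument, but the direct argument is cleaner. The main obstacle — and it is a mild one — is the handling of supports and the subtlety that $J_\infty$ is a supremum rather than an integral; everything else is a textbook direct-method compactness argument, and indeed the finite-$p$ case is essentially identical to the classical proof that optimal transport plans exist (cf.\ \cite{Vi09}).
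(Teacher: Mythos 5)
Your proof is correct and follows essentially the same route as the paper: compactness of $\Cpl(\m_0,\m_1)$ (Lemma~\ref{cpl-comp}) combined with lower semicontinuity of the distortion functional along a weakly convergent minimizing sequence. The only cosmetic difference is at $p=\infty$, where the paper obtains lower semicontinuity by letting $p\nearrow\infty$ in the finite-$p$ statement rather than via your direct support-approximation argument; both work.
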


\begin{proof}
According to Lemma \ref{cpl-comp}, $\Cpl(\m_0,\m_1)$ is a non-empty
	compact subset of $\prob(X_0\times X_1)$.
	Moreover, for any $p\in[1,\infty)$ the function
	\[
		\dis_p(.): \ \m \mapsto \left(\int_{X_0\times X_1} \int_{X_0\times X_1} \left| \d_0(x_0,y_0)-\d_1(x_1,y_1) \right|^p
		d\m(x_0,x_1) d\m(y_0,y_1)\right)^{1/p}
	\]
	is lower semicontinuous on $\prob(X_0\times X_1)$ due to the continuity of $\d_0$ and $\d_1$. Passing to the limit $p\nearrow\infty$, this also yields the lower semicontinuity for the analogously defined function $\dis_{\infty}(.)$. Thus for any $p\in [1,\infty]$, the function
	$\dis_p(.)$ attains its minimum on $\Cpl(\m_0,\m_1)$.
\end{proof}

\begin{definition}\label{def-opt-cpl}
A coupling $\ol{\m}\in\Cpl(\m_0,\m_1)$ is called \emph{optimal} (for $\DD_p$) if (\ref{optimal}) is satisfied.
The set of optimal couplings of the mm-spaces
$(X_0,\d_0,\m_0)$ and $(X_1,\d_1,\m_1)$ will be denoted by $\Opt(\m_0,\m_1)$\index{o@$\Opt(.,.)$}\index{optimal coupling}.
\end{definition}
Note that -- despite this short hand notation -- the set $\Opt(\m_0,\m_1)$ strongly depends on the choice of the metrics $\d_0,\d_1$ and on the choice of $p$.

\begin{lemma}\label{dildis1}
For each $p\in [1,\infty]$ and each triple of metric measure spaces
$(X_0,\d_0,\m_0)$, $(X_1,\d_1,\m_1)$ and $(X_2,\d_2,\m_2)$,
\[\DD_p((X_0,\d_0,\m_0),(X_2,\d_2,\m_2))\le
\DD_p((X_0,\d_0,\m_0),(X_1,\d_1,\m_1))+\DD_p((X_1,\d_1,\m_1),(X_2,\d_2,\m_2))
.\]
\end{lemma}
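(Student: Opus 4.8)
The plan is to glue optimal couplings of the two given pairs into a single measure on the triple product $X_0\times X_1\times X_2$, push it forward to a coupling of $\m_0$ and $\m_2$, and then read off the inequality from the elementary triangle inequality for reals together with Minkowski's inequality. Concretely, I would first invoke Lemma~\ref{optcoupl1} to choose $\ol\m_{01}\in\Opt(\m_0,\m_1)$ and $\ol\m_{12}\in\Opt(\m_1,\m_2)$ attaining $\DD_p((X_0,\d_0,\m_0),(X_1,\d_1,\m_1))$ and $\DD_p((X_1,\d_1,\m_1),(X_2,\d_2,\m_2))$ respectively. By the Gluing Lemma~\ref{glue1} with $k=2$ there is a probability measure $\mu=\ol\m_{01}\boxtimes\ol\m_{12}$ on $X_0\times X_1\times X_2$ with $(\pi_0,\pi_1)_{\push}\mu=\ol\m_{01}$ and $(\pi_1,\pi_2)_{\push}\mu=\ol\m_{12}$; in particular $(\pi_0)_{\push}\mu=\m_0$ and $(\pi_2)_{\push}\mu=\m_2$, so $\ol\m_{02}:=(\pi_0,\pi_2)_{\push}\mu\in\Cpl(\m_0,\m_2)$ is an admissible competitor in the definition of $\DD_p((X_0,\d_0,\m_0),(X_2,\d_2,\m_2))$.

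For $p<\infty$ I would then estimate, letting $(x_0,x_1,x_2)$ and $(y_0,y_1,y_2)$ be drawn independently from $\mu$, so that all $L^p$-norms below are taken over $\mu\otimes\mu$. By the definition of $\DD_p$ and the change of variables for the push-forward $\ol\m_{02}=(\pi_0,\pi_2)_{\push}\mu$,
\[
\DD_p\big((X_0,\d_0,\m_0),(X_2,\d_2,\m_2)\big)\ \le\ \big\|\,\d_0(x_0,y_0)-\d_2(x_2,y_2)\,\big\|_{L^p(\mu\otimes\mu)} .
\]
Applying the pointwise bound $|a-c|\le|a-b|+|b-c|$ with $a=\d_0(x_0,y_0)$, $b=\d_1(x_1,y_1)$, $c=\d_2(x_2,y_2)$, followed by Minkowski's inequality, gives
\[
\big\|\,\d_0(x_0,y_0)-\d_2(x_2,y_2)\,\big\|_{L^p(\mu\otimes\mu)}\le \big\|\,\d_0(x_0,y_0)-\d_1(x_1,y_1)\,\big\|_{L^p(\mu\otimes\mu)}+\big\|\,\d_1(x_1,y_1)-\d_2(x_2,y_2)\,\big\|_{L^p(\mu\otimes\mu)} .
\]
The first term on the right involves only the coordinates $x_0,x_1,y_0,y_1$, hence equals the distortion integral against $\ol\m_{01}\otimes\ol\m_{01}$, which by optimality of $\ol\m_{01}$ is $\DD_p((X_0,\d_0,\m_0),(X_1,\d_1,\m_1))$; likewise the second term equals $\DD_p((X_1,\d_1,\m_1),(X_2,\d_2,\m_2))$. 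Chaining the displays yields the triangle inequality for $p<\infty$.

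For $p=\infty$ I would either repeat the argument with essential suprema over $\supp(\mu)\times\supp(\mu)$ replacing the $L^p$-norms — using $(\pi_{i-1},\pi_i)(\supp\mu)\subseteq\supp(\ol\m_{i-1,i})$, the density of $(\pi_0,\pi_2)(\supp\mu)$ in $\supp(\ol\m_{02})$, and the continuity of $\d_0,\d_2$ — or simply pass to the limit $p\nearrow\infty$ in the inequality just proved, since $\DD_q\nearrow\DD_\infty$ as $q\nearrow\infty$.

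I do not expect a genuine obstacle here: the one point requiring care is the bookkeeping. One must arrange the gluing so that the $(0,1)$- and $(1,2)$-marginals of $\mu$ are exactly the chosen optimal couplings, and then observe that each of the two Minkowski summands depends on only one pair of the three coordinate blocks, so that it collapses back to an honest two-space distortion integral to which optimality applies. Everything else is Minkowski's inequality and the triangle inequality for real numbers.
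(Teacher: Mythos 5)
Your proposal is correct and follows essentially the same route as the paper: glue the two optimal couplings via Lemma~\ref{glue1}, take the $(0,2)$-marginal as a competitor coupling, and apply the pointwise triangle inequality together with Minkowski's inequality in $L^p(\mu\otimes\mu)$, noting that each summand depends only on one pair of coordinate blocks and hence collapses to the corresponding two-space distortion integral. The handling of $p=\infty$ by the same argument with suprema over supports also matches the paper.
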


\begin{proof}
Choose optimal couplings $\mu\in\Opt(\m_0,\m_1)$ and $\nu\in\Opt(\m_1\,\m_2)$ and glue them together
to obtain a probability measure $r=\mu\boxtimes\nu$ on $X_0\times X_1\times X_2$ with
$(\pi_0,\pi_2)_{\push}r\in \Cpl(\m_0,\m_2)$.
Thus in the case $p<\infty$
\begin{equation*}
\begin{split}
		\lefteqn{\DD_p((X_0,\d_0,\m_0),(X_2,\d_2,\m_2))}\\
		& \leq \bigg( \int \int
		\Big| \d_0(x_0,y_0)-\d_2(x_2,y_2) \Big|^p  dr(x_0,x_1,x_2) dr(y_0,y_1,y_2)
		\bigg)^{1/p} \\
		 & = \bigg( \int \int
		\left| \d_0(x_0,y_0)-\d_1(x_1,y_1)+\d_1(x_1,y_1)-\d_2(x_2,y_2) \right|^p
		%\hphantom{\leq \bigg( \int \int
		%		\left| \d_0(x_0,y_0)-\d_2(x_2,y_2) \right|^p}
		dr(x_0,x_1,x_2) dr(y_0,y_1,y_2)	\bigg)^{1/p}\\
		& \leq  \bigg( \int \int
		\left| \d_0(x_0,y_0)-\d_1(x_1,y_1)\right|^p  dr(x_0,x_1,x_2)
		dr(y_0,y_1,y_2) \bigg)^{1/p}  \\
		&\qquad\qquad\qquad \qquad\phantom{ {} \leq {}} +  \bigg( \int \int
		\left| \d_1(x_1,y_1)-\d_2(x_2,y_2) \right|^p dr(x_0,x_1,x_2)
		dr(y_0,y_1,y_2)	\bigg)^{1/p} \\
		& =
\DD_p((X_0,\d_0,\m_0),(X_1,\d_1,\m_1))+\DD_p((X_1,\d_1,\m_1),(X_2,\d_2,\m_2)).
\end{split}
\end{equation*}
This is the claim. Here, the last inequality is a consequence of the triangle inequality for the $L^p$-norm.
Exactly the same arguments also prove the claim in the case $p=\infty$.
\end{proof}

\subsection{Isomorphism Classes of MM-Spaces}

\begin{lemma}\label{iso-null} For each $p\in [1,\infty]$ and each pair of metric measure spaces $(X_0,\d_0,\m_0)$ and $(X_1,\d_1,\m_1)$, the following assertions are equivalent:
\begin{enumerate}
\item $\DD_p((X_0,\d_0,\m_0),(X_1,\d_1,\m_1))=0$.
\item $\exists \ol{\m}\in\Cpl(\m_0,\m_1)$ such that $\d_0(x_0,y_0)=\d_1(x_1,y_1)$ for $\ol{\m}^2$-a.e. $(x_0,x_1,y_0,y_1)\in (X_0\times X_1)^2$.
\item There exist a metric measure space $(X,\d,\m)$ -- complete and separable, as usual -- with full support and Borel maps $\psi_0:X\to X_0$, $\psi_1:X\to X_1$ which push forward the measures and pull back the metrics:
    \begin{itemize}
    \item $(\psi_0)_{\push} \m=\m_0,\quad (\psi_1)_{\push} \m=\m_1$,
    \item $\d=(\psi_0)^\push\d_0=(\psi_1)^\push\d_1$ \ on $X\times X$.
    \end{itemize}
   \item There exists a Borel measurable bijection $\psi: X_0^\base\to X_1^\base$ with Borel measurable inverse $\psi^{-1}$ between the supports $X_0^\base=\supp(X_0,\d_0,\m_0)$ and $X_1^\base=\supp(X_1,\d_1,\m_1)$ such that
\begin{itemize}
    \item $\psi_{\push} \m_0=\m_1$,
    \item $\d_0=\psi^\push\d_1$ \ on $X_0^\base\times X_0^\base$.
    \end{itemize}
\end{enumerate}
\end{lemma}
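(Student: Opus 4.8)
\textbf{Proof plan for Lemma \ref{iso-null}.}

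The strategy is to prove the cycle of implications
$(i)\Rightarrow(ii)\Rightarrow(iii)\Rightarrow(iv)\Rightarrow(i)$, since each single step is then relatively self-contained. The implication $(iv)\Rightarrow(i)$ is the easiest: given the bijection $\psi$, form the coupling $\ol\m=(\Id,\psi)_{\push}\m_0$ on $X_0^\base\times X_1^\base$ (extended trivially to $X_0\times X_1$); then $\d_0(x_0,y_0)=\d_1(\psi(x_0),\psi(y_0))$ holds identically on the support, so the double integral defining $\DD_p$ (or the sup, if $p=\infty$) vanishes. Likewise $(iii)\Rightarrow(ii)$ is immediate: push the diagonal measure forward, i.e. take $\ol\m=(\psi_0,\psi_1)_{\push}\m$; the marginal conditions hold by hypothesis, and $\d=(\psi_0)^\push\d_0=(\psi_1)^\push\d_1$ gives $\d_0(\psi_0(x),\psi_0(y))=\d_1(\psi_1(x),\psi_1(y))$ for all $x,y\in X$, hence $\d_0(x_0,y_0)=\d_1(x_1,y_1)$ for $\ol\m^2$-a.e.\ quadruple. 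Finally $(ii)\Rightarrow(i)$ is trivial from the definition of $\DD_p$, for any $p$.

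The substantive content is $(i)\Rightarrow(ii)$ and $(ii)\Rightarrow(iii)\Rightarrow(iv)$. For $(i)\Rightarrow(ii)$: by Lemma \ref{optcoupl1} there is an optimal coupling $\ol\m$ realizing $\DD_p=0$. For $p<\infty$ the nonnegative integrand $|\d_0(x_0,y_0)-\d_1(x_1,y_1)|^p$ has zero integral against $\ol\m\otimes\ol\m$, hence vanishes $\ol\m^2$-a.e.; for $p=\infty$ the sup over $\supp(\ol\m)^2$ is zero, which is even stronger. For $(ii)\Rightarrow(iii)$, the natural choice of mediating space is $X=\supp(\ol\m)\subseteq X_0\times X_1$ with $\m=\ol\m$ (restricted to its support, so full support holds) and $\psi_i=\pi_i|_{\supp(\ol\m)}$ the coordinate projections. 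Then $(\psi_i)_{\push}\m=\m_i$ by the marginal property. The metric on $X$ should be taken to be $\d:=(\psi_0)^\push\d_0$. One must check: this is genuinely a metric (complete separable), and it coincides $\ol\m$-a.e., hence — after passing to the closed set where the two pullbacks agree — everywhere on the support, with $(\psi_1)^\push\d_1$. The delicate point is upgrading the a.e.\ equality $(\psi_0)^\push\d_0=(\psi_1)^\push\d_1$ to equality on all of $X\times X$: one uses that both pullbacks are continuous functions on $\supp(\ol\m)\times\supp(\ol\m)$ and agree on a set that is dense there (since $\ol\m\otimes\ol\m$ gives full measure, its support is all of $\supp(\ol\m)^2$), so by continuity they agree everywhere; after this identification $\d$ is a bona fide metric on the Polish space $X$, and $(X,\d,\m)$ is the required mm-space.

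For $(iii)\Rightarrow(iv)$ — extracting an honest Borel bi-measurable bijection between the supports — is where I expect the main obstacle. Since $\d=(\psi_0)^\push\d_0$, two points $x,x'\in X$ with $\psi_0(x)=\psi_0(x')$ have $\d(x,x')=0$, so (as $\d$ is a metric after the identification above, or working on the quotient) $\psi_0$ is essentially injective on $X$; it is also surjective onto $\supp(\m_0)=X_0^\base$ up to a null set, and similarly for $\psi_1$. Thus $\psi_0$ and $\psi_1$ induce, on co-null Borel subsets, Borel injections onto co-null Borel subsets of $X_0^\base$ and $X_1^\base$ respectively, and one wants to glue $\psi_1\circ\psi_0^{-1}$ into a genuine bijection $X_0^\base\to X_1^\base$. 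This requires (a) the Lusin–Souslin theorem, so that the image of a Borel set under an injective Borel map is Borel and the inverse is Borel, and (b) a small surgery to repair the exceptional null sets — for instance, since the leftover pieces of $X_0^\base$ and $X_1^\base$ are both standard Borel of the same (finite or countable or continuum) cardinality class one matches them by an arbitrary Borel bijection, or alternatively one argues that WLOG the maps can be taken defined everywhere by the representation of couplings via maps from $[0,1]$ noted before Lemma \ref{standard borel}. The distance-preservation $\d_0=\psi^\push\d_1$ on $X_0^\base\times X_0^\base$ then transfers from the identity $\d=(\psi_0)^\push\d_0=(\psi_1)^\push\d_1$ on $X\times X$, again using density/continuity to remove the null set. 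Handling these measure-theoretic technicalities cleanly (Lusin–Souslin, null-set surgery, and the passage from a.e.\ to everywhere via supports) is the real work; the rest is bookkeeping.
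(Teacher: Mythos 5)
Your cycle $(i)\Rightarrow(ii)\Rightarrow(iii)\Rightarrow(iv)\Rightarrow(i)$ is the paper's structure, and the first two substantive steps are essentially the paper's proof: $(i)\Leftrightarrow(ii)$ via the existence of an optimal coupling, and $(ii)\Rightarrow(iii)$ by restricting $\ol\m$ to its support, taking the projections, and upgrading the a.e.\ identity of the two pullbacks to an everywhere identity on $\supp(\ol\m)^2$ by continuity and fullness of support. (The paper sidesteps your worry about $(\psi_0)^\push\d_0$ being only a pseudometric by taking $\d=\frac12\d_0+\frac12\d_1$ on $X_0\times X_1$, which is a genuine complete separable metric from the outset and coincides with both pullbacks once the everywhere identity is established; your version ends up in the same place.)

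The gap is in $(iii)\Rightarrow(iv)$. You treat $\psi_0$ as only ``essentially injective'' and ``surjective up to a null set'' and propose to repair the exceptional sets by Lusin--Souslin plus an arbitrary Borel bijection between the leftover pieces. That repair cannot work: condition $(iv)$ demands $\d_0=\psi^\push\d_1$ \emph{pointwise on all of} $X_0^\base\times X_0^\base$, and an arbitrary Borel bijection on the surgically altered (non-open, merely null) part will not preserve distances, nor can a density/continuity argument recover the identity there, since the patched $\psi$ is not continuous. The point you are missing is that the hypotheses of $(iii)$ make all of this unnecessary: there the pullback identities hold \emph{everywhere} on $X\times X$, $\d$ is a genuine metric, $X$ is complete, and $\m$ has full support. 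Hence each $\psi_i$ is an honest injective isometry (if $\psi_i(x)=\psi_i(y)$ then $\d(x,y)=\d_i(\psi_i(x),\psi_i(y))=0$, so $x=y$), its image is complete, hence closed, and carries full $\m_i$-measure, so by full support of $\m$ it equals $X_i^\base$ exactly --- equivalently, any $y\in X_i^\base$ is a limit of $\psi_i(x^k)$ with $(x^k)$ Cauchy in $X$, and completeness gives a preimage. The inverse of an isometric bijection is an isometry, hence continuous and Borel, and $\psi=\psi_1\circ\psi_0^{-1}$ satisfies $(iv)$ with no exceptional sets and no descriptive-set-theoretic machinery.
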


\begin{proof}
Taking into account the existence of optimal couplings (Lemma \ref{optcoupl1}), the equivalence of (i) and (ii) is obvious.
For the implication $(ii)\Rightarrow(iii)$, one may choose  $\m=\ol{\m}$, restricted to its support
$X$ which is some closed subset of $X_0\times X_1$.
On $X$, a complete separable metric is given by
\[\d((x_0,x_1),(y_0,y_1))=\frac12 \d_0(x_0,y_0)+\frac12 \d_1(x_1,y_1).\]
Finally, one may choose $\psi_0$ and $\psi_1$ to be the projection maps $X\to X_0$ and $X\to X_1$, resp.
They are Borel measurable and push forward $\m$ to its marginals $\m_0$ and $\m_1$.
Moreover, $\d_i(\psi_i(x),\psi_i(y))=\d_i(x_i,y_i)$ for $i=0,1$ and thus, according to assumption (ii), for $\m^2$-a.e. $(x,y)=((x_0,x_1),(y_0,y_1))\in X^2$
\[\d_0(\psi_0(x),\psi_0(y))=\d_0(x_0,y_0)=\d_1(x_1,y_1)=
\d_1(\psi_1(x),\psi_1(y)).\]
However, $\d_0$ and $\d_1$ (more precisely, their pull backs via the projection maps) are continuous functions on $X^2$, and $\m$ has full support. Thus the previous identity holds without exceptional set on $X^2$.
This in turn implies -- according to our choice of $\d$ --  that
\[\d(x,y)=\d_0(\psi_0(x),\psi_0(y))=
\d_1(\psi_1(x),\psi_1(y))\]
 for all $x,y\in X$.

$(iii)\Rightarrow(iv)$: \
The maps $\psi_i: X\to X_i^\base$ for $i=0,1$ are isometric bijections with Borel measurable inverse. Indeed, since the maps $\psi_i$ pull back the metrics, they are injective and isometries. For showing surjectivity, note that any $y\in X_i^\base$ is the limit of a sequence $\{y^k=\psi_i(x^k)\}_{k\in\N}$ in the image of $\psi_i$ since $\psi$ pushes forward the measures. Then $\{x^k\}_{k\in\N}$ is a Cauchy sequence in $X$ and due to the completeness of $X$ it has a limit $x\in X$ whose image $\psi_i(x)$ coincides with $y$.
Now $\psi=\psi_1\circ \psi_0^{-1}: X_0^\base\to X_1^\base$ is the requested bijective Borel map with Borel measurable inverse.

$(iii)$ or $(iv)\Rightarrow(i)$ and $(ii)$: \ Choose $\ol{\m}=(\psi_0,\psi_1)_{\push} \m$ or $\ol{\m}=(\Id,\psi)_{\push} \m_0$.
\end{proof}

\begin{definition}
Two metric measure spaces $(X_0,\d_0,\m_0)$ and $(X_1,\d_1,\m_1)$ will be called \emph{isomorphic} if any (hence every) of the preceding assertions holds true.
This obviously defines an equivalence relation. The corresponding equivalence class will be denoted by $[X_0,\d_0,\m_0]$ and called
\emph{isomorphism class} of $(X_0,\d_0,\m_0)$.
The family of all isomorphism classes of metric measure spaces (with complete separable metric and normalized volume, as usual) will be denoted by $\XX_0$.
\end{definition}

In the sequel, elements of $\XX_0$ will be denoted by $\X$, $\X'$, $\X_0$, $\X_1$ etc.
Each of them is an equivalence class of isomorphic mm-spaces, say
\[ \X=[X,\d,\m], \quad \X'=[X',\d',\m'],\quad \X_0=[X_0,\d_0,\m_0],\quad \X_1=[X_1,\d_1,\m_1]. \]
\index{isomorphism class $[X,\d,\m]$}
 Representatives within these classes will be denoted as before by
$(X,\d,\m)$, $(X',\d',\m')$, $(X_0,\d_0,\m_0)$ or $(X_1,\d_1,\m_1)$, resp.
Note that in each equivalence class there is a space with full support. Indeed, any $(X,\d,\m)$ is
isomorphic to $(\supp(X,\d,\m),\d,\m)$.

All relevant properties of mm-spaces considered in the sequel will be properties of the corresponding isomorphism classes. (This also holds true for the quantities $\diam(.),\, \size_p(.),\, \DD_p(.,.)$ defined so far.) Thus, mostly, there is no need to distinguish between equivalence classes and representatives of these classes and we simply call $\XX_0$
 \emph{the space of metric measure spaces}.
For any $p\in [1,\infty]$, the subspace of mm-spaces with finite $L^p$-size will be denoted by
\[\XX_p=\{\X\in \XX_0 \spec \size_p(\X)<\infty\}.\]
\index{space of mm-spaces}
\index{x@$\XX_p$}

\begin{proposition}\label{d-is-metric}
For each $p\in[1,\infty]$,
$\DD_p$ is a  metric on $\XX_p$.
\end{proposition}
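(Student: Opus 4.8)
The plan is to verify the three axioms of a metric on $\XX_p$: nonnegativity together with the identity of indiscernibles, symmetry, and the triangle inequality. Two of these are already in hand. The triangle inequality is exactly Lemma \ref{dildis1}, which was proved for all $p\in[1,\infty]$ via the gluing lemma. Symmetry is immediate from the definition: interchanging the roles of $(X_0,\d_0,\m_0)$ and $(X_1,\d_1,\m_1)$ replaces a coupling $\ol\m$ by its image under the swap map $(x_0,x_1)\mapsto(x_1,x_0)$, which is a bijection between $\Cpl(\m_0,\m_1)$ and $\Cpl(\m_1,\m_0)$, and the integrand $|\d_0(x_0,y_0)-\d_1(x_1,y_1)|^p$ is symmetric under this exchange; hence the two infima coincide. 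Also, $\DD_p\big((X,\d,\m),(X,\d,\m)\big)=0$ since the diagonal coupling $d\ol\m(x,y)=d\delta_x(y)\,d\m(x)$ makes the integrand vanish identically.

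The only substantive point is that $\DD_p\big(\X_0,\X_1\big)=0$ forces $\X_0=\X_1$ in $\XX_p$, i.e. that the two mm-spaces are isomorphic. But this is precisely the content of Lemma \ref{iso-null}: the equivalence of its item (i) with item (iv) says that $\DD_p$-distance zero is the same as the existence of a measure-preserving Borel isometry between the supports with Borel inverse, which by the Definition following Lemma \ref{iso-null} is exactly the relation defining the isomorphism class. Thus $\DD_p$ descends to a well-defined, strictly positive-off-the-diagonal function on $\XX_p$. One should also remark, for completeness, that $\DD_p$ takes finite values on $\XX_p\times\XX_p$: using the product coupling $\ol\m=\m_0\otimes\m_1$ and the elementary bound $|a-b|\le a+b$ for $a,b\ge0$ together with Minkowski's inequality, one gets $\DD_p(\X_0,\X_1)\le \size_p(\X_0)+\size_p(\X_1)<\infty$ when $p<\infty$ (and the analogous bound $\diam(\X_0)+\diam(\X_1)$ when $p=\infty$), so $\DD_p$ really is an honest $[0,\infty)$-valued metric on $\XX_p$.

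I do not anticipate a genuine obstacle here: the proposition is essentially a bookkeeping statement assembling Lemmas \ref{dildis1} and \ref{iso-null}. The one place that deserves a word of care is making sure the argument is phrased at the level of isomorphism classes — that $\DD_p$ is independent of the chosen representatives. This follows because if $\X_0=[X_0,\d_0,\m_0]=[X_0',\d_0',\m_0']$ then $\DD_p\big((X_0,\d_0,\m_0),(X_0',\d_0',\m_0')\big)=0$ by Lemma \ref{iso-null}, and then the triangle inequality (Lemma \ref{dildis1}) sandwiches $\DD_p\big((X_0,\d_0,\m_0),(X_1,\d_1,\m_1)\big)$ and $\DD_p\big((X_0',\d_0',\m_0'),(X_1,\d_1,\m_1)\big)$ between each other, so they agree. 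Hence $\DD_p$ is well-defined on $\XX_p$, and combining the four points above completes the proof.
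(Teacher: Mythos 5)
Your proof is correct and follows the same route as the paper: symmetry, nonnegativity and finiteness are elementary, the triangle inequality is Lemma \ref{dildis1}, and nondegeneracy is exactly the equivalence in Lemma \ref{iso-null}. Your extra remarks on finiteness via the product coupling and on well-definedness at the level of isomorphism classes are correct and simply make explicit what the paper leaves as "obvious" or "by construction".
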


\begin{proof}
Symmetry, finiteness and nonnegativity are obvious. By construction (see  Lemma \ref{iso-null}), $\DD_p$ vanishes only on the diagonal of $\XX_p\times\XX_p$. The triangle inequality was derived in Lemma \ref{dildis1}.
\end{proof}

\begin{remark}
For each $p\in[1,\infty)$, the metric space $\big(\XX_p,\DD_p\big)$ will be \emph{separable} but \emph{not complete}.

The separability will follow from an analogous statement for  $(\XX_p,\D_p)$, see Proposition \ref{d-prop}, combined with the estimate
$\DD_p\le 2\D_p$ from Proposition \ref{lp-d-dd} below. Incompleteness will be proven in Corollary \ref{incomplete}.
\end{remark}

%Let us emphasize that none of the metric spaces  $(\XX_p, \DD_p)$ will be complete, see \ref{} for a counterexample.

\begin{remark}
	The $L^p$-distortion distance can also be interpreted in terms of classical optimal transportation
	with some additional constraint.
	Given $p\in [1,\infty)$ and metric measure spaces $(X_0,\d_0,\m_0),(X_1,\d_1,\m_1)$, put
	$Y_i:= X_i \times X_i,\,\mu_i=\m_i\otimes\m_i$ for $i=0,1$ and
	\[
		c(y_0,y_1)=\left| a(y_0) - b(y_1) \right|^p
	\]
	with $a(y_0)=\d_0(x_0,x'_0),\, b(y_1)=\d_1(x_1,x'_1)$ for $y_0=(x_0,x'_0)\in Y_0,\,y_1=(x_1,x'_1)\in
	Y_1$. Then
	\[
		\DD_p(\X_0,\X_1)^p= \inf \left\{ \int_{Y_0\times Y_1} c(y_0,y_1) d\mu(y_0,y_1) \spec
		\mu\in \Cpl_\square(\mu_0,\mu_1)\right\},
	\]
	where
	\begin{eqnarray*}
			\Cpl_\square(\mu_0,\mu_1)&=&\Big\{  \mu\in \prob(Y_0\times Y_1) \text{ s.t.\ }
		d\mu(x_0,x'_0,x_1,x'_1)=d\m(x_0,x_1)d\m(x'_0,x'_1)\\
&&\qquad \qquad\qquad\qquad\qquad\qquad\qquad\qquad\text{ for some } \m\in \Cpl(\m_0,\m_1) \Big\}\\
		&\subset& \Cpl(\mu_0,\mu_1).
		\end{eqnarray*}
\end{remark}

An alternative approach to (optimal) couplings and to the $L^p$-distortion distance is based on the fact that every mm-space is a \emph{standard Borel space} or \emph{Lebesgue-Rohklin space} since by definition all (mm-) spaces under consideration are Polish spaces. Thus all of them can be represented as images of the \emph{unit interval} $I=[0,1]$\index{i@$I$} equipped with $\Leb^1$\index{l@$\Leb^1$}, the 1-dimensional Lebesgue measure  restricted to $I$. This leads to a variety of quite impressive representation results.
A drawback of these formulas, however, is that quite often any geometric interpretation gets lost.

\begin{lemma}\label{standard borel}
\begin{enumerate}
\item For every mm-space $(X,\d,\m)$ there exists a Borel map $\psi:I\to X$ such that \[m=\psi_{\push} \Leb^1.\]
Any such map $\psi$ will be called \emph{parametrization} of the mm-space $(X,\d,\m)$.
The set of all parametrizations  will be denoted by $\Par(X,\d,\m)$\index{p@$\Par(.)$} or occasionally briefly by $\Par(\m)$.
\item
Given mm-spaces $(X_0,\d_0,\m_0)$ and $(X_1,\d_1,\m_1)$, a probability measure $\ol\m$ on $X_0\times X_1$ is a coupling of $\m_0$ and $\m_1$ if and only if there exist $\psi_0\in\Par(X_0,\d_0,\m_0)$ and $\psi_1\in \Par(X_1,\d_1,\m_1)$ with
\[\ol\m=(\psi_0,\psi_1)_{\push} \Leb^1.\]
\item
For any $p\in [1,\infty)$ and any $\X_0=[X_0,\d_0,\m_0]$ and $\X_1=[X_1,\d_1,\m_1]$
\begin{eqnarray*}\DD_p(\X_0,\X_1)&=& \inf \Bigg\{  \bigg( \int_0^1 \int_0^1
		\left| \d_0(\psi_0(s),\psi_0(t))-\d_1(\psi_1(s),\psi_1(t)) \right|^p ds\,dt\bigg)^{1/p} \spec \\
&&\qquad\qquad\qquad\qquad\qquad\psi_0\in\Par(X_0,\d_0,\m_0), \ \psi_1\in \Par(X_1,\d_1,\m_1) \,
		\Bigg\}.
\end{eqnarray*}
\end{enumerate}
\end{lemma}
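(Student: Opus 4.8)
The plan is to prove the three parts of Lemma~\ref{standard borel} in order, leaning on the standard-Borel (Lebesgue--Rohklin) theory and on the already-established facts about couplings.

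\medskip

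\textbf{Part (i).}
First I would recall the classical isomorphism theorem for standard Borel spaces: every Polish space $X$ carrying a Borel probability measure $\m$ is, after deleting a $\m$-null set, Borel isomorphic modulo measure to $([0,1],\Leb^1)$ together with some atoms; more directly, one constructs a Borel map $\psi:I\to X$ with $\psi_\push\Leb^1=\m$ by the usual ``quantile/inverse CDF'' argument. Concretely, fix a countable generating algebra of Borel sets of $X$ (or embed $X$ into the Hilbert cube), build a refining sequence of finite measurable partitions of $X$, and on the $I$-side build the corresponding refining sequence of partitions of $[0,1]$ into intervals of matching $\Leb^1$-mass; the limiting map $\psi$ sends $s\in I$ to the unique point in the intersection of the nested pieces, and one checks $\psi$ is Borel and pushes $\Leb^1$ to $\m$. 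This is a standard fact (e.g. \cite{Dud}, or Rohklin's theory), so I would state it as such and only sketch. This justifies that $\Par(X,\d,\m)\neq\emptyset$.

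\medskip

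\textbf{Part (ii).}
The ``if'' direction is immediate: if $\psi_i\in\Par(X_i,\d_i,\m_i)$ then $\ol\m=(\psi_0,\psi_1)_\push\Leb^1$ has marginals $(\pi_i)_\push\ol\m=(\psi_i)_\push\Leb^1=\m_i$, so $\ol\m\in\Cpl(\m_0,\m_1)$. For ``only if'', given $\ol\m\in\Cpl(\m_0,\m_1)$ on the Polish space $X_0\times X_1$, apply part (i) to the mm-space $(X_0\times X_1,\ol\d,\ol\m)$ (with $\ol\d$ any compatible complete metric, e.g. $\ol\d=\tfrac12\d_0+\tfrac12\d_1$) to get a parametrization $\Psi:I\to X_0\times X_1$ with $\Psi_\push\Leb^1=\ol\m$. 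Write $\Psi=(\psi_0,\psi_1)$ with $\psi_i:=\pi_i\circ\Psi:I\to X_i$. Each $\psi_i$ is Borel, and $(\psi_i)_\push\Leb^1=(\pi_i)_\push\Psi_\push\Leb^1=(\pi_i)_\push\ol\m=\m_i$, so $\psi_i\in\Par(X_i,\d_i,\m_i)$ and $\ol\m=(\psi_0,\psi_1)_\push\Leb^1$, as claimed.

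\medskip

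\textbf{Part (iii).}
This is now a change-of-variables bookkeeping combined with Lemma~\ref{optcoupl1}. Given any $\psi_i\in\Par(X_i,\d_i,\m_i)$, set $\ol\m=(\psi_0,\psi_1)_\push\Leb^1\in\Cpl(\m_0,\m_1)$ by part (ii). Then by the image-measure (transfer) theorem applied to the product $\Leb^1\otimes\Leb^1$ on $I^2$ and the map $(s,t)\mapsto((\psi_0(s),\psi_1(s)),(\psi_0(t),\psi_1(t)))$,
\[
\int_0^1\!\!\int_0^1\bigl|\d_0(\psi_0(s),\psi_0(t))-\d_1(\psi_1(s),\psi_1(t))\bigr|^p\,ds\,dt
=\int\!\!\int\bigl|\d_0(x_0,y_0)-\d_1(x_1,y_1)\bigr|^p\,d\ol\m\,d\ol\m,
\]
so the right-hand side of the claimed formula dominates $\DD_p(\X_0,\X_1)^p$ by the definition of $\DD_p$ as an infimum over couplings. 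Conversely, by Lemma~\ref{optcoupl1} there is an optimal coupling $\ol\m\in\Opt(\m_0,\m_1)$ attaining $\DD_p$, and by part (ii) it can be written as $(\psi_0,\psi_1)_\push\Leb^1$ for suitable parametrizations; the same transfer identity then shows the corresponding double integral equals $\DD_p(\X_0,\X_1)^p$, so the infimum over parametrizations is $\le\DD_p(\X_0,\X_1)^p$. Combining the two inequalities gives equality; raising to the power $1/p$ finishes.

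\medskip

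\textbf{Main obstacle.}
The only genuinely nontrivial ingredient is part (i), i.e.\ the existence of a Borel parametrization $\psi:I\to X$ with $\psi_\push\Leb^1=\m$ for an arbitrary Polish space with atoms allowed; the careful construction of the refining partitions and the verification that the limit map is Borel (and the handling of atoms, which correspond to intervals of positive length mapped to a single point) is where one must be attentive. Parts (ii) and (iii) are then essentially formal: (ii) is an application of (i) to the product space, and (iii) is the transfer theorem together with the existence of optimal couplings from Lemma~\ref{optcoupl1}. I would also remark that in (iii) one does not even need optimal couplings to exist --- it suffices that $\DD_p$ is an infimum over couplings and that every coupling arises from parametrizations via (ii) --- but invoking Lemma~\ref{optcoupl1} makes the ``$\le$'' direction cleanest.
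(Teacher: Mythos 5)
Your proposal is correct and follows essentially the same route as the paper: part (i) is the standard Borel-space parametrization result (which the paper simply cites), part (ii) is obtained by pushing forward marginals in one direction and applying (i) to the coupling on the Polish product space in the other, and part (iii) is the transfer identity combined with the fact that every coupling arises from a pair of parametrizations. The paper dismisses (iii) as an obvious consequence of (ii); your explicit two-inequality argument (and your remark that optimal couplings are not actually needed there) fills in exactly the intended details.
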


\begin{proof} (i) is well-known, see e.g. \cite{Sri}, Theorem 3.4.23.

(ii) Let parametrizations $\psi_0,\psi_1$ of $\m_0,\m_1$, resp. be given.
If $\ol\m=(\psi_0,\psi_1)_{\push} \Leb^1$ then $(\pi_i)_*\ol\m=(\psi_i)_*\Leb^1=\m_i$ for each $i=0,1$. Thus $\ol\m\in\Cpl(\m_0,\m_1)$.
Conversely, according to part (i) for every $\ol\m\in\Cpl(\m_0,\m_1)$ there exists a Borel map $\psi:I\to X_0\times X_1$ such that $\ol\m=\psi_*\Leb^1$.
Put
$\psi_i=\pi_i\circ \psi$ such that $\psi=(\psi_0,\psi_1)$. Then
$(\psi_i)_*\Leb^1=(\pi_i)_*\ol\m=\m_i$ for each $i=0,1$.

(iii) is an obvious consequence of (ii).
\end{proof}

\begin{remarks}\label{rem:paramet}
\begin{enumerate}
\item\label{rem:paramet1} Given an mm-space $(X,\d,\m)$ \emph{without atoms} (i.e. with $\m(\{x\})=0$ for each $x\in X$), a Borel measurable map  $\psi:I\to X$ with $\m=\psi_{\push} \Leb^1$ can be chosen in such a way that it is bijective with Borel measurable inverse $\psi^{-1}: X\to I$.
    \item For a general mm-space $(X,\d,\m)$, the measure $\m$ can be decomposed into a countable (infinite or finite) weighted sum of atoms and a measure without atoms. That is,
        \[ \m=\sum_{i=1}^\infty\alpha_i\,\delta_{x_i}+\m'\]
        for suitable $x_i\in X$, $\alpha_i\in[0,1]$. Put
        $\ol\alpha_i=\sum_{j=1}^i\alpha_j$ for $i\in\N\cup\{\infty\}$, $I'=\big[\ol\alpha_\infty,1\big)$ and $X'=\supp(\m')$.
        Then there exists a Borel measurable map $\psi:I\to X$ such that $\m=\psi_{\push} \Leb^1$,
        \[\psi: \big[\ol\alpha_{i-1}, \ol\alpha_i\big) \to \{x_i\}\]
        for each $i\in\N$,
        and $\psi|_{I'}: I'\to X'$ is bijective with Borel measurable inverse (see Figure).
\begin{figure}[h!]
\begin{center}
\psfrag{X}{$X$}
\psfrag{I}{$I$}
\psfrag{0}{$0$}
\psfrag{1}{$1$}
\psfrag{alpha1}{\scriptsize\color{blue}$\alpha_1$}
\psfrag{alpha2}{\scriptsize\color{green}$\alpha_2$}
\psfrag{phi}{$\psi$}

\includegraphics[scale=0.5]{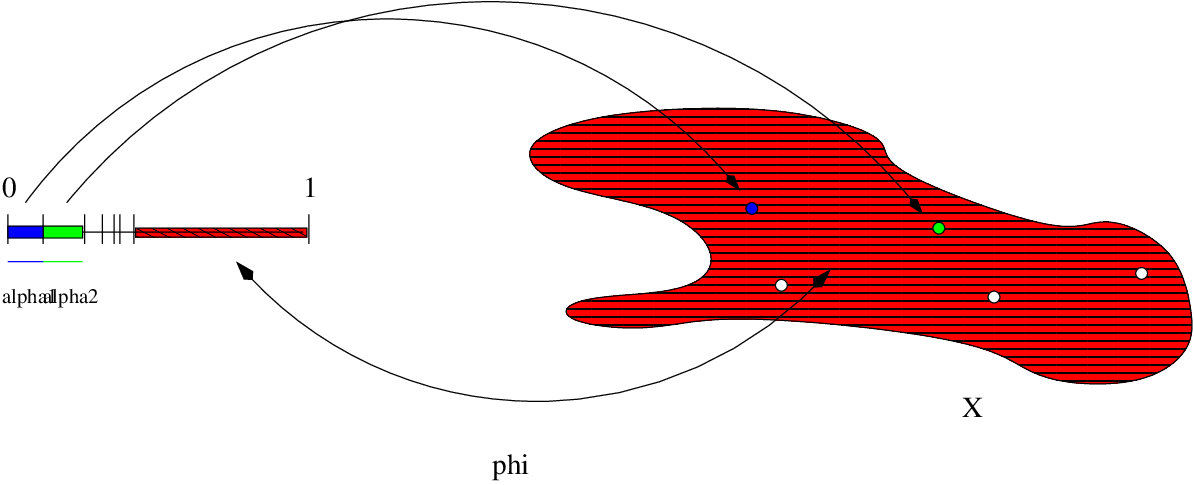}

\caption{\wichtig{Borel isomorphism $\psi$}}
\end{center}
\end{figure} 
\item
Typically, the triple $(I,\psi^\push\d, \Leb^1)$ will not be a mm-space in the sense of the previous section but just a \emph{pseudo} metric measure space in the sense of
chapter \ref{pseudo} below.
For every $\psi\in \Par(X,\d,\m)$, it will be \emph{homomorphic} to the mm-space $(X,\d,\m)$ (see Definition \ref{def-homo} below).
\end{enumerate}
\end{remarks}
For another canonical representation of elements $\X\in\XX$ in terms of matrix distributions, see Proposition
\ref{reconstruct}.

\section{The Topology of $(\XX_p, \DD_p)$}

\subsection{$L^p$-Distortion Distance  vs. $L^0$-Distortion Distance}
In order to characterize the topology on $\XX_p$ induced by $\DD_p$ observe
that it is essentially an $L^p$-distance and recall that $L^p$-convergence for functions is equivalent to
convergence in probability and convergence of the $p$-th moments (or uniform $p$-integrability). Following \cite{Dud}, convergence in probability is the appropriate concept of `$L^0$-convergence'. It is metrized among others by the Ky Fan-metric. Adopting this concept to our setting leads to the following definition of the  $L^0$-distortion distance $\DD_0$:
\[
	\DD_{0}(\X_0,\X_1)= \inf \bigg\{ \epsilon >0\spec
{\ol\m}\otimes{\ol\m} \bigg( \Big\{(x_0,x_1,y_0,y_1)\spec |{\d_0}(x_0,y_0)-\d_1(x_1,y_1)|>\epsilon\Big\}
\bigg)
\le\epsilon, \ \ol\m\in\Cpl(\m_0,\m_1)
\bigg\}.
\]

\begin{proposition}
For each $p\in[1,\infty)$, every point $\X_\infty$ and every sequence $(\X_n)_{n\in\N}$ in $\XX_p$ the following statements are equivalent:
\begin{enumerate}
\item
$\DD_p(\X_n,\X_\infty)\to0$ as $n\to\infty$;
\item
$\DD_0(\X_n,\X_\infty)\to0$ as $n\to\infty$ and
\[\size_p(\X_n)\to \size_p(\X_\infty)\quad\mbox{as }n\to\infty;\]
\item
$\DD_0(\X_n,\X_\infty)\to0$ as $n\to\infty$ and
\begin{equation}
\label{unifinte}\sup_{n\in\N}{\int\int}_{\{\d_n(x,y)>L\}}\d_n(x,y)^pd\m_n(x)\,d\m_n(y)\to0\quad\mbox{as }L\to\infty.
\end{equation}
\end{enumerate}
\end{proposition}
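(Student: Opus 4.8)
The plan is to mimic the classical characterization of $L^p$-convergence for real random variables --- namely, $L^p$-convergence iff convergence in probability together with convergence of $p$-th moments, iff convergence in probability together with uniform $p$-integrability --- transported into the coupling picture provided by Lemma~\ref{standard borel}. The cyclic implications I would prove are (i)$\Rightarrow$(ii), (ii)$\Rightarrow$(iii), and (iii)$\Rightarrow$(i). It is convenient to work throughout with parametrizations: by Lemma~\ref{standard borel}(iii) one has, for any $\psi_0\in\Par(\X_0)$ and $\psi_1\in\Par(\X_1)$ realizing the coupling $\ol\m=(\psi_0,\psi_1)_\push\Leb^1$, the functions $F_n(s,t):=\d_n(\psi_n(s),\psi_n(t))$ on $I^2$, and $\DD_p$ is the infimum of $\|F_n-F_\infty\|_{L^p(I^2)}$, $\DD_0$ the infimum of the Ky~Fan distance of $F_n$ and $F_\infty$, and $\size_p(\X_n)=\|F_n\|_{L^p(I^2)}$.

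\textbf{(i)$\Rightarrow$(ii).} This is the easy direction. Since $\DD_p$ dominates $\DD_0$ up to a harmless constant (the elementary inequality relating the $L^p$-norm to the Ky~Fan radius: if $\|g\|_{L^p}\le\delta$ then $g$ exceeds $\delta^{p/(p+1)}$ on a set of measure at most $\delta^{p/(p+1)}$), $\DD_p(\X_n,\X_\infty)\to0$ forces $\DD_0(\X_n,\X_\infty)\to0$. For the size convergence, fix for each $n$ an optimal coupling and apply the reverse triangle inequality for the $L^p$-norm on $I^2$: $|\,\size_p(\X_n)-\size_p(\X_\infty)\,|=|\,\|F_n\|_{L^p}-\|F_\infty\|_{L^p}\,|\le\|F_n-F_\infty\|_{L^p}=\DD_p(\X_n,\X_\infty)\to0$.

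\textbf{(ii)$\Rightarrow$(iii).} Here I would show that $\DD_0$-convergence plus convergence of $p$-th moments implies the uniform integrability condition~\eqref{unifinte}. Pick couplings $\ol\m_n$ with $F_n\to F_\infty$ in Ky~Fan distance. From $\DD_0$-convergence one extracts, along any subsequence, a further subsequence along which $F_n\to F_\infty$ $\Leb^2$-a.e.; this is Riesz's theorem applied on $I^2$. Combined with $\|F_n\|_{L^p}\to\|F_\infty\|_{L^p}<\infty$, the standard Vitali/Scheff\'e-type argument (a.e.\ convergence plus convergence of norms gives $L^p$-convergence, which in turn gives uniform $p$-integrability of the family $\{F_n^p\}$) yields $\{|F_n|^p\}$ uniformly integrable on $I^2$ along that subsequence. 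A subsequence-of-every-subsequence argument promotes this to uniform $p$-integrability of the whole sequence, which is exactly~\eqref{unifinte} rewritten via the $F_n$. One subtlety to be careful about: the Ky~Fan metric that enters $\DD_0$ here is taken on $I^2$ with respect to $\Leb^2$, matching the product structure $\ol\m\otimes\ol\m$ in the definition of $\DD_0$; one should check the definition is consistent with the product-measure form in the statement, which it is since $\ol\m=(\psi_0,\psi_1)_\push\Leb^1$ gives $\ol\m\otimes\ol\m=(\psi_0,\psi_1,\psi_0,\psi_1)_\push\Leb^2$.

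\textbf{(iii)$\Rightarrow$(i).} This is the main obstacle, because $\DD_p$ and $\DD_0$ are each defined through an \emph{infimum} over couplings, so a priori the coupling that is near-optimal for $\DD_0$ has no reason to be near-optimal for $\DD_p$; one must produce a single coupling that is simultaneously good for the probability estimate and for the tails. The resolution is to fix, for each $n$, a coupling $\ol\m_n$ with $\DD_0$-defect at most $\varepsilon_n\to0$, realize it by parametrizations, and then estimate
\[
\|F_n-F_\infty\|_{L^p(I^2)}^p
=\int_{\{|F_n-F_\infty|\le\varepsilon_n\}}\!\!|F_n-F_\infty|^p
+\int_{\{|F_n-F_\infty|>\varepsilon_n\}}\!\!|F_n-F_\infty|^p.
\]
The first term is at most $\varepsilon_n^p\to0$. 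For the second, split further according to whether $F_n$ or $F_\infty$ is large: on the part where both are bounded by $L$, the integrand is at most $(2L)^p$ times the measure of $\{|F_n-F_\infty|>\varepsilon_n\}\le\varepsilon_n$, which tends to $0$ for fixed $L$; the remaining contribution is controlled by $\int_{\{F_n>L\}}F_n^p$ (uniformly small in $n$ by hypothesis~\eqref{unifinte}, after recording that~\eqref{unifinte} is precisely $\int_{\{F_n>L\}}F_n^p\to0$ uniformly in $n$) plus $\int_{\{F_\infty>L\}}F_\infty^p$ (small by integrability of $F_\infty^p$). Choosing $L$ large first and then $n$ large gives $\|F_n-F_\infty\|_{L^p(I^2)}\to0$, and since $\DD_p(\X_n,\X_\infty)\le\|F_n-F_\infty\|_{L^p(I^2)}$ for this particular coupling, we conclude $\DD_p(\X_n,\X_\infty)\to0$. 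The only point requiring care is that the uniform $p$-integrability in~\eqref{unifinte} is phrased in terms of $\d_n$ against $\m_n^{\otimes2}$, i.e.\ against $F_n$ on $I^2$ --- an intrinsic quantity of $\X_n$ not depending on the chosen coupling --- so it transfers verbatim to whatever coupling we picked for the $\DD_0$ estimate; this is what makes the single-coupling argument go through.
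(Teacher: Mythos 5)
Your argument is correct and takes essentially the same route as the paper: both proofs reduce the statement to the classical equivalence for real-valued functions ($L^p$-convergence $\Leftrightarrow$ convergence in measure plus convergence of $p$-th moments $\Leftrightarrow$ convergence in measure plus uniform $p$-integrability) by realizing the relevant couplings on a common measure space so that $\DD_p$ becomes an $L^p$-distance, $\DD_0$ a Ky Fan distance, and $\size_p$ a $p$-th moment — the paper glues the couplings onto one probability space via Lemma \ref{sec-glue} and cites the classical theorem wholesale, while you work with parametrizations on $I^2$ and re-derive it. The only spot to tighten is the tail estimate in (iii)$\Rightarrow$(i): the region $\{F_n>L\}\cup\{F_\infty>L\}$ also produces the cross terms $\int_{\{F_\infty>L\}}F_n^p$ and $\int_{\{F_n>L\}}F_\infty^p$, which you should dispatch via Markov's inequality and the uniform bound on $\int F_n^p$ implied by \eqref{unifinte}.
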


Note that condition (\ref{unifinte}) is void for each sequence $(\X_n)_{n\in\N}$ with uniformly bounded diameter. Such a sequence converges w.r.t. $\DD_p$ (for some, hence all $p\in[1,\infty)$) if and only if it converges w.r.t. $\DD_0$.

\begin{proof}
Given the sequence $(\X_n)_{n\in\N}$ in $\XX_p$, the point $\X_\infty$ as well as optimal couplings $\ol\m_n$ of them, we can model all the distances $\d_n,\d_\infty$ as (suitably coupled) random variables on one probability
space. That is, there exists a probability space $(\Omega,\frak A, \PP)$ and random variables $\xi_n:\Omega\to\R$ for $n\in\N\cup\{\infty\}$ s.t.
\[\big(\xi_n,\xi_\infty\big)_{\push}\PP=\big(\d_n,\d_\infty\big)_{\push}(\ol\m_n\otimes\ol\m_n)\quad(\forall n\in\N),\]
see Lemma \ref{sec-glue}.
Then indeed $\DD_p(\X_n,\X_\infty)$ is the $L^p$-distance of the random variables $\xi_n, \xi_\infty$, and
 $\DD_0(\X_n,\X_\infty)$ is the Ky Fan-distance of them:
 \[\DD_p(\X_n,\X_\infty)=\bigg( \int_\Omega |\xi_n-\xi_\infty|^pd\PP\bigg)^{1/p},\]
 \[\DD_0(\X_n,\X_\infty)=\inf\Big\{\epsilon>0\spec \PP(\{ |\xi_n-\xi_\infty|>\epsilon\})\le\epsilon\Big\}.\]
Moreover, the $L^p$-size of $\X_n$ is just the $p$-th moment of $\xi_n$.
Hence, the claim of the Theorem is an immediate consequence of the  well-known and fundamental result from Lebesgue's integration theory: \emph{The following statements are equivalent:
\begin{itemize}
\item $\xi_n\to\xi_\infty$ in $L^p$;
\item $\xi_n\to\xi_\infty$ in probability and $\int|\xi_n|^pd\PP\to \int|\xi_\infty|^pd\PP$;
\item $\xi_n\to\xi_\infty$ in probability and $(\xi_n)_{n\in\N}$ is uniformly $p$-integrable.
\end{itemize}}
See e.g. \cite{Bau01}, Theorem~21.7.
\end{proof}

\begin{example}
\label{ex-incompl}
For each $n\in\N$, let $\X_n=[X_n,\d_n,\m_n]$ be the complete graph with $2^n$ vertices,
unit distances and uniform distribution, a representative of $\X_n$ is e.g. given by
$X_n=\{1,\ldots,2^n\}$, $\d_n(i,j)=1$ for all $i\not=j$ and
$\m_n=\frac{1}{2^n}\sum_{i=1}^{2^n}\delta_{i}$.
\begin{figure}[h!]
\begin{center}

\psfrag{mbar}{\color{blue}$\ol{\m}$}
\psfrag{m4}{\color{red}$\m_4$}
\psfrag{m2}{$\m_2$}

\includegraphics[scale=0.2]{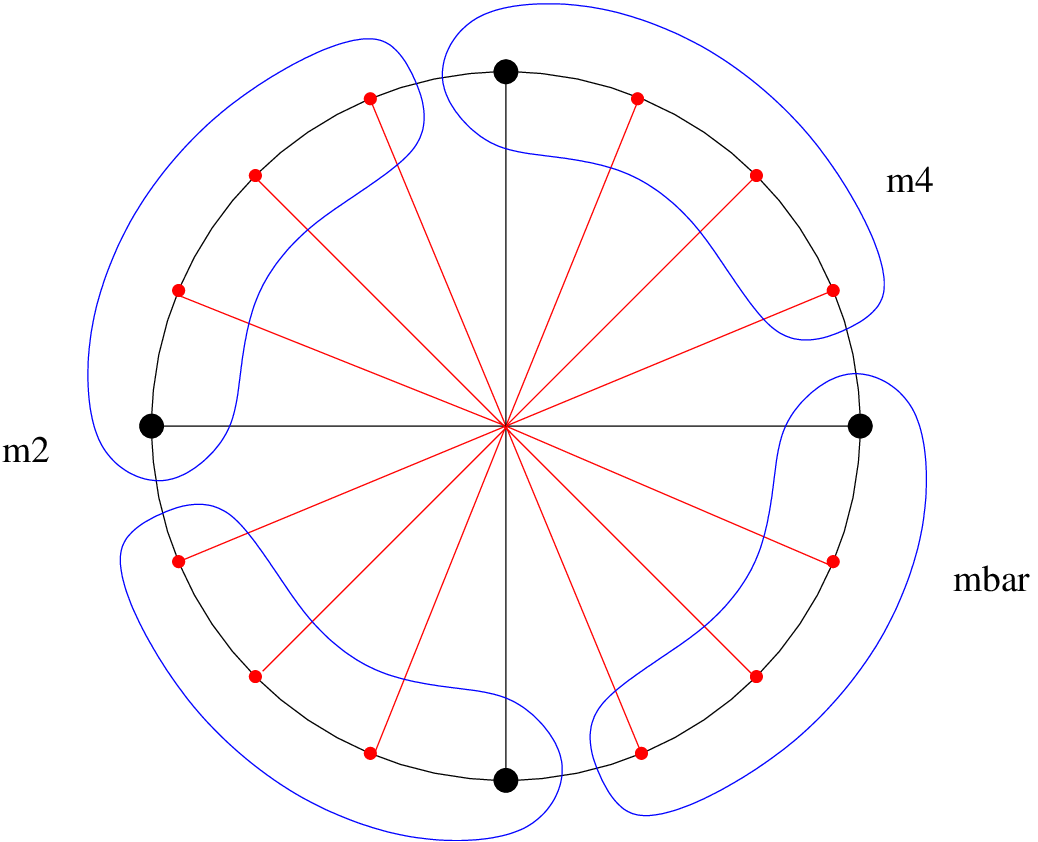}

\end{center}
\end{figure} 
Then $(\X_n)_{n\in\N}$ is a Cauchy sequence w.r.t. $\DD_p$ for each $p\in\{0\}\cup[1,\infty)$.
More precisely, for any $p\in[1,\infty)$,
\[\DD_p(\X_n,\X_k)^p=\DD_0(\X_n,\X_k)\le|2^{-n}-2^{-k}|\quad\text{ for all }k,n\in\N.\]
However, the sequence will not converge in $\XX$, see Lemma \ref{discrete-sphere}.
\end{example}

\begin{proof} Since the distortion function $\dis(i_n,j_n,i_k,j_k)=|\d_n(i_n,j_n)-\d_k(i_k,j_k)|$ can attain only the values $0$ and $1$, for \emph{each} coupling $\ol\m\in\Cpl(\m_n,\m_k)$, independent of $p$ and $\epsilon$,
\begin{eqnarray*}
\int\int\Big|\d_n-\d_k\Big|^pd\ol\m\,d\ol\m&=&\ol\m^2\Big(\dis>\epsilon\Big)=\ol\m^2\Big(\dis\not=0\Big)\\
%\sum_{i_n,i_k}\sum_{j_n,j_k} \ol\m(i_n,i_k)\ol\m(j_n,j_k)\Big|\d_n(i_n,j_n)-\d_k(i_k,j_k)\Big|^p\\
&=&
\sum_{i_n,i_k}\ol\m(i_n,i_k)\Big[\sum_{j_k\not=i_k}\ol\m(i_n,j_k)+\sum_{j_n\not=i_n}\ol\m(j_n,i_k)\Big|.
\end{eqnarray*}
Assume now that $k>n$. Then  the choice
\[\ol\m=\frac1{2^n}\sum_{i_n=1}^{2^n}\Big(\frac1{2^{k-n}}\sum_{j_k=1}^{2^{k-n}}\delta_{i_n,(i_n-1)2^{k-n}+j_k}\Big)\]
leads to the upper estimate
$\DD_p^p=\DD_0\le\frac1{2^n}\frac1{2^{k-n}}\Big(2^{k-n}-1\Big)$.
\end{proof}
\subsection{$L^p$-Distortion Distance vs. $L^p$-Transportation Distance}

The $L^p$-distortion distance is closely related to the $L^p$-transportation distance $\D_p$ introduced earlier by the author \cite{St06}. %In some sense, it is just a mutant of it.
The definition of the latter requires to introduce some further concepts.

\begin{wrapfigure}{r}{0.4\textwidth}
%\vspace{-100pt}
\begin{center}
\psfrag{X0}{\scriptsize $X_0$}
\psfrag{X1}{\scriptsize $X_1$}
\psfrag{d0}{\scriptsize $d_0$}
\psfrag{d1}{\scriptsize $d_1$}
\psfrag{dbar}{\scriptsize $\ol{d}$}

\includegraphics[scale=0.3]{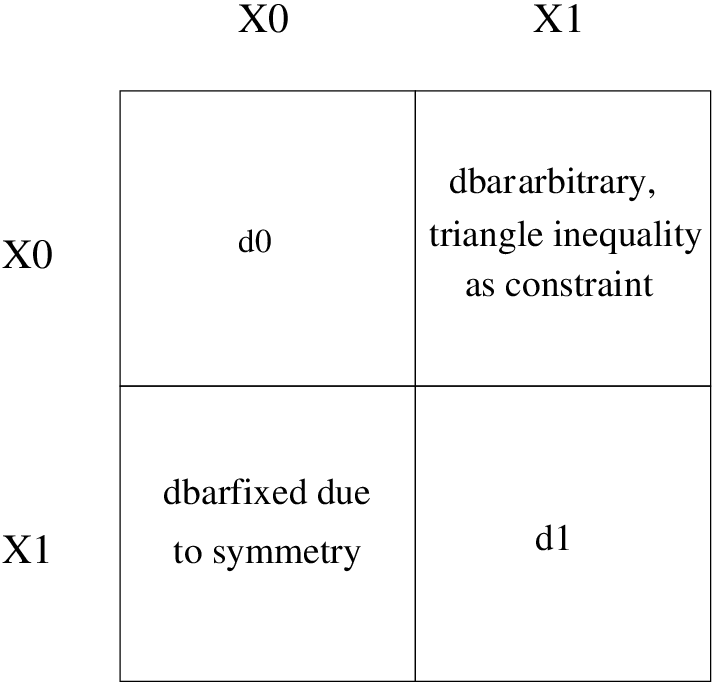}

\end{center}
\end{wrapfigure} 
Given metric spaces $(X_0,\d_0)$ and $(X_1,\d_1)$,
a symmetric $\R_+$-valued function $\ol\d$ on $X\times X$ -- where
$X=	X_0 \sqcup X_1$ denotes the disjoint union of these spaces (with induced topology) --	 will be called \emph{coupling} of the  metrics $\d_0$ and $\d_1$ if
\begin{itemize}
\item it
 satisfies the triangle inequality on $X\times X$
\item it coincides with $\d_0$ on $X_0\times X_0$
\item  it coincides with $\d_1$ on $X_1 \times X_1$.
\end{itemize}
Note that this implies that $\ol{\d}$ is continuous on $X\times X$ since
\[
	|\ol{\d}(x_0,x_1)-\ol{\d}(y_0,y_1)| \leq \d_0(x_0,y_0) + \d_1(x_1,y_1)
\]
but it might  vanish outside the diagonal. Thus, $\ol\d$ is a \emph{pseudo metric} on $X$.

Given metric measure spaces $(X_0,\d_0,\m_0)$ and $(X_1,\d_1,\m_1)$, the set
 $\Cpl(\d_0,\d_1)$ will denote the   set of all \emph{couplings of the metrics restricted to the supports}, that is, couplings of the metric spaces $(X_0^\base,\d_0)$ and $(X_1^\base,\d_1)$ where $X_0^\base$ and $X_1^\base$ denote the {support} of the measures $\m_0$ and $\m_1$, resp.

\bigskip

The \emph{$L^p$-transportation distance}\index{transportation distance $\D_p$} between $\X_0$ and $\X_1$ is defined as
\begin{equation*}
%\begin{split}
	\D_p(\X_0,\X_1)= \inf \Bigg\{  \bigg( \int_{X_0\times X_1} \ol{\d}^p(x_0,x_1) d\ol{\m}
	(x_0,x_1)\bigg)^{1/p} \spec
\ \ol\m\in\Cpl(\m_0,\m_1), \ \ol{\d}\in\Cpl(\d_0,\d_1)
	\Bigg\}.
%\end{split}
\end{equation*}
The usual limiting argument leads to consistent definitions for $p=\infty$:
\begin{align*}
	\D_{\infty}(\X_0,\X_1)= \inf \Bigg\{   \sup \Big\{ \ol{\d}(x_0,x_1)\spec
	(x_0,x_1)\in \supp(\ol{\m})\Big\} \spec
 \ \ol\m\in\Cpl(\m_0,\m_1), \ \ol{\d}\in\Cpl(\d_0,\d_1)
	\Bigg\}.
\end{align*}
One easily verifies that the distances $\D_p(\X_0,\X_1)$ only depend on the isomorphism classes of $\X_0$ and $\X_1$, resp.\ (and not on the choice of the representatives within these equivalence classes).
Obviously, all of them can be estimated in terms of the Gromov-Hausdorff distance between the supports of the measures
\[\D_p(\X_0,\X_1)\le d_{GH}\big(\supp(X_0),\supp(X_1)\big).\]
\begin{remark}
Taking into account that each isometric embedding leads to a coupling of the metrics $\d_0,\d_1$ and vice versa,
each coupling $\ol\d$ defines an isometric embedding into $(X_0^\base\bigsqcup X_1^\base,\ol\d)$, one easily
verifies that
\begin{equation*}
\begin{split}
	\D_p(\X_0,\X_1)= \inf \bigg\{
  &{\hat W}_p\Big(\hat\m_0,\hat\m_1\Big) \spec
\Big(\hat X,\hat\d\Big)\text{ cpl. sep. metric space},\\
& \qquad \imath_0: X_0^\base\to\hat X, \ \imath_1: X_1^\base\to\hat X
\text{ isometric embeddings},\
\hat\m_0={\imath_0}_{\push}\m_0,\ \hat\m_1={\imath_1}_{\push}\m_1
	\bigg\}
\end{split}
\end{equation*}
where ${\hat W}_p(.,.)$ denotes the $L^p$-Wasserstein distance on the space of probability measures on $(\hat X,\hat\d)$. Moreover, in view of Lemma \ref{standard borel} we conclude
\begin{equation*}
\begin{split}
	\D_p(\X_0,\X_1)= \inf \bigg\{
  &\bigg(\int_0^1 {\hat\d}^p\Big(\imath_0\big(\psi_0(t)\big), \imath_1\big(\psi_1(t)\big)\Big)\,dt\bigg)^{1/p}
   \spec \psi_0\in\Par(\m_0),\psi_1\in\Par(\m_1),\\
& \qquad \Big(\hat X,\hat\d\Big)\text{ cpl. sep. metric space},\
\imath_0: X_0^\base\to\hat X, \ \imath_1: X_1^\base\to\hat X
\text{ isometric embeddings}
	\bigg\}.
\end{split}
\end{equation*}
\end{remark}
The infimum in the above definition is always attained.

\begin{proposition}\label{d-prop}
Assume $p\in[1,\infty)$.
\begin{enumerate}
\item
For each pair $\big(\X_0, \X_1\big)$ of  metric measure spaces there exists an `optimal' pair $\big(\ol{\m},\ol{\d}\big)$
of couplings such that
\[
	\D_p(\X_0,\X_1)= \left( \int_{X_0\times X_1} \ol{\d}^p(x_0,x_1) d \ol{\m}(x_0,x_1)\right)^{1/p}.
\]	
\item
	$\D_p$ is a complete separable geodesic metric on $\XX_p$.
\end{enumerate}
\end{proposition}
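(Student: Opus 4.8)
\textbf{Plan of proof for Proposition~\ref{d-prop}.}
The plan is to treat the two assertions in turn, reducing both to standard facts about optimal transport once the ``two-level'' infimum over $(\ol\m,\ol\d)$ has been brought under control.

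\emph{Part (i): existence of an optimal pair.}
First I would fix a minimizing sequence $(\ol\m_n,\ol\d_n)$ of couplings realizing $\D_p(\X_0,\X_1)$ up to $1/n$. The measures $\ol\m_n$ live in the compact set $\Cpl(\m_0,\m_1)$ (Lemma~\ref{cpl-comp}), so after passing to a subsequence we may assume $\ol\m_n\to\ol\m$ weakly. The delicate point is compactness for the metric couplings $\ol\d_n$: a priori these are functions on $X_0^\base\times X_1^\base$ with no uniform bound. The key observation is that we may always truncate: replacing $\ol\d_n(x_0,x_1)$ by $\min\{\ol\d_n(x_0,x_1),\ \d_0(x_0,x_0')+\ol\d_n(x_0',x_1)\}$ for a fixed reference pair, or more systematically, using the triangle inequality, one sees that along a minimizing sequence one may assume $\ol\d_n(x_0,x_1)\le \inf_{x_0'}\big[\d_0(x_0,x_0')+\d_1(\text{(matched point)},x_1)\big]$-type bounds; combined with square-integrability of $\d_0,\d_1$ this gives uniform integrability of $\ol\d_n^p$ against $\ol\m_n$ and local equicontinuity (from the Lipschitz bound $|\ol\d(x_0,x_1)-\ol\d(y_0,y_1)|\le\d_0(x_0,y_0)+\d_1(x_1,y_1)$ noted in the text). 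Hence by an Arzel\`a--Ascoli / diagonal argument we extract a subsequential pointwise limit $\ol\d$ which again satisfies the triangle inequality and restricts to $\d_0,\d_1$ on the diagonal blocks, i.e.\ $\ol\d\in\Cpl(\d_0,\d_1)$. Finally lower semicontinuity of $(\ol\m,\ol\d)\mapsto\big(\int\ol\d^p\,d\ol\m\big)^{1/p}$ under joint weak/pointwise convergence (Fatou, using uniform integrability to avoid mass escaping to infinity) shows $(\ol\m,\ol\d)$ is optimal. The case $p=\infty$ follows by the usual $p\to\infty$ limiting argument, as for $\DD_\infty$ in Lemma~\ref{optcoupl1}.

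\emph{Part (ii): $\D_p$ is a complete separable geodesic metric.}
That $\D_p$ is a metric: symmetry and nonnegativity are clear; the triangle inequality follows by the gluing lemma (Lemma~\ref{glue1}) exactly as in Lemma~\ref{dildis1}, gluing optimal pairs $(\ol\m,\ol\d)$ across an intermediate space and using the triangle inequality for $\ol\d$ together with Minkowski's inequality. Positivity ($\D_p(\X_0,\X_1)=0\Rightarrow\X_0\cong\X_1$) is proved as in Lemma~\ref{iso-null}: if the infimum is $0$ then an optimal pair from (i) has $\ol\d=0$ $\ol\m$-a.e.\ on the support, which forces $\ol\m$ to be supported on the graph of a measure-preserving isometry between the supports. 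For separability I would exhibit a countable dense set: finitely supported mm-spaces with rational distances and rational weights are dense in $(\XX_p,\D_p)$ --- given $(X,\d,\m)$, approximate $\m$ weakly by finitely supported measures (using separability of $X$) and control the $L^p$-size by the uniform integrability built into $\XX_p$; then perturb distances to be rational. Completeness: given a $\D_p$-Cauchy sequence $\X_n$, pass to a fast subsequence with $\sum_n\D_p(\X_n,\X_{n+1})<\infty$, realize consecutive optimal couplings and metric couplings simultaneously via the countable gluing of Lemma~\ref{sec-glue} on $\prod_n X_n^\base$, obtain a Cauchy sequence of random distance-matrices in $L^p$, and let the limit define the metric on the (countably generated) limit space; the limit mm-space lies in $\XX_p$ by Fatou and uniform integrability, and $\D_p(\X_n,\X_\infty)\to0$. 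Geodesity: between $\X_0,\X_1$, take an optimal pair $(\ol\m,\ol\d)$ and define $\X_t=\big(X_0^\base\sqcup X_1^\base,\ \ol\d,\ \ol\m_t\big)$ where $\ol\m_t$ interpolates suitably --- more precisely, following the structure of the paper, the family $\big(X_0^\base\times X_1^\base,\ (1-t)\d_0+t\d_1,\ \ol\m\big)$ or its $\D_p$-analogue built from $\ol\d$ gives a curve with $\D_p(\X_s,\X_t)\le|t-s|\,\D_p(\X_0,\X_1)$, and the triangle inequality forces equality, yielding a geodesic.

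\emph{Main obstacle.}
I expect the genuine difficulty to be the compactness/closedness of the set of metric couplings $\Cpl(\d_0,\d_1)$ along a minimizing sequence in Part~(i): unlike $\Cpl(\m_0,\m_1)$ this set is not compact in any obvious topology because the coupling distances are unbounded, so the argument must combine truncation (to get uniform integrability from square-integrability of $\d_0,\d_1$) with the built-in Lipschitz control to pass to a limit that still obeys the triangle inequality --- this is where care is needed to be sure no mass and no ``length'' escapes to infinity. The remaining items (metric axioms via gluing, separability via finite approximations, completeness via countable gluing, geodesics via midpoint interpolation) are then routine adaptations of arguments already present in the excerpt.
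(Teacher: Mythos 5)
Your plan is essentially a reconstruction of the argument the paper relies on: the paper itself gives no proof of this proposition but defers to \cite{St06} (Lemma~3.3 and Theorem~3.6, for $p=2$) and observes that those proofs carry over verbatim to general $p$. Your Part~(i) is the right argument, and you correctly isolate the only real issue, namely compactness for the metric couplings; note however that no truncation is needed. Along a minimizing sequence one has $\int \ol\d_n^{\,p}\,d\ol\m_n\le C$, and integrating the chained triangle inequality $\ol\d_n(z_0,z_1)\le \d_0(z_0,x_0)+\ol\d_n(x_0,x_1)+\d_1(x_1,z_1)$ against $d\ol\m_n(x_0,x_1)$ bounds $\ol\d_n$ at a fixed reference pair (here finiteness of the $L^p$-size is what is used, not square-integrability); the built-in $1$-Lipschitz property then gives local uniform boundedness and equicontinuity, and Arzel\`a--Ascoli together with the lower semicontinuity of $(\ol\m,\ol\d)\mapsto\int\ol\d^{\,p}d\ol\m$ (which is free -- uniform integrability is only needed for continuity, not for lsc) finishes. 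The metric axioms, positivity via Lemma~\ref{iso-null}, separability via finitely supported spaces with rational data, and the geodesic $[X_0\times X_1,(1-t)\d_0+t\d_1,\ol\m]$ with $\ol\m$ optimal for $\D_p$ (verified by the metric coupling $\hat\d(x_s,y_t)=\inf_z\big[\d_s(x,z)+|t-s|\,\ol\d(z_0,z_1)+\d_t(z,y)\big]$, whose triangle inequality follows from $|\d_0(a_0,b_0)-\d_1(a_1,b_1)|\le\ol\d(a_0,a_1)+\ol\d(b_0,b_1)$) are all as in the cited source; your first suggestion for the geodesic, Wasserstein interpolation inside $(X_0^\base\sqcup X_1^\base,\ol\d)$, would not work since the glued space need not be geodesic.

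The one step where your sketch, as written, would go wrong is completeness. Passing to random distance matrices and taking an $L^p$-limit is the $\DD_p$-style argument, and it only produces a \emph{pseudo} metric measure space in the limit -- this is exactly the mechanism by which $(\XX_p,\DD_p)$ fails to be complete (Example~\ref{ex-incompl}, Lemma~\ref{discrete-sphere}), so the same device cannot by itself establish completeness of $\D_p$; one would still have to show the limiting gauge is realized by a genuine complete separable metric inducing the topology, which is precisely the point at issue. The correct route, and the reason metric couplings are introduced at all, is to chain the optimal $\ol\d_n$ between consecutive spaces into a single pseudo metric on $\bigsqcup_n X_n^\base$, pass to the metric completion $\hat X$ of its quotient (an honest Polish space into which every $X_n^\base$ embeds isometrically), and observe that the images of the $\m_n$ form a Cauchy sequence in the $L^p$-Wasserstein space over $\hat X$, which is complete; the limit measure then defines a bona fide element of $\XX_p$ with $\D_p(\X_n,\X_\infty)\le W_p(\m_n,\m_\infty)\to0$. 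With that substitution your plan is sound.
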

\begin{proof}
In the case $p=2$, all the assertions are proven in \cite{St06}, Lemma~3.3 and Theorem 3.6. Their proofs, however, apply without any change to general $p\in[1,\infty)$.
\end{proof}
The corresponding $L^0$-transportation distance $\D_0$ is defined -- in the spirit of the Ky Fan metric -- by
\[
	\D_{0}(\X_0,\X_1)= \inf \bigg\{ \epsilon >0\spec
\ol\m \bigg( \Big\{(x_0,x_1)\spec \ol{\d}(x_0,x_1)>\epsilon\Big\}
\bigg)
\le\epsilon, \ \ol\m\in\Cpl(\m_0,\m_1), \ \ol{\d}\in\Cpl(\d_0,\d_1)
\bigg\}.
\]

\begin{remark}
Albeit the $L^p$-transportation distance and the $L^p$-distortion distance are closely related, they measure quite different quantities. Both definitions rely on the choice of an optimal coupling $\ol\m$ which produces
pairs $(x_0,x_1),(y_0,y_1),\ldots$ of matched points.
\begin{itemize}
\item
Each  such pair produces certain transportation cost, say $\ol\d(x_0,x_1)$. The $L^p$ mean of it yields the  $L^p$-transportation distance. It is the $L^p$-Wasserstein distance of the measures in an -- optimally chosen -- ambient metric space. The relevant question here is how far the two spaces (or the two measures) are from each other after they are brought into optimal position (i.e. after choosing the best isometric embedding of the two spaces into some common spaces.)
\item
For the $L^p$-distortion distance the relevant question is how much the distance between any pair of points in one of the two spaces, say $(x_0,y_0)\in X_0^2$, is changed if one passes to the pair of matched points in the other space, say $(x_1,y_1)\in X_1^2$. This is the \emph{distortion} of the distance. This quantity is independent of any embedding. Its $L^p$-mean defines the $L^p$-distortion distance.
\end{itemize}
\end{remark}

\begin{figure}[h!]
\begin{center}
\psfrag{X0}{$X_0$}
\psfrag{X1}{$X_1$}
\psfrag{dbar}{\color{green}$\ol{\d}$}
\psfrag{d0}{\color{red}$\d_0$}
\psfrag{d1}{\color{red}$\d_1$}
\psfrag{x0}{$x_0$}
\psfrag{y0}{$y_0$}
\psfrag{z0}{$z_0$}
\psfrag{x1}{$x_1$}
\psfrag{y1}{$y_1$}
\psfrag{z11}{$z_1$}
\psfrag{z12}{$z_1'$}
\psfrag{z13}{$z_1''=y_1'$}
\includegraphics[scale=0.3]{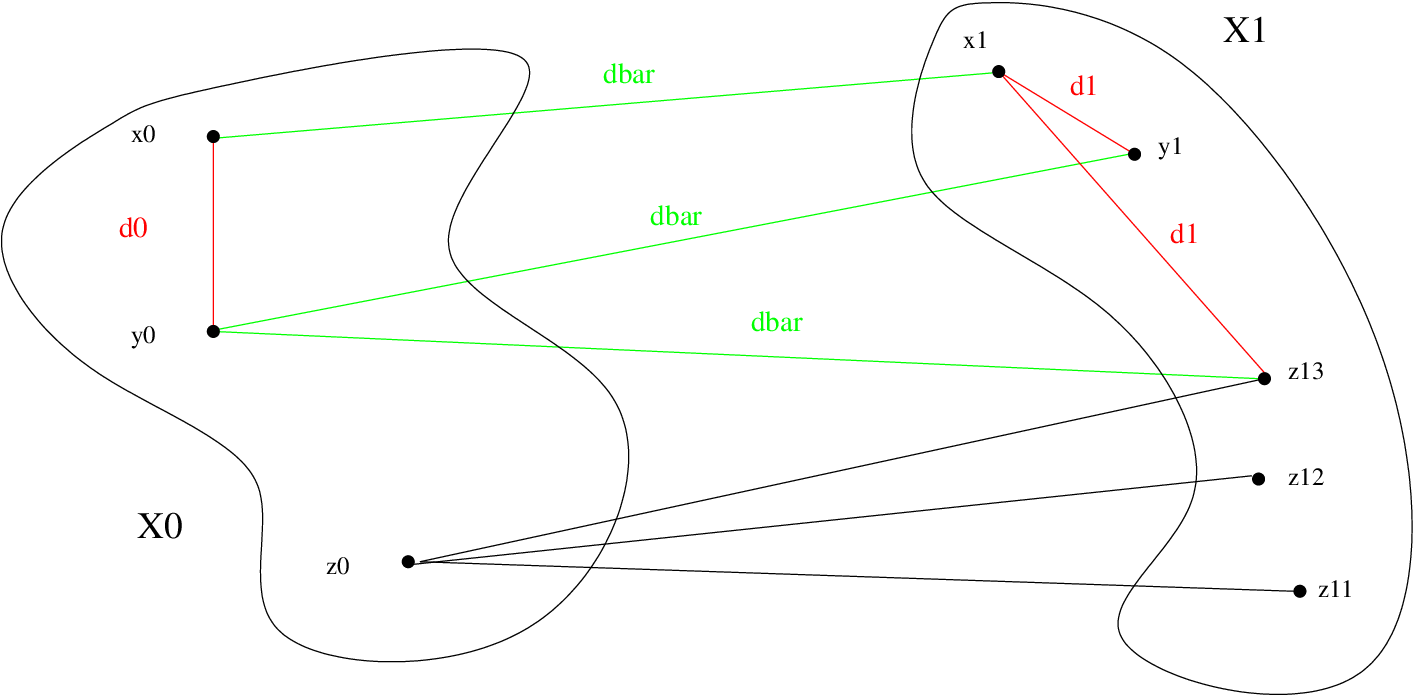}

\caption{\wichtig{
$\D_p$ = $L^p$-mean of $\ol{\d}$,\quad
$\DD_p$ = $L^p$-mean of $|\d_0-\d_1|$
}}

\end{center}
\end{figure} 

Let us summarize some of the elementary estimates for the metrics $\DD_p$ and $\D_p$ for varying $p$'s.

\begin{proposition}\label{lp-d-dd}
	\begin{enumerate}
		\item\label{lptransp1}
			$\forall p\in[1,\infty]: \quad \DD_p\leq 2 \D_p,\quad \DD_0\leq 2 \D_0$\quad and \quad $\DD_{\infty}=2
			\D_{\infty}$.
		\item\label{lptransp2}
			$\forall 1\leq p\leq q\leq\infty$: \qquad $\DD_0^{1+1/p}\le \DD_p\leq \DD_{q}, \qquad \D_0^{1+1/p}\le\D_p\leq
			\D_{q}$.
		\item\label{lptransp3}
			$\forall 1\leq p\leq q <\infty$, restricted to the space $\{\X\in \XX \spec \diam(\X)\leq L\}$ for a given $L\in \R_+$:
			\[
				L^{p-q} \cdot\DD_{q}^q\leq  \DD_p^p \leq \Big(1+L^p\Big)\cdot\DD_0, \qquad
			(L/2)^{p-q}\cdot	\D_{q}^q\leq   \D_p^p\leq\Big(1+(L/2)^p\Big)\cdot\D_0.
			\]
	\end{enumerate}
\end{proposition}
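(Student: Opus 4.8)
The plan is to treat the three items in turn, reducing each to an elementary inequality that is then applied under the integral (resp.\ inside the $\inf$). For all of them, I fix an optimal coupling $\ol\m\in\Cpl(\m_0,\m_1)$ for the relevant distance (existence by Lemma \ref{optcoupl1} for $\DD_p$, Proposition \ref{d-prop} for $\D_p$) and, when $\D_p$ is involved, an accompanying optimal coupling $\ol\d\in\Cpl(\d_0,\d_1)$ of the metrics; the key point used repeatedly is the pointwise bound
\[
\big|\d_0(x_0,y_0)-\d_1(x_1,y_1)\big|\le \ol\d(x_0,x_1)+\ol\d(y_0,y_1),
\]
which is just the triangle inequality for the pseudo metric $\ol\d$ on $X_0^\base\sqcup X_1^\base$.

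For (i): the displayed bound, combined with Minkowski's inequality in $L^p(\ol\m\otimes\ol\m)$, gives $\DD_p\le 2\D_p$ once one observes that $\int\int\ol\d(x_0,x_1)^p\,d\ol\m\,d\ol\m=\int\ol\d(x_0,x_1)^p\,d\ol\m$ because the integrand depends only on the first pair; taking the infimum over $(\ol\m,\ol\d)$ yields the claim. The $L^0$ version follows the same way from the Ky Fan formulation: if $\ol\m(\{\ol\d>\varepsilon\})\le\varepsilon$ then on the complement of a set of $\ol\m\otimes\ol\m$-measure $\le 2\varepsilon$ we have $|\d_0-\d_1|\le 2\varepsilon$, giving $\DD_0\le 2\D_0$ (after the harmless rescaling $\varepsilon\mapsto 2\varepsilon$). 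The equality $\DD_\infty=2\D_\infty$ is the sharp case: ``$\le$'' is the $p=\infty$ form of the Minkowski step, and ``$\ge$'' comes from choosing $\ol\d$ on the support of an optimal $\ol\m$ for $\DD_\infty$ to be the largest pseudo metric bounded by $\tfrac12 r$ on cross pairs where $r=\DD_\infty$, i.e.\ $\ol\d(x_0,x_1)=\inf\{\tfrac12(\d_0(x_0,\cdot)+\d_1(\cdot,x_1))+\cdots\}$ adapted over the support so that it is a coupling with $\sup\ol\d\le \tfrac12\DD_\infty$.

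For (ii): monotonicity $\DD_p\le\DD_q$ (and $\D_p\le\D_q$) in $p$ is Jensen/the nesting of $L^p$-norms on a probability space applied to the nonnegative integrand $|\d_0-\d_1|$ (resp.\ $\ol\d$) against the probability measure $\ol\m\otimes\ol\m$ (resp.\ $\ol\m$), then $\inf$ over couplings. The lower bound $\DD_0^{1+1/p}\le\DD_p$ is the quantitative Chebyshev estimate: for any coupling, if $\varepsilon:=\DD_0$ then the distortion exceeds $\varepsilon$ on a set of measure $\ge\varepsilon$ (by definition of the infimum $\DD_0$, approached arbitrarily closely — here one argues with $\varepsilon-\eta$ and lets $\eta\to0$, or invokes lower semicontinuity to get the infimum attained), hence $\int\int|\d_0-\d_1|^p\ge\varepsilon^p\cdot\varepsilon=\varepsilon^{p+1}$; taking $p$-th roots and then the infimum over $\ol\m$ gives $\DD_p\ge\DD_0^{(p+1)/p}$. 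The $\D_0,\D_p$ case is identical with $\ol\d$ in place of $|\d_0-\d_1|$ and $\ol\m$ in place of $\ol\m\otimes\ol\m$.

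For (iii): on $\{\diam\le L\}$ the integrand $|\d_0-\d_1|$ is bounded by $L$ (and $\ol\d$ by $L/2$, since an optimal $\ol\d$ can be taken $\le\tfrac12(\d_0\text{-diam}+\d_1\text{-diam})\le L/2$ on cross pairs). The left inequality $L^{p-q}\DD_q^q\le\DD_p^p$ comes from $|\d_0-\d_1|^q=|\d_0-\d_1|^p\cdot|\d_0-\d_1|^{q-p}\le L^{q-p}|\d_0-\d_1|^p$, integrated and $\inf$'d. The right inequality splits the integration domain at the level $1$: where $|\d_0-\d_1|\le 1$ we bound $|\d_0-\d_1|^p\le|\d_0-\d_1|\le\mathbf 1$, and on the complement (measure $\le\DD_0$ up to $\eta$) we use $|\d_0-\d_1|^p\le L^p$; summing gives $\int\int|\d_0-\d_1|^p\le \DD_0+L^p\DD_0=(1+L^p)\DD_0$ — strictly one first needs $|\d_0-\d_1|>1$ to have $\ol\m\otimes\ol\m$-measure controlled by $\DD_0$, which again follows from the definition of $\DD_0$ as an infimum after the usual $\eta$-approximation, provided $\DD_0\le 1$ (otherwise the bound is trivial since $\DD_p^p\le L^p\le L^p\DD_0$ when $\DD_0\ge1$, wait — when $\DD_0\ge 1$ one has $\DD_p^p\le L^p\le(1+L^p)\DD_0$ directly). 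The $\D$-versions are the same computation with $L/2$ replacing $L$.

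The main obstacle is the sharp equality $\DD_\infty=2\D_\infty$ in (i): the inequality ``$\le$'' is routine, but ``$\ge$'' requires producing, from an optimal coupling $\ol\m$ for $\DD_\infty$ with distortion bound $r$, an actual coupling of metrics $\ol\d$ on the disjoint union realizing $\sup_{\supp\ol\m}\ol\d\le r/2$; one must check that the natural candidate (the maximal pseudo metric on $X_0^\base\sqcup X_1^\base$ extending $\d_0,\d_1$ and assigning cross-distance $\le r/2+\text{something}$ on the support of $\ol\m$) genuinely satisfies the triangle inequality and still restricts to $\d_0,\d_1$ — this is where the geometry (not just Hölder-type estimates) enters. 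Everything else is bookkeeping around Jensen, Minkowski, Chebyshev and a domain split, plus the standard $\eta$-approximation for infima that are attained by Lemma \ref{optcoupl1} anyway.
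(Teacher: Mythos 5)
Your overall route coincides with the paper's: the pointwise bound $|\d_0(x_0,y_0)-\d_1(x_1,y_1)|\le\ol\d(x_0,x_1)+\ol\d(y_0,y_1)$ plus Minkowski for the upper estimates in (i), Jensen and the Markov/Ky Fan computation for (ii), and the bound $|\d_0-\d_1|\le L$ plus a domain split for (iii). The genuine gap is exactly the step you flag as ``the main obstacle'': the direction $\D_\infty\le\frac12\DD_\infty$ of the equality in (i) is left unverified, and the candidate you sketch, read literally, does not work. With the factors $\tfrac12$ in front of the distances, $\ol\d(x_0,x_1)=\inf\{\tfrac12\d_0(x_0,y_0)+\tfrac12\d_1(y_1,x_1)+c\}$ violates the triangle inequality through a cross point: for $x_0,z_0\in X_0$ far apart and $y_0$ near a midpoint, $\ol\d(x_0,x_1)+\ol\d(x_1,z_0)$ can be as small as $\tfrac12\d_0(x_0,z_0)+2c<\d_0(x_0,z_0)$. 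The correct choice, with $L=\DD_\infty$ and $\ol\m$ an optimal coupling for $\DD_\infty$, is
\[
\ol\d(x_0,x_1)=\inf\Big\{\d_0(x_0,y_0)+L/2+\d_1(y_1,x_1)\spec (y_0,y_1)\in\supp(\ol\m)\Big\},
\]
i.e.\ \emph{full} distances plus the additive constant $L/2$. The triangle inequality through cross terms then follows precisely from the distortion bound $|\d_0(y_0,w_0)-\d_1(y_1,w_1)|\le L$ valid on $\supp(\ol\m)$ (this gives $\d_1(y_1,x_1)+\d_1(x_1,w_1)\ge \d_1(y_1,w_1)\ge\d_0(y_0,w_0)-L$, which exactly cancels the two copies of $L/2$), and taking $(y_0,y_1)=(x_0,x_1)$ shows $\ol\d\le L/2$ on $\supp(\ol\m)$. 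This is the one place in the proposition where the geometry of couplings of metrics is really used, so it has to be written out rather than deferred.

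A smaller slip occurs in the $L^p$--$L^0$ bound of (iii): splitting $\int\xi^p$ at the level $1$ does not give $(1+L^p)\DD_0$, because $\int_{\{\xi\le1\}}\xi^p$ is not controlled by $\DD_0$. Split instead at $\epsilon$ with $\ol\m^2(\xi>\epsilon)\le\epsilon\le1$, so that $\int\xi^p\le\epsilon^p+L^p\epsilon\le(1+L^p)\epsilon$; letting $\epsilon\downarrow\DD_0$ gives the claim (and note $\DD_0\le1$ always, so your case distinction is unnecessary). Everything else --- (i) for $p\in\{0\}\cup[1,\infty)$, all of (ii), and the left-hand inequalities in (iii) --- matches the paper's proof.
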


\begin{proof}
(\ref{lptransp1}) \
Let mm-spaces $\X_0$ and $\X_1$ be given. Without restriction, assume that the respective measures have full support and put $X=X_0\times X_1$.
If $\ol\d$ is a coupling of $\d_0$ and $\d_1$ then the function
$\dis: (x_0,x_1,y_0,y_1)\mapsto |\d_0(x_0,y_0)-\d_1(x_1,y_1)|$ defined on $X\times X$ satisfies
$ \dis(x,y)\le \ol\d(x)+\ol\d(y)$ and thus for each $\epsilon>0$
\[\{(x,y)\spec \dis>\epsilon\}\subset
\left(\{x\spec \ol\d(x)>\epsilon/2\}\times X \right)\cup \left(X \times\{y\spec \ol\d(y)>\epsilon/2\}\right).\]
This, in particular, implies for any $\ol{\m}\in \prob(X)$
\[\ol\m^2( \dis>\epsilon)\le 2 \ol\m(\ol\d>\epsilon/2).\]
If we now assume in the case $p=0$ that $\D_0(\X_0,\X_1)<\epsilon/2$ then the right hand side of the previous inequality will be less than $\epsilon$ which in turn proves that $\DD_0(\X_0,\X_1)<\epsilon$. This proves the claim for $p=0$.

For  $p\in[1,\infty)$, choosing the pair $\big(\ol\m,\ol\d\big)$ of couplings optimal for $\D_p$ , the claim follows from
	\begin{equation*}
	\begin{split}
		\DD_p(\X_0,\X_1)\leq {} & \bigg(\int_X \int_X
		\left| \d_0(x_0,y_0)-\d_1(x_1,y_1) \right|^p  d\ol{\m}(x_0,x_1) d\ol{\m}(y_0,y_1)\bigg)^{1/p} \\
		\leq {} & \bigg(\int_X \int_X
		\left| \ol{\d}(x_0,x_1)+\ol{\d}(y_0,y_1) \right|^p  d\ol{\m}(x_0,x_1) d\ol{\m}(y_0,y_1)\bigg)^{1/p}\\
		\leq {} & 2 \bigg(\int_X \ol{\d}(x_0,x_1)^p d\ol{\m}(x_0,x_1)\bigg)^{1/p}=\ 2\D_p(\X_0,\X_1).
	\end{split}
	\end{equation*}
Passing to the limit $p\nearrow\infty$ yields the upper estimate in the case $p=\infty$.

For the lower estimate, assume that $\DD_\infty(\X_0,\X_1)=L$ and that $\ol\m$ is an optimal coupling w.r.t.\ $\DD_\infty$.
Then $\dis(x,y)\leq L$ for $\ol\m$-a.e. $x,y\in X$. Continuity of $\dis$ implies that this holds for all $x,y\in\supp(\ol\m)$.
Therefore, a coupling $\ol\d$ of $\d_0$ and $\d_1$ can be defined by putting
\[\ol\d(x_0,x_1)=\inf\bigg\{ \d_0(x_0,y_0)+L/2+\d_1(y_1,x_1)\spec (y_0,y_1)\in\supp(\ol\m)\bigg\}\]
for arbitrary $x_0\in X_0$ and $x_1\in X_1$.
For this coupling, obviously $\ol\d(x_0,x_1)\le L/2$ for all $(x_0,x_1)\in\supp(\ol\m)$.
Thus $\D_\infty\le L/2$.

(\ref{lptransp2}) \ Simple applications of Jensen's inequality yield for each coupling as above and for all $1\le p\le q\le\infty$
 \begin{equation*}
	\begin{split}
		& \left(\int_X \int_X
		\left| \d_0(x_0,y_0)-\d_1(x_1,y_1) \right|^p  d\ol{\m}(x_0,x_1) d\ol{\m}(y_0,y_1)\right)^{1/p} \\
&\le \left(\int_X \int_X
		\left| \d_0(x_0,y_0)-\d_1(x_1,y_1) \right|^q  d\ol{\m}(x_0,x_1) d\ol{\m}(y_0,y_1)\right)^{1/q} \\
\end{split}
	\end{equation*}
as well as
\begin{equation*}
\left(\int_X \ol{\d}(x_0,x_1)^p d\ol{\m}(x_0,x_1)\right)^{1/p}\le
\left(\int_X \ol{\d}(x_0,x_1)^q d\ol{\m}(x_0,x_1)\right)^{1/q}.
\end{equation*}

For the $L^0$-$L^p$-estimates, recall that Markov's inequality states that
$\epsilon^{p}\cdot \PP(|\xi|>\epsilon)\le \int|\xi|^pd\PP$ for each  random variable $\xi$ and each $\epsilon>0$.
Thus,
$$\epsilon^{p+1}\le\int|\xi|^pd\PP$$
for all $\epsilon>0$ satisfying $\PP(|\xi|>\epsilon)>\epsilon$. Moreover, note that
$\inf\Big\{\epsilon>0\spec \PP(|\xi|>\epsilon)\le\epsilon\Big\}=\sup\Big\{\epsilon>0\spec \PP(|\xi|>\epsilon)>\epsilon\Big\}$, where we define $\sup \emptyset:= 0$.
Applying this to $\xi=\dis(.)$  and to $\xi=\ol\d$, resp., yields the stated $L^0$-$L^p$-estimates.

(\ref{lptransp3})\ To prove the $L^q$-$L^p$-estimate, let $\ol{\m}$ be an optimal coupling for $\DD_p$. Then,
	\begin{equation*}
		\begin{split}
			\DD_{q}(\X_0,\X_1)^q & \leq  \int_X \int_X
										\left| \d_0(x_0,y_0)-\d_1(x_1,y_1) \right|^q
										d\ol{\m}(x_0,x_1) d\ol{\m}(y_0,y_1)\\
									& \leq L^{q-p} \cdot  \int_X \int_X
											\left| \d_0(x_0,y_0)-\d_1(x_1,y_1) \right|^p
											d\ol{\m}(x_0,x_1) d\ol{\m}(y_0,y_1)= L^{q-p}\cdot \DD_p(\X_0,\X_1)^p,
		\end{split}	
	\end{equation*}
	since $| \d_0(x_0,y_0)-\d_1(x_1,y_1)|\leq L$ for all $x_0,y_0,x_1,y_1$ under consideration.
	Moreover, it also follows immediately that $\DD_{\infty}(\X_0,\X_1)\leq L$ and thus (according to (i)) that
	\[
		 \D_{\infty}(\X_0,\X_1)\leq \frac{L}{2}.
	\]
	This finally proves
	\begin{equation*}
	\begin{split}
		\D_{q}(\X_0,\X_1)^q & \leq
		 \int_X \ol{\d}^q(x_0,x_1) d\ol{\m}(x_0,x_1) \leq \Big(\frac{L}{2}\Big)^{q-p} \cdot  \int_X \ol{\d}^p(x_0,x_1)
		 d\ol{\m}(x_0,x_1)= \Big(\frac{L}{2} \Big)^{q-p} \cdot \D_p(\X_0,\X_1)^p,
	\end{split}
	\end{equation*}
	where $\ol{\m}$ is now an optimal coupling w.r.t.\ $\D_p$.
For the $L^p$-$L^0$-estimate, recall the obvious estimate
\[ \int\xi^pd\PP=\int_{\{\xi>\epsilon\}} \xi^pd\PP+\int_{\{\xi\leq\epsilon\}}\xi^pd\PP
\le \epsilon L^p+\epsilon^p\le \epsilon (L^p+1)\]
provided $0\le\xi\le L$ and $\PP(\xi>\epsilon)\le\epsilon\le1$.
Applying this to $\xi=\dis(.)$  and to $\xi=\ol\d$, resp., -- in the latter case with $L/2$ in the place of $L$ --  yields the asserted $L^p$-$L^0$-estimates.
\end{proof}

\subsection{$L^0$-Distortion Distance vs. $L^0$-Transportation Distance and Gromov's Box Distance}

Our next goal is to analyze the topologies induced by $\DD_0$ and $\D_0$, resp. For this purpose, define the
\emph{modulus of mass distribution} as a function on $\XX_0\times\R_+$ by
\[\vartheta(\X,r)=\inf\bigg\{\epsilon>0\spec \m\Big(\Big\{x\in X: \m(B_\epsilon(x))\le r\Big\}\Big)\le\epsilon\bigg\} \]
and
put $\Theta(\X,r)=24\vartheta(\X,r^{1/4})+12r^{1/4}$.

\begin{lemma}[\cite{gpw}, Prop. 10.1, Lemma 10.3]
\begin{enumerate}
\item For each $\X_0\in\XX_0$,
\[\lim_{r\to0}\Theta(\X_0,r)=0.\]
\item
For all $\X_0,\X_1\in\XX_0$,
\[\D_0\Big(\X_0,\X_1\Big)\le \Theta\bigg(\X_0,\ \DD_0\Big(\X_0,\X_1\Big)\bigg).\]
\end{enumerate}
\end{lemma}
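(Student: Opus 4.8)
I would split the argument along the two assertions.

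\emph{For (i)}, the plan is to reduce everything to $\lim_{r\to0}\vartheta(\X_0,r)=0$, after which $\Theta(\X_0,r)=24\vartheta(\X_0,r^{1/4})+12r^{1/4}\to0$ follows at once. To prove that limit, I would fix $\epsilon>0$ and a full-support representative $(X,\d,\m)$ of $\X_0$, note that $x\mapsto\m(B_\epsilon(x))$ is Borel (it is lower semicontinuous), and observe that the sets $A_r:=\{x\in X:\m(B_\epsilon(x))\le r\}$ decrease, as $r\downarrow0$, to $\{x\in X:\m(B_\epsilon(x))=0\}=\emptyset$ (the latter because $\supp\m=X$). Continuity from above of the finite measure $\m$ then gives $\m(A_r)\to0$, hence $\m(A_r)\le\epsilon$, and therefore $\vartheta(\X_0,r)\le\epsilon$, for all small $r$; letting $\epsilon\downarrow0$ finishes (i).

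\emph{For (ii)}, I would first dispose of the easy range: since any probability measure charges a set by at most $1$ we have $\D_0\le1$ always, while $\Theta(\X_0,\epsilon)\ge12\epsilon^{1/4}>1$ once $\epsilon:=\DD_0(\X_0,\X_1)>1/16$, and $\epsilon=0$ forces $\X_0=\X_1$ (Lemma~\ref{iso-null}) and makes both sides vanish; so assume $0<\epsilon\le1/16$. An optimal coupling for $\DD_0$ exists, by weak compactness of $\Cpl(\m_0,\m_1)$ together with lower semicontinuity of $\ol\m\mapsto(\ol\m\otimes\ol\m)(\{\dis>s\})$ for the open sets $\{\dis>s\}$; fix $\ol\m$ with $(\ol\m\otimes\ol\m)(\{\dis>\epsilon\})\le\epsilon$, where $\dis(x_0,x_1,y_0,y_1):=|\d_0(x_0,y_0)-\d_1(x_1,y_1)|$, and pass to full-support representatives. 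The plan is to extract from $\ol\m$ a large Borel set $R\subset X_0\times X_1$ carrying a \emph{pointwise} distortion bound, and then feed $R$ into the metric-coupling construction used in the proof of Proposition~\ref{lp-d-dd}(\ref{lptransp1}).

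\emph{Building $R$ and the coupling $\ol\d$.} By Markov's inequality the set $N:=\{p:\ol\m(\{\dis(p,\cdot)>\epsilon\})>\sqrt\epsilon\}$ has $\ol\m(N)\le\sqrt\epsilon$. Put $r:=\epsilon^{1/4}$ and $\rho:=\vartheta(\X_0,r)+r$, so $M:=\{x_0:\m_0(B_\rho(x_0))\le r\}$ satisfies $\m_0(M)\le\rho$ by definition of $\vartheta$. For $p=(x_0,x_1)\notin N$ with $x_0\notin M$, thickness transfers to the second coordinate along $D_p:=\{\dis(p,\cdot)\le\epsilon\}$: if $\d_0(x_0,z_0)<\rho$ and $(z_0,z_1)\in D_p$ then $\d_1(x_1,z_1)<\rho+\epsilon$, so the ``cluster'' $C_p:=\{(z_0,z_1):\d_0(x_0,z_0)<\rho,\ \d_1(x_1,z_1)<\rho+\epsilon\}$ contains $(B_\rho(x_0)\times X_1)\cap D_p$ and hence has $\ol\m(C_p)\ge\m_0(B_\rho(x_0))-\ol\m(D_p^c)>r-\sqrt\epsilon$. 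Set $R:=N^c\cap(M^c\times X_1)$, so $\ol\m(R^c)\le\sqrt\epsilon+\rho$ (and if $R=\emptyset$ then $\sqrt\epsilon+\rho\ge1$, whence $\Theta(\X_0,\epsilon)\ge12(1-\sqrt\epsilon)>1\ge\D_0$ and we are done). For $p,q\in R$ we have $(\ol\m\otimes\ol\m)(C_p\times C_q)>(r-\sqrt\epsilon)^2\ge\epsilon$ (using $r=\epsilon^{1/4}\le1/2$), so $C_p\times C_q$ meets $\{\dis\le\epsilon\}$; moving $p$ and $q$ inside their clusters changes $\d_0$ by $<2\rho$ and $\d_1$ by $<2(\rho+\epsilon)$, which yields the pointwise bound $\dis(p,q)<4\rho+3\epsilon=:\eta$ for all $p,q\in R$ and, by continuity, for all $p,q\in\ol R$. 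Now define $\ol\d$ on $X_0\sqcup X_1$ by $\ol\d=\d_i$ on each block $X_i\times X_i$ and $\ol\d(x_0,x_1):=\inf\{\d_0(x_0,a_0)+\eta/2+\d_1(a_1,x_1):(a_0,a_1)\in\ol R\}$. As in Proposition~\ref{lp-d-dd}(\ref{lptransp1}), the pointwise bound $\eta$ is exactly what makes $\ol\d$ satisfy the triangle inequality and prevents shortcuts through the other space, so $\ol\d\in\Cpl(\d_0,\d_1)$; and $\ol\d\le\eta/2$ on $R$, so $\ol\m(\{\ol\d>\eta/2\})\le\ol\m(R^c)$. Hence $\D_0(\X_0,\X_1)\le\max(\eta/2,\ \ol\m(R^c))\le\max(2\rho+3\epsilon/2,\ \sqrt\epsilon+\rho)$, and since $\rho=\vartheta(\X_0,\epsilon^{1/4})+\epsilon^{1/4}$ and $\epsilon\le1$, the right-hand side is bounded (with room to spare) by $24\vartheta(\X_0,\epsilon^{1/4})+12\epsilon^{1/4}=\Theta(\X_0,\epsilon)$. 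This is (ii).

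\emph{Expected main obstacle.} Part (i) and the soft parts of (ii) are routine. The one genuinely delicate step is upgrading the $\ol\m\otimes\ol\m$-almost-everywhere distortion bound to the pointwise bound on $R$: this is where the modulus of mass distribution $\vartheta(\X_0,\cdot)$ is indispensable, first to guarantee that most points of $X_0$ lie in balls of not-too-small mass, and then---combined with the distortion estimate---to propagate that thickness to $X_1$ so that the clusters $C_p$ are heavy enough to overlap pairwise. Matching the precise constants $24$ and $12$ afterwards is mere bookkeeping (cf.~\cite{gpw}).
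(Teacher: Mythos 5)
Your argument is correct. Note that the paper itself offers no proof of this lemma --- it is quoted from \cite{gpw} (Prop.~10.1, Lemma~10.3) --- so what you have written is a self-contained reconstruction of the argument from that reference, and it follows essentially the same strategy: use the modulus of mass distribution to discard a small bad set, upgrade the $\ol\m\otimes\ol\m$-a.e.\ distortion bound to a pointwise bound on a large set $R$ via the pairwise overlap of the clusters $C_p$, and then run the ``$\inf\{\d_0(x_0,a_0)+\eta/2+\d_1(a_1,x_1)\}$'' construction from the proof of Proposition~\ref{lp-d-dd}(\ref{lptransp1}) with $\ol R$ in place of $\supp(\ol\m)$. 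The details check out: the restriction $\epsilon\le 1/16$ is exactly what makes $(r-\sqrt\epsilon)^2\ge\epsilon$ with $r=\epsilon^{1/4}$, so the clusters must meet $\{\dis\le\epsilon\}$; passing to $\ol R$ only weakens the pointwise bound to $\dis\le\eta$, which is all the triangle-inequality verification for $\ol\d$ needs; and your final bound $\max(2\rho+3\epsilon/2,\ \sqrt\epsilon+\rho)$ sits comfortably below $24\vartheta(\X_0,\epsilon^{1/4})+12\epsilon^{1/4}$. Part (i) via continuity from above of $\m$ on the decreasing sets $A_r$, using full support, is also fine.
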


Recall the corresponding lower bound
$\D_0\Big(\X_0,\X_1\Big)\ge \frac12 \DD_0\Big(\X_0,\X_1\Big)$ from Proposition \ref{lp-d-dd}.

\begin{corollary}
For every sequence $(\X_n)_{n\in\N}$ in $\XX_0$ and every $\X_0\in\XX_0$,
\[\D_0(\X_n,\X_0)\to0\ \mbox{ as }n\to\infty\qquad\Longleftrightarrow\qquad\DD_0(\X_n,\X_0)\to0\ \mbox{ as }n\to\infty.\]
In other words, $\D_0$ and $\DD_0$ induce the same topology on $\XX_0$, called \emph{Gromov-weak topology}.
\end{corollary}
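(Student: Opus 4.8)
The plan is to read off the equivalence from the two-sided comparison between $\D_0$ and $\DD_0$ that is now at our disposal. For the implication $\DD_0(\X_n,\X_0)\to0\Rightarrow\D_0(\X_n,\X_0)\to0$, I would apply the preceding lemma of \cite{gpw} with its second space-argument equal to $\X_n$, obtaining $\D_0(\X_n,\X_0)\le\Theta\big(\X_0,\DD_0(\X_n,\X_0)\big)$ for every $n$. Since $\DD_0(\X_n,\X_0)\to0$ and, by part (i) of that lemma, $\lim_{r\to0}\Theta(\X_0,r)=0$, the right-hand side tends to $0$, whence $\D_0(\X_n,\X_0)\to0$. The only point that deserves a word is that $\Theta$ is evaluated at the vanishing sequence $r_n:=\DD_0(\X_n,\X_0)$; this is legitimate precisely because $\lim_{r\to0}\Theta(\X_0,r)=0$ holds as a genuine limit of a function of $r\ge0$, and because we always keep the \emph{fixed} space $\X_0$ in the first slot of $\Theta$, so the asymmetry of $\Theta(\cdot,\cdot)$ in its two space-arguments causes no difficulty.

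For the converse implication I would simply invoke Proposition~\ref{lp-d-dd}\,(\ref{lptransp1}), which gives $\DD_0\le2\D_0$ on all of $\XX_0$: if $\D_0(\X_n,\X_0)\to0$ then $\DD_0(\X_n,\X_0)\le2\,\D_0(\X_n,\X_0)\to0$. (Equivalently, one may quote the explicit lower bound $\D_0(\X_0,\X_1)\ge\tfrac12\DD_0(\X_0,\X_1)$ recalled just before the corollary.) Together with the previous paragraph this establishes that a sequence in $\XX_0$ converges to $\X_0$ for $\D_0$ if and only if it converges to $\X_0$ for $\DD_0$.

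To pass from this statement about convergent sequences to the assertion that $\D_0$ and $\DD_0$ generate the \emph{same} topology, I would use that both are pseudometrics, hence both topologies are first countable (indeed metrizable up to the usual identification): a set is $\D_0$-closed iff it is $\D_0$-sequentially closed iff — by the equivalence of sequential convergence just shown — it is $\DD_0$-sequentially closed iff it is $\DD_0$-closed. Thus the two topologies have the same closed sets and therefore coincide; we call this common topology on $\XX_0$ the Gromov-weak topology. I do not expect any real obstacle here: the entire mathematical substance is contained in the cited lemma of \cite{gpw} and in the elementary estimate $\DD_0\le2\D_0$, and the remaining argument is the routine bookkeeping just described.
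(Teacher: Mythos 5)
Your proposal is correct and is exactly the argument the paper intends: the forward direction follows from the cited lemma of \cite{gpw} with the fixed space $\X_0$ in the first slot of $\Theta$ together with $\lim_{r\to0}\Theta(\X_0,r)=0$, and the converse from the bound $\DD_0\le 2\D_0$ of Proposition \ref{lp-d-dd}. The paper states the corollary without proof precisely because this is the immediate deduction, and your remarks on the asymmetry of $\Theta$ and on passing from sequential convergence to equality of topologies are accurate.
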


Note that the metric $\D_0$ is complete (\cite{Gro}) whereas $\DD_0$ is non-complete (Example~\ref{ex-incompl}).
Thus, in particular, the two metrics are neither Lipschitz nor H\"older equivalent.

These metrics are closely related to Gromov's \emph{box metric} $\underline\square_\lambda$ defined by
\begin{eqnarray*}
\underline\square_\lambda(\X_0,\X_1)&=&
\inf\bigg\{\epsilon>0\spec \exists \psi_0\in\Par(\m_0), \, \psi_1\in\Par(\m_1)\spec\\
 &&\qquad\qquad\forall s,t\in[0,1-\lambda\epsilon)\spec
\Big|\d_0(\psi_0(s),\psi_0(t))-\d_1(\psi_1(s),\psi_1(t))\Big|\le\epsilon\bigg\}
\end{eqnarray*}
for any $\lambda>0$.
Obviously, $\DD_0$ admits a quite similar representation in terms of parametrizations:
\begin{eqnarray*}
\DD_0(\X_0,\X_1)&=&
\inf\bigg\{\epsilon>0\spec \exists \psi_0\in\Par(\m_0), \, \psi_1\in\Par(\m_1)\spec\\
&&\qquad\qquad\Leb^2\Big(\Big\{(s,t\in[0,1]^2\spec
\Big| \d_0(\psi_0(s),\psi_0(t))-\d_1(\psi_1(s),\psi_1(t)) \Big|
\le\epsilon\Big\}\Big)\ge1-\epsilon\bigg\},
\end{eqnarray*}
the main difference between both formulas being that the `exceptional set' in the first case is the complement of a square (of side length close to 1) within the unit square
whereas in the second case it is any subset of the unit square of small $\Leb^2$-measure.
\begin{lemma}[\cite{Loe}]
\quad $\D_0=\underline\square_{1/2}$.
 \end{lemma}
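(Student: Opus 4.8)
The plan is to prove the two inequalities $\DD_0 \le \underline\square_{1/2}$ and $\underline\square_{1/2}\le \DD_0$ separately, using the parametrization representation of $\DD_0$ displayed just above the statement together with the definition of $\underline\square_\lambda$. In both directions the key observation is that a parametrization pair $(\psi_0,\psi_1)$ achieving one quantity can be used — possibly after a measure-preserving rearrangement of the unit interval — to bound the other.

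For the inequality $\DD_0\le\underline\square_{1/2}$, suppose $\underline\square_{1/2}(\X_0,\X_1)<\epsilon$, witnessed by parametrizations $\psi_0\in\Par(\m_0)$, $\psi_1\in\Par(\m_1)$ with $|\d_0(\psi_0(s),\psi_0(t))-\d_1(\psi_1(s),\psi_1(t))|\le\epsilon$ for all $s,t\in[0,1-\epsilon/2)$. Then the ``bad'' set in the square $[0,1]^2$ is contained in $\big([1-\epsilon/2,1]\times[0,1]\big)\cup\big([0,1]\times[1-\epsilon/2,1]\big)$, which has $\Leb^2$-measure at most $\epsilon/2+\epsilon/2=\epsilon$; hence the complement has measure at least $1-\epsilon$, so the same pair of parametrizations witnesses $\DD_0(\X_0,\X_1)\le\epsilon$. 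Taking the infimum gives $\DD_0\le\underline\square_{1/2}$; this direction is essentially immediate from comparing the two displayed formulas.

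For the reverse inequality $\underline\square_{1/2}\le\DD_0$ one must work harder, because the exceptional set for $\DD_0$ is an arbitrary small-measure subset of the square rather than an ``L-shaped corner.'' Suppose $\DD_0(\X_0,\X_1)<\epsilon$, witnessed by $\psi_0,\psi_1$ with a bad set $B\subset[0,1]^2$ of $\Leb^2$-measure at most $\epsilon$ off which the distortion is $\le\epsilon$. The idea is to rearrange the parametrizations so that the bad set gets pushed into a corner of side length $\le\epsilon/2$. By Fubini, the set $G=\{s\in[0,1]: \Leb^1(\{t: (s,t)\in B\})\le\epsilon/2 \text{ and } \Leb^1(\{t:(t,s)\in B\})\le\epsilon/2\}$ has measure at least $1-2\epsilon/(\epsilon/2)\cdot\tfrac12$... — more carefully, the set of $s$ for which the $s$-section of $B$ has measure exceeding $\epsilon/2$ has measure at most $\epsilon/(\epsilon/2)=2\epsilon$, which is too weak. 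So one instead argues: there is a set $S\subset[0,1]$ with $\Leb^1(S)\ge 1-\epsilon/2$ such that for every $s,t\in S$ one has $(s,t)\notin B$ — this requires a combinatorial/measure-theoretic selection. Then compose both $\psi_0$ and $\psi_1$ with a single measure-preserving map $\sigma$ of $I$ sending $[0,1-\epsilon/2)$ into $S$; the resulting new parametrizations $\psi_i\circ\sigma$ have distortion $\le\epsilon$ on all of $[0,1-\epsilon/2)^2$, witnessing $\underline\square_{1/2}(\X_0,\X_1)\le\epsilon$.

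The main obstacle is precisely this last rearrangement step: producing from a small-measure ``bad set'' $B$ a large subset $S$ of the line such that $S\times S$ avoids $B$, and then realizing the inclusion $[0,1-\epsilon/2)\hookrightarrow S$ by a measure-preserving transformation of $I$ that is applied simultaneously to both parametrizations (so that matched points stay matched). The existence of such an $S$ with $\Leb^1(S)\ge 1-c\epsilon$ for a universal constant $c$ is plausible but needs a genuine argument — one natural route is a probabilistic/averaging argument showing that a random interval of length $1-c\epsilon$, after a measure-preserving shuffle, lands in a product-good region with positive probability; alternatively one cites the corresponding lemma from \cite{Loe}. Since the statement is attributed to \cite{Loe}, I would expect the cleanest writeup to invoke whatever rearrangement lemma is proved there; but the conceptual skeleton — ``$\le$'' is trivial from the displayed formulas, ``$\ge$'' needs a corner-concentration of the bad set via a simultaneous measure-preserving reparametrization — is the content of the proof. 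One should also check the boundary/infimum bookkeeping (strict vs.\ non-strict inequalities, the convention $\sup\emptyset=0$) so that the two infima genuinely coincide rather than differing by a limiting value, but that is routine.
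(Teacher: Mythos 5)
Your two--inequality structure is right, and the direction $\DD_0\le\underline\square_{1/2}$ is complete: a good box $[0,1-\epsilon/2)^2$ forces the bad set into an L-shaped region of measure at most $\epsilon-\epsilon^2/4\le\epsilon$, so the $\underline\square_{1/2}$-admissible set of $\epsilon$'s is contained in the $\DD_0$-admissible one. (The paper gives no proof of its own here; it only cites \cite{Loe}, so the converse is the whole content.) The problem is that your reduction of $\underline\square_{1/2}\le\DD_0$ to the statement ``$\Leb^2(B)\le\epsilon$ implies there is $S\subset I$ with $\Leb^1(S)\ge 1-\epsilon/2$ and $(S\times S)\cap B=\emptyset$'' is not merely unproved -- it is false. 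Take $B=\{(s,t)\spec 0<|s-t|<\epsilon/3\}$: a symmetric set of measure less than $\epsilon$ avoiding the diagonal, yet any $S$ with $(S\times S)\cap B=\emptyset$ is $\epsilon/3$-separated, hence finite, hence null. Nor is this an artifact of allowing arbitrary $B$: such near-diagonal bad sets arise from witnessing couplings between honest mm-spaces. For instance, couple $\X_0=(I,|\cdot|,\Leb^1)$ with the finite metric space $\big(\{1,\ldots,N\},\d_1,\frac1N\sum\delta_i\big)$, $\d_1(i,j)=\max(|i-j|/N,\delta)$ for $i\ne j$, by matching $J_i=((i-1)/N,i/N]$ to $i$. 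For $\epsilon$ slightly above $2\delta/3$ the distortion-bad set is essentially the strip $\{1/N\lesssim|s-t|<\delta-\epsilon\}$, of measure $\le\epsilon$, and any product-good $S$ has measure $O\big(\tfrac{1/N}{\delta-\epsilon}\big)\to0$ as $N\to\infty$. So no rearrangement $\sigma$ of $I$ applied simultaneously to both parametrizations can push the bad set into a corner.

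The moral is that the hard inequality is not a purely measure-theoretic statement about the bad set $B$ of a given coupling: in the example above the parametrizations witnessing $\underline\square_{1/2}\approx\DD_0$ come from a genuinely different, ``clustered'' matching in which nearby points of $\X_0$ are collapsed onto common points of $\X_1$, not from restricting and reshuffling the original ones. Any correct proof must therefore construct a new coupling (or otherwise exploit the triangle inequality for $\d_0,\d_1$); that construction is exactly the content of L\"ohr's theorem and is the piece missing from your proposal. As written, you have proved only $\DD_0\le\underline\square_{1/2}$.
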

Together with the trivial estimate $\frac12\underline\square_1\le\underline\square_{1/2}\le\underline\square_1$
this implies
\[\D_0\le \underline\square_{1}\le 2\D_0.\]

\begin{corollary}
For every sequence $(\X_n)_{n\in\N}$ in $\XX_0$ with uniformly bounded diameters, for every $\X_\infty\in \XX_0$ and for all $\lambda>0$ and $p\in[1,\infty)$, the following are equivalent:
\begin{enumerate}
\item
$\X_n\to\X_\infty$ w.r.t.
$\underline\square_\lambda$;
\item
$\X_n\to\X_\infty$ w.r.t.
$\DD_0$;
\item
$\X_n\to\X_\infty$ w.r.t.
$\D_0$;
\item
$\X_n\to\X_\infty$ w.r.t.
$\DD_p$;
\item
$\X_n\to\X_\infty$ w.r.t.
$\D_p$.
\end{enumerate}
If $\X_n=[X_n,\d_n,\m_n]$ with compact spaces $X_n, n\in\N\cup\{\infty\}$, each of these properties will follow from
\begin{enumerate}
\item[(vi)] $(X_n,\d_n,\m_n)\to (X_\infty,\d_\infty,\m_\infty)$ in the  \emph{measured Gromov Hausdorff} sense (`mGH').
\end{enumerate}
Conversely, any of the properties (i)-(v) will imply (vi) provided the spaces $(X_n,\d_n,\m_n)$ have full support and satisfy uniform bounds for doubling constants and diameters.
    \end{corollary}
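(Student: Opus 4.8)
The plan is to organise the proof into three blocks: the mutual equivalence of (i)--(v), which amounts to assembling estimates already established above; the implication from measured Gromov--Hausdorff convergence to (i)--(v); and the converse implication under the additional hypotheses of full support, uniform doubling and uniform diameter. Only the last block carries genuine content; the first two are essentially bookkeeping, and the main obstacle will lie in identifying the limit produced by a precompactness argument.

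For the equivalences (i)--(v): under the standing uniform diameter bound, the Remark following the $L^p$-versus-$L^0$ characterisation of $\DD_p$-convergence already asserts that $\DD_p(\X_n,\X_\infty)\to0$ if and only if $\DD_0(\X_n,\X_\infty)\to0$, which is (ii)$\Leftrightarrow$(iv). The Corollary stating that $\D_0$ and $\DD_0$ induce the same (Gromov-weak) topology gives (ii)$\Leftrightarrow$(iii). Proposition~\ref{lp-d-dd}, parts (ii) and (iii) restricted to $\{\diam\le L\}$, supplies $\D_0^{1+1/p}\le\D_p$ and $\D_p^p\le(1+(L/2)^p)\,\D_0$, hence (iii)$\Leftrightarrow$(v). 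Finally the Lemma $\DD_0=\underline\square_{1/2}$, together with the elementary scaling $\underline\square_{\lambda'}\le\underline\square_\lambda\le(\lambda'/\lambda)\,\underline\square_{\lambda'}$ valid for $\lambda\le\lambda'$ (obtained by relabelling $\varepsilon$ in the definition of the box metric, which shows that any two box metrics with positive parameters are bi-Lipschitz equivalent), gives (i)$\Leftrightarrow$(ii) for every $\lambda>0$.

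For the implication from mGH convergence to (i)--(v), I would argue directly from the definition of $\D_0$. Measured Gromov--Hausdorff convergence provides metric spaces $Z_n$ and isometric embeddings $\iota_n: X_n\hookrightarrow Z_n$, $\iota_\infty: X_\infty\hookrightarrow Z_n$ such that the Hausdorff distance of the images and the Prohorov distance of the pushed-forward measures $(\iota_n)_{\push}\m_n$ and $(\iota_\infty)_{\push}\m_\infty$ both tend to $0$. The restriction of the metric of $Z_n$ to $X_n\sqcup X_\infty$ is then a coupling of $\d_n$ and $\d_\infty$, and Strassen's theorem converts the Prohorov bound into a coupling $\ol\m_n\in\Cpl(\m_n,\m_\infty)$ realising the defining inequality of $\D_0$ with a parameter $\varepsilon_n\to0$; hence $\D_0(\X_n,\X_\infty)\to0$, and (i)--(v) follow from the equivalences above, convergence with respect to $\D_p$ using in addition the uniform diameter bound through Proposition~\ref{lp-d-dd}(iii).

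For the converse, assume one --- hence every --- of (i)--(v), together with full support, a uniform doubling bound and a uniform diameter bound. Then the metric spaces $(X_n,\d_n)$ are uniformly totally bounded and the measures $\m_n$ are tight, so by Gromov's precompactness theorem \cite{Gro} the family $\{(X_n,\d_n,\m_n)\}$ is precompact in the mGH topology. Given any subsequence, extract a sub-subsequence mGH-converging to a compact mm-space $\mathcal Z$, which we may take with full support by passing to $\supp(\m_{\mathcal Z})$ (a doubling bound passes to closed subsets and survives GH-limits). By the preceding block, $\DD_0$ along that sub-subsequence converges simultaneously to $\mathcal Z$ and to $\X_\infty$, whence $\mathcal Z=\X_\infty$ since $\DD_0$ is a genuine metric on $\XX_0$ (Proposition~\ref{d-is-metric}). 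Thus every subsequence of $(X_n,\d_n,\m_n)$ has a further subsequence mGH-converging to $\X_\infty$, and since mGH convergence is metrisable on the precompact family at hand, the whole sequence mGH-converges to $\X_\infty$. The main obstacle is precisely this step: one must check that the limit furnished by precompactness, restricted to the support of its measure, is genuinely an element of $\XX_0$ still carrying a doubling bound --- so that the previous block applies to it and identifies it with $\X_\infty$ by uniqueness of $\DD_0$-limits --- and that the subsequential description of mGH convergence really does upgrade to convergence of the full sequence.
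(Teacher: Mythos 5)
Your proof is correct, and for the equivalence of (i)--(v) it does exactly what the paper does: the paper's proof consists of the single remark that ``the rest is obvious by the previous discussions,'' and your assembly of the Remark after the $\DD_p$-vs-$\DD_0$ proposition, the Corollary on the Gromov-weak topology, Proposition~\ref{lp-d-dd}(ii)--(iii), and the Lemma $\DD_0=\underline\square_{1/2}$ is precisely the intended bookkeeping. (Minor aside: your monotonicity $\underline\square_{\lambda'}\le\underline\square_\lambda$ for $\lambda\le\lambda'$ is the correct direction --- enlarging $\lambda$ shrinks the interval $[0,1-\lambda\epsilon)$ and so weakens the condition --- whereas the paper's displayed ``trivial estimate'' has the chain reversed; either way bi-Lipschitz equivalence of the box metrics holds and (i)$\Leftrightarrow$(ii) follows.) Where you genuinely diverge is the mGH block: the paper outsources the entire relation between $\D_p$- and mGH-convergence to \cite{St06}, Lemma~3.18, while you give a self-contained argument --- Strassen's theorem to turn the Prohorov bound in the ambient space into a $\D_0$-coupling for one direction, and Gromov precompactness plus uniqueness of $\DD_0$-limits plus the subsequence principle for the converse. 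This buys a proof that is readable without the external reference, at the cost of having to verify the two points you correctly flag: that the subsequential mGH limit, after passing to the support of its measure, is still a compact doubling mm-space (so the forward block applies to it), and that full support is what reconciles isomorphism classes with mGH limits (without it, as the paper itself warns, mGH convergence is not compatible with the equivalence relation). Both checks go through under the stated hypotheses, so the argument is complete.
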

\begin{proof} For the relation between $\D_p$- and  mGH-convergence we refer to \cite{St06}, Lemma 3.18. The rest is obvious by the previous discussions.
\end{proof}

\begin{remarks}\begin{itemize}
\item
The history of  mm-spaces essentially starts with
Gromov's monograph \cite{Gro}, more precisely, the famous Chapter $3\frac12$ therein.
He promoted very much the idea of focussing on properties which are invariant under isomorphisms.
He also introduced several distances on $\XX_0$, among others,  the box distance
$\underline\square_\lambda$. (Even before that, the topology of mGH-convergence on the space of mm-spaces was introduced by Fukaya \cite{Fuk87}. The concept of mGH-convergence, however, is not compatible with the equivalence relation of isomorphism classes.)
\item
The $L^p$-transportation distance $\D_p$ was introduced and discussed in detail (mainly restricted to the case $p=2$) by the author in
\cite{St06}.
\item
Both the $L^0$-transportation distance and the $L^0$-distortion distance $\DD_0$ were introduced by Greven, Pfaffelhuber and Winter \cite{gpw}. They called them \emph{Gromov-Prohorov metric} and \emph{Eurandom metric}, resp. Indeed, they derived an equivalent formulation for $\DD_0$ in the spirit of the usual definition of the Prohorov distance. %See also \cite{Evans-Winter-AnnProb2006}.
They also introduced the $L^1$-distortion distance $\DD_1$ (at least for truncated $\d$'s) and gave Example \ref{ex-incompl}
(with non-optimal constants).
The Gromov-Prohorov metric and its relation to  the  so-called Gromov-Hausdorff-Prohorov metric were discussed in
\cite{Vi09}.

\item
The space $\XX_0$ serves as an important model in image analysis and shape matching.
In a series of papers, Memoli
  introduced and analyzed various  distances
 (partly for finite, partly for compact mm-spaces)
 with emphasis on computational aspects and in view of applications to shape matching and object recognition.
In \cite{Me}, he presented an exhaustive survey on the distances $\DD_p$ and $\D_p$ (which he denoted by $2\mathcal D_p$ and $\mathcal S_p$, resp.), their mutual relations and applications in image analysis. Among others, he deduced a slightly restricted version of Proposition \ref{d-is-metric} (i.e. restricted to compact mm-spaces) as well as several estimates of Proposition \ref{lp-d-dd}
    (partly with non-optimal constants).
\item
In recent years, the concept of mm-spaces and related topological/metric issues on the space $\XX_0$ found surprising new applications in the study of random graphs
and their limits, e.g. the continuum random tree or the Brownian map,  see e.g. \cite{gpw}, \cite{adh}, \cite{Legall} and   \cite{Mie07}.
\end{itemize}
\end{remarks}

In none of the previous works, any geometric properties of the space $\XX_0$ itself have been derived. (The only exception might be \cite{St06} where geodesics had been characterized.) From our point of view, the emphasis of this paper is not on the `metric results' from the previous chapters but on the `geometric results' (concerning geodesics, curvature, quasi-Riemannian tangent structure etc.) of the subsequent chapters.

\section{Geodesics in $(\XX_p,\DD_p)$}

Recall that (as usual in metric geometry) a curve $(\X_t)_{t\in J}$
%or, more precisely, $\X_\bullet:J\rightarrow \XX_p$
-- where $J$ denotes some interval in $\R$ -- is called \emph{geodesic} if $\forall S,s,t,T\in J$ with $S<s<t<T$:
\[
	\DD_p(\X_s,\X_t)=\frac{t-s}{T-S}\DD_p(\X_S,\X_T).
\]
Thus, by definition, geodesics are always distance minimizing and have constant speed.
\begin{theorem}\label{thmoptcoupl} For each $p\in[1,\infty]$, $\big(\XX_p, \DD_p\big)$ is a geodesic space. More specifically, the following assertions hold:
	\begin{enumerate}
				\item\label{optcoupl2}
For  each pair of mm-spaces $\X_0, \X_1\in \XX_p$ and each optimal coupling
  $\ol{\m}$ of them (cf. Definition \ref{def-opt-cpl}), the family of metric
			measure spaces
			\[
				{\X}_t=[{X_0\times X_1},{\d}_t,\ol{\m}],\qquad t\in (0,1),
			\]
			with
			\[
				{\d}_t\left((x_0,x_1),(y_0,y_1)\right):= (1-t)\d_0(x_0,y_0)+t \d_1(x_1,y_1)
			\]
			defines a geodesic $({\X}_t)_{0\leq t \leq 1}$ in $\XX_p$ connecting $\X_0$ and $\X_1$.
		\item\label{optcoupl3}
If $p\in (1,\infty)$, then each geodesic $({\X}_t)_{0\leq t\leq 1}$ in $\XX_p$ is of the form as stated in
			(\ref{optcoupl2}).
			That is, for each geodesic $({\X}_t)_{0\leq t\leq 1}$ there exists an optimal coupling $\ol{\m}$ of the measures $\m_0, \m_1$, defined on the product space of $(X_0,\d_0,\m_0)$ and $(X_1,\d_1,\m_1)$, representatives of the endpoints, such that for each $t\in (0,1)$ a representative of the
			 isomorphism class
			$\X_t$ is given by $(X_0\times X_1,\d_t,\m)$ with $\d_t:=(1-t)\d_0+t\d_1$.
	\end{enumerate}
\end{theorem}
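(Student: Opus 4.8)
The plan is to verify (i) by a direct computation with the ``diagonal'' coupling, and to deduce (ii) from a gluing argument whose decisive ingredient is the strict convexity of $L^p$ for $p\in(1,\infty)$.

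\emph{Part (i).} For $t\in(0,1)$ the function $\d_t$ is a genuine metric on $X_0\times X_1$ (bi-Lipschitz to the product metric, hence complete and separable), and $\size_p(\X_t)\le(1-t)\size_p(\X_0)+t\,\size_p(\X_1)<\infty$ by Minkowski; at the endpoints the coupling $(\Id,\pi_i)_*\ol\m$ of $\ol\m$ with $\m_i$ has zero distortion, so $\X_t$ is isomorphic to $\X_i$ there by Lemma~\ref{iso-null}. For the geodesic property I would feed the diagonal coupling $(\Id,\Id)_*\ol\m$ of $\ol\m$ with itself into the definition of $\DD_p$: since $\d_s-\d_t=(t-s)(\pi_0^*\d_0-\pi_1^*\d_1)$ pointwise, this gives, for $0\le s\le t\le1$,
\[
\DD_p(\X_s,\X_t)\le(t-s)\left(\int\!\!\int\big|\d_0(x_0,y_0)-\d_1(x_1,y_1)\big|^p\,d\ol\m\,d\ol\m\right)^{1/p}=(t-s)\,\DD_p(\X_0,\X_1)
\]
(with the obvious reading for $p=\infty$), the last step because $\ol\m$ is optimal. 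Together with the triangle inequality of Lemma~\ref{dildis1} this forces equality for all $s<t$, which is exactly the geodesic property.

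\emph{Part (ii).} Let $(\X_t)$ be a geodesic in $\XX_p$ with $p\in(1,\infty)$, put $D=\DD_p(\X_0,\X_1)$, and assume $D>0$ (otherwise everything is trivial). Fix $0=t_0<t_1<\dots<t_k<t_{k+1}=1$, pick optimal couplings $\mu_i$ between (fixed representatives of) $\X_{t_{i-1}}$ and $\X_{t_i}$, and glue them along the intermediate marginals (Lemma~\ref{glue1}) to a measure $\mu$ on the product $Y=X_0\times X_{t_1}\times\dots\times X_{t_k}\times X_1$. On $(Y^2,\mu\otimes\mu)$ let $a_i$ denote the distance function of $\X_{t_i}$ read off the $X_{t_i}$-coordinates; then $\|a_i\|_p=\size_p(\X_{t_i})<\infty$, and since the consecutive marginal of $\mu$ is $\mu_i$ one has $\|a_{i-1}-a_i\|_p=\DD_p(\X_{t_{i-1}},\X_{t_i})=(t_i-t_{i-1})D$, while $\|a_0-a_{k+1}\|_p=\dis_p\big((\pi_0,\pi_{k+1})_*\mu\big)\ge D$. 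The $L^p$-triangle inequality makes the latter an equality, so $\ol\m:=(\pi_0,\pi_{k+1})_*\mu$ is an optimal coupling of $\m_0,\m_1$ and Minkowski's inequality is an equality for the sum $\sum_i(a_{i-1}-a_i)$. By strict convexity of $L^p$ the summands are non-negative multiples of one common function, and matching norms gives $a_{i-1}-a_i=(t_i-t_{i-1})(a_0-a_{k+1})$, hence $a_i=(1-t_i)a_0+t_i a_{k+1}$ $\mu\otimes\mu$-a.e. Pushing this identity forward along $Y\to X_0\times X_1$ and invoking Lemma~\ref{iso-null}, one obtains $\X_{t_i}=[X_0\times X_1,(1-t_i)\d_0+t_i\d_1,\ol\m]$ for all $i$ at once, with one common optimal coupling.

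It remains to pass from finitely many times to a single coupling good for all $t$. The previous step says the sets $C_t:=\{\ol\m\in\Opt(\m_0,\m_1):\X_t=[X_0\times X_1,(1-t)\d_0+t\d_1,\ol\m]\}$, $t\in(0,1)$, have the finite intersection property. Each $C_t$ is closed, because $\ol\m\mapsto[X_0\times X_1,(1-t)\d_0+t\d_1,\ol\m]$ is weakly continuous into $(\XX_p,\DD_p)$: i.i.d.\ sampling shows the matrix distributions converge (hence $\DD_0$-convergence, since these determine the Gromov-weak topology, cf.\ Proposition~\ref{reconstruct}), and $\size_p([X_0\times X_1,\d_t,\ol\m_j])^p=\int\!\int((1-t)\d_0+t\d_1)^p\,d\ol\m_j\,d\ol\m_j$ converges because $((1-t)\d_0+t\d_1)^p\le\d_0^p+\d_1^p$ is uniformly integrable against $\{\ol\m_j\otimes\ol\m_j\}$ — the tails $\int\!\int_{\{\d_i>L\}}\d_i^p\,d\ol\m_j\,d\ol\m_j=\int\!\int_{\{\d_i>L\}}\d_i^p\,d\m_i\,d\m_i$ depend only on the fixed marginals; now the characterization of $\DD_p$-convergence ($\DD_0$-convergence plus convergence of the $L^p$-size) applies. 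Since $\Opt(\m_0,\m_1)$ is compact (a closed sublevel set of the lower semicontinuous $\dis_p$ inside the compact $\Cpl(\m_0,\m_1)$ of Lemma~\ref{cpl-comp}), the finite intersection property yields $\ol\m\in\bigcap_{t\in(0,1)}C_t$, and for this $\ol\m$ one has $\X_t=[X_0\times X_1,(1-t)\d_0+t\d_1,\ol\m]$ for all $t\in[0,1]$ (the endpoints as in (i)). I expect this merging step to be the main obstacle: strict convexity of $L^p$ — which fails for $p\in\{1,\infty\}$, consistently with the statement — only delivers the linear interpolation of metrics along finite parameter sets, and assembling these into a single optimal coupling needs both the compactness of $\Opt(\m_0,\m_1)$ and the above continuity, which in turn rests on the uniform integrability supplied by the fixed marginals.
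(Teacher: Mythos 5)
Your proof is correct, and while part (i) coincides with the paper's argument (diagonal coupling plus the triangle inequality), part (ii) takes a genuinely different route to the same two conclusions. The paper glues optimal couplings along a dyadic partition and then invokes Clarkson's inequalities (Lemma \ref{unif p convex}), with a case split between $p\ge2$ (uniform $p$-convexity, with a non-explicit constant $C(p,t)$) and $p\in(1,2]$ (2-convexity of the $L^p$-norm applied to squared distances), in order to show that a quantitative ``deviation'' integral vanishes; you instead observe that the chain of distortion norms $\|a_{i-1}-a_i\|_p=(t_i-t_{i-1})D$ saturates Minkowski's inequality against $\|a_0-a_{k+1}\|_p\ge D$, and the equality case of Minkowski for $1<p<\infty$ immediately forces $a_i=(1-t_i)a_0+t_ia_{k+1}$ a.e.\ and the optimality of $(\pi_0,\pi_{k+1})_*\mu$. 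This is cleaner, treats all $p\in(1,\infty)$ uniformly, and makes transparent why $p=1,\infty$ are excluded. Your merging step also differs: the paper takes a weak accumulation point of the couplings $\ol\m^{(k)}$ attached to dyadic partitions and then argues by continuity in $t$, whereas you use compactness of $\Opt(\m_0,\m_1)$ together with the finite intersection property of the closed sets $C_t$, which yields all $t\in(0,1)$ at once; your verification that $C_t$ is closed (matrix distributions plus uniform integrability of $\d_t^p$ against the fixed marginals) is more careful than the paper's corresponding passage, though the implication ``convergence of matrix distributions $\Rightarrow$ Gromov-weak convergence'' is a fact from \cite{gpw} slightly beyond what Proposition \ref{reconstruct} literally states. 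Both approaches buy the same theorem; yours trades the quantitative convexity estimates for a softer equality-case argument and a compactness argument, at no loss of rigor.
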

 Note that in the case $p\in(1,\infty)$ a conclusion from (ii) is that geodesics  $({\X}_t)_{0\leq t\leq 1}$ in $\XX_p$ do not branch at times $t\not= 0,1$. And they do not collapse to atoms at interior points. More precisely,

\begin{corollary}
If $({\X}_t)_{t\in [0,1]}$ and $({\X'}_t)_{t\in [0,1]}$ are two non-identical geodesics in $\XX_p$ (for  $1<p<\infty$) with identical initial and terminal points (i.e. $\X_0=\X'_0, \X_1=\X'_1$ and $\X_t\not=\X'_t$ for some $t\in(0,1)$)
then none of these geodesics can be extended to a geodesic beyond $t=0$ or $t=1$.
\end{corollary}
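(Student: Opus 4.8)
The plan is to argue by contradiction, exploiting Theorem~\ref{thmoptcoupl}(ii), which says that for $p\in(1,\infty)$ \emph{every} geodesic arises from an optimal coupling via linear interpolation of the metrics. Suppose one of the two geodesics, say $(\X_t)_{t\in[0,1]}$, extends to a geodesic $(\X_t)_{t\in[-\eta,1]}$ for some $\eta>0$ (the case of extension beyond $t=1$ is symmetric, and the case of $(\X'_t)$ is identical). Fix an interior time $s\in(0,1)$ with $\X_s\ne\X'_s$; by rescaling the parameter we may think of $\X_0$ as an interior point of the extended geodesic. First I would apply the theorem to the \emph{extended} geodesic restricted to $[-\eta,s]$: it yields an optimal coupling $\ol{\m}$ of $\m_{-\eta}$ and $\m_s$ such that $\X_t$ is represented by the linear interpolation $(1-\lambda)\d_{-\eta}+\lambda\,\d_s$ with $\lambda=\lambda(t)$. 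In particular $\X_0$ and $\X_s$ are joined by this interpolation, and the whole subgeodesic on $[0,s]$ is determined, as an isomorphism class, by $\ol{\m}$ and the two endpoint metrics $\d_0,\d_s$.

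The key point is a \emph{rigidity} statement: the interpolating metric $\d_0=(1-\lambda_0)\d_{-\eta}+\lambda_0\d_s$ on $X_{-\eta}\times X_s$, together with the coupling $\ol\m$, forces the optimal coupling between $\X_0$ and $\X_s$ realizing the subgeodesic to be essentially unique. Concretely: along the extended geodesic the points of $\X_0$ carry memory of where they came from, namely they are pairs $(x_{-\eta},x_s)$, and the distance from $\X_0$ to $\X_s$ is realized by the ``diagonal'' coupling sending $(x_{-\eta},x_s)$ to $x_s$. Since $\X_0=\X'_0$ and $\X_1=\X'_1$, the geodesic $(\X'_t)$ on $[0,1]$ is also represented by an optimal coupling $\ol\m'$ between $\m_0$ and $\m_1$; restricting to $[0,s]$ (a subinterval of a geodesic is a geodesic, with the same structure by the theorem) gives an optimal coupling between $\X_0$ and $\X_s$. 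I would then show that \emph{any} optimal coupling between $\X_0$ and $\X_s$ must agree, up to isomorphism of the interpolation, with the canonical one inherited from the extension—because the representative $(X_{-\eta}\times X_s,\d_0,\ol\m)$ of $\X_0$ already ``splits off'' $\d_s$ isometrically, and an optimal coupling cannot do better than distortion zero on that part. Hence $(\X'_t)_{t\in[0,s]}=(\X_t)_{t\in[0,s]}$, and by the same argument on $[s,1]$, $\X'\equiv\X$, contradicting $\X_s\ne\X'_s$.

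The main obstacle I anticipate is making precise and proving this rigidity/uniqueness claim: that an interior point of a geodesic, when represented as the product space with the interpolated metric and the optimal coupling, admits essentially only one optimal coupling to each of its geodesic neighbours, so that the continuation is unique. This is the place where one genuinely uses that $\X_0$ is \emph{interior} to the (extended) geodesic—at an endpoint no such splitting representative is available. I would try to extract this from Theorem~\ref{thmoptcoupl}(ii) together with a ``no branching'' consequence already noted before the corollary: if two optimal couplings gave two different interpolations on $[0,s]$ sharing the endpoints $\X_0,\X_s$, then concatenating with the extension on $[-\eta,0]$ would produce two distinct geodesics on $[-\eta,s]$ agreeing on $[-\eta,0]$, i.e. branching at $t=0$, which is exactly what the theorem forbids for $p\in(1,\infty)$. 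Assembling these pieces carefully—in particular tracking the identifications of representatives and the reparametrizations—is the only delicate part; the rest is bookkeeping.
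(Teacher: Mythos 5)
Your high-level strategy --- extend one geodesic beyond an endpoint, observe that concatenating $(\X_t)_{t\in[-\eta,0]}$ with $(\X'_t)_{t\in[0,1]}$ yields a second geodesic from $\X_{-\eta}$ to $\X_1$ (this does hold, since the intermediate distances add up, but you should verify it), and then force the two continuations past $\X_0$ to coincide --- is the right one. The gap is in the ``rigidity'' step, in two respects. First, you formulate it as essential uniqueness of the optimal coupling between $\X_0$ and its geodesic neighbour; that is not the statement you need and is in general false (spaces with symmetries admit many optimal couplings, and $\DD_p(\X_0,\X_s)\neq 0$, so ``distortion zero on that part'' does not parse into a usable claim). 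Second, and more seriously, the justification you offer --- that two distinct interpolations on $[0,s]$ would produce two geodesics on $[-\eta,s]$ branching at the interior time $t=0$, ``which is exactly what the theorem forbids'' --- is circular. Theorem \ref{thmoptcoupl}(ii) only says that every geodesic is a linear interpolation along \emph{some} optimal coupling of its endpoints; it does not by itself exclude that two different optimal couplings of $\m_{-\eta}$ and $\m_1$ produce two different geodesics agreeing on $[-\eta,0]$. Non-branching at interior times is precisely the corollary you are trying to prove, so it cannot be invoked as known.

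What closes the gap is the quantitative information extracted in the \emph{proof} of Theorem \ref{thmoptcoupl}(ii), namely the vanishing of the term ${\bf (II)}$ coming from uniform convexity of the $L^p$-norm for $1<p<\infty$: if $A,M,B$ satisfy $\DD_p(A,M)+\DD_p(M,B)=\DD_p(A,B)$ with $\DD_p(A,M)=\lambda\,\DD_p(A,B)$, $\lambda\in(0,1)$, then for \emph{arbitrary} optimal couplings $\mu_1\in\Opt(\m_A,\m_M)$, $\mu_2\in\Opt(\m_M,\m_B)$ the glued measure $\mu_1\boxtimes\mu_2$ satisfies $\d_M=(1-\lambda)\d_A+\lambda\,\d_B$ almost everywhere. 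Apply this with $A=\X_{-\eta}$, $M=\X_0$ and, in turn, $B=\X_t$ and $B'=\X'_t$ (both triples are metric since both concatenations are geodesics, and both have the same $\lambda=\eta/(\eta+t)\in(0,1)$). Because $\lambda>0$ one can solve for the target gauge, $\d_B=\tfrac1\lambda\d_M-\tfrac{1-\lambda}\lambda\d_A$ a.e., and the same formula holds for $\d_{B'}$; gluing the two configurations along their common $X_{-\eta}\times X_0$-marginal $\mu_1$ therefore produces a coupling of $\m_{\X_t}$ and $\m_{\X'_t}$ with vanishing distortion, i.e.\ $\X_t=\X'_t$ for every $t$, the desired contradiction. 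Note that it is exactly the positivity of $\lambda$ --- i.e.\ the existence of the extension, making $\X_0$ a strictly interior point --- that allows solving for $\d_B$; so the argument, correctly, gives nothing at a genuine endpoint. Your draft never isolates this a.e.\ linear relation for arbitrary optimal couplings, which is the actual content of the non-branching assertion.
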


\begin{corollary} If the initial point $\X_0$ of a geodesic $({\X}_t)_{t\in [0,1]}$ in $\XX_p$ (for  $1<p<\infty$) has no atoms then each inner point ${\X}_t$, $t\in (0,1)$, of the geodesic has no atoms.
\end{corollary}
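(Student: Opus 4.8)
The plan is to reduce the statement to the structure theorem for geodesics, Theorem~\ref{thmoptcoupl}(\ref{optcoupl3}), and then to observe the elementary fact that a coupling cannot carry an atom over a point where one of its marginals is atomless. Since $1<p<\infty$, Theorem~\ref{thmoptcoupl}(\ref{optcoupl3}) supplies representatives $(X_0,\d_0,\m_0)$ of $\X_0$ and $(X_1,\d_1,\m_1)$ of $\X_1$ together with an optimal coupling $\ol\m\in\Opt(\m_0,\m_1)$ such that, for every $t\in(0,1)$,
\[
\X_t=[X_0\times X_1,\ \d_t,\ \ol\m],\qquad \d_t=(1-t)\,\d_0+t\,\d_1 .
\]
First I would record the small bookkeeping point that $\d_t$ is a genuine metric on $X_0\times X_1$ for $t\in(0,1)$: if $(x_0,x_1)\neq(y_0,y_1)$ then $\d_0(x_0,y_0)>0$ or $\d_1(x_1,y_1)>0$, and since both coefficients $1-t$ and $t$ are strictly positive we get $\d_t\big((x_0,x_1),(y_0,y_1)\big)>0$. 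Hence $(X_0\times X_1,\d_t,\ol\m)$ is a bona fide mm-space representing $\X_t$, and the property ``$\X_t$ has no atoms'' is exactly the property that the measure $\ol\m$ has no atoms (atomlessness being an isomorphism invariant, immediate from Lemma~\ref{iso-null} since an isomorphism is, on supports, a measure preserving Borel bijection with Borel inverse).

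It then remains to exclude an atom of $\ol\m$. I would argue by contradiction: suppose $\ol\m\big(\{(a_0,a_1)\}\big)=\alpha>0$ for some $(a_0,a_1)\in X_0\times X_1$. Since $(\pi_0)_{\push}\ol\m=\m_0$,
\[
\m_0(\{a_0\})=\ol\m\big(\{a_0\}\times X_1\big)\ge\ol\m\big(\{(a_0,a_1)\}\big)=\alpha>0,
\]
so $\m_0$, and therefore $\X_0$, would carry an atom, contradicting the hypothesis. Hence $\ol\m$ is atomless and so is every inner point $\X_t$, $t\in(0,1)$.

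I do not expect any real obstacle: once Theorem~\ref{thmoptcoupl}(\ref{optcoupl3}) is invoked the proof is entirely soft and no estimates are involved; the only things to be careful about are the two bookkeeping remarks above (that $\d_t$ is a true metric at interior times, so $\X_t$ is an honest element of $\XX_p$, and that atoms of a coupling descend to its marginals). The hypothesis $1<p<\infty$ enters precisely through the availability of the structure theorem; for $p=1$ or $p=\infty$ the more flexible geodesic structure makes no such conclusion available, which is consistent with the remark preceding the corollary that interior collapse to atoms is a phenomenon ruled out only in the range $1<p<\infty$.
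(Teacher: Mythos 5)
Your proof is correct and is exactly the argument the paper intends: the corollary is stated as an immediate consequence of Theorem~\ref{thmoptcoupl}(ii), namely that every interior point of the geodesic is represented by $(X_0\times X_1,\d_t,\ol\m)$ with $\ol\m\in\Cpl(\m_0,\m_1)$, and an atom of $\ol\m$ would project to an atom of $\m_0$. Your two bookkeeping remarks (that $\d_t$ is a genuine metric for $t\in(0,1)$ and that atomlessness is an isomorphism invariant) are accurate and harmless additions.
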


\begin{proof}[Proof of the theorem]
		(\ref{optcoupl2}) 	
	In order to prove that $({\X}_t)_{0\leq t\leq 1}$ is a geodesic in $\XX_p$, it suffices to verify
	that
	\[
		\DD_p({\X}_s,{\X}_t)\leq |s-t| \DD_p({\X}_0,{\X}_1)
	\]
	for all $s,t\in [0,1]$. We will restrict the discussion to the case $p<\infty$.
	For a given pair $s,t\in (0,1)$, note that the `diagonal coupling'
	\[
		d\dol{\m}(x,y):=d\delta_x(y)d\ol{\m}(x)
	\]
	is one of the possible couplings of the measures of ${\X}_s$ and ${\X}_t$ (both being
	$\ol{\m}$).
	Thus,  with $X:=X_0\times X_1$
	\begin{equation*}
	\begin{split}
		\DD_p({\X}_s,{\X}_t)^p & \leq \int_{{X}\times{X}} \int_{{X}\times{X}}
				\left| {\d}_s(x,y)-{\d}_t(x',y') \right|^p  d\dol{\m}(x,x') d\dol{\m}(y,y')\\
			& = \int_{{X}} \int_{{X}} \left| {\d}_s(x,y)-{\d}_t(x,y) \right|^p  d\ol{\m}(x)
				d\ol{\m}(y)\\
			& = |s-t|^p \int_{{X}} \int_{{X}} \left| \d_0(x_0,y_0)-\d_1(x_1,y_1) \right|^p  d\ol{\m}(x_0,x_1) 	 d\ol{\m}(y_0,y_1)\\
			& = |s-t|^p \DD_p(\X_0,\X_1)^p.
	\end{split}
	\end{equation*}
In the case $s=0$ and $t\in (0,1)$, a slight modification of the argument is requested.
 Now we choose
 \[
		d\dol{\m}(x_0,y):=d\delta_{y_0}(x_0)d\ol{\m}(y)
	\]
(where $y=(y_0,y_1)$)	as one of the possible couplings of the measures $\m_0$ of ${\X}_0$ and $\ol\m$ of ${\X}_t$. Then the argument works as before.
Similarly, for the case $s\in (0,1)$ and $t=1$.

	(\ref{optcoupl3}) %We present the argument in the case $p=2$ (which simplifies various estimates).
Let a geodesic  $({\X}_t)_{0\leq t\leq 1}$ in $\XX_p$ be given. Fix a  number $k\in\N$ and let
$\mu_i$ (for $i=1,\ldots, 2^k$) be optimal couplings of the measures $\m_{(i-1)2^{-k}}$ and $\m_{i2^{-k}}$.
Glue together all these couplings
to obtain a probability measure
\[\mu=\mu_1\boxtimes\mu_2\boxtimes\ldots\boxtimes \mu_{2^k}\]
on $X_0\times X_{2^{-k}}\times\ldots \times X_{i2^{-k}}\times\ldots\times X_1$.
Put $\ol\m=(\pi_0,\pi_1)_{\push} \mu$ as well as $\ol\m_{t}=(\pi_0,\pi_t,\pi_1)_{\push} \mu$ for all $t\in(0,1)$ of the form $t=i2^{-k}$ (for $i=1,\ldots, 2^k-1$).
Thus $\ol\m$ is a coupling of $\m_0$ and $\m_1$ (a priori not optimal).

Let us now first restrict to the case $p\ge 2$. Then for each $t=i2^{-k}$ (for some $i=1,\ldots, 2^k-1$),
\begin{eqnarray*}
\lefteqn{\DD_p(\X_0,\X_1)^p}\\
&\stackrel{(*)}\le&\int%_{X_0\times X_1}
\int%_{X_0\times X_1}
\Big| \d_0(x_0,y_0)-\d_1(x_1,y_1) \Big|^p  d\ol{\m}(x_0,x_1) 	 d\ol{\m}(y_0,y_1)\\
&=&
\int%_{X_0\times  X_t\times X_1}
\int%_{X_0\times X_t\times X_1}
\Big| \big[\d_0(x_0,y_0)-\d_t(x_t,y_t)\big]+\big[ \d_t(x_t,y_t)-\d_1(x_1,y_1)\big]\Big|^p  d\ol{\m}_t(x_0,x_t,x_1) 	 d\ol{\m}_t(y_0,y_t,y_1)\\
&\stackrel{(**)}\le&
\int%_{X_0\times  X_t\times X_1}
\int%_{X_0\times X_t\times X_1}
\Big[ \frac1{t^{p-1}}\big|\d_0(x_0,y_0)-\d_t(x_t,y_t)\big|^p+\frac1{(1-t)^{p-1}}\big| \d_t(x_t,y_t)-\d_1(x_1,y_1)\big|^p\Big]
d\ol{\m}_t(x_0,x_t,x_1) 	 d\ol{\m}_t(y_0,y_t,y_1)\\
&&-\frac1{C[t(1-t)]^{p-1}}
\int%_{X_0\times  X_t\times X_1}
\int%_{X_0\times X_t\times X_1}
\Big|(1-t)\big[\d_0(x_0,y_0)-\d_t(x_t,y_t)\big]-t\big[\d_t(x_t,y_t)-\d_1(x_1,y_1)\big]\Big|^p\\
&&\qquad\qquad\qquad\qquad\qquad\qquad\qquad\qquad\qquad\qquad\qquad\qquad\qquad\qquad
  d\ol{\m}_t(x_0,x_t,x_1) 	 d\ol{\m}_t(y_0,y_t,y_1)\\
  &=& {\bf (I)}\ -\ {\bf (II)}.
  \end{eqnarray*}
The last inequality $(\ast\ast)$ is based on the estimate (ii) of  Lemma \ref{unif p convex} below, applied pointwise to the integrand taking $a=\frac{\d_0-\d_t}{t}$ and $b=\frac{\d_t-\d_1}{1-t}$. In the case $p=2$, it is even an equality with $C=1$.

Let us have a closer look on the first integral $\bf (I)$. Using estimate (i) of the Lemma below, it can be
bounded from above as follows
\begin{eqnarray*}
{\bf (I)}
&=&
2^{k(p-1)}\int%_{X_0\times  X_t\times X_1}
\int%_{X_0\times X_t\times X_1}
\left[ \frac1{i^{p-1}}\big|\d_0(x_0,y_0)-\d_{i2^{-k}}(x_{i2^{-k}},y_{i2^{-k}})\big|^p+\frac1{(2^k-i)^{p-1}}\big| \d_{i2^{-k}}(x_{i2^{-k}},y_{i2^{-k}})-\d_1(x_1,y_1)\big|^p\right]\\
&&\qquad\qquad\qquad\qquad\qquad\qquad\qquad\qquad\qquad\qquad
d\mu(x_0,\ldots,x_{i2^{-k}},\ldots, x_1) 	 d\mu(y_0,\ldots,y_{i2^{-k}},\ldots,y_1)\\
&\le&
2^{k(p-1)}\sum_{j=1}^{2^k}\int%_{X_0\times  X_t\times X_1}
\int%_{X_0\times X_t\times X_1}
\Big|\d_{(j-1)2^{-k}}(x_{(j-1)2^{-k}},y_{(j-1)2^{-k}})-\d_{j2^{-k}}(x_{j2^{-k}},y_{j2^{-k}})\Big|^p\\
&&\qquad\qquad\qquad\qquad\qquad\qquad
d\mu(x_0,\ldots,x_{(j-1)2^{-k}},x_{j2^{-k}},\ldots, x_1) 	 d\mu(y_0,\ldots,y_{(j-1)2^{-k}},y_{j2^{-k}},\ldots,y_1)\\
&=&2^{k(p-1)}\sum_{j=1}^{2^k}\DD_p(\X_{(j-1)2^{-k}}, \X_{j2^{-k}})^p\\
&=&\DD_p(\X_0,\X_1)^p.
\end{eqnarray*}
This allows two conclusions: i) The coupling $\ol\m$ of $\m_0$ and $\m_1$ is optimal since the very first inequality $(*)$ must be an equality.
ii) The second integral $\bf (II)$ in the above derivation must vanish. That is,
\[\int_{X_0\times  X_t\times X_1}
\int_{X_0\times X_t\times X_1}
\Big|(1-t)\d_0(x_0,y_0)+t\d_1(x_1,y_1)-\d_t(x_t,y_t)\Big|^p
  d\ol{\m}_t(x_0,x_t,x_1) 	 d\ol{\m}_t(y_0,y_t,y_1)=0.\]
Since
$\ol\m_t$ is a coupling of $\ol\m$ and $\m_t$, this implies
that the mm-spaces $(X_t,\d_t,\m_t)$ and
$(X_0\times X_1,(1-t)\d_0+t\d_1,\ol\m)$  are isomorphic.
This holds true for any $t\in(0,1)$ of the form
$t=i2^{-k}$  for some $i=1,\ldots, 2^k-1$.

\medskip

Now let us consider the case $p\le 2$ which requires a slightly modified argumentation.
Here we consider the $L^p$-distortion distance to the power 2. It yields
\begin{eqnarray*}
\lefteqn{\DD_p(\X_0,\X_1)^2}\\
&\le&
\bigg(
\int%_{X_0\times  X_t\times X_1}
\int%_{X_0\times X_t\times X_1}
\Big| \big[\d_0(x_0,y_0)-\d_t(x_t,y_t)\big]+\big[ \d_t(x_t,y_t)-\d_1(x_1,y_1)\big]\Big|^p  d\ol{\m}_t(x_0,x_t,x_1) 	 d\ol{\m}_t(y_0,y_t,y_1)\bigg)^{2/p}\\
&\stackrel{(***)}\le&
\frac1{t}\bigg(\int%_{X_0\times  X_t\times X_1}
\int%_{X_0\times X_t\times X_1}
\Big[ \big|\d_0(x_0,y_0)-\d_t(x_t,y_t)\big|^p
\Big]
d\ol{\m}_t(x_0,x_t,x_1) 	 d\ol{\m}_t(y_0,y_t,y_1)
\bigg)^{2/p}\\
&&\qquad
+\frac1{1-t}
\bigg(\int%_{X_0\times X_t\times X_1}
\int%_{X_0\times X_t\times X_1}
\Big[
\big| \d_t(x_t,y_t)-\d_1(x_1,y_1)\big|^p\Big]
d\ol{\m}_t(x_0,x_t,x_1) 	 d\ol{\m}_t(y_0,y_t,y_1)\bigg)^{2/p}\\
&&-\frac{p-1}{t(1-t)}
\bigg(\int%_{X_0\times  X_t\times X_1}
\int%_{X_0\times X_t\times X_1}
\Big|(1-t)\big[\d_0(x_0,y_0)-\d_t(x_t,y_t)\big]-t\big[\d_t(x_t,y_t)-\d_1(x_1,y_1)\big]\Big|^p
  d\ol{\m}_t(x_0,x_t,x_1) 	 d\ol{\m}_t(y_0,y_t,y_1)\bigg)^{2/p}\\
  &=& {\bf (I')}\ -\ {\bf (II')}.
  \end{eqnarray*}
Now the last inequality $(\ast\ast\ast)$ is based on the estimate (iii) of  Lemma \ref{unif p convex} below, applied to the $L^p$-norms (w.r.t. the measure $\ol\m_t^2$) of the involved functions.

The quantity ${\bf (I')}$ can be estimated similarly as before, using the triangle inequality for the $L^p$-norm and estimate (i) of Lemma~\ref{unif p convex} with $p=2$:

\begin{eqnarray*}
{\bf (I')}
&=&
\frac{2^{k}}{i}\bigg(\int%_{X_0\times  X_t\times X_1}
\int%_{X_0\times X_t\times X_1}
 \big|\d_0(x_0,y_0)-\d_{i2^{-k}}(x_{i2^{-k}},y_{i2^{-k}})\big|^p\\
&&\qquad\qquad\qquad\qquad\qquad\qquad\qquad\qquad
 d\mu(x_0,\ldots,x_{i2^{-k}},\ldots, x_1) 	 d\mu(y_0,\ldots,y_{i2^{-k}},\ldots,y_1)\bigg)^{2/p}\\
&&  +
  \frac{2^{k}}{2^k-i}\bigg(\int%_{X_0\times  X_t\times X_1}
\int%_{X_0\times X_t\times X_1}
  \big| \d_{i2^{-k}}(x_{i2^{-k}},y_{i2^{-k}})-\d_1(x_1,y_1)\big|^p\\
&&\qquad\qquad\qquad\qquad\qquad\qquad\qquad\qquad
d\mu(x_0,\ldots,x_{i2^{-k}},\ldots, x_1) 	 d\mu(y_0,\ldots,y_{i2^{-k}},\ldots,y_1)\bigg)^{2/p}\\
&\le&
2^{k}\sum_{j=1}^{2^k}
\bigg(
\int%_{X_0\times  X_t\times X_1}
\int%_{X_0\times X_t\times X_1}
\Big|\d_{(j-1)2^{-k}}(x_{(j-1)2^{-k}},y_{(j-1)2^{-k}})-\d_{j2^{-k}}(x_{j2^{-k}},y_{j2^{-k}})\Big|^p\\
&&\qquad\qquad\qquad\qquad\qquad
d\mu(x_0,\ldots,x_{(j-1)2^{-k}},x_{j2^{-k}},\ldots, x_1) 	 d\mu(y_0,\ldots,y_{(j-1)2^{-k}},y_{j2^{-k}},\ldots,y_1)\bigg)^{2/p}\\
&=&2^{k}\sum_{j=1}^{2^k}\DD_p(\X_{(j-1)2^{-k}}, \X_{j2^{-k}})^2\\
&=&\DD_p(\X_0,\X_1)^2.
\end{eqnarray*}
This allows the very same conclusions as before: i) the coupling is optimal and ii) the mm-spaces $(X_t,\d_t,\m_t)$ and
$(X_0\times X_1,(1-t)\d_0+t\d_1,\ol\m)$  are isomorphic.

\medskip

To indicate the dependence on $k$, let us now denote the optimal coupling $\ol\m$ (obtained via the above construction) by $\ol\m^{(k)}$.
According to Lemma \ref{cpl-comp}, the family $(\ol\m^{(k)})_{k\in\N}$ has an accumulation point $\ol\m^{(\infty)}$ in $\Cpl(\m_0,\m_1)$.
With this $\ol\m^{(\infty)}$ in the place of the previous $\ol\m^{(k)}$ it follows that for all dyadic numbers $t\in (0,1)$, the  mm-spaces $(X_t,\d_t,\m_t)$ and
$(X_0\times X_1,(1-t)\d_0+t\d_1,\ol\m^{(\infty)})$  are isomorphic. Continuity of both as elements in $\XX$ in $t$ finally allows to conclude this identification for all $t\in (0,1)$.
\end{proof}

In the previous proof we used the following basic estimates between real numbers, partly known as Clarkson's inequalities.
\begin{lemma}\label{unif p convex}
\begin{enumerate}
\item
$\forall p\in (1,\infty)$, $\forall t_0< t_1\ldots<t_n$, $\forall a_1,\ldots,a_n\in\R_+$
\[\frac1{(t_n-t_0)^{p-1}}\Big(\sum_{i=1}^n a_i\Big)^p\le\sum_{i=1}^n\frac1{(t_i-t_{i-1})^{p-1}}a_i^p.\]
\item
$\forall p\in [2,\infty), \forall t\in (0,1)\spec\exists C=C(p,t)>0\spec\forall a,b\in\R$
\[|ta+(1-t)b|^p\le t |a|^p+(1-t)|b|^p-\frac{t(1-t)}C|a-b|^p.\]
\item
For all $p\in(1,2]$, all $t\in (0,1)$, all probability spaces $(\Omega,\frak A,\mathbb P)$ and all $f,g\in L^p(\Omega,\mathbb P)$,
\[ \|tf+(1-t)g\|_p^2\le t\|f\|_p^2+(1-t)\|g\|_p^2-(p-1)t(1-t)\|f-g\|_p^2.\]
\end{enumerate}
\end{lemma}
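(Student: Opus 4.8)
I would treat the three parts separately; the first is immediate, the second is a compactness argument, and the third — the sharp uniform convexity of $L^p$ for $1<p\le2$ — is the real content.

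\emph{Part (i).} This is a weighted Jensen (equivalently Hölder) inequality in disguise. Put $\lambda_i=(t_i-t_{i-1})/(t_n-t_0)$, so the $\lambda_i$ are positive with $\sum_i\lambda_i=1$; after multiplying through by $(t_n-t_0)^{p-1}$ the claim becomes $\big(\sum_i a_i\big)^p\le\sum_i\lambda_i^{1-p}a_i^p$, and writing $a_i=\lambda_i b_i$ this is exactly $\big(\sum_i\lambda_i b_i\big)^p\le\sum_i\lambda_i b_i^p$, i.e.\ Jensen's inequality for the convex function $x\mapsto x^p$ on $\R_+$ (convex since $p>1$). Alternatively, $\sum_i a_i=\sum_i\big(a_i\lambda_i^{-(p-1)/p}\big)\lambda_i^{(p-1)/p}\le\big(\sum_i a_i^p\lambda_i^{1-p}\big)^{1/p}\big(\sum_i\lambda_i\big)^{(p-1)/p}$ by Hölder. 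There is no obstacle.

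\emph{Part (ii).} This is the quantitative strict convexity of $x\mapsto|x|^p$ for $p\ge2$, which I would prove by homogeneity and compactness. Put $g(a,b)=t|a|^p+(1-t)|b|^p-|ta+(1-t)b|^p$ and $h(a,b)=t(1-t)|a-b|^p$; both are continuous, nonnegative ($g$ by convexity of $|\cdot|^p$), positively $p$-homogeneous, and the asserted inequality is precisely $h\le Cg$ for a suitable $C=C(p,t)$. By $p$-homogeneity it suffices to bound $h/g$ on the compact set $\{|a|^p+|b|^p=1\}$. Off the diagonal $\{a=b\}$ one has $g>0$ by \emph{strict} convexity of $|\cdot|^p$ ($p>1$), so $h/g$ is bounded there by continuity and compactness. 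Near the diagonal I would pass to coordinates $m=(a+b)/2$, $d=a-b$ and Taylor-expand $g$ in $d$ for fixed $m\neq0$: the constant and linear terms cancel and, using $\tfrac14-(t-\tfrac12)^2=t(1-t)$, one obtains $g=\tfrac{p(p-1)}{2}|m|^{p-2}t(1-t)\,d^2+O(|d|^3)$, uniformly for $m$ in a compact subset of $\R\setminus\{0\}$; since $p\ge2$, $h=t(1-t)|d|^{p-2}d^2$ is then dominated by a bounded multiple of $g$ as $d\to0$. Combining the two regions gives a finite $C$. For $p=2$ the algebraic identity $t\,a^2+(1-t)b^2-\big(ta+(1-t)b\big)^2=t(1-t)(a-b)^2$ gives $C=1$. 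The one slightly delicate point is making the near-diagonal expansion uniform; using the $(m,d)$-coordinates rather than expanding jointly around a fixed diagonal point makes the remainder genuinely $O(|d|^3)$.

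\emph{Part (iii).} This is the sharp $2$-uniform convexity of $\|\cdot\|_p$ for $1<p\le2$, with optimal constant $p-1$. The plan is to reduce to the midpoint case and then use a Clarkson-type scalar inequality together with reverse Minkowski. The functional $\Phi(f):=\|f\|_p^2$ is convex (composition of the convex $1$-homogeneous $\|\cdot\|_p$ with the convex increasing $x\mapsto x^2$) and continuous; a standard dyadic-interpolation argument then shows that a midpoint estimate $\Phi(\tfrac{f+g}{2})\le\tfrac12\Phi(f)+\tfrac12\Phi(g)-\tfrac{p-1}{4}\|f-g\|_p^2$ automatically upgrades to $\Phi(tf+(1-t)g)\le t\Phi(f)+(1-t)\Phi(g)-(p-1)t(1-t)\|f-g\|_p^2$ for all $t\in(0,1)$, so it suffices to prove $\|\tfrac{f+g}{2}\|_p^2+(p-1)\|\tfrac{f-g}{2}\|_p^2\le\tfrac12\|f\|_p^2+\tfrac12\|g\|_p^2$. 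Writing $u=\tfrac{f+g}{2}$, $v=\tfrac{f-g}{2}$, the key ingredient is the scalar inequality
\[\Big(\tfrac{|a+b|^p+|a-b|^p}{2}\Big)^{2/p}\ \ge\ a^2+(p-1)b^2\qquad(1<p\le2,\ a,b\in\R),\]
an elementary one-variable estimate (reduce to $a=1$, $s=b/a$; it is an equality to second order at $s=0$). Applying it pointwise to $(u(x),v(x))$ and setting $F=\big(\tfrac{|u+v|^p+|u-v|^p}{2}\big)^{1/p}$ gives $F^2\ge|u|^2+(p-1)|v|^2$ pointwise; taking $L^{p/2}$-(quasi)norms, using monotonicity and the \emph{reverse} Minkowski inequality (valid since $p/2\le1$ and the summands are nonnegative) yields $\|F\|_p^2=\|F^2\|_{p/2}\ge\||u|^2\|_{p/2}+(p-1)\||v|^2\|_{p/2}=\|u\|_p^2+(p-1)\|v\|_p^2$; and finally $\|F\|_p^2=\big(\tfrac{\|f\|_p^p+\|g\|_p^p}{2}\big)^{2/p}\le\tfrac12\|f\|_p^2+\tfrac12\|g\|_p^2$ by the power-mean inequality since $2/p\ge1$. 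The main obstacle is the scalar inequality and securing the \emph{sharp} constant $p-1$; alternatively one invokes Clarkson's second inequality / the known optimal uniform-convexity inequality for $L^p$ from the literature.
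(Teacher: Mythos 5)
Your proposal is correct, and it is partly the same as and partly independent of the paper's (very terse) proof. For (i) you use exactly the paper's argument: Jensen for $x\mapsto x^p$ with weights $\lambda_i=(t_i-t_{i-1})/(t_n-t_0)$. For (ii) and (iii) the paper simply cites Proposition 3 of Ball--Carlen--Lieb for the midpoint case $t=\tfrac12$ and then remarks that a dyadic iteration yields general $t$ (with the optimal constant in (iii), and merely \emph{some} constant $C(p,t)$ in (ii)). Your treatment of (iii) is in substance a re-proof of the cited BCL result -- the two-point scalar inequality $\big(\tfrac{|a+b|^p+|a-b|^p}{2}\big)^{2/p}\ge a^2+(p-1)b^2$, pointwise application, reverse Minkowski in $L^{p/2}$, and the power-mean inequality is precisely their argument -- followed by the same midpoint-to-dyadic-to-continuity upgrade; this buys self-containedness but is not a new route. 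Your treatment of (ii) is genuinely different and arguably cleaner: since only \emph{some} constant $C(p,t)$ is claimed, the homogeneity-plus-compactness argument (bounding $h/g$ on the sphere $\{|a|^p+|b|^p=1\}$, with the second-order expansion $g=\tfrac{p(p-1)}{2}|m|^{p-2}t(1-t)d^2+O(|d|^3)$ handling the diagonal points, which lie away from the origin so that $\phi''(m)=p(p-1)|m|^{p-2}$ is bounded below there, and $|d|^{p}\le |d|^{p-2}d^2$ with $p\ge2$ controlling the ratio) gives all $t\in(0,1)$ at once without any iteration or external citation. Both routes are valid; the paper's is shorter on the page, yours is complete.
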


\begin{proof}
(i) Consequence of Jensen's inequality applied to numbers $\frac{a_i}{t_i-t_{i-1}}$ and weights $\lambda_i=\frac{t_i-t_{i-1}}{t_n-t_0}$ with $\sum_i\lambda_i=1$.

For (ii) and (iii), see e.g. Prop.~3 of \cite{bcl}. (ii) is the quantitative version of the \emph{uniform convexity} of $r\mapsto r^p$ for $p\ge 2$.
(iii) is the \emph{2-convexity of the $L^p$-norm} for $p\le 2$. Actually, both inequalities are stated only for $t=\frac12$. However, a simple iteration argument allows to deduce them for arbitrary dyadic $t$ (with the optimal constant in case of (iii) and with some constant $C(p,t)>0$ in case of (ii)).
\end{proof}

\begin{remark} Given a mm-space $\X_0$ we say that another mm-space $\X_1$ is a \emph{regular target} for $\X_0$ if there exists a measurable map $\phi:X_0\rightarrow X_1$ such that
\[\ol\m=(\Id,\phi)_{\push}\m_0\] is a coupling of $\m_0$ and $\m_1$ which is optimal for $\DD_p$. In other words,
 $\X_1$ is a {regular target} for $\X_0$ if there exists a measurable map $\phi$ with $\phi_{\push} \m_0=\m_1$ such that
\begin{equation}
\label{monge}
	\DD_p(\X_0,\X_1)^p=\int_{X_0} \int_{X_0} \left| \d_0(x,y)-\d_1(\phi(x),\phi(y))\right|^p
						d\m_0(x) d\m_0(y).
\end{equation}
A geodesic $\geod{0}{1}$ emanating from $\X_0$ is called \emph{regular} (for $\X_0$) if it connects $\X_0$ with some regular target $\X_1$. Such a geodesic can be represented on the state space of $\X_0$ as
\[\X_t=\big[ X_0, (1-t)\d_0+t\,\phi^*\d_1,\m_0\big]\]
where $\phi^*\d_1$ denotes the pull back of $\d_1$ from $X_1$ to $X_0$ through $\phi$, that is,
$\phi^*\d_1(x_0,y_0)=\d_1(\phi(x_0),\phi(y_0))$.

This is in analogy to the `classical' theory of optimal transportation where in `nice situations'
the (unique) solution to the Kantorovich problem coincides with the solution to the Monge problem.
Note, however, that there is a significant difference to the `classical' theory of optimal transportation on Euclidean or Riemannian spaces.
\begin{itemize}
\item
`Nice' points $\mu_0$ of the Wasserstein space $\mathcal P_p(X)$ on a Riemannian manifold $X$ have the property that \emph{each} target $\mu_1\in \mathcal P_p(X)$ is regular for $\mu_0$. For instance, all probability measures $\mu_0$ which are absolutely continuous with respect to the volume measure on $X$ are `nice'.
\item
In contrast to that,
even for `nice' points in $\XX_p$ like smooth compact Riemannian manifolds, e.g. $n$-dimensional spheres $\mathbb{S}^n$, we expect that there are plenty of non-regular targets, e.g. products $\mathbb{S}^n\times \mathbb{S}^k$.
\end{itemize}
\end{remark}

\begin{challenge} \begin{enumerate}
\item Prove the existence (and uniqueness) of such a transport map $\phi$ between `nice' spaces (e.g. smooth compact Riemannian manifolds of the same dimension) -- i.e. $\XX_p$-version  of Brenier \cite{Bre91} and McCann \cite{McC01};
    \item Derive regularity and smoothness results for this map -- i.e. $\XX_p$-version of Ma, Trudinger, Wang \cite{mtw}.
    \end{enumerate}
\end{challenge}

For  further discussions and results for geodesic interpolations of Riemannian manifolds, we refer to the last chapter, in particular, to Example \ref{riemannian}.

\bigskip

\begin{definition}
A metric measure space $(X,\d,\m)$ is  called \emph{geodesic mm-space} if for all $x,y\in\supp(\m)$ there exists a curve $\gamma: [0,1] \to  \supp(\m)$ with
$\gamma_0=x, \gamma_1=y$ and $\length(\gamma)=\d(x,y)$.

$(X,\d,\m)$ is called \emph{length mm-space} if for all $x,y\in\supp(\m)$
\[\d(x,y)=\inf\Big\{ \length(\gamma)\spec \gamma \mbox{ in }\supp(\m), \gamma_0=x, \gamma_1=y\Big\}.\]

\end{definition}

Obviously, a mm-space $(X,\d,\m)$ is a geodesic (or length) mm-space if and only if the \emph{metric space}
$(\supp(\m),\d)$ is a geodesic (or length, resp.) space in the usual sense of metric geometry, see e.g. \cite{bbi}. 
It is easy to see that being a geodesic (or length) mm-space is a property of the isomorphism class  $[X,\d,\m]$.
The space of all isomorphism classes of geodesic mm-spaces will be denoted by $\XX^{geo}$ and the space of all length mm-spaces by $\XX^{length}$.
\index{x@$\XX^{geo}, \XX^{length}$}

\begin{remark}
Given two mm-spaces $[X_0,\d_0,\m_0]$, $[X_1,\d_1,\m_1]$ then every connecting geodesic can be represented
as $[X_0\times X_1,\d_t,\overline\m]$ for some optimal coupling $\overline\m$.
If the  metric spaces $(X_0,\d_0)$ and $(X_1,\d_1)$ are geodesic then for each $t\in(0,1)$, the $t$-intermediate metric space $(X_0\times X_1,\d_t)$ is geodesic.
(Analogously, for length spaces.)

Note, however, that in general the midpoints in $(X_0\times X_1,\d_t)$ will not necessarily be in $\supp(\ol\m)$.
\end{remark}

\begin{proof}
%To simplify the presentation, let us assume without restriction that $\m$ has full support.
It is well-known that $(X,\d)$ is a geodesic (or length, resp.) space if and only if  for each pair $(x,y)\in X^2$ there exists a \emph{midpoint} $M(x,y)$ (or a sequence of $1/n$-midpoints $M_n(x,y)$, resp.) characterized by
\[ \d(x,M(x,y))=\d(y,M(x,y))=\frac12\d(x,y)\]
(or $\d(x,M_n(x,y))\le (\frac12+\frac1n)\d(x,y)$ and $\d(y,M_n(x,y))\le (\frac12+\frac1n)\d(x,y)$, resp.).

Now let two geodesic mm-spaces $[X_0,\d_0,\m_0]$ and $[X_1,\d_1,\m_1]$ be given as well as an optimal coupling
$\ol\m\in\Cpl(\m_0,\m_1)$.  Assume without restriction that the chosen representatives have full support.
(This does not imply that $\ol\m$ has full support in $X_0\times X_1$.)
Let
\[M_0: X_0^2\to X_0,\quad M_1: X_1^2\to X_1\]
be the  midpoint maps and define
\[M:\quad\begin{array}{ccc}
(X_0\times X_1)^2&\to& X_0\times X_1\\
\big((x_0,x_1), (y_0,y_1)\big)&\mapsto &\Big(M_0(x_0,y_0), M_1(x_1,y_1)\Big).
\end{array}\]
Then for each $t\in (0,1)$,
$M$ is a midpoint map for $(X_0\times X_1, \d_t)$ with $\d_t=(1-t)\d_0+t\d_1$.
Indeed,
\begin{eqnarray*}
\d_t(x,M(x,y))&=&
(1-t)\d_0(x_0,M_0(x_0,y_0))+t \d_1(x_1,M_1(x_1,y_1))\\
&=&
(1-t)\frac12\d_0(x_0,y_0)+t\frac12 \d_1(x_1,y_1)\\
&=&\frac12\d_t(x,y)
\end{eqnarray*}
and also $\d_t(y,M(x,y))=\frac12\d_t(x,y)$.

Essentially the same argumentation applies to $1/n$-midpoint maps in the case of length spaces.
\end{proof}

\begin{remarks} \begin{enumerate}
\item
Since the set of all possible midpoints
is closed the measurable selection theorem provides a \emph{Borel measurable} map $M: X^2\to X$ such that for each $x,y\in X^2$ the point $M(x,y)$ is a midpoint of $x$ and $y$, provided of course $X$ is a geodesic space.
Similarly, for each $n\in\N$ it provides a Borel measurable $1/n$-midpoint map on a given length space.

\item
Neither $\XX^{geo}$ nor $\XX^{length}$ is \emph{closed}. An easy counterexample is provided by the sequence of geodesic mm-spaces
\[ \Big[ I, |.|, \frac1n \Leb^1+\frac12(1-\frac1n)\delta_0+\frac12(1-\frac1n)\delta_1\Big]\]
which $\DD_p$-converges to
\[ \Big[ I, |. |, \frac12\delta_0+\frac12\delta_1\Big].\]
\end{enumerate}
\end{remarks}

\section{Cone Structure and Curvature Bounds for $(\XX, \DD)$}\label{seccurvbds}

\subsection{Cone Structure}

From now on, for the rest of the paper we will restrict ourselves to the case $p=2$. We simply write $\XX$ instead of $\XX_2$, $\DD$ instead of $\DD_2$, and $\size(.)$ instead of $\size_2(.)$.

We begin with a reformulation of the $L^2$-distortion distance which is analogous to the reformulations of the classical transport problem for the cost functions $|x-y|^2$ in terms of the transport problem for the cost function $-2xy$. Indeed, such a result only holds for $p=2$.

\begin{proposition}\label{product}
	$\forall \X_0,\X_1\in \XX$:
	\begin{equation*}
	\begin{split}
		\DD(\X_0,\X_1)^2 = & \size(\X_0)^2 + \size(\X_1)^2 \\
							& - 2\sup \bigg\{
							\int_{X_0\times X_1} \int_{X_0\times X_1}
						\d_0(x_0,y_0) \d_1(x_1,y_1) d\ol{\m}(x_0,x_1) d\ol{\m}(y_0,y_1)\spec
							\ol{\m}\in\Cpl(\m_0,\m_1)\bigg\}.
	\end{split}
	\end{equation*}
\end{proposition}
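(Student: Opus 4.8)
The plan is to prove the identity coupling by coupling: for \emph{every} single $\ol{\m}\in\Cpl(\m_0,\m_1)$ one has
\[
\int_{X_0\times X_1}\!\int_{X_0\times X_1}\big|\d_0(x_0,y_0)-\d_1(x_1,y_1)\big|^2\,d\ol{\m}(x_0,x_1)\,d\ol{\m}(y_0,y_1)=\size(\X_0)^2+\size(\X_1)^2-2\int_{X_0\times X_1}\!\int_{X_0\times X_1}\d_0(x_0,y_0)\,\d_1(x_1,y_1)\,d\ol{\m}\,d\ol{\m},
\]
and then to take the infimum over $\ol{\m}$ on both sides. This is the point where $p=2$ enters essentially: it is precisely the algebraic identity $|a-b|^2=a^2+b^2-2ab$ that is being integrated.

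First I would check that all three integrals are finite, so that the expansion of the square is legitimate. Since $\X_0,\X_1\in\XX=\XX_2$, both $\size(\X_0)$ and $\size(\X_1)$ are finite; applying the Cauchy--Schwarz inequality with respect to the product measure $\ol{\m}\otimes\ol{\m}$ on $(X_0\times X_1)^2$ (with $f=\d_0(x_0,y_0)$ and $g=\d_1(x_1,y_1)$, both nonnegative) bounds the cross term by $\size(\X_0)\cdot\size(\X_1)<\infty$. Hence $\big|\d_0-\d_1\big|^2=\d_0^2+\d_1^2-2\d_0\d_1$ may be integrated term by term against $\ol{\m}\otimes\ol{\m}$.

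The key step is to evaluate the two ``pure'' terms using only the marginal property of $\ol{\m}$. Integrating out the $X_1$-coordinates and using $(\pi_0)_\push\ol{\m}=\m_0$,
\[
\int_{X_0\times X_1}\!\int_{X_0\times X_1}\d_0(x_0,y_0)^2\,d\ol{\m}(x_0,x_1)\,d\ol{\m}(y_0,y_1)=\int_{X_0}\!\int_{X_0}\d_0(x_0,y_0)^2\,d\m_0(x_0)\,d\m_0(y_0)=\size(\X_0)^2,
\]
and symmetrically the $\d_1^2$-term equals $\size(\X_1)^2$; crucially, both of these are \emph{independent of the choice of coupling}. Substituting back gives the displayed term-by-term identity.

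Finally, taking the infimum over $\ol{\m}\in\Cpl(\m_0,\m_1)$: the left-hand side becomes $\DD(\X_0,\X_1)^2$ directly from the definition of $\DD_2$ (and monotonicity of $r\mapsto r^{1/2}$), while on the right the two $\size$-terms are constants that pull out of the infimum and $\inf_{\ol{\m}}\big(-2\int\!\int\d_0\d_1\big)=-2\sup_{\ol{\m}}\int\!\int\d_0\d_1$, yielding exactly the claimed formula. There is essentially no obstacle; the only points requiring a word of care are the finiteness check above and keeping straight that $\d_0,\d_1$ on $(X_0\times X_1)^2$ are understood via the projections. If one wishes to record in addition that the supremum is attained, it suffices to recall from the proof of Lemma~\ref{optcoupl1} that $\ol{\m}\mapsto\int\!\int|\d_0-\d_1|^2\,d\ol{\m}\,d\ol{\m}$ is lower semicontinuous on the weakly compact set $\Cpl(\m_0,\m_1)$; by the term-by-term identity $\ol{\m}\mapsto\int\!\int\d_0\d_1\,d\ol{\m}\,d\ol{\m}$ is then upper semicontinuous and attains its maximum, and its maximizers are precisely the optimal couplings of Definition~\ref{def-opt-cpl}.
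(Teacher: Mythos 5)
Your proof is correct and follows essentially the same route as the paper's: expand $|\d_0-\d_1|^2=\d_0^2+\d_1^2-2\d_0\d_1$, observe that the two pure terms depend only on the marginals of $\ol\m$ and hence equal $\size(\X_0)^2$ and $\size(\X_1)^2$, and pass the infimum onto the cross term as a supremum. The extra care you take with integrability (via Cauchy--Schwarz) and the remark on attainment of the supremum are fine additions but not needed beyond what the paper records.
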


\begin{proof}
	Decompose the integrand $|\d_0(x_0,y_0)-\d_1(x_1,y_1)|^2$ in the integrals used in the definition
	of $\DD^2$ into two squares of distances and a midterm. Then observe that each of the integrals
	of a distance square only depends on one of the marginals of $\ol{\m}$, e.g.
	\[
		\int_{X_0\times X_1} \int_{X_0\times X_1} \d_0(x_0,y_0)^2 d\ol{\m}(x_0,x_1) d\ol{\m}(y_0,y_1)
		= \int_{X_0}\int_{X_0}  \d_0(x_0,y_0)^2 d\m(x_0) d\m(y_0)
		= \size(\X_0)^2.
	\]
\end{proof}

The space $\XX_0$ has a distinguished element: the isomorphism class of metric measure spaces $(X,\d,\m)$ whose support consist of one point, say $x\in X$ (and thus $\m=\delta_x$). This isomorphism class will be called \emph{$1$-point space} and denoted by $\bfdelta$.\index{d@$\bfdelta$} Note that for each $\X\in \XX_0$,
\[
\size(\X)=\DD(\bfdelta,\X)
\]
and thus
\[
	\XX^1:= \{ \X\in \XX \spec \size(\X)=1\}
\]
is the unit sphere in $(\XX,\DD)$ around $\bfdelta$.\index{x@$\XX^1$}
Given any $\X_1=[X_1,\d_1,\m_1]\in \XX$, the unique unit speed geodesic through $\X_1$ and emanating from $\bfdelta$ is given by
\[
	\X_t=[X_1,t\d_1,\m_1].
\]
It is called \emph{ray} through $\X_1$.
Each element $\X\neq\bfdelta$ in $\XX$ can uniquely be characterized as a pair $(r,\X_1)\in (0,\infty)\times \XX^1$. The number $r$ is the size of $\X$, the element $\X_1\in \XX^1$ is the
`standardization' of $\X=[X, \d,\m]$:
\[
	\X_1:=[X, \frac{\d}{\size(\X)},\m].
\]

A remarkable, quite surprising fact is that the $L^2$-distortion distance between two spaces $\X=(r,\X_1)$ and $\X'=(r',\X_1')$ is completely determined by the sizes $r=\size(\X),\,r'=\size(\X')$ and the distance $\DD(\X_1,\X_1')$ of the standardized spaces.

\begin{lemma}
	Let $\X_1,\X_1'\in \XX^1$ and let $(\X_s)_{s\geq 0}, (\X'_t)_{t\geq 0}$ be the corresponding rays.
	Then the quantity
	\[
		\frac{1}{2st} \left[ \DD^2(\X_s,\X_t')-s^2-t^2\right]
	\]
	is independent of $s,t\in (0,\infty)$.
\end{lemma}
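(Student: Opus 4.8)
The plan is to reduce the statement to Proposition~\ref{product} and to exploit the fact that the set of couplings $\Cpl(\m_1,\m_1')$ is built only from the measures and is therefore untouched when one rescales the metrics $\d_1,\d_1'$.

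First I would fix representatives $(X_1,\d_1,\m_1)$ and $(X_1',\d_1',\m_1')$ with $\size(\X_1)=\size(\X_1')=1$, and observe that $(X_1,s\d_1,\m_1)$ and $(X_1',t\d_1',\m_1')$ are representatives of $\X_s$ and $\X_t'$, respectively (this is precisely the definition of the rays). Hence $\size(\X_s)^2=\int_{X_1}\int_{X_1}s^2\d_1^2\,d\m_1\,d\m_1=s^2$ and similarly $\size(\X_t')^2=t^2$. Next I would apply Proposition~\ref{product} to the pair $\X_s,\X_t'$. Since $\Cpl(\m_1,\m_1')$ depends only on $\m_1,\m_1'$ — and not on $\d_1,\d_1'$, nor on $s,t$ — while the integrand in the supremum scales as $(s\d_1)(t\d_1')=st\,\d_1\d_1'$, the supremum factors as
\[
\sup_{\ol{\m}\in\Cpl(\m_1,\m_1')}\int_{X_1\times X_1'}\int_{X_1\times X_1'}(s\d_1)(t\d_1')\,d\ol{\m}\,d\ol{\m}=st\cdot C(\X_1,\X_1'),
\]
where
\[
C(\X_1,\X_1'):=\sup_{\ol{\m}\in\Cpl(\m_1,\m_1')}\int_{X_1\times X_1'}\int_{X_1\times X_1'}\d_1(x_0,y_0)\,\d_1'(x_1,y_1)\,d\ol{\m}(x_0,x_1)\,d\ol{\m}(y_0,y_1)
\]
is finite (Cauchy--Schwarz together with $\size_2(\X_1),\size_2(\X_1')<\infty$) and depends only on the isomorphism classes $\X_1,\X_1'$. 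Combining these computations gives
\[
\DD^2(\X_s,\X_t')=s^2+t^2-2st\,C(\X_1,\X_1'),
\]
so that $\tfrac{1}{2st}\big[\DD^2(\X_s,\X_t')-s^2-t^2\big]=-C(\X_1,\X_1')$, which manifestly does not depend on $s$ or $t$.

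I do not expect a genuine obstacle here: the only points needing a line of care are that rescaling $\d_1,\d_1'$ leaves $\Cpl(\m_1,\m_1')$ literally unchanged, and that $C(\X_1,\X_1')$ is finite and independent of the chosen representatives — both routine. It is worth noting, as a by-product, that the computed value $-C(\X_1,\X_1')$ plays the role of (minus) a scalar product of the directions $\X_1,\X_1'$, which is exactly the identity underlying the assertion that $(\XX,\DD)$ is a metric cone with apex $\bfdelta$ over $\XX^1$; but that observation belongs to the discussion following the lemma rather than to its proof.
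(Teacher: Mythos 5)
Your proof is correct and is essentially the paper's own argument: the paper expands $|s\d-t\d'|^2$ for each fixed coupling and uses $\size(\X_1)=\size(\X_1')=1$ to isolate the cross term $-st\int\int\d\d'\,d\ol\m\,d\ol\m$, which is exactly the content of Proposition \ref{product} that you invoke. Routing the computation through that proposition (together with the observation that $\Cpl(\m_1,\m_1')$ is unaffected by rescaling the metrics) is the same calculation packaged slightly differently.
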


\begin{proof}
	Let the rays be given as $\X_s=(X,s\d,\m)$ and $\X_t'=(X',t\d',\m')$. Then for each
	$\ol{\m}\in\Cpl(\m,\m')$ and all $s,t\in (0,\infty)$:
	\begin{align*}
			& \frac{1}{2st}\left[ \int_{X\times X'} \int_{X\times X'}
								\left| s\d(x,y)-t\d'(x',y') \right|^2 d\ol{\m}(x,x') d\ol{\m}(y,y')
								-s^2-t^2
							\right] \\
			= &\frac{1}{2st}\left[ s^2 \int_{X} \int_{X}
								\d(x,y)^2 d\m(x) d\m(y)-s^2 \right.\\
								&\hphantom{\frac{1}{2st}\left[ \right.}
								+ t^2 \int_{X'} \int_{X'}
								\d'(x',y')^2 d\m'(x') d\m'(y')-t^2\\
								&\hphantom{\frac{1}{2st}\left[ \right.}
								\left. -2st\int_{X\times X'} \int_{X\times X'}
								\d(x,y)\d'(x',y')
								 d\ol{\m}(x,x') d\ol{\m}(y,y')
						\right] \\
			= &- \int_{X\times X'} \int_{X\times X'}\d(x,y)\d'(x',y')
								 d\ol{\m}(x,x') d\ol{\m}(y,y'),
	\end{align*}
	which obviously is independent of $s$ and $t$.
	The last equality is due to the fact that $\size(\X_1)=1$ as well as $\size(\X'_1)=1$.
\end{proof}

For $\X,\X'\in \XX^1$ put
\[
	\DD^{(1)}(\X,\X'):= 2\arcsin(\frac{1}{2} \DD(\X,\X')).
\]
Of course, this is equivalent to saying that
\[\DD(\X,\X')^2=2-2 \cos \DD^{(1)} (\X,\X').\]
\begin{figure}[h!]

	\begin{subfigure}{0.4\textwidth}
		
		\psfrag{delta}{$\bfdelta$}
		\psfrag{XX1}{$\XX^1$}
		\psfrag{d}{\color{green}$\DD$}
		\psfrag{d1}{\color{red}$\DD^{(1)}$}

		\includegraphics[scale=0.4]{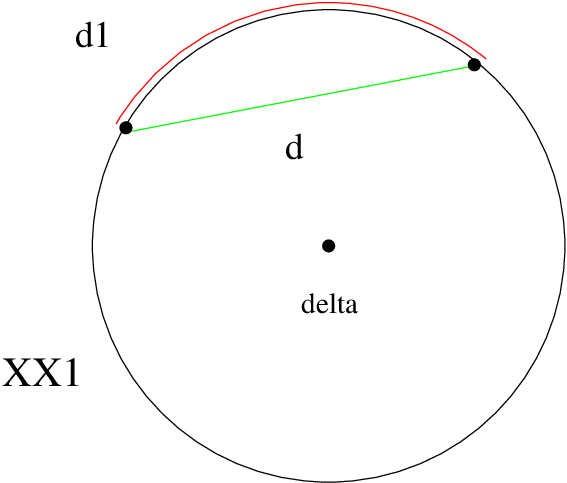}
	\caption*{\wichtig{$\sin(\frac{\DD^{(1)}}{2})=\frac{\DD}{2}$}}
	\end{subfigure}
	\qquad
	\begin{subfigure}{0.4\textwidth}

		\psfrag{X1}{$\X_1$}
		\psfrag{Xs}{$\X_s$}
		\psfrag{Xt}{$\X_t$}
		\psfrag{X12}{$\X_1'$}
		\psfrag{delta}{$\bfdelta$}
		\psfrag{angle}{\color{red}$\measuredangle=\DD^{(1)}(\X_1,\X_1')$}
		\psfrag{dXsXt}{\color{green}$\DD(\X_s,\X_t')$}
				\includegraphics[scale=0.35]{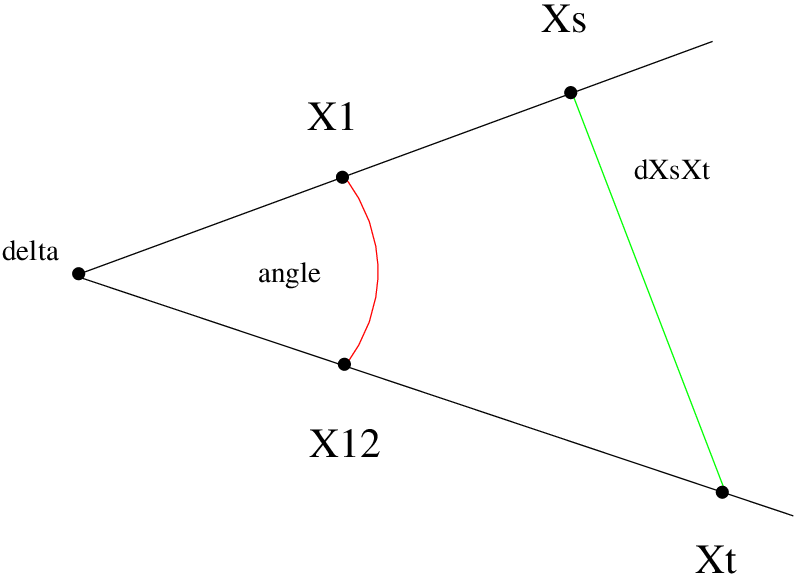}
	
\caption*{\wichtig{Law of cosines:\quad $\DD^2=s^2+t^2-2st\measuredangle$}}
		\end{subfigure}
\caption{\wichtig{Cone structure}}
\end{figure} 

Thus we have proved the following:
\begin{theorem}\label{cone}
			The space $\XX$ is the cone over $\XX^1$. For each $\X_1,\X_1'\in\XX^1$ and for all $s,t
			\in (0,\infty)$:
			\[
				\DD(\X_s,\X_t')^2=s^2+t^2-2st \cos \DD^{(1)} (\X_1,\X_1'),
			\]
			where $\X_s$ denotes the point with size $s$ on the ray through $\X_1$ and, similarly,
			$\X_t'$ the point with size $t$ on the ray through $\X_1'$.
	
\end{theorem}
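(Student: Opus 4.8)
The plan is to extract the law of cosines directly from the Lemma immediately preceding the statement, and then to match the resulting formula with the definition of a Euclidean cone. Fix $\X_1,\X_1'\in\XX^1$ and let $(\X_s)_{s\ge0}$, $(\X_t')_{t\ge0}$ be the associated rays. By that Lemma the quantity $q:=\tfrac1{2st}\big[\DD^2(\X_s,\X_t')-s^2-t^2\big]$ is independent of $s,t\in(0,\infty)$, hence a function of the pair $(\X_1,\X_1')$ alone. First I would evaluate it at $s=t=1$: since $\size(\X_1)=1$, the point of size $1$ on the ray through $\X_1$ is $\X_1$ itself, so $q=\tfrac12\DD(\X_1,\X_1')^2-1$. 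The definition $\DD^{(1)}(\X_1,\X_1')=2\arcsin\!\big(\tfrac12\DD(\X_1,\X_1')\big)$ rewrites as $\DD(\X_1,\X_1')^2=2-2\cos\DD^{(1)}(\X_1,\X_1')$; this is legitimate because $\DD(\X_1,\X_1')\le\DD(\bfdelta,\X_1)+\DD(\bfdelta,\X_1')=2$, so the argument of $\arcsin$ lies in $[0,1]$ and $\DD^{(1)}$ takes values in $[0,\pi]$. Hence $q=-\cos\DD^{(1)}(\X_1,\X_1')$, and substituting this constant back into the definition of $q$ yields $\DD(\X_s,\X_t')^2=s^2+t^2-2st\cos\DD^{(1)}(\X_1,\X_1')$ for all $s,t>0$, which is the asserted identity; the limiting cases $s=0$ or $t=0$ hold as well, since $\DD(\bfdelta,\X_t')=\size(\X_t')=t$.

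To present this as a cone statement, I would recall that the Euclidean cone $\Con(Y)$ over a metric space $(Y,d_Y)$ with $\diam(Y)\le\pi$ is $\big([0,\infty)\times Y\big)/\big(\{0\}\times Y\big)$, with apex the collapsed fibre and metric $\big((s,x),(t,y)\big)\mapsto\big(s^2+t^2-2st\cos d_Y(x,y)\big)^{1/2}$. The excerpt has already shown that $\X\mapsto\big(\size(\X),\ \text{standardization of }\X\big)$ is a bijection of $\XX\setminus\{\bfdelta\}$ onto $(0,\infty)\times\XX^1$, with $\bfdelta$ corresponding to the apex; under this bijection the identity just proved states exactly that $\DD$ coincides with the cone metric built from $(\XX^1,\DD^{(1)})$. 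The only thing left is that $\DD^{(1)}$ is genuinely a metric on $\XX^1$. Symmetry and nondegeneracy are inherited from $\DD$. The triangle inequality for $\DD^{(1)}$ is the ``only if'' half of the standard characterization that a Euclidean cone $\Con(Y)$ is a metric space iff $d_Y$ -- already of the required form here, since $\DD^{(1)}$ has range in $[0,\pi]$ -- satisfies the triangle inequality (see e.g. \cite{bbi}, Ch.~3); concretely, one applies the triangle inequality of $\DD$ to suitably scaled points on the three rays and invokes an elementary planar trigonometric computation.

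I do not expect a real obstacle here. All of the substance has already been carried through -- Proposition \ref{product}, whose reformulation of $\DD^2$ holds only for $p=2$, together with the $(s,t)$-independence Lemma -- so the theorem is essentially the act of assembling those ingredients. The one step requiring a little care is the triangle inequality for $\DD^{(1)}$, i.e. checking that one cannot read off from $\DD^{(1)}$ angular data inconsistent with the scaled law of cosines; but this is the routine ``only if'' direction of the cone characterization and needs nothing beyond the metric property of $\DD$.
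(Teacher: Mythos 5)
Your proof of the law of cosines is exactly the paper's argument: the preceding Lemma gives that $q=\frac1{2st}[\DD^2(\X_s,\X_t')-s^2-t^2]$ is constant, and evaluating at $s=t=1$ together with the identity $\DD^2(\X_1,\X_1')=2-2\cos\DD^{(1)}(\X_1,\X_1')$ yields $q=-\cos\DD^{(1)}(\X_1,\X_1')$, which is all the paper records before declaring the theorem proved. Your additional verification that $\DD^{(1)}$ is a metric and that the bijection $\X\mapsto(\size(\X),\X_1)$ matches the formal definition of a Euclidean cone is correct and goes slightly beyond what the paper writes down, but it does not change the route.
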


\subsection{Curvature Bounds}

\begin{theorem}\label{thmXX}
	$(\XX,\DD)$ is a geodesic space of nonnegative curvature in the sense of Alexandrov:
both the triangle comparison and the quadruple comparison property are satisfied. That is,
\begin{enumerate}
\item for each geodesic $(\X_t)_{0\leq t\leq1}$ in $\XX$ and each point
	$\X'$ in $\XX$,
	\begin{align}
		\DD^2(\X_t,\X')\geq (1-t) \DD^2(\X_0,\X') + t \DD^2(\X_1, \X') -t (1-t)\DD^2(\X_0,\X_1);
	\end{align}
\item
for each quadruple of points $\X_0, \X_1,\X_2,\X_3$ in $\XX$,
\[\sum_{i=1,2,3}\DD^2(\X_0,\X_i)\ge \frac13\sum_{1\leq i<j\leq 3}\DD^2(\X_i,\X_j).\]
\end{enumerate}
\end{theorem}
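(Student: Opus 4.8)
The plan is to reduce both comparison inequalities to trivial scalar estimates, using only three ingredients already at our disposal: the bilinear reformulation of $\DD^2$ (Proposition~\ref{product}), the explicit form of geodesics (Theorem~\ref{thmoptcoupl}), and the gluing lemmas. In particular, the cone structure of Theorem~\ref{cone} will not be needed.

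For the triangle comparison~(i), since $p=2\in(1,\infty)$, Theorem~\ref{thmoptcoupl}(ii) tells us that an arbitrary geodesic $(\X_t)_{0\le t\le1}$ from $\X_0$ to $\X_1$ is represented by $(X_0\times X_1,\d_t,\ol\m)$ with $\d_t=(1-t)\d_0+t\d_1$ and $\ol\m\in\Opt(\m_0,\m_1)$. First I would expand $\size(\X_t)^2=\int\int\d_t^2\,d\ol\m\,d\ol\m$ as a quadratic in $t$ and use that optimality of $\ol\m$ forces $\int\int(\d_0-\d_1)^2\,d\ol\m\,d\ol\m=\DD^2(\X_0,\X_1)$; this produces the exact identity
\[
\size(\X_t)^2=(1-t)\,\size(\X_0)^2+t\,\size(\X_1)^2-t(1-t)\,\DD^2(\X_0,\X_1).
\]
Next, for an arbitrary $\X'=(X',\d',\m')$ and any coupling $\ol\nu\in\Cpl(\ol\m,\m')$, the images of $\ol\nu$ under the two partial projections are couplings $\ol\nu_0\in\Cpl(\m_0,\m')$ and $\ol\nu_1\in\Cpl(\m_1,\m')$, so affinity of $t\mapsto\d_t$ gives
\[
\int\int\d_t\,\d'\,d\ol\nu\,d\ol\nu=(1-t)\int\int\d_0\,\d'\,d\ol\nu_0\,d\ol\nu_0+t\int\int\d_1\,\d'\,d\ol\nu_1\,d\ol\nu_1\ \le\ (1-t)\,P_0+t\,P_1,
\]
where $P_i:=\sup_{\pi\in\Cpl(\m_i,\m')}\int\int\d_i\,\d'\,d\pi\,d\pi=\tfrac12\big(\size(\X_i)^2+\size(\X')^2-\DD^2(\X_i,\X')\big)$ by Proposition~\ref{product}. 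Taking the supremum over $\ol\nu$, inserting this into Proposition~\ref{product} applied to $\X_t$ and $\X'$, and substituting the size identity above, the terms $\size(\X')^2$ and $\size(\X_i)^2$ cancel and exactly~(i) drops out.

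For the quadruple comparison~(ii), given $\X_0,\X_1,\X_2,\X_3$ I would choose optimal couplings $\ol\m_{0i}\in\Opt(\m_0,\m_i)$, $i=1,2,3$, and glue them along the common factor $X_0$ by Lemma~\ref{sec-glue} to a probability measure $\mu$ on $X_0\times X_1\times X_2\times X_3$ with $(\pi_0,\pi_i)_{\push}\mu=\ol\m_{0i}$; then $(\pi_i,\pi_j)_{\push}\mu\in\Cpl(\m_i,\m_j)$. Optimality of $\ol\m_{0i}$ gives $\DD^2(\X_0,\X_i)=\int\int|\d_0(x_0,y_0)-\d_i(x_i,y_i)|^2\,d\mu\,d\mu$, while for $i<j$ mere admissibility gives $\DD^2(\X_i,\X_j)\le\int\int|\d_i(x_i,y_i)-\d_j(x_j,y_j)|^2\,d\mu\,d\mu$. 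Hence~(ii) reduces to the pointwise inequality
\[
\sum_{i=1}^3(a_0-a_i)^2\ \ge\ \tfrac13\sum_{1\le i<j\le3}(a_i-a_j)^2\qquad(a_0,a_1,a_2,a_3\in\R),
\]
which holds since the left side minus the right side equals $3\,(a_0-\bar a)^2\ge0$ with $\bar a=\tfrac13(a_1+a_2+a_3)$.

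Since $(\XX,\DD)$ is geodesic by Theorem~\ref{thmoptcoupl}, part~(i) already exhibits it as an Alexandrov space of nonnegative curvature, and~(ii) adds the embedding-free quadruple condition. The only point requiring genuine care is the marginal bookkeeping in~(i): one must use that the coupling carrying the geodesic on $X_0\times X_1$ is \emph{optimal} (otherwise the size identity tilts the wrong way), and one must observe that a coupling of $\ol\m$ with $\m'$ restricts to honest couplings of $\m_0$ with $\m'$ and of $\m_1$ with $\m'$, so that Proposition~\ref{product} bounds the cross term by the two ``edge'' distances $\DD(\X_0,\X')$ and $\DD(\X_1,\X')$. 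After these reductions, nothing remains but the two elementary scalar estimates displayed above.
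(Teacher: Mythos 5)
Your proof is correct. Part (ii) is precisely the paper's argument: glue the three optimal couplings $\ol\m_{0i}$ along the common factor $X_0$ via Lemma \ref{sec-glue}, use optimality for the edges at $\X_0$ and mere admissibility of $(\pi_i,\pi_j)_{\push}\mu$ for the remaining edges, and finish with the scalar quadruple inequality in $\R$ (which you verify explicitly via $3(a_0-\bar a)^2\ge 0$, where the paper merely cites it). For part (i) the two computations are algebraically equivalent but organized differently: the paper selects a coupling $\hat\m\in\Cpl(\ol\m,\m')$ optimal for $\DD(\X_t,\X')$ and applies the pointwise identity $|(1-t)a+tb-c|^2+t(1-t)|a-b|^2=(1-t)|a-c|^2+t|b-c|^2$ under the double integral before projecting onto the pairs $(\m_0,\m')$ and $(\m_1,\m')$, whereas you polarize everything through Proposition \ref{product}, isolating the exact size identity $\size(\X_t)^2=(1-t)\size(\X_0)^2+t\,\size(\X_1)^2-t(1-t)\DD^2(\X_0,\X_1)$ and bounding the cross term by the two edge suprema. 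Both routes hinge on the same two facts — optimality of the coupling $\ol\m$ carrying the geodesic, and the observation that the marginals of a coupling of $\ol\m$ with $\m'$ are admissible (not necessarily optimal) couplings of $\m_i$ with $\m'$ — but your version has the minor advantages of not needing to select an optimal coupling for the pair $(\X_t,\X')$ (an upper bound on the supremum suffices) and of recording the size identity, i.e. that sizes along a geodesic satisfy the comparison with equality, which is the germ of the cone structure of Theorem \ref{cone}.
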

Note that for \emph{complete} length spaces, properties (i) and (ii) are known to be equivalent \cite{lp}. However, due to lack of completeness this does not apply directly.
\begin{figure}[h!]

	\begin{subfigure}{0.4\textwidth}
\begin{center}
\psfrag{X0}{\color{green}$\X_0$}
\psfrag{X1}{\color{green}$\X_1$}
\psfrag{Xt}{\color{green}$\X_t$}
\psfrag{Xprime}{$\X'$}

%%% eps Bilder, die nicht funktionieren  %%%% \includegraphics[scale=0.3]{}

\caption*{\wichtig{Triangle comparison}}
\end{center}
	\end{subfigure}
	\qquad
	\begin{subfigure}{0.3\textwidth}
\begin{center}
\psfrag{X2}{\color{green}$\X_2$}
\psfrag{X1}{\color{green}$\X_1$}
\psfrag{X3}{\color{green}$\X_3$}
\psfrag{X0}{\color{red}$\X_0$}
\psfrag{a1}{$a_1$}
\psfrag{a2}{$a_2$}
\psfrag{a3}{$a_3$}
\psfrag{b1}{$b_1$}
\psfrag{b2}{$b_2$}
\psfrag{b3}{$b_3$}
\includegraphics[scale=0.3]{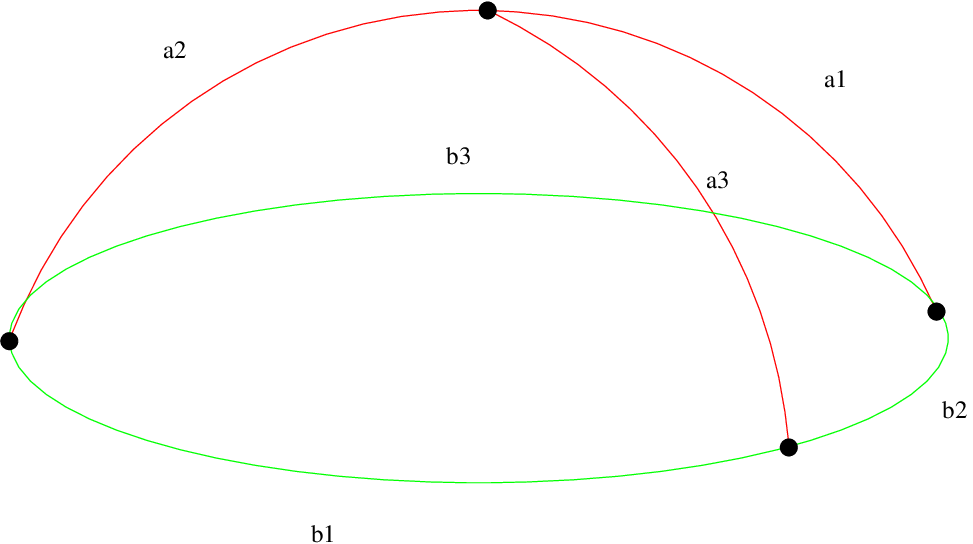}

\caption*{\wichtig{\mbox{Quadruple comparison: $\sum a_i^2 \geq \frac{1}{3} \sum b_i^2$}}}
\end{center}
		\end{subfigure}
\caption{\wichtig{Nonnegative curvature}}
\end{figure} 
\begin{proof}
(i)
	According
to Theorem \ref{thmoptcoupl}, we may assume that the geodesic is given as $\X_t=[X,\d_t,\ol\m]$
	with $X=X_0\times X_1$, $\d_t=(1-t)\d_0+t\d_1$, and some $\ol\m\in \Opt(\m_0,\m_1)$.
	
	Let $\X'=[X',\d',\m']$ and for fixed $t\in[0,1]$, let $\hat{\m}\in\Cpl(\ol\m,\m')$ be a
	coupling
which minimizes
\[\int \int \left|\d_t( x, y)-\d'(x',y')\right|^2 d\hat{\m}( x,x') d\hat{\m}( y,y').\]
In other words, $\hat\m$ is a probability measure on $\hat{X}=X\times X'$ which couples $\ol\m$ and $\m'$ in an optimal way  w.r.t. $\DD$.
 Then
	\begin{align*}
		& \DD^2(\X_t,\X') + t(1-t) \DD^2(\X_0,\X_1)\\
		=& \int_{\hat{X}} \int_{\hat{X}} \left|\d_t(x,y)-\d'(x',y')\right|^2 d\hat{\m}(x,x')d\hat{\m}
		(y,y') \
		 +\ t (1-t) \int_{X} \left|\d_0(x,y)-\d_1(x,y)\right|^2 d\ol\m(x)d\ol\m(y)\\
		= &\int_{\hat{X}} \int_{\hat{X}}
			\left[ \left| (1-t)\d_0(x,y)+t \d_1(x,y) -\d'(x',y')\right|^2
					+  t (1-t)  \left|\d_0(x,y)-\d_1(x,y)\right|^2
			\right]
			d\hat{\m}(x,x')d\hat{\m}(y,y')\\
		= & \int_{\hat{X}} \int_{\hat{X}}
			\left[  (1-t) \left|\d_0(x_0,y_0) -\d'(x',y')\right|^2
					+  t   \left|\d_1(x_1,y_1)-\d'(x',y')\right|^2
			\right]
			d\hat{\m}(x_0,x_1,x')d\hat{\m}(y_0,y_1,y')\\
		\geq {} & (1-t) \DD^2(\X_0,\X') + t\DD^2(\X_1,\X'),
	\end{align*}
	where the last inequality follows from the fact that $(\pi_0,\pi_2)_*\hat{\m}$ is a coupling of $\m_0$ and $\m'$ - but not necessarily an optimal one for $\DD$. Similarly, for $(\pi_1,\pi_2)_*\hat{\m}$ and $\m_1$, $\m'$.

\medskip

(ii) Given points $\X_0,\ldots, \X_3\in\XX$, choose $\ol\m_i\in\Opt(\m_0,\m_i)$ and define (according to Lemma \ref{sec-glue}) a measure $\mu$ on $X=X_0\times X_1\times X_2\times X_3$ by
\[d\mu(x_0,x_1,x_2,x_3)=d\ol\m_{1,x_0}(x_1)\,d\ol\m_{2,x_0}(x_2)\,d\ol\m_{3,x_0}(x_3)\,d\m_0(x_0)\]
where $d\ol\m_{i,x_0}(x_i)$ denotes the disintegration of $d\ol\m_i(x_0,x_i)$ w.r.t. $d\m_0(x_0)$.
Then
\begin{eqnarray*}
\sum_{i=1}^3\DD^2(\X_0,\X_i)&=&
\int_X\int_X\sum_{i=1}^3\big|\d_0(x_0,y_0)-\d_i(x_i,y_i)\big|^2d\mu(x)\,d\mu(y)\\
&\ge&
\int_X\int_X\frac13\sum_{1\leq i<j\leq 3}\big|\d_i(x_i,y_i)-\d_j(x_j,y_j)\big|^2d\mu(x)\,d\mu(y)\\
&\ge& \frac13\sum_{1\leq i<j\leq 3}\DD^2(\X_i,\X_j).
\end{eqnarray*}
The last inequality here comes from the fact that for all $i,j\in\{1,2,3\}$
\[(\pi_i,\pi_j)_*\mu\in\Cpl(\m_i,\m_j)\]
but is not necessarily optimal.
The first inequality follows from the quadruple inequality in the metric space $(\R^1,|.|)$ applied to the 4 points
$\xi_i=\d_i(x_i,y_i)$, $i=0,1,2,3$, for each fixed pair $(x,y)\in X^2$.
\end{proof}

\begin{corollary}\label{XX-compl}
The metric completion	$(\ol\XX,\DD)$ of $(\XX,\DD)$ is a complete length space of nonnegative curvature in the sense of Alexandrov.
%Both the triangle comparison and the quadruple comparison property are satisfied.

Obviously, also $\ol\XX$ is a cone over its unit sphere $\ol\XX^1$ (which is the completion of $\XX^1$).
\end{corollary}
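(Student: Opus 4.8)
The plan is to transfer each asserted property -- completeness, length space, nonnegative curvature, cone structure -- from $(\XX,\DD)$ to its completion, using only that $\DD$ extends continuously to $\ol\XX\times\ol\XX$ and that $\XX$ is dense in $\ol\XX$. Completeness is automatic. For the length-space property I would use that $(\XX,\DD)$ is geodesic (Theorem \ref{thmoptcoupl}), hence has midpoints: given $\X,\X'\in\ol\XX$ and $\varepsilon>0$, choose $\X_0,\X_1\in\XX$ with $\DD(\X,\X_0)\le\varepsilon$, $\DD(\X',\X_1)\le\varepsilon$ and a midpoint $\M\in\XX$ of $\X_0,\X_1$; then
\[
\DD(\X,\M)\le\DD(\X,\X_0)+\tfrac12\DD(\X_0,\X_1)\le\tfrac12\DD(\X,\X')+2\varepsilon,
\]
and symmetrically for $\DD(\X',\M)$, so $\M$ is a $2\varepsilon$-midpoint of $\X$ and $\X'$ in $\ol\XX$. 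A complete metric space in which every pair of points admits $\varepsilon$-midpoints for all $\varepsilon>0$ is a length space (\cite{bbi}); hence $(\ol\XX,\DD)$ is a complete length space.

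For nonnegative curvature, note that the quadruple comparison of Theorem \ref{thmXX}(ii) is merely an inequality among the six pairwise distances of four points; by continuity of $\DD$ and density of $\XX$ it therefore persists for every quadruple of points in $\ol\XX$. Since $(\ol\XX,\DD)$ is now a \emph{complete} length space, the quadruple and triangle comparisons are equivalent (\cite{lp}) -- it was precisely completeness that was unavailable for $\XX$ itself -- so both comparison properties of Theorem \ref{thmXX} hold on $\ol\XX$, i.e.\ $(\ol\XX,\DD)$ is an Alexandrov space of nonnegative curvature. This is the step I expect to be the main obstacle: the quadruple comparison passes to $\ol\XX$ for free, but recovering the usual triangle-comparison definition requires both the $\varepsilon$-midpoint argument (so that $\ol\XX$ is genuinely a length space) and the nontrivial equivalence of the two comparisons on complete length spaces.

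For the cone structure, recall that Theorem \ref{cone} identifies $(\XX,\DD)$ with the Euclidean cone $\Con(\XX^1,\DD^{(1)})$ with apex $\bfdelta$, where $\DD^{(1)}=2\arcsin(\tfrac12\DD)$. By Proposition \ref{product} one has $\DD\le\sqrt2$ on $\XX^1$, and since $x\le 2\arcsin(x/2)\le\sqrt2\,x$ for $x\in[0,\sqrt2]$, the metrics $\DD$ and $\DD^{(1)}$ are bi-Lipschitz equivalent on $\XX^1$; in particular $\DD^{(1)}\le\pi/2$, so the cone metric carries no angular truncation, and $\DD,\DD^{(1)}$ have the same completion $\ol\XX^1$, to which $\DD^{(1)}$ extends. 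The completion of such a cone is the cone over the completion: a sequence $(s_n,x_n)$ is Cauchy if and only if $s_n$ converges in $[0,\infty)$ and, whenever the limit is positive, $x_n$ is Cauchy in $\XX^1$. Hence $\ol\XX\cong\Con(\ol\XX^1)$, $\ol\XX^1=\{\X\in\ol\XX\spec\size(\X)=1\}$ is the unit sphere around $\bfdelta$, and it coincides with the completion of $\XX^1$. The only care needed here is that the angular metric is $\DD^{(1)}$ rather than $\DD$, which is harmless because they are bi-Lipschitz equivalent, so forming the cone commutes with passing to completions.
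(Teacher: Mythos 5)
Your proposal is correct and follows essentially the same route as the paper: the paper's proof consists of exactly the two observations that the quadruple inequality carries over to the completion by continuity and that, by \cite{lp}, this characterizes nonnegative curvature for complete length spaces. You merely make explicit the steps the paper leaves implicit (the $\varepsilon$-midpoint argument showing $\ol\XX$ is a length space, and the completion-of-a-cone argument for the final assertion), and these details are all sound.
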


\begin{proof}
The quadruple inequality immediately carries over to the completion. According to \cite{lp}, for complete length spaces this characterizes nonnegative curvature in the sense of Alexandrov.
\end{proof}

\begin{corollary}\label{olisalex}	\begin{enumerate}
\item		 $(\ol\XX^1,\DD^{(1)})$ is a complete length space  with  curvature $\geq 1$
			in the sense of Alexandrov.
\item
 $(\XX^1,\DD^{(1)})$ is a geodesic space  with  curvature $\geq 1$: both the triangle and the quadruple comparison property are satisfied.
 \end{enumerate}
					\end{corollary}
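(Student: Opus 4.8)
The plan is to derive both assertions from the cone structure already established — Theorem~\ref{cone} together with Corollary~\ref{XX-compl} — combined with Berestovskii's theorem relating Alexandrov curvature bounds on a metric space to those on its Euclidean cone: the Euclidean cone $C(\Sigma)$ over $\Sigma$ is a complete length space of nonnegative curvature in the sense of Alexandrov if and only if $\Sigma$ is a complete length space of curvature $\geq 1$ (see e.g.\ \cite{bbi}, the chapter on spaces of curvature bounded below, or \cite{BGP92}). A useful preliminary observation removes all diameter/cut-locus subtleties: by Proposition~\ref{product}, for $\X_1,\X_1'\in\XX^1$ one has $\DD(\X_1,\X_1')^2 = 2 - 2\sup_{\ol\m\in\Cpl(\m_1,\m_1')}\int\!\int \d_1\,\d_1'\,d\ol\m\,d\ol\m \leq 2$, since the integrand is nonnegative; hence $\DD\leq\sqrt2$ and $\DD^{(1)}=2\arcsin(\DD/2)\leq\pi/2$ on $\XX^1$, and the same bound passes to $\ol\XX^1$ by density. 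In particular $\diam(\ol\XX^1,\DD^{(1)})\leq\pi/2<\pi$, so $C(\ol\XX^1)$ is the genuine (uncapped) Euclidean cone and no $\DD$-geodesic between two points of the unit sphere runs through the apex $\bfdelta$.

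\textbf{Part (i).} By Corollary~\ref{XX-compl}, $(\ol\XX,\DD)$ is a complete length space of nonnegative Alexandrov curvature, and by Theorem~\ref{cone} — extended from $\XX$ to $\ol\XX$ using continuity of $\size(\cdot)$ and density of $\XX^1$ in $\ol\XX^1$ — it is the Euclidean cone over $(\ol\XX^1,\DD^{(1)})$. Applying Berestovskii's theorem in the direction ``cone $\Rightarrow$ base'' yields at once that $(\ol\XX^1,\DD^{(1)})$ is a complete length space of curvature $\geq 1$ in the sense of Alexandrov.

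\textbf{Part (ii).} It remains to show that $(\XX^1,\DD^{(1)})$ is geodesic; the two comparison properties are then inherited from $\ol\XX^1$. Indeed, the quadruple comparison involves only the six mutual distances of four points, which are the same in $\XX^1$ and in $\ol\XX^1$; and, once $\XX^1$ is known to be geodesic, any geodesic in $\XX^1$ is (by the distance-based definition of geodesic) also a geodesic in $\ol\XX^1$, so the triangle comparison for $\XX^1$ follows from part (i). To produce geodesics, fix $\X,\X'\in\XX^1$ with representatives $(X,\d,\m)$ and $(X',\d',\m')$, choose $\ol\m\in\Opt(\m,\m')$, and let $\X_t=[X\times X',(1-t)\d+t\d',\ol\m]$ be the $\DD$-geodesic of Theorem~\ref{thmoptcoupl}. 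By the law of cosines (Theorem~\ref{cone}), its size is $r(t)=\bigl((1-t)^2+t^2+2t(1-t)\cos\DD^{(1)}(\X,\X')\bigr)^{1/2}\geq\cos\bigl(\tfrac12\DD^{(1)}(\X,\X')\bigr)\geq\cos(\pi/4)>0$ for all $t\in[0,1]$, so the standardization $\hat\X_t:=[X\times X',\,r(t)^{-1}\bigl((1-t)\d+t\d'\bigr),\,\ol\m]$ is a well-defined element of $\XX^1$. Since the radial projection of a minimizing $\DD$-geodesic of a Euclidean cone, reparametrized by arclength, is a minimizing geodesic of the base whenever the base distance is $<\pi$ (here it is $\leq\pi/2$), the curve $t\mapsto\hat\X_t$, reparametrized to constant speed, is a $\DD^{(1)}$-geodesic in $\XX^1$ joining $\X$ and $\X'$. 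Hence $(\XX^1,\DD^{(1)})$ is geodesic, and the proof is complete.

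\textbf{Main obstacle.} The only substantial input is Berestovskii's cone theorem; the care it requires is just to verify its hypotheses — a complete length space as the cone (provided by Corollary~\ref{XX-compl}) and finite diameter of the base — which the smallness estimate $\DD^{(1)}\leq\pi/2$ guarantees. The one further elementary-but-not-quite-trivial fact used is that radial projections of cone geodesics are geodesics of the base for distances $<\pi$; everything else is bookkeeping with the cone structure of Theorem~\ref{cone}.
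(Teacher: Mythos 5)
Your proposal is correct and follows essentially the same route as the paper: part (i) is the ``cone of nonnegative curvature $\Rightarrow$ base has curvature $\ge 1$'' theorem (the paper cites \cite{bbi}, Thm.~10.2.3, which is Berestovskii's statement) applied to $\ol\XX$ over $\ol\XX^1$, and part (ii) obtains geodesics in $\XX^1$ from the cone structure together with geodesics of $(\XX,\DD)$ and then inherits both comparison properties from (i). Your additional checks — the bound $\DD\le\sqrt2$, hence $\DD^{(1)}\le\pi/2$, from Proposition~\ref{product}, and the explicit radial standardization $\hat\X_t$ with $r(t)\ge\cos(\pi/4)>0$ — are correct and merely make explicit details the paper leaves implicit.
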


\begin{proof}
(i) It is a well-known fact from  geometry of Alexandrov spaces, see e.g. \cite{bbi}, Thm. 10.2.3., that
 cone structure together with nonnegative curvature implies that the unit sphere has curvature $\ge1$.
This result immediately applies to the completion $\ol\XX$ and its unit sphere $\ol\XX^1$.

(ii)
The fact that $(\XX^1,\DD^{(1)})$ is a geodesic space follows from Theorem \ref{cone} (`cone structure') together with the fact that $(\XX,\DD)$ itself is a geodesic space.
The triangle and the quadruple inequality now both follow from (i) by applying it to points in $\XX^1$.
\end{proof}

\subsection{Space of Directions, Tangent Cone, and Gradients on $\ol\XX$}

According to the previous Corollary \ref{olisalex}, $(\ol\XX,\DD)$ is a complete length space of nonnegative curvature. Indeed, we will see in Theorem \ref{olisgeo} that $(\ol\XX,\DD)$ is even a geodesic space (not just a length space).
As consequences of general results on Alexandrov spaces this implies  a variety of  existence and structural results on tangent cones, exponential maps and gradients.
We present some of the basic concepts and results, following mainly \cite{Pl}. We formulate these definitions and assertions for the particular space $(\ol\XX,\DD)$. Actually, however, they will be true for arbitrary complete geodesic spaces of lower bounded curvature. The crucial point is that no (local) compactness is required.

%For any given point $\X_0=[X_0,\d_0,\m_0]$ in $\ol\XX$\marginpar{noch klarmachen, warum jedes Element aus $\ol\XX$ %auch wirklich von der Form ist}, each unit speed geodesic $\geod{0}{\tau}$ emanating from $\X_0$ is uniquely %characterized by
%\begin{itemize}
%	\item
%		its terminal point $\X_\tau=[X_\tau,\d_\tau,\m_\tau]$ at distance $\tau>0$ from $\X_0$ and
%	\item
%		the choice of the optimal coupling $\ol{m}$ of $\m_0$ and $\m_\tau$ (which realizes the
%		$\DD$-distance $\tau$).
%\end{itemize}

The \emph{space of geodesic directions} at $\X_0$ -- denoted by $\mathring{T}_{\X_0}^1\ol\XX$ --
consists of equivalence classes of unit speed geodesics emanating from $\X_0$ where two such geodesics $\geod{0}{\tau}$ and $(\X'_t)_{0\leq t\leq \tau'}$ are regarded as equivalent if one of them is an extension of the other one, say e.g.\ $\tau'\geq \tau$ and
\[
	\X_t=\X'_t \quad \text{for } t\leq \tau.
\]	
The space of geodesic directions is a metric space with a metric $\measuredangle$ \index{$\measuredangle$}
given by
\[
	\measuredangle(\X_\bullet,\X'_\bullet)=\lim_{s,t\searrow 0} \arccos
			\left[ \frac{1}{2st}\left(s^2+t^2-\DD^2(\X_s,\X'_t)\right) \right].
\]
The limit always exists. Indeed, as a consequence of the curvature bound, the quantity $\arccos[.]$ in the above formula is non-increasing in $s$ and in $t$.
The \emph{space of directions} at $\X_0$\index{directions} -- denoted by ${T}_{\X_0}^1\ol\XX$\index{t@$\mathring{T}_{\X}^1$, ${T}_{\X}^1$, ${T}_{\X}$} -- is the completion of the space of geodesic directions at $\X_0$ w.r.t.\ the metric $\measuredangle$. The \emph{tangent cone}\index{tangent cone} $T_{\X_0}\ol\XX$ at $\X_0$ is the cone over the space of directions at $\X_0$.

\begin{definition} \begin{enumerate}
\item
Given a number $\lambda\in\R$, a function $\U:\ol\XX\rightarrow\R$ will be called \emph{$\lambda$-Lipschitz continuous} if
\[
	|\U(\X_0)-\U(\X_1)|\leq \lambda \cdot \DD(\X_0,\X_1)
\]
for all $\X_0,\X_1\in\ol\XX$. In this case, we briefly write $\Lip(\U)\leq \lambda$. The function $\U$ is called Lipschitz continuous if it is $\lambda'$-Lipschitz continuous for some $\lambda'$.
\item
Given a number $\kappa\in\R$, the function $\U:\ol\XX\rightarrow\R$ is called \emph{$\kappa$-convex}  if for all geodesics $\geod{0}{1}$ in $\ol\XX$ and for all $t\in[0,1]$,
\[
	\U(\X_t)\leq (1-t)\U(\X_0)+ t\U(\X_1)- \frac{\kappa}{2}t(1-t)\DD^2(\X_0,\X_1).
\]
(Note that if $\U$ is continuous, then the latter is equivalent to
$\frac{d^2}{dt^2}\U(\X_t)\geq\kappa\cdot \DD^2(\X_0,\X_1)$
in distributional sense on the interval $(0,1)$ for each given geodesic.)
The function $\U$ is called \emph{semiconvex} if it is $\kappa'$-convex for some $\kappa'$.
\item
The function $\U$ is called \emph{$\kappa$-concave} (or \emph{semiconcave}) if $-\U$ is $(-\kappa)$-convex (or semiconvex, resp.), that is, if
 $	\U(\X_t)\geq (1-t)\U(\X_0)+ t\U(\X_1)- \frac{\kappa}{2}t(1-t)\DD^2(\X_0,\X_1) $
for all geodesics $\geod{0}{1}$ in $\ol\XX$ and all $t\in[0,1]$.
\end{enumerate}
Note that  functions which we call $\kappa$-concave are called by some other authors $(-\kappa)$-concave.
The sign convention is not consistent in the literature.
\end{definition}

\begin{example}
The function $\X\mapsto -\DD^2(\X,\X_0)$ is $-2$-convex for each $\X_0$.
The same is true for the function
\[\X\mapsto \max\Big\{-\DD^2(\X,\X_i)\spec i=1,\ldots,k\Big\}\]
for any given set of points $\X_1,\ldots,\X_k\in\ol\XX$.
\end{example}

For every Lipschitz continuous, semiconcave function $\U:\ol\XX\to\R$ the \emph{`ascending slope'}  of $\U$ at $\X\in\ol\XX$ is
\[
	\aslope{\X}:=\limsup_{\X'\rightarrow \X} \frac{\left[ \U(\X')-\U(\X)\right]^+}
	{\DD(\X',\X).}
\]
A point $\X\in\ol\XX$ is called \emph{critical} for $\U$ if $\aslope{\X}=0$. The set $\ol\XX_\U$ of critical points for $\U$ is a closed subset of $\ol\XX$. Each local maximizer (as well as each local minimizer) is critical for $\U$.

For each geodesic direction $\Phi\in T_{\X_0}\ol\XX$, say $\Phi=(\X_t)_{0\leq t\leq \tau}$, the \emph{directional derivative} of $\U$ in direction $\Phi$\index{d@$D_\Phi$}
\[
	D_\Phi\U=\lim_{t\searrow 0} \frac{1}{t}[\U(\X_t)-\U(\X_0)]
\]
exists and depends continuously on $\Phi\in T_{\X_0}\ol\XX$ (and thus extends to all of $T_{\X_0}\ol\XX$).

\begin{lemma}
	For every Lipschitz continuous, semiconcave function $\U$ on $\ol\XX$ and each point $\X\in \ol\XX$:
		\begin{enumerate}
		\item
			$\aslope{\X}=\sup \left\{ D_\Phi\U \spec \Phi \in T_\X \ol\XX,\, \|\Phi \|_{T_{\X}\ol\XX}=1
			\right\}$
		\item
			If $\aslope{\X}\neq 0$ then there exists a unique unit vector $\Phi\in T_\X \ol\XX$ such that
			\begin{equation}\label{uniquephi}
				\aslope{\X}=D_\Phi\U.
			\end{equation}
	\end{enumerate}
\end{lemma}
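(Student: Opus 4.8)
The plan is to read everything off from three properties of the \emph{differential} of $\U$ at $\X$, i.e.\ the function $d_\X\U\colon T_\X\ol\XX\to\R$, $\Phi\mapsto D_\Phi\U$, which by the discussion preceding the lemma is well defined and continuous: it is positively $1$-homogeneous, $d_\X\U(\lambda\Phi)=\lambda\,d_\X\U(\Phi)$ for $\lambda\ge0$; it is $\Lip(\U)$-Lipschitz on the tangent cone; and it is \emph{concave} along geodesics of $T_\X\ol\XX$. The first two are immediate from the definition of $D_\Phi\U$ and the Lipschitz bound on $\U$. The third is semiconcavity of $\U$ passed to the blow-up at $\X$: along geodesics emanating from $\X$ the difference quotients $t\mapsto\tfrac1t(\U(\X_t)-\U(\X))$ are monotone up to an $O(t)$-term governed by the semiconvexity constant, and in the limit one obtains a concave, $1$-homogeneous function on $T_\X\ol\XX$. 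This is a general fact about semiconcave functions on spaces of lower bounded curvature, and I would simply invoke it (cf.\ \cite{Pl}); it is exactly here that the curvature bound enters. I also use that, by construction, $T_\X\ol\XX$ is the metric cone over $T_\X^1\ol\XX$, and that $T_\X^1\ol\XX$ is a complete space admitting midpoints (being of curvature $\ge1$).

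\emph{Proof of (i).} The inequality $D_\Phi\U\le\aslope{\X}$ for every unit $\Phi$ is direct: if $\Phi\in\mathring T_\X^1\ol\XX$ is represented by a unit-speed geodesic $(\X_t)_{t\ge0}$, then $\DD(\X_t,\X)=t$, so $\tfrac1t(\U(\X_t)-\U(\X))\le\tfrac{[\U(\X_t)-\U(\X)]^+}{\DD(\X_t,\X)}$, and letting $t\searrow0$ gives $D_\Phi\U\le\aslope{\X}$; by continuity of $d_\X\U$ this persists for all $\Phi\in T_\X^1\ol\XX$. For the reverse inequality we may assume $a:=\aslope{\X}>0$. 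Choose $\X_n\to\X$ with $\ell_n:=\DD(\X_n,\X)\to0$ and $\tfrac{[\U(\X_n)-\U(\X)]^+}{\ell_n}\to a$; for $n$ large $\U(\X_n)>\U(\X)$. Let $\Phi_n$ be the direction at $\X$ of a unit-speed geodesic $(\X^n_s)_{0\le s\le\ell_n}$ from $\X$ to $\X_n$, and let $\kappa\in\R$ be such that $\U$ is $\kappa$-concave. Reparametrising this geodesic on $[0,1]$, applying the $\kappa$-concavity inequality, dividing by the running distance and letting the parameter tend to $0$, we get
\[
D_{\Phi_n}\U\ \ge\ \frac{\U(\X_n)-\U(\X)}{\ell_n}-\frac{\kappa}{2}\,\ell_n .
\]
The right-hand side converges to $a$ as $n\to\infty$, so $\sup_{\Phi}D_\Phi\U\ge a=\aslope{\X}$, which together with the first inequality proves (i). (If $\X$ is critical, $\aslope{\X}=0$ and the first inequality already gives $D_\Phi\U\le0$ for all $\Phi$.)

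\emph{Proof of (ii).} Put $a:=\aslope{\X}>0$; by (i), $a=\sup\{D_\Phi\U\colon\Phi\in T_\X^1\ol\XX\}$. The key estimate is: given unit directions $\Phi,\Psi$ with $\alpha:=\measuredangle(\Phi,\Psi)\in[0,\pi]$, let $M$ be the midpoint of the geodesic joining $\Phi$ and $\Psi$ in $T_\X\ol\XX$; the cone structure gives $\|M\|_{T_\X\ol\XX}=\cos(\alpha/2)$. By concavity of $d_\X\U$, $D_M\U\ge\tfrac12 D_\Phi\U+\tfrac12 D_\Psi\U$; if $D_\Phi\U+D_\Psi\U>0$ this forces $M$ off the apex (where $d_\X\U$ vanishes by $1$-homogeneity), so $\cos(\alpha/2)>0$, and since $D_{M/\|M\|}\U\le a$ and $d_\X\U$ is $1$-homogeneous,
\[
a\ \ge\ D_{M/\|M\|}\U\ =\ \frac{D_M\U}{\cos(\alpha/2)}\ \ge\ \frac{\tfrac12 D_\Phi\U+\tfrac12 D_\Psi\U}{\cos(\alpha/2)},
\]
i.e.\ $\cos(\alpha/2)\ge\tfrac1{2a}(D_\Phi\U+D_\Psi\U)$. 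Applying this to a maximizing sequence $(\Phi_n)$, $D_{\Phi_n}\U\to a$, yields $\cos(\tfrac12\measuredangle(\Phi_m,\Phi_n))\to1$, hence $\measuredangle(\Phi_m,\Phi_n)\to0$: the sequence is Cauchy in the complete space $T_\X^1\ol\XX$, and its limit $\Phi$ satisfies $D_\Phi\U=a$ by continuity of $d_\X\U$, which is (\ref{uniquephi}). Uniqueness: if $D_\Phi\U=D_\Psi\U=a$, the estimate gives $\cos(\tfrac12\measuredangle(\Phi,\Psi))\ge1$, so $\measuredangle(\Phi,\Psi)=0$ and $\Phi=\Psi$.

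\emph{Main obstacle.} The genuine content sits in two places. First, the concavity of the differential $d_\X\U$ on the tangent cone: this is where the lower Alexandrov-curvature bound is really used (through the first-variation/angle-comparison behaviour of geodesics issuing from $\X$), and since the argument uses no special structure of $\ol\XX$ it has to be imported from the general theory. Second, because $\ol\XX$ — hence $T_\X^1\ol\XX$ — is in general not (locally) compact, the existence of a maximizing direction cannot be obtained from a compactness argument; it is precisely the curvature bound on $T_\X^1\ol\XX$, via the midpoint estimate above, that makes the maximizing sequence Cauchy. Everything else is routine bookkeeping with cones and difference quotients.
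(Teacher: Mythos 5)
Your part (i) is correct and complete: the upper bound is immediate from the definition of the ascending slope, and the lower bound via the $\kappa$-concavity inequality along the geodesics from $\X$ to an almost-maximizing sequence $\X_n$ (which exist because $\ol\XX$ is geodesic) is exactly the right computation. Note that the paper itself offers no proof of this lemma — it is imported wholesale from the general theory of semiconcave functions on complete (not locally compact) spaces with curvature bounded below, following \cite{Pl} — so there is no in-paper argument to compare against; your part (ii) also has the correct architecture, namely the inequality $D_\Phi\U+D_\Psi\U\le 2\,\aslope{\X}\cos\big(\tfrac12\measuredangle(\Phi,\Psi)\big)$ followed by the observation that it forces any maximizing sequence of directions to be Cauchy in the complete space $T^1_\X\ol\XX$.

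The genuine gap is in your derivation of that key inequality. You take the midpoint $M$ of $\Phi$ and $\Psi$ \emph{inside the tangent cone} and apply concavity of $d_\X\U$ along the geodesic joining them, asserting that $T^1_\X\ol\XX$ "admits midpoints, being of curvature $\ge1$". Completeness plus a lower curvature bound in the comparison sense does \emph{not} give existence of midpoints without local compactness, and this is not a removable technicality: there are complete infinite-dimensional Alexandrov spaces whose tangent cone at some point fails to be a geodesic space (Halbeisen's example), and $\ol\XX$ is exactly the kind of space where one has no a priori control. For the same reason, "concavity of $d_\X\U$ along geodesics of $T_\X\ol\XX$" is either vacuous or unavailable between two arbitrary directions. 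The standard repair — and the actual content of the cited references — is to prove the inequality at positive scale in $\ol\XX$ itself, where geodesics and midpoints do exist: take $\X^\Phi_t$, $\X^\Psi_t$ at distance $t$ along the two geodesics, let $Z_t$ be a point on the geodesic joining them, bound $\U(Z_t)$ from below by the $\kappa$-concavity of $\U$ along that geodesic and from above by $\U(\X)+\big(\aslope{\X}+\epsilon\big)\DD(\X,Z_t)$ (the definition of the slope as a $\limsup$), and control $\DD(\X,Z_t)$ and $\DD(\X^\Phi_t,\X^\Psi_t)$ via the monotonicity of comparison angles; letting $t\searrow0$ then yields the inequality for geodesic directions, and density plus continuity of $d_\X\U$ extends it to all of $T^1_\X\ol\XX$. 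Until that step is carried out, your proof of (ii) rests on an unjustified (and in general false) structural assumption about the tangent cone.
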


%\begin{definition}
	The \emph{gradient} of $\U$ at $\X\in \ol\XX$, denoted by $\nabla\U(\X)$ or more precisely by $\nabla^{\ol\XX}\U(\X)$,\index{g@$\nabla$} is now defined as an element in $T_\X \ol\XX$ as follows:
	\begin{itemize}
		\item
			if $\X$ is critical for $\U$, put $\nabla\U(\X)=0$,
		\item
			otherwise, put $\nabla\U(\X):=t\Phi$ where $\Phi\in T_\X \ol\XX$ is the unique unit
			tangent vector satisfying \eqref{uniquephi} and $t:=\aslope{\X}$.
	\end{itemize}
%\end{definition}
Note that by construction,
\[
	\left\Vert \nabla\U(\X) \right\Vert_{T_{\X}\ol\XX} = \aslope{\X}.
\]

\subsection{Gradient Flows on $\ol\XX$}

\begin{definition}
	A curve $\X_\bullet:[0,L)\rightarrow \ol\XX$ (with $L\in(0,\infty]$) is called \emph{ascending gradient curve
	of $\U$} or solution of the (\emph{`upward gradient flow'}) differential equation
	\[
		\dot{\X}_t= \nabla\U(\X_t)
	\]
if for all $t\in [0,L)$:
	\begin{align}
		\lim_{s\searrow 0}  \frac{1}{s} \DD(\X_{t+s},\X_t)& =\aslope{\X_t}\label{grad flow equ1}
		\intertext{and}
		\lim_{s\searrow 0} \frac{1}{s} [\U(\X_{t+s})-\U(\X_t)]&=\aslope{\X_t}^2.\label{grad flow equ2}
	\end{align}
\end{definition}

\begin{theorem}Let $\U:\ol\XX\to\R$ be  Lipschitz continuous and $\kappa$-concave.
\begin{enumerate}
\item
	Then for each $\X_0\in\ol\XX$ there exists a unique ascending gradient curve
	$(\X_t)_{0\leq t<\infty}$ of $\U$.
\item
For all $\X_0,\X_0'\in\ol\XX$ and every $t>0$
\[\DD\Big(\X_t,\X_t'\Big)\le e^{\kappa\, t}\cdot \DD\Big(\X_0,\X_0'\Big).\]
\end{enumerate}
\end{theorem}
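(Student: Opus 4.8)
The plan is to obtain both assertions from the general theory of gradient flows of Lipschitz, semiconcave functions on complete geodesic spaces of lower bounded curvature, specialized to $(\ol\XX,\DD)$ --- which by Corollary~\ref{XX-compl} and Theorem~\ref{olisgeo} is a complete \emph{geodesic} space of nonnegative curvature. By the Lemma preceding the theorem and the definition of $\nabla\U$, the gradient $\nabla\U(\X)\in T_\X\ol\XX$ exists at every point, $\|\nabla\U(\X)\|_{T_\X\ol\XX}=|D^+\U(\X)|\le\Lip(\U)=:\lambda_0$, and $D_\Phi\U\le\langle\nabla\U(\X),\Phi\rangle$ for every direction $\Phi$ (the characterizing property of the gradient in curvature-bounded spaces, cf.\ \cite{Pl}). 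I would prove (i) and (ii) simultaneously: an a~priori contraction estimate --- valid between any pair of exact or approximate gradient curves --- yields uniqueness in (i), drives the convergence of the discrete scheme that supplies existence, and is precisely assertion (ii). Crucially this estimate uses only the curvature bound and the semiconcavity, never compactness, which is essential since $\ol\XX$ is not locally compact.

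The heart of the matter is an evolution variational inequality. Let $(\X_t)$ be an ascending gradient curve, $\Y\in\ol\XX$, and let $\gamma\colon[0,\ell]\to\ol\XX$ be the unit-speed geodesic from $\X_t$ to $\Y$, $\ell=\DD(\X_t,\Y)$. The first variation formula in spaces of lower bounded curvature, combined with $D_\Phi\U\le\langle\nabla\U(\X_t),\Phi\rangle$, gives
\[\tfrac{d^+}{ds}\DD(\X_{t+s},\Y)\big|_{s=0}\ \le\ -\big\langle\nabla\U(\X_t),\uparrow_{\X_t}^{\Y}\big\rangle\ \le\ -D_{\uparrow_{\X_t}^{\Y}}\U\ =\ -(\U\circ\gamma)'(0^+).\]
Since $\U$ is $\kappa$-concave, $s\mapsto(\U\circ\gamma)(s)+\tfrac{\kappa}{2}s(\ell-s)$ is concave on $[0,\ell]$ (the concavity inequality, applied on every subinterval of $\gamma$), hence $(\U\circ\gamma)''\le\kappa$ distributionally and $(\U\circ\gamma)'(0^+)\ge\big(\U(\Y)-\U(\X_t)\big)/\ell-\tfrac{\kappa}{2}\ell$. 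Multiplying the display by $\DD(\X_t,\Y)$ yields
\[\tfrac12\tfrac{d^+}{ds}\DD^2(\X_{t+s},\Y)\big|_{s=0}\ \le\ \U(\X_t)-\U(\Y)+\tfrac{\kappa}{2}\DD^2(\X_t,\Y).\]
Applying this, for two ascending gradient curves $(\X_t)$ and $(\X'_t)$, once with base point $\X_t$ and target $\X'_t$ and once with base point $\X'_t$ and target $\X_t$, adding, and using that the first variation estimate is locally uniform in the far point (so that moving both endpoints decouples to first order; see \cite{Pl}), I obtain $\tfrac{d^+}{dt}\DD^2(\X_t,\X'_t)\le2\kappa\,\DD^2(\X_t,\X'_t)$. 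Gronwall's lemma then gives $\DD(\X_t,\X'_t)\le e^{\kappa t}\DD(\X_0,\X'_0)$, which is (ii); with $\X_0=\X'_0$ this is uniqueness in (i).

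For existence I would construct, for each $T<\infty$ and mesh $\tau>0$, the Euler polygon $\X_\tau(\cdot)$ on $[0,T]$: put $\X_\tau(0)=\X_0$ and, given $\X_\tau(k\tau)$, let $\X_\tau$ run on $[k\tau,(k+1)\tau]$ along a geodesic issuing from $\X_\tau(k\tau)$ in a near-optimal geodesic direction $\Phi_k$ --- with $D_{\Phi_k}\U\ge|D^+\U(\X_\tau(k\tau))|-\delta_k$ and $\delta_k\downarrow0$ --- at speed $|D^+\U(\X_\tau(k\tau))|$; this curve is $\lambda_0$-Lipschitz. A discrete counterpart of the computation above --- with the triangle and quadruple comparison of Theorem~\ref{thmXX} in place of the first variation formula, and $\kappa$-concavity controlling $\U$ along each geodesic step --- yields $\sup_{t\in[0,T]}\DD\big(\X_\tau(t),\X_{\tau'}(t)\big)\le C(T)\,(\tau+\tau')$, so the polygons form a uniformly Cauchy net as $\tau\to0$ and converge to a $\lambda_0$-Lipschitz limit $(\X_t)_{0\le t\le T}$; since the speed stays bounded there is no blow-up and the curve extends to $[0,\infty)$. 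It then remains to verify \eqref{grad flow equ1} and \eqref{grad flow equ2}: the $k$-th step has length $\tau|D^+\U(\X_\tau(k\tau))|$, while $\kappa$-concavity along it gives $\U(\X_\tau((k+1)\tau))-\U(\X_\tau(k\tau))\ge\tau|D^+\U(\X_\tau(k\tau))|^2-O(\tau^2)-\delta_k\tau$; letting $\tau\to0$ and using the upper semicontinuity of $\X\mapsto|D^+\U(\X)|$, together with the continuity of $t\mapsto|D^+\U(\X_t)|$ along the flow forced by the energy balance, gives both relations.

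The hard part will be the convergence of the Euler scheme \emph{without any compactness}: the lack of local compactness rules out the usual argument of extracting a convergent subsequence of Euler polygons, so the uniform-Cauchy estimate $\DD(\X_\tau(t),\X_{\tau'}(t))\le C(T)(\tau+\tau')$ must be proven quantitatively, extracting the constant $C(T)$ and the $O(\tau)$ rate from the curvature lower bound (quadruple comparison) and the semiconcavity of $\U$ alone, and one must also check that the scheme is well posed --- that $\nabla\U$ can be followed along genuine geodesic directions, or that the approximate directions $\Phi_k$ suffice. This is exactly the general apparatus for gradient flows on complete geodesic spaces of lower bounded curvature recorded in \cite{Pl}, which I would invoke; the only inputs specific to $\ol\XX$ are its nonnegative curvature and geodesicity, supplied by Theorem~\ref{thmXX}, Corollary~\ref{XX-compl}, and Theorem~\ref{olisgeo}.
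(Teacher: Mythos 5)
Your proposal is correct and follows essentially the same route as the paper: the paper's own proof simply invokes the general theory of gradient flows on complete geodesic spaces of lower curvature bound (Plaut, Lytchak, building on Perelman--Petrunin), stressing exactly the point you stress, namely that existence does not require local compactness, and taking the uniqueness and the $e^{\kappa t}$-Lipschitz bound from Lytchak. Your sketch merely unpacks the internals of that citation (the EVI/contraction estimate and the Euler-scheme construction), and the sign bookkeeping for $\kappa$-concavity matches the claimed bound.
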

The uniqueness in particular implies that $\X_t=\X_\tau$ for all $t\ge\tau$ where $\tau=\inf\{s\ge0\spec \X_s\in\ol\XX_\U\}$.

%Note that these solutions to the gradient flow equation (\ref{grad flow equ1},\ref{grad flow equ2}) are in particular solutions to the \emph{evolution variation inequality} (`EVI') as considered in \cite{ags}.
\begin{proof}
If $\X_0\in\ol\XX_\U$, then one possible solution to the gradient flow equation (as defined above) is always given by
\[\X_t=\X_0\quad (\forall t\ge0).\]
For  $\X_0\not\in\ol\XX_\U$, Plaut \cite{Pl} as well as Lytchak \cite{Ly}, based on (unpublished) previous work of Perelman and Petrunin \cite{pp}, proved the existence of gradient flow curves. (The concept of gradient-like curves used in \cite{Pl} leads to re-parametrizations of gradient flow curves -- at least as long as they do not hit  the closed set  $\ol\XX_\U$.) The crucial point is that this existence result does not require any compactness of the underlying space $\ol\XX$.
The uniqueness result and exponential Lipschitz bound is taken from
 \cite{Ly}.
\end{proof}

\begin{remark}
In analysis (PDEs, mathematical physics), instead of the upward gradient flow mostly the \emph{downward gradient flow} for a given function $\U$ on $\ol\XX$ is considered.
\[
		\dot{\X}_t= \nabla(-\U)(\X_t).
	\]
It is just the upward gradient flow for $-\U$.
(Note that in metric geometry we have to distinguish between $\nabla(-\U)$ and $-\nabla\U$.)

This requires the function $\U$ now to be semiconvex. The relevant quantity then is the \emph{descending slope}
\[ |D^-\U(\X)|= |D^+(-\U)(\X)|=\limsup_{\X'\rightarrow \X} \frac{\left[ \U(\X)-\U(\X')\right]^+}
	{\DD(\X',\X).}\]
\end{remark}

\section{The Space $\YY$ of Gauged Measure Spaces}
\subsection{Gauged Measure Spaces}
In order to analyze and characterize elements $\X$ in the completion $\ol\XX$ of the space of mm-spaces, and to obtain a more explicit representation of tangent spaces $T_\X$ and exponential maps $\Expf{}$, we embed the space of mm-spaces into a bigger space $\YY$ which in the sense of Alexandrov geometry is more regular. (In particular, it has less boundary.)

\begin{definition}\label{def-homo}
A \emph{gauged measure space} is a triple $(X,\f,\m)$ consisting of a Polish space $X$, a Borel probability measure $\m$ on $X$, and a function $\f\in L^2_s(X^2,\m^2)$. The latter denotes the space of symmetric functions $f$ on $X\times X$ which are square integrable w.r.t. the product measure $\m \otimes \m$.
Any such function $\f$ is called \emph{gauge}.\index{gauge function}
\end{definition}
This extends the concept of \emph{metric measure spaces} in two respects: i) the function $\f$ replacing the distance $\d$ is no longer requested to satisfy the triangle inequality; ii) even if it did so, it is no longer requested to induce the (Polish) topology on $X$. Metric (or gauge) and topology are decoupled to the greatest possible extent. The only remaining constraint is that $\f$ should be measurable w.r.t. the $\sigma$-field induced by the topology. To abandon the triangle inequality will make the space of all gauged measure spaces `more linear'.

The size of a gauged measure is simply defined as the $L^2$-norm of its gauge function, i.e.
\[\size\big( X,\f,\m\big)=\Big(\int_{X}\int_X \f^2(x,y) d\m(x)\,d\m(y)\Big)^{1/2}.\]
\begin{definition}
\begin{enumerate}
\item
The $L^2$-distortion distance between two gauged measure spaces $(X_0,\f_0,\m_0)$ and $(X_1,\f_1,\m_1)$ is defined by
\begin{eqnarray*}
	\lefteqn{\DD\Big((X_0,\f_0,\m_0),(X_1,\f_1,\m_1)\Big)}\\
&=& \inf \Bigg\{  \bigg( \int_{X_0\times X_1} \int_{X_0\times X_1}
		\left| \f_0(x_0,y_0)-\f_1(x_1,y_1) \right|^2 d\ol{\m}
		(x_0,x_1)d\ol{\m}(y_0,y_1)\bigg)^{1/2} \spec \ol{\m}\in \Cpl(\m_0,\m_1)
		\Bigg\}.
	\end{eqnarray*}
\item
Every minimizer $\ol\m$ of the above RHS will be called \emph{optimal coupling} of the given gauged measure spaces. In other words, a coupling $\ol{\m}\in \Cpl(\m_0,\m_1)$ is optimal if
\begin{eqnarray*}
	\DD\Big((X_0,\f_0,\m_0),(X_1,\f_1,\m_1)\Big)=  \bigg( \int \int
		\left| \f_0-\f_1 \right|^2 d\ol{\m}\,
		d\ol{\m}\bigg)^{1/2}.
	\end{eqnarray*}
\item
Two gauged measure spaces $(X_0,\f_0,\m_0)$ and $(X_1,\f_1,\m_1)$ are called \emph{homomorphic} if \[\DD\Big((X_0,\f_0,\m_0),(X_1,\f_1,\m_1)\Big)=0.\] Obviously, this defines an equivalence relation.
\end{enumerate}
\end{definition}

\begin{lemma}\label{obdA}
\begin{enumerate}
\item
Every gauged measure space $(X,\f,\m)$ is homomorphic to the space $(I,\f',\Leb^1)$ for a suitable
$\f'\in L^2_s(I^2,\Leb^2)$.
Indeed, one may choose $\f'=\psi^*\f$ for any $\psi\in\Par(\m)$.
\item
An optimal coupling of $(X,\f,\m)$ and $(I,\psi^*\f,\Leb^1)$ is given by
$(\psi, \Id)_*\Leb^1$.
\end{enumerate}
\end{lemma}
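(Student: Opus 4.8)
The plan is to build the claimed homomorphism and the claimed optimal coupling explicitly from a parametrization, and then to verify the two defining identities by a change of variables. First I would invoke Lemma \ref{standard borel}(i) to fix a parametrization $\psi\in\Par(\m)$, i.e.\ a Borel map $\psi:I\to X$ with $\psi_\push\Leb^1=\m$, and I would fix once and for all a Borel-measurable representative of the $L^2$-class $\f$, so that $\f':=\psi^\push\f$, $\f'(s,t)=\f(\psi(s),\psi(t))$, is a well-defined Borel function on $I^2$; it is symmetric because $\f$ is. Since $(\psi\times\psi)_\push\Leb^2=(\psi_\push\Leb^1)\otimes(\psi_\push\Leb^1)=\m\otimes\m$, the image-measure formula gives
\[\int_{I^2}|\f'|^2\,d\Leb^2=\int_{X^2}|\f|^2\,d\m^2=\size(X,\f,\m)^2<\infty,\]
so $\f'\in L^2_s(I^2,\Leb^2)$ and $(I,\f',\Leb^1)$ is indeed a gauged measure space.

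Next I would consider the candidate coupling $\ol\m:=(\psi,\Id)_\push\Leb^1\in\prob(X\times I)$. Its marginals are $(\pi_0)_\push\ol\m=\psi_\push\Leb^1=\m$ and $(\pi_1)_\push\ol\m=\Id_\push\Leb^1=\Leb^1$, so $\ol\m\in\Cpl(\m,\Leb^1)$. Using $\ol\m\otimes\ol\m=\big((\psi,\Id)\times(\psi,\Id)\big)_\push\Leb^2$ and the image-measure formula once more, the distortion of this coupling equals
\[\int\int\big|\f(x_0,y_0)-\f'(x_1,y_1)\big|^2\,d\ol\m(x_0,x_1)\,d\ol\m(y_0,y_1)=\int_{I^2}\big|\f(\psi(s),\psi(t))-\f'(s,t)\big|^2\,ds\,dt,\]
and this vanishes because $\f'(s,t)=\f(\psi(s),\psi(t))$ by the very definition of $\f'$. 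Since the integrand in the definition of $\DD$ is pointwise nonnegative, this single computation proves both claims at once: it shows $\DD\big((X,\f,\m),(I,\f',\Leb^1)\big)=0$, i.e.\ the two spaces are homomorphic (part (i), for an arbitrary choice of $\psi\in\Par(\m)$), and it shows that $\ol\m$ realizes the infimum value $0$, hence is an optimal coupling (part (ii)).

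I do not expect any genuine obstacle here; the only points needing a bit of care are bookkeeping ones. One must pick a Borel representative of $\f$ before forming the pull-back $\psi^\push\f$ pointwise, and one must justify the identity $(\psi\times\psi)_\push\Leb^2=\m^2$ — which is exactly the content of Lemma \ref{standard borel}(ii)–(iii) specialized to $X_0=X_1=X$, $\psi_0=\psi_1=\psi$. It is worth stressing that, unlike in Lemma \ref{optcoupl1}, no lower semicontinuity or compactness of $\Cpl(\m,\Leb^1)$ is needed to obtain optimality: the exhibited coupling already attains the a priori minimum $0$ of a nonnegative functional.
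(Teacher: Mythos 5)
Your proof is correct and follows essentially the same route as the paper: define $\f'=\psi^*\f$, take the coupling $(\psi,\Id)_\push\Leb^1$, and observe that the change of variables reduces the distortion integral to $\int_I\int_I|\psi^*\f-\f'|^2\,d\Leb^1\,d\Leb^1=0$, which simultaneously proves homomorphy and optimality. The extra bookkeeping you supply (choosing a Borel representative of $\f$, verifying $(\psi\times\psi)_\push\Leb^2=\m^2$ so that $\f'\in L^2_s(I^2,\Leb^2)$, and noting that no compactness argument is needed) is all accurate and merely makes explicit what the paper leaves implicit.
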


\begin{proof}
Define $\ol\m=(\psi, \Id)_*\Leb^1$ for $\psi\in\Par(\m)$. Then obviously $\ol\m$ is a coupling of $\m=\psi_*\Leb^1$ and $\Leb^1$.
Moreover,
\begin{eqnarray*}
\int_{X\times I} \int_{X\times I}
		\left| \f-\f' \right|^2 d\ol{\m}\,
		d\ol{\m}
=
\int_{I} \int_{ I}
		\left| \psi^*\f-\f' \right|^2 d\Leb^1\,
		d\Leb^1
=0
	\end{eqnarray*}
according to our choice $\f'=\psi^*\f$.
\end{proof}

\begin{proposition}\label{exist opt}
For every pair of gauged measure spaces
$(X_0,\f_0,\m_0)$ and $(X_1,\f_1,\m_1)$ there exists an optimal coupling, i.e. a measure $\ol\m\in\Cpl(\m_0,\m_1)$ which realizes the $L^2$-distortion distance.
\end{proposition}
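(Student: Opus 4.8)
The plan is to mimic the proof of Lemma~\ref{optcoupl1} (existence of optimal couplings for mm-spaces), but working with the gauge functions $\f_0,\f_1$ in place of the metrics $\d_0,\d_1$. The key structural facts are already available: by Lemma~\ref{cpl-comp}, $\Cpl(\m_0,\m_1)$ is a nonempty weakly compact subset of $\prob(X_0\times X_1)$, so it suffices to show that the distortion functional
\[
\dis_2(\ol\m) := \bigg(\int_{X_0\times X_1}\int_{X_0\times X_1}\big|\f_0(x_0,y_0)-\f_1(x_1,y_1)\big|^2\,d\ol\m(x_0,x_1)\,d\ol\m(y_0,y_1)\bigg)^{1/2}
\]
is lower semicontinuous on $\Cpl(\m_0,\m_1)$ with respect to weak convergence; then it attains its infimum on the compact set, and any minimizer is by definition an optimal coupling.

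The main obstacle — and the only real difference from Lemma~\ref{optcoupl1} — is that $\f_0,\f_1$ are merely $L^2$-functions, not continuous, so the integrand is not a bounded continuous function on $(X_0\times X_1)^2$ and one cannot invoke weak convergence directly. First I would reduce to the canonical representation: by Lemma~\ref{obdA}(i), $(X_i,\f_i,\m_i)$ is homomorphic to $(I,\psi_i^*\f_i,\Leb^1)$ for parametrizations $\psi_i\in\Par(\m_i)$, and since $\DD$ is a pseudometric (the triangle inequality is Lemma~\ref{dildis1}, whose proof only used the $L^p$-triangle inequality and gluing, both valid for gauges) and vanishes between homomorphic spaces, $\DD\big((X_0,\f_0,\m_0),(X_1,\f_1,\m_1)\big)=\DD\big((I,g_0,\Leb^1),(I,g_1,\Leb^1)\big)$ where $g_i=\psi_i^*\f_i\in L^2_s(I^2,\Leb^2)$. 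Here one must also check that an optimal coupling for the interval representatives pulls back to an optimal coupling of the originals, which follows from the gluing lemma exactly as in the proof of Lemma~\ref{standard borel}(iii): given $\ol\nu\in\Opt(\Leb^1,\Leb^1)$ for $(g_0,g_1)$, set $\ol\m=(\psi_0,\psi_1)_*\ol\nu\in\Cpl(\m_0,\m_1)$, and its distortion equals that of $\ol\nu$.

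Next, for the interval representatives I would use a standard density/approximation argument to get lower semicontinuity. Approximate $g_0,g_1$ in $L^2_s(I^2,\Leb^2)$ by bounded continuous symmetric functions $g_0^{(k)},g_1^{(k)}$; for each $k$ the functional $\ol\m\mapsto\int\int|g_0^{(k)}(s,t)-g_1^{(k)}(s',t')|^2\,d\ol\m\,d\ol\m$ is weakly continuous (the integrand is bounded continuous on the product, and a product $\ol\m\otimes\ol\m$ converges weakly when $\ol\m$ does). Since all couplings of $\Leb^1$ and $\Leb^1$ have the same marginals, the replacement error
\[
\Big|\dis_2(\ol\m)-\dis_2^{(k)}(\ol\m)\Big|\le 2\|g_0-g_0^{(k)}\|_{L^2(\Leb^2)}+2\|g_1-g_1^{(k)}\|_{L^2(\Leb^2)}
\]
is uniform in $\ol\m\in\Cpl(\Leb^1,\Leb^1)$ (expand the difference of squares and apply Cauchy–Schwarz, using that each marginal of $\ol\m^{\otimes 2}$ is $\Leb^1\otimes\Leb^1$; note $\|g_i\|_{L^2(\Leb^1\otimes\Leb^1)}=\size(X_i,\f_i,\m_i)<\infty$). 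A uniform limit of continuous functions is continuous, hence $\dis_2$ is weakly continuous — in particular lower semicontinuous — on the compact set $\Cpl(\Leb^1,\Leb^1)$, so it attains its minimum. Transporting the minimizer back through $(\psi_0,\psi_1)$ as above produces the desired optimal coupling $\ol\m\in\Cpl(\m_0,\m_1)$, completing the proof.
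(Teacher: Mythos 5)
Your proposal is correct and follows essentially the same route as the paper: reduce to the canonical representatives $(I,\psi_i^*\f_i,\Leb^1)$ via parametrizations, prove that the distortion functional is weakly continuous on the compact set $\Cpl(\Leb^1,\Leb^1)$ by uniform approximation with continuous gauges (the uniform error control coming from the fixed marginals of $\ol\m\otimes\ol\m$), and push the minimizer forward through $(\psi_0,\psi_1)$, using the vanishing distance between homomorphic spaces to confirm optimality. This is precisely the paper's two-step argument (Proposition \ref{exist opt} combined with Lemma \ref{approx-lsc}).
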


\begin{proof}
(i) Let us first prove the claim in the particular case $X_0=X_1=I$ and $\m_0=\m_1=\Leb^1$.
As before in the proof of Lemma \ref{optcoupl1} the claim will follow from compactness of the set $\Cpl(\m_0,\m_1)$ and lower semicontinuity of the functional $\m\mapsto \big(\int \int
		\big| \f_0-\f_1 \big|^2 d{\m}\,
		d{\m}\big)^{1/2}$ on $\Cpl(\m_0,\m_1)$.
The former remains true in this more degenerate setting, i.e. Lemma \ref{cpl-comp} applies without any change.
The latter requires more care
and will be the content of the next lemma.

(ii) The case of general $X_0,X_1$ and $\m_0,\m_1$ can be reduced to the previous case as follows.
Choose $\psi_i\in\Par(\m_i)$ for $i=0,1$ and put $\f_i'={\psi_i}^*\f_i$. Apply the previous part (i) to deduce the existence of a coupling $\tilde\m\in\Cpl(\Leb^1,\Leb^1)$ which minimizes
\[
\bigg(\int_{I^2} \int_{I^2}
		\big| \f_0'(x_0,y_0)-\f_1'(x_1,y_1) \big|^2 d\tilde\m(x_0,x_1)\,
		d\tilde\m(y_0,y_1)\bigg)^{1/2}.
\]
Put $\ol\m=(\psi_0,\psi_1)_*\tilde\m$. This defines a coupling of $\m_0$ and $\m_1$ and satisfies
\begin{eqnarray*}
\lefteqn{
\bigg(\int_{X_0\times X_1} \int_{X_0\times X_1}
		\big| \f_0-\f_1 \big|^2 d\ol\m\,
		d\ol\m\bigg)^{1/2}}\\
&=&
\bigg(\int_{I^2} \int_{I^2}
		\big| \psi_0^*\f_0-\psi_1^*\f_1 \big|^2 d\tilde\m\,
		d\tilde\m\bigg)^{1/2}\\
&=&\DD\Big((I,\f_0',\Leb^1),(I,\f_1',\Leb^1)\Big)\\
&\le&
\DD\Big((I,\f_0',\Leb^1),(X_0,\f_0,\m_0)\Big)+
\DD\Big((X_0,\f_0,\m_0),(X_1,\f_1,\m_1)\Big)
+\DD\Big((X_1,\f_1,\m_1),(I,\f_1',\Leb^1)\Big)\\
&=&\DD\Big((X_0,\f_0,\m_0),(X_1,\f_1,\m_1)\Big)
\end{eqnarray*}
according to the previous lemma.
This proves the optimality of $\ol\m$.
\end{proof}

\begin{lemma}\label{approx-lsc}
Given two functions $\f_0,\f_1\in L^2_s(I^2,\Leb^2)$, the
functional
\[\m\mapsto \Xi(\m):=\bigg(\int_{I^2} \int_{I^2}
		\big| \f_0(x_0,y_0)-\f_1(x_1,y_1) \big|^2 d{\m}(x_0,x_1)\,
		d{\m}(y_0,y_1)\bigg)^{1/2}\]
is continuous on $\Cpl(\Leb^1,\Leb^1)$, the latter being regarded as a subset of $\mathcal P(I^2)$ equipped with the topology of weak convergence.
\end{lemma}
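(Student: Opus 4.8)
The plan is to reduce to the case of \emph{bounded continuous} gauges — for which continuity of $\Xi$ is immediate — and then to pass back to general $L^2$-gauges via an approximation that is \emph{uniform} over all couplings, the uniformity being a consequence of the fact that every $\m\in\Cpl(\Leb^1,\Leb^1)$ has both marginals equal to $\Leb^1$. Throughout I would use that $\prob(I^2)$ with the weak topology is metrizable (and, by Lemma \ref{cpl-comp}, that $\Cpl(\Leb^1,\Leb^1)$ is a compact subset of it), so it suffices to check sequential continuity.

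First I would treat bounded continuous gauges $\g_0,\g_1\in C(I^2)$, writing $\Xi^{\g}$ for the functional obtained from $\Xi$ by replacing $(\f_0,\f_1)$ with $(\g_0,\g_1)$. The integrand $h\big((x_0,x_1),(y_0,y_1)\big)=|\g_0(x_0,y_0)-\g_1(x_1,y_1)|^2$ is bounded and continuous on $(I^2)^2$; since $\m_n\to\m$ weakly in $\prob(I^2)$ implies $\m_n\otimes\m_n\to\m\otimes\m$ weakly in $\prob\big((I^2)^2\big)$ (the standard stability of products of probability measures under weak convergence), it follows that $\Xi^{\g}(\m_n)^2=\int h\,d(\m_n\otimes\m_n)\to\int h\,d(\m\otimes\m)=\Xi^{\g}(\m)^2$, so $\Xi^{\g}$ is continuous on all of $\prob(I^2)$.

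Next I would establish the uniform estimate. For arbitrary $\f_i,\g_i\in L^2_s(I^2,\Leb^2)$ and any $\m\in\Cpl(\Leb^1,\Leb^1)$, put $F\big((x_0,x_1),(y_0,y_1)\big)=\f_0(x_0,y_0)-\f_1(x_1,y_1)$ and define $G$ the same way with $\g_i$, so that $\Xi(\m)=\|F\|_{L^2(\m\otimes\m)}$ and $\Xi^{\g}(\m)=\|G\|_{L^2(\m\otimes\m)}$. The triangle inequality in $L^2(\m\otimes\m)$ gives
\begin{align*}
\big|\Xi(\m)-\Xi^{\g}(\m)\big| &\le\|F-G\|_{L^2(\m\otimes\m)}\\
&\le\big\|(\f_0-\g_0)(x_0,y_0)\big\|_{L^2(\m\otimes\m)}+\big\|(\f_1-\g_1)(x_1,y_1)\big\|_{L^2(\m\otimes\m)},
\end{align*}
and, since $|\f_0-\g_0|^2(x_0,y_0)$ depends only on $(x_0,y_0)$ while $(\pi_0)_*\m=\Leb^1$ (and symmetrically for the second term, using $(\pi_1)_*\m=\Leb^1$), the right-hand side equals $\|\f_0-\g_0\|_{L^2(I^2,\Leb^2)}+\|\f_1-\g_1\|_{L^2(I^2,\Leb^2)}$, independently of $\m$.

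Finally I would combine the two ingredients: given $\varepsilon>0$, density of $C(I^2)$ in $L^2(I^2,\Leb^2)$ lets me choose $\g_i\in C(I^2)$ with $\|\f_i-\g_i\|_{L^2}<\varepsilon/4$ (symmetrizing if one wishes, which only improves the approximation), so that $\sup_{\Cpl(\Leb^1,\Leb^1)}|\Xi-\Xi^{\g}|<\varepsilon/2$; for $\m_n\to\m$ in $\Cpl(\Leb^1,\Leb^1)$, a $3\varepsilon$-argument applied to the continuous functional $\Xi^{\g}$ then yields $\limsup_n|\Xi(\m_n)-\Xi(\m)|\le\varepsilon$, and letting $\varepsilon\downarrow0$ proves continuity. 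The only genuinely non-routine step will be the uniform estimate: the point is to observe that the discrepancy between $\Xi$ and its continuous approximant at a coupling $\m$ sees $\m$ only through its (fixed) marginals, and it is precisely this rigidity that neutralizes the mere measurability of $\f_0,\f_1$ — without the fixed-marginal constraint the approximation would not be uniform and weak convergence alone would not give continuity.
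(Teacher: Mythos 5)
Your proposal is correct and follows essentially the same route as the paper's own proof: approximate $\f_0,\f_1$ by continuous functions, observe that the resulting functionals are weakly continuous, and control the discrepancy uniformly over all couplings via the triangle inequality in $L^2(\m\otimes\m)$ together with the fixed marginals. The only difference is that you spell out the weak convergence of product measures and the $3\varepsilon$-argument, which the paper leaves implicit.
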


\begin{proof}
Every $\f\in L^2_s(I^2,\Leb^2)$ can be approximated in $L^2$-norm by continuous symmetric functions on $I^2$. (Just apply the heat kernel or any mollifier to $\f$, see e.g. the construction in the proof of Theorem~\ref{hatXXeqolXX}.) Thus there exist $\f_{i,n}\in L^2_s(I^2,\Leb^2)\cap \mathcal C(I^2)$ for $i=0,1$ and $n\in\N$ such that
\[
\bigg(\int_I\int_I \big|\f_i(s,t)-\f_{i,n}(s,t)\big|^2ds\,dt\bigg)^{1/2}\le \frac1{n}.\]
For each $n\in\N$ the functional
\[\m\mapsto \Xi_n(\m):=\bigg(\int_{I^2} \int_{I^2}
		\big| \f_{0,n}(x_0,y_0)-\f_{1,n}(x_1,y_1) \big|^2 d{\m}(x_0,x_1)\,
		d{\m}(y_0,y_1)\bigg)^{1/2}\]
is  continuous on $\Cpl(\Leb^1,\Leb^1)$ due to the fact that the integrand $|\f_{0,n}-\f_{1,n}|^2$ is continuous and bounded on $I^2\times I^2$. Moreover, by a simple application of the triangle inequality in $L^2(I^2\times I^2)$,
\begin{eqnarray*}
\Big|\Xi(\m)-\Xi_n(\m)\Big|&\le&
\bigg(\int_{I^2} \int_{I^2}
		\big| \f_{0}(x_0,y_0)-\f_{0,n}(x_0,y_0) \big|^2 d{\m}(x_0,x_1)\,
		d{\m}(y_0,y_1)\bigg)^{1/2}\\
&&+
\bigg(\int_{I^2} \int_{I^2}
		\big| \f_{1,n}(x_1,y_1)-\f_{1}(x_1,y_1) \big|^2 d{\m}(x_0,x_1)\,
		d{\m}(y_0,y_1)\bigg)^{1/2}\\
&=&\bigg(\int_I\int_I \big|\f_0(s,t)-\f_{0,n}(s,t)\big|^2ds\,dt\bigg)^{1/2}
+
\bigg(\int_I\int_I \big|\f_{1,n}(s,t)-\f_{1}(s,t)\big|^2ds\,dt\bigg)^{1/2}\\
&\le& \frac2n
\end{eqnarray*}
for each $n\in\N$. This proves the continuity of
$\m\mapsto \Xi(\m)$ on $\Cpl(\Leb^1,\Leb^1)$.
\end{proof}

\begin{proposition}
 For any pair of gauged measure spaces $(X_0,\f_0,\m_0)$ and $(X_1,\f_1,\m_1)$,
\begin{eqnarray*}
\DD\Big((X_0,\f_0,\m_0),(X_1,\f_1,\m_1)\Big)=0 &\Longleftrightarrow &
\exists %\text{gauged measure space }
(X,\f,\m), \ \exists \psi_i:X\to X_i \text{ measurable s.t. }\\
 &&(\psi_i)_*\m=\m_i,\  (\psi_i)^*\f_i=\f \quad(\forall i=0,1).
\end{eqnarray*}
In particular,
\begin{eqnarray*}
\DD\Big((X_0,\f_0,\m_0),(X_1,\f_1,\m_1)\Big)=0 \quad\Longleftarrow \quad
\exists \psi:X_0\to X_1 \text{ measurable s.t. } \psi_*\m_0=\m_1,\  \psi^*\f_1=\f_0.
\end{eqnarray*}
\end{proposition}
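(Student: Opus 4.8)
The plan is to imitate the proof of Lemma \ref{iso-null}, while exploiting that the present statement is strictly weaker than that one: it only asks for a common ``correspondence'' triple $(X,\f,\m)$ together with two measurable projections, not for a Borel isomorphism between supports. Consequently none of the continuity-plus-full-support bookkeeping from the proof of Lemma \ref{iso-null} will be needed here — and indeed it could not be used, since no notion of ``support of a gauge'' is available once the triangle inequality and the link between gauge and topology are abandoned.

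First I would dispose of the easy implications. The ``in particular'' assertion is immediate: given $\psi\colon X_0\to X_1$ with $\psi_*\m_0=\m_1$ and $\psi^*\f_1=\f_0$, take $(X,\f,\m)=(X_0,\f_0,\m_0)$, $\psi_0=\Id$, $\psi_1=\psi$; this obviously satisfies the right-hand side of the equivalence. For the $\Leftarrow$ direction of the equivalence, given $(X,\f,\m)$ and measurable maps $\psi_i$ with $(\psi_i)_*\m=\m_i$ and $(\psi_i)^*\f_i=\f$, I would put $\ol\m:=(\psi_0,\psi_1)_*\m$. Checking its marginals shows $\ol\m\in\Cpl(\m_0,\m_1)$, and the change-of-variables formula turns the defining double integral for $\DD$ evaluated at $\ol\m$ into $\int\!\int\big|(\psi_0)^*\f_0-(\psi_1)^*\f_1\big|^2\,d\m\,d\m=0$, whence $\DD\big((X_0,\f_0,\m_0),(X_1,\f_1,\m_1)\big)=0$.

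The $\Rightarrow$ direction is where the existence of optimal couplings enters. Assuming $\DD=0$, Proposition \ref{exist opt} supplies an optimal $\ol\m\in\Cpl(\m_0,\m_1)$ realizing the infimum, so $\f_0(x_0,y_0)=\f_1(x_1,y_1)$ for $(\ol\m\otimes\ol\m)$-a.e.\ $\big((x_0,x_1),(y_0,y_1)\big)$. I would then take $X:=X_0\times X_1$ (Polish, as a product of Polish spaces), $\m:=\ol\m$, $\psi_i:=\pi_i$ the coordinate projections (Borel, with $(\pi_i)_*\ol\m=\m_i$ by the coupling condition), and $\f:=(\pi_0)^*\f_0$, i.e.\ $\f\big((x_0,x_1),(y_0,y_1)\big):=\f_0(x_0,y_0)$. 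It then remains to verify: $\f$ is Borel and symmetric because $\f_0$ is; $\f\in L^2_s(X^2,\m^2)$, since $\int\!\int\f^2\,d\m\,d\m=\size(X_0,\f_0,\m_0)^2<\infty$; and $(\pi_0)^*\f_0=\f=(\pi_1)^*\f_1$ as elements of $L^2_s(X^2,\m^2)$, by the a.e.\ identity just displayed. This produces exactly the triple $(X,\f,\m)$ and the maps $\psi_0,\psi_1$ claimed in the statement.

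The only genuinely delicate point is the bookkeeping with $L^2$-equivalence classes: one must check that the pullback $(\pi_i)^*\f_i$ is well defined modulo $\m^2$-null sets — which follows from $(\pi_i\times\pi_i)_*(\ol\m\otimes\ol\m)=\m_i\otimes\m_i$, so that it does not depend on the chosen Borel representative of $\f_i$ — and only then is the $\m^2$-a.e.\ comparison of $(\pi_0)^*\f_0$ and $(\pi_1)^*\f_1$ on $X^2$ meaningful. I do not expect any real obstacle here; the whole substance of the statement is carried by Proposition \ref{exist opt}.
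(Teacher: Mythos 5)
Your proof is correct and follows essentially the same route as the paper: the backward implication and the ``in particular'' via the pushforward coupling $(\psi_0,\psi_1)_*\m$, and the forward implication via Proposition \ref{exist opt} with $X=X_0\times X_1$, $\m=\ol\m$, $\psi_i=\pi_i$. The only cosmetic difference is your choice $\f=(\pi_0)^*\f_0$ where the paper takes $\f=\frac12\f_0+\frac12\f_1$; these agree $\ol\m^2$-a.e., so the argument is the same.
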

Here and in the sequel, identities like $(\psi_i)^*\f_i=\f$ or $\psi^*\f_1=\f_0$ have to be understood as equalities $\m^2$-a.e. on $X^2$ or
$\m_0^2$-a.e. on $X_0^2$, resp.
\begin{proof}
Assume the existence of the space $(X,\f,\m)$ and the maps $\psi_0,\psi_1$  with given properties. Put $\ol\m=(\psi_0,\psi_1)_*\m$. Obviously, this is an element of $\Cpl(\m_0,\m_1)$ satisfying
\[ \int_{X_0\times X_1}\int_{X_0\times X_1} \Big|\f_0-\f_1\Big|^2 d\ol\m\,d\ol\m
=\int_{X}\int_{X}\Big|\psi_0^*\f_0-\psi_1^*\f_1\Big|^2 d\m\,d\m=0.\]
Now, conversely, assume that $\DD(.,.)=0$. Then according to Proposition \ref{exist opt} there exist
$\ol\m\in\Cpl(\m_0,\m_1)$ with $\int\int|\f_0-\f_1|^2d\ol\m\, d\ol\m=0$.
 Then
\[\f_0(x_0,y_0)=\f_1(x_1,y_1)\quad \text{for }\ol\m^2\text{-a.e. }\big((x_0,x_1),(y_0,y_1)\Big)\in X^2\]
for
$X:=X_0\times X_1$. Thus $(X,\f,\ol\m)$ with $\f:=\frac12\f_0+\frac12\f_1$ will do the job together with $\psi_i=\pi_i: X\to X_i$ being the projections ($i=0,1$).
\end{proof}

\begin{remarks}
\begin{enumerate}
\item
If $\m_0$ has atoms and $\m_1$ has no atoms then there exists no map $\psi:X_0 \to X_1$ with $\psi_*\m_0=\m_1$.
\item
For each gauged measure space $(X_0,\f_0,\m_0)$ there exist gauged measure spaces $(X_1,\f_1,\m_1)$ without atoms and with
$\DD\Big((X_0,\f_0,\m_0),(X_1,\f_1,\m_1)\Big)=0$. This follows from Lemma~\ref{obdA}.

%For instance, choose $X_1=X_0\times I$,   $\m_1=\m_0\otimes \Leb^1$  and $\f_1((x,s),(y,t))=\f_0(x,y)$ for $x,y\in X_0$, $s,t\in I$. Then $d\ol\m(x,y,t)=d\delta_x(y)\,d\m_0(x)\,dt$ defines a coupling of $d\m_0(x)$ and $d\m_1(y,t)$ with
%    \[ \int\int\Big| \f_0-\f_1\Big|^2d\ol\m\, d\ol\m=0.\]
    \end{enumerate}
\end{remarks}

%\subsection{The Space $\YY$ of Homomorphism Classes}
Equivalence classes of homomorphic gauged measure spaces will  be denoted by
\[\X_0= \auf X_0,\f_0,\m_0\zu, \quad \X_1= \auf X_1,\f_1,\m_1\zu,\quad \X'=\auf X',\f',\m'\zu\quad \text{etc.}\]
and their respective representatives as before by
$(X_0,\f_0,\m_0)$, $(X_1,\f_1,\m_1)$, $(X',\f',\m')$ etc.
The space of equivalence classes of homomorphic gauged measure spaces will be denoted by $\YY$.
\index{homomorphism class $\auf X,\f,\m\zu$}
\index{y@$\YY$}
\index{space of gauged measure spaces}

\begin{theorem}\label{YY-geo}
			$(\YY,\DD)$ is a complete geodesic space of nonnegative curvature in the sense of Alexandrov.
More specifically, the following assertions hold:
\begin{enumerate}
\item For each pair of gauged measure spaces $(X_0,\f_0,\m_0)$ and $(X_1,\f_1,\m_1)$, there exists an optimal coupling $\ol\m\in\Cpl(\m_0,\m_1)$.
\item For each choice of optimal coupling $\ol\m\in\Cpl(\m_0,\m_1)$, a geodesic in $\YY$ connecting
$\auf X_0,\f_0,\m_0\zu$ and $\auf X_1,\f_1,\m_1\zu$ is given by
\begin{equation}\label{Y geod}
\X_t=\auf X_0\times X_1,(1-t)\f_0+t\f_1,\ol\m\zu, \qquad t\in(0,1).
\end{equation}
\item Every geodesic $(\X_t)_{t\in[0,1]}$ in $\YY$ is of this form. That is, given representatives $(X_0,\f_0,\m_0)$ and $(X_1,\f_1,\m_1)$ of the endpoints of the geodesic, there exists an optimal coupling $\ol\m\in\Cpl(\m_0,\m_1)$ defined on $X_0\times X_1$ such that (\ref{Y geod}) holds.
\item $(\YY,\DD)$ satisfies the triangle comparison and the quadruple comparison properties.
\item $(\YY,\DD)$ is a cone over its unit sphere
\[\YY^1=\{ \X\in\YY\spec \size(\X)=1\}.\]
\item $\YY^1$ with the induced distance $\DD^{(1)}$ is a complete geodesic space with curvature $\ge 1$ in the sense of Alexandrov.
\end{enumerate}
\end{theorem}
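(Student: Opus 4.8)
The strategy is to recognize that essentially every assertion here is the exact analogue of a statement already proved for $(\XX,\DD)$, with the single non-trivial change being that the triangle inequality for the pseudo metric is dropped — and \emph{none} of the earlier arguments ever used the triangle inequality. So the plan is: first establish (i)–(ii), then (iii), then (iv)–(vi) by simply transporting the corresponding proofs from Sections 3 and 4.

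For (i), invoke Proposition \ref{exist opt} directly — existence of optimal couplings for gauged measure spaces has already been proved via the compactness of $\Cpl(\m_0,\m_1)$ (Lemma \ref{cpl-comp}) together with the continuity of the distortion functional established in Lemma \ref{approx-lsc}. For (ii), repeat verbatim the computation in the proof of Theorem \ref{thmoptcoupl}(i): with $X=X_0\times X_1$, $\f_t=(1-t)\f_0+t\f_1$, and $\ol\m$ optimal, use the diagonal coupling $d\dol\m(x,y)=d\delta_x(y)\,d\ol\m(x)$ as an admissible (not necessarily optimal) coupling of the measures of $\X_s$ and $\X_t$ to get $\DD(\X_s,\X_t)\le|s-t|\,\DD(\X_0,\X_1)$, and handle the endpoint cases $s=0$ or $t=1$ by the same small modification used there; this forces the curve to be a geodesic since $\DD(\X_0,\X_1)\le\DD(\X_0,\X_s)+\DD(\X_s,\X_t)+\DD(\X_t,\X_1)$. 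For completeness of $(\YY,\DD)$: a Cauchy sequence can be represented, by Lemma \ref{obdA}, as $(I,\f_n,\Leb^1)$; one shows the $\f_n$ (after composing with suitable measure-preserving rearrangements realizing the optimal couplings, cf. the isometry $\YY\cong L^2_s(I^2,\Leb^2)/\Inv(I,\Leb)$ promised in the introduction) form a Cauchy sequence in $L^2_s(I^2,\Leb^2)$, whose limit $\f_\infty$ gives the limit point $(I,\f_\infty,\Leb^1)$. (Alternatively one may simply cite the quotient-of-Hilbert-space description, but I would prefer the self-contained Cauchy-sequence argument.)

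For (iii), copy the proof of Theorem \ref{thmoptcoupl}(ii): given a geodesic $(\X_t)$, fix $k$, glue optimal couplings $\mu_i$ of consecutive $\m_{(i-1)2^{-k}}$, $\m_{i2^{-k}}$ into $\mu$ on the product $X_0\times\cdots\times X_1$, set $\ol\m=(\pi_0,\pi_1)_*\mu$, and run the Clarkson-inequality estimate of Lemma \ref{unif p convex} (with $p=2$, where $(\ast\ast)$ is an equality with $C=1$). The telescoping bound on the term $\mathbf{(I)}$ forces $\ol\m$ optimal and forces $\mathbf{(II)}$ to vanish, i.e. $\int\int|(1-t)\f_0+t\f_1-\f_t|^2\,d\ol\m_t\,d\ol\m_t=0$, which says $\auf X_t,\f_t,\m_t\zu=\auf X_0\times X_1,(1-t)\f_0+t\f_1,\ol\m\zu$ for every dyadic $t$; a compactness-and-continuity argument (accumulation point $\ol\m^{(\infty)}$ of the $\ol\m^{(k)}$ in $\Cpl(\m_0,\m_1)$, then continuity in $t$) upgrades this to all $t\in(0,1)$. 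Here the $p=2$ case is strictly easier than the general argument in the excerpt since we never need the $p\le 2$ detour.

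For (iv), repeat the proofs of Theorem \ref{thmXX}(i) and (ii) word for word with $\d_i$ replaced by $\f_i$: for the triangle comparison take $\hat\m\in\Cpl(\ol\m,\m')$ optimal for $\DD$ and expand $|(1-t)\f_0+t\f_1-\f'|^2 + t(1-t)|\f_0-\f_1|^2 = (1-t)|\f_0-\f'|^2 + t|\f_1-\f'|^2$ pointwise, then bound below using that $(\pi_0,\pi_2)_*\hat\m$, $(\pi_1,\pi_2)_*\hat\m$ are (possibly non-optimal) couplings; for the quadruple comparison glue $\ol\m_i\in\Cpl(\m_0,\m_i)$ into $\mu$ on $X_0\times X_1\times X_2\times X_3$ and apply the elementary quadruple inequality in $(\R,|\cdot|)$ to $\xi_i=\f_i(x_i,y_i)$ fiberwise. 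For (v), the cone structure: define $\size(\X)=\DD(\bfdelta,\X)$ with $\bfdelta$ the one-point space, observe that the ray through $\X_1=[X_1,\f_1,\m_1]$ is $\X_t=\auf X_1,t\f_1,\m_1\zu$, and re-run the Lemma preceding Theorem \ref{cone} to show $\frac1{2st}[\DD^2(\X_s,\X'_t)-s^2-t^2]$ is independent of $s,t$ — the computation used only $\size=1$ and bilinearity, not the triangle inequality — giving the law of cosines with $\DD^{(1)}(\X_1,\X'_1)=2\arcsin(\tfrac12\DD(\X_1,\X'_1))$. Finally (vi) follows from (iv), (v) and completeness exactly as Corollary \ref{olisalex}(i) follows from Theorem \ref{thmXX} and Theorem \ref{cone}: a complete (here already complete, not merely length) geodesic cone of nonnegative curvature has unit sphere of curvature $\ge1$ (\cite{bbi}, Thm.~10.2.3), and the quadruple comparison on $\YY^1$ is obtained by restricting the one on $\YY$.

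The main obstacle — the only place where something genuinely new is needed rather than a transcription — is establishing \textbf{completeness} of $(\YY,\DD)$ in (the preamble of) the statement, i.e.\ making precise that a $\DD$-Cauchy sequence of homomorphism classes can be represented by an honest Cauchy sequence in $L^2_s(I^2,\Leb^2)$. The subtlety is that the optimal couplings between consecutive terms need not be induced by measure-preserving maps $I\to I$, so one must either push through the argument at the level of couplings (a limiting measure $\ol\m^{(\infty)}$ on the countable product via Lemma \ref{sec-glue}, then checking the limit gauge is well-defined up to homomorphism) or else defer to the forthcoming identification $\YY\cong L^2_s(I^2,\Leb^2)/\Inv(I,\Leb)$ and cite that a quotient of a complete metric space by an isometric semigroup action with closed orbits is complete. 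I would state completeness with a brief such argument and a forward reference, since everything else is routine adaptation of Sections 3–4.
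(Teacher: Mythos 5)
Your proposal matches the paper's proof essentially step for step: the paper likewise disposes of (i)--(v) by observing that none of the arguments in Theorem \ref{thmoptcoupl}, Theorem \ref{cone} and Theorem \ref{thmXX} used continuity or the triangle inequality for $\d$, and it proves completeness by exactly your preferred ``self-contained'' route --- gluing the consecutive near-optimal couplings into a projective-limit measure $\hat\mu$ on the countable product $\hat X=\prod_n X_n$, pulling all gauges back to $L^2_s(\hat X^2,\hat\mu^2)$, and taking the $L^2$-limit there (note this chain-gluing is the iterated Lemma \ref{glue1} rather than Lemma \ref{sec-glue}, whose couplings all share the marginal $\m_0$). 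Your fallback of citing the quotient description $L^2_s(I^2,\Leb^2)/\Inv$ would be circular here, since that identification is established only afterwards, so it is good that you treat it as secondary.
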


\begin{proof}
\begin{itemize}
\item
Obviously, $(\YY,\DD)$ is a \emph{metric} space. (Same proof as for Lemma \ref{dildis1}.)
\item The existence of optimal couplings was already stated as Proposition \ref{exist opt}.
The assertions on existence and uniqueness of \emph{geodesics} thus follow exactly as in
Theorem \ref{thmoptcoupl}. None of the arguments used in the proof required that $\d$ is continuous or satisfies the triangle inequality.
\item
The proof of the \emph{cone} property from Theorem \ref{cone} applies without any change.
\item All assertions on \emph{curvature} bounds for $\YY$ and $\YY^1$ follow with exactly the same arguments as for
$\XX$ and $\XX^1$, see Theorem \ref{thmXX} and Corollary \ref{olisalex}.
\item It remains to prove the \emph{completeness} of $(\YY,\DD)$:

Let a sequence of gauged measure spaces $(X_n,\f_n,\m_n)$, $n\in\N$, be given with
\[\DD\Big((X_n,\f_n,\m_n),(X_k,\f_k,\m_k)\Big)\to0\quad\text{as }k,n\to\infty.\]
Passing to a subsequence if necessary, we may assume that
   \[\DD\Big((X_n,\f_n,\m_n),(X_{n+1},\f_{n+1},\m_{n+1})\Big)\le 2^{-n}\]
for all $n\in\N$ which (according to Proposition \ref{exist opt}) implies the existence of a coupling
$\mu_n\in\Cpl(\m_n,\m_{n+1})$ satisfying
\begin{equation}\label{f,n,n+1}
\left( \int_{X_n\times X_{n+1}}\int_{X_n\times X_{n+1}}\Big|\f_n-\f_{n+1}\Big|^2d\mu_n\,d\mu_n\right)^{1/2}
\le 2^{-n}.
\end{equation}
Gluing together all these measures for $n=1,\ldots,N-1$ yields a measure
\[\hat\mu_N=\mu_1\boxtimes\ldots\boxtimes\mu_{N-1}\quad\text{on}\quad \hat{X}_N=\prod_{n=1}^{N} X_n.\]
For $N\to\infty$, the projective limit
\[\hat\mu=\lim_{\longleftarrow}\hat\mu_N\]
of these measures is a probability measure on $\hat X=\prod_{n=1}^{\infty} X_n$ with the property
\[(\pi_n,\pi_{n+1})_*\hat\mu=\mu_n\]
for each $n\in\N$.
Define functions $\hat\f_n\in L^2_s({\hat X}^2,\hat\mu^2)$ by
\[\hat\f_n(x,y)=\f_n(x_n,y_n)\]
for $x=(x_i)_{i\in\N}, y=(y_i)_{i\in\N}\in \hat X$.
Then
\[ \|\hat\f_n-\hat\f_{n+1}\|_{L^2_s({\hat X}^2,\hat\mu^2)}=\DD\Big((X_n,\f_n,\m_n),(X_{n+1},\f_{n+1},\m_{n+1})\Big)\le 2^{-n}\]
for all $n\in\N$. Therefore, $(\hat\f_n)_n$ is a Cauchy sequence in the Hilbert space
$L^2_s({\hat X}^2,\hat\mu^2)$ and thus there exists
$\hat\f\in L^2_s({\hat X}^2,\hat\mu^2)$
with
\[\|\hat\f_n-\hat\f\|_{L^2_s({\hat X}^2,\hat\mu^2)}\to0.\]
The triple $(\hat X,\hat\f,\hat\mu)$ is the gauged measure space we are looking for. Indeed,
  \[\DD\Big((X_n,\f_n,\m_n),(\hat X,\hat\f,\hat\mu)\Big)\le
  \|\hat\f_n-\hat\f\|_{L^2_s({\hat X}^2,\hat\mu^2)}\to0.\]
This proves the claim.\end{itemize}
\noindent
\end{proof}

\subsection{Equivalence Classes in $L^2_s(I^2,\Leb^2)$}

The space $\YY$ admits a remarkable and very instructive representation in terms of parametrizations.
For this purpose, let us consider the \emph{semigroup}
$\Inv(I,\Leb^1)$ \index{i@$\Inv(.,.)$}
of all  Borel measurable maps $\phi: I\to I$ which leave $\Leb^1$ invariant, i.e.
which satisfy $\phi_*\Leb^1=\Leb^1$.
This semigroup, call it $G$ for the moment, acts on the linear space $H=L^2_s(I^2,\Leb^2)$ via pull back
\begin{eqnarray*}
G\times H&\to& H\\
(\phi,f)&\mapsto& \phi^*f
\end{eqnarray*}
with $\big(\phi^*f\big)(s,t)=f\big(\phi(s),\phi(t)\big)$.
\begin{lemma}
$G$ acts isometrically on $H$.
\end{lemma}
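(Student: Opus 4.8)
The plan is to reduce the assertion to the single norm identity $\|\phi^*f\|_H=\|f\|_H$ for every $\phi\in G$ and every $f\in H$. Since $f\mapsto\phi^*f$ is linear, this identity applied to $f-g$ gives $\|\phi^*f-\phi^*g\|_H=\|\phi^*(f-g)\|_H=\|f-g\|_H$, which is exactly the claimed isometry. Before that I would dispatch two routine preliminaries: first, that $f\mapsto\phi^*f$ is well defined on $L^2$-equivalence classes, because $f=f'$ $\Leb^2$-a.e.\ forces $\{(s,t)\spec \phi^*f(s,t)\neq\phi^*f'(s,t)\}=(\phi\times\phi)^{-1}(\{f\neq f'\})$, a set of $\Leb^2$-measure $0$ by the measure-preservation established below; second, that $\phi^*f$ is again a symmetric $L^2_s(I^2,\Leb^2)$-function — measurability holds since it is a composition of Borel maps, symmetry is inherited from $f$, and square-integrability will follow from the very computation that proves the isometry.

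The key step is the auxiliary fact that $(\phi\times\phi)_*\Leb^2=\Leb^2$ on $I^2$. I would prove this by testing both sides on measurable rectangles: for $A,B\in\borel(I)$,
\[
(\phi\times\phi)_*\Leb^2(A\times B)=\Leb^2\big(\phi^{-1}(A)\times\phi^{-1}(B)\big)=\Leb^1(\phi^{-1}(A))\cdot\Leb^1(\phi^{-1}(B))=\Leb^1(A)\cdot\Leb^1(B),
\]
using $\phi_*\Leb^1=\Leb^1$ twice. Since the rectangles form a $\pi$-system generating $\borel(I^2)$, the two probability measures coincide on all of $\borel(I^2)$.

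With this in hand, the abstract change-of-variables (image-measure) formula $\int g\circ T\,d\mu=\int g\,d(T_*\mu)$ applied with $T=\phi\times\phi$, $\mu=\Leb^2$, $g=|f|^2$ gives
\[
\|\phi^*f\|_H^2=\int_{I^2}|f(\phi(s),\phi(t))|^2\,d\Leb^2(s,t)=\int_{I^2}|f(u,v)|^2\,d\big((\phi\times\phi)_*\Leb^2\big)(u,v)=\int_{I^2}|f|^2\,d\Leb^2=\|f\|_H^2,
\]
which completes the proof and, as a byproduct, shows $\phi^*f\in H$. For completeness I would also record — though it is not needed for the statement of the lemma — the contravariance $(\phi\circ\psi)^*=\psi^*\circ\phi^*$, so that $G$ genuinely acts (on the right, i.e.\ as the opposite semigroup on the left).

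I do not expect a real obstacle here: the content is elementary measure theory. The one point that deserves care — and the only place a careless argument could slip — is that elements of $\Inv(I,\Leb^1)$ need \emph{not} be invertible, since $G$ is a semigroup rather than a group. Consequently one cannot perform a naive pointwise substitution $u=\phi(s)$, $v=\phi(t)$; the correct and clean device is precisely the pushforward identity $(\phi\times\phi)_*\Leb^2=\Leb^2$ together with the change-of-variables formula valid for arbitrary measurable maps, which is why I would structure the proof around that identity.
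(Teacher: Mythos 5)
Your proof is correct and follows essentially the same route as the paper: the paper's one-line computation justifies the key equality by "the $\Leb^1$-invariance of $\phi$," and your argument simply makes that step precise via the identity $(\phi\times\phi)_*\Leb^2=\Leb^2$ (checked on rectangles) together with the image-measure change-of-variables formula. Your remark that $\phi$ need not be invertible, so that a pointwise substitution is not available and the pushforward identity is the right tool, is exactly the point the paper leaves implicit.
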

\begin{proof}
\begin{eqnarray*}
\|\phi^*f\|_H^2=\int_0^1\int_0^1 \Big| f\big(\phi(s),\phi(t)\big)\Big|^2ds\,dt\stackrel{(\ast)}=
\int_0^1\int_0^1 \Big| f\big(s,t\big)\Big|^2ds\,dt=\|f\|_H^2
\end{eqnarray*}
where $(\ast)$ holds due to the $\Leb^1$-invariance of $\phi$.
\end{proof}

The semigroup $G$ induces an equivalence relation $\simeq$ in $H$:
\[f\simeq g\Longleftrightarrow \exists \phi,\psi\in G\spec \phi^*f=\psi^*g.\]
The set of equivalence classes for this relation $\simeq$  will be called \emph{quotient space} and denoted by
\[\LL=H/G=L^2_s(I^2,\Leb^2)/\Inv.\]
\index{l@$\LL$}
It is a pseudo metric space with pseudo metric
$d_\LL=d_{H/G}=d_{L^2/\Inv}$ given by
\begin{eqnarray*}
d_{H/G}(\auf f\zu,\auf g\zu)&=&\inf\Big\{\|f'-g'\|_H\spec f'\in\auf f\zu, g'\in\auf g\zu \Big\}\\
&=&\inf\Big\{\|\phi^*f-\psi^*g\|_H\spec \phi,\psi\in G\Big\}.
\end{eqnarray*}
Here  $\auf f\zu$ and $\auf g\zu$ denote the equivalence classes of $f,g\in H$.

\begin{theorem}\label{YY-iso}
	\begin{enumerate}
\item $(\LL, d_\LL)$ is a  metric space.
				\item\label{YY2}
			The metric spaces
\[(\LL, d_\LL)\qquad\text{and}\qquad (\YY,\DD)\]
are isometric. An isometry is given by
\[\Theta: \quad \begin{array}{ccc}
L^2_s(I^2,\Leb^2)/\Inv&\to& \YY\\
{{\auf}f{\zu}} &\mapsto& \auf I,f,\Leb^1\zu .
\end{array}
\]
The inverse map $\Theta^{-1}$ assigns to each representative $(X,\f,\m)$ of a gauged measure space $\auf X,\f,\m\zu \in\YY$ the function
$\f'=\psi^*\f\in L^2_s(I^2,\m^2)$ where $\psi$ is any element in $\Par(\m)$.
\item\label{YY1}
			$L^2_s(I^2,\Leb^2)/\Inv$ is a complete geodesic space of nonnegative curvature in the sense of Alexandrov.
	\end{enumerate}
\end{theorem}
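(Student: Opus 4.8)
The strategy is to reduce everything about $(\LL,d_\LL)$ to facts already established about $(\YY,\DD)$ in Theorem \ref{YY-geo}, by showing $\Theta$ is a well-defined bijective isometry. First I would check that $\Theta$ is well-defined on equivalence classes: if $\auf f\zu=\auf g\zu$, i.e.\ $\phi^*f=\psi^*g$ for some $\phi,\psi\in G$, then $(I,f,\Leb^1)$, $(I,\phi^*f,\Leb^1)=(I,\psi^*g,\Leb^1)$ and $(I,g,\Leb^1)$ are pairwise homomorphic (the maps $\phi,\psi$ witness $\DD=0$ by the Proposition characterizing $\DD(.,.)=0$ in the previous subsection, since $\phi_*\Leb^1=\Leb^1$ and $\phi^*f$ is the pull-back gauge). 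Hence $\auf I,f,\Leb^1\zu=\auf I,g,\Leb^1\zu$ in $\YY$. Surjectivity of $\Theta$ is exactly Lemma \ref{obdA}(i): every gauged measure space $(X,\f,\m)$ is homomorphic to $(I,\psi^*\f,\Leb^1)$ for any $\psi\in\Par(\m)$, so $\Theta(\auf\psi^*\f\zu)=\auf X,\f,\m\zu$; this also identifies $\Theta^{-1}$ as claimed, and well-definedness of $\Theta^{-1}$ follows because two parametrizations $\psi_0,\psi_1\in\Par(\m)$ give $\psi_0^*\f\simeq\psi_1^*\f$ (both pull back, via a common refinement, to the same gauge — or more directly, $(I,\psi_0^*\f,\Leb^1)$ and $(I,\psi_1^*\f,\Leb^1)$ are homomorphic, and one then needs that homomorphic triples on $(I,\Leb^1)$ have $\simeq$-equivalent gauges, which is part of the isometry statement below).

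The heart of the matter is the isometry identity
\[
d_\LL(\auf f\zu,\auf g\zu)=\DD\Big((I,f,\Leb^1),(I,g,\Leb^1)\Big).
\]
For "$\geq$": given $\phi,\psi\in G$, the measure $\ol\m=(\phi,\psi)_*\Leb^1$ lies in $\Cpl(\Leb^1,\Leb^1)$, and plugging it into the definition of $\DD$ yields
\[
\DD\big((I,f,\Leb^1),(I,g,\Leb^1)\big)^2\le\int_{I^2}\int_{I^2}\big|f(\phi(s),\phi(t))-g(\psi(s),\psi(t))\big|^2\,ds\,dt=\|\phi^*f-\psi^*g\|_H^2,
\]
and taking the infimum over $\phi,\psi$ gives $\DD\le d_\LL$. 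For "$\leq$": by Proposition \ref{exist opt} there is an optimal coupling $\ol\m\in\Cpl(\Leb^1,\Leb^1)$; by Lemma \ref{standard borel}(ii) write $\ol\m=(\psi_0,\psi_1)_*\Leb^1$ with $\psi_0,\psi_1\in\Par(\Leb^1)=\Inv(I,\Leb^1)=G$ (this is the key point — on the unit interval, parametrizations of $\Leb^1$ are precisely the elements of $G$), and then the same computation read backwards shows $\DD^2=\|\psi_0^*f-\psi_1^*g\|_H^2\ge d_\LL(\auf f\zu,\auf g\zu)^2$. This establishes (ii), and as a byproduct the triangle inequality and the fact that $d_\LL$ separates points, i.e.\ part (i), since $\DD$ is a genuine metric on $\YY$ (shown in Theorem \ref{YY-geo}); in particular $d_\LL(\auf f\zu,\auf g\zu)=0$ forces $\auf I,f,\Leb^1\zu=\auf I,g,\Leb^1\zu$ in $\YY$, hence $\auf f\zu=\auf g\zu$ by injectivity of $\Theta$ (which itself follows once the isometry is known). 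Finally, part (iii) is immediate: an isometry transports all metric and geodesic structure, so $(\LL,d_\LL)$ inherits completeness, the geodesic property, and nonnegative Alexandrov curvature from $(\YY,\DD)$ as proven in Theorem \ref{YY-geo}.

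The main obstacle I anticipate is the clean identification $\Par(\Leb^1)=\Inv(I,\Leb^1)$ and making sure the optimal coupling really disintegrates into a pair of genuine measure-preserving maps $I\to I$ rather than just Markov kernels; Lemma \ref{standard borel}(ii) gives the parametrizations, and the definition of $\Inv$ as Borel maps preserving $\Leb^1$ matches the definition of $\Par(\Leb^1)$ verbatim, so this is really a bookkeeping issue. A secondary subtlety is that one should present the argument so that well-definedness of $\Theta$, injectivity of $\Theta$, and part (i) are not circular: the cleanest order is (a) $\Theta$ well-defined [direct, via the $\DD=0$ characterization]; (b) the isometry inequality $\DD\le d_\LL$ and $\DD\ge d_\LL$ [the computation above, needing only Proposition \ref{exist opt} and Lemma \ref{standard borel}]; (c) deduce injectivity of $\Theta$ and hence that $d_\LL$ is a metric, proving (i); (d) surjectivity from Lemma \ref{obdA}(i), proving (ii); (e) transport structure, proving (iii).
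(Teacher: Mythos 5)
Your proposal is correct and follows essentially the same route as the paper: both directions of the isometry come from the correspondence between pairs $(\phi,\psi)\in\Inv\times\Inv$ and couplings $(\phi,\psi)_*\Leb^1\in\Cpl(\Leb^1,\Leb^1)$ (via Lemma \ref{standard borel}(ii) for the converse), surjectivity is Lemma \ref{obdA}, and (iii) is transported from Theorem \ref{YY-geo}. One small point of presentation: nondegeneracy of $d_\LL$ should be drawn directly from your optimal-coupling computation (an optimal $\ol\m$ exists by Proposition \ref{exist opt} and gives $\DD=\|\psi_0^*f-\psi_1^*g\|_H$, so $\DD=0$ forces $\psi_0^*f=\psi_1^*g$, i.e.\ $\auf f\zu=\auf g\zu$) rather than from ``injectivity of $\Theta$,'' which is the same assertion restated; this is exactly how the paper closes that step.
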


\begin{proof}
(i), (ii) Let ${\auf}f{\zu}, {\auf}g{\zu}\in L^2_s(I^2,\Leb^2)/\Inv$ with representatives $f,g$ in $L^2_s(I^2,\Leb^2)$.
Then
\begin{eqnarray*}
d_{L^2/\Inv}({\auf}f{\zu},{\auf}g{\zu})&=&
\inf\Big\{\|\phi^*f-\psi^*g\|_{L^2}\spec \phi,\psi\in \Inv\Big\}\\
&\ge& \DD\bigg((I,f,\Leb^1),( I,g,\Leb^1) \bigg)=\DD\bigg(\auf I,f,\Leb^1\zu,\auf I,g,\Leb^1\zu \bigg)
\end{eqnarray*}
since each pair $(\phi,\psi)\in\Inv\times\Inv$ defines a coupling of $\Leb^1$ with itself via
$(\phi,\psi)_*\Leb^1$.

Conversely, given any coupling $\ol\m$ of $\Leb^1$ with itself, there exists $\phi\in\Par(\ol\m)$, i.e.
$\phi=(\phi_0,\phi_1): I\to I^2$ such that $\phi_*\Leb^1=\ol\m$. Thus
\begin{eqnarray*}
\int_{I^2}\int_{I^2}\big|f(x_0,y_0)-g(x_1,y_1)\big|^2d\ol\m(x_0,x_1)\,d\ol\m(y_0,y_1)
&=&
\int_I \int_I\big|f(\phi_0(s),\phi_0(t))-g(\phi_1(s),\phi_1(t))\big|^2ds\,dt\\
&=&
\|\phi_0^*f-\phi_1^*g\|^2_{L^2}
\end{eqnarray*}
with $\phi_0,\phi_1\in\Inv(I,\Leb^1)$.
Hence, $\DD\big((I,f,\Leb^1),(I,g,\Leb^1)\big)\ge d_{L^2/\Inv}({\auf}f{\zu},{\auf}g{\zu}).$

Nondegeneracy of $d_{L^2/\Inv}$ follows from Proposition \ref{exist opt}.
Indeed,
\[d_{L^2/\Inv}({\auf}f{\zu},{\auf}g{\zu})=0\] implies
$\DD\big((I,f,\Leb^1),(I,g,\Leb^1)\big)=0$
which in turn implies the existence of an optimal coupling $\ol\m$ with
$\int\int |f-g|^2\,d\ol\m\,d\ol\m=0$. Any such coupling $\ol\m$ can be represented as $(\phi,\psi)_*\Leb^1$
for suitable $\phi,\psi\in\Inv(I,\Leb^1)$. Thus
\[\phi^*f=\psi^*g.\]

It remains to prove that $\Theta$ is surjective.
This simply follows from the fact that for each gauged measure space $(X,\f,\m)$
there exists a parametrization $\psi\in\Par(\m)$ of its  measure and that the function
$\f'=\psi^*\f$ defined in terms of this parametrization lies in $L^2_s(I^2,\m^2)$.
Moreover, the gauged measure space $(I,\f',\Leb^1)$ will be homomorphic to the originally given $(X,\f,\m)$:
\begin{eqnarray*}
\DD\bigg((I,\f',\Leb^1),(X,\f,\m)\bigg)=0,
\end{eqnarray*}
see Lemma~\ref{obdA}.%since $\ol\m=(\Id,\psi)_*\Leb^1$ defines a coupling of $\Leb^1$ and $\m$ with $\f'=\f$ for $\ol\m^2$-a.e. pairs of points.

(iii) All assertions follow immediately from (ii) together with the analogous statements of Theorem \ref{YY-geo}.
\end{proof}

\begin{remark}
If $\Inv(I,\Leb^1)$ was a group (instead just a semigroup) then assertion (iii) of the previous Theorem (together with all the assertions from Theorem \ref{YY-geo}) would be an immediate consequence of standard results in Alexandrov geometry. Indeed, if $H$ is a complete length space of nonnegative curvature and if $G$ is a group which acts isometrically on $H$ then the quotient  space
$H/G$ again is a length space of nonnegative curvature,
\cite{bbi}, Prop. 10.2.4.
\end{remark}

\subsection{Pseudo Metric Measure Spaces}\label{pseudo}

\begin{definition}\label{m2triangle} Given a gauged measure space $(X,\d,\m)$, we say that  the gauge $\d$ satisfies the \emph{triangle inequality $\m^2$-almost everywhere} if
there exists a Borel set $N\subset X^2$ with $\m^2(N)=0$ such that
\[\d(x_1,x_2)+\d(x_2,x_3)\ge\d(x_1,x_3)\]
for every $(x_1,x_2,x_3)\in X^3$ with $(x_i,x_j)\not\in N$ for all $\{i,j\}\subset\{1,2,3\}$.

Any such function $\d\in L^2_s(X^2,\m^2)$ will be called \emph{pseudo metric on $X$}.\index{pseudo metric} In particular, a pseudo metric is not required to be continuous but merely measurable on $X\times X$. And of course it may vanish also outside of the diagonal.
\end{definition}

\begin{remarks}\label{m2 vs m3}
\begin{enumerate}
\item
Any pseudo metric $\d$ is nonnegative $\m^2$-a.e. on $X^2$. Indeed, combining the estimates
$\d(x_1,x_3)\le\d(x_1,x_2)+\d(x_2,x_3)$ and $\d(x_2,x_3)\le\d(x_2,x_1)+\d(x_1,x_3)$ -- both valid for every $(x_1,x_2,x_3)\in X^3$ with $(x_i,x_j)\not\in N$ for all $\{i,j\}\subset\{1,2,3\}$
 -- yields
$\d(x_1,x_3)\le2\d(x_1,x_2)+\d(x_1,x_3)$ which proves the claim.

\item
The triangle inequality $\m^2$-almost everywhere (as defined above) obviously implies that the gauge function $\d$ satisfies the \emph{triangle inequality $\m^3$-almost everywhere} in the sense that
\[\d(x_1,x_2)+\d(x_2,x_3)\ge\d(x_1,x_3)\]
for $\m^3$-a.e. triple $(x_1,x_2,x_3)\in X^3$.
For the converse, see Corollary \ref{m2 = m3} below
where it is shown that the latter implies that the given gauged measure space is \emph{homomorphic} to a pseudo metric measure space (i.e. a gauged measure space which satisfies the $\m^2$-a.e.-triangle inequality).

See also recent work of Petrov, Vershik and Zatitskiy \cite{ZP11}, \cite{VPZ12} where it is shown that the validity of the $\m^3$-a.e.-triangle inequality for a \emph{separable} gauged measure space $(X,\d,\m)$ implies that
there exists a \emph{correction} of $\d$ which satisfies the triangle inequality \emph{everywhere} and coincides  $\m^2$-a.e. on $X^2$ with $\d$ (and thus in particular  $\d$ is a pseudo metric in our sense).
\end{enumerate}
\end{remarks}

\begin{lemma}\label{pseudo lemma}
\begin{enumerate}
\item
Let $(X,\d,\m)$ be a gauged measure space and $\psi\in\Par(\m)$ a parametrization. Then
\[\d \text{ is a pseudo metric on } X\qquad\Longleftrightarrow \quad
\psi^*\d \text{ is a pseudo metric on } I.\]
\item
Let $(X_k,\d_k,\m_k)$, $k\in\N$, be a sequence of gauged measure spaces with \[\DD\Big((X_k,\d_k,\m_k),(X_\infty,\d_\infty,\m_\infty)\Big)\longrightarrow0 \ \text{ as }k\to\infty\]
for some gauged measure space $(X_\infty,\d_\infty,\m_\infty)$.
If for each $k\in\N$, $\d_k$ is a pseudo metric on $X_k$ then $\d_\infty$ is a pseudo metric on $X_\infty$.
\end{enumerate}

\end{lemma}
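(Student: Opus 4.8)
The plan is to handle the two parts separately; part (i) is essentially a transfer-of-structure argument along a parametrization, and part (ii) reduces to part (i) together with an a.e.-convergence extraction.

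For part (i), suppose first that $\d$ is a pseudo metric on $X$, witnessed by a Borel set $N\subset X^2$ with $\m^2(N)=0$. Set $N':=(\psi\times\psi)^{-1}(N)\subset I^2$. Since $\psi_*\Leb^1=\m$, we have $(\psi\times\psi)_*\Leb^2=\m^2$ up to the usual identification, so $\Leb^2(N')=\m^2(N)=0$. If $(s_1,s_2,s_3)\in I^3$ has $(s_i,s_j)\notin N'$ for all $\{i,j\}\subset\{1,2,3\}$, then $(\psi(s_i),\psi(s_j))\notin N$, hence the triangle inequality for $\d$ applies to the points $\psi(s_1),\psi(s_2),\psi(s_3)$, which is exactly the triangle inequality for $\psi^*\d$ at $(s_1,s_2,s_3)$. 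So $\psi^*\d$ is a pseudo metric on $I$. For the converse direction one would like to push the exceptional set $N'\subset I^2$ forward to $X^2$; the subtlety is that $\psi$ need not be injective, so I would instead use Remarks \ref{rem:paramet} (the normal form of a parametrization: $\psi$ maps a countable family of intervals onto the atoms and is a Borel isomorphism on the complement $I'$). On the atomic part the triangle inequality for $\d$ among atoms $x_i$ is literally the triangle inequality for $\psi^*\d$ among any chosen preimages; on $X'=\psi(I')$ one transports $N'\cap (I')^2$ through the Borel isomorphism $\psi|_{I'}$. Combining, one obtains a Borel $\m^2$-null set $N\subset X^2$ outside of which $\d$ satisfies the triangle inequality, as required.

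For part (ii), by Lemma \ref{obdA} it suffices to work with the representatives $(I,\f_k,\Leb^1)$ where $\f_k=\psi_k^*\d_k$ for some $\psi_k\in\Par(\m_k)$, and similarly $\f_\infty=\psi_\infty^*\d_\infty$; by part (i) each $\f_k$ is a pseudo metric on $I$ and it suffices to show $\f_\infty$ is. Now $\DD\big((X_k,\d_k,\m_k),(X_\infty,\d_\infty,\m_\infty)\big)\to0$ together with the isometry $(\YY,\DD)\cong(\LL,d_\LL)$ of Theorem \ref{YY-iso} gives $d_{L^2/\Inv}(\auf\f_k\zu,\auf\f_\infty\zu)\to0$, so after replacing each $\f_k$ by a suitable $\phi_k^*\f_k$ (still a pseudo metric, since precomposition by a measure-preserving map preserves the a.e.\ triangle inequality — same argument as in part (i)) we may assume $\|\f_k-\f_\infty\|_{L^2_s(I^2,\Leb^2)}\to0$. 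Passing to a subsequence, $\f_k\to\f_\infty$ pointwise $\Leb^2$-a.e.\ on $I^2$. Let $N_k\subset I^2$ be the $\Leb^2$-null exceptional set for $\f_k$ and let $N_0$ be the $\Leb^2$-null set off which $\f_k\to\f_\infty$ pointwise. The natural candidate exceptional set for $\f_\infty$ is $N:=N_0\cup\bigcup_k N_k$, which is $\Leb^2$-null. For $(s_1,s_2,s_3)$ with all pairs $(s_i,s_j)\notin N$ we have, for every $k$, $\f_k(s_1,s_2)+\f_k(s_2,s_3)\ge \f_k(s_1,s_3)$, and all three pairs lie outside $N_0$, so letting $k\to\infty$ along the subsequence yields $\f_\infty(s_1,s_2)+\f_\infty(s_2,s_3)\ge\f_\infty(s_1,s_3)$. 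Hence $\f_\infty$ is a pseudo metric on $I$, and by part (i) $\d_\infty$ is a pseudo metric on $X_\infty$.

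I expect the main obstacle to be the "converse" direction of part (i): because a parametrization $\psi$ can be highly non-injective (it collapses intervals onto atoms), one cannot simply push an exceptional set forward, and one must invoke the structure theorem for parametrizations (Remarks \ref{rem:paramet}) to split $I$ into the atomic part and the part on which $\psi$ is a Borel isomorphism, then argue on each piece and recombine. The rest — the a.e.\ convergence extraction and passing to the limit in the triangle inequality — is routine, the only mild care being to arrange that the exceptional sets $N_k$ can be taken uniformly null by putting them into a countable union, which is harmless.
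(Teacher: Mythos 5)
Your treatment of the forward direction of (i) and of part (ii) is correct and essentially the paper's own argument: pull the exceptional set back through $(\psi,\psi)$, respectively reduce to a single representative, extract an $\m^2$-a.e.\ convergent subsequence, collect the countably many exceptional sets into one null set, and pass to the limit in the triangle inequality. The problem is the converse direction of (i), which you rightly single out as the delicate point but do not actually close. First, Remark 2.18 provides the \emph{existence} of a parametrization in normal form; the $\psi$ in the statement is arbitrary, and transferring the a.e.\ triangle inequality from $\psi^*\d$ to $\psi_0^*\d$ for a different parametrization $\psi_0$ is exactly the same difficulty you are trying to solve (there is in general no measure-preserving $\theta$ with $\psi=\psi_0\circ\theta$, only a coupling), so the reduction to normal form is not free. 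Second, even in normal form, ``the triangle inequality for $\d$ among atoms is literally the triangle inequality for $\psi^*\d$ among any chosen preimages'' is not right: the values $\psi^*\d(s_i,s_j)$ do coincide with $\d(x_i,x_j)$ for \emph{any} preimages, but the triangle \emph{inequality} for $\psi^*\d$ is only known at triples all of whose pairs avoid $N'$, so you must exhibit one consistent triple of preimages avoiding $N'$ pairwise; and triples mixing atoms with non-atomic points (where one fiber is an interval and the others are singletons) are not covered by ``combining'' the two pure cases.

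The clean way to do the converse for an arbitrary $\psi$ is to disintegrate $\Leb^1=\int_X\nu_x\,d\m(x)$ over $\psi$, so that $\Leb^2=\int_{X^2}\nu_x\otimes\nu_y\,d\m^2(x,y)$, and to set $N:=\{(x,y)\in X^2\spec (\nu_x\otimes\nu_y)(N'\cup (N')^{\top})>0\}$; then $\m^2(N)=0$, and for any triple $(x_1,x_2,x_3)$ with all pairs outside $N$, Fubini shows that $\nu_{x_1}\otimes\nu_{x_2}\otimes\nu_{x_3}$-a.e.\ triple of preimages has all pairs outside $N'$, so a consistent good triple exists and the inequality for $\d$ follows. (To be fair, the paper's own proof of this direction is also terse --- it merely pushes $I^2\setminus N'$ forward and asserts the conclusion, which hides the same consistent-preimage issue --- but the gap is real in your write-up as well.) A related, smaller wrinkle occurs in your part (ii): aligning via $d_{L^2/\Inv}$ gives $\|\phi_k^*\f_k-\phi_k'^*\f_\infty\|_{L^2}\to0$ with a representative of $\f_\infty$ that varies with $k$; the paper avoids this by gluing all couplings onto one common space (as in the completeness proof of Theorem \ref{YY-geo}) so that the limit object is fixed, and you should either do the same or note that it suffices to produce \emph{one} pseudo metric representative of the homomorphism class of $\X_\infty$ and then invoke the (correctly proved) transfer along zero-distance couplings.
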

\begin{proof}
(i) Assume that $\d$
satisfies the triangle inequality $\m^2$-a.e. with `exceptional set' $N\subset X^2$.
Put $\d'=\psi^*\d$ and $N'=(\psi,\psi)^{-1}(N)\subset I^2$.
Then
$\Leb^2(N')=\m^2(N)=0$ and $\d'$ satisfies the triangle inequality for every $(t_1,t_2,t_3)\in I^3$ with $(t_i,t_j)\not\in N'$ for all $\{i,j\}\subset\{1,2,3\}$.

Conversely, assume that
$\d'$ satisfies the triangle inequality $\Leb^2$-a.e. with
`exceptional set' $N'\subset I^2$. Put $M'=I^2\setminus N'$ and
\[M=(\psi,\psi)(M'),\quad N=X^2\setminus M=(\psi,\psi)(N').\]
Then $\Leb^2(M')=1$ and thus $m^2(M)=1$. Moreover,
$\d$
satisfies the triangle inequality
for every $(x_1,x_2,x_3)\in X^3$ with $(x_i,x_j)\in M$ for all $\{i,j\}\subset\{1,2,3\}$.

(ii)
Following the argumentation in the proof of Theorem \ref{YY-geo}
(completeness assertion), we may assume without restriction that $X_k=X_\infty$, $\m_k=\m_\infty$ for all $k\in\N$ and, moreover,
\[\|\d_k-\d_\infty\|_{L^2_s(X_\infty^2,\m_\infty^2)}\to0\]
as $k\to\infty$. Passing to a subsequence, the latter implies
\[ \d_k\to\d_\infty\quad\m_\infty^2\text{-a.e. on }X_\infty^2.\]
Thus the $\m_\infty^2$-a.e. triangle inequality carries over from $\d_k$ to $\d_\infty$.
\end{proof}

Applied to two gauged measure spaces $(X_0,\d_0,\m_0)$ and $(X_1,\d_1,\m_1)$ which are homomorphic,  i.e.
$\DD\big((X_0,\d_0,\m_0),(X_1,\d_1,\m_1)\big)=0$, the previous Lemma in particular implies that $\d_0$ satisfies the triangle inequality $\m_0^2$-almost everywhere if and only if
$\d_1$ satisfies the triangle inequality $\m_1^2$-almost everywhere.
Thus the `almost everywhere triangle inequality' is a property of homomorphism classes.

\begin{definition} A (homomorphism class of)  gauged measure space(s) $\X=\auf X,\d,\m\zu $ is called \emph{pseudo metric measure space}
if the gauge $\d$ satisfies the triangle inequality $\m^2$-almost everywhere.

The space of homomorphism classes of pseudo metric measure spaces is denoted by
$\hat\XX$.
\end{definition}\index{x@$\hat\XX$}
\index{space of pseudo metric measure spaces}

\begin{corollary}\label{closedness of tr-in}
The space $\hat\XX$ of pseudo metric measure spaces is a closed, convex subset of $\YY$. It contains the space $\XX$ of metric measure spaces and its closure $\ol\XX$.
\end{corollary}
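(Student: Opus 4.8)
The plan is to verify the three assertions — closedness, convexity, and the inclusions $\XX\subset\ol\XX\subset\hat\XX$ — in that order, each reducing to facts already established. For \textbf{closedness}, suppose $(\X_k)_{k\in\N}$ is a sequence in $\hat\XX$ with $\X_k\to\X_\infty$ in $(\YY,\DD)$. Each $\X_k$ has a representative $(X_k,\d_k,\m_k)$ whose gauge satisfies the triangle inequality $\m_k^2$-a.e., i.e.\ each $\d_k$ is a pseudo metric in the sense of Definition \ref{m2triangle}. By Lemma \ref{pseudo lemma}(ii), applied to this sequence and its limit $(X_\infty,\d_\infty,\m_\infty)$, the limiting gauge $\d_\infty$ is again a pseudo metric on $X_\infty$; hence $\X_\infty\in\hat\XX$. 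Since the $\m^2$-a.e.\ triangle inequality is a property of the homomorphism class (as noted just before the Corollary, via Lemma \ref{pseudo lemma}(i)), this conclusion is independent of the chosen representatives. Thus $\hat\XX$ is closed in $\YY$.

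For \textbf{convexity}, let $\X_0=\auf X_0,\d_0,\m_0\zu$ and $\X_1=\auf X_1,\d_1,\m_1\zu$ lie in $\hat\XX$, and let $(\X_t)_{t\in[0,1]}$ be a geodesic between them in $\YY$. By Theorem \ref{YY-geo}(iii), there is an optimal coupling $\ol\m\in\Cpl(\m_0,\m_1)$ with $\X_t=\auf X_0\times X_1,(1-t)\d_0+t\d_1,\ol\m\zu$. Write $\d_t=(1-t)\d_0+t\d_1$, a function on $(X_0\times X_1)^2$, and let $N_i\subset X_i^2$ be the $\m_i^2$-null exceptional set for $\d_i$. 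Pull these back under the projections $\pi_i:X_0\times X_1\to X_i$: set $N=(\pi_0,\pi_0)^{-1}(N_0)\cup(\pi_1,\pi_1)^{-1}(N_1)\subset(X_0\times X_1)^2$. Since $(\pi_i)_{\push}\ol\m=\m_i$, we get $\ol\m^2(N)=0$. For any triple of points in $X_0\times X_1$ whose pairwise images avoid $N$, both $\d_0$ and $\d_1$ satisfy the triangle inequality on the corresponding triples, and a convex combination of two triangle inequalities is again a triangle inequality; hence $\d_t$ satisfies the triangle inequality off $N$. So $\X_t\in\hat\XX$, proving convexity.

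For the \textbf{inclusions}: every metric measure space $(X,\d,\m)$ is in particular a gauged measure space whose gauge $\d$ is a genuine metric, hence satisfies the triangle inequality everywhere, a fortiori $\m^2$-a.e.; thus $\XX\subset\hat\XX$. Since $\hat\XX$ is closed in $\YY$ and contains $\XX$, it contains the closure $\ol\XX$ of $\XX$ as well. This completes the proof. The only nontrivial ingredient is Lemma \ref{pseudo lemma}, and within that the passage to an a.e.-convergent subsequence on a common probability space (the ``gluing'' reduction used already in the completeness proof of Theorem \ref{YY-geo}); everything else here is bookkeeping, so I do not expect a genuine obstacle — the one point to be careful about is that all statements are phrased for homomorphism classes, which is exactly what Lemma \ref{pseudo lemma}(i) guarantees is well-defined.
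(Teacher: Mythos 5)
Your proposal is correct and follows essentially the same route as the paper: closedness via Lemma \ref{pseudo lemma}(ii), convexity via the explicit form of geodesics in $\YY$ and the union of the pulled-back exceptional sets, and the inclusion $\ol\XX\subset\hat\XX$ from closedness plus the trivial inclusion $\XX\subset\hat\XX$. Your spelling-out of why $\ol\m^2(N)=0$ via the marginal conditions is a welcome bit of extra detail the paper leaves implicit.
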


\begin{proof}
Closedness of $\hat\XX$ follows from part (ii) of the previous Lemma. Since it obviously contains $\XX$ it therefore also contains $\ol\XX$.

To see the convexity, let a geodesic $(\X_t)_{0\le t\le1}$ in $\YY$ be given.
It is always of the form
\[ \X_t=\auf X_0\times X_1, (1-t)\d_0+t\d_1,\ol\m\zu.\]
Thus if the endpoints lie in $\hat\XX$, the gauges $\d_0$ and $\d_1$ satisfy the triangle inequality on $X_0\times X_1$ with suitable exceptional sets $N_0, N_1$ of vanishing $\ol\m^2$-measure. But then also the convex combinations of $\d_0$ and $\d_1$ satisfy the triangle inequality with exceptional set $N_0\cup N_1$.
\end{proof}

\begin{lemma}\label{discrete-sphere}
\begin{enumerate}
\item
Let $(X,\m)$ and $(X',\m')$ be arbitrary standard Borel spaces without atoms (i.e. $X$ is a Polish space and $\m$ a probability measure on $\borel(X)$ with $\m(\{x\})=0$ for all $x\in X$; similarly $X'$ and $\m'$). Equip $X$ as well as $X'$ with the \emph{discrete metric}
\[ \d(x,y)=\d'(x,y)=\left\{ \begin{array}{cl}
0,\ &x=y\\
1,\ &\text{else}
\end{array}\right.
\]
Then $(X,\d,\m)$  and $(X',\d',\m')$ are homomorphic. The equivalence class $\auf  X,\d,\m\zu $ will be called  \emph{the discrete continuum}.\index{discrete continuum}
\item  The pseudo metric measure space
$\X=\auf X,\d,\m\zu $ from (i) is the limit of the sequence of metric measure spaces $\X_n=[X_n,\d_n,\m_n]$, $n\in\N$, considered in Example \ref{ex-incompl}. More precisely,
\[\DD(\X_n,\X)\le2^{-n/2}\quad\text{ for all }n\in\N.\]

\item For each $n\in\N$, the geodesic $(\X_{n,t})_{0\le t\le 1}$ connecting $\X_n=\X_{n,0}$ and $\X=\X_{n,1}$ \emph{instantaneously} leaves the set $\XX$. That is, for each $t>0$,
    \[\X_{n,t}\not\in \XX.\]
\end{enumerate}
\end{lemma}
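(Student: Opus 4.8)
My plan is to prove the three parts in order, since (ii) and (iii) both build on the concrete description of the spaces in (i). For part (i), the claim is that two atomless standard Borel spaces equipped with the discrete metric are homomorphic. The strategy is to use the characterization of $\DD=0$ for gauged measure spaces: it suffices to produce a coupling $\ol\m\in\Cpl(\m,\m')$ with $\int\int|\d-\d'|^2\,d\ol\m\,d\ol\m=0$. Because $\d$ and $\d'$ are both the discrete metric, $|\d(x_0,y_0)-\d'(x_1,y_1)|\ne 0$ exactly when one of the pairs lies on the diagonal and the other does not, i.e. when $x_0=y_0$ but $x_1\ne y_1$, or $x_0\ne y_0$ but $x_1=y_1$. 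The point is to choose $\ol\m$ so that $\ol\m^2$ gives zero mass to $\{(x_0,x_1,y_0,y_1)\spec x_0=y_0\}$ and to $\{x_1=y_1\}$; since $\m$ and $\m'$ are atomless, the \emph{product} coupling $\ol\m=\m\otimes\m'$ already does this: $\ol\m^2(\{x_0=y_0\})=\m^2(\{x_0=y_0\})=0$ by atomlessness of $\m$, and likewise $\ol\m^2(\{x_1=y_1\})=0$. Hence $\DD\big((X,\d,\m),(X',\d',\m')\big)=0$, which is part (i).

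For part (ii), I would apply the same circle of ideas but now compute an explicit coupling between $\X_n$ (the complete graph on $2^n$ vertices with uniform measure and unit distances) and $\X$ (the discrete continuum). Take a parametrization $\psi\in\Par(\m)$ which sends the $i$-th dyadic subinterval $\big[(i-1)2^{-n},i2^{-n}\big)$ into the $i$-th vertex of $X_n$; compose with the identity on the continuum side to get a coupling $\ol\m\in\Cpl(\m_n,\m)$. Then the distortion integrand $|\d_n-\d|$ is supported on the set where the continuum points $y,y'$ lie in the same dyadic subinterval (so $\d_n=1$ there but $\d$ may be $0$) — more precisely on $\{y,y'\text{ in same block}\}\setminus\{y=y'\}$, which has $\ol\m^2$-measure $2^n\cdot(2^{-n})^2=2^{-n}$. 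Therefore $\DD(\X_n,\X)^2\le 2^{-n}$, i.e. $\DD(\X_n,\X)\le 2^{-n/2}$. This uses only Theorem~\ref{YY-geo}(i) for the existence of optimal couplings and the defining formula for $\DD$.

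For part (iii), I would use the characterization of geodesics in $\YY$ from Theorem~\ref{YY-geo}: the geodesic $(\X_{n,t})$ connecting $\X_n$ and $\X$ is represented, for an optimal coupling $\ol\m$, as $\auf X_n\times X,(1-t)\d_n+t\d,\ol\m\zu$. The gauge at time $t$ takes the value $(1-t)\cdot 1+t\cdot 0=1-t$ on a set of pairs of positive $\ol\m^2$-measure (those where $\d_n=1,\ \d=0$ — the intra-block off-diagonal pairs, of positive measure by part (ii)) while also taking the value $(1-t)+t=1$ on other pairs and $0$ on the diagonal. To conclude $\X_{n,t}\notin\XX$ I must rule out that this gauge is homomorphic to an honest metric measure space; the cleanest route is to observe that for a genuine mm-space the distance is a \emph{continuous} function inducing the topology and in particular separates points of the support, whereas here, along any parametrization, the gauge fails to be (homomorphic to) a metric because it takes two distinct positive values $1-t$ and $1$ on pairs that cannot be consistently realized as distances in a metric space when $0<t<1$: a metric realization would force two points at distance $1-t$ whose distances to a third cluster would violate the triangle inequality given the coexisting value $1$ and the atoms of $\m_n$. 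I expect the main obstacle to be exactly this last step — turning ``the gauge takes incompatible values'' into a rigorous proof that $\X_{n,t}$ is not homomorphic to \emph{any} metric measure space; the right tool is probably to exhibit, for each $t\in(0,1)$, a positive-measure set of triples $(x,y,z)$ violating the triangle inequality for every representative, using that $\d_n$ has atoms (clusters of $\m_n$-mass at vertices) so the exceptional set $N$ allowed by a pseudo-metric cannot absorb the violation, combined with the fact that $\hat\XX$ is closed but $\XX$ is not (Corollary~\ref{closedness of tr-in} together with Lemma~\ref{iso-null} characterizing when a pseudo metric measure space is actually in $\XX$).
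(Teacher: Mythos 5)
Parts (i) and (ii) of your proposal are correct and essentially coincide with the paper's proof: for (i) the paper observes that the distortion integral vanishes for \emph{every} coupling (the marginal of $\ol\m\otimes\ol\m$ on $(x_0,y_0)$ is always $\m\otimes\m$, which charges no diagonal by atomlessness), so your specialization to the product coupling is harmless; for (ii) your block coupling and the count $2^n\cdot(2^{-n})^2=2^{-n}$ is exactly the paper's computation.

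Part (iii) contains a genuine gap, and the mechanism you propose cannot be made to work. First, a bookkeeping slip: under the block coupling, the positive-measure off-diagonal pairs within a block have $\d_n=0$ and $\d=1$ (the set $\{\d_n=1,\ \d=0\}$ is $\ol\m^2$-null, since distinct vertices are coupled to disjoint blocks), so the gauge there equals $t$, not $1-t$. More seriously, there is no triangle-inequality violation to be found: $\d_t=(1-t)\d_n+t\d$ satisfies the triangle inequality for \emph{all} triples in $X_n\times X$ (three points in one block: $t\le t+t$; two in one block, one outside: $1\le t+1$ and $t\le 1+1$; three in distinct blocks: $1\le 1+1$), which is precisely why $\X_{n,t}$ remains in $\hat\XX=\ol\XX$ (Corollary \ref{closedness of tr-in}). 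Any attempt to exhibit a positive-measure set of triples violating the triangle inequality is therefore doomed. The actual obstruction to $\X_{n,t}\in\XX$ is metric-topological, not a triangle-inequality failure: the law of the gauge under $\ol\m^2$ puts mass $0$ on $\{0\}$, mass $2^{-n}$ on $\{t\}$, mass $1-2^{-n}$ on $\{1\}$, and no mass on $(0,t)$. This law is a homomorphism invariant (Proposition \ref{reconstruct} with $n=2$), so any genuine mm-space representative $(Z,\d_Z,\mu_Z)$ would have $\mu_Z$ atomless and $\mu_Z^2(\{0<\d_Z<t\})=0$. Since $Z$ is separable, some ball $B$ of radius $t/3$ satisfies $\mu_Z(B)>0$; all pairs in $B\times B$ have $\d_Z<t$, hence $\d_Z=0$ for $\mu_Z^2$-a.e.\ pair in $B\times B$, forcing $\mu_Z|_B$ to be a single atom -- contradicting atomlessness. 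Equivalently: a $t$-separated subset of a separable metric space is countable, hence null for an atomless measure, whereas each block carries mass $2^{-n}>0$. This is the missing step; your appeal to ``atoms of $\m_n$ plus triangle violation'' does not supply it.
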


\begin{proof}
(i) For \emph{every} coupling $\ol\m\in\Cpl(\m,\m')$
\begin{eqnarray*}
\int\int\Big|\d-\d'\Big|^2d\ol\m\,d\ol\m&=&
\int_{X\times X'}\Big[ \ol\m(\{(x,x')\spec x=y, x'\not=y'\})+\ol\m(\{(x,x')\spec x\not=y, x'=y'\})\Big]
\,d\ol\m(y,y')\\
&\le&
\int_{X\times X'}\Big[\m(\{y\})+\m'(\{y'\})\Big]\,d\ol\m(y,y')=0.
\end{eqnarray*}

(ii) Decompose $X$ into $2^n$ disjoint subsets of equal volume
\[X=\bigcup_{i=1}^{2^n}X_i,\quad \m(X_i)=2^{-n}.\] Indeed, by Remark~\ref{rem:paramet} (\ref{rem:paramet1}), we can find a Borel measurable bijection $\psi:I\to X$ with $\m=\psi_*\Leb^1$ and Borel measurable inverse. Now perform the decomposition on $I$.\\
Define a coupling $\ol\m$ of $\m_n$ and $\m$ by
\[d\ol\m(j,x)=\sum_{i=1}^{2^n}1_{X_i}(x)d\m(x)\, d\delta_i(j).\]
Then
\begin{eqnarray*}
\DD^2(\X_n,\X)&\le&
\int\int\Big|\d_n-\d\Big|^2 d\ol\m\,d\ol\m \\
&=&\sum_{i,j=1}^{2^n}\int\int\Big| \d_n(i,j)-\d(x,y)\Big|^2\,1_{X_j}(y)\,1_{X_i}(x)\,d\m(x)\,d\m(y)\\
&=&\sum_{i=1}^{2^n}\m(X_i)^2=2^{-n}.
\end{eqnarray*}
This yields the asserted upper estimate.
%
%This upper estimate together with the lower estimate from Example \ref{ex-incompl} also implies the lower estimate:
%\[\DD(\X_n,\X) \ge \DD(\X_n,\X_k)-\DD(\X_k,\X)\ge |2^{-n}-2^{-k}|-2^{-k}\ \longrightarrow \ 2^{-n}\]
%as $k\to\infty$.

(iii) The geodesic $(\X_{n,t})_{0\le t\le1}$ connecting $\X_n=\X_{n,0}$ and $\X=\X_{n,1}$ is given by
\[\X_{n,t}=\auf X_n\times X, \d_t,\ol\m \zu\]
with $\d_t=(1-t)\d_n+t\d$. For each $t>0$, the pseudo metric $\d_t$ is \emph{not} a metric which generates the Polish topology of $X_n\times X$.
\end{proof}

\begin{corollary}\label{incomplete} $\XX$ is not closed. Even more, it is not open in $\ol\XX$.
\end{corollary}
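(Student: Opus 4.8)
The plan is to derive both assertions from the material already assembled in Lemma~\ref{discrete-sphere}: the ``discrete continuum'' $\X=\auf X,\d,\m\zu$ and the sequence $\X_n=[X_n,\d_n,\m_n]\in\XX$ of complete graphs from Example~\ref{ex-incompl} supply all the witnesses needed, so the proof should be short.

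For the failure of closedness I would argue as follows. By Lemma~\ref{discrete-sphere}(ii) one has $\DD(\X_n,\X)\le 2^{-n/2}\to 0$, so $\X$ lies in the closure $\ol\XX$ of $\XX$. On the other hand $\X\notin\XX$: this is precisely Lemma~\ref{discrete-sphere}(iii) read at $t=1$, where $\X_{n,1}=\X$. (If one wants an argument not relying on (iii): if $(X',\d',\m')$ were a genuine metric measure space homomorphic to $\X$, then $\d'(x',y')=1$ for $\m'^2$-a.e.\ $(x',y')$; this first forces $\m'$ to be atomless and then forces $\m'(B_{1/2}(x'))=0$ for $\m'$-a.e.\ $x'$, contradicting $\supp\m'=X'$.) Hence $\XX$ is a proper, non-closed subset of $\ol\XX$.

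For the failure of openness I would fix $n\in\N$ and consider the geodesic $(\X_{n,t})_{0\le t\le1}$ of Lemma~\ref{discrete-sphere}(iii), which joins $\X_{n,0}=\X_n\in\XX$ to $\X_{n,1}=\X$. Being a constant-speed geodesic it satisfies $\DD(\X_{n,t},\X_n)=t\,\DD(\X_n,\X)\le t\,2^{-n/2}\to 0$ as $t\searrow0$, while $\X_{n,t}\notin\XX$ for every $t>0$ by the same lemma. Hence every $\DD$-ball around $\X_n$ meets $\ol\XX\setminus\XX$ --- provided one checks that $\X_{n,t}\in\ol\XX$ --- so $\X_n$ is not an interior point of $\XX$, and $\XX$ is not open in $\ol\XX$. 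To certify $\X_{n,t}\in\ol\XX$, I would approximate it by genuine metric measure spaces: using the finite graphs $\X_k=[X_k,\d_k,\m_k]$ ($k\ge n$) of Example~\ref{ex-incompl} and the block couplings $\ol\m_k$ appearing there and in Lemma~\ref{discrete-sphere}, the spaces $[X_n\times X_k,(1-t)\d_n+t\,\d_k,\ol\m_k]$ are honest (finite, hence Polish) metric measure spaces for $t\in(0,1)$, and the obvious coupling matching each vertex of $X_k$ with the corresponding block of $X$ yields $\DD\bigl([X_n\times X_k,(1-t)\d_n+t\,\d_k,\ol\m_k],\X_{n,t}\bigr)^2\le t^2\,2^{-k}\to 0$.

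The only step I expect to require genuine (if still routine) work is this last one: confirming that the interior geodesic points $\X_{n,t}$ lie in the completion $\ol\XX$, rather than only in the a priori larger closed convex set $\hat\XX$ of pseudo metric measure spaces. Everything else is a direct reading of Lemma~\ref{discrete-sphere} together with the definition of the completion.
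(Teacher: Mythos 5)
Your proof is correct and is exactly the route the paper intends: Lemma~\ref{discrete-sphere}(ii) exhibits the discrete continuum as a $\DD$-limit of the complete graphs $\X_n\in\XX$ (non-closedness), and Lemma~\ref{discrete-sphere}(iii) shows the connecting geodesics leave $\XX$ instantaneously, so no ball around $\X_n$ stays in $\XX$ (non-openness). Your extra verification that the interior points $\X_{n,t}$ really lie in $\ol\XX$ (and not merely in $\hat\XX$) is a sensible precaution, since the identity $\hat\XX=\ol\XX$ is only established later in Theorem~\ref{hatXXeqolXX}.
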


To obtain at least a vague geometric interpretation
of the convergence $\X_n\to\X$ in the previous Lemma \ref{discrete-sphere}(ii), think of $\X_n$ being the tree consisting of $2^n$ edges $e_i=(0,v_i)$ of length $1/2$, glued together at the origin.
The vertices $v_i$ may be regarded as points on the circle with radius $1/2$, connected to each other only via the origin. The limit space $\X$ then may be regarded as the circle with radius $1/2$ equipped with the uniform distribution (= Haar measure) and the discrete metric (which amounts to say that each pair of points is connected only via the origin).

\begin{theorem}\label{hatXXeqolXX}\quad
$\hat\XX=\ol\XX$.
\end{theorem}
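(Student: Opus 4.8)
\emph{Proof plan.} The inclusion $\ol\XX\subseteq\hat\XX$ is already contained in Corollary \ref{closedness of tr-in}, so what remains is the reverse inclusion: every pseudo metric measure space is a $\DD$-limit of honest metric measure spaces. Fix $\X=\auf X,\d,\m\zu\in\hat\XX$. By Lemma \ref{obdA} together with Lemma \ref{pseudo lemma}(i) we may assume $X=I$, $\m=\Leb^1$ and $\d\in L^2_s(I^2,\Leb^2)$ is a pseudo metric on $I$; by Remark \ref{m2 vs m3}(i) we also have $\d\ge0$ $\Leb^2$-a.e. The plan is to approximate $\X$ in two stages: first mollify $\d$ into a \emph{continuous} pseudo metric, then discretize the resulting space by finite metric spaces.

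\emph{Stage 1 (regularization).} Let $P_\varepsilon$ be a standard mollifying Markov kernel on $I$, e.g.\ the Neumann heat semigroup, so that $P_\varepsilon$ is positivity preserving, $P_\varepsilon 1=1$, maps $L^2(I)$ into $\mathcal C(I)$, commutes with the coordinate flip, and $P_\varepsilon\to\Id$ strongly in $L^2(I)$ as $\varepsilon\to0$. Put $\d_\varepsilon:=(P_\varepsilon\otimes P_\varepsilon)\d$. Then $\d_\varepsilon$ is continuous, symmetric, nonnegative, and $\d_\varepsilon\to\d$ in $L^2_s(I^2,\Leb^2)$. The key point is that $\d_\varepsilon$ satisfies the triangle inequality \emph{everywhere}: since $\Leb^2(N)=0$ for the exceptional set $N$ of $\d$, the inequality $\d(a,c)\le\d(a,b)+\d(b,c)$ holds for $\Leb^3$-a.e.\ $(a,b,c)\in I^3$; writing $\d_\varepsilon(x,y)+\d_\varepsilon(y,z)$ as the integral of $\d(a,b)+\d(b,c)$ against the product density $p_\varepsilon(x,a)p_\varepsilon(y,b)p_\varepsilon(z,c)$ and integrating out $b$ yields $\d_\varepsilon(x,y)+\d_\varepsilon(y,z)\ge\d_\varepsilon(x,z)$ for all $x,y,z\in I$. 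Using the identity coupling, $\DD\big((I,\d_\varepsilon,\Leb^1),\X\big)\le\|\d_\varepsilon-\d\|_{L^2_s(I^2,\Leb^2)}\to0$, so it suffices to prove $(I,\d_\varepsilon,\Leb^1)\in\ol\XX$ for each fixed $\varepsilon$.

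\emph{Stage 2 (discretization).} Fix $\varepsilon$ and write $\rho:=\d_\varepsilon$, a continuous, symmetric, nonnegative function on the compact $I^2$ satisfying the triangle inequality everywhere; let $\omega$ be its modulus of continuity. For $n\in\N$ split $I$ into intervals $I_1,\dots,I_n$ of length $1/n$, pick $t_i\in I_i$, and define the finite space $X_n=\{1,\dots,n\}$, $\m_n=\frac1n\sum_{i=1}^n\delta_i$ and
\[
\d_n(i,j)=\begin{cases}0,& i=j,\\ \rho(t_i,t_j)+\tfrac1n,& i\neq j.\end{cases}
\]
Since $\rho\ge0$ and $\rho$ satisfies the triangle inequality on $\{t_1,\dots,t_n\}$, and since $(i,j)\mapsto\tfrac1n\mathbf 1_{\{i\neq j\}}$ is a scaled discrete metric, $\d_n$ is a genuine metric on the finite set $X_n$; hence $(X_n,\d_n,\m_n)\in\XX$. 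Coupling $\m_n$ and $\Leb^1$ by the measure $\ol\m_n$ with $\ol\m_n(A\times B)=\sum_{i\in A}\Leb^1(B\cap I_i)$ gives
\[
\DD\big((X_n,\d_n,\m_n),(I,\rho,\Leb^1)\big)^2\le\sum_{i,j=1}^n\int_{I_i}\int_{I_j}\big|\d_n(i,j)-\rho(s,t)\big|^2\,ds\,dt.
\]
Splitting into the diagonal cells $i=j$, whose total area $\sum_i\Leb^1(I_i)^2=1/n$ tends to $0$ so that their contribution is at most $\|\rho\|_\infty^2/n$, and the off-diagonal cells, on which $|\d_n(i,j)-\rho(s,t)|\le\omega(\sqrt2/n)+\tfrac1n$, we obtain the bound $\|\rho\|_\infty^2/n+\big(\omega(\sqrt2/n)+\tfrac1n\big)^2\to0$. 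Thus $(I,\d_\varepsilon,\Leb^1)\in\ol\XX$, and combining the two stages, $\X\in\ol\XX$. Together with Corollary \ref{closedness of tr-in} this proves $\hat\XX=\ol\XX$.

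\emph{Expected main obstacle.} The only genuinely delicate point is the preservation of the triangle inequality under mollification in Stage 1, together with the companion observation that forcing $\d_n(i,i)=0$ on the diagonal is harmless because the diagonal cells carry vanishing $\Leb^2$-mass; the remainder is routine bookkeeping with couplings, uniform continuity, and the elementary estimates on $\DD$.
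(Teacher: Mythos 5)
Your proof is correct, and its crucial step coincides with the paper's: both arguments hinge on the observation that smoothing the gauge with a product of Markov kernels turns the $\Leb^3$-a.e.\ triangle inequality into an everywhere triangle inequality for a continuous function, because the defect $\d_\varepsilon(x,y)+\d_\varepsilon(y,z)-\d_\varepsilon(x,z)$ is an integral of the a.e.-nonnegative defect of $\d$ against an absolutely continuous product density. The differences are in the bookkeeping around that step. The paper first truncates $\d$ to make it bounded and extends it to a pseudo metric on all of $\R$ so that ordinary convolution with a compactly supported mollifier makes sense; you avoid both by convolving with a mass-preserving kernel on $I$ itself, which automatically yields a bounded continuous output (boundedness then enters only through $\|\rho\|_\infty$ in your error estimate, so no truncation is needed). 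In the endgame the paper simply adds $\frac1n|x-y|$ to the mollified gauge, producing a genuine complete separable metric on $I$ inducing the Euclidean topology; you instead discretize $(I,\d_\varepsilon,\Leb^1)$ by finite metric spaces with an explicit block coupling. Your route is slightly longer but equally valid, and it has the side benefit of exhibiting $\X$ as a limit of \emph{finite} mm-spaces (consistent with the paper's separate density result for $\bigcup_n\XXn$); the paper's perturbation trick is shorter and keeps all approximants supported on $I$. One small point worth making explicit in your Stage 2: a continuous everywhere-triangle-inequality gauge need not vanish on the diagonal, so it is right that you only sample $\rho(t_i,t_j)$ for $i\neq j$ and absorb the diagonal cells into the $\|\rho\|_\infty^2/n$ term; as written this is handled correctly.
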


\begin{proof}
Given any pseudo metric measure space $(X,\d,\m)$, we have to find metric measure spaces $(X_n,\d_n,\m_n)$ with
\[\DD\Big((X_n,\d_n,\m_n),(X,\d,\m)\Big)\to 0.\]
We will modify the given pseudo metric step by step to transform  it into a complete separable metric.

(i) According to Lemma \ref{obdA} and Lemma \ref{pseudo lemma}(i),  we may assume without restriction that $X=I$, $\m=\Leb^1$. We then also will choose $X_n=I$, $\m_n=\Leb^1$ for all $n$.
Let $\d$ be the given pseudo metric on $I$. That is, $\d$ is a symmetric $L^2$-function on $I\times I$ which satisfies the triangle inequality $\Leb^2$-a.e. in the sense of Definition \ref{m2triangle}.

(ii) Without restriction $\d$ is bounded, say bounded by $L$.
%Moreover, we may assume that $\d$ is bounded.
Indeed, $\d$ is square integrable on $I^2$ and thus
can be approximated in $L^2$-norm by $\d_k=\min\{\d,k\}$ for $k\in\N$. Obviously, $\d_k$ is again a pseudo metric and now in addition bounded. The convergence $\d_k\to\d$ in $L^2$ implies  $(I,\d_k,\Leb^1)\to (I,\d,\Leb^1)$ in $\DD$-distance.

(iii) We extend $\d$ to a pseudo metric $\d'$ on $\R$ by
\[ \d'(x,y)=\left\{\begin{array}{cl}
\d(x,y),\quad& \text{if } x,y\in I\\
L/2,\quad& \text{if } x\in I, y\not\in I \text{ or }y\in I, x\not\in I\\
0,\quad& \text{if } x,y\not\in I.
\end{array}\right.
\]

(iv) Let $\eta_n$ for $n\in\N$ be a smooth mollifier kernel on $\R$, i.e. $\eta_n\ge0$ on $\R$, $\eta_n=0$ outside of $[-\frac1n,+\frac1n]$ and  $\int\eta_n(t)\,dt=1$, say
$\eta_n(t)=n\cdot\eta(nt)$ with
\[\eta(t)=\left\{\begin{array}{ll}
C\cdot \exp\Big(\frac1{t^2-1}\Big),\quad&t\in(-1,1)\\
0,&\text{else}.
\end{array}\right.
\]
Put
\begin{equation}
\d'_n(x,y)=\int_\R\int_\R \d'(x+s,y+t)\,\eta_n(s)\,\eta_n(t)\,ds\,dt.
\end{equation}
For each $n\in\N$, this defines a pseudo metric on $\R$. The triangle inequality holds for each triple of points $x,y,z\in\R$. Indeed,
\begin{eqnarray*}
\lefteqn{\d'_n(x,y)+\d'_n(y,z)-\d'_n(x,z)}\\
&=&
\int_\R\int_\R\int_\R \Big[\d'(x+s,y+t)
+\d'(y+t,z+u)-\d'(x+s,z+u)\Big]
\,\eta_n(s)\,\eta_n(t)\,\eta_n(u)\,ds\,dt\,du
\end{eqnarray*}
which is nonnegative since the integrand $[\ldots]$ is nonnegative for $\Leb^3$-a.e. triple $(s,t,u)$.

Hence, $\d'_n$ is continuous and satisfies the triangle inequality. Moreover,
\[ \|\d'_n-\d\|_{L^2(I^2)}\to0\]
as $n\to \infty$.

(v) Finally, we put
\begin{equation}
\d_n(x,y)=\d'_n(x,y)+\frac1n |x-y|
\end{equation}
for $x,y\in I$. Then $\d_n$ is a complete separable metric which induces the standard Euclidean topology on $I$. In particular, $(I,\d_n,\Leb^1)$ is a metric measure space.
Moreover, $\|\d_n-\d\|_{L^2(I^2)}\le\|\d'_n-\d\|_{L^2(I^2)}+\frac1n\to0$
as $n\to\infty$. This  proves the claim.
\end{proof}

The proof of the previous theorem in particular leads to the following

\begin{corollary}\label{m2 = m3}
For a gauged measure space $(X,\d,\m)$, the following assertions are equivalent:
\begin{enumerate}
\item $\d$ satisfies the triangle inequality $\m^2$-a.e.
\item $\d$ satisfies the triangle inequality $\m^3$-a.e.
\end{enumerate}
\end{corollary}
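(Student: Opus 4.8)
The plan is to prove the two implications separately. The direction (i)$\Rightarrow$(ii) is already recorded in Remark \ref{m2 vs m3}(ii): if $\d$ satisfies the triangle inequality outside a Borel set $N\subset X^2$ with $\m^2(N)=0$, then for $\m^3$-a.e.\ triple $(x_1,x_2,x_3)$ all three pairs $(x_i,x_j)$ avoid $N$, so the triangle inequality holds $\m^3$-a.e. So the only real content is (ii)$\Rightarrow$(i), and here the idea is to exploit the smoothing argument carried out in the proof of Theorem \ref{hatXXeqolXX}, observing that it never actually used the $\m^2$-a.e.\ hypothesis — the $\m^3$-a.e.\ hypothesis is exactly what the mollification needs.

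First I would reduce, via Lemma \ref{obdA} and (the obvious analogue of) Lemma \ref{pseudo lemma}(i), to the case $X=I$, $\m=\Leb^1$, with $\d\in L^2_s(I^2,\Leb^2)$ satisfying the triangle inequality $\Leb^3$-a.e.\ in the sense of Remark \ref{m2 vs m3}(ii). One subtlety: Lemma \ref{pseudo lemma}(i) is stated for pseudo metrics (i.e.\ the $\m^2$-a.e.\ property), so I would first note that pulling back through a parametrization $\psi\in\Par(\m)$ also preserves the $\m^3$-a.e.\ triangle inequality — this is the same argument as in the proof of that lemma, now carried out with $(\psi,\psi,\psi)$ on $I^3$ and the exceptional $\Leb^3$-null set $(\psi,\psi,\psi)^{-1}(\text{bad set})$. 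Next, as in steps (ii)–(iv) of the proof of Theorem \ref{hatXXeqolXX}, truncate $\d$ to be bounded, extend it to $\R$ by the same formula for $\d'$, and mollify: $\d'_n(x,y)=\int\int \d'(x+s,y+t)\eta_n(s)\eta_n(t)\,ds\,dt$. The key computation is the one already displayed there,
\[
\d'_n(x,y)+\d'_n(y,z)-\d'_n(x,z)=\int\!\int\!\int\bigl[\d'(x+s,y+t)+\d'(y+t,z+u)-\d'(x+s,z+u)\bigr]\eta_n(s)\eta_n(t)\eta_n(u)\,ds\,dt\,du,
\]
whose integrand is nonnegative for $\Leb^3$-a.e.\ $(s,t,u)$ precisely because $\d'$ inherits the $\Leb^3$-a.e.\ triangle inequality from $\d$. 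Hence $\d'_n$ is a genuine continuous pseudo metric (triangle inequality \emph{everywhere}), and $\|\d'_n-\d\|_{L^2(I^2)}\to 0$.

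To finish I would not add the extra $\frac1n|x-y|$ term of step (v) — that was needed in Theorem \ref{hatXXeqolXX} to force a \emph{separable metric generating the topology}, which is irrelevant here. Instead I simply observe that each $\d'_n$, being continuous and satisfying the triangle inequality everywhere, certainly satisfies it $\Leb^2$-a.e., i.e.\ $(I,\d'_n,\Leb^1)\in\hat\XX$; and since $\DD\bigl((I,\d'_n,\Leb^1),(I,\d,\Leb^1)\bigr)\le\|\d'_n-\d\|_{L^2(I^2)}\to 0$, the limit $(I,\d,\Leb^1)$ lies in the closed set $\hat\XX$ by Lemma \ref{pseudo lemma}(ii). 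Transporting back through $\psi$ gives that the original $(X,\d,\m)$ is homomorphic to a pseudo metric measure space, which by the "almost everywhere triangle inequality is a homomorphism invariant" remark following Lemma \ref{pseudo lemma} means $\d$ itself satisfies the triangle inequality $\m^2$-a.e.

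The main obstacle — and it is a mild one — is making sure the reduction steps genuinely transfer the $\m^3$-a.e.\ hypothesis rather than the $\m^2$-a.e.\ one, since the cited lemmas are phrased for the latter. In particular one must check that (a) pull-back through $\psi\in\Par(\m)$ sends an $\Leb^3$-null exceptional set to an $\m^3$-null one and conversely, and (b) the extension $\d\mapsto\d'$ on $\R$ and the truncation $\d\mapsto\min\{\d,L\}$ both preserve the $\Leb^3$-a.e.\ triangle inequality (for the truncation this is the elementary fact that $a\mapsto\min\{a,L\}$ is concave and nondecreasing, hence subadditive along triangle configurations). Everything else is a verbatim replay of the Theorem \ref{hatXXeqolXX} argument, so no new estimates are required.
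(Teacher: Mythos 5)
Your proposal is correct and follows essentially the same route as the paper's own (sketched) proof: transfer to $(I,\Leb^1)$ via a parametrization, mollify with $\eta_n$ to obtain pseudo metrics satisfying the triangle inequality everywhere, and pass to the limit. The only cosmetic difference is the last step, where you invoke closedness of $\hat\XX$ under $\DD$-convergence (Lemma \ref{pseudo lemma}(ii)) while the paper extracts a subsequence converging $\Leb^2$-a.e.\ and reads off the $\Leb^2$-a.e.\ triangle inequality directly; your version is, if anything, slightly more careful about verifying that the reduction steps transport the $\m^3$-a.e.\ hypothesis.
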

Note that in contrast to \cite{ZP11},  our result does not require the pseudo metric to be separable.

\begin{proof} Let us briefly sketch the arguments for "$(ii)\Rightarrow (i)$". (The converse implication is obvious.)
Given $(X,\d,\m)$, we choose a parametrization $\psi\in\Par(\m)$ to transfer everything from $X$ to $I$. In particular,
the pull back $\d'=\psi^*\d$ will satisfy the triangle inequality $\Leb^3$-a.e. on $I$. We approximate $\d'$ by convolution with the mollifier kernels $\eta_n$ (as in the previous proof)
 and obtain pseudo metrics $\d_n'$ on $I$ which satisfy the triangle inequality everywhere. For $n\to\infty$ we obtain, at least along subsequences, that $\d_n'\to \d'$   $\Leb^2$-a.e. on $I^2$.
Thus $\d'$ satisfies the triangle inequality $\Leb^2$-a.e. Back to the space $X$, this amounts to say that the original $\d$ satisfies the triangle inequality $\m^2$-a.e.
\end{proof}

\begin{corollary}\label{olisgeo} $\ol\XX$ is a complete geodesic space of nonnegative curvature in the sense of Alexandrov.

It is a convex (`totally geodesic') subset of $\YY$   and it contains $\XX$ as a convex subset.
\end{corollary}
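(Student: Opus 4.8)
The plan is to assemble the statement from three earlier results: Theorem \ref{hatXXeqolXX} which identifies $\ol\XX$ with $\hat\XX$, Corollary \ref{closedness of tr-in} which describes $\hat\XX$ inside $\YY$, and Theorem \ref{YY-geo} which gives all the geometric properties of $\YY$. First I would invoke Theorem \ref{hatXXeqolXX} to replace $\ol\XX$ by $\hat\XX$ everywhere, so that it suffices to prove the assertions for $\hat\XX$ as a subset of $\YY$. By Corollary \ref{closedness of tr-in}, $\hat\XX$ is a closed, convex subset of $\YY$, and it contains $\XX$, which by the same corollary (using that $\XX$ is convex in $\YY$ — this is exactly the content needed; if only stated for $\hat\XX$, then convexity of $\XX$ follows from Theorem \ref{thmoptcoupl} since the geodesic constructed there between mm-spaces $(X_0\times X_1,(1-t)\d_0+t\d_1,\ol\m)$ is again a metric measure space) is itself a convex subset.

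Next I would transfer the Alexandrov-geometric properties. Theorem \ref{YY-geo}(iv) tells us $(\YY,\DD)$ satisfies the triangle and quadruple comparison inequalities, and these are inherited by any \emph{convex} subset equipped with the induced metric, since a geodesic of $\YY$ between two points of $\hat\XX$ stays in $\hat\XX$ (convexity), so the comparison inequalities — which only involve distances along geodesics and to a fourth point, all of which lie in $\hat\XX$ — hold verbatim in $\hat\XX$. Thus $\hat\XX$ is a geodesic metric space (convexity gives existence of connecting geodesics) satisfying the quadruple comparison; completeness follows because $\hat\XX$ is a closed subset of the complete space $(\YY,\DD)$ (Theorem \ref{YY-geo}). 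By the characterization of \cite{lp} for complete length spaces — already used in Corollary \ref{XX-compl} — the quadruple comparison is equivalent to nonnegative curvature in the sense of Alexandrov, so $\hat\XX=\ol\XX$ is a complete geodesic space of nonnegative curvature.

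Finally, the structural claims: $\ol\XX$ is a convex (totally geodesic) subset of $\YY$ — this is precisely Corollary \ref{closedness of tr-in} combined with Theorem \ref{hatXXeqolXX}, since "totally geodesic" here means exactly that every $\YY$-geodesic between two of its points remains inside it, which is the convexity already established. That $\ol\XX$ contains $\XX$ as a convex subset is the last clause of Corollary \ref{closedness of tr-in} together with convexity of $\XX$ in $\YY$ (equivalently, in $\ol\XX$, since geodesics agree). I do not anticipate a genuine obstacle here; the one point requiring a word of care is making sure that "geodesic space of nonnegative curvature" for $\ol\XX$ is legitimately deduced from the quadruple comparison via \cite{lp} — but this is exactly the step already carried out in the proof of Corollary \ref{XX-compl}, so it applies here without change. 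The corollary is therefore essentially a bookkeeping consequence of Theorem \ref{hatXXeqolXX}, Corollary \ref{closedness of tr-in} and Theorem \ref{YY-geo}.
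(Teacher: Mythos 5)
Your proposal is correct and follows exactly the route the paper intends (the corollary is stated without proof precisely because it is this assembly of Theorem \ref{hatXXeqolXX}, Corollary \ref{closedness of tr-in}, and Theorem \ref{YY-geo}): identify $\ol\XX$ with $\hat\XX$, use closedness for completeness and convexity for the geodesic property and the inheritance of the comparison inequalities from $\YY$. Your side remark on the convexity of $\XX$ itself, via the explicit form of geodesics in Theorem \ref{thmoptcoupl}, correctly fills the one detail that Corollary \ref{closedness of tr-in} does not state verbatim.
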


%\begin{proof}
%The convexity of $\ol\XX$ follows from the uniqueness of geodesics in $\YY$ and the fact that $\ol\XX$ is the closure of a geodesic space.
%\end{proof}

\subsection{The $n$-Point Spaces}

For each $n\in\N$, let $\sf M^{(n)}$  be the linear space of real-valued symmetric $(n\times n)$-matrices vanishing on the diagonal. Equipped with the re-normalized $l_2$-norm
\[
	\|f\|_{{\sf M}^{(n)}}:=\bigg(\frac{2}{n^2} \sum_{1\le i<j\le n} f_{ij}^2\bigg)^{1/2}\qquad \text{for } f=(f_{ij})_{1\leq
	i<j\leq n}\in {\sf M}^{(n)}
\]
it is a Hilbert space (and as such of course a very particular example of an Alexandrov space of nonnegative curvature). It is isometric to $\R^{\frac{n(n-1)}{2}}$ equipped with a constant multiple of the Euclidean metric.

The permutation group $S_n$ acts isometrically on ${\sf M}^{(n)}$ via
\[
	(\sigma,f) \mapsto \sigma^*f \qquad \text{with } (\sigma^*f)_{ij}:=f_{\sigma_i \sigma_j}.
\]	
It defines an equivalence relation $\sim$ in ${\sf M}^{(n)}$ by
\[
	f\sim f' \quad \Longleftrightarrow \quad \exists \sigma\in S_n:\, f_{ij}=f'_{\sigma_i \sigma_j}
	\;(\forall i,j\in \{1,\ldots,n\}).
\]

\begin{theorem}\begin{enumerate}
\item
	The quotient space $\Mn:={\sf M}^{(n)}/\sim$ equipped with the metric
	\[
		d_{\Mn}(f,f')=\inf \{ \|f-\sigma^*f'\|_{{\sf M}^{(n)}} \spec \sigma\in S_n \}
	\]
	is a complete geodesic space of nonnegative curvature. Its Hausdorff dimension is
	$\frac{n(n-1)}{2}$.
\item $(\Mn,d_{\Mn})$ is isometric to a cone in $\R^{ \frac{n(n-1)}{2}}$ (with the induced inner metric in the cone).
	This cone can be regarded as fundamental domain for the group action of $S_n$.
\item
$\Mn$ is a Riemannian orbifold. The tangent space at $f\in\Mn$ is given by
\[ {\sf T}_f\Mn=\R^{ \frac{n(n-1)}{2}}/{\sf Sym}(f)\]
where \[{\sf Sym}(f)=\Big\{\sigma\in S_n\spec \sigma^*f=f\Big\}\]
is the \emph{symmetry group} (or \emph{stabilizer subgroup} or \emph{isotropy group}) of $f$.
\end{enumerate}
\end{theorem}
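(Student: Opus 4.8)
The plan is to recognize the statement as an instance of the theory of orbit spaces of finite orthogonal group actions. By the discussion preceding the theorem, $({\sf M}^{(n)},\|\cdot\|_{{\sf M}^{(n)}})$ is a Euclidean space of dimension $N:=\frac{n(n-1)}{2}$ and $S_n$ acts on it linearly and isometrically via $\sigma\mapsto(f\mapsto\sigma^*f)$; write $G\le O({\sf M}^{(n)})$ for its (finite) image, so that $\Mn={\sf M}^{(n)}/G$ carries the orbit metric $d_{\Mn}$. Since $G$ is finite the quotient map $\pi:{\sf M}^{(n)}\to\Mn$ is a submetry, hence $d_{\Mn}$ is the intrinsic metric of the quotient; by the standard fact that the quotient of a complete length space of nonnegative curvature by an isometric (here finite) group action is again such a space (\cite{bbi}, Prop.~10.2.4, resp.\ \cite{BGP92}), together with the observation that a finite-to-one image of a proper space is proper and so geodesic by Hopf--Rinow, $\Mn$ is a complete geodesic space of nonnegative curvature. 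For the Hausdorff dimension I would note that each non-identity element of $G$ fixes only a proper linear subspace of ${\sf M}^{(n)}$; hence $\pi$ restricts to a covering map, in particular a local isometry, on the open dense full-measure complement $U:={\sf M}^{(n)}\setminus\bigcup_{g\ne\mathrm{id}}\mathrm{Fix}(g)$, whose image $\pi(U)$ is open dense in $\Mn$ and locally isometric to an open subset of $\R^N$, while the (closed, lower-dimensional) remainder adds nothing; thus $\dim_H\Mn=N$.

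For (ii) I would realize $\Mn$ as an explicit cone inside ${\sf M}^{(n)}\cong\R^N$. Pick a point $p$ on the unit sphere $S^{N-1}$ of ${\sf M}^{(n)}$ with trivial $G$-stabilizer — possible since the fixed subspaces are proper — and form the spherical Dirichlet domain $F=\{x\in S^{N-1}:\|x-p\|\le\|x-g p\|\ \text{for all }g\in G\}$; let $C=\{\lambda x:\lambda\ge0,\ x\in F\}$, a closed cone in ${\sf M}^{(n)}$. By construction $C$ meets every $G$-orbit and its interior meets each orbit exactly once, so $C$ is a (closed) fundamental domain for $G$, and $\pi|_C:C\to\Mn$ is onto, injective on the interior. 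Equipping $C$ with the inner metric $d^{\mathrm{in}}_C$ induced from ${\sf M}^{(n)}$, I claim $\pi|_C$ is an isometry $(C,d^{\mathrm{in}}_C)\to(\Mn,d_{\Mn})$: the inequality $d_{\Mn}(\pi x,\pi y)\le d^{\mathrm{in}}_C(x,y)$ is immediate from $\pi$ being $1$-Lipschitz, and for the reverse one uses that every $d_{\Mn}$-geodesic is the $\pi$-image of a straight segment $[x,g_0y]$ in ${\sf M}^{(n)}$ realizing $\min_{g\in G}\|x-gy\|$; such a segment is cut by the $G$-translates of $C$ into finitely many sub-segments, and applying to each the group element carrying it back into $C$ folds the segment into a rectifiable path in $C$ of the same length. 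Finally, since dilation $\lambda\cdot$ commutes with $G$ it descends to $\Mn$, which together with $F\cong S^{N-1}/G$ exhibits $\Mn$ (equivalently $(C,d^{\mathrm{in}}_C)$) as the Euclidean cone over $S^{N-1}/G$, a cone in $\R^N$.

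For (iii) I would use the linear slice at a point. Fix $f\in{\sf M}^{(n)}$ with stabilizer $H:={\sf Sym}(f)=\{\sigma\in S_n:\sigma^*f=f\}$. Because the action is linear and $G$ is finite, for $\varepsilon$ smaller than half the distance from $f$ to the rest of its orbit the ball $B_\varepsilon(f)$ is $H$-invariant and $gB_\varepsilon(f)\cap B_\varepsilon(f)=\emptyset$ for $g\in G\setminus H$; hence $\pi$ identifies $B_\varepsilon(f)/H$ with a neighborhood of $\pi(f)$ in $\Mn$, and after translating $f$ to the origin (under which $H$ still acts linearly) this neighborhood is modeled on $B_\varepsilon(0)/H\subset\R^N/H$. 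These charts $(\R^N,H={\sf Sym}(f))\to\Mn$ cover $\Mn$, their transition maps are restrictions of elements of $S_n$ composed with translations and hence smooth, so $\Mn$ is a Riemannian orbifold. The Alexandrov tangent cone at $\pi(f)$ is the Euclidean cone over the space of directions, which by the local model is $S^{N-1}/H$; since $\R^N/H$ is itself a metric cone with vertex $\pi(0)$, it equals its own tangent cone there, giving ${\sf T}_f\Mn=\R^N/H=\R^{n(n-1)/2}/{\sf Sym}(f)$ with $H$ acting by its linear permutation action.

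The step I expect to be the main obstacle is the folding argument in (ii): verifying that reflecting a distance-realizing segment back into the Dirichlet cone yields a path that is genuinely continuous and stays in $C$, i.e. that consecutive pieces agree on the shared boundary faces and that those faces are paired by single group elements. Everything else is routine bookkeeping for quotients of Euclidean space by finite orthogonal groups once the identification $\Mn={\sf M}^{(n)}/G$ is in place.
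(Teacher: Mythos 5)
Your treatment of (i) and (iii) is correct and is the same route the paper takes, only with the details written out: the paper disposes of (i) by citing \cite{bbi}, Prop.~10.2.4 and of (iii) by citing Thurston's chapter on orbifolds, and your arguments (properness plus Hopf--Rinow for geodesicity, the open dense regular set for the Hausdorff dimension, linear slices for the orbifold charts and the identification of the tangent cone at $f$ with $\R^{n(n-1)/2}/{\sf Sym}(f)$) are exactly what is being suppressed there.

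The obstacle you flag in (ii) is, however, a genuine gap and not a removable technicality. The folding argument requires that whenever the minimizing segment $[x,g_0y]$ crosses a wall $g_kC\cap g_{k+1}C$ at a point $q$, the element $g_{k+1}g_k^{-1}$ fixes $q$; otherwise the two folded pieces land at the distinct points $g_k^{-1}q\neq g_{k+1}^{-1}q$ of $C$ and the folded path is discontinuous. Since that wall lies in the bisector hyperplane of $g_kp$ and $g_{k+1}p$, this forces $g_{k+1}g_k^{-1}$ to fix a hyperplane pointwise, i.e.\ to be a reflection. But the image of $S_n$ in $O({\sf M}^{(n)})$ is generated by reflections only for $n\le 3$: for $n\ge 4$ a transposition $(kl)$ acts on the coordinates $f_{ij}$ as the product of the $n-2\ge 2$ disjoint swaps $f_{ki}\leftrightarrow f_{li}$, so its fixed subspace has codimension $n-2\ge 2$, and one checks that no non-identity permutation acts as a reflection at all. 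Hence for $n\ge 4$ the folding cannot be repaired, and indeed the conclusion in the form you aim at fails: $\pi|_C$ identifies distinct points of $\partial C$ lying in one orbit, which have positive distance in the induced inner metric of $C$ but distance zero in $\Mn$, so $\pi|_C$ is not injective and cannot be an isometry from $(C,d_C^{\mathrm{in}})$ onto $(\Mn,d_{\Mn})$. (Model case: $\R^2$ modulo rotation by $2\pi/3$ is the glued cone of angle $2\pi/3$, not a closed sector with its induced metric.) What is true in general is that $\Mn$ is the Euclidean cone over $S^{N-1}/S_n$, and this agrees with a fundamental cone carrying its induced inner metric precisely in the reflection-group cases $n\le 3$ --- the only cases the paper illustrates; its own proof of (ii) is just the word ``straightforward'', so the reflection hypothesis is nowhere addressed. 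For $n\ge 4$ one must either pass to $C$ modulo the boundary identifications or restrict claim (ii) accordingly; your parts (i) and (iii) are unaffected by this.
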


\begin{proof}
(i)	According to general results on geometry of Alexandrov spaces, lower curvature bounds are preserved
	under passing to quotient spaces w.r.t.\ any isometric group action, cf. \cite{bbi},
	Proposition 10.2.4.
	The remaining claims in (i) and (ii) are straightforward.
For (iii), we refer to \cite{Thurston}, chapter 13.
\end{proof}

\begin{figure}[h!]
	\psfrag{r1}{$r_1$}
	\psfrag{r2}{$r_2$}
	\psfrag{r3}{$r_3$}
	\psfrag{t=0}{$t=0$}
	\psfrag{t=1}{$t=1$}
	\psfrag{t=2}{$t=2$}
	\centering
		%% eps Bilder, die nicht funtionieren %%% \includegraphics[scale=0.5]{}
	\caption{\wichtig{Triangles $r(t)=\exp_r(tg)$ for $r=(3,4,5)\in \M^{(3)}$, $g=(0,\frac12,-\frac12)\in {\sf T}_r\M^{(3)}$ and $t=0,1,2$.
\\
Note that for the equilateral triangle $r(1)\in \M^{(3)}$:
$\exp_{r(1)}(tg)=\exp_{r(1)}(-tg)\qquad(\forall t\in\R)$.}}
	
\end{figure} 

\begin{figure}[h!]
	\psfrag{r1}{$r_1$}
	\psfrag{r2}{$r_2$}
	\psfrag{r3}{$r_3$}

	\centering
		\includegraphics[scale=0.3]{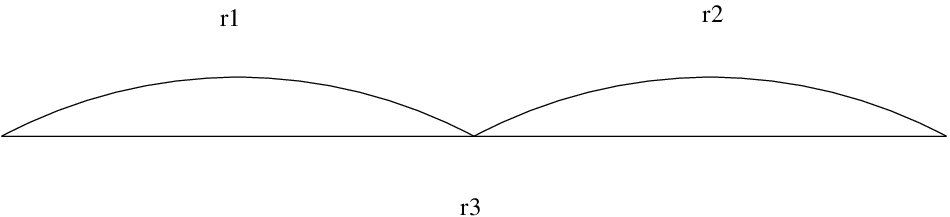}
	\caption{\wichtig{For $r=(1,1,2)$ and $g=(0,0,1)$:\quad
$g\in \sf T_r\M^{(3)}\quad\text{but}\quad
g\not\in \sf T_r\M^{(3)}_\leq$.}}

\end{figure} 

\index{m@$\sf M^{(n)}$, ${\sf M}^{(n)}_\leq$}
\index{m@$\Mn$, $\Mnl$}
Now let us consider the subset ${\sf M}^{(n)}_\leq$ in ${\sf M}^{(n)}$ consisting of those symmetric $(n\times n)$-matrices $(f_{ij})_{1\leq i<j \leq n}$ which `satisfy the triangle inequality' in the following sense:
\begin{equation}\label{trianglemat}
	f_{ij}+f_{jk}\geq f_{ik} \qquad (\forall i,j,k\in\{1,\ldots,n\}).
\end{equation}
Note that this constraint is compatible with the equivalence relation $\sim$ induced by the action of the permutation group $S_n$:\\
\[\forall f,f'\in {\sf M}^{(n)}\text{ with }f\sim  f':\qquad
	f\in {\sf M}^{(n)}_\leq \quad \Longleftrightarrow \quad f'\in {\sf M}^{(n)}_\leq.
\]
Hence, the space $\Mnl:={\sf M}^n_\leq/\sim$ coincides with the subset of $\Mn$ of equivalence classes of $f$ which satisfy \eqref{trianglemat}.

\begin{example} The simplest non-trivial case is $n=3$. Here
\[{\sf M}^{(3)}=\left\{\left(\begin{array}{ccc}
0&r_1&r_2\\
r_1&0&r_3\\
r_2&r_3&0
\end{array}\right)\spec r=(r_1,r_2,r_3)\in \R^3\right\}\]
and
\[{\sf M}^{(3)}_\leq=\Big\{r\in\R^3\spec r_1\le r_2+ r_3, \ r_2\le r_3+ r_1,\ r_3\le r_1+ r_2\Big\}.\]
A fundamental domain of the quotient space $\sf M^{(3)}/S_3$ is for instance given by
\[\tilde{\sf M}^{(3)}=\Big\{r\in\R^3\spec r_1\le r_2\le r_3\Big\}.\]
\begin{figure}[h!]
\begin{subfigure}{0.1\textwidth}
\psfrag{x1}{$r_2$}
\psfrag{x2}{$r_3$}
\psfrag{x3}{$r_1$}
\includegraphics[scale=0.25]{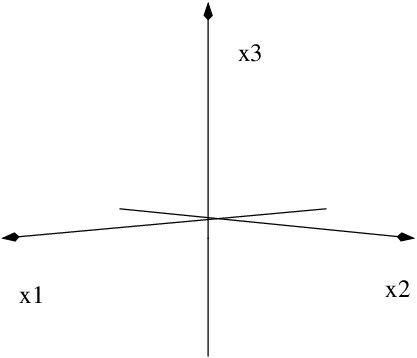}
\end{subfigure}
\begin{subfigure}{0.45\textwidth}
\centering
		\includegraphics[scale=0.35]{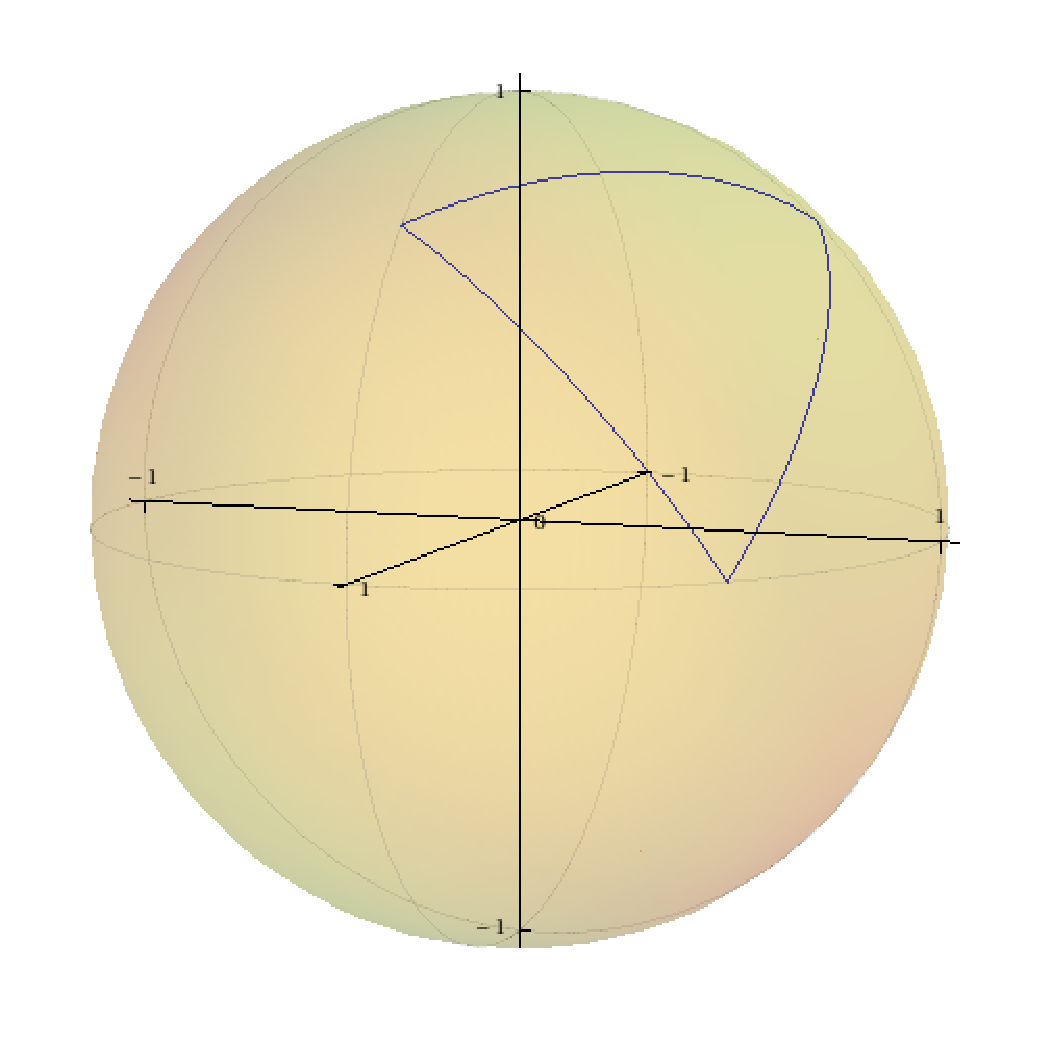}
		\caption{\wichtig{The domain bounded by the blue lines is\\
		${\sf M}^{(3)}_\leq\cap {\mathbb S}^2$.}}
\end{subfigure}
\begin{subfigure}{0.4\textwidth}
\centering
		%% eps Bilder, die nicht funktionieren %%%% \includegraphics[scale=0.35]{}
		\caption{\wichtig{The red colored area is\\
		$\tilde{\sf M}^{(3)}\cap {\mathbb S}^2$.}}
\end{subfigure}	
\end{figure} 
\begin{figure}[h!]
\begin{center}
		%% eps Bilder, die nicht funktionieren %%%% \includegraphics[scale=0.35]{}
\end{center}
\caption{\wichtig{The green vectors illustrate elements in ${\sf T}_f{\sf M}^{(3)}_\leq$ which are mutually identified,
whereas the black vector is in ${\sf T}_f{\sf M}^{(3)}$, but not in in ${\sf T}_f{\sf M}^{(3)}_\leq$.}}
	
\end{figure} 
\end{example}

%\begin{cartoon} \[\tilde{\sf M}^{(3)}\cap {\mathbb S}^2\]
%\end{cartoon}
%
%
%\begin{cartoon} \[{\sf M}^{(3)}_\leq\cap {\mathbb S}^2\]
%\end{cartoon}
\newpage
\begin{corollary}
\begin{enumerate}
\item
	$\Mnl$ is a closed convex subset of $\Mn$. It is itself an Alexandrov space of nonnegative curvature
	with dimension $\frac{n(n-1)}{2}$.
\item
For $f\in \Mnl$ the tangent space ${\sf T}_f\Mnl$ consists of those $g\in {\sf T}_f\Mn$ for which
$\exp_f(tg)=f+tg$ stays within $\Mnl$
at least for some $t>0$.
\end{enumerate}
\end{corollary}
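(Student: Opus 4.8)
The plan is to mimic, \emph{mutatis mutandis}, the proof of Corollary~\ref{closedness of tr-in}: the role played there by $\YY$ is taken over by $\Mn$, that of $\hat\XX$ by $\Mnl$, and the $\m^2$-a.e.\ triangle inequality is replaced by the finitely many \emph{linear} inequalities \eqref{trianglemat}. Concretely, ${\sf M}^{(n)}_\leq$ is a closed convex polyhedral cone in the Euclidean space ${\sf M}^{(n)}\cong\R^{n(n-1)/2}$, and since (as already recorded) this cone is invariant under the $S_n$-action, one has $\pi^{-1}(\Mnl)={\sf M}^{(n)}_\leq$ for the quotient projection $\pi\colon{\sf M}^{(n)}\to\Mn$; as the latter set is closed, $\Mnl$ is closed in $\Mn$.

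For the convexity I would use that $\pi$ is a submetry, being the quotient map for the isometric action of the \emph{finite} group $S_n$ with quotient metric $d_{\Mn}(\pi f,\pi g)=\min_{\sigma\in S_n}\|f-g^\sigma\|_{{\sf M}^{(n)}}$. Hence every minimizing geodesic $(f_t)_{0\le t\le1}$ of $\Mn$ admits a $\pi$-lift which is itself minimizing, i.e.\ a straight segment in ${\sf M}^{(n)}$ from some lift of $f_0$ to some lift of $f_1$. If $f_0,f_1\in\Mnl$ then, since $\pi^{-1}(\Mnl)={\sf M}^{(n)}_\leq$, \emph{every} lift of either point lies in ${\sf M}^{(n)}_\leq$, so by convexity the whole segment does, and its $\pi$-image $(f_t)$ lies in $\Mnl=\pi({\sf M}^{(n)}_\leq)$. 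Being a closed, geodesically convex subset of the Alexandrov space $\Mn$ of nonnegative curvature (the preceding Theorem), $\Mnl$, equipped with the restricted metric --- which on a convex subset coincides with the induced intrinsic metric --- is itself a geodesic Alexandrov space of nonnegative curvature. Finally, ${\sf M}^{(n)}_\leq$ has nonempty interior in ${\sf M}^{(n)}$ (for instance the matrix with all off-diagonal entries equal to $1$ satisfies \eqref{trianglemat} strictly), and since $\pi$ is a local isometry away from the lower-dimensional singular set, $\Mnl$ inherits the full Hausdorff dimension $\frac{n(n-1)}{2}=\dim_{H}\Mn$.

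For part (ii): by the preceding Theorem, $\Mn$ is a Riemannian orbifold with ${\sf T}_f\Mn=\R^{n(n-1)/2}/{\sf Sym}(f)$ and $\exp_f(tg)=f+tg$; near $f$ the orbifold chart is exactly $\pi$ restricted to a small ${\sf Sym}(f)$-invariant ball $B\subset{\sf M}^{(n)}$ about $f$. Under this chart $\Mnl$ corresponds to the ${\sf Sym}(f)$-invariant convex set $B\cap{\sf M}^{(n)}_\leq$, whose tangent cone at $f$ is the polyhedral cone $C_f=\{\,g:\, f+tg\in{\sf M}^{(n)}_\leq\text{ for some }t>0\,\}$; by convexity of ${\sf M}^{(n)}_\leq$ and $f\in{\sf M}^{(n)}_\leq$, requiring this for some $t>0$ is equivalent to requiring it for all sufficiently small $t>0$. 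Passing to the quotient by ${\sf Sym}(f)$ then identifies ${\sf T}_f\Mnl$ with $C_f/{\sf Sym}(f)\subset{\sf T}_f\Mn$, i.e.\ with the set of those $g\in{\sf T}_f\Mn$ for which $\exp_f(tg)=f+tg$ stays in $\Mnl$ for some (equivalently, all small) $t>0$, which is the assertion.

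The step I expect to be the main obstacle is the convexity in (i): one must know that \emph{every} minimizing geodesic of $\Mn$ joining two points of $\Mnl$ --- not merely one chosen one --- remains inside $\Mnl$, which is why the geodesic-lifting (submetry) property of $\pi$, equivalently the structural description of geodesics in $\Mn$, is needed; this plays the role of Theorem~\ref{YY-geo}(iii) in the proof of Corollary~\ref{closedness of tr-in}. Granting this, the tangent-cone identification in (ii) is bookkeeping, once one checks that the Alexandrov tangent cone of the convex set $B\cap{\sf M}^{(n)}_\leq$ is its polyhedral tangent cone and that forming it commutes with the finite quotient by ${\sf Sym}(f)$.
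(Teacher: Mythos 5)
Your argument is correct. The paper in fact states this corollary \emph{without} proof (the preceding theorem's proof only remarks that the remaining claims are ``straightforward''), so there is no authorial route to compare against; what you supply is the natural completion, and it hangs together. The two ingredients you correctly isolate as the only nontrivial ones are both sound: (a) every geodesic of $\Mn$ is the projection of a straight segment in ${\sf M}^{(n)}$ --- for a quotient of Euclidean space by a \emph{finite} isometry group this follows from an elementary midpoint argument (pick a lift $m$ of the midpoint realizing $d([f],[m])$, then a lift $\sigma^*g$ realizing $d([m],[g])$; the triangle inequality forces $m$ to be the Euclidean midpoint of $f$ and $\sigma^*g$, and iterating over dyadic times identifies the geodesic with the projected segment), so convexity of $\Mnl$ follows from convexity and $S_n$-invariance of the polyhedral cone ${\sf M}^{(n)}_\leq$; and (b) for part (ii), polyhedrality of ${\sf M}^{(n)}_\leq$ is exactly what guarantees that the cone $\{g \spec f+tg\in{\sf M}^{(n)}_\leq \text{ for some } t>0\}$ is already closed, so that completing the space of geodesic directions adds nothing and the Alexandrov tangent cone coincides with the stated set after passing to the ${\sf Sym}(f)$-quotient.
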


Now let us consider the injection
\begin{equation*}\Phi: \quad\begin{array}{ccc}
		\Mn &\rightarrow &\YY,\\
		 f=(f_{ij})_{1\leq i<j \leq n} &\mapsto& \X=\auf\{1,\ldots,n\},f, \frac{1}{n}\sum_{i=1}^n \delta_{i}\zu.
	\end{array}
\end{equation*}
Elements in the image $\YYn:=\Phi\Big(\Mn\Big)$ are called \emph{$n$-point spaces}. They are characterized as gauged measure spaces for which the mass is uniformly distributed on $n$ (not necessarily distinct) points. For convenience, we also require that the gauge functions vanish on the diagonal. The image
\[\XXn:=\Phi\Big(\Mnl\Big)\]
of $\Mnl$ consist of those mm-spaces with mass uniformly distributed on $n$ points.

\index{x@$\XXn$, $\YYn$}
\begin{proposition}
For each $n\in\N$, $\Phi$ is a 1-Lipschitz map:
\[\DD\big(\Phi(f),\Phi(g)\big)\le d_{\Mn}(f,g)\qquad(\forall f,g\in \Mn).\]
Moreover,
\[\size\big(\Phi(f)\big)=\|f\|_{{\sf M}^{(n)}}.\]
	\end{proposition}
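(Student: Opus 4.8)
The plan is to establish both claims by testing the $L^2$-distortion distance against the special couplings coming from permutations; this collapses everything to the finite sum defining $\|\cdot\|_{{\sf M}^{(n)}}$, and no analysis of general couplings of $\m_n$ with itself is needed.

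\emph{Size.} For a representative $f\in{\sf M}^{(n)}$, the gauged measure space $\Phi(f)$ has underlying set $\{1,\dots,n\}$, measure $\m_n=\frac1n\sum_{i=1}^n\delta_i$, and gauge $f$ (which vanishes on the diagonal). Since $\m_n\otimes\m_n=\frac1{n^2}\sum_{i,j}\delta_{(i,j)}$, I would simply compute
\[\size\big(\Phi(f)\big)^2=\int\int f^2\,d\m_n\,d\m_n=\frac1{n^2}\sum_{i,j=1}^n f_{ij}^2=\frac2{n^2}\sum_{1\le i<j\le n}f_{ij}^2=\|f\|_{{\sf M}^{(n)}}^2,\]
using symmetry of $f$ and $f_{ii}=0$. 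In particular $\size\circ\Phi$ and $\|\cdot\|_{{\sf M}^{(n)}}$ coincide and are both $S_n$-invariant, so the size formula descends to $\Mn$.

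\emph{Lipschitz bound.} The key observation is that each $\sigma\in S_n$ yields a coupling
\[\ol\m_\sigma:=\frac1n\sum_{i=1}^n\delta_{(i,\sigma_i)}\ \in\ \Cpl(\m_n,\m_n),\]
because both marginals equal $\m_n$: the first is $\frac1n\sum_i\delta_i$ and the second is $\frac1n\sum_i\delta_{\sigma_i}=\m_n$ since $\sigma$ is a bijection of $\{1,\dots,n\}$. Inserting $\ol\m_\sigma$ into the definition of $\DD$ for representatives $f,g\in{\sf M}^{(n)}$, and using $(\sigma^* g)_{ij}=g_{\sigma_i\sigma_j}$ together with the vanishing of $f$ and $\sigma^* g$ on the diagonal, I get
\[\DD\big(\Phi(f),\Phi(g)\big)^2\le\int\int\big|f(x_0,y_0)-g(x_1,y_1)\big|^2\,d\ol\m_\sigma\,d\ol\m_\sigma=\frac1{n^2}\sum_{i,j}\big|f_{ij}-g_{\sigma_i\sigma_j}\big|^2=\|f-\sigma^* g\|_{{\sf M}^{(n)}}^2.\]
Taking the infimum over $\sigma\in S_n$ gives $\DD(\Phi(f),\Phi(g))\le d_{\Mn}(f,g)$. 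Applying this with $g$ a second representative of the $\sim$-class of $f$ makes the right-hand side $0$, so $\Phi(f)$ and $\Phi(g)$ are homomorphic; hence $\Phi$ is well defined on the quotient $\Mn$ and is genuinely a $1$-Lipschitz map $\Mn\to\YY$.

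\emph{Main difficulty.} There is essentially no obstacle here. The only care needed is bookkeeping — matching the action $\sigma^* g$ appearing in the quotient metric $d_{\Mn}$ with the relabelling of the $n$ points realized by $\ol\m_\sigma$, and noting that only the inequality $\le$ is required, since restricting the infimum over all couplings of $\m_n$ with itself to the permutation couplings $\ol\m_\sigma$ can only increase the value. In particular, doubly stochastic matrices and Birkhoff's theorem play no role in the argument.
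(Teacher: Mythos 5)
Your proof is correct and is essentially the paper's argument: both reduce the infimum over all couplings of $\m_n$ with itself to the permutation couplings $\frac1n\sum_i\delta_{(i,\sigma_i)}$, which turns the distortion integral into the finite sum defining $\|f-\sigma^*g\|_{{\sf M}^{(n)}}$. The only cosmetic difference is that the paper routes the computation through the reformulation $\DD^2=\size^2+\size^2-2\sup\langle\cdot,\cdot\rangle$ and the identification of couplings with doubly stochastic matrices (of which permutation matrices are particular examples), whereas you plug the permutation coupling directly into the defining integral; both yield the same one-line inequality.
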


\begin{proof}
Obviously,
$\size^2\big(\Phi(f)\big)=\frac1{n^2}\sum_{i,j=1}^nf_{ij}^2=\|f\|^2_{{\sf M}^{(n)}}$. Moreover,
(cf. Proposition \ref{product})
\begin{eqnarray*}
{-\DD^2\big(\Phi(f),\Phi(g) \big)+\size^2\big(\Phi(f)\big)+\size^2\big(\Phi(g)\big)}
&=&\sup_{p\in {\sf P}^{(n)}}\frac2{n^2}\sum_{i,j=1}^n\sum_{k,l=1}^n
f_{ij}\cdot g_{kl}\cdot p_{ik}\cdot p_{jl}
\end{eqnarray*}
where ${\sf P}^{(n)}$ denotes the set of doubly stochastic $(n\times n)$-matrices, i.e. set of all $p=(p_{ij})_{1\le i,j\le n}\in\R_+^{n\times n}$ satisfying
$\sum_{i=1}^np_{il}=\sum_{j=1}^np_{kj}=1$ for all $k,l=1,\ldots,n$.
Particular examples of such doubly stochastic matrices are given for each $\sigma\in S_n$ by
\[p_{ij}=\delta_{i\sigma_j}.\]
The claim thus follows from the fact that
\begin{eqnarray*}
{-d_{\Mn}^2\big(f,g \big)+\|f\|^2_{{\sf M}^{(n)}}+\|g\|^2_{{\sf M}^{(n)}}}
&=&\sup_{\sigma\in S_n}\frac2{n^2}\sum_{i,j=1}^n
f_{ij}\cdot g_{\sigma_i \sigma_j}.
\end{eqnarray*}
\end{proof}

\begin{remark} {\it The injection
\[\Phi:\ \Mnl \rightarrow \ol\XX\]
is an embedding.}
Indeed, assume that
\[\DD\Big(\Phi(d^k),\Phi(d^\infty)\Big)\to0\qquad\text{as }k\to\infty\]
for some $d^\infty\in\Mnl$ and some sequence $(d^{k})_{k\in\N}$ in $\Mnl$.
Assume for simplicity that $d^\infty$ and all the $d^k$ are metrics on $\{1,\ldots,n\}$. (All  $d^\infty,d^k\in\Mnl$
can be approximated by metrics.) The $d^k$ are uniformly bounded. Thus according to Corollary 2.10
 \[ \D_2 \Big(\Phi(d^k),\Phi(d^\infty)\Big)\to0\qquad\text{as }k\to\infty.\]
 According to the union lemma (\cite{Gro}, \cite{St06}) this implies that there exists a metric space $(X,\d)$
 and isometric embeddings $\eta^k:\big(\{1,\ldots,n\},d^k\big)\to (X,\d)$ for all $k\in\N\cup\{\infty\}$ such that
 \[\d_2\Big((\eta^k)_*\big(\frac1n\sum_{i=1}^n\delta_i\big), (\eta^\infty)_*\big(\frac1n\sum_{j=1}^n\delta_j\big)\Big)\to0\qquad\text{as }k\to\infty\]
 where $\d_2$ now denotes the $L^2$-Wasserstein distance for probability measures on $(X,\d)$, i.e.
 \[\d_2\Big((\eta^k)_*\big(\frac1n\sum_{i=1}^n\delta_i\big), (\eta^\infty)_*\big(\frac1n\sum_{j=1}^n\delta_j\big)\Big)=
 \inf\Big\{\frac1n\sum_{i,j=1}^n \d^2\big(\eta^k(i),\eta^\infty(j)\big)p_{ij}:\ \sum_l p_{il}=\sum_l p_{lj}=1\text{ for all }i,j\Big\}^{1/2}.\]
 For this `classical' transport problem, however, it is known that the infimum is attained (among others) on the set of extremal points within the set of doubly stochastic matrices.
 Hence,
 \[\d_2\Big((\eta^k)_*\big(\frac1n\sum_{i=1}^n\delta_i\big), (\eta^\infty)_*\big(\frac1n\sum_{j=1}^n\delta_j\big)\Big)=
 \inf\Big\{\frac1n\sum_{i=1}^n \d^2\big(\eta^k(i),\eta^\infty(\sigma_i)\big):\ \sigma\in S_n\Big\}^{1/2}.\]
 Moreover, the triangle inequality for $\d$ implies
 \begin{eqnarray*} &&2\inf\Big\{\frac1n\sum_{i=1}^n \d^2\big(\eta^k(i),\eta^\infty(\sigma_i)\big):\ \sigma\in S_n\Big\}^{1/2}\\
 &\ge&
 \inf\Big\{\frac1{n^2}\sum_{i,j=1}^n \Big|\d\big(\eta^k(i),\eta^k(j)\big)- \d\big(\eta^\infty(\sigma_j),\eta^\infty(\sigma_i)\big)\Big|^2:\ \sigma\in S_n\Big\}^{1/2}\\
 &=&
 \inf\Big\{\frac1{n^2}\sum_{i,j=1}^n \Big|d^k_{ij}- d^\infty_{\sigma_j\sigma_i}\Big|^2:\ \sigma\in S_n\Big\}^{1/2}\\
 &=&  d_{\Mn}\big(d^k,d^\infty\big).\end{eqnarray*}
 This finally implies $d_{\Mn}\big(d^k,d^\infty\big)\to0$ as $k\to\infty$ which is the claim.
 \end{remark}

\begin{challenge}
Prove or disprove that the injections
\[\Phi:\ \Mn \rightarrow \YY\]
and
\[\Phi:\ \Mnl \rightarrow \ol\XX\]
are isometric embeddings.
\end{challenge}

\begin{proposition}
			$\bigcup_{n\in\N} \XXn$ is dense in $\ol\XX$ and
$\bigcup_{n\in\N} \YYn$ is dense in $\YY$.
	\end{proposition}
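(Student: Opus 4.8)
The plan is to show that any $\auf X,\f,\m\zu\in\YY$ can be $\DD$-approximated by $n$-point spaces, and that the approximants can be chosen in $\hat\XX=\ol\XX$ whenever the target is in $\ol\XX$; the second density statement then follows a fortiori, and intersecting with $\hat\XX$ gives the first. By Lemma~\ref{obdA} (together with Lemma~\ref{pseudo lemma}(i) for the $\ol\XX$ part) we may assume without restriction that the representative is $(I,\f,\Leb^1)$ with $\f\in L^2_s(I^2,\Leb^2)$, and in the pseudo metric case that $\f$ additionally satisfies the $\Leb^2$-a.e.\ triangle inequality.

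First I would reduce to bounded $\f$ exactly as in step (ii) of the proof of Theorem~\ref{hatXXeqolXX}: $\f_k=\min\{\f,k\}$ (or, in the signed case, truncation in absolute value) converges to $\f$ in $L^2$, hence in $\DD$, and truncation preserves the a.e.\ triangle inequality. Next, partition $I$ into the dyadic-type intervals $J^{(n)}_i=\big[\tfrac{i-1}{n},\tfrac{i}{n}\big)$, $i=1,\dots,n$, pick sample points (say $t^{(n)}_i=\tfrac{i-1}{n}$, or Lebesgue points of $\f$ in each $J^{(n)}_i\times J^{(n)}_j$), and define the matrix $f^{(n)}\in{\sf M}^{(n)}$ by $f^{(n)}_{ij}=\f(t^{(n)}_i,t^{(n)}_j)$ for $i\neq j$ (vanishing on the diagonal as required). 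Its associated $n$-point space is $\Phi(f^{(n)})=\auf\{1,\dots,n\},f^{(n)},\tfrac1n\sum_i\delta_i\zu\in\YYn$. To estimate $\DD\big(\auf I,\f,\Leb^1\zu,\Phi(f^{(n)})\big)$ I would use the coupling $\ol\m$ of $\Leb^1$ with $\tfrac1n\sum_i\delta_i$ given by $d\ol\m(x,j)=\sum_i\mathbf 1_{J^{(n)}_i}(x)\,d\Leb^1(x)\,d\delta_i(j)$; plugging this into the definition of $\DD$ bounds the squared distance by $\int_0^1\int_0^1\big|\f(s,t)-\f^{(n)}_{\iota(s)\iota(t)}\big|^2\,ds\,dt$, where $\iota(s)$ is the index of the interval containing $s$, i.e.\ by the $L^2$-distance between $\f$ and its piecewise-constant (on the grid of squares $J^{(n)}_i\times J^{(n)}_j$) approximation built from the sampled values. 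This tends to $0$ as $n\to\infty$ by a standard density/continuity-in-$L^2$ argument (density of step functions in $L^2(I^2)$, or Lebesgue differentiation if sample points are Lebesgue points), which is the only analytic input and is routine.

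The remaining point is that in the pseudo metric case $\Phi(f^{(n)})$ should lie in $\XXn\subset\hat\XX=\ol\XX$, i.e.\ $f^{(n)}\in\Mnl$, i.e.\ $f^{(n)}$ should satisfy the triangle inequality~\eqref{trianglemat}. This is the main obstacle: sampling $\f$ at individual points need not preserve an \emph{a.e.} inequality, since a fixed triple of sample points may land in the null exceptional set. I would handle it by first replacing $\f$ with a genuine everywhere-pseudo-metric approximant before sampling: extend and mollify $\f$ as in steps (iii)--(iv) of the proof of Theorem~\ref{hatXXeqolXX} to get continuous $\f'_m$ on $I^2$ satisfying the triangle inequality \emph{everywhere} and with $\|\f'_m-\f\|_{L^2(I^2)}\to0$; for a continuous function satisfying the triangle inequality at every triple, any sampled matrix $f^{(n)}$ automatically lies in $\Mnl$, and moreover continuity makes the step-function error above go to $0$ as $n\to\infty$. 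A diagonal choice of $m=m(n)\to\infty$ then yields $\XXn$-elements converging to $\auf I,\f,\Leb^1\zu$ in $\DD$. For the $\YY$ statement no triangle inequality is needed and the unmodified construction suffices; in fact one can simply observe $\YYn\supset\XXn$ and that the $\XXn$ are already dense in $\ol\XX\subset\YY$, combined with the direct $\YY$-approximation of gauged measure spaces with non-metric $\f$ (mollification without the extension-to-$L/2$ trick), so $\bigcup_n\YYn$ is dense in $\YY$.
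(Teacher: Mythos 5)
Your argument is correct, and for the first half it takes a genuinely different route from the paper. The paper disposes of the density of $\bigcup_n\XXn$ in $\ol\XX$ in one line, by citing the density of finite mm-spaces w.r.t.\ the transportation distance $\D$ from \cite{St06} together with the estimate $\DD\le 2\D$ of Proposition \ref{lp-d-dd}; your proof is instead self-contained, recycling the truncation--extension--mollification scheme from the proof of Theorem \ref{hatXXeqolXX} to produce a \emph{continuous, everywhere}-triangle-inequality approximant before discretizing, which is exactly the right way to defuse the obstacle you correctly identify (an a.e.\ inequality is not seen by point evaluation). This buys two things: it avoids the external reference, and it manifestly produces \emph{uniformly} weighted $n$-point spaces, which is what $\XXn$ requires. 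For the $\YY$ statement your construction is in the same spirit as the paper's, except that the paper replaces $\f$ by its averages over the cells $X_i^{(n)}\times X_j^{(n)}$ of an equal-mass partition (an $L^2$-projection, which converges without any continuity assumption), whereas you sample at grid points; your version only works because you mollify first, so you should drop the parenthetical suggestion of sampling a raw $L^2$ function at "Lebesgue points" (arranging all $n^2$ pairs $(t_i,t_j)$ to be Lebesgue points of a two-variable $L^2$ function simultaneously is delicate and unnecessary) and keep the order mollify-then-sample throughout. Two small glosses worth closing: the diagonal blocks $J_i^{(n)}\times J_i^{(n)}$, where $f^{(n)}_{ii}=0$ but the mollified gauge need not vanish, contribute at most $\|\f'\|_\infty^2/n$ to your coupling estimate, so the truncation step is actually needed to make this term go to $0$; and one should check that the sampled matrix with zero diagonal still satisfies \eqref{trianglemat} for triples with repeated indices, which reduces to nonnegativity of the mollified gauge and holds because the extended pseudo metric in step (iii) of Theorem \ref{hatXXeqolXX} is nonnegative.
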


\begin{proof}
The density assertion (w.r.t. $\DD$) concerning $\XX$ or $\ol\XX$ is an immediate consequence of the analogous density statement for $\XX$ w.r.t. $\D$ in \cite{St06}, Lemma~3.5, and the
	estimate $\DD\leq 2\D$ of Lemma~\ref{lp-d-dd}.

To see the density assertion concerning $\YY$, let a gauged measure space $\X$ be given. We always can choose a representative $(X,\f,\m)$ without atoms. The gauge function $\f\in L^2_s(X^2,\m^2)$ then can be approximated in $L^2$-norm by piecewise constant functions $\f^{(n)}\in L^2_s(X^2,\m^2)$, $n\in\N$. Even more, these functions $\f^{(n)}$ on $X\times X$ can be chosen to be constant on $X_i^{(n)}\times X_j^{(n)}$ for $1\le i < j\le n$ for a suitable partition of $X$ into sets $X_i^{(n)}$ of volume $\frac1n$ ($\forall i=1,\ldots,n$).
That is, for each $n\in\N$ the gauged measure space $(X,\f^{(n)},\m)$ is homomorphic to the $n$-point space
$\Big(\{1,\ldots,n\},f^{(n)}, \frac{1}{n}\sum_{i=1}^n \delta_{i}\Big)$ for
\[f_{ij}^{(n)}:=\f^{(n)}\Big|_{X_i^{(n)}\times X_j^{(n)}}\qquad(\forall 1\le i<j\le n).\]
\end{proof}

The spaces ${\sf M}^{(n)}$ also play a key role in the `reconstruction theorem' of Gromov \cite{Gro} and Vershik \cite{Versh} based on `random matrix distributions'.
For each $n\in\N$ and each gauged measure space $(X,\f,\m)$, let
$\nu_n^{(X,\f,\m)}$
denote the distribution of the matrix
\[\big(\f(x_i,x_j)\big)_{1\le i<j\le n}\in {\sf M}^{(n)}\]
under the measure $d\m^n(x_1,\ldots,x_n)$. Here $\m^n=\m^{\otimes n}$ denotes the $n$-fold product measure of $\m$. Let $\m^\infty=\m^{\otimes\N}$ denote the infinite product of $\m$ defined on $X^\infty=\{(x_i)_{i\in\N}:\, x_i\in X\}$, put
\[{\sf M}^{(\infty)}=\Big\{ \big(f_{ij}\big)_{1\le i<j<\infty}:\ f_{ij}\in\R\Big\}\]
and let $\nu_\infty^{(X,\f,\m)}$ denote the distribution of
\[\big(\f(x_i,x_j)\big)_{1\le i<j<\infty}\in {\sf M}^{(\infty)}\]
under the measure $d\m^\infty(x_1,x_2,\ldots)$.
\index{m@${\sf M}^{(\infty)}$}
\index{matrix distribution}

\begin{proposition}\label{reconstruct} For the following assertions, the implications $(i)\Rightarrow(ii)\Leftrightarrow(iii)$ hold true for all gauged measure spaces $(X,\f,\m)$ and $(X',\f',\m')$:
\begin{enumerate}
\item
$(X,\f,\m)$ and $(X',\f',\m')$ are homomorphic (as elements in $\YY$)
\item For each $n\in\N$:\
 $\nu_n^{(X,\f,\m)}$ and $\nu_n^{(X',\f',\m')}$ coincide (as probability measures on ${\sf M}^{(n)}$)
\item $\nu_\infty^{(X,\f,\m)}$ and $\nu_\infty^{(X',\f',\m')}$ coincide (as probability measures on ${\sf M}^{(\infty)}$).
\end{enumerate}
For  metric measure spaces $(X,\f,\m)$ and $(X',\f',\m')$, the assertions (i), (ii) and (iii) are equivalent.
\end{proposition}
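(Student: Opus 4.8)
The plan is to handle the three implications separately: $(ii)\Leftrightarrow(iii)$ by a soft measure-theoretic argument on the product space ${\sf M}^{(\infty)}$, then $(i)\Rightarrow(iii)$ (and hence $(i)\Rightarrow(ii)$) via the common-space representation of homomorphic gauged measure spaces provided by the preceding Proposition, and finally the remaining implication $(iii)\Rightarrow(i)$ in the metric case by invoking the reconstruction theorem of Gromov and Vershik.

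First, for $(ii)\Leftrightarrow(iii)$ I would equip ${\sf M}^{(\infty)}$ with the product topology, under which it is Polish, and note that the coordinate projection $p_n\colon{\sf M}^{(\infty)}\to{\sf M}^{(n)}$ onto the entries $f_{ij}$, $1\le i<j\le n$, is continuous and satisfies $\nu_n^{(X,\f,\m)}=(p_n)_*\,\nu_\infty^{(X,\f,\m)}$ for every gauged measure space; hence $(iii)\Rightarrow(ii)$ is immediate. For the converse one uses that the cylinder sets $p_n^{-1}(B)$ with $n\in\N$ and $B\in\borel({\sf M}^{(n)})$ form a $\pi$-system generating the Borel $\sigma$-field of ${\sf M}^{(\infty)}$, so two Borel probability measures agreeing on all of them coincide (Dynkin's $\pi$--$\lambda$ theorem); condition $(ii)$ says exactly that $\nu_\infty^{(X,\f,\m)}$ and $\nu_\infty^{(X',\f',\m')}$ agree on every such cylinder. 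This step is routine.

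Next, for $(i)\Rightarrow(iii)$ I would invoke the Proposition preceding this one: from $\DD\big((X,\f,\m),(X',\f',\m')\big)=0$ it produces a gauged measure space $(\hat X,\hat\f,\hat\m)$ together with measurable maps $\psi\colon\hat X\to X$ and $\psi'\colon\hat X\to X'$ such that $\psi_*\hat\m=\m$, $\psi'_*\hat\m=\m'$ and $\psi^*\f=\hat\f=\psi'^*\f'$ holding $\hat\m^2$-a.e., say off a $\hat\m^2$-null set $N\subset\hat X^2$. Consider the product measure $\hat\m^\infty$ on $\hat X^\infty$ and the map $(x_i)_i\mapsto(\hat\f(x_i,x_j))_{i<j}$ into ${\sf M}^{(\infty)}$, which pushes $\hat\m^\infty$ to $\nu_\infty^{(\hat X,\hat\f,\hat\m)}$. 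Off a $\hat\m^\infty$-null set (a countable union of preimages of $N$ under the coordinate projections $\hat X^\infty\to\hat X^2$) one has $\big(\hat\f(x_i,x_j)\big)_{i<j}=\big(\f(\psi(x_i),\psi(x_j))\big)_{i<j}$, and $\psi^{\otimes\infty}$ pushes $\hat\m^\infty$ to $\m^\infty$; hence $\nu_\infty^{(\hat X,\hat\f,\hat\m)}=\nu_\infty^{(X,\f,\m)}$, and by the symmetric argument it also equals $\nu_\infty^{(X',\f',\m')}$. Together with the previous step this also yields $(i)\Rightarrow(ii)$.

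Finally, for $(iii)\Rightarrow(i)$ in the case of metric measure spaces I would appeal to the reconstruction theorem of Gromov \cite{Gro} (Chapter~$3\frac12$) and Vershik \cite{Versh}: for $\m^\infty$-a.e.\ sequence $(x_i)_i$ the points $\{x_i\}$ are dense in $\supp(\m)$ and, by the strong law of large numbers, their empirical measures converge weakly to $\m$, so the distance array $\big(\d(x_i,x_j)\big)$ recovers $(\supp(\m),\d,\m)$ up to isomorphism; equality of the $\infty$-matrix distributions therefore forces the two mm-spaces to be isomorphic, i.e.\ homomorphic as elements of $\YY$. I expect this to be the one step of genuine substance, and the natural choice is to cite it rather than reprove it; it is also precisely the point where the triangle inequality is indispensable, since for a general gauge $\f$ the sampled array $(\f(x_i,x_j))$ carries no intrinsic metric structure and the completion argument collapses — which is why only the chain $(i)\Rightarrow(ii)\Leftrightarrow(iii)$ is asserted for arbitrary gauged measure spaces.
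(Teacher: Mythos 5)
Your proposal is correct and follows essentially the same route as the paper: the equivalence $(ii)\Leftrightarrow(iii)$ via cylinder sets generating the Borel field of ${\sf M}^{(\infty)}$, the implication from $(i)$ via a coupling/common-space realization of the homomorphism along which the sampled arrays agree almost surely, and the converse for metric measure spaces by citing Gromov's reconstruction theorem $3\tfrac12.5$. The only cosmetic difference is that you pass through $(i)\Rightarrow(iii)$ using the common-space representation, whereas the paper verifies $(i)\Rightarrow(ii)$ directly for each finite $n$ with the coupling $\ol\m$; these are the same computation.
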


\begin{proof}
\begin{description}
\item[$(i)\Rightarrow(ii)$] Assuming the spaces to be homomorphic amounts to assume that there exists a measure $\ol\m\in\Cpl(\m,\m')$ on $X\times X'$ such that $\f(x,y)=\f'(x',y')$ for $\ol\m^2$-a.e. $((x,x'),(y,y'))$.
    Thus
    \begin{eqnarray*}
    &&\text{distr. of } \Big(\f(x_i,x_j)\Big)_{1\le i<j\le n} \text{ under } d\m^n\big(x_1,\ldots,x_n\big)\\
    &=&
    \text{distr. of } \Big(\f(x_i,x_j)\Big)_{1\le i<j\le n} \text{ under } d\ol\m^n\big((x_1,x'_1),\ldots,(x_n,x'_n)\big)\\
    &=&\text{distr. of } \Big(\f'(x'_i,x'_j)\Big)_{1\le i<j\le n} \text{ under }
    d\ol\m^n\big((x_1,x'_1),\ldots,(x_n,x'_n)\big)\\
      &=&\text{distr. of } \Big(\f'(x'_i,x'_j)\Big)_{1\le i<j\le n} \text{ under } d\m'^n\big(x'_1,\ldots,x'_n\big).
\end{eqnarray*}
    \item[$(ii)\Leftrightarrow(iii)$:] Straightforward consequence of the fact that the Borel field in ${\sf M}^{(\infty)}$ is generated by pre-images under projections into ${\sf M}^{(n)}$, $n\in\N$.
\item[$(iii)\Rightarrow(i)$:] Reconstruction theorem \cite{Gro}, $3\frac12.5$.
\end{description}
\end{proof}

\section{The Space $\YY$ as a Riemannian Orbifold}

\subsection{The Symmetry Group}

Let Polish spaces $X_1,X_2,X_3$  with Borel probability measures $\m_1,\m_2,\m_3$ be given as well as couplings
$\mu'\in\Cpl(\m_1,\m_2)$ and $\mu''\in\Cpl(\m_2,\m_3)$.
Recall  the gluing construction from Lemma
\ref{glue1} which  yields a measure
$\widehat\mu=\mu'\boxtimes\mu''$ on $X_1\times X_2\times X_3$ with $(\pi_1,\pi_2)_*\widehat\mu=\mu'$ and $(\pi_2,\pi_3)_*\widehat\mu=\mu''$.
\begin{definition}
The \emph{melting}\index{melting}\index{$\boxtimes$}
of $\mu'$ and $\mu''$ is the probability measure $\mu\in\Cpl(\m_1,\m_3)$ defined as
\[\mu=(\pi_1,\pi_3)_*(\mu'\boxtimes\mu'').\]
It will be denoted by $\mu'\boxdot\mu''$.
\end{definition}
\index{$\boxdot$}

\begin{lemma} Let a gauged measure space $(X,\f,\m)$ be given.
\begin{enumerate}
\item
$\Cpl(\m,\m)$, the space of all \emph{self-couplings} of $\m$,  is a group with composition
$\boxdot$.
The neutral element is the \emph{diagonal coupling}
\[d\nu(x,y)=\d\delta_{x}(y)\,d\m(x).\]
The element inverse to $\mu$ is given by \index{${\mu}^{-1}$}
\[d{\mu}^{-1}(x,y)=d\mu(y,x).\]
\item
A norm is given on this group by
\[ \|\mu\|_f=\bigg(\int_X\int_X \Big|\f(x_0,y_0)-\f(x_1,y_1)\Big|^2\,d\mu(x_0,x_1)\,d\mu(y_0,y_1)\bigg)^{1/2}.\]
\end{enumerate}
\end{lemma}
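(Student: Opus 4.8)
The statement has two parts: (1) $(\Cpl(\m,\m),\boxdot)$ is a group with the indicated neutral element and inverse, and (2) $\|\cdot\|_\f$ is a norm on this group. I would treat them in order.

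\emph{Part (i).} First I would check that $\boxdot$ is well defined on $\Cpl(\m,\m)$: given $\mu',\mu''\in\Cpl(\m,\m)$, the gluing $\mu'\boxtimes\mu''$ exists by Lemma \ref{glue1} and, by that lemma, $(\pi_1)_*(\mu'\boxtimes\mu'')=\m$ and $(\pi_3)_*(\mu'\boxtimes\mu'')=\m$, hence $\mu'\boxdot\mu''=(\pi_1,\pi_3)_*(\mu'\boxtimes\mu'')\in\Cpl(\m,\m)$. Next I would verify the neutral element: if $d\nu(x,y)=d\delta_x(y)\,d\m(x)$, then gluing $\mu'$ with $\nu$ amounts to disintegrating $\mu'$ along its second marginal and pairing the corresponding kernel with the identity kernel from $\nu$, which after projecting to the outer factors returns $\mu'$; symmetrically $\nu\boxdot\mu'=\mu'$. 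This is a direct disintegration computation, which I would carry out via the explicit kernel formula in the proof of Lemma \ref{glue1}. For the inverse, the key observation is that with $d\mu^{-1}(x,y)=d\mu(y,x)$ one has, after gluing $\mu$ and $\mu^{-1}$ over the common middle marginal and projecting, the diagonal coupling $\nu$ — this again reduces to the disintegration identity $\int dp_{y}(x)\,dp_y(x')\,d\m(y)$ concentrating on $\{x=x'\}$ only after a further argument, so some care is needed: in general $\mu\boxdot\mu^{-1}$ need not literally be $\nu$ unless $\mu$ is induced by a bijection. I would therefore first reduce, using Lemma \ref{standard borel} (parametrizations) and the representation of self-couplings, to the case $X=I$, $\m=\Leb^1$, and then note that the invertible self-couplings — those of the form $(\phi,\psi)_*\Leb^1$ with $\phi,\psi\in\Inv(I,\Leb^1)$ bijective — are precisely the ones relevant; for the genuinely group-theoretic statement one restricts attention to the group generated, or one interprets $\Cpl(\m,\m)$ here as acting on $L^2_s$ and passes to the quotient as in Theorem \ref{YY-iso}. \textbf{This is the main obstacle}: making precise in what sense \emph{every} element of $\Cpl(\m,\m)$ is invertible under $\boxdot$, since a priori $\mu\boxdot\mu^{-1}=\nu$ can fail. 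I expect the intended reading is that one works modulo the homomorphism equivalence (i.e. in $\YY$), where the disintegration kernels are honest Markov kernels and the composition is associative on the nose; associativity of $\boxtimes$, hence of $\boxdot$, then follows from uniqueness in the gluing lemma applied to the three-fold and four-fold glued measures. I would prove associativity by exhibiting both $(\mu_1\boxdot\mu_2)\boxdot\mu_3$ and $\mu_1\boxdot(\mu_2\boxdot\mu_3)$ as the $(\pi_1,\pi_4)$-marginal of the unique four-fold gluing $\mu_1\boxtimes\mu_2\boxtimes\mu_3$, invoking Lemma \ref{glue1}.

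\emph{Part (ii).} For $\|\cdot\|_\f$ I would check the three norm axioms. Nonnegativity is clear from the definition. For the identity element, $\|\nu\|_\f=\big(\int\int|\f(x_0,y_0)-\f(x_1,y_1)|^2 d\nu\,d\nu\big)^{1/2}$; since $\nu$ is concentrated on the diagonal, $x_0=x_1$ and $y_0=y_1$ $\nu\otimes\nu$-a.e., so the integrand vanishes and $\|\nu\|_\f=0$. Conversely $\|\mu\|_\f=0$ forces $\f(x_0,y_0)=\f(x_1,y_1)$ for $\mu^2$-a.e. $(x_0,x_1,y_0,y_1)$, i.e. $\mu$ is an optimal coupling realizing $\DD((X,\f,\m),(X,\f,\m))=0$; in $\YY$ this means $\mu=\nu$. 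Symmetry $\|\mu\|_\f=\|\mu^{-1}\|_\f$ is immediate by the change of variables $(x_0,x_1,y_0,y_1)\mapsto(x_1,x_0,y_1,y_0)$ together with the symmetry of $\f$. The triangle inequality $\|\mu'\boxdot\mu''\|_\f\le\|\mu'\|_\f+\|\mu''\|_\f$ is the substantive point: I would work on the three-fold gluing $\widehat\mu=\mu'\boxtimes\mu''$ on $X^3$, write, for $\widehat\mu^2$-a.e. tuple,
\[
\f(x_1,y_1)-\f(x_3,y_3)=\big[\f(x_1,y_1)-\f(x_2,y_2)\big]+\big[\f(x_2,y_2)-\f(x_3,y_3)\big],
\]
take $L^2(\widehat\mu\otimes\widehat\mu)$ norms, and apply the triangle inequality for the $L^2$-norm exactly as in the proof of Lemma \ref{dildis1}; then observe that pushing forward under $(\pi_1,\pi_3)$ does not change the value of the left-hand integral, while the two right-hand integrals equal $\|\mu'\|_\f$ and $\|\mu''\|_\f$ respectively because $(\pi_1,\pi_2)_*\widehat\mu=\mu'$ and $(\pi_2,\pi_3)_*\widehat\mu=\mu''$. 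Homogeneity is not an issue here since this is a group norm (length function), not a vector-space norm, so no scaling axiom is required; I would state the result as: $\|\cdot\|_\f$ is a left- and right-invariant "norm" (length function) on the group, with the triangle inequality above and $\|\mu\|_\f=0\iff\mu=\nu$.

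Overall the routine parts are the diagonal and symmetry computations and the $L^2$-triangle inequality; the delicate part, which I would handle up front by passing to the quotient picture of Theorem \ref{YY-iso} (or by restricting to the subgroup of invertible self-couplings), is the genuine group structure — associativity from uniqueness of gluings and existence of two-sided inverses.
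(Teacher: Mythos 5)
Your handling of part (ii) is, where it matters, the same as the paper's: the author verifies only the triangle inequality $\|\mu'\boxdot\mu''\|_{\f}\le\|\mu'\|_{\f}+\|\mu''\|_{\f}$, and does so exactly as you propose — lift to the gluing $\widehat\mu=\mu'\boxtimes\mu''$ on $X^3$, insert $\pm\f(x_2,y_2)$, and apply Minkowski's inequality in $L^2(\widehat\mu\otimes\widehat\mu)$, as in the proof of Lemma \ref{dildis1}. Your computations for the diagonal coupling and for $\|\mu\|_{\f}=\|\mu^{-1}\|_{\f}$ are also fine. One claim in your write-up is wrong, however, and should be removed: $\|\mu\|_{\f}=0$ does \emph{not} force $\mu=\nu$. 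The zero set of $\|\cdot\|_{\f}$ is by definition the symmetry group $\Sym(X,\f,\m)$, which the paper introduces immediately after this lemma precisely because it is in general nontrivial — any mm-space with a nontrivial isometry in $\sym(X,\d,\m)$ produces a non-diagonal self-coupling of vanishing norm (and on the discrete continuum even the product coupling $\m\otimes\m$ has vanishing norm). The "norm" here is a deliberately degenerate length function; non-degeneracy is neither claimed nor needed.

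On part (i), the obstacle you isolate is genuine, and the paper's own proof does not remove it. The paper asserts that the gluing of $\mu$ and $\mu^{-1}$ "is given by" $(\pi_1,\pi_2,\pi_1)_*\mu$ and projects to obtain $\nu$. That measure does have the correct pairwise marginals $\mu$ and $\mu^{-1}$, but it is not the conditionally independent measure $dp_{x_1}(x_0)\,dq_{x_1}(x_2)\,d\m(x_1)$ by which $\boxtimes$ is actually constructed in Lemma \ref{glue1}; identifying the two requires the uniqueness clause of that lemma, which is false as stated (consecutive pairwise marginals do not determine a measure on a triple product). With the canonical gluing one finds, as you computed, $\mu\boxdot\mu^{-1}=\int p_{x_1}\otimes p_{x_1}\,d\m(x_1)$, which equals $\nu$ only when the disintegration kernels are Dirac, i.e.\ when $\mu$ is induced by a measure-preserving map; for $\mu=\m\otimes\m$ one gets $\mu\boxdot\mu^{-1}=\m\otimes\m\ne\nu$ whenever $\m$ is not a Dirac mass. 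So $(\Cpl(\m,\m),\boxdot)$ is, strictly speaking, only a monoid (composition of the associated Markov kernels), and your proposed repairs — restricting to the deterministic/invertible self-couplings, or reading the statement modulo the identifications of Theorem \ref{YY-iso} — are the right ones. Your derivations of associativity via the four-fold gluing and of the neutrality of $\nu$ are correct and go beyond what the paper supplies; in short, the gap here lies in the paper's "obvious", not in your caution.
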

\begin{proof}
(i) is obvious: the gluing of $\mu$ and $\mu^{-1}$ for instance is given by
$(\pi_1,\pi_2,\pi_1)_*\mu$. Projecting this onto the first and third factor yields
\[(\pi_1,\pi_1)_*\mu=(\pi_1,\pi_1)_*\m\]
which is the diagonal coupling.

(ii) The  inequality to be verified
\[ \|\mu'\boxdot\mu''\|_f \le  \|\mu'\|_f  +\|\mu''\|_f \]
follows exactly in the same way as  the triangle inequality for $\DD$.
\end{proof}

\begin{definition}
The \emph{symmetry group}\index{s@$\Sym(.)$} of $(X,\f,\m)$ is the subgroup of $\Cpl(\m,\m)$ of elements with vanishing norm:
\[ \Sym(X,\f,\m)=\Big\{\mu\in \Cpl(\m,\m)\spec \|\mu\|_f=0\Big\}.\]
In other words, $\Sym(X,\f,\m)$ is the set of all \emph{optimal} couplings of $(X,\f,\m)$ with itself.

We say, that $(X,\f,\m)$ has \emph{no symmetries} if $\Sym(X,\f,\m)$ only contains the neutral element (diagonal coupling).
\end{definition}

The symmetry group $\Sym(X,\f,\m)$ will depend on the choice of the representative within the equivalence class  $\auf X,\f,\m\zu$. For different choices of representatives, the groups
will be obtained from each other via conjugation and thus in particular will be isomorphic to each other.

\begin{lemma} Let two homomorphic gauged measure spaces $(X,\f,\m)$ and $(X',\f',\m')$ be given with $\nu\in\Opt(\m,\m')$ being a coupling which realizes the vanishing $\DD$-distance.
Then
\begin{eqnarray*}
\Sym(X',\f',\m')& =& \nu^{-1}\ \boxdot\ \Sym(X,\f,\m)\ \boxdot\ \nu\\
&=&\Big\{\mu'=\nu^{-1} \boxdot\mu\boxdot\nu\spec \mu\in \Sym(X,\f,\m)\Big\}.
\end{eqnarray*}
\end{lemma}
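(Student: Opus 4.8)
The plan is to verify the two claimed equalities by direct manipulation of the group operation $\boxdot$ on self-couplings, using only the group structure established in the previous lemma together with the observation that the norm $\|\cdot\|_f$ is transported correctly under conjugation by an optimal coupling. First I would record the key algebraic fact: if $\nu\in\Opt(\m,\m')$ realizes $\DD=0$, then $\nu^{-1}\in\Opt(\m',\m)$ also realizes $\DD=0$ (immediate from $d\nu^{-1}(x,y)=d\nu(y,x)$ and symmetry of $\f,\f'$ in their arguments), and for any $\mu\in\Cpl(\m,\m)$ the element $\mu':=\nu^{-1}\boxdot\mu\boxdot\nu$ lies in $\Cpl(\m',\m')$. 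The whole statement then reduces to the single norm identity
\[
\|\nu^{-1}\boxdot\mu\boxdot\nu\|_{\f'}=\|\mu\|_{\f}\qquad(\forall\,\mu\in\Cpl(\m,\m)),
\]
since, granting this, $\mu\in\Sym(X,\f,\m)$ (i.e.\ $\|\mu\|_\f=0$) is equivalent to $\nu^{-1}\boxdot\mu\boxdot\nu\in\Sym(X',\f',\m')$, and conjugation by $\nu$ is a bijection of $\Cpl(\m,\m)$ onto $\Cpl(\m',\m')$ with inverse conjugation by $\nu^{-1}$, so the images of the two symmetry groups match exactly.

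To prove the displayed norm identity I would work on a gluing that simultaneously carries $\nu$, $\mu$, and $\nu^{-1}$. Concretely, using the iterated gluing of Lemma~\ref{glue1} one forms a measure $\widehat\mu$ on $X'\times X\times X\times X'$ whose consecutive two-coordinate projections are $\nu^{-1}$, $\mu$, $\nu$ respectively; then $\nu^{-1}\boxdot\mu\boxdot\nu$ is the $(1,4)$-marginal of $\widehat\mu$. Writing the square of $\|\nu^{-1}\boxdot\mu\boxdot\nu\|_{\f'}$ as a double integral of $|\f'(x'_0,y'_0)-\f'(x'_1,y'_1)|^2$ against that marginal, I would lift it to an integral over $(X'\times X\times X\times X')^2$ against $\widehat\mu\otimes\widehat\mu$. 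The point is now that $\nu$ and $\nu^{-1}$ being optimal with vanishing distance forces $\f'(x',y')=\f(x,y)$ for $\widehat\mu^2$-a.e.\ configuration linking $x'$ to $x$ and $y'$ to $y$ through the $\nu$-coordinates; substituting $\f'$ by $\f$ in both slots and then projecting down to the middle two $X$-coordinates collapses the integral to $\int\int|\f(x_0,y_0)-\f(x_1,y_1)|^2\,d\mu\,d\mu=\|\mu\|_\f^2$. This is exactly the same bookkeeping as in the proof that $\|\mu'\boxdot\mu''\|_\f\le\|\mu'\|_\f+\|\mu''\|_\f$ and in the triangle inequality for $\DD$ (Lemma~\ref{dildis1}), just with two of the three links being ``zero-distance'' links so that no error term survives.

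The step I expect to be the main obstacle is the careful justification of the almost-everywhere substitution $\f'(x'_i,y'_i)\rightsquigarrow\f(x_i,y_i)$ under $\widehat\mu^{\otimes2}$. One must check that the null set where $\f(x,y)\neq\f'(x',y')$ for $\nu$-a.e.\ pair propagates correctly to a null set for the glued-and-squared measure, which requires keeping track of which marginals of $\widehat\mu\otimes\widehat\mu$ equal $\nu\otimes\nu$; this is where the gluing lemma's exact marginal control (as opposed to mere compatibility of marginals) is essential. Once that is in place the computation is purely mechanical. I would then close the argument by the symmetry remark: applying the same identity with the roles of $(X,\f,\m)$ and $(X',\f',\m')$ interchanged and $\nu$ replaced by $\nu^{-1}$ gives $\Sym(X,\f,\m)=\nu\boxdot\Sym(X',\f',\m')\boxdot\nu^{-1}$, which is just the inverse of the asserted relation, confirming that conjugation by $\nu$ really is a group isomorphism $\Sym(X,\f,\m)\xrightarrow{\ \sim\ }\Sym(X',\f',\m')$ and in particular that the second displayed equality (the set-builder form) is merely an unpacking of the first.
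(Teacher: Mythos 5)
Your central computation is sound and is in substance the same as the paper's proof: the paper also forms the gluing $\nu^{-1}\boxtimes\mu\boxtimes\nu$ on $X'\times X\times X\times X'$ and chains the two almost-everywhere identities ($\f(x_0,y_0)=\f'(x'_0,y'_0)$ $\nu^2$-a.e.\ from optimality of $\nu$ with vanishing cost, and $\f(x_0,y_0)=\f(x_1,y_1)$ $\mu^2$-a.e.\ from $\mu\in\Sym(X,\f,\m)$) to conclude $\f'(x'_0,y'_0)=\f'(x'_1,y'_1)$ $(\nu^{-1}\boxtimes\mu\boxtimes\nu)^2$-a.e., then projects onto the outer coordinates. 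Your norm identity $\|\nu^{-1}\boxdot\mu\boxdot\nu\|_{\f'}=\|\mu\|_{\f}$ is exactly this argument in quantitative form, and the marginal bookkeeping you single out as the delicate point is indeed handled by the gluing lemma, since the integrand depends only on pairwise marginals of $\widehat\mu\otimes\widehat\mu$. This yields the inclusion $\nu^{-1}\boxdot\Sym(X,\f,\m)\boxdot\nu\subseteq\Sym(X',\f',\m')$ — which, for what it is worth, is all the paper's own proof establishes.

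The gap is in your closing step. Conjugation by $\nu$ is \emph{not} a bijection of $\Cpl(\m,\m)$ onto $\Cpl(\m',\m')$ with inverse conjugation by $\nu^{-1}$: that would require $\nu\boxdot\nu^{-1}$ and $\nu^{-1}\boxdot\nu$ to be the respective diagonal (neutral) couplings. If $q_{x'}$ denotes the disintegration of $\nu$ with respect to $\m'$, then $\nu\boxdot\nu^{-1}=\int_{X'}q_{x'}\otimes q_{x'}\,d\m'(x')$, which is the diagonal coupling only when $q_{x'}$ is almost surely a Dirac mass, i.e.\ when $\nu$ is induced by a measurable map $X'\to X$. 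A homomorphism-realizing coupling need not be of this form: take $(X,\f,\m)=(\{*\},0,\delta_*)$ and $(X',\f',\m')=(I,0,\Leb^1)$ with the unique coupling $\nu=\delta_*\otimes\Leb^1$; then $\nu^{-1}\boxdot\Sym(X,\f,\m)\boxdot\nu$ is the singleton $\{\Leb^1\otimes\Leb^1\}$, whereas $\Sym(I,0,\Leb^1)$ is all of $\Cpl(\Leb^1,\Leb^1)$. Consequently the two legitimate one-sided inclusions (yours, and the symmetric one with the roles of the spaces and of $\nu,\nu^{-1}$ exchanged) do not assemble into the asserted set equality: composing them only shows that $(\nu^{-1}\boxdot\nu)\boxdot\mu'\boxdot(\nu^{-1}\boxdot\nu)$ lies in $\nu^{-1}\boxdot\Sym(X,\f,\m)\boxdot\nu$, not that $\mu'$ itself does. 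You should either weaken the conclusion to the single inclusion you actually proved, or add the hypothesis that $\nu$ is induced by a bimeasurable measure-preserving bijection, under which your inversion argument goes through verbatim.
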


\begin{proof}
The fact that $\mu$ is in $\Sym(X,\f,\m)$ implies that
$\f(x_0,y_0)=\f(x_1,y_1)$ for $\mu^2$-a.e. $((x_0,x_1),(y_0,y_1))\in X^2\times X^2$. The fact that $\nu$ realizes the (vanishing) distance of $(X,\f,\m)$ and $(X',\f',\m')$ implies that
$\f(x_0,y_0)=\f'(x'_0,y'_0)$ for $\nu^2$-a.e. $((x_0,x'_0),(y_0,y'_0))\in (X\times X')^2$.
Thus
\[\f'(x'_0,y'_0)=\f(x_0,y_0)=\f(x_1,y_1)=\f'(x'_1,y'_1)\]
for $(\nu^{-1}\boxtimes\mu\boxtimes\nu)^2$-a.e. $((x'_0,x_0,x_1,x'_1),(y'_0,y_0,y_1,y'_1))\in
(X'\times X\times X\times X')^2$.
Projecting the measure $\nu^{-1}\boxtimes\mu\boxtimes\nu$ from $X'\times X\times X\times X'$ onto $X'\times X'$ yields the claim:
\[\f'(x'_0,y'_0)=\f'(x'_1,y'_1)\]
for $(\nu^{-1}\boxdot\mu\boxdot\nu)^2$-a.e. $((x'_0,x'_1),(y'_0,y'_1))\in
(X'\times  X')^2$.
\end{proof}

If the underlying space is not just a gauged measure space but a metric measure space, then
the symmetry group admits an equivalent representation in more familiar terms.

\begin{definition}
Given   a metric measure space $(X,\d,\m)$, let\index{s@$\sym(.)$}
\[ \sym(X,\d,\m)=\Big\{\phi:X^\base\to X^\base\spec \m=\phi_*\m,\ \d=\phi^*\d\Big\}\]
where $X^\base$ denotes the support of $\m$.
\end{definition}
Note that any $\phi$ which preserves the metric is Lipschitz continuous and thus in particular Borel measurable.
If moreover it is measure preserving, then according to the proof of (iii) $\Rightarrow$ (iv) in Lemma \ref{iso-null} it is necessarily bijective with Borel measurable inverse.

\begin{lemma} Let $(X,\d,\m)$ be a metric measure space.
\begin{enumerate}
\item $\sym(X,\d,\m)$ is a group (with composition of maps as group operation)
\item The groups  $\sym(X,\d,\m)$ and $\Sym(X,\d,\m)$ are isomorphic. For any $\phi\in\sym(X,\d,\m)$ the corresponding  measure in $\mu\in\Sym(X,\d,\m)$ is given by \[\mu:=(\Id,\phi)_*\m.\]
\item Let $(X',\d',\m')$ be another metric measure space, isomorphic to the first one with $\psi: X^\base\to{X'}^\base$ being a Borel measurable bijection which pushes forward the measure and pulls back the metric.
    Then
    \[\sym(X',\d',\m')=\psi\circ \sym(X,\d,\m)\circ\psi^{-1}.\]
    \end{enumerate}
\end{lemma}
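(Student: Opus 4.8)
The plan is to prove the three assertions in turn, with the bulk of the work going into part (ii) since parts (i) and (iii) are essentially bookkeeping.

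\textit{Part (i).} First I would check that $\sym(X,\d,\m)$ is closed under composition and inversion. If $\phi,\psi$ preserve $\m$ and pull back $\d$, then $\phi\circ\psi$ clearly does too, since $(\phi\circ\psi)_*\m=\phi_*(\psi_*\m)=\m$ and $(\phi\circ\psi)^*\d=\psi^*(\phi^*\d)=\psi^*\d=\d$. For inverses, recall from the remark preceding the statement that any $\phi\in\sym(X,\d,\m)$ is necessarily a Borel measurable bijection of $X^\base$ with Borel measurable inverse (this uses the argument for $(iii)\Rightarrow(iv)$ in Lemma~\ref{iso-null}: a measure-preserving map that pulls back the metric is injective and isometric, hence bijective with measurable inverse by the completeness argument given there). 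Then $\phi^{-1}$ also pushes $\m$ forward to $\m$ and pulls $\d$ back to $\d$, so $\phi^{-1}\in\sym(X,\d,\m)$. The identity map is obviously in the group. Hence $\sym(X,\d,\m)$ is a group.

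\textit{Part (ii).} The key step is to show that $\phi\mapsto\mu_\phi:=(\Id,\phi)_*\m$ is a well-defined group isomorphism onto $\Sym(X,\d,\m)$. I would first check $\mu_\phi\in\Cpl(\m,\m)$: the first marginal is $\m$ by construction, and the second is $\phi_*\m=\m$. Next, $\mu_\phi\in\Sym(X,\d,\m)$ because $\|\mu_\phi\|_\d^2=\int\int|\d(x,y)-\d(\phi(x),\phi(y))|^2\,d\m(x)\,d\m(y)=0$ since $\phi^*\d=\d$ $\m^2$-a.e. For the homomorphism property I must verify $\mu_{\phi\circ\psi}=\mu_\psi\boxdot\mu_\phi$; this is a direct computation with the gluing/melting construction: the gluing of $\mu_\psi=(\Id,\psi)_*\m$ and $\mu_\phi=(\Id,\phi)_*\m$ along the common middle marginal $\m$ is $(\Id,\psi,\phi\circ\psi)_*\m$ (one checks the two pairwise projections against the definition in Lemma~\ref{glue1}), and projecting onto the outer coordinates gives $(\Id,\phi\circ\psi)_*\m=\mu_{\phi\circ\psi}$. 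Injectivity follows since $\mu_\phi$ determines $\phi$ $\m$-a.e. (the disintegration of $\mu_\phi$ with respect to its first marginal is $x\mapsto\delta_{\phi(x)}$). The main obstacle is \emph{surjectivity}: given $\mu\in\Sym(X,\d,\m)$, I must produce $\phi\in\sym(X,\d,\m)$ with $\mu=\mu_\phi$. Here $\mu\in\Cpl(\m,\m)$ with $\|\mu\|_\d=0$ means $\d(x_0,y_0)=\d(x_1,y_1)$ for $\mu^2$-a.e. $((x_0,x_1),(y_0,y_1))$. Reduce to full support, then disintegrate $\mu$ along the first marginal to get a Markov kernel $x\mapsto\mu_x$ on $X^\base$; the vanishing-norm condition forces, for $\m$-a.e.\ $x$ and $\mu_x\otimes\m$-a.e.\ $(x_1,y_0)$, that $\d(x,y_0)=\d(x_1,y_0)$, which by continuity of $\d$ and full support of $\m$ propagates to: $x_1$ lies in the zero set of $y_0\mapsto\d(x,y_0)-\d(x_1,y_0)$, i.e.\ $\d(x_1,\cdot)=\d(x,\cdot)$ on $X^\base$, hence $x_1=x$ since $\d$ is a genuine metric separating points. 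Therefore $\mu_x=\delta_{\phi(x)}$ for some point $\phi(x)$, and a measurable-selection/disintegration argument gives a Borel map $\phi$ with $\mu=(\Id,\phi)_*\m$; then $\phi_*\m=\m$ (second marginal) and $\phi^*\d=\d$ $\m^2$-a.e.\ (the vanishing-norm identity), and since both sides are continuous on $X^\base\times X^\base$ with $\m$ of full support, $\phi^*\d=\d$ everywhere. Thus $\phi\in\sym(X,\d,\m)$ and the map is onto.

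\textit{Part (iii).} Given the isomorphism $\psi:X^\base\to{X'}^\base$ pushing $\m$ to $\m'$ and pulling $\d'$ back to $\d$, I would show $\phi\mapsto\psi\circ\phi\circ\psi^{-1}$ is a bijection $\sym(X,\d,\m)\to\sym(X',\d',\m')$. If $\phi\in\sym(X,\d,\m)$, set $\phi'=\psi\circ\phi\circ\psi^{-1}$; then $\phi'_*\m'=\psi_*\phi_*\psi^{-1}_*\m'=\psi_*\phi_*\m=\psi_*\m=\m'$, and $(\phi')^*\d'=(\psi^{-1})^*\phi^*\psi^*\d'=(\psi^{-1})^*\phi^*\d=(\psi^{-1})^*\d=\d'$, using $\psi^*\d'=\d$ and its consequence $(\psi^{-1})^*\d=\d'$. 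The inverse map is conjugation by $\psi^{-1}$, so this is a bijection (indeed a group isomorphism, consistent with part (ii) and the conjugation formula in the preceding lemma). This part is routine once the identities for $\psi$ are recorded carefully; the one point to watch is that $\psi^{-1}$ is Borel measurable with $\psi^{-1}_*\m'=\m$ and $(\psi^{-1})^*\d=\d'$, all of which follow from the hypotheses on $\psi$ exactly as in Lemma~\ref{iso-null}.
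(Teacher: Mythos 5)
Parts (i) and (iii), and the homomorphism/injectivity computations in (ii), are fine and correspond to what the paper dismisses as ``obvious''. The problem is in the surjectivity step of (ii) --- the one point the paper actually proves --- where your argument contains a genuine error. You claim that the vanishing-norm condition forces $\d(x,y_0)=\d(x_1,y_0)$ for $\mu_x\otimes\m$-a.e.\ $(x_1,y_0)$ and conclude $x_1=x$. But the condition $\|\mu\|_\d=0$ only says $\d(x_0,y_0)=\d(x_1,y_1)$ for $\mu^2$-a.e.\ $\big((x_0,x_1),(y_0,y_1)\big)$; to get your identity you would have to set $y_1=y_0$, which is only legitimate if $\mu$ charges the diagonal. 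Worse, your conclusion $x_1=x$ would show that \emph{every} $\mu\in\Sym(X,\d,\m)$ is the diagonal coupling, i.e.\ that no metric measure space has non-trivial symmetries. This is false: take $X=\{a,b\}$, $\d(a,b)=1$, $\m=\frac12(\delta_a+\delta_b)$ and $\mu=\frac12(\delta_{(a,b)}+\delta_{(b,a)})$, which lies in $\Sym(X,\d,\m)$ and has $\mu_a=\delta_b\neq\delta_a$; here $\d(a,a)=0\neq1=\d(b,a)$, so your intermediate identity already fails.

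The conclusion you want ($\mu_x$ is a Dirac mass) is correct, but the comparison must be made between two points $x_1,x_1'$ in the \emph{same} fiber over $x$, not between $x_1$ and $x$: for $\mu$-a.e.\ $(y_0,y_1)$ one gets $\d(x,y_0)=\d(x_1,y_1)$ and $\d(x,y_0)=\d(x_1',y_1)$, hence $\d(x_1,\cdot)=\d(x_1',\cdot)$ $\m$-a.e., and then continuity plus full support gives $x_1=x_1'$. Alternatively --- and this is what the paper does --- simply observe that $\mu\in\Sym(X,\d,\m)$ is a coupling of $(X,\d,\m)$ with itself realizing $\int\!\!\int|\d-\d|^2\,d\mu\,d\mu=0$, so condition (ii) of Lemma \ref{iso-null} holds and its condition (iv) hands you a Borel bijection $\phi:X^\base\to X^\base$ with $\phi_*\m=\m$ and $\d=\phi^*\d$; the construction there puts the graph of $\phi$ equal to $\supp(\mu)$, whence $\mu=(\Id,\phi)_*\m$. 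Either repair works; as written, your surjectivity argument does not.
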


\begin{proof} Most properties are obvious. Let us briefly comment on the inverse of the isomorphism in
(ii). Let a measure $\mu\in\Sym(X,\d,\m)$ be given. It is an optimal coupling of $(X,\d,\m)$ with itself with vanishing $\int\int |\d-\d|^2d\mu d\mu$. According to Lemma \ref{iso-null} this implies that there exists a bijective Borel map (with Borel inverse) $\phi:X^\base\to X^\base$ satisfying $\m=\phi_*\m$ and $\d=\phi^*\d$.
\end{proof}
\subsection{Geodesic Hinges}

%Now let us consider
A \emph{geodesic hinge} is a pair of geodesics $(\X_t)_{0\le t\le\tau}$ and
$(\X'_t)_{0\le t\le\tau'}$ emanating from  a common point $\X_0=\X_0'$ in $\YY$.
To simplify the presentation, we assume $\tau=\tau'=1$. (Since the geodesics are not required to have unit speed, this is no restriction.)

We fix representatives $(X_0,\f_0,\m_0)$, $(X_1,\f_1,\m_1)$ and $(X'_1,\f'_1,\m'_1)$ of the endpoints as well as optimal couplings
$\ol\m\in\Cpl(\m_0,\m_1)$ and
$\ol\m'\in\Cpl(\m_0,\m'_1)$. We are now looking for couplings of $\ol\m$ and $\ol\m'$, that is, for  $\mu\in\Cpl(\ol\m,\ol\m')$ being measures on $X=X_0\times X_1\times X_0\times X'_1$.
 The projections onto the respective factors will be denoted by $\pi_0,\pi_1,\pi'_0,\pi'_1$.
 Note that the factor $X_0$  shows up twice in the definition of $\mu$.

For $t\in(0,1]$, we define the functional
\begin{eqnarray*} C_t(\mu)&=&\frac1{t^2}\int_{X}\int_{X}
\bigg|(1-t)\Big[\f_0(x_0,y_0)-\f_0(x'_0,y'_0)\Big]\\
&&\qquad\qquad\qquad\quad
+t\Big[\f_1(x_1,y_1)-\f'_1(x'_1,y'_1)\Big]\bigg|^2\,
d\mu(x_0,x_1,x'_0,x'_1)\,d\mu(y_0,y_1,y'_0,y'_1)
\end{eqnarray*}
on  $\Cpl(\ol\m,\ol\m')$.
Moreover, we put
\begin{eqnarray*} C_0(\mu)&=&%\lim_{t\to0}C_t(\mu).%
\sup_{t>0}C_t(\mu).
\end{eqnarray*}

\begin{lemma}
\begin{enumerate}
\item For each $t\in(0,1]$, there exists a measure $\mu_t\in\Cpl(\ol\m,\ol\m')$ which minimizes $C_t(.)$, an 'optimal' coupling of $\ol\m$ and $\ol\m'$ w.r.t. the cost function $|\f_t-\f'_t|^2$.
\item
The quantity
\[ C_t^*=C_t(\mu_t)=\frac1{t^2}\DD^2(\X_t,\X'_t)\]
is non-increasing in $t$.
\item
For each $\mu$ with $\big(\pi_0,\pi'_0\big)_*\mu\in\Sym(X_0,\f_0,\m_0)$,
\[ t\mapsto C_t(\mu) \quad\text{is independent of }t\in (0,1]\]
and thus $C_0(\mu)=C_t(\mu)=C_1(\mu)$. In particular,
\begin{equation*}
 C_0(\mu)=
\int_{X}\int_{X}
\Big[\f_1(x_1,y_1)-\f'_1(x'_1,y'_1)\Big]^2\,
d\mu(x_0,x_1,x'_0,x'_1)\,d\mu(y_0,y_1,y'_0,y'_1)<\infty.
\end{equation*}
\item
For each $\mu$ with $\big(\pi_0,\pi'_0\big)_*\mu\not\in\Sym(X_0,\f_0,\m_0)$,
\[C_0(\mu)=\infty.\]

\item
%The family $(\mu_t)_{t>0}$ is compact.
The functional $C_0$ is lower semicontinuous on $\Cpl(\ol\m,\ol\m')$.
\item
Every accumulation point $\mu_0$ %is a minimizer of $C_0(.)$ and
of $(\mu_t)_{t>0}$ satisfies
$\big(\pi_0,\pi'_0\big)_*\mu_0\in\Sym(X_0,\f_0,\m_0)$.
\end{enumerate}
\end{lemma}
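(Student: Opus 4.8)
The plan is to prove the seven assertions essentially in order, since the later parts depend on the earlier ones. For (i), I would argue exactly as in Lemma~\ref{optcoupl1}: the set $\Cpl(\ol\m,\ol\m')$ is a nonempty compact subset of $\prob(X)$ by Lemma~\ref{cpl-comp}, and $C_t(\cdot)$ is lower semicontinuous there (either by the approximation-by-continuous-functions argument of Lemma~\ref{approx-lsc}, applied to $\f_t=(1-t)\f_0+t\f_1$ and $\f'_t=(1-t)\f_0+t\f'_1$, or directly by Fatou), so a minimizer $\mu_t$ exists. For (ii), note that any $\mu\in\Cpl(\ol\m,\ol\m')$ with $(\pi_0,\pi_1)_*\mu=\ol\m$ and $(\pi'_0,\pi'_1)_*\mu=\ol\m'$ yields $(\pi_0\text{-}\pi_1\text{-side},\pi'_0\text{-}\pi'_1\text{-side})_*\mu$ which, after pushing the geodesic parameter through, gives a coupling of the measures of $\X_t$ and $\X'_t$; hence $\DD^2(\X_t,\X'_t)\le t^2 C_t(\mu)$, and taking $\mu=\mu_t$ optimal this is an equality, so $C_t^*=\frac1{t^2}\DD^2(\X_t,\X'_t)$. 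Monotonicity in $t$ is then precisely the monotonicity of $\arccos[\cdots]$ behind the definition of the angle $\measuredangle$ — it follows from the curvature bound (Theorem~\ref{YY-geo}): on an Alexandrov space of nonnegative curvature, $s,t\mapsto\frac{1}{st}[\DD^2(\X_s,\X'_t)]$ behaves so that $\frac1{t^2}\DD^2(\X_t,\X'_t)$ is non-increasing along the pair of geodesics (this is the standard monotonicity of the ``comparison angle'' as the basepoint distances shrink, specialized to $s=t$). I would cite the excerpt's own discussion of $\measuredangle$ for this.

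For (iii): if $(\pi_0,\pi'_0)_*\mu\in\Sym(X_0,\f_0,\m_0)$, then by definition of the symmetry group $\f_0(x_0,y_0)=\f_0(x'_0,y'_0)$ for $\mu^2$-a.e.\ point, so the bracket $(1-t)[\f_0(x_0,y_0)-\f_0(x'_0,y'_0)]$ vanishes $\mu^2$-a.e.\ and $C_t(\mu)=\int\int|\f_1(x_1,y_1)-\f'_1(x'_1,y'_1)|^2\,d\mu\,d\mu$ independently of $t$; this is finite because $\f_1\in L^2(\m_1^2)$, $\f'_1\in L^2((\m'_1)^2)$ and $\mu$ has these as marginals. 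For (iv), conversely, if $g:=(\pi_0,\pi'_0)_*\mu\notin\Sym(X_0,\f_0,\m_0)$ then $\|g\|_{\f_0}>0$, i.e.\ $\int\int|\f_0(x_0,y_0)-\f_0(x'_0,y'_0)|^2\,dg\,dg=:a^2>0$. Writing the integrand of $t^2C_t(\mu)$ as $|(1-t)A+tB|^2$ with $A=\f_0(x_0,y_0)-\f_0(x'_0,y'_0)$, $B=\f_1(x_1,y_1)-\f'_1(x'_1,y'_1)$, and using $\int\int|(1-t)A+tB|^2 = (1-t)^2\|A\|_2^2 + 2t(1-t)\langle A,B\rangle + t^2\|B\|_2^2$ with $\|A\|_2^2=a^2>0$, we get $t^2C_t(\mu)=(1-t)^2a^2+O(t)$, hence $C_t(\mu)=\frac{(1-t)^2a^2}{t^2}+O(1/t)\to\infty$ as $t\to0$; thus $C_0(\mu)=\sup_{t>0}C_t(\mu)=\infty$. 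Here I should be slightly careful that the cross term and $\|B\|_2^2$ are genuinely finite (they are, by Cauchy--Schwarz and the $L^2$ assumptions), so the divergence is driven purely by the $(1-t)^2a^2/t^2$ term.

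For (v), lower semicontinuity of $C_0=\sup_{t>0}C_t$: each $C_t$ is lower semicontinuous on $\Cpl(\ol\m,\ol\m')$ (part (i)), and a pointwise supremum of lower semicontinuous functions is lower semicontinuous. Finally (vi): let $\mu_0$ be an accumulation point of $(\mu_t)_{t>0}$ as $t\to0$, say $\mu_{t_k}\to\mu_0$ weakly. By (iv) it suffices to show $C_0(\mu_0)<\infty$. For fixed $s\in(0,1]$ and $t_k<s$, monotonicity (ii) applied at the minimizers gives $C_s(\mu_{t_k})\le C_s^*\le C_{t_k}^*=C_{t_k}(\mu_{t_k})$... — wait, that direction is the wrong one; the usable bound is $C_s(\mu_{t_k})\le C_{t_k}(\mu_{t_k})=C_{t_k}^*$ fails in general. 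Instead: for any fixed $s$, by optimality $C_s(\mu_{t_k})\ge C_s^*$, which is not helpful either. The correct route is: $C_{t_k}(\mu_{t_k})=C_{t_k}^*$ is \emph{non-increasing} as $t_k\to0$? No — it is non-increasing \emph{in} $t$, so as $t_k\downarrow0$ it \emph{increases}; however it increases to $\sup_t C_t^*=\measuredangle$-type quantity which is finite (it equals $\lim_{t\to0}\frac1{t^2}\DD^2(\X_t,\X'_t)$, the squared length of the ``difference of the two tangent directions'', finite because both geodesics have finite speed). Call this finite limit $L<\infty$. Then for each fixed $s$, lower semicontinuity of $C_s$ gives $C_s(\mu_0)\le\liminf_k C_s(\mu_{t_k})$, and for $t_k<s$ monotonicity gives $C_s(\mu_{t_k})\le C_{t_k}(\mu_{t_k})=C_{t_k}^*\le L$; hence $C_s(\mu_0)\le L$ for every $s$, so $C_0(\mu_0)=\sup_s C_s(\mu_0)\le L<\infty$, and (iv) forces $(\pi_0,\pi'_0)_*\mu_0\in\Sym(X_0,\f_0,\m_0)$.

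\textbf{Main obstacle.} The delicate point is making the monotonicity statement (ii) rigorous and, relatedly, verifying the inequality $C_s(\mu_{t})\le C_{t}^*$ for $t<s$ used in (vi) — i.e.\ that the minimizing hinge-coupling at the small parameter $t$ still gives a \emph{good} (not merely admissible) coupling at the larger parameter $s$. This is exactly a first-variation/convexity-along-geodesics estimate, and the natural way to get it is to invoke the nonnegative curvature of $(\YY,\DD)$ from Theorem~\ref{YY-geo} together with the explicit geodesic description (Theorem~\ref{YY-geo}(ii)--(iii)): the map $s\mapsto \frac1{s^2}\DD^2(\X_s,\X'_s)$ is non-increasing precisely because of the comparison inequality, and the coupling realizing $C_t^*$ can be ``re-read'' at parameter $s$ by the same gluing/melting bookkeeping used throughout Section~1. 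I would spell this out by writing $\mu_t$ down explicitly on the four-fold product and checking that its pushforward under the appropriate projections is an admissible coupling of the measures underlying $\X_s$ and $\X'_s$, then comparing $C_s(\mu_t)$ with $C_t(\mu_t)$ via the elementary inequality $|(1-s)A+sB|^2 \le \frac{s}{t}\,t^{-2}|\cdots|$-type rescaling — this algebraic comparison of the two brackets at parameters $t$ and $s$ (both affine in the parameter) is where the real work sits, and it is the natural place to deploy Lemma~\ref{unif p convex} with $p=2$.
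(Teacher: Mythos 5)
Parts (i)--(v) of your plan are correct and essentially the paper's own argument: (i) is the existence result of Proposition \ref{exist opt} (equivalently, compactness of $\Cpl(\ol\m,\ol\m')$ plus continuity of $C_t$ obtained as in Lemma \ref{approx-lsc} after reducing everything to $I$ via Lemma \ref{obdA}); (ii) is the identification $C_t^*=\frac1{t^2}\DD^2(\X_t,\X'_t)$ combined with the standard comparison-angle monotonicity in nonnegative curvature; (iii) and (iv) follow from your expansion $t^2C_t(\mu)=(1-t)^2\|A\|^2+2t(1-t)\langle A,B\rangle+t^2\|B\|^2$ with $A=\f_0(x_0,y_0)-\f_0(x'_0,y'_0)$ and $B=\f_1(x_1,y_1)-\f'_1(x'_1,y'_1)$ (your treatment of (iv) is in fact more careful than the paper's ``obvious''); and (v) is a supremum of (semi)continuous functionals.

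The gap is in (vi), and you half-noticed it yourself. The inequality $C_s(\mu_{t_k})\le C_{t_k}(\mu_{t_k})$ for $t_k<s$, on which your whole argument rests, is false in general: for a \emph{fixed} coupling $\mu$ one has $C_t(\mu)=\|uA+B\|^2_{L^2(\mu^2)}$ with $u=(1-t)/t$, a quadratic in $u\ge0$ that is not monotone in $t$ (pointwise $A=1$, $B=-1$ gives $C_t=(1-2t)^2/t^2$, which equals $4$, $0$, $1$ at $t=\tfrac14,\tfrac12,1$). Nonnegative curvature yields monotonicity only of the \emph{infimum} $C_t^*$, not of $t\mapsto C_t(\mu)$ along a fixed $\mu$; and optimality of $\mu_{t_k}$ at parameter $t_k$ only gives the useless lower bound $C_s(\mu_{t_k})\ge C_s^*$. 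The repair — which is the paper's route — is a direct contradiction argument: if $(\pi_0,\pi'_0)_*\mu_0\notin\Sym(X_0,\f_0,\m_0)$, then $\|A\|^2_{L^2(\mu_0^2)}\ge2\epsilon>0$; the functional $\mu\mapsto\|A\|^2_{L^2(\mu^2)}$ is continuous for weak convergence (same mollification argument as Lemma \ref{approx-lsc}), so $\|A\|^2_{L^2(\mu_{t_n}^2)}\ge\epsilon$ eventually along the convergent subsequence; your own expansion then gives $C_{t_n}(\mu_{t_n})\ge\frac{(1-t_n)^2}{t_n^2}\,\epsilon-\frac{2(1-t_n)}{t_n}\cdot2\|\f_0\|_{L^2}\big(\|\f_1\|_{L^2}+\|\f'_1\|_{L^2}\big)\to\infty$, the cross term being bounded uniformly over couplings because the marginals are fixed. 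This contradicts $C_{t_n}(\mu_{t_n})=C_{t_n}^*\le L<\infty$, where the finiteness of $L=\sup_tC_t^*$ is the one thing you did justify correctly (alternatively: $C_t^*\le C_1(\mu)$ for the $t$-independent competitor $\mu$ obtained by gluing $\ol\m$ and $\ol\m'$ along the diagonal self-coupling of $\m_0$, which is finite by (iii)). Accordingly, the programme in your closing paragraph — deriving $C_s(\mu_t)\le C_t^*$ from curvature plus Clarkson-type inequalities — is neither available nor needed.
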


\begin{proof}
(i) follows from the existence result in Proposition \ref{exist opt} and the fact that
\[ C_t^*=\inf\Big\{C_t(\mu)\spec \mu\in\Cpl(\ol\m,\ol\m')\Big\}=\frac1{t^2}\DD^2(\X_t,\X'_t).\]

(ii) is a general consequence of nonnegative curvature  in Alexandrov geometry.

(iii), (iv) are obvious:
If the condition $\big(\pi_0,\pi'_0\big)_*\mu\in\Sym(X_0,\f_0,\m_0)$ was not satisfied then obviously
$C_0(\mu)=\infty$.
On the other hand, the previously mentioned condition $\big(\pi_0,\pi'_0\big)_*\mu\in\Sym(X_0,\f_0,\m_0)$ implies
$C_t(\mu)=C_1(\mu)<\infty$ independent of $t$ and thus $C_0(\mu)=C_1(\mu)<\infty$.

(v)
%Compactness is already proven in Lemma \ref{cpl-comp}.
According to Lemma \ref{obdA},
 we may assume without restriction that $X_0=X_1=X'_1=I$ and $\m_0=\m_1=\m'_1=\Leb^1$.
  With the same argument as in the proof of Lemma \ref{approx-lsc} (approximating $\f_0,\f_1,\f'_1\in L^2$ by bounded continuous $\f_{0,i},\f_{1,i},\f'_{1,i}$), $C_t(.)$ is proven  to be continuous on $\Cpl(\ol\m,\ol\m')$.
As a supremum of continuous functionals $C_t$, the functional  $C_0$ is lower semicontinuous.

(vi)
Assume that $\big(\pi_0,\pi'_0\big)_*\mu_0\not\in\Sym(X_0,\f_0,\m_0)$
for an accumulation point $\mu_0$ of the family $(\mu_t)_{t>0}$.
Then
\begin{equation*}
\int_{X}\int_{X}
\Big[\f_0(x_0,y_0)-\f_0(x'_0,y'_0)\Big]^2\,
d\mu_0(x_0,x_1,x'_0,x'_1)\,d\mu_0(y_0,y_1,y'_0,y'_1)\ge 2\epsilon>0
\end{equation*}
and thus
for a converging (sub)sequence $(\mu_{t_n})_n$,
\begin{equation*}
\int_{X}\int_{X}
\Big[\f_0(x_0,y_0)-\f_0(x'_0,y'_0)\Big]^2\,
d\mu_{t_n}(x_0,x_1,x'_0,x'_1)\,d\mu_{t_n}(y_0,y_1,y'_0,y'_1)\ge \epsilon
\end{equation*}
uniformly in $n$. This implies
\[C_{t_n}(\mu_{t_n})\nearrow \infty\]
which contradicts the minimality of $\mu_{t_n}$.
\end{proof}

\begin{proposition}\label{alex-sym}
Let a geodesic hinge $(\X_t)_{0\le t\le1}$ and
$(\X'_t)_{0\le t\le1}$  be given as above with speeds $R=\DD(\X_0,\X_1)$, $R'=\DD(\X_0,\X'_1)$ and representatives
$(X_0\times X_1,\f_0+t(\f_1-\f_0),\ol\m)$, $(X_0\times X'_1,\f_0+t(\f'_1-\f_0),\ol\m')$, resp.
Then there exists a probability measure $\ol\mu$ on $X:=X_0\times X_1\times X_0\times X'_1$ with
\begin{itemize}
\item
$\ol\mu\in\Cpl(\ol\m,\ol\m')$, more precisely,
$(\pi_0,\pi_1)_*\ol\mu=\ol\m$ and $(\pi'_0,\pi'_1)_*\ol\mu=\ol\m'$
\item $(\pi_0,\pi'_0)_*\ol\mu\in \Sym(X_0,\f_0,\m_0)$
\end{itemize}
and
\begin{eqnarray*}
C_0(\ol\mu)=\inf\Big\{C_0(\mu)\spec
\mu\in\Cpl(\ol\m,\ol\m'), \ (\pi_0,\pi'_0)_*\mu\in \Sym(X_0,\f_0,\m_0)
\Big\}.
\end{eqnarray*}
Equivalently,
\begin{eqnarray*}
\big\langle \f_1-\f_0,\f'_1-\f_0\Big\rangle_{L^2(X^2,\ol\mu^2)}&=&\sup\Big\{
\big\langle \f_1-\f_0,\f'_1-\f_0\Big\rangle_{L^2(X^2,\mu^2)}
\spec\mu\in\Cpl(\ol\m,\ol\m'),\\
&&\qquad\qquad\qquad\qquad \qquad\qquad
(\pi_0,\pi'_0)_*\mu\in \Sym(X_0,\f_0,\m_0)
\Big\}.
\end{eqnarray*}
Moreover,
\begin{eqnarray*}
\cos\measuredangle\Big(\X_\bullet,\X'_\bullet\Big)
&\ge&\frac1{RR'}\big\langle \f_1-\f_0,\f'_1-\f_0\Big\rangle_{L^2(X^2,\ol\mu^2)}.
\end{eqnarray*}
\end{proposition}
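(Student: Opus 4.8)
The plan is to establish the three assertions of the proposition in order. First I would secure the existence of the minimizer $\ol\mu$. The admissible set
\[
A=\Big\{\mu\in\Cpl(\ol\m,\ol\m')\spec (\pi_0,\pi'_0)_*\mu\in\Sym(X_0,\f_0,\m_0)\Big\}
\]
is non-empty: glue $\ol\m$ and $\ol\m'$ over their common marginal $\m_0$ via the diagonal self-coupling of $\m_0$, i.e. take $\mu=\ol\m\boxtimes(\mathrm{diag})\boxtimes(\ol\m')^{-1}$ suitably arranged, so that $(\pi_0,\pi'_0)_*\mu$ is the diagonal coupling, which lies in $\Sym(X_0,\f_0,\m_0)$. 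Next, $A$ is closed in $\Cpl(\ol\m,\ol\m')$: by the preceding Lemma, $\mu\in A$ iff $C_0(\mu)<\infty$ iff $(\pi_0,\pi'_0)_*\mu\in\Sym$, and the condition $(\pi_0,\pi'_0)_*\mu\in\Sym$ is a closed condition because $\mu\mapsto\|(\pi_0,\pi'_0)_*\mu\|_{\f_0}$ is lower semicontinuous (it is, up to a harmless reparametrization via Lemma \ref{obdA} reducing to $I$ with $\Leb^1$, of the same form as $\Xi$ in Lemma \ref{approx-lsc}, hence continuous, in particular lsc). Since $\Cpl(\ol\m,\ol\m')$ is compact (Lemma \ref{cpl-comp}) and $A$ is closed, $A$ is compact. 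On $A$, the functional $C_0$ coincides with $C_1$ (by part (iii) of the Lemma, $C_t(\mu)$ is independent of $t$ when $(\pi_0,\pi'_0)_*\mu\in\Sym$), and $C_1$ is continuous on $\Cpl(\ol\m,\ol\m')$ by the approximation argument of Lemma \ref{approx-lsc}. Hence $C_0=C_1$ attains its infimum on $A$; call a minimizer $\ol\mu$.

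For the second, "equivalently" clause I would simply expand $C_0(\mu)$ for $\mu\in A$. Using part (iii) of the Lemma,
\[
C_0(\mu)=\int\!\!\int\Big[\f_1(x_1,y_1)-\f'_1(x'_1,y'_1)\Big]^2\,d\mu\,d\mu,
\]
and writing $\f_1-\f'_1=(\f_1-\f_0)-(\f'_1-\f_0)$ pointwise along the relevant arguments, together with the $\Sym$-condition which forces $\f_0(x_0,y_0)=\f_0(x'_0,y'_0)$ $\mu^2$-a.e., gives
\[
C_0(\mu)=\|\f_1-\f_0\|_{L^2(X^2,\mu^2)}^2+\|\f'_1-\f_0\|_{L^2(X^2,\mu^2)}^2-2\langle\f_1-\f_0,\f'_1-\f_0\rangle_{L^2(X^2,\mu^2)}.
\]
The first two terms are $\mu$-independent: $\|\f_1-\f_0\|_{L^2(X^2,\mu^2)}^2=\int\!\int|\f_1-\f_0|^2\,d\ol\m\,d\ol\m=R^2$ since $(\pi_0,\pi_1)_*\mu=\ol\m$, and likewise $\|\f'_1-\f_0\|_{L^2(X^2,\mu^2)}^2=R'^2$. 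Therefore minimizing $C_0$ over $A$ is exactly maximizing $\langle\f_1-\f_0,\f'_1-\f_0\rangle_{L^2(X^2,\mu^2)}$ over $A$, which is the claimed identity, with $\ol\mu$ the common optimizer.

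For the third assertion (the angle bound) I would use the definition of $\measuredangle$ together with nonnegative curvature. By definition
\[
\cos\measuredangle(\X_\bullet,\X'_\bullet)=\lim_{s,t\searrow0}\frac{s^2+t^2-\DD^2(\X_s,\X'_t)}{2st},
\]
and by the curvature bound this quotient is non-increasing in $s,t$, so it suffices to bound $\DD^2(\X_t,\X'_t)$ from above along the diagonal $s=t$. Since $\ol\mu\in\Cpl(\ol\m,\ol\m')$, it is a (not necessarily optimal) coupling of the measures of $\X_t$ and $\X'_t$, whence
\[
\DD^2(\X_t,\X'_t)\le\int\!\!\int\Big|\big[\f_0+t(\f_1-\f_0)\big]-\big[\f_0+t(\f'_1-\f_0)\big]\Big|^2\,d\ol\mu\,d\ol\mu
\le t^2\,C_0(\ol\mu)
\]
— here I use $C_t(\ol\mu)=C_0(\ol\mu)$ since $(\pi_0,\pi'_0)_*\ol\mu\in\Sym$, so the $\f_0$-terms cancel and one is left precisely with $t^2$ times $C_0(\ol\mu)=R^2+R'^2-2\langle\f_1-\f_0,\f'_1-\f_0\rangle_{L^2(X^2,\ol\mu^2)}$. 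Plugging this into the angle formula with $s=t$,
\[
\cos\measuredangle(\X_\bullet,\X'_\bullet)\ge\lim_{t\searrow0}\frac{2t^2-t^2\,C_0(\ol\mu)}{2t^2}=1-\tfrac12 C_0(\ol\mu)=\frac{1}{RR'}\langle\f_1-\f_0,\f'_1-\f_0\rangle_{L^2(X^2,\ol\mu^2)},
\]
using $R^2+R'^2\ge 2RR'$ only if needed — actually the last equality holds exactly when $R=R'$, so more carefully one should not specialize to $s=t$ but rather keep $s,t$ free, replace $\X_s$ by the point at parameter $s$ on the first geodesic (of speed $R$) and $\X'_t$ by parameter $t$ on the second (speed $R'$), bound $\DD^2$ by a mixed coupling argument giving $s^2R^2/R^2+\dots$; I would rescale the geodesics to unit speed first, so that $R=R'=1$ in the hinge and reinsert the factor $1/(RR')$ at the end. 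The main obstacle I anticipate is precisely this bookkeeping of speeds versus unit-speed parametrization in the angle formula, together with making the lower semicontinuity/closedness of $A$ airtight; the algebra in steps two and three is routine once the $\Sym$-cancellation of the $\f_0$-terms is in hand.
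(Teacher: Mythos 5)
Your argument is correct and matches the paper's proof in all essentials: existence of $\ol\mu$ via compactness of $\Cpl(\ol\m,\ol\m')$ together with (semi)continuity of the cost, the identity $C_0(\mu)=R^2+R'^2-2\langle\f_1-\f_0,\f'_1-\f_0\rangle_{L^2(X^2,\mu^2)}$ on the admissible set via the $\Sym$-cancellation of the $\f_0$-terms, and the angle bound from the monotone limit combined with $C_t^*\le C_t(\ol\mu)=C_0(\ol\mu)$. The speed bookkeeping you flag at the end resolves exactly as you anticipate: the paper evaluates the comparison angle at the points $\X_t,\X'_t$ (arc-length parameters $tR$ and $tR'$), so the normalization is $\frac{1}{2(tR)(tR')}$ and one gets $\cos\measuredangle=\lim_{t\to0}\frac{1}{2RR'}\big[R^2+R'^2-\frac{1}{t^2}\DD^2(\X_t,\X'_t)\big]$ directly, whence the factor $1/(RR')$ appears without any rescaling.
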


\begin{proof}
The existence of $\ol{\mu}$ follows from the lower semicontinuity of $C_0$ proven in the previous Lemma.
Moreover,
\[C_0(\ol\mu)=\lim_{t\to0}C_t(\ol\mu)\ge\lim_{t\to0}C_t^*.\]
On the other hand, nonnegative curvature of $\YY$ implies that the angle between the geodesics always exists. Indeed, it is a monotone limit
\begin{eqnarray*}
\cos\measuredangle\Big(\X_\bullet,\X'_\bullet\Big)&=&\lim_{t\to0}
\frac1{2RR'}\Big[R^2+R'^2-\frac1{t^2}\DD^2(\X_t,\X'_t)\Big]\\
&=&
\frac1{2RR'}\Big[R^2+R'^2-\lim_{t\to0}C_t^*\Big].
\end{eqnarray*}
Finally, since $\f_0(x_0,y_0)=\f_0(x_0',y_0')$ for $\mu^2$-a.e.
$\big((x_0,x_1,x'_0,x'_1),(y_0,y_1,y'_0,y'_1)\big)\in X^2$ we may
 rewrite the previous expressions for each coupling $\mu$ (with the required  properties of its pairwise marginals)  as follows
\begin{eqnarray*}
\lefteqn{R^2+R'^2-C_0(\mu)}\\
&=&
\int_{X}\int_{X}
\bigg(
\Big[\f_1(x_1,y_1)-\f_0(x_0,y_0)\Big]^2
+\Big[\f'_1(x'_1,y'_1)-\f_0(x'_0,y'_0)\Big]^2\\
&&\qquad\qquad\qquad\qquad\qquad\qquad
-\Big[\f_1(x_1,y_1)-\f'_1(x'_1,y'_1)\Big]^2\bigg)\,
d\mu(x_0,x_1,x'_0,x'_1)\,d\mu(y_0,y_1,y'_0,y'_1)\\
&=&2
\int_{X}\int_{X}
\Big[\f_1(x_1,y_1)-\f_0(x_0,y_0)\Big]\cdot
\Big[\f'_1(x'_1,y'_1)-\f_0(x'_0,y'_0)\Big]\,
d\mu(x_0,x_1,x'_0,x'_1)\,d\mu(y_0,y_1,y'_0,y'_1).
\end{eqnarray*}
This is the claim.
\end{proof}

\begin{conjecture}
For each geodesic hinge as above,
\begin{eqnarray*}
\cos\measuredangle\Big(\X_\bullet,\X'_\bullet\Big)
&=&\frac1{RR'}\big\langle \f_1-\f_0,\f'_1-\f_0\Big\rangle_{L^2(X^2,\ol\mu^2)}.
\end{eqnarray*}
\end{conjecture}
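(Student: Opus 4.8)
We sketch a strategy toward this conjecture; the reverse inequality ``$\le$'' is the only thing missing, since ``$\ge$'' is Proposition~\ref{alex-sym}. The plan begins by translating the desired equality into a statement about the numbers $C_t^*$. By the identity established at the end of the proof of Proposition~\ref{alex-sym}, applied to the admissible coupling $\ol\mu$, one has $R^2+R'^2-C_0(\ol\mu)=2\langle\f_1-\f_0,\f'_1-\f_0\rangle_{L^2(X^2,\ol\mu^2)}$, while nonnegative curvature gives the cosine law $\cos\measuredangle(\X_\bullet,\X'_\bullet)=\tfrac1{2RR'}\bigl[R^2+R'^2-\lim_{t\to0}C_t^*\bigr]$. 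Hence the conjecture is equivalent to
\[
\lim_{t\to0}C_t^*\;=\;C_0(\ol\mu)\;=\;\inf\Bigl\{C_0(\mu)\;:\;\mu\in\Cpl(\ol\m,\ol\m'),\ (\pi_0,\pi'_0)_*\mu\in\Sym(X_0,\f_0,\m_0)\Bigr\}.
\]
The inequality ``$\le$'' here is free: taking $\mu=\ol\mu$ in part~(iii) of the Lemma preceding Proposition~\ref{alex-sym} gives $C_t(\ol\mu)=C_0(\ol\mu)$ for all $t$, whence $C_t^*\le C_0(\ol\mu)$ and, by the monotonicity in part~(ii), $\lim_{t\to0}C_t^*\le C_0(\ol\mu)$. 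Everything therefore reduces to proving $\lim_{t\to0}C_t^*\ge C_0(\ol\mu)$.

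For this I would argue by compactness. Pick $t_n\downarrow0$ and minimisers $\mu_n:=\mu_{t_n}$ of $C_{t_n}$, so that $C_{t_n}(\mu_n)=C_{t_n}^*\to\lim_{t\to0}C_t^*$; by Lemma~\ref{cpl-comp}, $\Cpl(\ol\m,\ol\m')$ is weakly compact, so along a subsequence $\mu_n\to\mu_0$ weakly, and by part~(vi) of the same Lemma the limit satisfies $(\pi_0,\pi'_0)_*\mu_0\in\Sym(X_0,\f_0,\m_0)$. Thus $\mu_0$ is admissible, $C_0(\mu_0)\ge C_0(\ol\mu)$, and it suffices to show $\liminf_n C_{t_n}(\mu_n)\ge C_0(\mu_0)$. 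Writing $C_t(\mu)=u_t^2\,a(\mu)+2u_t\,c(\mu)+b(\mu)$ with $u_t=(1-t)/t\to\infty$ and
\[
a(\mu)=\!\iint\!\bigl|\f_0(x_0,y_0)-\f_0(x'_0,y'_0)\bigr|^2d\mu\,d\mu,\qquad
b(\mu)=\!\iint\!\bigl|\f_1(x_1,y_1)-\f'_1(x'_1,y'_1)\bigr|^2d\mu\,d\mu,
\]
and $c(\mu)$ the corresponding cross term, one knows from the mollification argument of Lemma~\ref{approx-lsc} that $a,b,c$ are weakly continuous, so $a(\mu_n)\to a(\mu_0)=0$, $c(\mu_n)\to c(\mu_0)=0$ and $b(\mu_n)\to b(\mu_0)=C_0(\mu_0)$. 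The whole matter is thus reduced to the single estimate $\liminf_n\bigl[u_{t_n}^2a(\mu_n)+2u_{t_n}c(\mu_n)\bigr]\ge0$.

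This last estimate is the heart of the difficulty, and is why the assertion remains conjectural. The nonnegative term $u_t^2a(\mu_n)$ must dominate the cross term, but the only a priori bound, Cauchy--Schwarz $|c(\mu_n)|\le\sqrt{a(\mu_n)\,b(\mu_n)}$ combined with $C_t(\mu_n)\ge0$, gives merely $u_t^2a(\mu_n)+2u_tc(\mu_n)\ge-b(\mu_n)$, which is far too weak; what is needed is a genuine use of the minimality of $\mu_n$, since an \emph{unconstrained} minimiser is precisely the kind of object that tries to make the cross term negative. Two routes look promising. The first is to exploit the first-order optimality condition: since $C_t(\mu)=(\mu\otimes\mu)(G_t^2)$ for the \emph{fixed} kernel $G_t(x,y)=(1-t)[\f_0(x_0,y_0)-\f_0(x'_0,y'_0)]+t[\f_1(x_1,y_1)-\f'_1(x'_1,y'_1)]$ on $X^2$, minimality of $\mu_t$ is equivalent to $(\mu_t\otimes\nu)(G_t^2)\ge C_t^*$ for all $\nu\in\Cpl(\ol\m,\ol\m')$; testing against ``melted'' competitors $\nu$ obtained by composing the $X_0$-marginal of $\mu_t$ with small-norm self-couplings in $\Cpl(\m_0,\m_0)$ should pin the sign of $c(\mu_t)u_t$ in the limit. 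The second is to reduce, via the density of atomless symmetry-free representatives of $\X_0$ and a stability argument, to the case $\Sym(X_0,\f_0,\m_0)=\{\text{diagonal}\}$, where $(\pi_0,\pi'_0)_*\ol\mu$ is the diagonal coupling, $x_0=x'_0$ $\ol\mu$-a.e., and the claim becomes the assertion that $\tfrac1{t^2}\DD^2(\X_t,\X'_t)$ converges to the minimum of $b(\mu)$ over couplings of $\ol\m$ and $\ol\m'$ \emph{over the common factor} $X_0$ --- equivalently, that in the optimal coupling of $\X_t$ with $\X'_t$ no mass can asymptotically escape the diagonal of $X_0$ while cheating the cross term. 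Proving this ``no-escape'' phenomenon --- or, put differently, identifying the space of geodesic directions at $\X_0$ with the completed quotient $\overline{L^2_s(X_0^2,\m_0^2)/\Sym(X_0,\f_0,\m_0)}$ equipped with its cone metric, in the spirit of the orbifold picture developed in this section --- is the main obstacle I foresee, and I expect the optimality-condition route to be the more robust way to attack it.
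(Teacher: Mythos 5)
This statement is stated in the paper as an open \emph{conjecture}: the paper proves only the inequality ``$\ge$'' (Proposition \ref{alex-sym}) and offers no proof of the equality, so there is no proof of the paper's to compare against. Your reductions are correct and consistent with the framework the paper sets up. Using the cosine formula $\cos\measuredangle(\X_\bullet,\X'_\bullet)=\frac1{2RR'}[R^2+R'^2-\lim_{t\to0}C_t^*]$ and the polarization identity $R^2+R'^2-C_0(\mu)=2\langle\f_1-\f_0,\f'_1-\f_0\rangle_{L^2(\mu^2)}$ from the proof of Proposition \ref{alex-sym}, the conjecture is indeed equivalent to $\lim_{t\to0}C_t^*=C_0(\ol\mu)$; the inequality ``$\le$'' does follow from $C_t^*\le C_t(\ol\mu)=C_0(\ol\mu)$ together with the monotonicity of $t\mapsto C_t^*$; and after extracting a weak limit $\mu_0$ of the minimizers $\mu_{t_n}$ (Lemma \ref{cpl-comp} and part (vi) of the Lemma preceding Proposition \ref{alex-sym}) and invoking the weak continuity of $a$, $b$, $c$ via the mollification argument of Lemma \ref{approx-lsc}, everything does reduce to
\[
\liminf_{n}\Bigl[u_{t_n}^2\,a(\mu_{t_n})+2u_{t_n}\,c(\mu_{t_n})\Bigr]\ \ge\ 0,\qquad u_t=\tfrac{1-t}{t}.
\]
Up to this point your analysis is sound and is a genuinely useful reformulation of the problem.

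The gap is that this final coercivity estimate is never proved, and it is exactly the content of the conjecture. As you note, completing the square only gives $u^2a+2uc\ge -c^2/a\ge -b$, which yields the vacuous $\liminf C_{t_n}^*\ge0$. Of your two proposed routes, neither is carried out: the first-order optimality condition $(\mu_t\otimes\nu)(G_t^2)\ge C_t^*$ is a correct necessary condition for minimizing a quadratic form over the convex set $\Cpl(\ol\m,\ol\m')$, but you do not exhibit competitors $\nu$ that actually force the sign of $u_{t}\,c(\mu_{t})$ in the limit --- the obstruction being that any perturbation of the $(\pi_0,\pi'_0)$-marginal of $\mu_t$ moves $a$, $b$ and $c$ simultaneously, with no evident way to decouple them; and the reduction to symmetry-free $\X_0$ does not simplify matters, since the ``no-escape from the diagonal'' statement you arrive at there is again precisely the conjecture in that special case (and the passage from symmetry-free representatives to general $\X_0$ by density would itself require a stability of angles that is not available). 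What you have written is an accurate diagnosis of where the difficulty sits, not a proof; the missing estimate is the open problem, in the paper as in your proposal.
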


\subsection{Tangent Spaces and Tangent Cones}

\begin{definition} The \emph{tangent space}\index{t@$\TT_\X$} at $\X\in\YY$ is defined as
\[\TT_\X=\bigcup_{\auf X,\f,\m\zu =\X} L^2_s(X^2,\m^2)/\sim
\]
with union taken over all gauged measure spaces $(X,\f,\m)$ in the homomorphism class $\auf X,\f,\m\zu $.
%\Big\{g\in L^2_s(X^2,\m^2)\spec \auf X,\f,\m\zu =\X\Big\}/\sim.
Here
$g\in L^2_s(X^2,\m^2)$ and  $g'\in L^2_s(X'^2,\m'^2)$ are regarded as equivalent, briefly $g\sim g'$, if
they are defined on two representatives $(X,\f,\m)$ and $(X',\f',\m')$ of $\X$ for which
there exists a coupling $\mu\in \Cpl(\m,\m')$ such that $\f=\f'$ and $g=g'$ $\mu^2$-a.e. on $(X\times X')^2$.
More precisely, the latter means that
\[\f(x,y)=\f'(x',y')\quad\text{and} \quad g(x,y)=g'(x',y')\]
for $\mu^2$-a.e. $\big((x,x'),(y,y')\big)\in(X\times X')^2$.
\end{definition}

\begin{remarks}
\begin{enumerate}
\item This, indeed, is an equivalence relation: $g\sim g'$ and $g'\sim g''$ implies $g\sim g''$.
\item
For $g,h$ defined as symmetric $L^2$-functions on the same representative $(X,\f,\m)$ of $\X$, the above equivalence means that
$g=h$ $\mu^2$-a.e. for some $\mu\in\Sym(X,\f,\m)$.
\item
Given a gauged measure space $(X,\f,\m)$,  a probability space $(X',\m')$ (for consistence, with $X'$ being a Polish space) is called ``enlargement''  of $(X,\m)$ if there exists a measurable map $\phi:X'\to X$ with $\m=\phi_*\m'$.  In this case,  the map
\[\Phi:\ g\mapsto \phi^*g\]
defines an isometric embedding of the Hilbert space
$L^2_s(X^2,\m^2)$  into the Hilbert space $L^2_s(X'^2,\m'^2)$.
Put $\f'=\phi^*\f$.
Then \[g\sim \phi^*g\]
for each $g\in L^2_s(X^2,\m^2)$. Indeed, $\mu:=(\phi,\Id)_*\m'$ defines a coupling of $\m$ and $\m'$ with the property
$\f=\f', g=\phi^*g$ $\mu^2$-a.e.

%Moreover,
%$\Sym(X',\f',\m')\supset (\phi,\phi)_* \Sym(X,\f,\m)$.
Therefore, for all $g,h\in L^2_s(X^2,\m^2)$,
\[g\sim h\quad\Longleftrightarrow\quad \phi^*g\sim\phi^*h.\]
\item
For each gauged measure space $(X,\f,\m)$, the ``standard'' space  $(I,\Leb^1)$ together with some parametrization $\phi\in\Par(\m)$ can be regarded as an enlargement.
Hence, each tangent vector admits a representative in $L^2_s(I^2,\Leb^2)$. In other words,  the tangent space can be considered as subspace of $L^2_s(I^2,\Leb^2)$, see section 6.4 below.
\end{enumerate}
\end{remarks}

\begin{definition} A metric $d_\X^{\TT}$ will be defined on the tangent space $\TT_\X$ as follows: for  $g,h\in \TT_\X$, say $g\in L^2_s(X^2,\m^2)$, $h\in L^2_s(X'^2,\m'^2)$ with
$\auf X,\f,\m\zu=\auf X',\f',\m'\zu=\X$, we put
\[
d_\X^{\TT}(g,h)=\inf\Big\{
\| g-h\|_{L^2((X\times X')^2,\mu^2)}\spec
\mu\in \Cpl(\m,\m'),\quad \f=\f'\ \mu^2\text{-a.e. on }(X\times X')^2\Big\}.\]
\end{definition}

\begin{remarks}
\begin{enumerate}
\item
$d_\X^{\TT}$ is symmetric and satisfies the triangle inequality.
\item
$d_\X^{\TT}(g,h)=0$ if and only if $g\sim h$.
\item
Given $g,h\in \TT_{\X}$, say $g\in L^2_s(X^2,\m^2)$, $h\in L^2_s(X'^2,\m'^2)$ with
$\auf X,\f,\m\zu=\auf X',\f',\m'\zu=\X$, choose a common enlargement $(\ol X,\ol m)$ of $(X,\m)$ and $(X',\m')$
with embeddings $\phi: \ol X\to X$, $\phi': \ol X\to X'$. Since the spaces $(X,\f,\m)$ and $(X',\f',\m')$
are homomorphic we may assume without restriction that $\phi^*\f=\phi'^*\f'=:\ol\f$.
Put $\ol g=\phi^*g, \ol h={\phi'}^*h$. Then $g\sim \ol g$, $h\sim\ol h$ and
\[ d_\X^{\TT}(g,h)=d_\X^{\TT}(\ol g,\ol h)=\inf
\Big\{
\| \ol g- \ol h\|_{L^2({\ol X}^4,\mu^2)}\spec
\mu\in \Sym(\ol X,\ol \f,\ol m)\Big\}.\]
\end{enumerate}
\end{remarks}

\begin{lemma}
$d_\X^{\TT}$ is a cone metric on $\TT_\X$.
\end{lemma}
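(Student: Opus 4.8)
Recall that a \emph{cone metric} on a set $V$ means a metric of the form $d(g,h)^2 = \|g\|^2 + \|h\|^2 - 2\,\|g\|\,\|h\|\cos\vartheta(g,h)$ where $\vartheta$ is a fixed metric on the unit sphere taking values in $[0,\pi]$ (equivalently: $V$ is isometric to a Euclidean cone over $(\{g : \|g\|=1\}, \vartheta)$); concretely one must exhibit a ``norm'' $g\mapsto\|g\|_\X := d_\X^{\TT}(g,0)$ (where $0$ is the class of the zero function) and an ``angle'' $\vartheta_\X$ on nonzero directions, verify that $\|tg\|_\X = |t|\,\|g\|_\X$ for $t\geq 0$ and all $g$, and verify the cosine identity. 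The key structural input is that every tangent vector, by the remarks preceding the lemma, admits a representative in the common Hilbert space $L^2_s(I^2,\Leb^2)$ attached to a \emph{single} fixed representative $(I,\f',\Leb^1)$ of $\X$ (via a parametrization $\phi\in\Par(\m)$). So the whole tangent space is, up to the equivalence $\sim$, a quotient of one Hilbert space $H = L^2_s(I^2,\Leb^2)$ by the group action of $\Sym(I,\f',\Leb^1)\subset\Cpl(\Leb^1,\Leb^1)$, acting isometrically via $\mu\mapsto$ pushforward composition, exactly as in the proof of Theorem~\ref{YY-iso}. This reduces the statement to a general fact: a quotient of a Hilbert space by an isometric group action is a cone metric space.

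\textbf{Key steps.} First I would reduce to a single representative: given $\X\in\YY$, fix $(X,\f,\m)$ a representative without atoms and $\phi\in\Par(\m)$, and set $\ol\f=\phi^*\f\in H$. By the third and fourth remarks after the definition of $\TT_\X$, the map $g\mapsto\phi^*g$ identifies $\TT_\X$ with $H/\!\!\sim$ where now $g\sim h$ (for $g,h\in H$) iff $g=h$ holds $\mu^2$-a.e. for some $\mu\in\Sym(I,\ol\f,\Leb^1)$, and by the third remark after the definition of $d_\X^{\TT}$ the metric becomes
\[
d_\X^{\TT}(g,h)=\inf\Big\{\|g-h\|_{L^2(I^4,\mu^2)}\spec \mu\in\Sym(I,\ol\f,\Leb^1)\Big\}.
\]
Second, I would define $\|g\|_\X := d_\X^{\TT}(g,0)$. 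Since every $\mu\in\Sym$ acts isometrically on $H$ (the action being $g\mapsto$ the function $(s,t)\mapsto g$ read through the coupling $\mu$, which is measure-preserving in the relevant sense — this is the content of the lemma ``$G$ acts isometrically on $H$'' together with the melting-group structure), we get $\|g\|_\X = \inf_\mu \|g\|_{L^2(\mu^2)} = \|g\|_H$ is actually independent of $\mu$; hence $\|tg\|_\X = |t|\,\|g\|_\X$ for $t\geq 0$ is immediate, and more: $d_\X^{\TT}(g,h)^2 = \inf_\mu \big(\|g\|_H^2 + \|h\|_H^2 - 2\langle g,h\rangle_{\mu^2}\big) = \|g\|_H^2+\|h\|_H^2 - 2\sup_\mu\langle g,h\rangle_{\mu^2}$, using that $\|g\|_{L^2(\mu^2)}=\|g\|_H$ and $\|h\|_{L^2(\mu^2)}=\|h\|_H$ for every $\mu\in\Sym$. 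Third, for nonzero $g,h$ define
\[
\cos\vartheta_\X(g,h):=\frac{1}{\|g\|_H\|h\|_H}\,\sup\Big\{\langle g,h\rangle_{L^2(\mu^2)}\spec \mu\in\Sym(I,\ol\f,\Leb^1)\Big\};
\]
one checks this lies in $[-1,1]$ by Cauchy--Schwarz, so $\vartheta_\X\in[0,\pi]$ is well-defined, and the displayed identity $d_\X^{\TT}(g,h)^2 = \|g\|_H^2+\|h\|_H^2-2\|g\|_H\|h\|_H\cos\vartheta_\X(g,h)$ is exactly the cone cosine law. Fourth, $\vartheta_\X$ descends to the unit sphere $\TT_\X^1=\{\|\cdot\|_\X=1\}$ and is a genuine metric there: symmetry is clear, and the triangle inequality for $\vartheta_\X$ follows from the triangle inequality for $d_\X^{\TT}$ (already recorded in the remarks) via the standard spherical-law-of-cosines comparison — or, more efficiently, one may invoke that $d_\X^{\TT}$ is a quotient pseudometric of a Hilbert metric under an isometric action and cite the general principle (cf.\ the remark after Theorem~\ref{YY-iso}, and \cite{bbi} Prop.~10.2.4 together with the cone characterization) that such quotients of a cone are cones.

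\textbf{Main obstacle.} The one point requiring genuine care is the verification that the action of $\Sym(I,\ol\f,\Leb^1)$ on $H$ is by isometries in the precise sense that $\|g\|_{L^2(I^4,\mu^2)}=\|g\|_H$ for all $g\in H$ and $\mu\in\Sym$ — i.e.\ that pushing forward by a self-coupling preserves the $L^2$ norm — and that the $\sup_\mu$ defining the angle is compatible with the equivalence $\sim$ (so that $\vartheta_\X$ is well-defined on tangent vectors, not just on $H$). The former is the self-coupling analogue of the lemma ``$G$ acts isometrically on $H$'' but with couplings in place of the semigroup $\Inv$; it follows because $\mu\in\Sym\subset\Cpl(\Leb^1,\Leb^1)$ has both marginals equal to $\Leb^1$, so for any $L^1$ function $F$ on $I^2$ one has $\int F(x)\,d\mu(x,x')=\int F(x)\,d\Leb^1(x)$, applied to $F=g^2$ and to the doubled measure $\mu\otimes\mu$ on $I^4$. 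The latter compatibility is handled by the second remark after the definition of $\TT_\X$: $g\sim h$ within a fixed representative means $g=h$ $\mu_0^2$-a.e.\ for some $\mu_0\in\Sym$, and since $\Sym$ is a group under melting $\boxdot$, replacing $\mu$ by $\mu\boxdot\mu_0$ shows $\sup_\mu\langle g,\cdot\rangle_{\mu^2}=\sup_\mu\langle h,\cdot\rangle_{\mu^2}$. Everything else is the routine bookkeeping of unwinding the definitions, so the proof is short once these two compatibility facts are in place.
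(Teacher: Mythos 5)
Your proposal is correct and its mathematical core is the same as the paper's: expand $\|sg-th\|^2_{L^2(\mu^2)}=s^2\|g\|^2+t^2\|h\|^2-2st\langle g,h\rangle_{L^2(\mu^2)}$, observe that the squared-norm terms depend only on the marginals of $\mu$ and hence not on the coupling, so $\frac1{2st}\big[d_\X^{\TT}(sg,th)^2-s^2-t^2\big]=-\sup_\mu\langle g,h\rangle$ is independent of $s,t$, which is precisely the cone cosine law. The paper stops at this homogeneity identity (working directly with couplings of two representatives rather than reducing to $\Sym$ of a single one), while you additionally spell out the angle, its range, and its compatibility with $\sim$; these are harmless elaborations of the same argument.
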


\begin{proof}
The claim will follow from the fact that for each
$g\in L^2_s(X^2,\m^2)$, $h\in L^2_s(X'^2,\m'^2)$ with $\|g\|_{L^2}=\|h\|_{L^2}=1$,
the quantity
\[\frac1{2st}\Big[d_\X^{\TT}(sg,th)^2-s^2-t^2\Big]\]
is independent of $s$ and $t\in (0,\infty)$. The latter can be seen as follows
\begin{eqnarray*}
\lefteqn{\frac1{2st}\Big[d_\X^{\TT}(sg,th)^2-s^2-t^2\Big]}\\
&=&
\inf\Big\{\frac1{2st}\Big[\|sg-th\|^2_{L^2((X\times X')^2,\mu^2)}-s^2-t^2\Big]
\spec \mu\in\Cpl(\m,\m'), \ \f=\f' \ \mu^2\text{-a.e.}
\Big\}
\\
&=&
-\sup\Big\{\langle g,h\rangle_{L^2((X\times X')^2,\mu^2)}
\spec \mu\in\Cpl(\m,\m'),\ \f=\f' \ \mu^2\text{-a.e.}
\Big\}.
\end{eqnarray*}
\end{proof}

\begin{definition}
The \emph{exponential map} $\EExp_\X:  \TT_\X\to\YY$ \index{e@$\EExp_\X$} is defined by
\[g\mapsto \auf X, f+g,\m \zu\]
for $g\in L^2_s(X^2,\m^2)$.
\end{definition}

\begin{remark}
This definition is consistent since $g\sim g'$ implies $\auf X,\f+g,\m\zu=\auf X',\f'+g',\m'\zu$.
Indeed, given $g\in L^2_s(X^2,\m^2)$, $g'\in L^2_s(X'^2,\m'^2)$ with
$\auf X,\f,\m\zu=\auf X',\f',\m'\zu$, we know that $g\sim g'$ if and only if there exists a measure
$\mu\in\Cpl(\m,\m')$ such that $\f=\f'$ and $g=g'$ $\mu^2$-a.e. This implies $\f+tg=\f'+tg'$ $\mu^2$-a.e. for every $t\in\R$ which in turn implies
\[ \auf X,\f+tg,\m\zu=\auf X',\f'+tg',\m'\zu\]
for every $t\in\R$. In other words,
\[\EExp_\X(tg)=\EExp_\X(tg')\]
for every $t$. Thus $\EExp$ is well-defined.
\end{remark}

\begin{definition} For $\X\in\YY$ we define the map $\tau_\X^\YY: \TT_\X\to [0,\infty]$ by
\[\tau_\X^\YY(g)=\sup\Big\{t\ge 0\spec \big(\EExp_\X(s g)\big)_{s\in [0,t]} \text{ is geodesic in }\YY\Big\}.\]
Analogously, for $\X\in\ol\XX$, we define
\[\tau_\X^{\ol\XX}(g)=\sup\Big\{t\ge 0\spec \big(\EExp_\X(s g)\big)_{s\in [0,t]} \text{ is geodesic in }\ol\XX\Big\}.\]
\end{definition}\index{tt@$\tau_\X^\YY, \tau_\X^{\ol\XX}$}

Recall that $T_\X\YY$, the \emph{tangent cone} at $\X$ in the sense of Alexandrov geometry (cf. section 4.3.), is defined as the cone over its unit sphere $T_\X^1\YY$ which in turn is the completion of the space of geodesic directions $\mathring T_\X^1\YY$. Equivalently,
$T_\X\YY$ can be considered as the completion of $\mathring T_\X\YY$
which in turn is the cone over $\mathring T_\X^1\YY$.
Denote the metric on $T_\X\YY$ by $d_\X^\YY$.

\begin{theorem} \begin{enumerate}
\item
The set $\big\{g\in \TT_\X\spec \tau^\YY_\X(g)>0\big\}$ can be identified with the cone  $\mathring T_\X\YY$ via $g\mapsto \big(\EExp_\X(s g)\big)_{s\in [0,\tau^\YY_\X(g))}$.
\item
For each $g\in \TT_\X$, say $g\in L^2_s(X^2,\m^2)$, with $\tau^\YY_\X(g)>0$,
\[d_\X^{\YY}(g,0)=\|g\|_{T_\X\YY}=\|g\|_{L^2(X^2,\m^2)}=
d_\X^{\TT}(g,0).\]
\item
For all $g,h\in \mathring T_\X\YY$,
\[
d_\X^{\YY}(g,h)\le
d_\X^{\TT}(g,h).\]
\end{enumerate}
\end{theorem}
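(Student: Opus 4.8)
The plan is to establish the three assertions in order, since each uses the previous one. For (i), I would argue that a direction $g \in \TT_\X$ with $\tau^\YY_\X(g) > 0$ determines, via $s \mapsto \EExp_\X(sg) = \auf X, \f + sg, \m \zu$, an honest unit-speed (after rescaling) geodesic emanating from $\X$, hence a point of $\mathring T_\X\YY$; conversely, by Theorem~\ref{YY-geo}(iii) every geodesic in $\YY$ emanating from $\X$ has the form $t \mapsto \auf X_0 \times X_1, (1-t)\f_0 + t\f_1, \ol\m\zu$, and pulling $\f_0, \f_1$ back to a common representative of $\X$ realizes it as $\EExp_\X(sg)$ for $g$ the (equivalence class of the) difference of gauges. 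The only subtlety is that distinct $g$'s giving the \emph{same} geodesic germ must be identified; but $\EExp_\X(sg) = \EExp_\X(sg')$ for all small $s>0$ forces $sg = sg'$ $\mu^2$-a.e. on a representative with $\f = \f'$, i.e. $g \sim g'$, which is precisely the equivalence relation defining $\TT_\X$. So the assignment is a bijection between $\{g : \tau^\YY_\X(g) > 0\}$ and $\mathring T_\X\YY$, and it is compatible with the cone dilations on both sides because $\EExp_\X((\lambda s)g) = \EExp_\X(s(\lambda g))$.

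For (ii): along the ray $s \mapsto \EExp_\X(sg)$ we compute, using the diagonal coupling as in the proof of Theorem~\ref{thmoptcoupl}(i), that $\DD(\X, \EExp_\X(sg)) \le s\|g\|_{L^2(X^2,\m^2)}$; since this ray is a geodesic (for $s < \tau^\YY_\X(g)$) it has constant speed, and the speed equals $\lim_{s \to 0} \frac1s \DD(\X, \EExp_\X(sg))$. To get the matching lower bound $\DD(\X, \EExp_\X(sg)) \ge s\|g\|_{L^2} + o(s)$ — in fact I expect equality $\DD(\X,\EExp_\X(sg)) = s\|g\|$ for $s$ small — one notes that for \emph{any} coupling $\ol\m$ of $\m$ with itself, the integrand $|\f(x_0,y_0) - (\f+sg)(x_1,y_1)|^2$ is bounded below in a way that, after expanding and using $\f(x_0,y_0) = \f(x_1,y_1)$ on $\Sym$-couplings (which are the relevant competitors as $s \to 0$ by the hinge analysis of the previous section), reduces to $s^2 \|g\|^2$ plus a cross term that is $O(s)$ times a quantity controlled by $\DD$ itself. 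The cleanest route is: the angle/speed formula for Alexandrov cones gives $\|g\|_{T_\X\YY} = \lim_{s\to 0}\frac1s\DD(\X,\EExp_\X(sg))$, and this limit is $\le \|g\|_{L^2}$ by the upper bound; for the reverse, use that $\Theta$ of Theorem~\ref{YY-iso} identifies $\YY$ with $L^2_s(I^2,\Leb^2)/\Inv$, under which $\EExp_\X(sg)$ corresponds to translating a representative of $\X$ by $s$ times a representative of $g$ in the Hilbert space, and in a quotient of a Hilbert space by isometries the distance from a point to its $s$-translate along a vector $g$ is $s\|g\|$ for $s$ small whenever the segment stays geodesic (the isometry group cannot "shortcut" a germ of a geodesic). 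Finally $d_\X^{\TT}(g,0) = \|g\|_{L^2}$ directly from the definition of $d_\X^{\TT}$ (the infimum over couplings of $\|g - 0\|$, and $\|g\|$ is coupling-independent since it is $\size$-type).

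For (iii): the inequality $d_\X^\YY(g,h) \le d_\X^{\TT}(g,h)$ says the Alexandrov tangent-cone metric is dominated by the explicit $L^2$-quotient metric. The idea is to compare the two: given $g \in L^2_s(X^2,\m^2)$ and $h \in L^2_s(X'^2,\m'^2)$ representing tangent vectors at $\X$, pick a coupling $\mu \in \Cpl(\m,\m')$ with $\f = \f'$ $\mu^2$-a.e. that nearly realizes $d_\X^{\TT}(g,h)$, so that on the common representative $\ol X = X \times X'$ (with gauge the pulled-back $\f$) both $g$ and $h$ live as $L^2$ functions $\ol g, \ol h$ with $\|\ol g - \ol h\|_{L^2(\ol X^2, \mu^2)}$ close to $d_\X^{\TT}(g,h)$. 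Then $\EExp_\X(s\ol g)$ and $\EExp_\X(s\ol h)$ are both geodesics from $\X$ in $\YY$, and the diagonal coupling of $\mu$ with itself gives $\DD(\EExp_\X(s\ol g), \EExp_\X(s\ol h)) \le s\|\ol g - \ol h\|_{L^2(\ol X^2,\mu^2)}$; dividing by $s$ and letting $s \to 0$, the left side tends to $d_\X^\YY(g,h)$ by definition of the tangent-cone metric (it is the limit of normalized distances between points on the two geodesics — here using that these are genuine geodesic directions and the curvature bound makes the limit exist and equal the cone distance of the directions), giving $d_\X^\YY(g,h) \le \|\ol g - \ol h\|_{L^2}$; taking the infimum over $\mu$ yields the claim. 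The main obstacle I anticipate is part (ii)'s lower bound: making rigorous that the isometric $\Inv$-action (only a semigroup!) cannot shorten the germ of a geodesic ray, i.e. that $\lim_{s\to0}\frac1s\DD(\X,\EExp_\X(sg)) = \|g\|_{L^2}$ rather than something strictly smaller — this is exactly where one needs the "geodesic for small $s$" hypothesis $\tau^\YY_\X(g) > 0$ together with the fact (from the hinge lemma of the previous section) that the optimal couplings competing in $\DD(\X, \EExp_\X(sg))$ converge as $s \to 0$ to an element of $\Sym(X,\f,\m)$, on which the cross term vanishes identically.
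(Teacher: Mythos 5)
Your overall architecture matches the paper's: (i) rests on the representation of geodesics from Theorem \ref{YY-geo}, and for (iii) your direct estimate --- put $g,h$ on a common representative via a near-optimal coupling $\mu$, bound $\DD(\EExp_\X(s\ol g),\EExp_\X(s\ol h))\le s\|\ol g-\ol h\|_{L^2(\mu^2)}$ by the diagonal coupling of $\mu$ with itself, divide by $s$, pass to the limit using the monotonicity of comparison angles, and infimize over $\mu$ --- is a valid substitute for the paper's route, which instead reduces to the unit spheres and quotes the hinge estimate of Proposition \ref{alex-sym} (itself proved by essentially your computation). Your remark that $d_\X^{\TT}(g,0)=\|g\|_{L^2}$ because the $L^2$-norm is coupling-independent is also correct.

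The genuine gap is exactly where you suspect it: the lower bound $\lim_{s\to0}\frac1s\DD(\X,\EExp_\X(sg))\ge\|g\|_{L^2}$ in (ii), and neither of your two mechanisms closes it. Expanding the square in the defining infimum, $\DD^2(\X,\X_s)=\inf_{\ol\m}\big[A(\ol\m)^2-2s\,B(\ol\m)+s^2\|g\|^2\big]$ with $A(\ol\m)=\|\f(x_0,y_0)-\f(x_1,y_1)\|_{L^2(\ol\m^2)}$ and $B(\ol\m)=\langle \f(x_0,y_0)-\f(x_1,y_1),\,g(x_1,y_1)\rangle_{L^2(\ol\m^2)}$; the accumulation of near-optimal couplings on $\Sym(X,\f,\m)$ only gives $A(\ol\m_s)\to0$ with no rate, and if $A(\ol\m_s)$ is of order $s$ and aligned with $g$, Cauchy--Schwarz yields merely $\DD^2\ge(A-s\|g\|)^2\ge0$, which is useless. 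Likewise, ``a semigroup of isometries cannot shortcut a geodesic germ'' is precisely the statement to be proved, not a known feature of quotients. A clean fix within the paper's toolbox: apply the triangle comparison inequality of Theorem \ref{YY-geo} to the geodesic $\big(\EExp_\X(utg)\big)_{u\in[0,1]}$ with reference point $\bfdelta$. Since $u\mapsto\size^2\big(\EExp_\X(utg)\big)=\|\f+utg\|^2_{L^2(\m^2)}$ is a quadratic in $u$ whose deviation from its chord is exactly $-u(1-u)\,t^2\|g\|^2$, the comparison inequality forces $t^2\|g\|^2\le\DD^2\big(\X,\EExp_\X(tg)\big)$, which together with your diagonal-coupling upper bound gives equality. (A smaller issue: your injectivity argument in (i) uses a coupling witnessing $\EExp_\X(sg)=\EExp_\X(sg')$ that may depend on $s$ and need not preserve $\f$ separately, so ``$sg=sg'$ a.e.'' does not follow as stated; the printed proof is silent on this point as well.)
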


\begin{proof} (i),(ii) By definition, for each $g\in \TT_\X$ with $\tau^\YY_\X(g)>0$,
$\big(\EExp_\X(s g)\big)_{s\in [0,\tau^\YY_\X(g))}$ is a geodesic in $\YY$. Hence, $g$ is an element of the cone
$\mathring T_\X\YY$.
Conversely, each geodesic $(\X_s)_{s\in [0,t]}$ in $\YY$ emanating from $\X=\X_0$ can be represented as
$\X_s=\EExp_\X(s g)$ for suitable $g\in L^2_s(X^2,\m^2)$ and suitable representative  $(X,\f,\m)$ of $\X$.

(iii)
To prove the inequality between the distances on $T_\X\YY$ and $\TT_\X$ it suffices to verify the analogous inequality between the induced distances on the respective unit spheres $T^1_\X\YY$ and $\TT^1_\X$ (since both spaces are cones over their respective unit spheres).
Let representatives $(X,\f,\m)$ and $(X',\f',\m')$ of $\X$ be given as well as
unit tangent vectors $g\in L^2_s(X^2,\m^2)$ and $g'\in  L^2_s(X'^2,\m'^2)$.
Then
\[ d^{\YY,1}_{\X}(g,g')=\measuredangle \Big( \big(\EExp_\X(sg)\big)_{s\ge0}, \big(\EExp_\X(tg')\big)_{t\ge0}\Big)\]
whereas
\[\cos d^{\TT,1}_\X(g,g')=\sup\Big\{\langle g,g'\rangle_{L^2((X\times X')^2,\mu^2)}
\spec
\mu\in \Cpl(\m,\m'),\quad \f=\f'\ \mu^2\text{-a.e.}
\Big\}
\]
According to Proposition \ref{alex-sym} (with $\f,\f'$ in the place of $\f_0,\f_0$ and $g,g'$ in the place of $\f_1-\f_0, \f_1'-\f_0$)
\[\cos d^{\TT,1}_\X(g,g')\le \cos d^{\YY,1}_\X(g,g').\]
This proves the claim.
\end{proof}

\begin{corollary} \begin{enumerate}
\item
The set $\big\{g\in \TT_\X\spec \tau^{\ol\XX}_\X(g)>0\big\}$ can be identified with the cone  $\mathring T_\X\ol\XX$.
\item
For all $g,h\in \mathring T_\X\ol\XX\subset \mathring T_\X\YY$,
\[
d_\X^{\ol\XX}(g,h)=
d_\X^{\YY}(g,h)
\le
d_\X^{\TT}(g,h).\]
\end{enumerate}
\end{corollary}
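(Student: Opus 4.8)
The plan is to reduce both assertions to the preceding Theorem by exploiting that $\ol\XX$ is a convex, hence totally geodesic, subset of $\YY$ (Corollary \ref{olisgeo}). The single structural fact needed is that a curve is a geodesic in $\ol\XX$ emanating from $\X$ if and only if it is a geodesic in $\YY$ emanating from $\X$ whose image stays in $\ol\XX$: the ``only if'' is the totally geodesic property, while the ``if'' holds because $\ol\XX$ carries the restriction of the metric $\DD$, so a $\YY$-geodesic contained in $\ol\XX$ is automatically distance minimizing of constant speed there. Concretely this rests on Corollary \ref{closedness of tr-in}, whose proof shows that the convex-combination representatives $\auf X_0\times X_1, (1-t)\d_0+t\d_1,\ol\m\zu$ of a $\YY$-geodesic between two pseudo metric measure spaces are again pseudo metric measure spaces.

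For (i): unwinding the definition of $\tau_\X^{\ol\XX}$, we have $\tau_\X^{\ol\XX}(g)>0$ precisely when $\tau_\X^\YY(g)>0$ and $\EExp_\X(sg)\in\ol\XX$ for all small $s>0$, and in that case $\big(\EExp_\X(sg)\big)_{s\in[0,\tau_\X^{\ol\XX}(g))}$ is a geodesic in $\ol\XX$. Conversely, a geodesic in $\ol\XX$ emanating from $\X$ is a geodesic in $\YY$ emanating from $\X$, hence, by parts (i)--(ii) of the Theorem, of the form $\big(\EExp_\X(sg)\big)_s$ for some $g\in L^2_s(X^2,\m^2)$ and some representative $(X,\f,\m)$ of $\X$. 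The identification of geodesics which are extensions of one another with the relation $\sim$ on $\TT_\X$ is carried out verbatim as in the proof of the Theorem. Hence $g\mapsto\big(\EExp_\X(sg)\big)_{s\in[0,\tau_\X^{\ol\XX}(g))}$ is a bijection from $\{g\in\TT_\X\spec\tau_\X^{\ol\XX}(g)>0\}$ onto $\mathring T_\X\ol\XX$, which is (i).

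For (ii): the inclusion $\mathring T_\X\ol\XX\subset\mathring T_\X\YY$ is the tautological one, a geodesic in $\ol\XX$ being a geodesic in $\YY$. The angle $\measuredangle(\X_\bullet,\X'_\bullet)$ between two geodesics emanating from $\X$ is defined through $\lim_{s,t\searrow0}\arccos\big[\tfrac1{2st}(s^2+t^2-\DD^2(\X_s,\X'_t))\big]$, an expression involving only the metric $\DD$, which coincides on $\ol\XX$ and on $\YY$; hence $\measuredangle$ on $\mathring T_\X^1\ol\XX$ is the restriction of $\measuredangle$ on $\mathring T_\X^1\YY$. Passing to the cones --- both $d_\X^{\ol\XX}$ and $d_\X^\YY$ are cone metrics, and for $g$ with $\tau_\X^{\ol\XX}(g)>0$ the radial length $\|g\|=\|g\|_{L^2(X^2,\m^2)}$ is the same for both by part (ii) of the Theorem --- gives $d_\X^{\ol\XX}(g,h)=d_\X^{\YY}(g,h)$ for all $g,h\in\mathring T_\X\ol\XX$. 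Together with $d_\X^{\YY}(g,h)\le d_\X^{\TT}(g,h)$ from part (iii) of the Theorem this yields the asserted chain.

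The proof is essentially bookkeeping; the only delicate point is the totally geodesic input above --- once it is granted, neither compactness nor any local structure of $\ol\XX$ enters, and (i) and (ii) are formal consequences of the Theorem together with Corollary \ref{olisgeo}.
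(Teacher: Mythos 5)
Your proof is correct and follows exactly the route the paper intends: the paper states this corollary without proof as an immediate consequence of the preceding theorem together with the fact (Corollaries \ref{closedness of tr-in} and \ref{olisgeo}) that $\ol\XX$ is a closed convex subset of $\YY$ carrying the restricted metric $\DD$, which is precisely the reduction you carry out. The only cosmetic remark is that the implication ``geodesic in $\ol\XX$ $\Rightarrow$ geodesic in $\YY$'' already follows from the metric on $\ol\XX$ being the restriction of $\DD$ (the defining identity $\DD(\X_s,\X_t)=\frac{t-s}{T-S}\DD(\X_S,\X_T)$ is purely metric), so the totally geodesic property is not actually needed for that direction; your argument is otherwise complete.
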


\subsection{Tangent Spaces  -- A Comprehensive Alternative Approach}

Recall the fact (sect. 5.2) that the space $\YY$ of gauged measure space is isometric to a quotient space $\LL$ of $L^2_s(I^2,\Leb^2)$. The tangent spaces $\TT_\X$ for $\X=\auf X,\f,\m\zu\in\YY$, therefore, will be in one-to-one correspondence with the tangent spaces $\TT_\f$ (to be defined below) for $\f\in\LL$.

Given a function $\f\in L^2_s(I^2,\Leb^2)$, we put
\[\Sym(\f)=\Big\{(\psi_0,\psi_1)\in\Inv(I,\Leb^1)^2\spec \psi_0^*\f=\psi_1^*\f\Big\}.\]
Note that this will be a group, isomorphic to the previously introduced $\Sym(I,\f,\Leb^1)$, provided we identify all
$(\psi_0,\psi_1)\in\Sym(\f)$ which satisfy $\psi_0=\psi_1$. (The latter should be understood as identity $\Leb^2$-a.e. as usual.)
We say that $\f$ has no symmetries if \[
\forall\psi_0,\psi_1\in\Inv(I,\Leb^1):\ \psi_0^*\f=\psi_1^*\f \quad\Longrightarrow\quad \psi_0=\psi_1.\]

\begin{definition}
The \emph{tangent space}
\[\TT_\f=L^2_s(I^2,\Leb^2)/\Sym(\f)\]
is the quotient space of $L^2_s(I^2,\Leb^2)$ with respect to the equivalence relation
\[g\sim h\quad\Longleftrightarrow\quad \exists (\psi_0,\psi_1)\in\Sym(\f)\spec \psi_0^* g=\psi_1^* h.\]
It is a metric space with metric
\[\d_\f(g,h)=\inf\Big\{\|\psi_0^* g-\psi_1^* h\|_{L^2(I^2,\Leb^2)}\spec (\psi_0,\psi_1)\in\Sym(\f)\Big\}.\]
If $\f$ has no symmetries then $\TT_\f=L^2_s(I^2,\Leb^2)$. In particular, then $\TT_\f$ is a Hilbert space.
\end{definition}
This definition justifies to regard the tangent spaces $\TT_\f$ (and thus also the previously defined tangent spaces $\TT_\X$) as infinite dimensional \emph{Riemannian orbifolds}.

\begin{definition}
The \emph{exponential map} $\EExp_\f:  \TT_\f\to\LL$ is defined by
\[g\mapsto \auf f+g\zu.\]
Equivalently, it may be considered as map $\EExp_\f:  \TT_\f\to\YY$ with
\[g\mapsto \auf I, f+g,\Leb^1\zu.\]
\end{definition}

Indeed, however, the measure space $(I,\Leb^1)$ does not play any particular role. It is just one of many possible enlargements of a given space. It can be replaced by any other standard Borel space without atoms. Thus for any gauged measure space $(X,\f,\m)$ without atoms we may define \index{t@$\TT_\f$, $\TT_{(X,\f,\m)}$}
\[\TT_{(X,\f,\m)}=L^2_s(X^2,\m^2)\big/ \Sym(X,\f,\m)\]
where two elements $g$ and $h$ in $L^2_s(X^2,\m^2)$ are identified if there exists a measure $\mu\in \Sym(X,\f,\m)$ -- a self-coupling of $\m$ which leaves $\f$ invariant --  such that
\[g(x,y)=h(x',y')\qquad\text{for $\mu^2$-a.e.}\big((x,x'),(y,y')\big)\in X^4.\]
For $g\in \TT_{(X,\f,\m)}$ we put \[\EExp_{(X,\f,\m)}(g)=\auf X,\f+g,\m\zu.\]
\index{e@$\EExp_{(X,\f,\m)}$, $\EExp_\f$}

\begin{corollary}
For each gauged measure space $(X,\f,\m)$ without atoms, the space
$\TT_{(X,\f,\m)}$ may be identified with the tangent space $\TT_\X$ where $\X$ denotes the homomorphism class of $(X,\f,\m)$. The exponential maps $\EExp_{(X,\f,\m)}$ and $\EExp_\X$ are defined consistently.
\end{corollary}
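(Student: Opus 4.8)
The plan is to show that the two descriptions of the tangent space and the two exponential maps agree, and for this it suffices to exhibit a canonical bijection between $\TT_{(X,\f,\m)}=L^2_s(X^2,\m^2)/\Sym(X,\f,\m)$ and $\TT_\X$ which is isometric and intertwines the exponential maps. Recall that $\TT_\X$ was \emph{defined} as $\bigcup_{\auf X,\f,\m\zu=\X}L^2_s(X^2,\m^2)/\!\sim$, the union taken over all representatives of $\X$. So there is a tautological inclusion: any $g\in L^2_s(X^2,\m^2)$, regarded on the particular representative $(X,\f,\m)$, is one of the elements appearing in that union, and passing to $\sim$-equivalence classes gives a well-defined map $\iota\colon \TT_{(X,\f,\m)}\to\TT_\X$. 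The first step is to check that $\iota$ is well-defined, i.e. that the coarser identification used in $\TT_{(X,\f,\m)}$ (two elements identified iff they agree $\mu^2$-a.e. for some $\mu\in\Sym(X,\f,\m)$) is \emph{exactly} the restriction of $\sim$ to functions defined on the single representative $(X,\f,\m)$. This is precisely Remark (ii) following the definition of $\TT_\X$: for $g,h$ on the same representative, $g\sim h$ iff $g=h$ $\mu^2$-a.e. for some $\mu\in\Sym(X,\f,\m)$. Hence $\iota$ is well-defined and injective.

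Next I would prove surjectivity of $\iota$, and this is where the hypothesis ``$(X,\f,\m)$ without atoms'' enters. Given an arbitrary representative $(X',\f',\m')$ of $\X$ and an element $g'\in L^2_s(X'^2,\m'^2)$, I must produce $g\in L^2_s(X^2,\m^2)$ with $g\sim g'$. The idea is to route everything through a common enlargement. By Lemma~\ref{obdA} (and the discussion in the Remarks after the definition of $\TT_\X$), both $(X,\f,\m)$ and $(X',\f',\m')$ are homomorphic to $(I,\psi^*\f,\Leb^1)$ via suitable parametrizations; more usefully, since $(X,\m)$ has no atoms, Remark~\ref{rem:paramet}(i) gives a parametrization $\psi\in\Par(\m)$ that is a Borel isomorphism $I\to X$, so $(X,\m)$ is itself a standard atomless space and can serve as the enlargement. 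Concretely: pick $\psi'\in\Par(\m')$ and set $\f_I=\psi'^*\f'\in L^2_s(I^2,\Leb^2)$; since $(I,\f_I,\Leb^1)$ is homomorphic to $(X,\f,\m)$ there is a coupling realizing distance zero, and using the Borel isomorphism $\psi$ one transfers $\f_I$ and $g_I:=\psi'^*g'$ back to symmetric $L^2$-functions on $X^2$ that are $\m^2$-compatible with $\f$; call the image of $g_I$ the desired $g$. One then checks $g\sim g'$ directly from the definition of $\sim$ by writing down the explicit coupling $\mu\in\Cpl(\m,\m')$ obtained by melting (Definition of $\boxdot$) the coupling $(\psi,\Id)_*\Leb^1$ of $\m$ and $\Leb^1$, the identity coupling, and $(\Id,\psi')_*\Leb^1$ of $\Leb^1$ and $\m'$, along which both $\f=\f'$ and $g=g'$ hold a.e.

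The third step is to verify that $\iota$ is isometric for the metrics $d_\X^{\TT}$ on $\TT_\X$ and the metric on $\TT_{(X,\f,\m)}$ induced by the $L^2$-norm and $\Sym(X,\f,\m)$. By Remark (iii) following the definition of $d_\X^{\TT}$, for $g,h\in\TT_\X$ represented on the common atomless enlargement one has $d_\X^{\TT}(g,h)=\inf\{\|g-h\|_{L^2(\ol X^4,\mu^2)}\colon \mu\in\Sym(\ol X,\ol\f,\ol m)\}$, which is exactly the defining formula for the metric on $\TT_{(X,\f,\m)}$ once we take $(\ol X,\ol\f,\ol m)=(X,\f,\m)$ — legitimate precisely because $(X,\f,\m)$ is atomless and hence is a common enlargement of itself and any other representative (via the bijective parametrizations above). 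Finally, the exponential maps match on the nose: $\EExp_{(X,\f,\m)}(g)=\auf X,\f+g,\m\zu$ by definition, and under $\iota$ the class of $g$ corresponds to a geodesic direction whose $\EExp_\X$ image is also $\auf X,\f+g,\m\zu$ by the definition of $\EExp_\X$; since $\iota$ is a bijection compatible with scaling, $\EExp_{(X,\f,\m)}=\EExp_\X\circ\iota$. I expect the main obstacle to be the bookkeeping in the surjectivity step — making the ``transfer through a common enlargement'' precise requires carefully chaining parametrizations and melting couplings, and one must check at each stage that the relevant functions stay in the correct $L^2$ spaces and that the a.e.-identities compose correctly; everything else is essentially unwinding the definitions and invoking the Remarks after the definitions of $\TT_\X$ and $d_\X^{\TT}$.
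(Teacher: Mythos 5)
Your first, third and fourth steps are fine and are exactly the paper's (purely definitional) reasoning: on functions living on the single representative $(X,\f,\m)$ the relation $\sim$ reduces to identification modulo $\Sym(X,\f,\m)$, the metric $d_\X^{\TT}$ restricts to the quotient metric of $L^2_s(X^2,\m^2)$ by $\Sym(X,\f,\m)$, and both exponential maps send $g$ to $\auf X,\f+g,\m\zu$. So you do obtain a well-defined, injective, isometric map $\iota\colon\TT_{(X,\f,\m)}\to\TT_\X$ intertwining the exponential maps, which is the substance of the corollary (stated in the paper without proof).

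The surjectivity step, however, contains a concrete error. The melted coupling you propose is $(\psi,\psi')_*\Leb^1$, and for it to witness $g\sim g'$ you need $\f=\f'$ to hold $\mu^2$-a.e. along it, i.e.\ $\psi^*\f=\psi'^*\f'$ $\Leb^2$-a.e.\ on $I^2$. But $\psi^*\f$ and $\psi'^*\f'$ are merely \emph{homomorphic} gauges on $(I,\Leb^1)$: by Theorem \ref{YY-iso} they agree only after a further twist $\phi_0^*(\psi^*\f)=\phi_1^*(\psi'^*\f')$ by a pair $(\phi_0,\phi_1)\in\Inv(I,\Leb^1)^2$, not pointwise a.e.\ under the identity coupling. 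Inserting that twist destroys your construction of $g$: the composite $\psi\circ\phi_0$ is no longer injective, so $\psi'^*g'$ pulled through it need not descend to a function on $X^2$. This is not repairable bookkeeping. Take $\X$ the circle with arc distance and uniform measure, and the representative $(X',\f',\m')=(S^1\times I,\ \f'((x,s),(y,t))=\d(x,y),\ \m\otimes\Leb^1)$, with $g'((x,s),(y,t))=(s-\tfrac12)(t-\tfrac12)$. Any coupling $\mu\in\Cpl(\m,\m')$ along which $g'$ equals a function of the circle variables alone forces the $I$-coordinate to be $\mu$-a.s.\ a function $h(x)$ of $x$, while the constraint $\f=\f'$ forces the $S^1$-coordinate of $\mu_x$ to be supported on at most two points; the second marginal of such a $\mu$ can then never be $\m\otimes\Leb^1$. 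Hence $[g']_\sim$ has no representative on the circle itself, and $\iota$ is not onto $\TT_\X$ as literally defined by the union over \emph{all} representatives. The defensible content of the corollary — and all that the later sections use — is the injective isometric identification compatible with $\EExp$ that you did prove; if you want genuine surjectivity you must either restrict $\TT_\X$ to classes admitting a representative on the given atomless space, or pass to a suitable completion/saturation, and you should say so explicitly rather than promising that the "transfer through a common enlargement" will close the gap.
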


\subsection{Ambient Gradients}\label{sec:ambgrad}

\begin{definition} A function $\U:\YY\to\R$ is called \emph{strongly differentiable} at $\X\in\YY$
\begin{itemize}
\item
if the directional derivative \index{d@$D_h$}
\[D_h\U(\X):=\lim_{t\searrow0} \frac1t\big[\U(\EExp_\X(th))- \U(\X)\big]  \]
exists for every $h\in \TT_\X$ and
\item
if there exists a tangent vector $g\in\TT_\X$ such that
\begin{eqnarray*}
D_h\U(\X)=
\langle g,h\rangle_{L^2((X\times X')^2,\mu^2)}
\end{eqnarray*}
for every $h\in \TT_\X$ and every $\mu\in \Cpl(\m,\m')$ with $\f=\f'\ \mu^2\text{-a.e. on }(X\times X')^2$.

Here we assumed $g\in L^2_s(X^2,\m^2)$ and $h\in L^2_s(X'^2,\m'^2)$ with $(X,\f,\m)$ and $(X',\f',\m')$ being two representatives of $\X$.
\end{itemize}
The tangent vector $g\in\TT_\X$ is then called \emph{ambient gradient} of $\U$ at $\X$. It is denoted by \index{ambient gradient}
\index{g@$\Grad$}
\[g=\Grad\U(\X).\]
\end{definition}
%Note that in general $\Grad(-\U)(\X)\not=-\Grad\U(\X)$. Indeed, replacing the $\sup_\mu$ in the definition of
%$\Grad\U(\X)$ by an $\inf_\mu$ will lead to the definition of $-\Grad(-\U)(\X)$.

\begin{lemma} For any function $\U:\YY\to\R$ which is strongly differentiable at $\X\in\YY$, the ambient gradient is unique and satisfies
\[\|\Grad\U(\X)\|=\sup\big\{ D_h\U(\X)\spec \|h\|=1\big\}.\]
\end{lemma}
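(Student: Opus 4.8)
The statement to prove is that whenever $\U:\YY\to\R$ is strongly differentiable at $\X$, the ambient gradient $\Grad\U(\X)$ is unique and satisfies $\|\Grad\U(\X)\|=\sup\{D_h\U(\X)\spec\|h\|=1\}$. The plan is to work with a single fixed representative $(X,\f,\m)$ of $\X$, reduce the defining identity to an inner product relation inside the Hilbert space $L^2_s(X^2,\m^2)$ modulo the symmetry group, and then combine the Cauchy--Schwarz inequality with a judicious choice of test direction. Throughout I would use Remark (after the definition of $\TT_\X$) to the effect that any tangent vector admits a representative on a common enlargement, so that the inner products $\langle g,h\rangle_{L^2((X\times X')^2,\mu^2)}$ can all be computed on one space.

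First I would address uniqueness. Suppose $g_1,g_2\in\TT_\X$ both serve as ambient gradients; pick representatives of both on a common representative $(X,\f,\m)$ of $\X$ (using the enlargement remark). Then for every $h\in L^2_s(X^2,\m^2)$ and every $\mu\in\Sym(X,\f,\m)$ we have $\langle g_1,h\rangle=\langle g_2,h\rangle$ computed with the measure $\mu^2$; taking $\mu$ to be the diagonal coupling (the neutral element of $\Sym$) gives $\langle g_1-g_2,h\rangle_{L^2(X^2,\m^2)}=0$ for all $h$, hence $g_1=g_2$ in $L^2_s(X^2,\m^2)$, i.e.\ $g_1\sim g_2$ in $\TT_\X$. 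This is the easy half.

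For the norm identity, write $g=\Grad\U(\X)$ realized in $L^2_s(X^2,\m^2)$. For the inequality $\sup_{\|h\|=1}D_h\U(\X)\le\|g\|$: for any unit $h$ (on any representative, brought to a common enlargement), $D_h\U(\X)=\langle g,h\rangle_{L^2(\mu^2)}$ for an appropriate coupling $\mu$ with $\f=\f'$ $\mu^2$-a.e.; since such a coupling preserves the $L^2$-norms of $g$ and $h$ (the marginals are $\m$ and $\m'$), Cauchy--Schwarz gives $\langle g,h\rangle_{L^2(\mu^2)}\le\|g\|_{L^2(X^2,\m^2)}\cdot\|h\|=\|g\|$. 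For the reverse inequality it suffices to exhibit a unit direction attaining $\|g\|$: take $h:=g/\|g\|$ on the very same representative $(X,\f,\m)$ (assuming $g\neq 0$; if $g=0$ the identity is trivial since then $D_h\U(\X)=0$ for all $h$). Using the diagonal self-coupling $\mu$, which trivially satisfies $\f=\f$ $\mu^2$-a.e., we get $D_h\U(\X)=\langle g,h\rangle_{L^2(X^2,\m^2)}=\|g\|$, and $\|h\|=1$. Combining the two inequalities yields $\|\Grad\U(\X)\|=\sup_{\|h\|=1}D_h\U(\X)$.

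The only genuinely delicate point is the bookkeeping with representatives and couplings: one must be sure that the defining property of the ambient gradient, quantified over \emph{all} representatives $(X',\f',\m')$ and \emph{all} matching couplings $\mu$, can legitimately be specialized to one representative and the diagonal coupling, and that $\|g\|$ is well-defined on the quotient $\TT_\X$ (independent of representative) — both of which follow from the preceding Theorem, item (ii), which already states $d_\X^{\TT}(g,0)=\|g\|_{L^2(X^2,\m^2)}$ is representative-independent for $g$ with $\tau^\YY_\X(g)>0$, together with the enlargement invariance in the Remarks. So the main obstacle is not analytic but purely a matter of carefully invoking these consistency facts; once they are in hand the proof is a two-line Cauchy--Schwarz argument plus the diagonal-coupling test direction.
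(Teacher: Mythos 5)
Your proposal is correct and follows essentially the same route as the paper's proof: uniqueness via specializing to the diagonal coupling on a single representative (plus pull-back to a common enlargement to compare gradients on different representatives), and the norm identity via Cauchy--Schwarz together with the test direction $h=g/\|g\|$ under the diagonal self-coupling. No gaps.
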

Here $\|h\|=\|h\|_{L^2(X'^2,\m'^2)}=d_\X^{\TT}(h,0)$ for $h\in L^2_s(X'^2,\m'^2)$ and
$\|g\|=\|g\|_{L^2(X^2,\m^2)}=d_\X^{\TT}(g,0)$ for $g=\Grad\U(\X)\in L^2_s(X^2,\m^2)$ with representatives
 $(X,\f,\m)$ and $(X',\f',\m')$  of $\X$.

\begin{proof} \emph{Uniqueness.}
In order to be the  ambient gradient $\Grad\U(\X)$, a function $g\in L^2_s(X^2,\m^2)$ in particular has to satisfy
\[D_h\U(\X)=\langle g,h\rangle_{L^2(X^2,\m^2)}\qquad\text{for every }h\in L^2_s(X^2,\m^2).\]
(Indeed, choose $X'=X$ and $\mu$ to be diagonal coupling.) The latter property determines $g$ (if it exists) uniquely within $ L^2_s(X^2,\m^2)$.
Two ambient gradients  $g$ and $g'$ defined on two representatives of $\X$ may always be extended (via pull back) to functions on a common enlargement. Thus the ambient gradient is unique (if it exists).

\emph{Norm identity.}
For each $h$ and each coupling $\mu$ (which leaves $\f$ invariant) as above
\begin{eqnarray*}
D_h\U(\X)&=&
\langle g,h\rangle_{L^2((X\times X')^2,\mu^2)}\\
&\le&
\|g\|_{L^2((X\times X')^2,\mu^2)}\cdot\|h\|_{L^2((X\times X')^2,\mu^2)}\\
&=&
\|g\|_{L^2(X^2,\m^2)}\cdot\|h\|_{L^2(X'^2,\m'^2)}.
\end{eqnarray*}
Thus
$D_h\U(\X)\le\|g\|$
for each $h\in\TT_\X$ with $\|h\|=1$. On the other hand, assume without restriction that $\|g\|>0$ and choose $h=\frac1{\|g\|}g$ and $\mu=$  diagonal coupling of $\m$ to obtain
\[D_h\U(\X)=\frac1{\|g\|}\langle g,g\rangle_{L^2((X\times X)^2,\mu^2)}=\|g\|.\]
\end{proof}

\begin{theorem}
Let $\U:\YY\to\R$ be Lipschitz continuous, semiconcave and strongly differentiable in  $\X\in\YY$. Assume that the ambient gradient
$\Grad\U(\X)$  lies in $\mathring T_\X\YY$ or, in other words, assume that $\tau_\X^\YY(g)>0$ for $g=\Grad\U(\X)$. Then
\[\Grad\U(\X)=\nabla^\YY \U(\X).\]
Here $\nabla^\YY$ denotes the gradient in the sense of Alexandrov geometry as introduced e.g. in section 4.3, see
\cite{Pl}.
%In particular, the ambient gradient $g$ is uniquely determined.
\end{theorem}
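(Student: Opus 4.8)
The plan is to show that the ambient gradient $g = \Grad\U(\X)$, once it is known to lie in $\mathring T_\X\YY$, satisfies the two defining properties of the Alexandrov gradient $\nabla^\YY\U(\X)$: namely that its norm equals the ascending slope $\aslope{\X}$, and that it realizes the slope along its own direction, i.e. $D_\Phi\U = \aslope{\X}$ where $\Phi$ is the unit direction of $g$. By the uniqueness part of the Lemma on the structure of the Alexandrov gradient (the one following the definition of $|D^+\U|$), these two properties pin down $\nabla^\YY\U(\X)$, so establishing them for $g$ gives the identity.

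First I would record what is already available. From the preceding Lemma, $\|g\| = \sup\{D_h\U(\X) : \|h\|=1,\ h\in\TT_\X\}$, and from the earlier Lemma on slopes and directional derivatives, $\aslope{\X} = \sup\{D_\Phi\U : \Phi\in T_\X\ol\XX,\ \|\Phi\|=1\}$ — with the analogous statement on $\YY$. The key linking fact is the previous Theorem (the one on tangent spaces and cones): for $g\in\TT_\X$ with $\tau_\X^\YY(g)>0$ we have $d_\X^\YY(g,0) = \|g\|_{L^2(X^2,\m^2)} = d_\X^\TT(g,0)$, and more generally $d_\X^\YY(g,h) \le d_\X^\TT(g,h)$ for $g,h\in\mathring T_\X\YY$. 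So the geodesic directions sit inside $\TT_\X$ isometrically (for their lengths) and 1-Lipschitzly (for their mutual distances). Since $\U$ is semiconcave, $D_\Phi\U$ is defined and continuous on all of $T_\X\YY$, and along a geodesic direction $\Phi = (\EExp_\X(sg))_{s\in[0,\tau)}$ realized by $g\in\TT_\X$ one has $D_\Phi\U = D_g\U(\X)$ by definition of both sides (the Alexandrov directional derivative is the $t\searrow 0$ limit of $\frac1t[\U(\X_t)-\U(\X_0)]$, which is exactly the strong-differentiability directional derivative $D_g\U(\X)$ when $\X_t = \EExp_\X(tg)$).

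Next, the heart of the argument: I claim $\aslope{\X} = \|g\|$. The inequality $\aslope{\X} \ge \|g\|$ follows because $\Phi_g$, the direction of $g$, is a genuine geodesic direction (using $\tau_\X^\YY(g)>0$), it has unit norm after normalization, and $D_{\Phi_g}\U = D_g\U(\X) = \langle g, g/\|g\|\rangle = \|g\|$ by the strong-differentiability identity with the diagonal self-coupling; so $\aslope{\X} = \sup_\Phi D_\Phi\U \ge \|g\|$. For the reverse inequality $\aslope{\X} \le \|g\|$: any unit geodesic direction $\Phi\in\mathring T_\X^1\YY$ is represented by some $h\in\TT_\X$ with $\|h\|_{L^2}=1$ (by the cited Theorem, the $\YY$-norm and the $\TT$-norm agree on such directions), and then $D_\Phi\U = D_h\U(\X) = \langle g, h\rangle_{L^2((X\times X')^2,\mu^2)} \le \|g\|\cdot\|h\| = \|g\|$ for any admissible coupling $\mu$; taking the sup over $\Phi$ and then the completion (continuity of $D_\bullet\U$ and density of $\mathring T_\X^1\YY$ in $T_\X^1\YY$) gives $\aslope{\X}\le\|g\|$. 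Combining, $\aslope{\X} = \|g\| = \|\nabla^\YY\U(\X)\|$, and moreover the supremum defining $\aslope{\X}$ is attained at $\Phi_g$ with value $\|g\|$; by the uniqueness clause of the slope Lemma this forces $\nabla^\YY\U(\X) = \|g\|\cdot\Phi_g = g$.

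I expect the main obstacle to be bookkeeping around the passage to the completion $T_\X^1\YY$ of the space of geodesic directions: the ambient-gradient identity $D_h\U(\X) = \langle g,h\rangle_{\mu^2}$ is formulated for $h\in\TT_\X$ with a choice of coupling $\mu$ leaving $\f$ invariant, whereas the Alexandrov slope is a sup over $T_\X^1\YY$, so one must check that every element of the abstract completion is approximated in angle by geodesic directions whose $\TT_\X$-representatives have controlled norm, and that $D_\bullet\U$ extends continuously compatibly — this is exactly where the $1$-Lipschitz comparison $d_\X^\YY\le d_\X^\TT$ from the previous Theorem (ultimately Proposition \ref{alex-sym}) is needed, and where one should be careful that the inequality goes the right way. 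A secondary point to verify cleanly is that semiconcavity of $\U$ on $\YY$ (not merely on $\ol\XX$) is what licenses the existence and continuity of $D_\Phi\U$ and the uniqueness of the maximizing direction; this is asserted in the slope Lemma and its preamble, so it can be invoked, but one should state explicitly that these results apply to $(\YY,\DD)$ since it too is a complete geodesic space of nonnegative (hence lower bounded) curvature by Theorem \ref{YY-geo}.
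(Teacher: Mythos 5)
Your proposal is correct and follows essentially the same route as the paper: the paper also argues that $g_1=g/\|g\|$ maximizes $h\mapsto D_h\U(\X)$ over the unit sphere of $\TT_\X$ (which is your two-sided inequality $\aslope{\X}=\|g\|$ via Cauchy--Schwarz and the self-pairing of $g$), hence in particular over $\mathring T^1_\X\YY$ once $\tau^\YY_\X(g)>0$ places it there, and then invokes the characterization of the Alexandrov gradient as the unique norm-times-maximizing-direction together with the norm identity from the preceding Lemma. Your extra care about passing from geodesic directions to their completion $T^1_\X\YY$ via continuity of $D_\bullet\U$ is a point the paper leaves implicit, but it does not change the argument.
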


\begin{proof} Put $g=\Grad\U(\X)$.
According to the argumentation in the proof of the previous Lemma,
$g_1=\frac1{\|g\|}g$ is the maximizer of
\[h\mapsto D_h\U(\X)\]
in $\TT_\X^1$. Therefore, assuming that $g\in \mathring T_\X\YY$, the normalized $g_1$  in particular is the maximizer
of $h\mapsto D_h\U(\X)$
in $\mathring T^1_\X\YY$.
Up to a multiplicative constant, this already characterizes the gradient of $\U$ at $\X$ in the sense of Alexandrov geometry. The previous Lemma finally yields the equivalence of the norms (= lengths of tangent vectors) in both spaces.
\end{proof}

\begin{corollary}
Let $\U:\YY\to\R$ be defined on all of $\YY$ and assume that its restriction to $\ol\XX$ is Lipschitz continuous and semiconcave. Assume furthermore that $\U$ is strongly differentiable in  $\X\in\ol\XX$ and  that the ambient gradient
$\Grad\U(\X)$  lies in $\mathring T_\X\ol\XX$. Then
\[\Grad\U(\X)=\nabla^{\ol\XX} \U(\X).\]
\end{corollary}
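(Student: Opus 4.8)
The plan is to run the argument of the preceding theorem inside $\ol\XX$, exploiting that the Alexandrov gradient $\nabla^{\ol\XX}\U(\X)$ depends only on the restriction $\U|_{\ol\XX}$, which by hypothesis is Lipschitz continuous and semiconcave on the complete geodesic Alexandrov space $(\ol\XX,\DD)$ (Corollary \ref{olisgeo}). Fix a representative $(X,\f,\m)$ of $\X$ and write $g=\Grad\U(\X)\in L^2_s(X^2,\m^2)$. Since by assumption $g\in\mathring T_\X\ol\XX$, i.e.\ $\tau_\X^{\ol\XX}(g)>0$, the curve $s\mapsto\EExp_\X(sg)$ is a geodesic in $\ol\XX$ for small $s$; hence $g$ is a genuine geodesic direction in $\ol\XX$, and by the corollary of the previous subsection identifying $d_\X^{\ol\XX}$ with $d_\X^{\YY}$ on $\mathring T_\X\ol\XX$ its length in the tangent cone $T_\X\ol\XX$ equals $\|g\|_{L^2(X^2,\m^2)}$.

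First I would use strong differentiability: for every $h\in\mathring T_\X\ol\XX\subset\TT_\X$ one has $D_h\U(\X)=\langle g,h\rangle_{L^2((X\times X')^2,\mu^2)}$ for any admissible coupling $\mu$, so by the argument in the proof of the lemma on ambient gradients the normalized vector $g_1:=g/\|g\|$ maximizes $h\mapsto D_h\U(\X)$ over the unit sphere $\TT_\X^1$, with value $\|g\|$. A fortiori $g_1$ maximizes this functional over the smaller set $\mathring T_\X^1\ol\XX$ — to which $g_1$ itself belongs — and by continuity of $\Phi\mapsto D_\Phi\U(\X)$ on $T_\X\ol\XX$ (established in the section on directions and gradients) it even maximizes over the completion $T_\X^1\ol\XX$, the supremum still being $\|g\|$. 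Consequently $|D^+\U(\X)|=\|g\|$ and $g_1$ is the unique unit direction realizing the ascending slope, uniqueness coming from \eqref{uniquephi}; the defining formula for the Alexandrov gradient then yields $\nabla^{\ol\XX}\U(\X)=|D^+\U(\X)|\cdot g_1=\|g\|\cdot g_1=g=\Grad\U(\X)$, the case $g=0$ being trivial since then $\X$ is critical for $\U$.

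The only substantive point — and the reason this does not follow word for word from the theorem — is that restricting from $\YY$ to its closed convex subset $\ol\XX$ shrinks the set of admissible directions, so that a priori the maximizer of $h\mapsto D_h\U(\X)$ over $\TT_\X^1$ (equivalently over $\mathring T_\X^1\YY$) might fail to lie in $\mathring T_\X^1\ol\XX$; the hypothesis $\Grad\U(\X)\in\mathring T_\X\ol\XX$ is precisely what guarantees that the global maximizer already lies in the smaller set and hence maximizes there as well. The remaining items — that $\EExp_\X(sh)$ stays in $\ol\XX$ for the directions $h$ involved, so that control of $\U$ on $\ol\XX$ alone suffices, and that $\ol\XX$ is totally geodesic in $\YY$ (Corollary \ref{olisgeo}), so that the notions of geodesic in $\ol\XX$ and in $\YY$ coincide along these curves — are routine consequences of results already established.
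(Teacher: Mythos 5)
Your argument is correct and is essentially the route the paper intends: the paper states this corollary without a separate proof, relying on the same reasoning as the preceding theorem, namely that the global maximizer $g_1=g/\|g\|$ of $h\mapsto D_h\U(\X)$ over $\TT_\X^1$ lies, by hypothesis, already in $\mathring T_\X^1\ol\XX$, hence maximizes there as well, and the Alexandrov characterization of the gradient together with the norm identification on geodesic directions yields $\nabla^{\ol\XX}\U(\X)=g$. Your explicit identification of the one substantive point --- that restricting to the smaller cone of $\ol\XX$-directions could a priori lose the maximizer, which is exactly what the hypothesis $\Grad\U(\X)\in\mathring T_\X\ol\XX$ rules out --- is a faithful and slightly more careful rendering of the paper's implicit argument.
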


\section{Semiconvex Functions on $\YY$ and their Gradients}

\subsection{Polynomials on $\YY$ and their Derivatives}

A striking consequence of the detailed knowledge of the geometry of $\YY$ is that for major classes of functions on $\YY$ one can explicitly calculate sharp bounds for derivatives of any order. Of particular interest will be bounds for first and second derivatives.

An important class of `smooth' functions on $\YY$ is given by \emph{polynomials} of order $n\in\N$. These are functions $\U:\YY\to\R$ of the form
\begin{equation}
\label{U}
\U(\X)=\int_{X^n}u\bigg( \Big( \d(x^i,x^j)\Big)_{1\le i<j\le n}\bigg)d\m^n(x)
\end{equation}
for $\X=\auf X,\d,\m\zu$
where $u:\R^{\frac{n(n-1)}2} \to\R$ is any  Borel function  which grows at most quadratically.  Mostly, $u$ will be differentiable with bounded derivatives of any order. For our purpose, derivatives of order 1 and 2 are sufficient. Here and in the sequel, $\m^n=\m\otimes\ldots\otimes \m$ denotes the $n$-fold product of $\m$ and $x=(x^1,\ldots,x^n)\in X^n$ whereas $\xi=(\xi_{ij})_{1\le i<j\le n}\in\R^{\frac{n(n-1)}2}$. Deviating from the convention of the previous Chapter, the gauge function will be denoted by $\d$. In most cases of application, indeed, it will be a pseudo metric.

Note that all these functions $\U$ are functions of homomorphism classes,
% (A detailed proof of this fact can e.g. deduced from the first assertion of Theorem \ref{thm U} below.)
i.e. the definition of $\U(\X)$ does not depend on the choice of the representative $(X,\d,\m)$ of $\X$, see Proposition \ref{reconstruct}.
Moreover, it might be worthwhile to mention that the set of polynomials of any order  separates points in $\XX$ (\cite{Gro}, cf. also  \cite{gpw}, Prop. 2.6)

Recall that each geodesic $\geod{0}{1}$ in $\YY$ can  be represented as
\begin{equation}\label{gerep}
	\X_t= \auf X_0\times X_1, \d_0+ t(\d_1-\d_0),\ol\m\zu
\end{equation}for given representatives of  $\X_0$ and $\X_1$ and a suitable choice of $\ol\m\in\Opt(\m_0,\m_1)$.
Thus $\U$ is represented along the geodesic $\geod{0}{1}$ as
\begin{equation}\label{formulaU}
\U\Big(\X_t\Big)=\int_{(X_0\times X_1)^n}u\bigg( \Big( \d_0(x_0^i,x_0^j)+t\big(\d_1(x_1^i,x_1^j)-\d_0(x_0^i,x_0^j)\big)\Big)_{1\le i<j\le n}\bigg)d\ol\m^n(x_0,x_1)
\end{equation}
where $(x_0,x_1)$ now stands for the $n$-tuple $((x_0^i,x_1^i))_{1\le i\le n}$ of points $(x_0^i,x_1^i)\in X_0\times X_1$.

\begin{lemma}\label{derivf}
	Assume that $\U:\YY\rightarrow \R$ is given by formula \eqref{U} with
	$u\in C^2(\R^{\frac{n(n-1)}2},\R_+)$ with bounded derivatives.
				Then for each  geodesic $\geod{0}{1}$ in $\YY$ (represented	as in \eqref{gerep}):
			\begin{eqnarray*}
									\frac{d}{dt} \U(\X_t)&= &
						\sum_{1\le i<j\le n}\int_{(X_0\times X_1)^n} \frac{\partial}{\partial \xi_{ij}}
u\bigg( \Big( \d_0(x_0^p,x_0^q)+t\big(\d_1(x_1^p,x_1^q)-\d_0(x_0^p,x_0^q)\big)\Big)_{1\le p<q\le n}\bigg)
\cdot\\
&&\qquad\qquad\qquad\qquad\cdot\bigg(\d_1(x_1^i,x_1^j)-\d_0(x_0^i,x_0^j)\bigg)
d\ol\m^n(x_0,x_1)
\end{eqnarray*}
and
\begin{eqnarray*}
									\frac{d^2}{dt^2} \U(\X_t)&= &
					\sum_{1\le k<l\le n}	\sum_{1\le i<j\le n}\int_{(X_0\times X_1)^n} \frac{\partial}{\partial \xi_{kl}}\frac{\partial}{\partial \xi_{ij}}
u\bigg( \Big( \d_0(x_0^p,x_0^q)+t\big(\d_1(x_1^p,x_1^q)-\d_0(x_0^p,x_0^q)\big)\Big)_{1\le p<q\le n}\bigg)
\cdot\\
&&\qquad\qquad\qquad\qquad\cdot\bigg(\d_1(x_1^i,x_1^j)-\d_0(x_0^i,x_0^j)\bigg)\cdot\bigg(\d_1(x_1^k,x_1^l)-\d_0(x_0^k,x_0^l)\bigg)
d\ol\m^n(x_0,x_1)
\end{eqnarray*}
			for all $t\in (0,1)$ and as a right limit for $t=0$.
		\end{lemma}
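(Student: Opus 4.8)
The plan is to differentiate under the integral sign in the representation \eqref{formulaU}, justified by dominated convergence using the boundedness of the derivatives of $u$ together with the square integrability of the gauge functions. First I would fix a geodesic $\geod{0}{1}$ in $\YY$ and, by Theorem~\ref{YY-geo}(iii), choose an optimal coupling $\ol\m\in\Opt(\m_0,\m_1)$ so that $\X_t$ is represented on $(X_0\times X_1)^n$ with gauge $\d_t=\d_0+t(\d_1-\d_0)$. Writing $\xi_{ij}(t;x_0,x_1)=\d_0(x_0^i,x_0^j)+t\big(\d_1(x_1^i,x_1^j)-\d_0(x_0^i,x_0^j)\big)$ and $\Delta_{ij}(x_0,x_1)=\d_1(x_1^i,x_1^j)-\d_0(x_0^i,x_0^j)$, the integrand in \eqref{formulaU} is $F(t;x_0,x_1)=u\big((\xi_{ij}(t;x_0,x_1))_{i<j}\big)$, a $C^2$ function of $t$ for each fixed $(x_0,x_1)$, with
\[
\frac{\partial}{\partial t}F=\sum_{1\le i<j\le n}\frac{\partial u}{\partial\xi_{ij}}\big((\xi_{pq}(t))_{p<q}\big)\cdot\Delta_{ij},
\qquad
\frac{\partial^2}{\partial t^2}F=\sum_{1\le k<l\le n}\sum_{1\le i<j\le n}\frac{\partial^2 u}{\partial\xi_{kl}\partial\xi_{ij}}\big((\xi_{pq}(t))_{p<q}\big)\cdot\Delta_{ij}\cdot\Delta_{kl}
\]
by the chain rule, since $\partial_t\xi_{ij}=\Delta_{ij}$.

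The key step is to verify the hypotheses of the standard theorem on differentiating an integral depending on a parameter. For the first derivative: $\partial_t F$ is dominated in absolute value by $\|\nabla u\|_\infty\sum_{i<j}|\Delta_{ij}(x_0,x_1)|$, and each $\Delta_{ij}$ depends only on the pair $(x_0^i,x_1^i),(x_0^j,x_1^j)$, so its $\ol\m^n$-integral equals $\int\int|\d_1(x_1,y_1)-\d_0(x_0,y_0)|\,d\ol\m(x_0,x_1)\,d\ol\m(y_0,y_1)$, which is finite because $\d_0,\d_1\in L^2$ (indeed bounded by $\size(\X_0)+\size(\X_1)$ in $L^1$, by the triangle inequality in $L^2$ and the Cauchy--Schwarz estimate $L^1\le L^2$). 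Hence the dominating function is in $L^1(\ol\m^n)$, uniformly in $t\in[0,1]$, and differentiation under the integral is legitimate; the same argument gives continuity of $t\mapsto\frac{d}{dt}\U(\X_t)$ and in particular the right-limit assertion at $t=0$. For the second derivative the domination is by $\|\nabla^2 u\|_\infty\sum_{i<j}\sum_{k<l}|\Delta_{ij}||\Delta_{kl}|$; here each summand involves at most four of the $n$ points, so its $\ol\m^n$-integral is a product (over the disjoint index blocks) of integrals of $|\Delta|$ and $|\Delta|^2$ against $\ol\m$ or $\ol\m^2$, all finite since $\d_0,\d_1\in L^2(\m_0^2)$ resp.\ $L^2(\m_1^2)$ and thus $|\Delta|^2\le 2\d_0^2+2\d_1^2\in L^1(\ol\m^2)$. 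So again the dominating function is $\ol\m^n$-integrable uniformly in $t$, and we may differentiate twice under the integral.

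Collecting terms gives exactly the two displayed formulas once we re-expand $\xi_{pq}(t)=\d_0(x_0^p,x_0^q)+t\big(\d_1(x_1^p,x_1^q)-\d_0(x_0^p,x_0^q)\big)$ and $\Delta_{ij}=\d_1(x_1^i,x_1^j)-\d_0(x_0^i,x_0^j)$, $\Delta_{kl}=\d_1(x_1^k,x_1^l)-\d_0(x_0^k,x_0^l)$. I would also note that the answer is independent of the chosen representatives and of the chosen optimal coupling insofar as the \emph{value} $\U(\X_t)$ is concerned (Proposition~\ref{reconstruct}); the derivative formulas as stated are expressed in terms of a fixed such choice, which is all that is claimed. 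The main obstacle is purely the integrability bookkeeping---making sure the dominating functions really lie in $L^1(\ol\m^n)$ uniformly in $t$, which hinges on the observation that each $\Delta_{ij}$ factor touches only two coordinates so that the multi-fold product measure $\ol\m^n$ factorizes over the relevant blocks and reduces everything to the $L^2$-integrability of $\d_0$ and $\d_1$ that is built into the definition of a gauged measure space.
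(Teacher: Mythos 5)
Your proposal is correct and follows exactly the route the paper takes: differentiate under the integral in the representation \eqref{formulaU} via the chain rule, with the interchange justified by the boundedness of the derivatives of $u$ and the $L^2$-integrability of the gauges (the paper states this in one line as a "straightforward consequence"; your domination estimates simply make that explicit). The only minor imprecision is that for the second derivative the $\ol\m^n$-integral of $|\Delta_{ij}||\Delta_{kl}|$ does not literally factor when the index pairs overlap in one coordinate, but Cauchy--Schwarz handles that case with the same $L^2$ bounds, so the argument stands.
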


\begin{proof}
	These formulae are straightforward consequences of the representation of \eqref{formulaU}: interchanging order of differentiation (w.r.t.\ $t$) and integration
	(w.r.t.\ $x_0^i,x_1^i$) and application of chain rule.
\end{proof}

Note that at $t=0$ the previous formulas simplify, e.g.
\begin{eqnarray*}
									\frac{d}{dt} \U(\X_t)\big|_{t=0}&= &
						\sum_{1\le i<j\le n}\int_{(X_0\times X_1)^n} \frac{\partial}{\partial \xi_{ij}}
u\bigg( \Big( \d_0(x_0^p,x_0^q)\Big)_{1\le p<q\le n}\bigg)
\cdot\bigg(\d_1(x_1^i,x_1^j)-\d_0(x_0^i,x_0^j)\bigg)
d\ol\m^n(x_0,x_1).
\end{eqnarray*}

\begin{theorem}\label{thm U} Let $n\in\N$ as well as numbers $\lambda,\kappa\in\R$ be given and let $u:\R^{\frac{n(n-1)}2}\to\R$ be continuous and bounded (or with at most quadratic growth).
\begin{enumerate}
\item
If $u$ is  $\lambda$-Lipschitz continuous on $\R^{\frac{n(n-1)}2}$ then $\U$ is  $\lambda'$-Lipschitz continuous on $\YY$ for $\lambda'=\lambda\cdot \frac{n(n-1)}2$.
\item
If $u$ is $\kappa$-convex  on $\R^{\frac{n(n-1)}2}$ then $\U$ is $\kappa'$-convex on $\YY$ for $\kappa'=\kappa\cdot \frac{n(n-1)}2$.
\end{enumerate}
\end{theorem}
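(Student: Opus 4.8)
\textbf{Proof plan for Theorem \ref{thm U}.}
The plan is to reduce both statements to the corresponding statements for the finite-dimensional function $u$ on $\R^{n(n-1)/2}$ by working along a single geodesic in $\YY$ and using the explicit representations from Lemma \ref{derivf}. Throughout, fix a geodesic $\geod{0}{1}$ in $\YY$ and use the representation \eqref{gerep}, namely $\X_t=\auf X_0\times X_1,\d_0+t(\d_1-\d_0),\ol\m\zu$ with $\ol\m\in\Opt(\m_0,\m_1)$; recall from Theorem \ref{YY-geo} that \emph{every} geodesic is of this form, so it suffices to treat this case.

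For part (i), first I would handle the case where $u$ is additionally $C^1$ with bounded derivative, so that Lemma \ref{derivf} applies. Along the geodesic, the derivative formula gives
\[
\Big|\tfrac{d}{dt}\U(\X_t)\Big|\le \sum_{1\le i<j\le n}\int_{(X_0\times X_1)^n}\Big|\tfrac{\partial}{\partial\xi_{ij}}u(\cdots)\Big|\cdot\big|\d_1(x_1^i,x_1^j)-\d_0(x_0^i,x_0^j)\big|\,d\ol\m^n.
\]
Bounding each $|\partial u/\partial\xi_{ij}|$ by $\lambda$ (which follows from $\lambda$-Lipschitz continuity, via the gradient bound $|\nabla u|\le\lambda$), applying Cauchy--Schwarz in $L^2(\ol\m^n)$ to each summand, and recognizing that for a single pair $(i,j)$ the factor $\int\int|\d_1(x_1^i,x_1^j)-\d_0(x_0^i,x_0^j)|^2\,d\ol\m^2$ (integrating out the other coordinates) equals exactly $\DD(\X_0,\X_1)^2$ since $\ol\m$ is optimal, one gets $|\tfrac{d}{dt}\U(\X_t)|\le \lambda\cdot\tfrac{n(n-1)}{2}\cdot\DD(\X_0,\X_1)$. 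Integrating over $t\in[0,1]$ and noting that $\DD(\X_0,\X_1)$ is the distance between the endpoints of the geodesic (and that geodesics realize distances) yields $|\U(\X_0)-\U(\X_1)|\le\lambda'\DD(\X_0,\X_1)$ with $\lambda'=\lambda n(n-1)/2$. For general $\lambda$-Lipschitz $u$ (only continuous), I would mollify: approximate $u$ by $u_\varepsilon=u*\eta_\varepsilon$, which is smooth, still $\lambda$-Lipschitz, and converges to $u$ locally uniformly; the induced $\U_\varepsilon$ then converge pointwise on $\YY$ (dominated convergence, using the quadratic growth control and square-integrability of the gauges), and the Lipschitz bound passes to the limit.

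For part (ii), the same strategy applies using the second-derivative formula from Lemma \ref{derivf}. Assuming first $u\in C^2$ with bounded derivatives and $\kappa$-convex, i.e. $\mathrm{Hess}\,u\ge\kappa\,\mathrm{Id}$ pointwise, I would apply this pointwise Hessian bound to the integrand of $\tfrac{d^2}{dt^2}\U(\X_t)$ with the vector $v=(v_{ij})$, $v_{ij}:=\d_1(x_1^i,x_1^j)-\d_0(x_0^i,x_0^j)$: the double sum is exactly $v^{\mathsf T}\,\mathrm{Hess}\,u(\cdots)\,v\ge\kappa|v|^2=\kappa\sum_{i<j}v_{ij}^2$. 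Integrating and again using that each diagonal term contributes $\DD(\X_0,\X_1)^2$ by optimality of $\ol\m$ gives $\tfrac{d^2}{dt^2}\U(\X_t)\ge\kappa\cdot\tfrac{n(n-1)}{2}\cdot\DD^2(\X_0,\X_1)$ in the distributional sense on $(0,1)$, which by the remark after the definition of $\kappa$-convexity is precisely $\kappa'$-convexity of $\U$ with $\kappa'=\kappa n(n-1)/2$. The passage to merely continuous $\kappa$-convex $u$ is again by mollification: $u*\eta_\varepsilon$ remains $\kappa$-convex and converges to $u$, and $\kappa$-convexity of $\U$ (an inequality involving only values of $\U$ along geodesics) is stable under pointwise limits.

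The main obstacle I anticipate is not conceptual but a matter of care in two places: (a) verifying that the diagonal integral $\int\int |v_{ij}|^2\,d\ol\m^2$ for a fixed pair equals $\DD^2(\X_0,\X_1)$ rather than merely being bounded by it --- this is where optimality of $\ol\m$ (Definition of $\Opt$, together with the fact that $\ol\m$ serves simultaneously as the measure realizing $\DD$) is essential, and one must be sure the off-diagonal cross terms are controlled correctly by Cauchy--Schwarz rather than naively bounded; and (b) justifying the mollification limits, in particular the pointwise convergence $\U_\varepsilon\to\U$ when $u$ has only quadratic growth, which requires a uniform-integrability argument for $\d_0^2+\d_1^2$ under $\ol\m^n$ (this is where square-integrability of gauges and the product structure of $\ol\m^n$ come in). Neither is deep, but both need to be written out carefully.
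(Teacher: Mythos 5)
Your plan coincides with the paper's own proof: both parts are obtained by applying Lemma \ref{derivf} along a geodesic in the representation \eqref{gerep}, bounding $|\partial u/\partial\xi_{ij}|$ by $\lambda$ (resp.\ using the pointwise Hessian bound $\ge\kappa\,\mathrm{Id}$ on the vector $v_{ij}=\d_1-\d_0$), applying Cauchy--Schwarz, and using optimality of $\ol\m$ so that each pair $(i,j)$ contributes exactly $\DD^2(\X_0,\X_1)$, with non-smooth $u$ handled by approximation. The only cosmetic difference is that the paper additionally records direct, approximation-free variants of both arguments by applying the Lipschitz/convexity inequality for $u$ pointwise to the integrand of \eqref{formulaU}, but the main line of reasoning is identical to yours.
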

\begin{proof}
(i) Approximating $u$ by $u_k\in\mathcal C^2$ (with bounded derivatives), we may apply the estimates of the previous Lemma. Thus for any geodesic $(\X_t)_t$ in $\YY$
\begin{eqnarray*}
\Big|\frac{d}{dt} \U(\X_t)\Big|&\le &
\lambda\cdot
\sum_{1\le i<j\le n}\int_{(X_0\times X_1)^n}\Big|\d_1(x_1^i,x_1^j)-\d_0(x_0^i,x_0^j)\Big|
d\ol\m^n(x_0,x_1)\\
&\le&
\lambda\cdot
\sum_{1\le i<j\le n}\bigg(\int_{X_0\times X_1}\int_{X_0\times X_1}\Big|\d_1(x_1^i,x_1^j)-\d_0(x_0^i,x_0^j)\Big|^2
d\ol\m(x_0^i,x_1^i)d\ol\m(x_0^j,x_1^j)\bigg)^{1/2}\\
&=&
\lambda\cdot \frac{n(n-1)}2\cdot \DD(\X_0,\X_1).
\end{eqnarray*}
Since $\DD(X_0,\X_1)$ is the speed of the geodesic $(\X_t)_t$, this implies
\[\Lip \U\le \lambda\cdot \frac{n(n-1)}2.\]

(i')
A more direct proof, avoiding any approximation argument, is based on the explicit representation formula
(\ref{formulaU}).
It immediately yields
\begin{eqnarray*}
|\U(\X_1)-\U(\X_0)|&\le&
\int_{(X_0\times X_1)^n}\bigg|u\bigg( \Big(\d_1(x_1^i,x_1^j)\Big)_{1\le i<j\le n}\bigg)-u\bigg(\Big(\d_0(x_0^i,x_0^j)\Big)_{1\le i<j\le n}\bigg)\bigg|\,d\ol\m^n(x_0,x_1)\\
&\le&
\lambda\cdot\bigg(\int_{(X_0\times X_1)^n}
\bigg| \Big(\d_1(x_1^i,x_1^j)-\d_0(x_0^i,x_0^j)\Big)_{1\le i<j\le n}\bigg|^2
\,d\ol\m^n(x_0,x_1)\bigg)^{1/2}\\
&=&
\frac{n(n-1)}2\lambda\cdot\bigg(\int_{(X_0\times X_1)^2}
\bigg| \d_1(x_1^1,x_1^2)-\d_0(x_0^1,x_0^2)\bigg|^2
\,d\ol\m^2(x_0,x_1)\bigg)^{1/2}\\
&=&
\frac{n(n-1)}2\lambda\cdot\DD\big(\X_1,\X_0\big).
\end{eqnarray*}

(ii) Recall that for smooth $u$, $\kappa$-convexity is equivalent to
\[
\sum_{1\le k<l\le n}	\sum_{1\le i<j\le n} \frac{\partial}{\partial \xi_{kl}}\frac{\partial}{\partial \xi_{ij}}
u(\xi)\cdot V_{ij}\cdot V_{kl}\ge
\kappa\cdot \sum_{1\le i<j\le n} |V_{ij}|^2\qquad(\forall \xi, V\in\R^{\frac{n(n-1)}2}).\]
Thus, similarly to the previous argumentation, Lemma \ref{derivf} in the case of $\kappa$-convex $u$ now yields
\begin{eqnarray*}
\frac{d^2}{dt^2} \U(\X_t)&\ge &
\kappa\cdot
\sum_{1\le i<j\le n}\int_{(X_0\times X_1)^n}\Big|\d_1(x_1^i,x_1^j)-\d_0(x_0^i,x_0^j)\Big|^2
d\ol\m^n(x_0,x_1)\\
&=&
\kappa\cdot \frac{n(n-1)}2\cdot \DD^2(\X_0,\X_1).
\end{eqnarray*}
This proves the claim.

(ii')
Again, a more direct proof (without approximation) is possible, based on (\ref{formulaU}). It implies
\begin{eqnarray*}
\lefteqn{\U(\X_t)-t\U(\X_1)-(1-t)\U(\X_0)}\\
&=&
\int_{(X_0\times X_1)^n}\bigg[
u\bigg(
\Big( t\,\d_1(x_1^i,x_1^j)+(1-t)\,\d_0(x_0^i,x_0^j)\Big)_{1\le i<j\le n}
\bigg)\\
&&\qquad\quad
-t\,u\bigg( \Big(\d_1(x_1^i,x_1^j)\Big)_{1\le i<j\le n}\bigg)-(1-t)\,u\bigg(\Big(\d_0(x_0^i,x_0^j)\Big)_{1\le i<j\le n}\bigg)\bigg]\,d\ol\m^n(x_0,x_1)\\
&\le&
-\frac\kappa2\cdot t(1-t)\cdot\int_{(X_0\times X_1)^n}
\bigg| \Big(\d_1(x_1^i,x_1^j)-\d_0(x_0^i,x_0^j)\Big)_{1\le i<j\le n}\bigg|^2
\,d\ol\m^n(x_0,x_1)\\
&=&-\frac\kappa2\cdot t(1-t)\cdot
\frac{n(n-1)}2\cdot\int_{(X_0\times X_1)^2}
\bigg| \d_1(x_1^1,x_1^2)-\d_0(x_0^1,x_0^2)\bigg|^2
\,d\ol\m^2(x_0,x_1)\\
&=&
-\frac\kappa2\cdot \frac{n(n-1)}2\cdot t(1-t)\cdot\DD^2\big(\X_1,\X_0\big).
\end{eqnarray*}
This proves the $\kappa'$-convexity of $\U$ for $\kappa'=\kappa\cdot \frac{n(n-1)}2$.
\end{proof}

\begin{remark}
The formulas in Lemma \ref{derivf} for derivatives of $t\mapsto\U(\X_t)$ not only hold for geodesics $(\X_t)_{t\in[0,1]}$ but for all curves $(\X_t)_{t\ge0}$ in $\YY$ induced by exponential maps:
\[\X_t=\EExp_\X(tg)\qquad\text{for some }g\in\TT_\X.\]
For instance, the directional derivative of $\U$ at $\X=\auf X,\d,\m\zu$ in direction $g\in L^2_s(X^2,\m^2)$  is given by
	\begin{eqnarray}
									D_g\U(\X)&= &
						\sum_{1\le i<j\le n}\int_{X^n} \frac{\partial}{\partial \xi_{ij}}
u\Big( \big( \d(x^p,x^q)\big)_{1\le p<q\le n}\Big)
\cdot g(x^i,x^j)\,
d\m^n(x)
\end{eqnarray}
This  leads to an explicit representation formula for the ambient gradient of $\U$ at $\X$.
\end{remark}
To this end, given $u$ and $(X,\d,\m)$ as above,
put
\begin{equation}
u^\d_{ij}(x)=\frac{\partial}{\partial \xi_{ij}}
u\Big( \big( \d(x^p,x^q)\big)_{1\le p<q\le n}\Big),
\end{equation}
for $x=(x^1,\ldots,x^n)\in X^n$.
%, $y,z\in X$ and indices $i,j\in\{1,\ldots,n\}$,
%\begin{equation*}
%u^\d_{ij}(x,y,z)=
%u_{ij}^\d\Big(
%x^1,\ldots,x^{i-1},y,x^{i+1},\ldots, x^{j-1},z,x^{j+1},\ldots,x^{n}\Big)
%\end{equation*}
%for $i<j$, $u^\d_{ij}(x,y,z)=u^\d_{ji}(x,z,y)$ for $i>j$ and
%\begin{equation*}
%u^\d_{ii}(x,y,z)=
%u_{ii}^\d\Big(
%x^1,\ldots,x^{i-1},y,x^{i+1},\ldots, x^{n}\Big).
%\end{equation*}

\begin{theorem}\label{grad U}
The ambient gradient $\Grad\U(X)$ of the function $\U$  at the point $\X=\auf X,\d,\m\zu\in\YY$  is the function $f\in L^2_s(X^2,\m^2)$ given by
$f(y,z)=\frac12	\tilde	f(y,z)+\frac12\tilde f(z,y)$ with
 \begin{eqnarray*}
			\tilde f(y,z)&=&				
						\sum_{1\le i<j\le n}\int_{X^{n-2}}
u_{ij}^\d\Big(
x^1,\ldots,x^{i-1},y,x^{i+1},\ldots, x^{j-1},z,x^{j+1},\ldots,x^{n}\Big)
\\
&&\qquad\qquad\qquad\qquad\qquad
d\m^{n-2}(x^1,\ldots,x^{i-1},x^{i+1},\ldots, x^{j-1},x^{j+1},\ldots,\ldots,x^n).
\end{eqnarray*}
Moreover, $\Grad(-\U)(X)=-\Grad\U(X)$.
\end{theorem}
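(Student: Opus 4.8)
The plan is to compute the directional derivative $D_h\U(\X)$ in closed form and then read off the ambient gradient from its $L^2$-representation. First I would fix a representative $(X,\d,\m)$ of $\X$ and a direction $h\in L^2_s(X^2,\m^2)$, and invoke the remark preceding the theorem (which rests on Lemma~\ref{derivf} applied to the curve $t\mapsto\EExp_{(X,\d,\m)}(th)=\auf X,\d+th,\m\zu$) to obtain
\[
D_h\U(\X)=\sum_{1\le i<j\le n}\int_{X^n}u_{ij}^\d(x)\,h(x^i,x^j)\,d\m^n(x).
\]
Then, for each pair $(i,j)$, I would use Fubini to integrate out the $n-2$ variables $x^k$ with $k\neq i,j$; since $\m$ is a probability measure this leaves exactly the inner integral defining $\tilde f$ in the statement, so that $D_h\U(\X)=\langle\tilde f,h\rangle_{L^2(X^2,\m^2)}$. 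Because $h$ is symmetric, this equals $\langle f,h\rangle_{L^2(X^2,\m^2)}$ for the symmetrized $f(y,z)=\tfrac12\tilde f(y,z)+\tfrac12\tilde f(z,y)$, which is symmetric; and $f\in L^2_s(X^2,\m^2)$ because $u_{ij}^\d$ is bounded (or, under merely quadratic growth of $u$, has at most linear growth in the entries $\d(x^p,x^q)\in L^2(\m^2)$, so one finishes with Minkowski's inequality).

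Next I would upgrade this identity to the full defining property of the ambient gradient, i.e.\ to an arbitrary second representative $(X',\d',\m')$ of $\X$, an arbitrary $\mu\in\Cpl(\m,\m')$ with $\d=\d'$ $\mu^2$-a.e., and $h\in L^2_s(X'^2,\m'^2)$. I would do this by passing to the representative $(X\times X',\bar\d,\mu)$ of $\X$, with $\bar\d\big((x,x'),(y,y')\big)=\d(x,y)=\d'(x',y')$ $\mu^2$-a.e.; then $(X\times X',\mu)$ is an enlargement of both $(X,\m)$ (via $\pi_0$) and $(X',\m')$ (via $\pi_1$), so the class of $h$ in $\TT_\X$ is represented by the pull-back $\tilde h\big((x,x'),(y,y')\big)=h(x',y')$ and that of $f$ by $\hat f\big((x,x'),(y,y')\big)=f(x,y)$. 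Rerunning the first computation on this representative, the point to check is that $\bar\d(\bar x^p,\bar x^q)=\d(x^p,x^q)$ holds $\mu^n$-a.e.\ simultaneously over the finitely many pairs $p<q$ (each $(\bar x^p,\bar x^q)$ has law $\mu\otimes\mu$ under $\mu^n$), whence $u_{ij}^{\bar\d}(\bar x)=u_{ij}^\d(x)$; then when I integrate out the coordinates $\bar x^k$, $k\neq i,j$, the integrand depends on $x^k$ but not on $x'^k$, so those integrations collapse to integrations against $\m$ and reproduce $\tilde f$ exactly. After the same symmetrization this yields $D_h\U(\X)=\langle\hat f,\tilde h\rangle_{L^2((X\times X')^2,\mu^2)}$, which is precisely the pairing in the definition of strong differentiability; hence $\U$ is strongly differentiable at $\X$ with $\Grad\U(\X)=f$.

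Finally, for $\Grad(-\U)(\X)=-\Grad\U(\X)$ I would simply note that the difference quotient defining $D_h(-\U)(\X)$ is the negative of the one for $D_h\U(\X)$, so $D_h(-\U)(\X)=\langle -f,h\rangle$ in the relevant $L^2$-pairing for every admissible $h,\mu$; thus $-f$ satisfies the defining property of the ambient gradient of $-\U$, and uniqueness of ambient gradients (the Lemma in Section~\ref{sec:ambgrad}) gives $\Grad(-\U)(\X)=-f$. I expect the routine analytic steps (interchanging $\tfrac{d}{dt}$ with $\int$, Fubini on $X^n$) to be harmless since they are already packaged in Lemma~\ref{derivf}; the main obstacle will be the bookkeeping on $(X\times X')^n$ in the middle paragraph — tracking precisely which variables the integrand depends on and handling the fact that $\bar\d$ coincides with $\d$ and with $\d'$ only up to $\mu^2$-null sets — together with verifying $f\in L^2$ in the quadratic-growth regime.
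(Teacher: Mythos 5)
Your proposal is correct and follows essentially the same route as the paper: compute $D_h\U$ on a single representative via the directional-derivative formula, apply Fubini to integrate out the $n-2$ spectator variables, symmetrize, and then repeat the computation on the product representative $(X\times X',\bar\d,\mu)$ to verify the pairing for an arbitrary admissible coupling. Your version is in fact slightly more explicit than the paper's on the symmetrization step, the $L^2$-membership of $f$, and the simultaneous $\mu^n$-a.e.\ identification $\bar\d=\d$ over all pairs $(p,q)$, but these are refinements of the same argument rather than a different one.
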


\begin{proof}
For a given representative $(X,\d,\m)$ of $\X$ put
$f$ as above. Now in addition, let  $g\in \TT_\X$ be given. Let us first consider the particular case that $g$ is given on the same representative, i.e. $g\in L^2_s(X^2,\m^2)$.
Then
\begin{eqnarray*}
D_g \U(\X)&= &				
\sum_{1\le i<j\le n}\int_{X^n}
\frac{\partial}{\partial \xi_{ij}}
u\Big( \big( \d(x^p,x^q)\big)_{1\le p<q\le n}\Big)
\cdot g(x^i,x^j)\,
d\m^n(x)\\
&= &	
\sum_{1\le i<j\le n}
\int_{X^n}
 u_{ij}^\d(x^1,\ldots,x^n)
\cdot g(x^i,x^j)\,
d\m^n(x)\\
&= &	
\sum_{1\le i<j\le n}\int_{X^2}\int_{X^{n-2}}
u_{ij}^\d\Big(
x^1,\ldots,x^{i-1},y,x^{i+1},\ldots, x^{j-1},z,x^{j+1},\ldots,x^{n}\Big)\cdot g(y,z)
\\
&&\qquad\qquad\qquad\qquad\qquad
d\m^{n-2}(x^1,\ldots,x^{i-1},x^{i+1},\ldots, x^{j-1},x^{j+1},\ldots,\ldots,x^n)\,d\m^2(y,z)\\
&= &	
\int_{X^2}
f(y,z)
\cdot g(y,z)\,
d\m^2(y,z)\ =\ \langle  f, g\rangle_{L^2( X^2,\m^2)}.
\end{eqnarray*}
Now let us consider the general case: \  $g\in L^2_s(X'^2,\m'^2)$ for some representative $(X',\d',\m')$ of $\X$. Put $\ol X=X\times X'$ and let $\ol\m$ be \emph{any} coupling of $\m$ and $\m'$ such that $\d=\d'$ $\ol\m^2$-a.e. on $\ol X^2$. Choose $\ol\d$ on $\ol X^2$ which coincides a.e. with $\d$ (and $\d'$) and define $\ol f,\ol g\in L^2_s(\ol X^2,\ol\m^2)$ by
$\ol g(\ol y,\ol z)=g(y',z')$ for $\ol y=(y,y'), \ol z=(z,z')\in X\times X'$,
\begin{eqnarray*}
\ol f(\ol y,\ol z)&=&
\sum_{1\le i<j\le n}\int_{\ol X^{n-2}}\frac12\Big[
u_{ij}^{\ol\d}\Big(
\ol x^1,\ldots,\ol x^{i-1},\ol y,\ol x^{i+1},\ldots, \ol x^{j-1},\ol z,\ol x^{j+1},\ldots,\ol x^{n}\Big)\\
&&\qquad\qquad\qquad\quad
+u_{ij}^{\ol\d}\Big(
\ol x^1,\ldots,\ol x^{i-1},\ol z,\ol x^{i+1},\ldots, \ol x^{j-1},\ol y,\ol x^{j+1},\ldots,\ol x^{n}\Big)\Big]\\
\\
&&\qquad\qquad\qquad\qquad\qquad
d\ol{\m}^{n-2}(\ol x^1,\ldots,\ol x^{i-1},\ol x^{i+1},\ldots, \ol x^{j-1},\ol x^{j+1},\ldots,\ldots,\ol x^n).
\end{eqnarray*}
Then
$\ol f(\ol y,\ol z)=f(y,z)$ for $\ol y=(y,y'), \ol z=(z,z')\in X\times X'$
since $\ol\d=\d$ $\ol\m^2$-a.e. on $\ol X^2$.
Repeating the previous calculation with $\ol X,\ol f,\ol g$ and $\ol \m$ in the place of $X,f,g$ and $\m$ yields
\begin{eqnarray}
D_g \U(\X)&= &	\langle \ol f,\ol g\rangle_{L^2(\ol X^2,\ol\m^2)}.
\end{eqnarray}
\end{proof}
\begin{remark}
\begin{itemize}
\item
Polynomials of degree 2 are of the form $\int_X\int_X u(\d(x,y))\,d\m(x)\,d\m(y)$. They had been used e.g. to define the $L^p$-size of $\X=\auf X,\d,\m\zu$.
\item
Polynomials of degree 3 can be used to determine whether a space
$\X\in\YY$ satisfies the triangle inequality, at least in a certain weak sense. For instance,
\[\U(\X)=\int_X\int_X\int_X
\Big[\d(x,z)-\d(x,y)-\d(y,z)\Big]^-
d\m(x)\,d\m(y)\,d\m(z)\]
vanishes if and only if
$\X\in\YY$ satisfies the triangle inequality $\m^3$-a.e., cf. Remark \ref{m2 vs m3}.

\item
Polynomials of degree 4 allow to determine whether a given curvature bound (either from above or from below) in the sense of Alexandrov is satisfied. This will be achieved through the functionals $\G_K$ and $\H_K$ to be considered below.
\end{itemize}
\end{remark}

\subsection{Nested Polynomials}

Besides polynomials, there are many other functions on $\YY$ for which derivatives (of any order) can be calculated explicitly. Among them are
functions
\[
	\U:\YY\rightarrow \R_+
\]
of the form
\begin{equation}\label{formulaF}
	\U(\X)= \int_X U\left( \int_X \eta(\d(x,y))d \m(y)\right) d\m(x)
\end{equation}
for given functions $U:\R_+\rightarrow \R_+$ and $\eta:\R_+\rightarrow \R_+$.
Any functional of this type will be called \emph{nested polynomial} of order 2.
The $\F$-functional to be considered in the next chapter will be of this type.

Note, however,  that analogous Lipschitz continuity and semiconvexity results can be easily obtained along the same lines of reasoning for more general classes of nested polynomials including for instance
\[
	\U(\X)= \int_X U\left( \int_X \eta(\d(x,y))d \m(y), \int_X \vartheta (\d(x,z))d \m(z) \right) d\m(x)
\]
or
\begin{align*}
	\U(\X)= \int_X \int_X U\left( \int_X \int_X \right.
	\theta\Big( \d(x,y),\d(x,z),\d(x,w),\d(y,z),\d(y,w),\d(z,w)\Big)
				\left.\vphantom{\int_X} d \m(w) d\m(z)\right) d\m(y) d\m(x).
\end{align*}

\begin{lemma}\label{derivf00}
	Assume that $\U:\YY\rightarrow \R_+$ is given by formula \eqref{formulaF} with
	$U\in C^2(\R_+,\R_+)$ and $\eta\in C^1(\R_+,\R_+)$, both with bounded derivatives.
	\begin{enumerate}
		\item\label{derivf1}
			Then for each  geodesic $\geod{0}{1}$ in $\YY$ and 	
			represented	as in \eqref{gerep}:
			\begin{equation*}
				\begin{split}
					\frac{d}{dt} \U(\X_t)= &
						\int_{X_0\times X_1} \bigg[ U' \bigg( \int_{X_0\times X_1}
						\eta\Big(\d_0(x,z)+t(\d_1(x,z)-\d_0(x,z))\Big) 	
						d\ol\m(z)\bigg)\\
						&\qquad\qquad\cdot \int_{X_0\times X_1}
						 \eta' \Big( \d_0(x,y)+t(\d_1(x,y)-\d_0(x,y)\Big) 	\cdot \Big(\d_1(x,y)-\d_0(x,y)\Big)
																	d\ol\m(y)\bigg] d\ol\m(x)
				\end{split}
			\end{equation*}
			for all $t\in (0,1)$ and as a right limit for $t=0$.
		\item\label{derivf2}
			Moreover,
			\begin{equation*}
				\begin{split}
					\frac{d^2}{dt^2} \U(\X_t)= &
						\int_{X_0\times X_1} \left[ U'' \left( \int_{X_0\times X_1}
						\eta\Big(\d_0(x,z)+t(\d_1(x,z)-\d_0(x,z))\Big) 	
						d\ol\m(z)\right)\right.\\
						&\phantom{\int_{X_0}}
						\left.\cdot \left(\int_{X_0\times X_1}
						\eta' \Big( \d_0(x,y)+t(\d_1(x,y)-\d_0(x,y)\Big)
						\cdot \Big(\d_1(x,y)-\d_0(x,y)\Big) d\ol\m(y)\right)^2 \right] d\ol\m(x)\\
						& + \int_{X_0\times X_1} \left[ U' \left( \int_{X_0\times X_1}
						\eta\Big(\d_0(x,z)+t(\d_1(x,z)-\d_0(x,z))\Big) 	
						d\ol\m(z)\right)\right.\\
						&\phantom{+\int_{X_0}}
						\left.\cdot \int_{X_0\times X_1}  \eta''
						\Big( \d_0(x,y)+t(\d_1(x,y)-\d_0(x,y)\Big)
							\cdot \Big(\d_1(x,y)-\d_0(x,y)\Big)^2 d\ol\m(y) \right] d\ol\m(x),
				\end{split}
			\end{equation*}
			again for all $t\in (0,1)$ and as a right limit at $t=0$.
	\end{enumerate}
\end{lemma}

\begin{proof} As in the case of polynomials, these formulae are straightforward consequences of the representations  \eqref{formulaF} and
	\eqref{gerep} which provide an explicit formula for the dependence of $\U(\X_t)$ on $t$:
\begin{equation*}									 \U(\X_t)=
						\int_{X_0\times X_1}  U \bigg( \int_{X_0\times X_1}
						\eta\Big(\d_0(x,y)+t(\d_1(x,y)-\d_0(x,y))\Big) 	
						d\ol\m(y)\bigg)\, d\ol\m(x).
						\end{equation*}
Now again, interchanging the order of differentiation  and integration
	 and applying the chain rule leads to the asserted formulas for the directional derivatives.
\end{proof}

\begin{remarks}
\begin{enumerate}
\item
	In the case $t=0$, using the abbreviation $w_0(x)=\int_{X_0} \eta(\d_0(x,z))d\m_0(z)$,
the previous formulas yield
	\begin{equation}\label{derivt0}
				\begin{split}
					\frac{d}{dt} \U(\X_t)\Big\vert_{t=0} = &
						\int_{X_0\times X_1}\int_{X_0\times X_1} U' (w_0(x)) \cdot
						 \eta' ( \d_0(x,y))\cdot \Big(\d_1(x,y)-\d_0(x,y)\Big)
						 	d\ol\m(y) d\ol\m(x),
				\end{split}
	\end{equation}
	\begin{equation*}
		\begin{split}
					\frac{d^2}{dt^2} \U(\X_t)\Big\vert_{t=0}= &
						\int_{X_0}  U''(w_0(x))\left[ \int_{X_0\times X_1}
						\eta' ( \d_0(x,y))	\cdot \Big(\d_1(x,y)-\d_0(x,y)\Big) d\ol\m(y)\right]^2 d\ol\m(x)\\
						& + \int_{X_0\times X_1}\int_{X_0\times X_1} U'(w_0(x))\cdot  \eta''(\d_0(x,y))\cdot
						\Big(\d_1(x,y)-\d_0(x,y)\Big)^2 d\ol\m(y) d\ol\m(x),		
		\end{split}
	\end{equation*}
\item More generally, for each $\X_0=\auf X_0,\d_0,\m_0\zu=\auf X_1,\d_1,\m_1\zu\in\YY$, each
$g\in L^2_s(X_1^2,\m_1^2)$ and each $\ol\m\in\Cpl(\m_0,\m_1)$ with $\d_0=\d_1$ $\ol\m^2$-a.e.
\begin{eqnarray*}
D_g\U(\X)&=&\int_{X_0\times X_1}\int_{X_0\times X_1} U' (w_0(x_0)) \cdot
						 \eta' ( \d_0(x_0,y_0))\cdot g(x_1,y_1)\,
						 	d\ol\m(y_0,y_1) d\ol\m(x_0,x_1).
\end{eqnarray*}
\end{enumerate}
\end{remarks}

\begin{corollary}\label{cor47a}
The ambient gradient of $\U$ at the point $\X=\auf X,\d,\m\zu$ is given by the function $f=\Grad\U(\X) \in L^2(X^2,\m^2)$ defined as
\begin{equation}\label{gradfyy}
		f(x,y)= \frac{1}{2}\Big(U'(w(x))+U'(w(y))\Big)
		\cdot \eta'(\d(x,y))
	\end{equation}
where $w(.):= \int_{X} \eta(d(.,z)) d\m(z)$.
In particular,
\[\|\Grad\U(\X)\| = \frac{1}{2}
		\left[ \int_{X} \int_{X} \Big[U'(w(x))+U'(w(y))\Big]^2\cdot \eta'(\d(x,y))^2
								d\m(y) d\m(x)\right]^{\frac{1}{2}}.
	\]
\end{corollary}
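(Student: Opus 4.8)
The plan is to apply the general directional-derivative formula in Remark (ii) preceding the corollary, and then to recognize the resulting bilinear form as an $L^2$-inner product against an explicit symmetric kernel. First I would fix a representative $(X,\d,\m)$ of $\X$ and a test direction $g\in L^2_s(X^2,\m^2)$, and start from the identity
\[
D_g\U(\X)=\int_{X}\int_{X} U'(w(x))\,\eta'(\d(x,y))\,g(x,y)\,d\m(x)\,d\m(y),
\]
which is precisely the $t=0$, same-representative case of the cited remark (with $w(x)=\int_X\eta(\d(x,z))\,d\m(z)$). The integrand is not manifestly symmetric in $(x,y)$, so the next step is to symmetrize: since $g$ is symmetric and $\eta'(\d(x,y))$ is symmetric (as $\d$ is), I would average the kernel $U'(w(x))\eta'(\d(x,y))$ with its transpose $U'(w(y))\eta'(\d(x,y))$, obtaining
\[
D_g\U(\X)=\int_{X}\int_{X}\Big[\tfrac12\big(U'(w(x))+U'(w(y))\big)\,\eta'(\d(x,y))\Big]\,g(x,y)\,d\m^2(x,y)=\langle f,g\rangle_{L^2(X^2,\m^2)}
\]
with $f$ defined by \eqref{gradfyy}. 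One must check $f\in L^2_s(X^2,\m^2)$: symmetry is built in, and square-integrability follows from boundedness of $U'$ and $\eta'$ together with $\m^2(X^2)=1$, so $\|f\|_\infty<\infty$ and hence $f\in L^2$.

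Next I would handle the general case, where $g$ lives on a possibly different representative $(X',\d',\m')$ of $\X$. Following the pattern of the proof of Theorem \ref{grad U}, I would pass to the product $\ol X=X\times X'$ with a coupling $\ol\m\in\Cpl(\m,\m')$ for which $\d=\d'$ holds $\ol\m^2$-a.e., choose $\ol\d$ agreeing a.e. with both, and lift $f$ and $g$ to $\ol f,\ol g\in L^2_s(\ol X^2,\ol\m^2)$. The crucial observation is that $\ol f$ built from $\ol\d$ and $\ol\m$ via \eqref{gradfyy} equals the lift of the original $f$, because $w$ depends only on $\d$ and $\m$ (hence is $\ol\m$-a.e. unchanged under the lift) and $\ol\d=\d$ a.e. Applying the same-representative computation on $\ol X$ then gives $D_g\U(\X)=\langle\ol f,\ol g\rangle_{L^2(\ol X^2,\ol\m^2)}$, which is exactly the defining property of the ambient gradient; uniqueness was already established in the Lemma following Definition of ambient gradient. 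This verifies $\Grad\U(\X)=f$.

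Finally, the norm formula is immediate from $\|\Grad\U(\X)\|=\|f\|_{L^2(X^2,\m^2)}$ and the explicit expression for $f$:
\[
\|\Grad\U(\X)\|^2=\int_X\int_X \tfrac14\big(U'(w(x))+U'(w(y))\big)^2\,\eta'(\d(x,y))^2\,d\m(y)\,d\m(x),
\]
and taking square roots gives the stated identity. For full rigour one should also justify the interchange of differentiation and integration used to obtain the directional-derivative formula in the first place; but this is supplied by Lemma \ref{derivf} (the nested-polynomial version) under the hypotheses $U\in C^2$, $\eta\in C^1$ with bounded derivatives, and for merely continuous $U,\eta$ one argues by the usual $C^2$-approximation as in the proof of Theorem \ref{thm U}. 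I expect the only genuinely delicate point to be the consistency check across representatives in the second paragraph — namely that the kernel $f$ is intrinsic, i.e. invariant (a.e.) under the symmetry coupling — since everything else is a routine symmetrization and Cauchy–Schwarz bookkeeping already carried out for ordinary polynomials in Theorem \ref{grad U}.
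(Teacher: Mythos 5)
Your proposal is correct and follows essentially the route the paper intends: the corollary is an immediate consequence of the directional-derivative formula in Remark (ii) (which already holds for an arbitrary coupling with $\d=\d'$ a.e.), followed by symmetrization of the kernel against the symmetric test function $g$, with the cross-representative consistency handled exactly as in the proof of Theorem \ref{grad U}. The norm identity then follows from $\|\Grad\U(\X)\|=\|f\|_{L^2(X^2,\m^2)}$ as you state.
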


\begin{theorem}
	\begin{enumerate}
		\item\label{lip}
			If $U$ and $\eta$ are Lipschitz functions on $\R_+$, then $\U$ is a Lipschitz function
			on $(\YY,\DD)$ with
			\[
				\Lip(\U)\leq \Lip(U)\cdot \Lip(\eta).
			\]
		\item\label{kconv}
			Assume that $U,\eta\in C^2(\R_+)$ with
			\begin{align*}
				U'\geq -L,\quad U''\geq -\lambda
				\qquad \text{and} \qquad
				|\eta'|\leq C_1, \quad \eta''\leq C_2
			\end{align*}
			for some numbers $L,\lambda,C_1,C_2\in\R_+$.
			Then $\U$ is $\kappa$-convex on $(\YY,\DD)$ with
			\[
				\kappa\geq -\lambda\cdot C_1^2-L\cdot C_2.
			\]
	\end{enumerate}
\end{theorem}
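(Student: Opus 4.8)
The plan is to run the same two-pronged argument used for Theorem~\ref{thm U}, based on the representation
\[
\U(\X_t)=\int_{X_0\times X_1}U\!\left(\int_{X_0\times X_1}\eta\big(\d_0(x,z)+t(\d_1(x,z)-\d_0(x,z))\big)\,d\ol\m(z)\right)d\ol\m(x)
\]
valid for every geodesic $(\X_t)_{0\le t\le1}$ written as $\X_t=\auf X_0\times X_1,\d_0+t(\d_1-\d_0),\ol\m\zu$ with $\ol\m\in\Opt(\m_0,\m_1)$ (formula~\eqref{gerep}), together with the derivative formulas for nested polynomials (Lemma~\ref{derivf}). I abbreviate $\d_t=\d_0+t(\d_1-\d_0)$, $w_t(x)=\int\eta(\d_t(x,z))\,d\ol\m(z)$ and $\beta(x)=\int|\d_1(x,z)-\d_0(x,z)|^2\,d\ol\m(z)$, so that $\int\beta\,d\ol\m=\DD^2(\X_0,\X_1)$. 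For part (i) no approximation is needed: applying the Lipschitz bound for $U$ and then for $\eta$, for an arbitrary $\ol\m\in\Cpl(\m_0,\m_1)$,
\[
\begin{aligned}
\big|\U(\X_1)-\U(\X_0)\big|
&\le \Lip(U)\int\left|\int\big(\eta(\d_1(x_1,z_1))-\eta(\d_0(x_0,z_0))\big)\,d\ol\m(z)\right|d\ol\m(x)\\
&\le \Lip(U)\,\Lip(\eta)\int\!\!\int\big|\d_1(x_1,z_1)-\d_0(x_0,z_0)\big|\,d\ol\m(x)\,d\ol\m(z)\\
&\le \Lip(U)\,\Lip(\eta)\left(\int\!\!\int\big|\d_1-\d_0\big|^2\,d\ol\m\,d\ol\m\right)^{1/2},
\end{aligned}
\]
using Jensen to move the modulus inside the inner integral and Cauchy--Schwarz at the end; taking the infimum over $\ol\m$ gives $\Lip(\U)\le\Lip(U)\Lip(\eta)$.

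For part (ii) I would first mollify: choose $U_k\in C^2$, $\eta_k\in C^2$ with bounded derivatives, $U_k\to U$, $\eta_k\to\eta$ locally uniformly, and, for some $\varepsilon_k\downarrow0$, $U_k'\ge-L-\varepsilon_k$, $U_k''\ge-\lambda-\varepsilon_k$, $|\eta_k'|\le C_1+\varepsilon_k$, $\eta_k''\le C_2+\varepsilon_k$ (convolution with a probability kernel preserves one-sided bounds on first and second derivatives; a small linear/quadratic correction absorbs any loss coming from a truncation needed to make the derivatives bounded). For each $\U_k$ the second-derivative formula of Lemma~\ref{derivf} applies and gives, at $t\in(0,1)$,
\[
\frac{d^2}{dt^2}\U_k(\X_t)
=\int U_k''(w_t(x))\left(\int\eta_k'(\d_t(x,z))(\d_1-\d_0)(x,z)\,d\ol\m(z)\right)^2 d\ol\m(x)
+\int U_k'(w_t(x))\int\eta_k''(\d_t(x,z))(\d_1-\d_0)^2(x,z)\,d\ol\m(z)\,d\ol\m(x).
\]
For the first summand, Jensen applied to $r\mapsto r^2$ together with $|\eta_k'|\le C_1+\varepsilon_k$ bounds the inner square by $(C_1+\varepsilon_k)^2\beta(x)$, so, since $U_k''\ge-\lambda-\varepsilon_k$, this summand is $\ge-(\lambda+\varepsilon_k)(C_1+\varepsilon_k)^2\,\DD^2(\X_0,\X_1)$. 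For the second summand one uses $\eta_k''(\d_t)(\d_1-\d_0)^2\le(C_2+\varepsilon_k)(\d_1-\d_0)^2$ together with $U_k'\ge-L-\varepsilon_k$ (and the sign normalisation $L,\lambda,C_1,C_2\ge0$ of the statement) to bound it below by $-(L+\varepsilon_k)(C_2+\varepsilon_k)\,\DD^2(\X_0,\X_1)$. Hence $\U_k$ is $\kappa_k$-convex with $\kappa_k=-(\lambda+\varepsilon_k)(C_1+\varepsilon_k)^2-(L+\varepsilon_k)(C_2+\varepsilon_k)$; letting $k\to\infty$, using $\U_k\to\U$ pointwise on $\YY$, $\kappa_k\to-\lambda C_1^2-LC_2$, and stability of the defining inequality of $\kappa$-convexity under pointwise limits, we conclude $\U$ is $\kappa$-convex with $\kappa\ge-\lambda C_1^2-LC_2$.

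The main obstacle is the second summand of the Hessian: estimating $\int\!\!\int U_k'(w_t(x))\,\eta_k''(\d_t(x,z))\,(\d_1-\d_0)^2\,d\ol\m\,d\ol\m$ from below out of purely one-sided information on $U_k'$ and $\eta_k''$ forces one to split the integrand according to the sign of $\eta_k''(\d_t(x,z))$ and treat the two pieces separately, and this is exactly where one must check that the mollification did not spoil the relevant bounds (and, if needed, strengthen the regularisation). Everything else — interchanging differentiation and integration in Lemma~\ref{derivf}, and the limit passage — is routine. As an alternative to the approximation step one can work directly from the representation formula: for each fixed $x$ establish, from $w_t''(x)=\int\eta''(\d_t(x,z))(\d_1-\d_0)^2\,d\ol\m(z)\le C_2\beta(x)$ and $(w_t'(x))^2\le C_1^2\beta(x)$ and the hypotheses on $U$, the semiconcavity estimate $U(w_t(x))\ge(1-t)U(w_0(x))+tU(w_1(x))-\tfrac12(\lambda C_1^2+LC_2)\beta(x)\,t(1-t)$, and then integrate in $x$ against $d\ol\m$.
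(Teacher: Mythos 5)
Your argument is essentially the paper's own: part (i) is the direct estimate (Jensen plus Cauchy--Schwarz) that the paper explicitly notes as an alternative to differentiating, and part (ii) runs the second-derivative formula of Lemma~\ref{derivf} along a geodesic and bounds the two summands pointwise. Your mollification preamble is a reasonable (and arguably needed) patch, since Lemma~\ref{derivf} is stated for $U,\eta$ with \emph{bounded} derivatives while the theorem only assumes one-sided bounds; the paper applies the lemma without comment.

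Two remarks on the second summand, which you correctly single out as the crux. First, the difficulty you describe is real but is not created by the mollification and is not removed by splitting according to the sign of $\eta''$: from $U'\geq -L$ and $\eta''\leq C_2$ alone one cannot conclude $U'(w_t(x))\int \eta''(\d_t)(\d_1-\d_0)^2\,d\ol\m(z)\geq -L\,C_2\,\beta(x)$, because on the set where the inner integral is negative a large positive $U'$ makes the product arbitrarily negative. The inequality does hold whenever the inner integral is nonnegative (e.g.\ $\eta$ convex), or under the two-sided reading $|U'|\leq L$; the paper's one-line estimate tacitly uses one of these, and in the intended application ($U_r(a)=\tfrac12(a-w_r^\star)^2$ with $a\in[0,r]$) the two-sided bound $|U_r'|\leq r$ is what actually gets used. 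So you should either add the hypothesis $|U'|\le L$ (resp.\ argue it holds in your setting) or restrict to the case where $\int\eta''(\d_t)(\d_1-\d_0)^2\,d\ol\m(z)\ge0$; "strengthening the regularisation" will not help. Second, a small slip in your sketched alternative: $\kappa$-convexity with $\kappa<0$ requires the \emph{upper} bound $U(w_t(x))\leq (1-t)U(w_0(x))+tU(w_1(x))+\tfrac12(\lambda C_1^2+LC_2)\,\beta(x)\,t(1-t)$ before integrating in $x$; you wrote the inequality with $\geq$.
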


\begin{proof}
	(\ref{lip}) For Lipschitz continuous $U$ and $\eta$, the formula in Lemma~\ref{derivf00}
	(\ref{derivf1}), yields
	\begin{equation*}
	\begin{split}
		\bigg|\frac{d}{dt} \U(\X_t)\bigg|& \leq \Lip(U)\cdot \Lip(\eta)\cdot
					\int_{X_0\times X_1} \int_{X_0\times X_1} |\d_1(x,y)-\d_0(x,y)| d\ol\m(y) d\ol\m(x) \\
				& \leq \Lip(U)\cdot \Lip(\eta) \cdot \DD(\X_0,\X_1)
	\end{split}
	\end{equation*}
	and thus
	\[
		\Lip(\U)\leq \Lip(U)\cdot \Lip(\eta).
	\]
	(Indeed, a more direct estimation is possible without any $t$-differentiation.)\\
	(\ref{kconv}) The given bounds on derivatives of $U$ and $\eta$ allow to estimate the right hand
	side in Lemma~\ref{derivf00} (\ref{derivf2}) as follows:
	\begin{equation*}
				\begin{split}
					\frac{d^2}{dt^2} \U(\X_t) \geq &
						- \lambda \cdot C_1^2 \cdot \int_{X_0\times X_1} \left( \int_{X_0\times X_1}
						|\d_1(x,z)-\d_0(x,z)| d\ol\m(y)\right)^2  d\ol\m(x)\\
						& - L\cdot C_2 \cdot \int_{X_0} \int_{X_0}
						|\d_1(x,z)-\d_0(x,z)|^2	d\m(y) d\m(x)\\
						\geq & -(\lambda\cdot C_1^2+L\cdot C_2) \cdot \DD(\X_0,\X_1)^2.
				\end{split}
	\end{equation*}
	That is, $\frac{d^2}{dt^2} \U(\X_t) \geq  \kappa \cdot \DD(\X_0,\X_1)^2$ for each geodesic
	$\geod{0}{1}$ in $\YY$. This is the $\kappa$-convexity of $\U$ on the geodesic space $(\YY,\DD)$.
\end{proof}

A straightforward generalization yields analogous assertions for functionals $\bar{\U}:\YY\rightarrow \R_+$ of the form
\[
	\bar{\U}(\X)=\int_0^\infty \U_r(\X)\rho_r dr
\]
for some probability density $\rho$ on $\R_+$ and a one-parameter family of functionals $\U_r$, $r\in \R_+$, of the form \eqref{formulaF} with appropriate $U_r$ and $\eta_r$ (depending in a measurable way on $r\in \R_+$):
\[
	\U_r(\X)=\int_X U_r \left( \int_X \eta_r(\d(x,y))d\m(y)\right) d\m(x).
\]

\begin{corollary}
	\begin{enumerate}
		\item
			If $U_r$ and $\eta_r$ are Lipschitz ($\forall r\geq 0$) then so is $\bar{\U}$ with
			\[
				\Lip(\bar{\U})\leq \int_0^\infty \Lip(U_r) \Lip(\eta_r)\rho_r dr.
			\]
		\item
			If $U_r$ and $\eta_r$ are $C^2$ ($\forall r\geq 0$) then $\bar{\U}$ is $\kappa$-convex for
			\[
				\kappa=-\int_0^\infty \left[ \Vert (U_r'')\Vert_\infty \cdot \Vert \eta_r'\Vert_\infty^2
				+ \Vert (U_r')\Vert_\infty \cdot \Vert \eta_r''\Vert_\infty\right] \rho_r dr
			\]
	\end{enumerate}
\end{corollary}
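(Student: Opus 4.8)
The plan is to reduce the statement about $\bar{\U}(\X)=\int_0^\infty \U_r(\X)\rho_r\,dr$ to the single-scale result of the previous theorem by integrating the pointwise estimates in $r$. First I would fix a geodesic $\geod{0}{1}$ in $\YY$, represented as in \eqref{gerep} by $\X_t=\auf X_0\times X_1,\d_0+t(\d_1-\d_0),\ol\m\zu$ for some $\ol\m\in\Opt(\m_0,\m_1)$. For each fixed $r\ge0$, the previous theorem (applied to $U_r$ and $\eta_r$) gives that $\U_r$ is Lipschitz with $\Lip(\U_r)\le\Lip(U_r)\Lip(\eta_r)$ and $\kappa_r$-convex with $\kappa_r=-\big(\|U_r''\|_\infty\|\eta_r'\|_\infty^2+\|U_r'\|_\infty\|\eta_r''\|_\infty\big)$; note $\Lip(\eta_r)=\|\eta_r'\|_\infty$ and similarly for $U_r$, so the two constants appearing in the corollary are exactly $\int_0^\infty \Lip(U_r)\Lip(\eta_r)\rho_r\,dr$ and $\int_0^\infty \kappa_r\rho_r\,dr$.

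For part (i), the cleanest route is to avoid any $t$-differentiation and use the direct estimate (as in the ``(i')'' argument in the proof of Theorem \ref{thm U}, or the Lipschitz part of the preceding theorem): for any $\X_0,\X_1\in\YY$ and any $r$,
\[
|\U_r(\X_0)-\U_r(\X_1)|\le \Lip(U_r)\Lip(\eta_r)\cdot\DD(\X_0,\X_1).
\]
Multiplying by $\rho_r$ and integrating over $r\in(0,\infty)$, then applying the triangle inequality for the integral, yields
\[
|\bar{\U}(\X_0)-\bar{\U}(\X_1)|\le\Big(\int_0^\infty\Lip(U_r)\Lip(\eta_r)\rho_r\,dr\Big)\DD(\X_0,\X_1),
\]
which is the claim. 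One should just remark that $\bar{\U}(\X)$ is finite and well-defined under the stated hypotheses (e.g. $U_r(0)$ integrable against $\rho_r$, or $U_r\ge0$), so Fubini/Tonelli applies when interchanging $\int_0^\infty$ with the integrals over $X$ hidden in $\U_r$.

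For part (ii), I would use the integrated form of $\kappa$-convexity rather than the second-derivative form, again to sidestep regularity issues and the interchange of $\frac{d^2}{dt^2}$ with $\int_0^\infty$. By the ``(ii')'' estimate in Theorem \ref{thm U}'s proof applied scalewise, for every $t\in[0,1]$ and every $r$,
\[
\U_r(\X_t)-(1-t)\U_r(\X_0)-t\,\U_r(\X_1)\le -\tfrac{\kappa_r}{2}\,t(1-t)\,\DD^2(\X_0,\X_1),
\]
with $\kappa_r$ as above (this is precisely what the preceding theorem proves, since its proof goes through $\frac{d^2}{dt^2}\U_r(\X_t)\ge\kappa_r\DD^2$ which integrates to this inequality; alternatively one reads it off directly from the representation formula for $\U_r(\X_t)$). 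Multiplying by $\rho_r$ and integrating in $r$ gives
\[
\bar{\U}(\X_t)-(1-t)\bar{\U}(\X_0)-t\,\bar{\U}(\X_1)\le -\tfrac{1}{2}\Big(\int_0^\infty\kappa_r\rho_r\,dr\Big)t(1-t)\DD^2(\X_0,\X_1),
\]
i.e.\ $\bar{\U}$ is $\kappa$-convex with $\kappa=\int_0^\infty\kappa_r\rho_r\,dr=-\int_0^\infty\big(\|U_r''\|_\infty\|\eta_r'\|_\infty^2+\|U_r'\|_\infty\|\eta_r''\|_\infty\big)\rho_r\,dr$. The main obstacle is purely bookkeeping: one must check the integrability/measurability hypotheses carefully so that Tonelli's theorem legitimately interchanges $\int_0^\infty\rho_r\,dr$ with the (double) integrals over $X$ that define $\U_r$, and so that $\bar{\U}$ takes finite values on all of $\YY$; given that, no new geometric input beyond the single-scale theorem is needed.
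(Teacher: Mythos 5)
Your proposal is correct and is exactly the ``straightforward generalization'' the paper intends: apply the single-scale theorem to each $\U_r$ (with $\lambda=\|U_r''\|_\infty$, $L=\|U_r'\|_\infty$, $C_1=\|\eta_r'\|_\infty$, $C_2=\|\eta_r''\|_\infty$) and integrate the scalewise estimates against $\rho_r\,dr$. Your choice to integrate the secant-form convexity inequality rather than differentiate $\bar\U$ twice under the $r$-integral is a sensible implementation detail, not a different argument, and the Tonelli/finiteness caveats you flag are the only points requiring care.
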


\subsection{The $\mathcal{G}$-Functionals}
Throughout this section, let
\[\zeta(r)=\left\{\begin{array}{ll}
-2r-1,&r\le-1\\
r^2,&-1\le r\le0\\
0,&0\le r.
\end{array}\right.\]
Given a number $K>0$ and a gauged measure space $(X,\d,\m)$, we say that $\m^3$-a.e. triangle in $(X,\d)$ has  \emph{perimeter $\le 2\pi/\sqrt K$} if
\[\d(x_1,x_2)+\d(x_2,x_3)+\d(x_3,x_1)\le 2\pi/\sqrt K\]
for $\m^3$-a.e. $(x_1,x_2,x_3)\in X^3$. Put
\[\YY_K^{per}=\Big\{\X=\auf X,\d,\m\zu\in \YY\spec
\text{$\m^3$-a.e. triangle in $(X,\d)$ has  perimeter $\le 2\pi/\sqrt K$}\Big\}.\]
For $K\le0$ we put $\YY_K^{per}=\YY$.
\index{y@$\YY_K^{per}$}

\begin{lemma}
For each $K\in\R$, $\YY_K^{per}$ is a closed convex subset of $\YY$.
\end{lemma}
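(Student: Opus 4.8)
The statement asserts that $\YY_K^{per}$ is a closed convex subset of $\YY$. For $K\le 0$ there is nothing to prove since $\YY_K^{per}=\YY$, so assume $K>0$. The two properties --- closedness and convexity --- can be handled independently, and both follow the pattern already used for the analogous facts about $\hat\XX$ in Corollary \ref{closedness of tr-in} and for $\YY_K^{per}$'s cousins. The key point to keep in mind is that membership in $\YY_K^{per}$ is a constraint of ``$\m^3$-almost everywhere'' type on the gauge $\d$, exactly parallel to the $\m^3$-a.e.\ triangle inequality discussed in Remark \ref{m2 vs m3}.

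\textbf{Convexity.} First I would verify convexity. Let $\X_0=\auf X_0,\d_0,\m_0\zu$ and $\X_1=\auf X_1,\d_1,\m_1\zu$ both lie in $\YY_K^{per}$, and let $(\X_t)_{t\in[0,1]}$ be a geodesic joining them. By Theorem \ref{YY-geo}(iii), there is an optimal coupling $\ol\m\in\Cpl(\m_0,\m_1)$ with $\X_t=\auf X_0\times X_1,(1-t)\d_0+t\d_1,\ol\m\zu$. Now $\d_0$ (more precisely its pull-back via $\pi_0$) satisfies the perimeter bound $\ol\m^3$-a.e.\ on $(X_0\times X_1)^3$: indeed the perimeter bound is preserved under pull-back by a measure-preserving map exactly as in the proof of Lemma \ref{pseudo lemma}(i), since $(\pi_0)_*\ol\m=\m_0$ and one transports the exceptional $\m_0^3$-null set to a $\ol\m^3$-null set by taking its preimage under $(\pi_0,\pi_0,\pi_0)$. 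Similarly $\d_1$ satisfies the perimeter bound $\ol\m^3$-a.e. Let $N_0,N_1\subset(X_0\times X_1)^3$ be the corresponding null sets. Off $N_0\cup N_1$ (still a $\ol\m^3$-null set), for $\ol\m^3$-a.e.\ triple of points $\big((x_0^k,x_1^k)\big)_{k=1,2,3}$ one has
\[
\sum_{\text{cyc}} \d_0(x_0^i,x_0^j)\le \frac{2\pi}{\sqrt K}
\quad\text{and}\quad
\sum_{\text{cyc}} \d_1(x_1^i,x_1^j)\le \frac{2\pi}{\sqrt K},
\]
and the convex combination with weights $(1-t),t$ gives
\[
\sum_{\text{cyc}}\big[(1-t)\d_0+t\d_1\big](x_0^i,x_1^i;x_0^j,x_1^j)\le \frac{2\pi}{\sqrt K}.
\]
Hence $\d_t:=(1-t)\d_0+t\d_1$ satisfies the perimeter bound $\ol\m^3$-a.e., so $\X_t\in\YY_K^{per}$. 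This proves convexity.

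\textbf{Closedness.} For closedness I would mimic the completeness argument of Theorem \ref{YY-geo} together with Lemma \ref{pseudo lemma}(ii). Suppose $\X_n=\auf X_n,\d_n,\m_n\zu\in\YY_K^{per}$ with $\DD(\X_n,\X_\infty)\to 0$ for some $\X_\infty=\auf X_\infty,\d_\infty,\m_\infty\zu\in\YY$. By the now-standard gluing/projective-limit construction (as in the completeness proof of Theorem \ref{YY-geo}, or more simply by passing through representatives on $(I,\Leb^1)$ via Lemma \ref{obdA}), one may reduce to the situation where all the $\d_n$ and $\d_\infty$ live on one common probability space $(\hat X,\hat\m)$ with $\|\d_n-\d_\infty\|_{L^2_s(\hat X^2,\hat\m^2)}\to 0$; along a subsequence $\d_n\to\d_\infty$ $\hat\m^2$-a.e.\ on $\hat X^2$, hence (by Fubini) $\d_n\to\d_\infty$ $\hat\m^3$-a.e.\ on each of the three coordinate-pair projections of $\hat X^3$. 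Each $\d_n$ satisfies the perimeter bound off an $\hat\m^3$-null set $N_n$; on the complement of $\bigcup_n N_n$ union the $\hat\m^3$-null set where convergence fails, the pointwise limit of $\d_n(x^i,x^j)+\d_n(x^j,x^k)+\d_n(x^k,x^i)\le 2\pi/\sqrt K$ gives the same bound for $\d_\infty$. Thus $\d_\infty$ satisfies the perimeter bound $\hat\m^3$-a.e., i.e.\ $\X_\infty\in\YY_K^{per}$.

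\textbf{Main obstacle.} The only genuinely delicate point is the reduction to a common representative with $L^2$-convergence of the gauges --- but this is precisely the content of the completeness proof of Theorem \ref{YY-geo} and of Lemma \ref{pseudo lemma}(ii), so it can be quoted rather than reproved. Everything else is a routine manipulation of null sets under pull-back by measure-preserving maps (as in Lemma \ref{pseudo lemma}(i)) and the trivial observation that a convex combination of two inequalities of the same form is again an inequality of that form. I would therefore phrase the write-up as: ``Closedness follows from part (ii) of Lemma \ref{pseudo lemma} verbatim, replacing `triangle inequality' by `perimeter $\le 2\pi/\sqrt K$'; convexity follows as in Corollary \ref{closedness of tr-in}, since a convex combination of gauges satisfying the perimeter bound off null sets $N_0,N_1$ satisfies it off $N_0\cup N_1$.''
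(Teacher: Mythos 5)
Your proof is correct and follows essentially the same route as the paper: convexity by pulling the perimeter bound back to the product space via the projections (union of the two null sets) and observing it is preserved under convex combinations of the gauges, and closedness by the same reduction-to-a-common-representative and a.e.-subsequence argument that the paper invokes from the stability of the triangle inequality (Lemma \ref{pseudo lemma}(ii) / Corollary \ref{closedness of tr-in}). The only difference is that you spell out the null-set bookkeeping and the Fubini step from $\hat\m^2$- to $\hat\m^3$-a.e.\ convergence, which the paper leaves implicit.
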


\begin{proof}
\emph{Convexity:} the inequalities $\d_0(x_1,x_2)+\d_0(x_2,x_3)+\d_0(x_3,x_1)\le 2\pi/\sqrt K$ and $\d_1(x_1,x_2)+\d_1(x_2,x_3)+\d_1(x_3,x_1)\le 2\pi/\sqrt K$ carry over from given spaces $(X_0,\d_0,\m_0)$ and $(X_1,\d_1,\m_1)$, resp., to the product space (equipped with any coupling measure) and they are preserved under convex combinations.

\emph{Closedness:} the inequalities $\d_n(x_1,x_2)+\d_n(x_2,x_3)+\d_n(x_3,x_1)\le 2\pi/\sqrt K$ on a sequence of spaces carry over to the limit space. In detail, this stability result is based on the same arguments as the stability of the triangle inequality, see proof of Corollary \ref{closedness of tr-in}.
\end{proof}

\index{g@$\G_K$}
\begin{definition}\label{def G}\begin{enumerate}
\item
The $\G_0$-functional is defined on $\YY$ by
\[\G_0(\X)=\int_{X^4}\zeta\bigg(3\sum_{1\le i\le 3} \d^2(x_0,x_i)-\sum_{1\le i<j\le 3}\d^2(x_i,x_j)\bigg)\,d\m^4(x_0,x_1,x_2,x_3).\]
\item
For any $K\in(0,\infty)$ we define the $\G_K$-functional by
\[\G_K(\X)=\int_{X^4}\zeta\bigg(-\frac1K\bigg[\sum_{1\le i\le 3} \cos\Big(\sqrt{K}\d(x_0,x_i)\Big)\bigg]^2+\frac3K+\frac2K\sum_{1\le i<j\le 3}\cos\Big(\sqrt{K}\d(x_i,x_j)\Big)\bigg)\,d\m^4(x_0,x_1,x_2,x_3)\]
provided $\X\in\YY_K^{per}$ and $\G_K(\X)=\infty$ otherwise.
\item
For any $K\in(-\infty,0)$ we define the $\G_K$-functional by
\begin{eqnarray*}
\G_K(\X)&=&\int_{X^4}\zeta\bigg(-\frac{18}K\log\Big[\frac13\sum_{1\le i\le 3} \cosh\Big(\sqrt{-K}\d(x_0,x_i)\Big)\Big]\\
&&\qquad\qquad+\frac9K\log\Big[\frac13+\frac29\sum_{1\le i<j\le 3}\cosh\Big(\sqrt{-K}\d(x_i,x_j)\Big)\Big]\bigg)\,d\m^4(x_0,x_1,x_2,x_3).\end{eqnarray*}
\end{enumerate}
\end{definition}
Note that $\G_K(\X)\to\G_0(\X)$ for $K\nearrow0$ as well as for $K\searrow0$.
\begin{theorem}\label{theoG}
\begin{enumerate}
\item
For each $K\in\R$ the function $\G_K$ is semiconvex and locally Lipschitz continuous  on $\YY^{per}_K$.
If $K\not=0$ it is globally Lipschitz continuous; if $K=0$ it satisfies $\|\Grad\G_K(\X)\|\le 36\cdot\size(\X)$.

\item
Moreover, $\Grad\G_K$ is given explicitly, e.g. for $K=0$ at the point $\X\in\YY$ as the symmetrization of the function $f\in L^2(X^2,\m^2)$ defined by
\begin{eqnarray*}
f(z,z')&=& 6\d(z,z')\cdot\int_{X^2}\bigg[ 3\zeta'\bigg(3\Big(
\d^2(z,z')+\d^2(z,y)+\d^2(z,y')\Big)-\Big(
\d^2(z',y)+\d^2(z',y')+\d^2(y,y')\Big)\bigg)\\
&&\quad-2\zeta'
\bigg(\Big(
\d^2(y,z)+\d^2(y,z')+\d^2(y,y')\Big)-\Big(
\d^2(y',z)+\d^2(y',z')+\d^2(z,z')\Big)\bigg)\bigg]\,d\m^2(y,y')
.\end{eqnarray*}
\item
For each $K\in\R$ and $\X\in \XX^{geo}$:
\[\G_K(\X)=0\quad\Longleftrightarrow\quad \X\text{ has curvature $\ge K$ in the sense of Alexandrov}.\]
\end{enumerate}
\end{theorem}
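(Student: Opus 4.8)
\textbf{Proof strategy for Theorem \ref{theoG}.}
The plan is to treat the three assertions in turn, with the bulk of the work going into (iii); (i) and (ii) will follow from the general machinery of Section 7.1--7.2 once the integrands are recognized as polynomials of order $4$ (after composing with the smooth truncation $\zeta$). First I would observe that each of the functions appearing inside $\zeta$ in Definition \ref{def G} is a smooth function $u=u_K$ of the pairwise squared distances $\big(\d^2(x_i,x_j)\big)_{0\le i<j\le 3}$, hence of the pairwise distances themselves, and that on $\YY^{per}_K$ these arguments range over a bounded set (for $K>0$ the perimeter bound makes the $\cos$-terms stay in a region where the whole expression is controlled; for $K<0$ one uses that $\size(\X)<\infty$ keeps the $\cosh$-terms integrable, with local boundedness on $\DD$-balls). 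Therefore $\G_K=\U$ with $\U$ of the form \eqref{U} for $n=4$, the outer function being $\zeta\circ u_K$. Since $\zeta\in C^1$ with $\zeta'$ bounded and Lipschitz, $\zeta''\ge 0$ bounded, and $\zeta$ is convex, the composition $\zeta\circ u_K$ is semiconvex with bounded first derivatives on the relevant bounded region (a chain-rule computation: $D^2(\zeta\circ u)=\zeta''(u)\,Du\otimes Du+\zeta'(u)D^2u\ge \zeta'(u)D^2u$, and $\zeta'(u)\ge 0$, so only the bounded Hessian of $u$ contributes a lower bound). Then Theorem \ref{thm U} (applied on $\YY^{per}_K$, which is closed and convex by the preceding Lemma, so that geodesics stay inside) yields the Lipschitz and semiconvexity claims of (i); the gradient norm bound for $K=0$ comes from differentiating explicitly and using $|\zeta'|\le 2$, $\size(\X)<\infty$, as in Theorem \ref{grad U}. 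For (ii), the explicit formula for $\Grad\G_0$ is just Theorem \ref{grad U} specialized to $n=4$ and $u(\xi)=\zeta(3(\xi_{01}+\xi_{02}+\xi_{03})-(\xi_{12}+\xi_{13}+\xi_{23}))$ with $\xi_{ij}=\d^2(x_i,x_j)$, the factor $6\d(z,z')$ being $\partial\d^2/\partial\d$ times the combinatorial multiplicity, and the two $\zeta'$-terms recording whether the varied pair $(z,z')$ sits in the ``apex slot'' $x_0$ or in one of the three ``base slots.''

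The heart of the theorem is (iii), the equivalence $\G_K(\X)=0\iff\X$ has curvature $\ge K$, for $\X\in\XX^{geo}$. The plan is to reduce curvature $\ge K$ to a pointwise (in the four sample points) inequality about model triangles, and then to recognize exactly that inequality inside the argument of $\zeta$. Concretely: for a geodesic space, curvature $\ge K$ in the sense of Alexandrov is equivalent (via the standard globalization/$(1+3)$-point characterization, e.g. \cite{bbi}, \cite{Pl}) to the requirement that for \emph{all} quadruples $x_0,x_1,x_2,x_3$, writing $\tilde\measuredangle_K$ for comparison angles in the model surface $\M^{2,K}$, one has $\tilde\measuredangle_K(x_1 x_0 x_2)+\tilde\measuredangle_K(x_2 x_0 x_3)+\tilde\measuredangle_K(x_3 x_0 x_1)\le 2\pi$; equivalently, a determinant/quadratic-form inequality on the Gram-type matrix built from $\cos(\sqrt K\,\d(x_i,x_j))$ (for $K>0$), respectively $\cosh(\sqrt{-K}\,\d(x_i,x_j))$ (for $K<0$), respectively the Euclidean Cayley--Menger-type expression $3\sum_i\d^2(x_0,x_i)-\sum_{i<j}\d^2(x_i,x_j)$ (for $K=0$). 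The key algebraic identity to establish is that the argument of $\zeta$ in Definition \ref{def G} is, up to a positive factor, precisely the quantity whose \emph{nonnegativity} is the $K$-quadruple condition at $(x_0,x_1,x_2,x_3)$ — so that $\zeta(\,\cdot\,)$ vanishes exactly when the comparison inequality holds and is strictly positive otherwise. Given that identity: if $\X$ has curvature $\ge K$ the integrand vanishes $\m^4$-a.e.\ (indeed everywhere on $\supp\m$), so $\G_K(\X)=0$; conversely, if $\G_K(\X)=0$ then, since $\zeta\ge 0$ and $\zeta(r)=0\iff r\ge 0$, the comparison inequality holds for $\m^4$-a.e.\ quadruple, and continuity of $\d$ on the geodesic space together with $\supp\m$ being the whole space (w.l.o.g.) upgrades this to \emph{all} quadruples in $\supp\m$, i.e.\ to curvature $\ge K$. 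One also needs to check the ``$K>0$ and $\X\notin\YY^{per}_K$'' case: then $\G_K(\X)=\infty\ne 0$, consistent with the fact that a space of curvature $\ge K>0$ necessarily has all perimeters $\le 2\pi/\sqrt K$ (Bonnet--Myers-type diameter bound $\diam\le\pi/\sqrt K$), so $\X\in\XX^{geo}$ of curvature $\ge K$ forces $\X\in\YY^{per}_K$.

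The main obstacle I expect is the algebraic identity in the previous paragraph: verifying that the specific combination of $\cos$'s (or $\cosh$'s, or squared distances) written in Definition \ref{def G} coincides — on $\YY^{per}_K$, where the relevant branches of $\arccos$/comparison-angle functions are unambiguous — with the sign of the ``angle excess'' $\sum\tilde\measuredangle_K-2\pi$, or equivalently with the determinant of the appropriate $3\times 3$ comparison matrix. This is a trigonometric computation (spherical law of cosines for $K>0$, hyperbolic law of cosines for $K<0$, Euclidean law of cosines for $K=0$) plus a sign-chasing argument showing that the determinant has the same sign as the angle-sum excess within the perimeter-restricted domain; the role of the perimeter restriction $\YY^{per}_K$ is exactly to keep us in the region where a genuine spherical comparison triangle exists and where the monotonicity of $\arccos$ translates the matrix inequality faithfully. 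Once this dictionary between ``argument of $\zeta$'' and ``$K$-quadruple comparison'' is pinned down, the rest of (iii) is soft (nonnegativity and zero-set of $\zeta$, plus the continuity upgrade from a.e.\ to everywhere), and (i)--(ii) are, as indicated, immediate applications of Theorems \ref{thm U} and \ref{grad U}. A secondary technical point worth isolating as a lemma is the equivalence, for \emph{geodesic} spaces, between the $(1+3)$-point quadruple comparison and full Alexandrov curvature $\ge K$; this is classical but should be cited precisely (it is the same circle of ideas used in Theorem \ref{thmXX} and Corollary \ref{olisalex}, now read in the reverse direction and with general $K$ rather than $K=0$).
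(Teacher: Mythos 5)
Your architecture matches the paper's: parts (i) and (ii) are obtained by recognizing $\G_K$ as a polynomial of order $4$ in the sense of \eqref{U} and invoking Theorem \ref{thm U} and Theorem \ref{grad U}, and part (iii) is an ``a.e.\ to everywhere'' upgrade followed by a citation of the $(1+3)$-point characterization of curvature bounds for geodesic spaces. Two remarks. First, a local error in (i): you assert $\zeta'(u)\ge 0$, but $\zeta$ is nonincreasing, with $-2\le\zeta'\le 0$ (this is exactly the bound the paper uses to get $\|\Grad\G_0(\X)\|\le 36\cdot\size(\X)$). Your chain-rule lower bound $D^2(\zeta\circ u)\ge\zeta'(u)\,D^2u$ is still valid (it only needs $\zeta''\ge0$), and semiconvexity still follows, but from the boundedness of $|\zeta'|$ together with the boundedness of $D^2u$ on the relevant region, not from any sign of $\zeta'$; please correct the justification. (The paper also inserts a regularization $\zeta_\epsilon=\zeta/(1+\epsilon\sqrt\zeta)$ so as to apply Theorem \ref{thm U} to globally Lipschitz $u$'s first; this is a technicality you can add or absorb into a ``locally'' qualifier.) Second, for (iii) you propose to \emph{derive} the identity between the argument of $\zeta$ and the $K$-quadruple comparison by passing through comparison angles and a spherical/hyperbolic law-of-cosines computation, and you flag this as the main obstacle. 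The paper avoids that obstacle entirely: the expressions in Definition \ref{def G} are, verbatim, the algebraic form of the $(1+3)$-point comparison proved equivalent to curvature $\ge K$ for geodesic spaces by Lebedeva and Petrunin \cite{lp}, so nothing remains to be identified --- one only needs $\zeta\ge0$ with $\{\zeta=0\}=[0,\infty)$, the continuity upgrade on $\supp(\m)$, and (for $K>0$) your correct observation that curvature $\ge K$ forces the perimeter bound, so the $\G_K=\infty$ branch is consistent. Your route is viable but amounts to re-proving the cited result; citing it in its algebraic form collapses the ``heart of the theorem'' to a one-line observation.
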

Here an isomorphism class $\X$ of mm-spaces is said to have curvature $\ge K$ (or $\le K$) in the sense of Alexandrov if for some (hence any) of its representatives $(X,\d,\m)$ the metric space
$(\supp(\m),\d)$ has curvature $\ge K$ (or $\le K$, resp.) in the sense of Alexandrov.

\begin{proof}
(i), (ii)
Differentiability (weakly up to order two) and semiconvexity  follow from the previous Theorem \ref{thm U}
applied to suitable  functions $u$ on $\R^{6}$. In the case $K=0$, the appropriate choice  is
\[u\Big(\xi_{01},\ldots,\xi_{23}\Big)=\zeta\bigg(3\sum_{1\le i\le 3} \xi_{0i}^2-\sum_{1\le i<j\le 3}\xi_{ij}^2\bigg).\]
Approximating $\zeta$ by
\[\zeta_\epsilon=\Phi_\epsilon(\zeta):=\frac\zeta{1+\epsilon\sqrt\zeta}\]
and analogously $u$ by $u_\epsilon=\Phi_\epsilon(u)$ we obtain Lipschitz continuous, semiconvex functions
$u_\epsilon$   on $\R^{6}$ which approximate $u$ (which itself is locally Lipschitz and semiconvex).
According to Theorem \ref{grad U}, this also yields the formula for the gradient $\Grad\G$.

The formula for $\Grad\G(\X)$ together with the estimate $-2\le\zeta'\le 0$ implies
\[|\Grad\G(\X)(z,z')|\le 36\cdot |\d(z,z')|\]
and thus $\|\Grad\G(\X)\|\le 36\cdot \size(\X)$.

The general case of $K\in\R$ is treated analogously. For instance, in the case $K=-1$ one has to choose
\[u\Big(\xi_{01},\ldots,\xi_{23}\Big)=\zeta\bigg(18\log\Big(\frac13\sum_{1\le i\le 3} \cosh\xi_{0i}\Big)-9\log\Big(\frac13+\frac29\sum_{1\le i,j\le 3}\cosh\xi_{ij}\Big)\bigg).\]
Again it is easily verified that this function is Lipschitz continuous  and semiconvex on $\R^{6}$.

\medskip

(iii) We first discuss the case $K=0$.
Obviously, $\G_0(\X)=0$ is equivalent to
\begin{equation}\label{nnc}
3\sum_{1\le i\le 3} \d^2(x_0,x_i)\ge\sum_{1\le i<j\le 3}\d^2(x_i,x_j)
\end{equation}
for $\m^4$-a.e. quadruple $(x_0,x_1,x_2,x_3)\in X^4$. Since $\d$ is continuous the latter is equivalent to (\ref{nnc}) for \emph{all} quadruples $(x_0,x_1,x_2,x_3)\in X^4$. According to a recent characterization by Lebedeva and Petrunin \cite{lp}, for a geodesic mm-space this in turn is equivalent to nonnegative curvature in the sense of Alexandrov.

Analogously,
in the case $K<0$ the condition  $\G_K(\X)=0$ is obviously equivalent to the condition
\begin{equation}\label{nnc}
\bigg(\sum_{1\le i\le 3} \cosh\Big(\sqrt{-K}\d(x_0,x_i)\Big)\bigg)^2\ge3+2\sum_{1\le i<j\le 3}\cosh\Big(\sqrt{-K}\d(x_i,x_j)\Big)
\end{equation}
for all  quadruples $(x_0,x_1,x_2,x_3)\in X^4$.
In the case $K>0$ it is equivalent to the facts that all triangles in $X$ have perimeter $\le{2\pi}/{\sqrt K}$ and that
\begin{equation}\label{nnc}
\bigg(\sum_{1\le i\le 3} \cos\Big(\sqrt{K}\d(x_0,x_i)\Big)\bigg)^2\le3+2\sum_{1\le i<j\le 3}\cos\Big(\sqrt{K}\d(x_i,x_j)\Big)
\end{equation}
for all  quadruples $(x_0,x_1,x_2,x_3)\in X^4$.

Again in both cases, within geodesic mm-spaces, the latter characterizes the spaces of curvature $\ge K$ in the sense of Alexandrov \cite{lp}.
\end{proof}

\subsection{The  $\mathcal{H}$-Functionals}

\begin{definition}\begin{enumerate}
\item The $\H_0$-functional is defined on $\YY$ by
\begin{eqnarray*}
\H_0(\X)&=&\int_{X^4}\zeta\bigg(\d^2(x_1,x_2)+\d^2(x_2,x_3)+\d^2(x_3,x_4)+\d^2(x_4,x_1)\\
&&\qquad\qquad\qquad\qquad\qquad\qquad\qquad-\d^2(x_1,x_3)-\d^2(x_2,x_4)\bigg)\,d\m^4(x_1,x_2,x_3,x_4)\end{eqnarray*}
with $\zeta$ as before in Definition \ref{def G}.

\index{h@$\H_K$}
\item For  $K\in(0,\infty)$ we define the $\H_K$-functional by
\begin{eqnarray*}\H_K(\X)&=&\int_{X^4}\zeta\bigg(-\frac2K\sum_{i=1}^4 \cos^*\Big(\sqrt{K}\d(x_i,x_{i+1})\Big)\\
&&\qquad\qquad+\frac8K\cos\Big(\frac12\sqrt{K}\d(x_2,x_4)\Big)
\cdot\cos\Big(\frac12\sqrt{K}\d(x_1,x_3)\Big)
\bigg)\,d\m^4(x_1,x_2,x_3,x_4)\end{eqnarray*}
with $x_5:=x_1$
and $\cos^*(r):=\cos(r)$ for $r\in[-\pi/2,\pi/2]$ and $\cos^*(r)=-\infty$ else.
\item For any $K\in(-\infty,0)$ we define the $\H_K$-functional by
\begin{eqnarray*}
\H_K(\X)&=&\int_{X^4}\zeta\bigg(-\frac8K\log\Big[\frac14\sum_{i=1}^4 \cosh\Big(\sqrt{-K}\d(x_i,x_{i+1})\Big)\Big]\\
&&\qquad\quad+\frac8K\log
\Big[
\cosh\Big(\frac12\sqrt{-K}\d(x_2,x_4)\Big)
\cosh\Big(\frac12\sqrt{-K}\d(x_1,x_3)\Big)
\Big]\bigg)\,d\m^4(x_1,x_2,x_3,x_4).\end{eqnarray*}
\end{enumerate}
\end{definition}
Note that $\H_K(\X)\to\H_0(\X)$ for $K\nearrow0$ as well as for $K\searrow0$.

\begin{theorem}
\begin{enumerate}
\item
For each $K\in\R$ the function $\H_K$ is semiconvex and locally  Lipschitz continuous  on $\YY$.
It is globally Lipschitz if $K\not=0$.

\item
Moreover, $\Grad\H_K$ is given explicitly, e.g. for $K=0$ at the point $\X\in\YY$ as the symmetric function $f\in L^2(X^2,\m^2)$ defined by
\begin{eqnarray*}
f(z,z')&=& 4\d(z,z')\cdot\int_{X^2}\bigg[ 2\zeta'\Big(
\d^2(z,z')+\d^2(z',y)+\d^2(y,y')+\d^2(y',z)-
\d^2(z,y)-\d^2(z',y')\Big)\\
&&\quad-\zeta'
\Big(
\d^2(z,y)+\d^2(y,z')+\d^2(z',y')+\d^2(y',z)-\d^2(z,z')-\d^2(y,y')\Big)\bigg]\,d\m^2(y,y')
.\end{eqnarray*}
\item
For each $\X\in \XX^{geo}$ and each $K\in\R$:
\[\X\text{ has globally curvature $\le K$ in the sense of Alexandrov}\quad\Longrightarrow\quad  \H_K(\X)=0.\]
In particular, in the case $K=0$
\[ \X\text{ has globally  curvature $\le 0$ in the sense of Alexandrov}\quad\Longleftrightarrow\quad \H_0(\X)=0.\]
\end{enumerate}
\end{theorem}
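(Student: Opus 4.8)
The plan is to treat the $\H_K$-functional in exactly the same way the excerpt has just treated the $\G_K$-functional, exploiting the fact that $\H_K$ is a polynomial of order $4$ in the sense of Section 7.1. First I would identify the integrand: for $K=0$ the function $\H_0$ arises from formula \eqref{U} with $n=4$ and
\[
u(\xi_{12},\xi_{13},\xi_{14},\xi_{23},\xi_{24},\xi_{34})=\zeta\bigl(\xi_{12}^2+\xi_{23}^2+\xi_{34}^2+\xi_{14}^2-\xi_{13}^2-\xi_{24}^2\bigr),
\]
and analogously for $K\neq0$ with the $\cos^*$- or $\cosh$-expressions in place of the squares. Part (i) then follows from Theorem \ref{thm U}: $\zeta\in C^{1,1}$ is semiconvex and locally Lipschitz, hence so is $u$, and composing with the smooth (bounded-derivative, on the relevant domain) functions $\xi\mapsto\xi^2$ or $\xi\mapsto\cos^*\sqrt K\xi$ etc.\ keeps these properties; global Lipschitzness for $K\neq0$ comes from the boundedness of $\cos,\cosh$ and their derivatives on the admissible range. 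One subtlety worth flagging: for $K>0$ the $\cos^*$ truncation makes $u$ take the value $-\infty$, so one should either restrict to a suitable closed convex subset (as was done with $\YY_K^{per}$ for $\G_K$) or argue that $\zeta$ composed with $-\infty$ gives $+\infty$ and the functional is still semiconvex in the extended-real-valued sense; I would handle this exactly as in Theorem \ref{theoG}(i), approximating $\zeta$ by $\zeta_\epsilon=\zeta/(1+\epsilon\sqrt\zeta)$ and $u$ by $u_\epsilon=\Phi_\epsilon(u)$.

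For part (ii), I would simply invoke Theorem \ref{grad U} with the above $u$ and $n=4$. Writing $u^{\d}_{ij}(x)=\partial_{\xi_{ij}}u((\d(x^p,x^q))_{p<q})$, the chain rule gives $u^{\d}_{i,i+1}(x)=2\d(x^i,x^{i+1})\,\zeta'(\dots)$ for the four ``edge'' pairs of the $4$-cycle and $u^{\d}_{13}(x)=u^{\d}_{24}(x)=-2\d(\dots)\,\zeta'(\dots)$ for the two ``diagonal'' pairs. Summing these six contributions over the $n-2=2$ remaining free variables, as prescribed by the formula in Theorem \ref{grad U}, and then symmetrizing, produces precisely the stated $f(z,z')$ — after collecting the combinatorially equivalent terms (the four edge-pairs contribute the same expression up to relabelling, giving the factor compatible with the displayed ``$2\zeta'(\cdots)$'' term, and the two diagonal-pairs the ``$-\zeta'(\cdots)$'' term). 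This is a routine but slightly tedious bookkeeping exercise; no genuine difficulty.

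Part (iii) is the substantive statement and the place where I expect the real work. The implication ``curvature $\le K$ $\Rightarrow$ $\H_K(\X)=0$'' should go as follows: unwinding the definition, $\H_K(\X)=0$ is equivalent to the pointwise inequality $\zeta(\Psi_K(x_1,x_2,x_3,x_4))=0$ for $\m^4$-a.e.\ quadruple, i.e.\ $\Psi_K\ge0$ $\m^4$-a.e., where $\Psi_K$ is the model-space expression appearing inside $\zeta$; by continuity of $\d$ on a geodesic mm-space this is equivalent to $\Psi_K\ge0$ for \emph{all} quadruples in $\supp\m$. I would then recognize $\Psi_K\ge0$ as exactly a known quadruple-type comparison inequality characterizing upper curvature bounds — the $\mathrm{CAT}(K)$ analogue of the Lebedeva--Petrunin quadruple condition used for $\G_K$ — valid in spaces of curvature $\le K$. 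For $K=0$ the expression
\[
\d^2(x_1,x_2)+\d^2(x_2,x_3)+\d^2(x_3,x_4)+\d^2(x_4,x_1)-\d^2(x_1,x_3)-\d^2(x_2,x_4)\ge0
\]
is the familiar ``$\boxtimes$-inequality'' / Enflo-type roundness inequality that holds in every $\mathrm{CAT}(0)$ space and, conversely, characterizes $\mathrm{CAT}(0)$ among geodesic spaces; the $K\neq0$ versions are its trigonometric counterparts. The main obstacle is thus not the soft functional-analytic part but pinning down and citing (or proving) the correct geometric equivalence: for general $K$ and for the \emph{converse} direction in the case $K=0$, one needs that this four-point inequality, holding for all quadruples in a geodesic space, already forces curvature $\le 0$ (resp.\ $\le K$). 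I would rely on the Lebedeva--Petrunin-type characterization \cite{lp} (the upper-bound analogue), verifying that the specific algebraic form of $\Psi_K$ matches theirs after the substitution $\xi\mapsto\cos\sqrt K\xi$ or $\xi\mapsto\cosh\sqrt{-K}\xi$; checking this matching, and in particular why only the one-directional implication survives for $K>0$ (because of the perimeter/$\cos^*$ truncation, exactly as with $\G_K$ one loses the clean ``iff''), is where I would spend most of the effort.
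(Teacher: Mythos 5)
Your treatment of (i) and (ii) is exactly the paper's: the author writes that the Lipschitz/semiconvexity proof and the gradient formula are ``almost identical'' to the $\G_K$ case, i.e.\ one plugs the order-$4$ polynomial $u(\xi)=\zeta(\xi_{12}^2+\xi_{23}^2+\xi_{34}^2+\xi_{14}^2-\xi_{13}^2-\xi_{24}^2)$ (and its $\cos^*$/$\cosh$ variants) into Theorems \ref{thm U} and \ref{grad U}, with the same $\zeta_\epsilon=\zeta/(1+\epsilon\sqrt\zeta)$ approximation. Your bookkeeping for the six partial derivatives $u^\d_{ij}$ and the symmetrization is the intended computation. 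For (iii) with $K=0$ you also land on the paper's argument: $\H_0(\X)=0$ is equivalent (by continuity of $\d$ on a geodesic mm-space) to the quadrilateral inequality for all quadruples, and the ``iff'' with nonpositive curvature is exactly the Berg--Nikolaev characterization (the paper cites \cite{bn}, not a Lebedeva--Petrunin analogue, but the inequality you name is the right one).

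The genuine gap is the case $K\neq0$ of (iii). You propose to ``rely on'' an upper-curvature analogue of the Lebedeva--Petrunin quadruple characterization and to verify that $\Psi_K$ matches it; no such ready-made statement is available, and the paper does not cite one. Instead it proves the needed implication from scratch in a separate lemma: for a quadruple $(x_1,x_2,x_3,x_4)$ in a space of curvature $\le K$ (say $K=1$ after rescaling), take the midpoint $z$ of $x_1$ and $x_3$, apply the spherical triangle comparison to the triangles $(x_1,x_2,x_3)$ and $(x_1,x_4,x_3)$ to get $\cos(\d(z,x_i))\cos(\tfrac12\d(x_1,x_3))\ge\tfrac12\cos(\d(x_1,x_i))+\tfrac12\cos(\d(x_3,x_i))$ for $i=2,4$, and then use the triangle inequality together with monotonicity and concavity of $\cos$ on $[0,\pi/2]$ to bound $\cos(\tfrac12\d(x_2,x_4))$ from below by $\tfrac12\cos(\d(x_2,z))+\tfrac12\cos(\d(z,x_4))$; multiplying and adding gives the four-point inequality, and the hyperbolic case is identical with $\cosh$ and reversed inequalities. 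Without this (elementary but not citable) lemma your proof of (iii) for $K\neq0$ is incomplete. A second, smaller point: your explanation for why only one implication is claimed for $K\neq0$ --- the $\cos^*$ truncation --- is not right (note that $\G_K$ does get a full ``iff'' for all $K$ despite an analogous truncation via $\YY_K^{per}$); the asymmetry here is simply that the Berg--Nikolaev converse is only available for $K=0$, while for $K\neq0$ the four-point inequality is established as necessary but not known to be sufficient.
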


\begin{proof} (i), (ii) The proof of (local/global) Lipschitz continuity and semiconvexity is almost identical to the previous one for $\G_K$. Also the formula for $\Grad\H$ is derived in completely the same way.

(iii)
Obviously, $\H_0(\X)=0$ is equivalent to
\begin{equation}\label{npc}
\d^2(x_1,x_2)+\d^2(x_2,x_3)+\d^2(x_3,x_4)+\d^2(x_4,x_1)-\d^2(x_1,x_3)-\d^2(x_2,x_4)\ge0
\end{equation}
for all quadruples $(x_1,x_2,x_3,x_4)\in X^4$.  According to a recent characterization by Berg and Nikolaev \cite{bn}, for a geodesic mm-space this in turn is equivalent to globally nonpositive curvature in the sense of Alexandrov.
The claim for general $K\in\R$ follows from the next lemma.
\end{proof}

\begin{lemma}
Let $(X,\d)$ be a geodesic metric space globally of curvature $\le K$  in the sense of Alexandrov for some $K\in\R\setminus\{0\}$.
Then  if $K<0$
 \begin{eqnarray*}
\lefteqn{ 4\,\cosh\Big(\frac12\sqrt{-K}\d(x_2,x_4)\Big)\cdot \cosh\Big(\frac12\sqrt{-K}\d(x_1,x_3)\Big)}\\
&\le&
\cosh\Big(\sqrt{-K}\d(x_1,x_2)\Big)+\cosh\Big(\sqrt{-K}\d(x_2,x_3)\Big)+
 \cosh\Big(\sqrt{-K}\d(x_3,x_4)\Big)+\cosh\Big(\sqrt{-K}\d(x_4,x_1)\Big)
 \end{eqnarray*}
for every quadruple $(x_1,x_2,x_3,x_4)\in X^4$. Analogously, if $K>0$
 \begin{eqnarray*}
\lefteqn{ 4\,\cos\Big(\frac12\sqrt{K}\d(x_2,x_4)\Big)\cdot \cos\Big(\frac12\sqrt{K}\d(x_1,x_3)\Big)}\\
&\ge&
\cos\Big(\sqrt{K}\d(x_1,x_2)\Big)+\cos\Big(\sqrt{K}\d(x_2,x_3)\Big)+
 \cos\Big(\sqrt{K}\d(x_3,x_4)\Big)+\cos\Big(\sqrt{K}\d(x_4,x_1)\Big)
 \end{eqnarray*}
 for every quadruple $(x_1,x_2,x_3,x_4)\in X^4$ with $\d(x_i,x_{i+1})\le\frac\pi{2\sqrt K}$ for each $i=1,\ldots,4$.
\end{lemma}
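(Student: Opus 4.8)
The plan is to deduce the inequality from the $\mathrm{CAT}(K)$ triangle comparison of $(X,\d)$ against the two-dimensional model surface $\M^{2,K}$ of constant curvature $K$, together with the classical median (Apollonius/Stewart) identity on $\M^{2,K}$ and a few elementary estimates for $\cosh$ and $\cos$. Throughout, set $\lambda=\sqrt{|K|}$ and adopt the convention $x_5=x_1$. Fix a quadruple $(x_1,x_2,x_3,x_4)\in X^4$ (in the case $K>0$ satisfying the stated side bounds), pick a geodesic from $x_1$ to $x_3$, and let $m$ be its midpoint. First I would apply the curvature bound to the two geodesic triangles with vertices $x_1,x_2,x_3$ and $x_1,x_4,x_3$. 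For $K<0$ the $\mathrm{CAT}(K)$ comparison is available without restriction; for $K>0$ it is available because the side bound $\d(x_i,x_{i+1})\le \pi/(2\sqrt K)$ forces $\d(x_1,x_3)\le \pi/\sqrt K$, so each of these triangles has perimeter at most $2\pi/\sqrt K$ (the boundary case being disposed of by continuity, see below). Comparison yields $\d(x_2,m)\le \bar d_2$ and $\d(x_4,m)\le \bar d_4$, where $\bar d_2$, resp.\ $\bar d_4$, is the length of the median issued from the vertex corresponding to $x_2$, resp.\ $x_4$, in the comparison triangle in $\M^{2,K}$ whose side lengths are $\d(x_1,x_2),\d(x_2,x_3),\d(x_1,x_3)$, resp.\ $\d(x_3,x_4),\d(x_4,x_1),\d(x_1,x_3)$.

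Next I would evaluate $\bar d_2$ and $\bar d_4$ via the median identity on $\M^{2,K}$: adding the law of cosines for the two sub-triangles cut out by the median (the two angles at its foot being supplementary) gives, in the hyperbolic case $K=-\lambda^2$, the identity $\cosh(\lambda\bar d_2)\,\cosh(\tfrac\lambda2\d(x_1,x_3))=\tfrac12\big(\cosh(\lambda\d(x_1,x_2))+\cosh(\lambda\d(x_2,x_3))\big)$, and likewise for $\bar d_4$; in the spherical case $K=\lambda^2$ the same computation produces the analogous identity with $\cos$ replacing $\cosh$. Since $\cosh$ is increasing on $\R_+$, the comparison $\d(x_j,m)\le \bar d_j$ turns, for $K<0$, into $\cosh(\lambda\d(x_2,m))\le\big(\cosh(\lambda\d(x_1,x_2))+\cosh(\lambda\d(x_2,x_3))\big)/\big(2\cosh(\tfrac\lambda2\d(x_1,x_3))\big)$ together with its analogue for $x_4$; for $K>0$, using that $\cos$ is decreasing on $[0,\pi]$ and that all arguments occurring (including $\lambda\bar d_j$, which is at most $\pi$ since $\bar d_j\le\d(x_1,x_j)+\tfrac12\d(x_1,x_3)\le\pi/\sqrt K$) lie in this range, one gets the two inequalities with $\cos$ everywhere but reversed. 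Summing the two estimates gives $\cosh(\lambda\d(x_2,m))+\cosh(\lambda\d(x_4,m))\le \big(\textstyle\sum_{i=1}^4\cosh(\lambda\d(x_i,x_{i+1}))\big)/\big(2\cosh(\tfrac\lambda2\d(x_1,x_3))\big)$ and, in the spherical case, the reversed counterpart with $\cos$.

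Finally I would bound the left side. Writing $a=\d(x_2,m)$, $b=\d(x_4,m)$, one has $\d(x_2,x_4)\le a+b$ by the triangle inequality, and the elementary identity $\cosh(\lambda a)+\cosh(\lambda b)=2\cosh(\tfrac\lambda2(a+b))\cosh(\tfrac\lambda2(a-b))$ gives, using $\cosh\ge 1$ and monotonicity of $\cosh$, that $\cosh(\lambda a)+\cosh(\lambda b)\ge 2\cosh(\tfrac\lambda2(a+b))\ge 2\cosh(\tfrac\lambda2\d(x_2,x_4))$. Combined with the summed inequality and clearing the positive denominator $2\cosh(\tfrac\lambda2\d(x_1,x_3))$, this yields exactly $4\cosh(\tfrac\lambda2\d(x_2,x_4))\cosh(\tfrac\lambda2\d(x_1,x_3))\le \sum_{i=1}^4\cosh(\lambda\d(x_i,x_{i+1}))$, which is the claim for $K<0$. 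In the spherical case one argues symmetrically, with $\cos(\lambda a)+\cos(\lambda b)=2\cos(\tfrac\lambda2(a+b))\cos(\tfrac\lambda2(a-b))\le 2\cos(\tfrac\lambda2\d(x_2,x_4))$; this is legitimate because the middle term of the summed inequality is nonnegative (all the $\cos(\lambda\d(x_i,x_{i+1}))$ are $\ge 0$ by the side bound), which confines $\tfrac\lambda2(a+b)$ to $[0,\pi/2]$, and because $\tfrac\lambda2\d(x_2,x_4)\le\pi/4$; clearing the nonnegative denominator $2\cos(\tfrac\lambda2\d(x_1,x_3))$ then gives the asserted reversed inequality. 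The degenerate configuration $\d(x_1,x_3)=\pi/\sqrt K$, where the comparison triangles collapse onto a great circle, makes the left side of the target inequality vanish while the right side is nonnegative, so it is covered by a short continuity/approximation argument.

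The step I expect to be the main obstacle is the sign and range bookkeeping in the case $K>0$: one must ensure that each monotonicity step for $\cos$ (decreasing on $[0,\pi]$, values in $[0,1]$ on $[0,\pi/2]$) is invoked only where the relevant arguments genuinely lie in the admissible interval — this is precisely where the hypothesis $\d(x_i,x_{i+1})\le\pi/(2\sqrt K)$ and the derived bounds on $\d(x_1,x_3)$ and on $\bar d_2,\bar d_4$ enter — and that the boundary configurations with a comparison triangle of perimeter exactly $2\pi/\sqrt K$ are disposed of cleanly. The case $K<0$ is essentially unobstructed, since $\cosh$ is globally increasing and bounded below by $1$.
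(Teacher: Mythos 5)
Your argument is correct and takes essentially the same route as the paper's proof: both pass to the midpoint of a geodesic $[x_1,x_3]$, apply the curvature-$\le K$ comparison to the two triangles $(x_1,x_2,x_3)$ and $(x_1,x_4,x_3)$ (your midpoint comparison combined with the median/law-of-cosines identity is exactly the paper's one-line ``triangle comparison'' inequality $\cos(\d(z,x_2))\cos(\tfrac12\d(x_1,x_3))\ge\tfrac12\cos(\d(x_1,x_2))+\tfrac12\cos(\d(x_3,x_2))$), and conclude via the triangle inequality for $\d(x_2,x_4)$ together with concavity of $\cos$ (equivalently, your product-to-sum identity). The only blemish is the claim $\tfrac{\lambda}{2}\d(x_2,x_4)\le\pi/4$, which should read $\le\pi/2$ (via $\d(x_2,x_4)\le\d(x_2,x_3)+\d(x_3,x_4)$) and which is all that is needed; otherwise your range bookkeeping for $K>0$ is if anything more careful than the paper's.
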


\begin{proof} To simplify notation, we first assume $K=1$.
Let a quadruple $(x_1,\ldots,x_4)\in X^4$ be given with $\d(x_i,x_j)\le\frac\pi{2\sqrt K}$ for all $i,j$ and let $z$ be a midpoint of $x_1$ and $x_3$.
Then by global triangle comparison, applied to the triangle $(x_1,x_2,x_3)$
\[\cos\Big(\d(z,x_2)\Big)\cdot\cos\Big(\frac12\d(x_1,x_3)\Big)\ge \frac12 \cos\Big(\d(x_1,x_2)\Big)+\frac12\cos\Big(\d(x_3,x_2)\Big).\]
Considering the triangle $(x_1,x_4,x_3)$ we obtain similarly
\[\cos\Big(\d(z,x_4)\Big)\cdot\cos\Big(\frac12\d(x_1,x_3)\Big)\ge \frac12 \cos\Big(\d(x_1,x_4)\Big)+\frac12\cos\Big(\d(x_3,x_4)\Big).\]
Since $r\mapsto \cos(r)$ is decreasing and concave on the interval $[0,\pi/2]$,
\[ \cos\Big(\frac12\d(x_2,x_4)\Big)\ge
\cos\Big(\frac12\d(x_2,z)+\frac12\d(z,x_4)\Big)\ge
\frac12\cos\Big(\d(x_2,z)\Big)+\frac12\cos\Big(\d(z,x_4)\Big).\]
Altogether this implies
\begin{eqnarray*}
\lefteqn{ \cos\Big(\frac12\d(x_2,x_4)\Big)\cdot \cos\Big(\frac12\d(x_1,x_3)\Big)}\\
&\ge&
 \frac14 \cos\Big(\d(x_1,x_2)\Big)+\frac14\cos\Big(\d(x_3,x_2)\Big)+
 \frac14\cos\Big(\d(x_1,x_4)\Big)+\frac14\cos\Big(\d(x_3,x_4)\Big).
 \end{eqnarray*}
 In the case $K=-1$, the same formulas hold true with all $\cos$ replaced by $\cosh$ and all $\ge$ replaced by $\le$. The general case follows by re-scaling.
\end{proof}

\section{The $\mathcal{F}$-Functional}

\subsection{Balanced spaces}

Given a gauged  measure space $(X,\d,\m)$, we define its \emph{volume growth function}\index{volume growth}  $v: \R_+\times X\rightarrow \R_+$ by
\[
	v_r(x):=\m(B_r(x))
\]
where $B_r(x)=\{y\in X \spec |\d(x,y)|<r\}$.
\begin{definition}
A gauged  measure space $(X,\d,\m)$ is called \emph{balanced}\index{balanced} if there exists a function $v^\star:\R_+\to\R_+$ such that for every $r>0$
\begin{equation*}\label{defbalanced}
	v_r(x)=v^\star_r\quad\text{for $\m$-a.e. }x\in X.
\end{equation*}
\end{definition}
\index{v@$v_r$, $v^\star_r$}

\begin{remarks}
\begin{enumerate}
\item
Being balanced is invariant under homomorphisms of gauged measure spaces (see Proposition 5.6).
\item
A metric  measure space $(X,\d,\m)$ is balanced if and only if for all $r\in\R_+$
\begin{equation*}
	x\mapsto v_r(x) \quad\text{does not depend on $x\in \supp(\m)\subset X$}.
\end{equation*}
\end{enumerate}
\end{remarks}
\begin{proof} (i) as well as the  ``if''-implication in (ii) are obvious. For the converse, note that $v^\star$ has at most countably many discontinuities.
Choose  $r>0$ in which $v^\star$ is continuous.
By the triangle inequality, for all $x$ and all $y\in B_\epsilon(x)$
\[v_{r-\epsilon}(x)\le v_r(y)\le v_{r+\epsilon}(x).\]
Hence,
\[y\mapsto v_r(y)\quad\text{is continuous on }\supp(\m).\]
Thus
\begin{equation}\label{v=v on supp}
v_r(y)=v_r^\star\quad\text{for all }y\in \supp(\m).
\end{equation}
Recall that this holds for all $r>0$ in which $v^\star$ is continuous.
Since $r\mapsto v_r(y)$ is left continuous for each $y\in X$,  (\ref{v=v on supp}) extends to all $r>0$.
\end{proof}

\begin{proposition}
Assume that a gauged measure space $(X,\d,\m)$ is \emph{homogeneous} in the sense that for each pair $(x,y)\in X^2$ there exists a map $\psi:X\to X$ which sends $x$ to $y$
and which preserves measure and gauge. Then $(X,\d,\m)$ is balanced.
\end{proposition}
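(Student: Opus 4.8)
The claim is that homogeneity (in the stated pointwise transitivity sense) implies balancedness. The plan is to fix a radius $r>0$ and show that the function $x\mapsto v_r(x)=\m(B_r(x))$ is $\m$-a.e.\ constant, which is exactly what is required. The natural strategy is to first observe that any gauge-preserving map $\psi:X\to X$ automatically preserves the balls $B_r(\cdot)$ in the sense that $\psi^{-1}(B_r(y))\supseteq B_r(x)$ whenever $\psi(x)=y$ — indeed if $\d(x,z)<r$ then $\d(y,\psi(z))=\d(\psi(x),\psi(z))=\d(x,z)<r$, so $\psi(z)\in B_r(y)$, i.e.\ $\psi(B_r(x))\subseteq B_r(y)$. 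Combined with the measure-preserving property $\psi_*\m=\m$ this gives $v_r(x)=\m(B_r(x))\le\m(\psi^{-1}(B_r(y)))=\m(B_r(y))=v_r(y)$. Since $x,y$ were arbitrary, by symmetry (apply the argument with the roles of $x$ and $y$ reversed, using a map sending $y$ to $x$) we obtain $v_r(x)=v_r(y)$ for \emph{all} $x,y\in X$. Hence $x\mapsto v_r(x)$ is genuinely constant, say equal to some $v^\star_r$, and this holds for every $r>0$; setting $r\mapsto v^\star_r$ to be this common value yields the balancedness condition (in fact with no exceptional null set at all).

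\textbf{Key steps, in order.} First, record the elementary inclusion $\psi(B_r(x))\subseteq B_r(\psi(x))$ for any gauge-preserving $\psi$; this uses only $\d(\psi(a),\psi(b))=\d(a,b)$ and is purely set-theoretic once one is careful that $\d$ here may be a mere gauge rather than a metric — but the definition of $B_r$ only uses $|\d|<r$, so nothing is lost. Second, translate the inclusion into the measure inequality $v_r(x)\le v_r(y)$ via $\psi_*\m=\m$ and monotonicity of $\m$; one should be slightly careful that $B_r(x)$ and $\psi^{-1}(B_r(y))$ are Borel — the former because $\d$ is Borel measurable on $X^2$ (so $y\mapsto\d(x,y)$ is Borel, making $B_r(x)$ Borel), the latter because $\psi$ is measurable. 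Third, invoke the hypothesis twice to get both inequalities and conclude $v_r$ is constant on all of $X$. Fourth, define $v^\star_r$ as that constant and check the displayed equation \eqref{defbalanced} holds trivially (a.e., indeed everywhere). Fifth, note measurability of $r\mapsto v^\star_r$ is automatic since $v^\star_r=v_r(x_0)$ for any fixed $x_0$ and $r\mapsto\m(B_r(x_0))$ is monotone (left-continuous), hence Borel.

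\textbf{Main obstacle.} There is no deep obstacle here; the proposition is essentially a formal consequence of the definitions, and the proof in the paper presumably occupies only a line or two. The only genuine point requiring a moment's care is the measurability bookkeeping for a gauged (as opposed to metric) measure space: $\psi$ is merely assumed measurable, not continuous, and $\d$ is merely an $L^2$ symmetric function, so one must confirm that balls are Borel and that the pushforward identity $\psi_*\m=\m$ can legitimately be applied to the Borel set $B_r(y)$. Once that is granted, the inequality $v_r(x)\le v_r(\psi(x))$ and its reverse are immediate, and the conclusion follows at once. A secondary cosmetic point is whether one wants ``$\m$-a.e.'' or ``everywhere'' in the conclusion — the argument gives everywhere, which is stronger and consistent with the ``if''-direction of the earlier Remark characterizing balancedness for metric measure spaces via $x\mapsto v_r(x)$ being constant on $\supp(\m)$.
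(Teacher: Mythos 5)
Your argument is correct and is essentially the paper's own proof, which simply notes that a measure- and gauge-preserving $\psi$ with $\psi(x)=y$ forces $v_r(x)=v_r(y)$; your extra care about Borel measurability of balls and the pushforward identity is sound but not a new idea. (The only cosmetic difference: since $\d(\psi(\cdot),\psi(\cdot))=\d(\cdot,\cdot)$ gives $B_r(x)=\psi^{-1}(B_r(y))$ exactly, the equality $v_r(x)=v_r(y)$ follows in one step, so the second application of the hypothesis to the reversed pair is not needed.)
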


\begin{proof} The fact that $\psi$ preserves measure and gauge implies $v_r(x)=v_r(\psi(x))$.
\end{proof}

\begin{example}
\begin{description}
\item[\it Discrete Circles.] For $n\in\N$, let
$X=\big\{e^{k2\pi i/n}\subset\mathbb C:\ k=1,\ldots,n\big\}$,
let $\m$  be the uniform distribution on the $n$ points of $X$ and let $\d$ be the graph distance on $X$ (which -- up to a multiplicative constant -- coincides with the induced distance within the unit circle of $\mathbb C$). Then $(X,\d,\m)$ is balanced.

The volume growth $v^\star$ is a step function with values in $\frac{2k-1}n$ for $k=1,\ldots, \lfloor \frac{n+1}2\rfloor$ and (in addition if $n$ is even) 1.
\item[\it Platonic Solids.] Each platonic solid (regarded as a metric measure space with uniform distribution on the vertices and induced graph distance or, alternatively, with distance of ambient Euclidean space) is a balanced space.
\item[\it Discrete Continuum.] The discrete continuum (see \ref{discrete-sphere}) is balanced with volume growth
\[v_r^\star=\left\{\begin{array}{cc}
0,\quad&r\le1\\
1,\quad&r>1.
\end{array}\right.\]
\end{description}
\end{example}

\begin{proposition}
For $\X\in\XX^{length}$ the following are equivalent:
\begin{enumerate}
\item $\X$ is balanced with $v_r^\star=r\wedge1$
\item $\X$ is the circle of length 2 (with uniform distribution).
\end{enumerate}
\end{proposition}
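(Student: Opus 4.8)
The direction (ii) $\Rightarrow$ (i) is an immediate computation: on $\R/2\Z$ with the arc–length metric $\d$ and the uniform probability measure $\m$ (density $\tfrac12$), every ball $B_r(x)$ with $0<r\le1$ is an arc of length $2r$, hence has $\m$–mass $r$, while $B_r(x)=X$ for $r>1$; so $\X$ is balanced with $v^\star_r=r\wedge1$.

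For (i) $\Rightarrow$ (ii) the plan is as follows. By the Remark preceding the proposition we may assume $(X,\d,\m)$ has full support, and since $v^\star_r=r\wedge 1$ is continuous, $v_r(x)=r\wedge1$ for \emph{every} $x\in X$ and every $r>0$. First I extract elementary consequences: $\m(\{x\})\le\inf_r v_r(x)=0$, so $\m$ is non‑atomic; $\{y:\d(x,y)\ge1\}$ is $\m$‑null for each $x$, and as it contains no non‑empty open set in a full‑support space, $\diam X\le1$; and if $\sup_y\d(x,y)<1$ for some $x$, then $v_r(x)=1\neq r$ for $r$ slightly below $1$, so in fact $\sup_y\d(x,y)=1$ for every $x$. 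Next, a maximal $\varepsilon$‑separated set yields disjoint balls of radius $\varepsilon/2$ and mass $\varepsilon/2$, hence at most $2/\varepsilon$ of them; so $X$ is totally bounded, hence (being complete) compact, hence a geodesic space, and the supremum above is attained: every $x$ has an \emph{antipode} $\bar x$ with $\d(x,\bar x)=1$, joined to $x$ by a unit‑speed geodesic.

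The key rigidity input is the identity
\[\d(x,w)+\d(\bar x,w)=1\qquad(\forall\, w\in X).\]
Indeed $(\d(x,\cdot))_*\m=\Leb^1$ on $[0,1]$ (because $\m(B_s(x))=s$), so $\int_X\d(x,w)\,d\m(w)=\tfrac12$, and likewise for $\bar x$; thus $\int_X\bigl(\d(x,w)+\d(\bar x,w)-1\bigr)\,d\m(w)=0$ with a nonnegative integrand, giving $\d(x,w)+\d(\bar x,w)=1$ $\m$‑a.e., hence everywhere by continuity and full support. Consequences: the antipode is unique and $x\mapsto\bar x$ is a fixed‑point‑free isometric involution; and every $w$ lies on a geodesic from $x$ to $\bar x$ (concatenate a geodesic $[x,w]$ with $[w,\bar x]$; the result has length $\d(x,w)+\d(w,\bar x)=\d(x,\bar x)$). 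The decisive step is then to exclude branching: I will show that $X$ has no branch points and no endpoints by a local volume comparison. If a point $p$ carried three arcs issuing from it (in particular, if there were three distinct geodesics from $x$ to $\bar x$ meeting only at the endpoints, or one would‑be endpoint), then for small $\rho$ the ball $B_\rho(p)$ would split into sub‑arcs whose $\m$‑masses are pinned down by the value of $v$ at \emph{interior} points: a sub‑arc of length $\rho$ emanating from $p$ is, up to a null set, the ball $B_{\rho/2}(q)$ about its midpoint $q$ and hence has mass $\rho/2$. Summing over three such sub‑arcs gives $\m(B_\rho(p))=3\cdot\tfrac\rho2\neq\rho$, a contradiction; an endpoint would analogously force $\m(B_\rho(p))=\tfrac\rho2\neq\rho$.

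Granting this, any two distinct geodesics from $x$ to $\bar x$ are disjoint in their interiors (a shared interior point would be a branch point), there are at least two of them (a single one would make $X$ an interval with an endpoint) and at most two (three disjoint initial arcs would make $x$ a branch point); so $X$ is the union of exactly two length‑$1$ geodesics $\gamma,\sigma$ from $x$ to $\bar x$ with disjoint interiors, topologically a circle. Finally, the identity identifies the metric: writing a generic point of $\gamma$ as $\gamma(a)$ and of $\sigma$ as $\sigma(b)$ (arc‑length parameters in $[0,1]$), one has $\overline{\gamma(a)}=\sigma(1-a)$, whence $\d(\gamma(a),\sigma(b))=1-\d(\sigma(1-a),\sigma(b))=1-|1-a-b|=\min(a+b,\,2-a-b)$, which is exactly the arc‑length metric of $\R/2\Z$ (a circle of length $2$); and then $\m(B_r(x))=r=\tfrac12\,\mathrm{length}(B_r(x))$ for all $x,r$ forces $\m$ to be the uniform measure. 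The main obstacle is the branching/endpoint exclusion (making the local structure around a point precise); everything else is bookkeeping once the displayed identity is available.
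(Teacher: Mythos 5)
Your (ii)$\Rightarrow$(i) is fine, and much of (i)$\Rightarrow$(ii) is sound and genuinely differently organized than the paper's proof: the identity $\d(x,w)+\d(\bar x,w)=1$, obtained by combining the first-moment computation $\int_X\d(x,\cdot)\,d\m=\tfrac12$ (from $(\d(x,\cdot))_*\m=\Leb^1$ on $[0,1]$) with the triangle inequality $\d(x,w)+\d(w,\bar x)\ge\d(x,\bar x)=1$, is an attractive device the paper does not use; it hands you the antipodal involution and the fact that every point lies on a diameter-realizing geodesic essentially for free, whereas the paper works with a single diameter-realizing geodesic and the mass of the ball around its midpoint.

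The gap is in the branching exclusion, which is the decisive step. You assert that a geodesic sub-arc of length $\rho$ issuing from $p$ ``is, up to a null set, the ball $B_{\rho/2}(q)$ about its midpoint $q$ and hence has mass $\rho/2$.'' The implication runs the wrong way: the inclusion $\gamma((0,\rho))\subset B_{\rho/2}(q)$ is clear, but the reverse inclusion up to a null set is exactly the statement that the arc carries mass $\ge\rho/2=\m\bigl(B_{\rho/2}(q)\bigr)$, which is what needs proving. A priori the arc could be $\m$-null, with all the mass of $B_{\rho/2}(q)$ sitting elsewhere; nothing established up to that point rules this out, so as written the step is circular. The missing ingredient is precisely the packing lemma the paper proves first: for a unit-speed geodesic $\gamma$ of length $L\le1$, the balls $B_{L/(2n)}\bigl(\gamma(iL/n)\bigr)$, $i=1,\dots,n$, are pairwise disjoint, each of mass $L/(2n)$, and their union lies in the $L/(2n)$-neighborhood of $\gamma$; letting $n\to\infty$ gives $\m(\gamma)\ge L/2$. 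With this, your contradiction at a branch point goes through (three arcs meeting only at $p$, each of mass $\ge\rho/2$, all inside $B_\rho(p)$ of mass $\rho$). Note also that your ``endpoint'' exclusion would need an \emph{upper} bound $\m(B_\rho(p))\le\rho/2$, which the packing lemma does not provide; that case is better disposed of by observing that, by your antipodal identity, every $p$ is an interior point of a geodesic from $x$ to $\bar x$ whenever $0<\d(x,p)<1$, so endpoints simply cannot occur. With these repairs the remaining bookkeeping (two geodesics with disjoint interiors, identification of the metric via $\overline{\gamma(a)}=\sigma(1-a)$, and of $\m$ as the uniform measure) is correct.
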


\begin{proof} Without restriction, assume that $\m$ has full support.
A first consequence of the volume growth is the doubling property for $\m$ and thus the compactness of $X$ (cf. \cite{Gro}, \cite{bbi}).
Since $X$ was assumed to be a length space, we conclude that it is a geodesic space.

Let $\gamma:[0,1]\to X$ be a geodesic of length $L=\d(\gamma_0,\gamma_1)\le1$.
Then for each $n\in\N$
\[\m\left(B_{\frac L{2n}}(\gamma)\right)\ge
\m\left(\bigcup_{i=1}^n B_{\frac L{2n}}\left(\gamma_{\frac in}\right)\right)=
\sum_{i^=1}^n \m\left(B_{\frac L{2n}}\left(\gamma_{\frac in}\right)\right)=
n\cdot v^\star_{\frac L{2n}}=\frac L2.\]
Thus $\m(\gamma)\ge\frac L2$.

According to the volume growth, the diameter  is 1. Thus there exists a pair $(x,y)\in X^2$ such that $\d(x,y)=1$.
Let $\gamma$ be a connecting geodesic. Then \[\m(\gamma)\ge\frac12.\]
(Hence, there exist at most two such geodesics which are `disjoint' in the sense that the restrictions to the open interval $(0,1)$ are disjoint.
If there exist two `disjoint' geodesics then we are done: they will support all the mass.)

 Let $z=\gamma_{1/2}$ be the midpoint of $\gamma$. Then
\[\frac12\le \m\left(\gamma\right)\le \m\left(B_{\frac12}(z)\right)=\frac12.\]
Thus within $B_{\frac12}(z)$ all the mass is supported by $\gamma$. There is no branching.
But at $x$ and $y$, the boundary points of $B_{\frac12}(z)$, other geodesics $\alpha,  \beta$ (of length 1) must start. Otherwise, $v_r(x)=r/2$ and $v_r(y)=r/2$ for all  $r\in(0,1)$.
The diameter bound  requires that $\gamma$ composed with these geodesics $\alpha$ and $\beta$ emanating from $x$ and $y$, resp., constitute a closed curve.
This yields the claim.
\end{proof}

\begin{example}
Let $X={\mathcal I}^\infty$ be the infinite dimensional torus, i.e. the infinite product of
$\mathcal I=\R/\Z$, the circle of length 1.
The 1-dimensional Lebesgue measure $\Leb^1$ on ${\mathcal I}$ induces a Borel probability measure
 $\m=\Leb^\infty$ on the Polish space  $X$.
Given a sequence of positive real numbers $(a_n)_{n\in\N}$, we define a metric $\d$ on $X$ by
\[\d(x,y)=2\sup_{n\in\N}\frac{ \d_1(x_n,y_n)}{a_n}\]
where $\d_1$ denotes the standard  metric on ${\mathcal I}$, i.e. $\d_1(s,t)=\inf_{k\in\Z}|s-t+k|$.
\begin{enumerate}
\item
 Then $(X,\d,\m)$ is balanced with
\[v^\star_r=\prod_{n\in\N} (r\,a_n\wedge1).\]
\item
If $a_n=1$ for all $n$ then $m(B_r(x))=0$ for all $x\in X$ and all $r\in [0,1)$. That is, $(X,\d,\m)$ is balanced with $v^\star_r=0$ for $r<1$ (and of course $v^\star_r=1$ for $r\ge1$).
\item
If $a_n=e^n$ then $(X,\d,\m)$ is balanced with
\[v^\star_r=r^{-\frac12\log r+O(1)}\qquad\text{as }r\to0.\]
Indeed, for each $x\in X$ and $r>0$
\[m(B_r(x))= \prod_{a_n< 1/r}(r\cdot a_n)=\exp\left(\sum_{n<-\log r}(\log r +n)\right)
=\exp\left(-(\log r)^2+\frac12(\log r)^2+O(\log r)\right).\]
\end{enumerate}
\end{example}

Now let us have a closer look on Riemannian spaces which are balanced. We will consider the volume growth $(r,x)\mapsto v_r(x)$ for triples $(X,\d,\m)$ where $X=M$ is a  Riemannian manifold (which always is assumed to be smooth, complete and connected)
equipped with its Riemannian distance $\d$ and its Riemannian volume measure $\m$.
To avoid confusing normalization constants, for the rest of this section we will not require that the measure $\m$ is normalized. Even more, we will not require that it is finite (i.e. we will also allow spaces of infinite volume). The manifold $M$ will be called \emph{balanced} if its volume growth function $\m(B_r(x))$ is independent of $x$.

The favorite examples here are the simply connected $n$-dimensional Riemannian manifolds $\M^{n,{K}}$ of constant sectional curvature ${K}\in\R$.
The model spaces $\M^{n,{K}}$ for ${K}>0$ are rescaled versions of the standard $n$-sphere $\s^n=\M^{n,1}$ whereas for ${K}<0$ they are rescaled versions of the hyperbolic space $\mathbb H^n=\M^{n,-1}$. The space form for ${K}=0$ is the Euclidean space $\R^n=\M^{n,0}$.

\begin{example} For each $n\in\N$ and ${K}\in\R$, the space $\M^{n,{K}}$ is balanced with volume growth
\begin{equation}\label{volspaceform}
v_r^\star=\frac{2\pi^{n/2}}{\Gamma(n/2)}\int_0^r\left(\frac{\sinh(\sqrt{-{K}}t)}{\sqrt{-{K}}}\right)^{n-1}dt
\end{equation}
if ${K}<0$; if ${K}>0$, $\frac{\sinh(\sqrt{-{K}}t)}{\sqrt{-{K}}}$ must be replaced
by
$\frac{\sin(\sqrt{{K}}t\wedge \pi)}{\sqrt{{K}}}$ and if ${K}=0$ by $t$.
\end{example}

Besides model spaces, there exist many other Riemannian examples of balanced spaces.

\begin{example}
\begin{itemize}
			\item
		\emph{Product of spheres}, e.g.\ $M=\s^2\times \s^2$:\\
		Here $v_r(x)=v^\star_r=(2\pi)^2\cdot\big(1-\cos (r\wedge \pi)\big)^2$ for all $x\in M$ and  $r>0$.
\item \emph{Torus} $M=\R^n/\Z^n={\mathcal I}^n$ with ${\cal I}=\R/\Z$ circle of length $1$:\\
		Here $v_r(x)=v^\star_r$ for all $(r,x)\in\R_+\times M$ for some function $v^\star:\R_+\rightarrow \R_+$
		satisfying
		\[
			v^\star_r= \begin{cases}
					c_n r^n & \text{for }0\leq r\leq \frac{1}{2}\\
					1 &			\text{for }r\geq 1.
				\end{cases}
		\]
\end{itemize}
\end{example}

\begin{lemma}[Gray, Vanhecke \cite{gv}] For any
$n$-dimensional Riemannian manifold $(M,g)$ -- equipped with its Riemannian distance $\d$ and its (non-normalized) Riemannian volume measure $\m$ -- the volume growth function admits  the following asymptotic expansion
\begin{equation}
v_r(x)=c_nr^n\cdot\Big(1+b_2(x)r^2+b_4(x)r^4+b_6(x)r^6+{\mathcal O}(r^8)\Big)
\end{equation}
as $r\searrow0$ locally uniformly in  $x\in X$ with $c_n=\frac{\pi^{n/2}}{\Gamma(n/2+1)}$ and explicitly given coefficients $b_2, b_4, b_6$.
In particular,
\begin{itemize}
\item $b_2(x)=-\frac{\scal(x)}{6(n+2)}$ where $\scal(x)$ denotes the scalar curvature at $x\in M$
\item $b_4(x)=\frac1{360 (n+2)(n+4)}
\Big( -3 \|\Riem\|^2(x)+8 \|\Ric\|^2(x)+5\scal^2(x)-18 \Delta\scal(x)\Big)$
\end{itemize}
with $\Riem$ denoting the Riemannian curvature tensor and $\Ric$ the Ricci tensor.
\end{lemma}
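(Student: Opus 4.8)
The plan is to compute $v_r(x)=\m(B_r(x))$ in geodesic normal coordinates centered at $x$, so that the whole $x$-dependence is concentrated in the Taylor expansion of the Riemannian volume density. First I would fix $x\in M$ and take $r$ smaller than the injectivity radius at $x$; on a compact subset of $M$ this radius is bounded below, which will later give the local uniformity. Then $\exp_x$ maps $\{v\in T_xM\spec |v|<r\}$ diffeomorphically onto $B_r(x)$, and fixing an orthonormal basis of $T_xM$ identifies the domain with the Euclidean ball $B_r^{\mathrm{eucl}}\subset\R^n$. The change-of-variables formula gives
\[ v_r(x)=\int_{B_r^{\mathrm{eucl}}}\theta_x(v)\,dv,\qquad \theta_x(v):=\sqrt{\det\big(g_{ij}(\exp_x v)\big)} , \]
and the classical Jacobi-field / Hamilton--Jacobi expansion of the volume density (as in \cite{gv}, or Besse) reads $\theta_x(v)=1-\tfrac16\Ric_{ij}v^iv^j+P_3(v)+P_4(v)+P_5(v)+P_6(v)+{\mathcal O}(|v|^7)$, where $P_k$ is a homogeneous polynomial of degree $k$ in $v$ whose coefficients are universal contractions of covariant derivatives of the curvature tensor at $x$ of total order $k-2$.

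Second, I would integrate this expansion over $B_r^{\mathrm{eucl}}$ term by term. Since $\int_{B_r^{\mathrm{eucl}}}P_k(v)\,dv=r^{n+k}\int_{B_1^{\mathrm{eucl}}}P_k(v)\,dv$, and the reflection $v\mapsto-v$ annihilates the integral of every odd-degree homogeneous polynomial, only even powers of $r$ survive; hence $v_r(x)=c_nr^n\big(1+b_2(x)r^2+b_4(x)r^4+b_6(x)r^6+{\mathcal O}(r^8)\big)$ with $c_n=|B_1^{\mathrm{eucl}}|=\pi^{n/2}/\Gamma(n/2+1)$. Each $b_{2m}(x)$ is then the spherical average of $P_{2m}$ over $\s^{n-1}$, multiplied by the radial moment $\big(\int_0^1 s^{n-1+2m}ds\big)\big/\big(\int_0^1 s^{n-1}ds\big)=\tfrac{n}{n+2m}$, using the standard formula that $\int_{\s^{n-1}}v^{i_1}\cdots v^{i_{2m}}\,d\sigma$ is the symmetrized product of Kronecker deltas times an explicit constant. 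For $m=1$ this gives $\langle\Ric_{ij}v^iv^j\rangle_{\s^{n-1}}=\tfrac1n\scal(x)$, and with the radial factor $\tfrac{n}{n+2}$ one obtains $b_2(x)=-\frac{\scal(x)}{6(n+2)}$. The coefficient $b_4(x)$ comes out the same way from $P_4$: its sphere average is a fixed linear combination of $\|\Riem\|^2(x)$, $\|\Ric\|^2(x)$, $\scal^2(x)$ and $\Delta\scal(x)$ — the last term being the trace $g^{ij}\nabla_i\nabla_j\scal$ that appears in the second-order part of the density expansion — and the stated normalization $\tfrac1{360(n+2)(n+4)}$ drops out after the $m=2$ radial and spherical averaging.

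The local uniformity in $x$ is built in: the injectivity-radius lower bound, the joint smoothness of $(x,v)\mapsto\theta_x(v)$, and Taylor's theorem with remainder for $\theta_x$ are all uniform on compact subsets of $M$, so the ${\mathcal O}(r^8)$ remainder is controlled locally uniformly. The main obstacle I anticipate is the pure bookkeeping behind $b_4$ and especially $b_6$: one must push the volume-density expansion to degree $6$ in $v$, which means integrating the Jacobi equation (equivalently, expanding $\det(\mathrm{d}\exp_x)$) to the corresponding order, and then recognizing the spherically averaged curvature contractions as exactly the invariants listed. This is lengthy but entirely algebraic; since the lemma only quotes \cite{gv} for the precise values of $b_4,b_6$, I would carry out the normal-coordinate reduction and the parity-plus-averaging argument in full, derive $b_2$ explicitly, and cite \cite{gv} for the evaluation of the remaining coefficients.
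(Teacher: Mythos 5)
The paper does not prove this lemma at all: it is quoted verbatim from Gray--Vanhecke \cite{gv}, and the text immediately moves on to specializations of $b_4$. So there is nothing in the paper to compare against; what you have written is a reconstruction of the argument in the cited reference itself, and it is the correct one. Your reduction to the normal-coordinate volume density $\theta_x$, the parity argument killing the odd-degree terms, and the radial/spherical averaging are all sound; in particular the $b_2$ computation checks out exactly ($\langle \Ric_{ij}v^iv^j\rangle=\tfrac1n\scal$ times the radial factor $\tfrac{n}{n+2}$ times $-\tfrac16$ gives $-\scal/(6(n+2))$), and deferring the evaluation of $b_4$ and $b_6$ to \cite{gv} is no weaker than what the paper itself does. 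One small point worth making explicit if you write this out: when you spherically average the degree-$4$ part of $\theta_x$, the second-derivative terms produce both $\Delta\scal$ and the divergence $\nabla^i\nabla^j\Ric_{ij}$, and you need the contracted second Bianchi identity $\nabla^i\nabla^j\Ric_{ij}=\tfrac12\Delta\scal$ to collapse everything onto the single invariant $\Delta\scal$ appearing in the stated formula for $b_4$.
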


%\begin{remarks}
In dimension $n=2$, the coefficient $b_4$ is explicitly given as
\[b_4(x)=\frac1{1440}\Big(\scal^2(x)-3\Delta\,\scal (x)\Big).\]
In $n=3$, it is given as
\[b_4(x)=\frac1{6300}\Big(4\scal^2(x)-2\|\Ric\|^2(x)-9\Delta\,\scal (x)\Big).\]

In dimensions $n\ge 3$, the coefficient $b_4$ can also be expressed as
\begin{equation}
b_4(x)=\frac1{360 (n+2)(n+4)}
\Big( -3 \|\sf W\|^2(x)+C'_n \,\|\mathring\Ric\|^2(x)+C''_n\,\scal^2(x)-18 \Delta\scal(x)\Big)
\end{equation}
with $C'_n=8-\frac3{(n-2)^2}$ and $C''_n=5-\frac3{[2n(n-1)]^2}+\frac8{n^2}$
in terms of the \emph{traceless Ricci tensor}
\begin{equation}
\mathring\Ric=\Ric-\frac\scal n g
\end{equation}
and the \emph{Weyl tensor}
\begin{equation}
\sf W=\Riem -\frac1{n-2} \mathring{\Ric} \circ g-\frac{\scal}{2n(n-1)}g\circ g.
\end{equation}
Indeed (see \cite{Petersen}),
\[\|\Ric\|^2=\|\mathring\Ric\|^2+\frac1{n^2}\scal^2\]
and
\[\|\Riem\|^2=\|\sf W\|^2+\frac1{(n-2)^2}\|\mathring\Ric\|^2+\frac1{[2n(n-1)]^2}\scal^2.\]

\begin{itemize}\item
If $M$ is  conformally flat then the Weyl tensor vanishes \cite{Petersen}.
\item
Conversely, if the Weyl tensor vanishes and $n\ge4$ then $M$ is conformally flat.
\item
If the traceless Ricci tensor vanishes and $n\ge 3$ then $M$ is Einstein (i.e. $\Ric=\lambda g$ for some $\lambda\in\R$).
\end{itemize}
%\end{remarks}

\begin{corollary}
Every balanced Riemannian manifold has constant scalar curvature.
\end{corollary}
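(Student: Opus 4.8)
The plan is to extract the conclusion directly from the Gray--Vanhecke asymptotic expansion recorded in the lemma immediately above. Let $M$ be a balanced Riemannian manifold of dimension $n$, equipped with its Riemannian distance $\d$ and Riemannian volume measure $\m$. By the definition of balancedness for Riemannian manifolds there is a function $v^\star\colon\R_+\to\R_+$ with $v_r(x)=v^\star_r$ for every $r>0$ and every $x\in M$. (If one prefers to start from the weaker, $\m$-a.e.\ form of the condition, one first upgrades it: the Riemannian volume has full support and $x\mapsto v_r(x)=\m(B_r(x))$ is continuous, since $M$ is complete — so $\bar B_r(x)$ is compact by Hopf--Rinow — and the distance sphere $\{y\spec\d(x,y)=r\}$ is $\m$-null by the coarea formula; hence the argument of the Remark following the definition of balancedness applies and gives the identity for all $x$.)

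Next I would pick two arbitrary points $x,x'\in M$ and write down the Gray--Vanhecke expansion at each of them. Applying the lemma to the compact set $\{x,x'\}$, there are a constant $C$ and an $r_0>0$ such that for all $r<r_0$
\[
v_r(x)=c_n r^n\bigl(1+b_2(x)r^2+\mathcal{O}(r^4)\bigr),\qquad v_r(x')=c_n r^n\bigl(1+b_2(x')r^2+\mathcal{O}(r^4)\bigr),
\]
with error terms bounded uniformly over the pair. Since the left-hand sides are both equal to $v^\star_r$, subtracting and dividing by $c_n r^{n+2}$ gives $b_2(x)-b_2(x')=\mathcal{O}(r^2)$, and letting $r\searrow0$ forces $b_2(x)=b_2(x')$. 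As $x,x'$ were arbitrary, $b_2$ is constant on $M$, and since $b_2(x)=-\scal(x)/(6(n+2))$ this says precisely that $\scal$ is constant. (The same comparison at the next orders would, if desired, also give that $b_4$ and $b_6$ are constant, but these are not needed here.)

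I do not expect any real obstacle: all the analytic work has been front-loaded into the Gray--Vanhecke lemma, and what remains is the elementary fact that the Taylor coefficients of $r\mapsto v_r(x)/(c_n r^n)$ are determined by the function. The only point deserving a line of care is the one flagged above, namely that ``balanced'' is to be used in its pointwise-everywhere form; for Riemannian volume measures this is automatic from the continuity of $v_r(\cdot)$ and the fullness of the support, after which the two-point comparison of expansions closes the argument at once.
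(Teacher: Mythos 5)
Your argument is correct and is essentially the paper's proof: the paper likewise observes that if $v_r(x)$ is independent of $x$ then so are the coefficients $b_k(x)$ in the Gray--Vanhecke expansion, and reads off constancy of $\scal$ from $b_2(x)=-\scal(x)/(6(n+2))$. Your two-point comparison with division by $c_nr^{n+2}$ just makes that one-line deduction explicit, and the parenthetical about upgrading the a.e.\ condition is not even needed here, since for Riemannian manifolds the paper defines balanced directly as $\m(B_r(x))$ being independent of $x$.
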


\begin{proof}
If $v_r(x)$ is independent of $x$ then so are the coefficients $b_k(x)$ in the above asymptotic expansion. For $k=2$ this is the claim.
\end{proof}

The converse implication is not true. Even worse: \emph{constant sectional curvature does not imply that $M$ is balanced.}

\begin{example}
Consider the Riemannian manifold
		\[
			M=\mathbb{H}/G
		\]
		obtained as the quotient space of $\mathbb{H}$ under the action of a discrete subgroup $G$
		of isometries of $\mathbb{H}$, acting freely on it. Then $M$ has constant curvature $-1$.\\
		Hence, for each $x\in M$ there exists $R>0$ such that
		\[
			v_r(x)= v^\star_r \qquad \text{for all } r\in [0,R].
		\]
On the other hand, if $M$ is non-compact
		for each $r>0$
		\[
			v_r(x) \rightarrow 0 \quad \text{as }x\rightarrow \infty.
		\]
Note that there also exist such examples $M=\mathbb{H}/G$ which are non-compact but have finite volume,
		see e.g.\ Example 5.7.4 in \cite{Dav}.
\end{example}

\begin{conjecture}

Let $v^\star$ be the volume growth of a given model space $\M^{n,{K}}$ and let $v$ denote the volume growth of another, arbitrary Riemannian manifold $M$.
\begin{enumerate}
\item[\bf (I)] Gray, Vanhecke (1979):
\[\forall x\text{ as }r\to0:\ v_r(x)=v^\star_r+{o}(r^{n+4})\qquad\Longleftrightarrow\qquad M\text{ has sectional curvature ${K}$ and dimension $n$}\]
\item[\bf (II)] Moreover:
\[\forall x, \forall r>0: \ v_r(x)=v^\star_r\qquad\Longleftrightarrow\qquad M=\M^{n,{K}}.
\phantom{has sectional curvature}
\]
\end{enumerate}
\end{conjecture}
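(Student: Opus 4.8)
The two ``$\Leftarrow$'' implications should be the routine half. If $M$ has dimension $n$ and constant sectional curvature $K$, then $M$ is locally isometric to $\M^{n,K}$, so for $r$ below the injectivity radius at $x$ one has the exact identity $v_r(x)=v^\star_r$, which gives (I) and, when $M=\M^{n,K}$ (no nontrivial deck group, hence no injectivity cut off of the model's diameter), also the ``$\Leftarrow$'' half of (II). For the ``$\Rightarrow$'' part of (I) the plan is to feed the hypothesis into the Gray--Vanhecke asymptotic expansion stated above,
\[
v_r(x)=c_m\,r^m\Big(1+b_2(x)r^2+b_4(x)r^4+{\mathcal O}(r^6)\Big),\qquad m=\dim M,
\]
and compare it coefficientwise with the same expansion for $\M^{n,K}$ (where $m=n$, $b_2\equiv b_2^\star=-\frac{n(n-1)K}{6(n+2)}$, and $b_4\equiv b_4^\star$ equals the space-form value with ${\sf W}=0$, $\mathring\Ric=0$). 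Matching the leading term forces $m=n$; matching the $r^{n+2}$-coefficient forces $b_2(x)=b_2^\star$ for all $x$, i.e.\ $\scal\equiv n(n-1)K$ is constant and equal to the model's scalar curvature; and matching the $r^{n+4}$-coefficient -- which is exactly what the error term $o(r^{n+4})$ buys -- forces $b_4(x)=b_4^\star$ for all $x$.

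For $n=1$ there is nothing to prove, and for $n=2$ constancy of $\scal$ already is constancy of the sectional curvature, so (I) is complete in those dimensions using only $b_2$. For $n\ge 3$ one inserts $\Delta\scal=0$ into the Weyl form of $b_4$; since $\M^{n,K}$ has ${\sf W}=0$ and $\mathring\Ric=0$, the equality $b_4(x)=b_4^\star$ collapses to the pointwise relation
\[
3\,\|{\sf W}\|^2(x)=C'_n\,\|\mathring\Ric\|^2(x)\qquad(\forall\,x\in M),\qquad C'_n=8-\frac{3}{(n-2)^2}>0 .
\]
What remains is to deduce ${\sf W}\equiv 0$ and $\mathring\Ric\equiv 0$, i.e.\ that $M$ is conformally flat and Einstein; for $n\ge 3$ these two together force constant sectional curvature, necessarily $K$ by the value of $\scal$, completing (I).

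This last step is the genuine obstacle, and it is why the statement is posed as a conjecture: the single scalar identity above does not by itself separate the two nonnegative quantities $\|{\sf W}\|^2$ and $\|\mathring\Ric\|^2$, and a priori one must still exclude candidates such as suitably scaled Riemannian products (e.g.\ $S^p\times S^q$) on which $\scal$ is constant and the $b_4$-relation might be arranged to hold. The natural continuation is therefore to bring in the higher Gray--Vanhecke coefficients $b_6,b_8,\dots$ (still computable from the same heat-kernel-type machinery, though their vanishing consequences become progressively more delicate), or to combine the pointwise identity with a global argument when $M$ has finite volume (integration by parts against $\scal$, a maximum principle for an appropriate elliptic quantity), so as to extract the Einstein condition. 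I expect this to be the hardest part by a wide margin; everything else is bookkeeping with the expansion.

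Granting (I), part (II) should follow by a covering-space volume comparison. If $v_r(x)=v^\star_r$ for all $r>0$, then in particular the small-$r$ asymptotics hold, so by (I) $M$ has dimension $n$ and constant sectional curvature $K$; hence $M=\M^{n,K}/\Gamma$ with universal covering $\pi:\M^{n,K}\to M$ for a group $\Gamma$ of isometries acting freely and properly discontinuously. Suppose $\Gamma\neq\{1\}$, pick $\gamma\in\Gamma\setminus\{1\}$ and $\tilde x\in\M^{n,K}$, and set $x=\pi(\tilde x)$. Since $\pi$ is a local isometry one checks $B_r(x)=\pi\big(B_r(\tilde x)\big)$, so $v_r(x)=\m\big(B_r(x)\big)\le\m\big(B_r(\tilde x)\big)=v^\star_r$, with strict inequality once $r$ is large enough that some pair of distinct points $\tilde z\neq\gamma\tilde z$ (take $\tilde z$ near a midpoint of $[\tilde x,\gamma\tilde x]$, so that both it and $\gamma\tilde z$ lie in $B_r(\tilde x)$) get identified by $\pi|_{B_r(\tilde x)}$, which then fails injectivity on a set of positive measure. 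This contradicts $v_r(x)=v^\star_r$, so $\Gamma=\{1\}$ and $M=\M^{n,K}$. (In the case $n\le 3$ and $K>0$ the conclusion of (II) is already available, as noted earlier.)
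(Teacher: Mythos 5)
Your proposal is sound and, for part (I), follows exactly the route the paper itself takes in the theorem immediately following this conjecture: match the Gray--Vanhecke coefficients, get $\dim M=n$ and $\scal\equiv n(n-1)K$ from the orders $r^n$ and $r^{n+2}$, and reduce the order-$r^{n+4}$ information (with $\Delta\scal=0$) to the single pointwise identity $3\|{\sf W}\|^2=\big(8-\tfrac{3}{(n-2)^2}\big)\|\mathring\Ric\|^2$, which is precisely the paper's equation (8.6). You are also right to flag the remaining step --- separating the two nonnegative quantities $\|{\sf W}\|^2$ and $\|\mathring\Ric\|^2$ --- as the genuine obstruction: this is exactly why the statement is posed as a conjecture, and the paper only closes the gap under the extra hypotheses of Theorem 8.13 ($n\le 3$, conformally flat, Einstein, or a one-sided Ricci bound $\Ric\gtrless (n-1)Kg$), each of which kills one of the two terms and hence forces both to vanish. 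One small improvement you could make: in dimension $3$ the Weyl tensor vanishes identically, so your identity already gives $\mathring\Ric\equiv 0$ there without further input --- this is how the paper settles case $n=3$, whereas you only dispose of $n\le 2$ via $b_2$. For part (II) you genuinely diverge from the paper: the paper invokes Bishop--Gromov rigidity (equality in the volume comparison for all $r$ and $x$ characterizes the model space), while you run a covering-space volume count on $M=\M^{n,K}/\Gamma$ to show $\Gamma$ is trivial. Both are correct (your argument needs completeness of $M$ to apply Killing--Hopf, which the paper assumes throughout); yours is more elementary and self-contained, the paper's is one line given the comparison theorem.
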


\begin{theorem}
The Conjectures (I) and (II) are true in each of the following cases
\begin{enumerate}
\item $n\le 3$
\item $M$ is conformally flat
\item $M$ is an Einstein manifold
\item $M$ satisfies the uniform lower bound $\Ric_x\ge (n-1){K}$ %for the Ricci curvature
\item $M$ satisfies the uniform upper bound $\Ric_x\le (n-1){K}$.% for the Ricci curvature.
\end{enumerate}
\end{theorem}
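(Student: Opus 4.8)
The plan is to read off the two nontrivial Taylor coefficients of the Gray--Vanhecke expansion and then use each of the hypotheses (i)--(v) to turn a single algebraic relation between the Weyl tensor and the traceless Ricci tensor into the vanishing of both. The implication ``$\Leftarrow$'' in (I) and (II) is disposed of first: if $M$ has constant sectional curvature $K$ and dimension $n$ it is locally isometric to $\M^{n,K}$, so $v_r(x)=v^\star_r$ for $r$ below the injectivity radius at $x$, hence certainly $v_r(x)=v^\star_r+o(r^{n+4})$; and if $M=\M^{n,K}$ this identity holds for every $r$.

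For the direction ``$\Rightarrow$'' of (I), assume $v_r(x)=v^\star_r+o(r^{n+4})$ for every $x$. Since the leading terms of the two growth functions are $c_{\dim M}\,r^{\dim M}$ and $c_n r^n$, comparison forces $\dim M=n$. Inserting the Gray--Vanhecke expansion for $v_r(x)$ and the corresponding one for $v^\star_r$ (for which $\scal\equiv n(n-1)K$, $\Delta\scal\equiv0$, $\mathring\Ric\equiv0$ and ${\sf W}\equiv0$) and matching the coefficient of $r^{n+2}$ gives $b_2(x)\equiv b_2^\star$, i.e. $\scal(x)\equiv n(n-1)K$; in particular $M$ has constant scalar curvature and $\Delta\scal\equiv0$. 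Matching the coefficient of $r^{n+4}$ gives $b_4(x)\equiv b_4^\star$; plugging in $\scal\equiv n(n-1)K$, $\Delta\scal\equiv0$ and the decomposition of $b_4$ in terms of ${\sf W}$, $\mathring\Ric$ and $\scal$ recalled before the theorem, this collapses (for $n\ge3$) to the pointwise identity $3\,\|{\sf W}\|^2(x)=C'_n\,\|\mathring\Ric\|^2(x)$ with $C'_n=8-3/(n-2)^2>0$.

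Now the five cases are treated in turn. For $n=2$ (part of (i)) the relation $\scal\equiv 2K$ already says the Gauss curvature is $\equiv K$; for $n=3$ (also part of (i)) the Weyl tensor vanishes identically, so the fourth-order identity forces $\mathring\Ric\equiv0$, i.e. $M$ is Einstein. If $M$ is conformally flat (ii) then ${\sf W}\equiv0$, and since $C'_n>0$ again $\mathring\Ric\equiv0$. If $M$ is Einstein (iii) then $\mathring\Ric\equiv0$, whence ${\sf W}\equiv0$. If $\Ric_x\ge(n-1)K$ (iv) or $\Ric_x\le(n-1)K$ (v), then, using $\operatorname{tr}\big(\Ric-(n-1)Kg\big)=\scal-n(n-1)K=0$, the symmetric $2$-tensor $\pm(\Ric-(n-1)Kg)$ is nonnegative with vanishing trace, hence zero, so $M$ is Einstein and we are back to case (iii). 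In every case ${\sf W}\equiv0$ and $\mathring\Ric\equiv0$, so by the curvature decomposition $\Riem=\frac1{n-2}\mathring\Ric\circ g+\frac{\scal}{2n(n-1)}g\circ g+{\sf W}$ the tensor $\Riem$ equals $\frac{\scal}{2n(n-1)}\,g\circ g$, the curvature tensor of a space of constant sectional curvature $\scal/(n(n-1))=K$; this proves (I) in each of the cases. For (II), the hypothesis of (I) holds a fortiori, so $M$ is a complete connected manifold of constant curvature $K$ and dimension $n$; by Killing--Hopf $M=\M^{n,K}/\Gamma$ with $\Gamma$ a discrete group acting freely. If $\gamma\in\Gamma\setminus\{1\}$, choose $x\in M$ with lift $\tilde x$ and put $\ell=\d(\tilde x,\gamma\tilde x)\in(0,\infty)$ (finite also for $K>0$, since $\M^{n,K}$ is then bounded); for $r>\ell$ the covering projection is noninjective on $B_r(\tilde x)$, hence $v_r(x)=\m\big(\pi(B_r(\tilde x))\big)<\m\big(B_r(\tilde x)\big)=v^\star_r$, contradicting $v_r(x)=v^\star_r$. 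Thus $\Gamma=\{1\}$ and $M=\M^{n,K}$, giving (II).

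The main obstacle is the fourth-order step: the identity $b_4(x)=b_4^\star$ only yields the proportionality $3\,\|{\sf W}\|^2=C'_n\,\|\mathring\Ric\|^2$, and nothing in the general hypothesis forces both sides to vanish --- which is precisely why the proof must branch into the cases (i)--(v), each supplying the missing information (vanishing of $\mathring\Ric$, vanishing of ${\sf W}$, automatic vanishing of ${\sf W}$ in low dimension, or a Ricci pinching that yields the Einstein condition) together with the classical facts that an Einstein manifold of dimension $3$, and a conformally flat Einstein manifold of any dimension, have constant sectional curvature. The remaining ingredients --- the bookkeeping in the Gray--Vanhecke expansion and the covering-space volume comparison --- are routine.
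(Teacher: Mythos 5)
Your proof is correct and follows essentially the same route as the paper: match the Gray--Vanhecke coefficients to get $\dim M=n$ and $\scal\equiv n(n-1)K$, reduce the fourth-order coefficient to the relation $3\|{\sf W}\|^2=C_n'\|\mathring\Ric\|^2$, and use each hypothesis (i)--(v) to kill one side and hence both, so that $\Riem=\frac{\scal}{2n(n-1)}\,g\circ g$. The only (minor) divergence is in Conjecture (II), where the paper invokes Bishop--Gromov rigidity while you argue directly via Killing--Hopf and a covering-space volume count; both are valid.
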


\begin{proof} Conjecture (I) has been proven by Gray and Vanhecke in \cite{gv}.
Being unaware of this result, an independent proof of it as well as a proof of Conjecture (II) has been proposed to the author by Andrea Mondino (personal communication, May 2012).
For the convenience of the reader, we sketch the arguments for both conjectures.

According to the asymptotic formula for the volume growth (up to order 2), the assumption on the local coincidence of the volume growth of $M$ and $\M^{{K},n}$ implies
\begin{itemize}
\item $\dim_M=n$
\item $\scal(x)= \scal^\star=n (n-1)\,{K}$ for all $x$.
\end{itemize}
Taking into account the $4^{th}$-order term of the volume growth, it yields
\[-3 \|\Riem\|^2(x)+8 \|\Ric\|^2(x)=
-3 \|\Riem^\star\|^2+8 \|\Ric^\star\|^2\] or equivalently
\begin{equation}\label{W,R}
-3 \|\sf W\|^2(x)+\left(8-\frac3{(n-2)^2}\right) \|\mathring\Ric\|^2(x)=
-3 \|\sf W^\star\|^2+\left(8-\frac3{(n-2)^2}\right) \|\mathring\Ric^\star\|^2.
\end{equation}
Now assume (iii), (iv) or (v). Since $\scal(x)= n (n-1)\,{K}$, each of these assumptions implies that
$\mathring\Ric=0$. Anyway, $\sf W^\star=0$ and $\mathring\Ric^\star=0$. Hence, according to (\ref{W,R}), $\sf W=0$ and thus $R=R^\star=\frac{\scal^\star}{2n(n-1)}g\circ g$.

Next assume (ii), i.e. $M$ is  conformally flat. Then $\sf W=0$. Since $\sf W^\star=0$ and $\mathring\Ric^\star=0$, it implies $\mathring\Ric=0$ and thus  $R=R^\star=\frac{\scal^\star}{2n(n-1)}g\circ g$.

The case (i) follows from the explicit formulas for the coefficient $b_4$ in dimensions 2 and 3.

\medskip

To prove the validity of Conjecture (II) in all these cases, finally, assume that $M$ has constant sectional curvature ${K}$ and dimension $n$. Then by the Bishop-Gromov volume comparison theorem
 \[v_r(x)\le v^\star_r\]
 for all $r$ and $x$. Moreover, equality (for all $r$ and $x$) holds true if and only if $M$ is the model space $\M^{n,{K}}$.
\end{proof}
\begin{remark}
Within the larger frame of Finsler manifolds $M$, Conjectures (I) and (II) are wrong. In fact, every $n$-dimensional normed space equipped with a multiple of the $n$-dimensional Lebesgue measure is balanced -- and after appropriate choice of the normalizing constant -- has the same volume growth as the Euclidean space $\R^n$.
\end{remark}

\subsection{The $\F$-Functional and its Gradient Flow}

Now let us fix a balanced space $\X^\star\in\ol\XX$ (with volume growth $v^\star$) as well as  a Borel function $\rho:\R_+\rightarrow \R_+$ with $\rho_r>0$ for all $r$ and
$\int_0^\infty (r^2+r^4)\rho_r dr <\infty$.
We regard $\X^\star$ as a  ``model space'' within the category of pseudo metric measure spaces. The downward gradient flow for the $\F$-functional to be defined below -- either on $\ol\XX$ or on $\YY$ -- will push any other space $\X$ towards $\X^\star$.

Define $\F:\YY\rightarrow\R_+$ by
\[
	\F(\X)= \frac12\int_0^\infty \int_X \left[\int_0^r\Big(v_t(x)-v^\star_t\Big)\,dt\right]^2 d\m(x) \rho_r dr
\]
where
$v_r(x)=\m(B_r(x))$ for $\X=\auf X,\d,\m\zu$. Recall that  $B_r(x)=\{y\in X:\ |\d(x,y)|<r\}$.

\begin{theorem}\label{theoF}
	\begin{enumerate}
		\item\label{globminbal}
			Each global minimizer $\X$ of $\F$ is balanced with
			\[
				\m(B_r(x))=v^\star_r \qquad \text{for all }r\in [0,\infty) \text{ and  $\m$-a.e. }x\in X.
			\]
		\item\label{flipsemiconv}
			The function $\F:\YY\rightarrow \R_+$ is Lipschitz continuous and semiconvex.
			More precisely, it is $\kappa$-convex with
			$
				\kappa= -\sup_{r>0}\big[r\,\rho_r\big]
			$
			and Lipschitz continuous with
			$
				\Lip(\F) \leq  \int_0^\infty r\, \rho_r dr.
			$
\item The ambient gradient of $-\F$ at a point $\X=\auf X,\d,\m\zu\in\ol\XX$ is given by $\Grad(-\F)(\X)=\f\in L^2_s(X^2,\m^2)$ with
	\[
		\f(x,y) =\int_0^\infty \left( \frac{v_r(x)+v_r(y)}{2}-v^\star_r\right) \bar{\rho}\big(r\vee\d(x,y)\big)dr
	\]
where $\bar{\rho}(a)= \int_a^\infty  \rho_r dr$.
		\end{enumerate}
\end{theorem}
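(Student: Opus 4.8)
The statement has three parts, and I would tackle them in the order (ii), (iii), (i), since the formula for the gradient in (iii) will feed back into a cleaner verification of (ii), and (i) is an easy consequence of the non-negativity and the shape of the integrand.

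For part (ii), the plan is to recognize $\F$ as a nested polynomial of the type covered in Section 7.2. Writing $v_r(x)=\int_X \mathbf 1_{\{|\d(x,y)|<r\}}\,d\m(y)$, one has
\[
\F(\X)=\int_X U_r\!\left(\int_X \eta_r(\d(x,y))\,d\m(y)\right)\cdots
\]
only after an outer integration in $r$; more precisely $\F(\X)=\int_0^\infty \U_r(\X)\,\rho_r\,dr$ where $\U_r(\X)=\tfrac12\int_X\big[\int_0^r(v_t(x)-v^\star_t)\,dt\big]^2\,d\m(x)$ is itself a nested polynomial in $\d$ with outer function $U_r(w)=\tfrac12(w-c_r)^2$-type and inner functions built from $t\mapsto\mathbf 1_{\{\cdot<t\}}$. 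The subtlety is that the inner functions $\eta_r$ here are indicator functions, hence not $C^1$; so I cannot literally invoke Lemma~\ref{derivf}. Instead I would argue directly on a geodesic $\X_t=\auf X_0\times X_1,(1-t)\d_0+t\d_1,\ol\m\zu$, writing $v^{(t)}_r(x)=\ol\m(\{y:\ (1-t)\d_0(x_0,y_0)+t\d_1(x_1,y_1)<r\})$ and observing that $t\mapsto\int_0^r(v^{(t)}_s(x)-v^\star_s)\,ds = \int_0^r\ol\m(\{y: \d^{(t)}(x,y)<s\})\,ds=\int_X (r-\d^{(t)}(x,y))^+\,d\ol\m(y)$ — and the function $(a,b)\mapsto (r-(1-t)a-tb)^+$ is convex and $1$-Lipschitz in $(a,b)$, hence $t\mapsto \int_0^r(v^{(t)}_s(x)-v^\star_s)\,ds$ is convex with Lipschitz constant controlled by $\DD(\X_0,\X_1)$. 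Squaring, integrating against $d\m(x)\,\rho_r\,dr$, and using the elementary fact that $t\mapsto \tfrac12\phi(t)^2$ has second derivative $\ge -\lambda\cdot\phi_{\max}$ when $\phi$ is $\lambda$-Lipschitz plus a convexity correction, one extracts $\kappa=-\sup_r[r\rho_r]$ (the factor $r$ enters because $(r-\d)^+\le r$, so the relevant $L^\infty$-bound on the inner integrand is $r$). The Lipschitz bound $\Lip(\F)\le\int_0^\infty r\rho_r\,dr$ comes the same way, estimating $|\tfrac{d}{dt}\F(\X_t)|$ by $\int_0^\infty r\rho_r\,dr\cdot\DD(\X_0,\X_1)$ via Cauchy--Schwarz in $\ol\m$.

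For part (iii), I would compute the directional derivative $D_g(-\F)(\X)$ for $g\in L^2_s(X^2,\m^2)$ along $\EExp_\X(tg)=\auf X,\d+tg,\m\zu$. The key identity is again
\[
\int_0^r\big(v^{(t)}_s(x)-v^\star_s\big)\,ds=\int_X\big(r-\d(x,y)-tg(x,y)\big)^+\,d\m(y)-\int_0^r v^\star_s\,ds,
\]
whose $t$-derivative at $t=0$ is $-\int_{\{\d(x,y)<r\}} g(x,y)\,d\m(y)$ (differentiating under the integral, the derivative of $(r-\d-tg)^+$ at $t=0$ being $-g\cdot\mathbf 1_{\{\d<r\}}$ off the measure-zero set $\{\d=r\}$; here one uses that $\m$ has no atoms, or more simply that $\{(x,y):\d(x,y)=r\}$ has $\m^2$-measure zero for a.e. $r$). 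Then by the chain rule,
\[
D_g(-\F)(\X)=\int_0^\infty\!\!\int_X\Big[\int_0^r(v_s(x)-v^\star_s)\,ds\Big]\cdot\Big(\int_{\{\d(x,y)<r\}} g(x,y)\,d\m(y)\Big)d\m(x)\,\rho_r\,dr.
\]
Now I symmetrize in $x,y$, swap the order of integration to bring the $r$-integral inside, and use Fubini to write the $x$-integral as $\int_0^r(v_s(x)-v^\star_s)\,ds=\int_X(r-\d(x,z))^+\,d\m(z)-\int_0^r v^\star_s\,ds$. After collecting, the coefficient of $\tfrac12 g(x,y)$ becomes $\int_0^\infty (v_r(x)+v_r(y)-2v^\star_r)\,[\text{something}]\,dr$; the constraint $\{\d(x,y)<r\}$ forces $r>\d(x,y)$, which combined with an integration by parts in $r$ (using $\bar\rho(a)=\int_a^\infty\rho_r\,dr$, so $\bar\rho'=-\rho$) produces exactly the weight $\bar\rho(r\vee\d(x,y))$. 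This is the bookkeeping-heavy step. Having matched $D_g(-\F)(\X)=\langle \f,g\rangle_{L^2(X^2,\m^2)}$ with the claimed $\f$, and checking the same identity for $g$ living on a different representative via a coupling leaving $\d$ invariant (as in Theorem~\ref{grad U}), I conclude $\Grad(-\F)(\X)=\f$ by the uniqueness of the ambient gradient. Finally, part (i): if $\X$ is a global minimizer then $\F(\X)\le\F(\X^\star)=0$, and since the integrand of $\F$ is pointwise nonnegative and $\rho_r>0$ for all $r$, we must have $\int_0^r(v_t(x)-v^\star_t)\,dt=0$ for all $r$ and $\m$-a.e. $x$; differentiating in $r$ (both sides being right-continuous in $r$, and $v^\star$ having at most countably many discontinuities) gives $v_r(x)=v^\star_r$ for all $r$ and $\m$-a.e. $x$, i.e. $\X$ is balanced.

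\textbf{Main obstacle.} The genuine difficulty is the non-smoothness of the indicator $\eta_r=\mathbf 1_{\{\cdot<r\}}$: Lemma~\ref{derivf} does not apply off the shelf, so both the semiconvexity estimate in (ii) and the differentiation under the integral sign in (iii) must be justified by hand, exploiting that the \emph{primitive} $a\mapsto(r-a)^+$ \emph{is} Lipschitz and convex (this is why the cumulative quantity $\int_0^r v_s\,ds$, rather than $v_r$ itself, is the right object), and that $\{\d=r\}$ is $\m^2$-null for a.e. $r$. Once this regularization-by-integration trick is in place, the remaining work — the Fubini rearrangements and the integration by parts yielding $\bar\rho(r\vee\d)$ — is routine but must be done carefully to land the $r\vee\d(x,y)$ in the right slot.
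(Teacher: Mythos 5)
Your overall route coincides with the paper's: part (i) is the same non-negativity argument (with the passage from a.e.\ $r$ to all $r$ via continuity of $r\mapsto w_r(x)=\int_0^r v_t(x)\,dt$), part (iii) is the same computation — the paper simply invokes Corollary \ref{cor47a} for nested polynomials with $U_r(a)=\frac12(a-w^\star_r)^2$, $\eta_r(a)=(r-|a|)^+$ and then performs exactly your Fubini step $\int_0^\infty(w_r-w^\star_r)1_{[0,r)}(\d)\rho_r\,dr=\int_0^\infty(v_t-v^\star_t)\bar\rho(t\vee\d)\,dt$ — and part (ii) uses the same ``differentiate the primitive $(r-|a|)^+$ rather than the indicator'' device. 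Your identification of the main obstacle (non-smoothness of $\eta_r$, cured by working with the cumulative quantity) is exactly where the paper's proof also does its real work.

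One step of your plan would fail as literally stated, though it is repairable within your framework. For the $\kappa$-convexity you invoke ``$\frac{d^2}{dt^2}\frac12\phi^2\ge-\lambda\cdot\phi_{\max}$ for $\phi$ convex and $\lambda$-Lipschitz.'' Applied with $\phi(t)=w^{(t)}_r(x)-w^\star_r$, $\phi_{\max}\le r$ and $\lambda=\int_X|\d_1-\d_0|\,d\ol\m(y)$, and then integrated against $d\m(x)\,\rho_r\,dr$, this yields a lower bound of order $-\big(\int_0^\infty r\rho_r\,dr\big)\cdot\DD(\X_0,\X_1)$, i.e.\ \emph{linear} in $\DD$, which does not imply $\kappa$-convexity (for small $\DD$ a linear lower bound is weaker than a quadratic one). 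The missing observation is that the singular measure $\phi''_{r,x}(dt)=\int_X|\d_1-\d_0|(x,y)\,\delta_{t_r(x,y)}(dt)\,d\ol\m(y)$ is supported where $\d^{(t)}(x,y)=r$; integrating $r\rho_r\,dr$ against it therefore performs a change of variables $r\leftrightarrow t$ with Jacobian $|\d_1-\d_0|$, so that
\[
\int_0^\infty r\rho_r\,\phi''_{r,x}(dt)\,dr=\Big[\int_X(\d_1-\d_0)^2(x,y)\,\d^{(t)}(x,y)\,\rho_{\d^{(t)}(x,y)}\,d\ol\m(y)\Big]\,dt\le\sup_{r>0}[r\rho_r]\int_X(\d_1-\d_0)^2\,d\ol\m(y)\,dt,
\]
and only then does integration over $x$ produce $\sup_r[r\rho_r]\cdot\DD^2(\X_0,\X_1)$. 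This localization in $r$ is precisely what the paper encodes in its term $\rho_r\,d(\delta_{\d(x,y)}+\delta_{-\d(x,y)}-2\delta_0)(r)$. With that repair your argument delivers the stated constant; the rest of the proposal is sound.
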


\begin{proof}
	(\ref{globminbal}) Since we assumed that $v^\star_r$ is the volume growth of
	$\X^*$, the function $\F$ will attain its global minimum $0$ at least at the point $\X^*$. For any
	other minimizer $\X$, it immediately follows that
	\[
		w_r(x)=w^\star_r
	\]
	for $\m$-a.e.\ $x\in X$ and a.e.\ $r\geq 0$
where
\[w^\star_r:= \int_0^r  v^\star_t dt ,\qquad
w_r(x)=\int_0^r  v_t(x) dt.\]
Indeed, this actually holds for each $r>0$ since for every $x\in X$ the function  $r\mapsto w_r(x)$ is continuous. (It is obtained as the anti-derivative of a function $r\mapsto v_r(x)$ which itself is non-decreasing and left continuous.)
With the same argument, $r\mapsto w^\star_r$ is seen to be continuous.
\\
	(\ref{flipsemiconv}) Note that $\F$ can be written as
	$
		\F(\X)= \int_0^\infty \F_r(\X) \rho_r dr
	$
	with $\F_r(\X)=\int_X U_r \left( \int_X \eta_r(\d(x,y))d\m(y) \right) d\m(x)$ as in \eqref{formulaF}
	if one chooses
	\[
		U_r(a)=\frac12(a-w^\star_r)^2, \quad \eta_r(a)= \left(r-|a|\right)^+.
	\]
	For each geodesic $\geod{0}{1}$ emanating from $\X$
	\[
		\frac{d}{dt}\F(\X_t)\Big|_{t=0} =  \int_0^\infty \int_X \int_X \Big(w_r(x)-w^\star_r\Big)\cdot
\Big(1_{(-r,0]}(\d(x,y))  -1_{[0,r)}(\d(x,y))\Big)\cdot
								\Big[\d_1(x,y)-\d(x,y)\Big]\,
								 d\m(y)\, d\m(x)\, \rho_r\, dr
	\]
	and thus
	\begin{align*}
		|\nabla\F(\X)| &\leq  \left( \int_X \int_X \left[ \int_{|\d(x,y)|}^\infty |w_r(x)-w^\star_r|\,
								\rho_r dr\right]^2  d\m(y) d\m(x) \right)^\frac{1}{2}\\
				&\leq  \int_0^\infty r\, \rho_r dr.
	\end{align*}
	For the last inequality, note that $|w_r(x)-w_r^\star|\leq r$  (since
	$0\le v_r(x)\leq 1$ and $0\le v^\star_r\leq 1$) for all gauged measure spaces.\\
	A similar calculation yields
	\begin{align*}
					\frac{d^2}{dt^2} \F(\X_t)\Big|_{t=0}&=
						 \int_0^\infty \int_{X} \left[\int_{X}
\Big(1_{(-r,0]}(\d(x,y))  -1_{[0,r)}(\d(x,y))\Big)
						\cdot\Big(\d_1(x,y)-\d(x,y)\Big) d\m(y)\right]^2 d\m(x) \rho_r dr\\
						& \phantom{ ={}} + \int_0^\infty \int_X \int_X \big(w_r(x)-w^\star_r\big)\cdot 	\big(\d_1(x,y)-\d(x,y)\big)^2\,
						 d\m(y)d\m(x) \rho_r \, d\big(\delta_{\d(x,y)}+\delta_{-\d(x,y)}-2\delta_0\big)(r)\\
						&\geq -\sup_{r>0}\big[r\,\rho_r\big] \cdot \int_X \int_X
						\big(\d_1(x,y)-\d(x,y)\big)^2	d\m(y)d\m(x)\\
						&=\kappa\cdot \DD(\X_1,\X)^2
	\end{align*}
provided $\kappa$ is chosen as in the claim.\\
(iii) According to
Corollary 7.8
\[
\Grad(-\F_r)(\X)(x,y)=	-\frac12\Big[
U'(w_r(x))+U'(w_r(y))\Big]\, \eta'_r(\d(x,y)).\]
Since $\X\in\ol\XX$ we may assume that $\d(x,y)\ge0$. Integrating  w.r.t. $\rho_r\,dr$ yields
\begin{align*}
\Grad(-\F)(\X)(x,y)&=-
\int_0^\infty	\frac12\Big[
U'(w_r(x))+U'(w_r(y))\Big]\, \eta'_r(\d(x,y)) \rho_r dr\\
&=\int_0^\infty \left( \frac{w_r(x)+w_r(y)}{2}-w^\star_r\right)\cdot
1_{[0,r)}(\d(x,y))
\, \rho_r dr\\
	&=\int_0^\infty \int_0^\infty \left( \frac{v_t(x)+v_t(y)}{2}-v^\star_t\right)
		\cdot 1_{\{t\le r\}}\cdot 1_{\{\d(x,y)<r\}}dt\,\rho_r\, dr\\
			& =\int_0^\infty \left( \frac{v_t(x)+v_t(y)}{2}-v^\star_t\right) \bar{\rho}(t\vee\d(x,y))dt
	\end{align*}
	with $\bar{\rho}(a)= \int_a^\infty  \rho_r dr$.
\end{proof}

\begin{corollary}
\begin{enumerate}
\item
	For each $\X_0\in \ol\XX$ the gradient flow equation
	\begin{equation}\label{gradflow}
		\dot{\X}_t= \nabla(-\F)(\X_t)
	\end{equation}
	has a unique solution $\X_\bullet: [0,\infty)\rightarrow \ol\XX$ starting in $\X_0$. For all $\X_0,\X_0'\in \ol\XX$ and all $t>0$
\begin{equation}\label{exp-cont}
\DD(\X_t,\X'_t)\le e^{|\kappa|\, t}\cdot\DD(\X_0,\X'_0)
\end{equation}
with $\kappa$ from assertion (ii) of the above Theorem.
\item
Similarly, for each $\X_0\in \YY$, there exists a unique solution to  the gradient flow equation (\ref{gradflow}) in $\YY$. It also satisfies the Lipschitz estimate
(\ref{exp-cont}).
\end{enumerate}
\end{corollary}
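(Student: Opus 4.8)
The plan is to deduce both assertions directly from the abstract gradient-flow theory for Lipschitz, semiconcave functions on complete geodesic spaces of lower bounded curvature, applied to the function $-\F$. Concretely, Theorem \ref{theoF}(ii) tells us that $\F:\YY\to\R_+$ is Lipschitz continuous with $\Lip(\F)\le\int_0^\infty r\,\rho_r\,dr$ and $\kappa$-convex with $\kappa=-\sup_{r>0}[r\,\rho_r]$. Hence $-\F$ is Lipschitz continuous and $(-\kappa)$-concave on $\YY$. By Corollary \ref{olisgeo}, $\ol\XX$ is a complete geodesic space of nonnegative curvature in the sense of Alexandrov, and it is a totally geodesic (convex) subset of $\YY$; in particular the restriction of $\F$ (equivalently of $-\F$) to $\ol\XX$ is again Lipschitz continuous and semiconcave with the \emph{same} constants, because geodesics in $\ol\XX$ are geodesics in $\YY$. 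Similarly, Theorem \ref{YY-geo} shows $(\YY,\DD)$ itself is a complete geodesic space of nonnegative curvature.

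For part (i): I would invoke the existence-and-uniqueness theorem for ascending gradient curves (the Theorem in Section 4.4, based on Plaut \cite{Pl} and Lytchak \cite{Ly}) applied to the Lipschitz continuous, $(-\kappa)$-concave function $\U:=-\F$ on the complete geodesic space $\ol\XX$ of nonnegative curvature. This yields, for each $\X_0\in\ol\XX$, a unique ascending gradient curve $(\X_t)_{t\ge0}$ of $-\F$, i.e. a unique solution of $\dot\X_t=\nabla(-\F)(\X_t)$. The Lipschitz estimate $\DD(\X_t,\X_t')\le e^{\lambda' t}\DD(\X_0,\X_0')$ from part (ii) of that same theorem, with $\lambda'$ any lower bound for the Hessian of $-\F$, gives the contraction bound; since $-\F$ is $(-\kappa)$-concave with $\kappa\le 0$, the relevant constant is $-\kappa=|\kappa|$, yielding exactly \eqref{exp-cont} with $e^{|\kappa|\,t}$. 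One subtlety to address explicitly: the abstract existence theorem requires no local compactness but does require completeness of the ambient space and the geodesic property; both are supplied by Corollary \ref{olisgeo}, so nothing further is needed.

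For part (ii): the identical argument applies verbatim with $\ol\XX$ replaced by $\YY$, using Theorem \ref{YY-geo} in place of Corollary \ref{olisgeo} to guarantee that $\YY$ is a complete geodesic space of nonnegative curvature, and again invoking Theorem \ref{theoF}(ii) for the Lipschitz and semiconvexity bounds of $\F$ on all of $\YY$. The contraction estimate \eqref{exp-cont} follows with the same constant $|\kappa|=\sup_{r>0}[r\,\rho_r]$. Finally, one should remark that the two solutions are consistent: since $\ol\XX$ is convex and totally geodesic in $\YY$ and the ambient gradient $\Grad(-\F)(\X)$ computed in Theorem \ref{theoF}(iii) lies in the tangent cone $\mathring T_\X\ol\XX$ whenever $\X\in\ol\XX$ (it is a genuine gauge perturbation $\d\mapsto\d+t\f$ of a pseudo metric, which for small $t$ remains a pseudo metric by Corollary \ref{closedness of tr-in} — the exponential curve stays in $\ol\XX$), the $\YY$-flow starting in $\ol\XX$ coincides with the $\ol\XX$-flow.

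The main obstacle is not in any single deduction — each step is a citation of an already-established result — but in checking the hypotheses cleanly: namely that $-\F$ is semiconcave \emph{on $\ol\XX$} with the stated constant (which requires observing that geodesics of $\ol\XX$ are geodesics of $\YY$, so the second-derivative bound from Theorem \ref{theoF}(ii) transfers), and that the abstract gradient-flow machinery of Section 4.4 genuinely applies without local compactness. Once these are in place, the corollary is immediate.
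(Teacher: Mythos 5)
Your proof is correct and is exactly the deduction the paper intends (the paper gives no explicit proof of this corollary): apply the abstract existence/uniqueness and contraction theorem for ascending gradient curves from Section~4.5 to the Lipschitz continuous, $|\kappa|$-concave function $-\F$ on the complete geodesic nonnegatively curved spaces $\ol\XX$ (via Corollary~\ref{olisgeo}) and $\YY$ (via Theorem~\ref{YY-geo}), with Theorem~\ref{theoF}(ii) supplying the constants. One caveat on your final consistency remark, which is not needed for the statement: the claim that $\d+t\f$ remains a pseudo metric for small $t>0$ does not follow from Corollary~\ref{closedness of tr-in} (closedness and convexity of $\hat\XX$ say nothing about arbitrary gauge perturbations of a pseudo metric preserving the a.e.\ triangle inequality), so the identification of the two flows on $\ol\XX$ would need a separate argument; since the corollary only asserts existence and uniqueness of a flow in each space separately, you can simply drop that paragraph.
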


\begin{remark}
\begin{enumerate}
\item
	The  concept of ambient gradients (see Section 6.5) allows a quite intuitive interpretation of
	the evolution driven by \eqref{gradflow}. According to this calculus, $\Grad(-\F)(\X)$ is the
	function $\f\in L^2(X^2,\m^2)$ given by
	\begin{equation}\label{amb-rho}
		\f(x,y) =\int_0^\infty \left( \frac{v_r(x)+v_r(y)}{2}-v^\star_r\right) \bar{\rho}(r\vee\d(x,y))dr.
	\end{equation}
This fact
	should be interpreted as follows:\\
	the function $\f$ is positive for those pairs of points $(x,y)\in X^2$ for which - in average
	w.r.t.\ the distribution $ \bar{\rho}(r\vee\d(x,y))\,dr$ of the radius - the volume of the balls
	$B_r(x)$ and $B_r(y)$ is too large compared with the volume $v^\star_r$ of balls in the model space;
	and vice versa, if the volume of $B_r(x)$ and $B_r(y)$ is too small (in average w.r.t.\ $r$) then
	$\f(x,y)<0$.\\
	The infinitesimal evolution of $\X$ under the gradient flow is given by
	\[
		\d_t(x,y)=\d(x,y)+ t\f(x,y)+O(t^2)
	\]
	with $\f$ as above.
	That is, $\d(x,y)$ will be enlarged if the volume of balls centered at $x$ and $y$ is too large, and
	$\d(x,y)$ will be reduced if the volume of balls is too small.
\item The gradient flow for $-\F$ gets stuck if it enters the set of critical points. Obviously, $\X$ is critical for $-\F$ if and only if
    \[\Grad(-\F)(\X)=0.\]
    In view of (\ref{amb-rho}) this yields:\
    {\it
    $\X$ is critical if and only if for $\m^2$-a.e. $(x,y)\in X^2$
    \[\frac{v_r(x)+v_r(y)}{2}=v^\star_r\]
    in average w.r.t. the measure $\bar{\rho}(r\vee\d(x,y))dr$.}
\item
The above identification of the ambient gradient leads to an even more intuitive formula if we dispense with smoothing the volume growth, i.e. if in the definition of $\U$ we replace the functions $w_r$ and $w^\star_r$ by the original $v_r$ and $v^\star_r$, resp.
Let
\[\tilde\F(\X)=
\frac12 \int_0^\infty \int_X (v_r(x)-v^\star_r)^2 d\m(x) \rho_r dr
\]
for a Borel  function  $\rho\ge0$ on $\R_+$ as above.
Then a direct calculation as above yields
\[\Grad(-\tilde\F)(\X)(x,y)=\Big[\frac{v_{\d(x,y)}(x)+ v_{\d(x,y)}(y)}2- v^\star_{\d(x,y)}\Big]\cdot\rho_{\d(x,y)}.\]
\end{enumerate}
\end{remark}

\begin{remark}
For each $n\in\N$, the $\F$-functional induces a functional
\[\F^{(n)}=\F\circ\Phi:\, \M^{(n)}\to\R_+\]
on the space $\M^{(n)}$ of symmetric $(n\times n)$-matrices $(\d_{ij})_{1\le i<j \le n}$ with vanishing diagonal entries via the injection $\Phi: \M^{(n)}\to \YY$, see section 5.4.
This functional $\F^{(n)}$ again is Lipschitz continuous and $\kappa$-convex (with the same bounds as $\F$). It admits a unique downward gradient flow in $\M^{(n)}$. This flow $(\d(t))_{t\ge0}$ can be characterized in a very explicit way as follows:
\begin{itemize}
\item As long as $\d_t$ does not reach points $\d\in \M^{(n)}$ with non-trivial symmetries, the flow is simply given by the first order ODE in $\R^{\frac{n(n-1)}2}$
    \[\frac d{dt}\d(t)=-\nabla\F^{(n)}\big(\d(t)\big)\]
    with
\[\nabla_{ij}\F^{(n)}\big(\d\big)=\int_0^\infty\left(\frac{v_r(i)+v_r(j)}2-v^\star_r\right)\ol\rho(r\vee\d_{ij})\,dr
\qquad\text{for }1\le i <j\le1\]
and
$v_r(i)=\frac1n\sharp\big\{k=1,\ldots,n:\ \d_{ik}<r\big\}$.
\item
If $\d$ admits symmetries, say $\sigma_1^*\d=\d$, \ldots, $\sigma_l^*\d=\d$ for $\sigma_1,\ldots,\sigma_l\in S_n$, then smoothness of $\F^{(n)}$ on $\R^{\frac{n(n-1)}2}$, invariance under actions of $S_n$, and uniqueness of $\nabla\F^{(n)}$ imply that the evolution remains within the subspace $\M^{(n)}_{\sigma_1,\ldots,\sigma_l}$ of elements in $\M^{(n)}$ which are invariant under all these permutations $\sigma_1,\ldots,\sigma_l$.
Within this linear subspace, the downward gradient flow for $\F^{(n)}$ again solves a first order ODE until it reaches a point with additional symmetries.
\end{itemize}
\end{remark}

The functional $\F$ is closely related to the famous Einstein-Hilbert functional of Riemannian geometry. To explore this link, for given $n\in\N$ let us consider a one-parameter family $\F^{(\varepsilon)}$, $\varepsilon>0$, of such functionals
\[
	\F^\be(\X)=\frac12\int_0^\infty \int_X (w_r(x)-w^\star_r)^2 d\m(x) \rho_r^\be dr
\]
defined in terms of weight functions $\rho_\bullet^\be:\R_+\rightarrow \R_+$ satisfying as before
\begin{align*}
	\sup_{r>0}[r\,\rho^\be_r]<\infty,\qquad\int_0^\infty r\rho_r^\be dr & <\infty  &(\forall\varepsilon>0)
\intertext{
and now in addition with $c_n'= \left[\frac{c_n}{6(n+2)(n+3)}\right]^2$ (where $c_n=\frac{\pi^{n/2}}{\Gamma(n/2+1)}$, see Lemma 8.9)
}
c_n'\cdot\int_0^\infty r^{2n+6}\cdot \rho_r^\be dr & \rightarrow 1 &\text{as }\varepsilon\rightarrow 0
\intertext{
and
}
	\int_0^\infty r^{2n+8}\cdot\rho_r^\be dr& \rightarrow 0 &\text{as }\varepsilon\rightarrow 0.
\end{align*}

\begin{theorem}\label{thm2.15}
	Let $n\in\N$ be given and let $r\mapsto v^\star_r$ be the volume growth of some balanced Riemannian
	manifold of dimension $n$ and volume 1. Let $\scal^\star$ be its scalar curvature.
	Then for each compact Riemannian manifold of dimension $n$ and volume $1$, regarded as a metric
	measure space $(X,\d,\m)$
	\[
		\lim_{\varepsilon\searrow 0} \F^\be(\X)= \frac12 \int_X (\scal(x)-\scal^\star)^2 d\m(x)
	\]
	where $\scal(x)$ denotes the scalar curvature at $x\in X$.
\end{theorem}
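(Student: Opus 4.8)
The plan is to reduce the statement to a pointwise-in-$x$ asymptotic expansion of the inner expression $\int_X \eta_r(\d(x,y))\,d\m(y) = w_r(x) - (\text{correction})$, i.e.\ of the anti-derivative $w_r(x) = \int_0^r v_t(x)\,dt$ of the volume growth, and then to integrate against the rescaled weights $\rho^{(\varepsilon)}_r\,dr$. First I would invoke the Gray--Vanhecke expansion (Lemma 8.9): for a compact Riemannian manifold $(X,\d,\m)$ of dimension $n$ and unit volume,
\[
v_r(x) = c_n r^n\Big(1 + b_2(x) r^2 + b_4(x) r^4 + \mathcal{O}(r^6)\Big)
\]
locally uniformly in $x$, with $b_2(x) = -\scal(x)/(6(n+2))$. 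Integrating in $t$ from $0$ to $r$ gives
\[
w_r(x) = c_n\Big(\frac{r^{n+1}}{n+1} + \frac{b_2(x)\, r^{n+3}}{n+3} + \mathcal{O}(r^{n+5})\Big),
\]
and likewise $w^\star_r = c_n\big(\frac{r^{n+1}}{n+1} + \frac{b_2^\star\, r^{n+3}}{n+3} + \mathcal{O}(r^{n+5})\big)$ with $b_2^\star = -\scal^\star/(6(n+2))$, since the model space is balanced of dimension $n$ and volume $1$ and so shares the same leading coefficient $c_n$. Subtracting, the leading $r^{n+1}$ terms cancel exactly, leaving
\[
w_r(x) - w^\star_r = \frac{c_n}{n+3}\big(b_2(x) - b_2^\star\big)\, r^{n+3} + \mathcal{O}(r^{n+5})
= -\frac{c_n}{6(n+2)(n+3)}\big(\scal(x) - \scal^\star\big)\, r^{n+3} + \mathcal{O}(r^{n+5}).
\]

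Next I would square this and integrate: for fixed $x$,
\[
\big(w_r(x) - w^\star_r\big)^2 = c_n' \big(\scal(x) - \scal^\star\big)^2 r^{2n+6} + \mathcal{O}(r^{2n+8}),
\]
with $c_n' = [c_n/(6(n+2)(n+3))]^2$ exactly the constant fixed in the hypotheses. Multiplying by $\rho^{(\varepsilon)}_r$ and integrating over $r\in(0,\infty)$, the prescribed normalizations $c_n'\int_0^\infty r^{2n+6}\rho^{(\varepsilon)}_r\,dr \to 1$ and $\int_0^\infty r^{2n+8}\rho^{(\varepsilon)}_r\,dr\to 0$ force
\[
\int_0^\infty \big(w_r(x) - w^\star_r\big)^2 \rho^{(\varepsilon)}_r\,dr \longrightarrow \big(\scal(x) - \scal^\star\big)^2
\]
as $\varepsilon\to 0$, for each fixed $x$. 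Then I would integrate in $x$ against $\m$ and pass the limit inside using dominated convergence (justified by compactness of $X$: the error terms in Gray--Vanhecke are uniform in $x$, the bound $|w_r(x) - w^\star_r|\le r$ holds for all gauged measure spaces as noted in the proof of Theorem~\ref{theoF}, and $\sup_{r>0}[r\rho^{(\varepsilon)}_r]<\infty$ together with $\int_0^\infty r\rho^{(\varepsilon)}_r\,dr<\infty$ control the tail). This yields
\[
\lim_{\varepsilon\searrow 0}\F^{(\varepsilon)}(\X) = \frac12\int_X \big(\scal(x) - \scal^\star\big)^2\,d\m(x),
\]
which is the claim.

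The main obstacle is the uniformity of the error control over the full range $r\in(0,\infty)$, not just $r\to 0$: the Gray--Vanhecke expansion is only an asymptotic statement near $r=0$, so one must split the integral at some small fixed $r_0$ (independent of $\varepsilon$) and argue separately. On $[0,r_0]$ the expansion applies and, by compactness, the remainder is bounded by $C r^{2n+8}$ uniformly in $x$, so its contribution after integration against $\rho^{(\varepsilon)}$ vanishes by the second normalization hypothesis; on $[r_0,\infty)$ one uses $(w_r(x) - w^\star_r)^2\le r^2$ and the moment bound $\int_{r_0}^\infty r^2\rho^{(\varepsilon)}_r\,dr$, which one needs to show tends to $0$ — this follows since the hypotheses pin down the high-order moments and the ``mass'' of $\rho^{(\varepsilon)}$ must concentrate near $r=0$ as $\varepsilon\to 0$ (indeed $r^{2n+6}\rho^{(\varepsilon)}_r$ integrates to roughly $1/c_n'$ while $r^{2n+8}\rho^{(\varepsilon)}_r$ integrates to $0$, forcing $\rho^{(\varepsilon)}$ toward $\delta_0$ in the appropriate weighted sense, whence all lower-order fixed-exponent tails over $[r_0,\infty)$ vanish). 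Making this concentration argument precise — extracting from the two moment conditions that $\int_{r_0}^\infty r^2\rho^{(\varepsilon)}_r\,dr\to 0$ — is the one genuinely delicate point; everything else is the Taylor bookkeeping above plus dominated convergence.
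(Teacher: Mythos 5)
Your proposal is correct and follows essentially the same route as the paper: the Gray--Vanhecke expansion of $v_r(x)$ (Lemma 8.9), integration in $t$ to get $w_r(x)-w_r^\star = -\tfrac{c_n}{6(n+2)(n+3)}(\scal(x)-\scal^\star)r^{n+3}+\mathcal{O}(r^{n+5})$, squaring, and then using the two moment normalizations of $\rho^{(\varepsilon)}$ to isolate $\tfrac12\int_X(\scal-\scal^\star)^2\,d\m$. The one point you flag as genuinely delicate is in fact immediate: on $[r_0,\infty)$ one has $r^2\le r_0^{-(2n+6)}r^{2n+8}$, so $\int_{r_0}^\infty r^2\rho^{(\varepsilon)}_r\,dr\le r_0^{-(2n+6)}\int_0^\infty r^{2n+8}\rho^{(\varepsilon)}_r\,dr\to 0$ directly from the second hypothesis, which completes the tail estimate (and is a point the paper itself glosses over).
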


\begin{proof}
	The asymptotic expansion
	\[
		v_r(x)= c_n\cdot r^n \left(1- \frac{\scal(x)}{6(n+2)}r^2+ O(r^4)\right).
	\]
of the volume growth  implies
	\[
		w_r(x)=\frac{c_n}{n+1}\cdot r^{n+1} \left(1- \frac{\scal(x)(n+1)}{6(n+2)(n+3)}r^2+O(r^4)\right)
	\]
	and thus 
	\begin{align*}
		\F_r(\X) &:= \frac12\int_X (w_r(x)-w^\star_r)^2 d\m(x)\\	
			&\phantom{:}=\frac12 c_n' \cdot r^{2n+6} \int_X \left(\scal(x)-\scal^\star+O(r^2)\right)^2 d\m(x)\\
			&\phantom{:}= \frac12c_n' \cdot r^{2n+6} \left[ \int_X (\scal(x)-\scal^\star)^2 d\m(x) + O(r^2) \right].
	\end{align*}
	Integrating w.r.t.\ $\rho_r^\be dr$ therefore yields
	\begin{align*}
		\F^\be (\X)&= \int_0^\infty \F_r(\X) \rho_r^\be dr \\
			&= \frac12\int_X (\scal(x)-\scal^\star)^2 d\m(x) \cdot c_n' \cdot \int_0^\infty r^{2n+6} \rho_r^\be dr
			+ c_n' \cdot \int_0^\infty O(r^2) r^{2n+6} \rho_r^\be dr.
	\end{align*}
		This proves the claim.
\end{proof}

\begin{remark} In Riemannian geometry, the canonical interpretation (and construction) of gradient flows for
	\[
		\F(X)=\frac12\int_X (\scal(x)-\scal^\star)^2 d\m(x)
	\]
is to regard it as a functional on  the space ${\mathfrak Met}(X)$ of metric tensors on a given Riemannian manifold $X$, cf.
\cite{ChowKnopf}.
The downward gradient flow then  is
	characterized as the evolution of  metric tensors determined by
	\[
		\frac{d}{dt}g(x)=  (\scal(x)-\scal^\star) \cdot \Ric_g(x).
	\]
This evolution is different from the evolution governed by the downward gradient flow  induced by the $L^2$-distortion distance on the space of pseudo metric measure spaces and also different from the induced flow within the space of Riemannian manifolds.
\end{remark}

Finally, we will study combinations of the $\F$- and the $\G$-functionals. Let $n\in\N$ be given and choose ${K}>0$ such that the model space $\M^{n,{K}}$ has volume 1. This amounts to ${K}=[(n+1)c_{n+1}]^{2/n}$.
Put $X^\star=\M^{n,{K}}$,
\[v_r^\star=\int_0^{{\sqrt{K}} r\wedge\pi}\sin^{n-1}(t)\,dt\Big/\int_0^{\pi}\sin^{n-1}(t)\,dt,\]
choose any strictly positive weight function $\rho:\R_+\to\R_+$  and define
the $\F$-functional on $\YY$ as before in terms of these quantities by
\[
	\F(\X)=\frac12 \int_0^\infty \int_X \left[\int_0^r  \left(v_t(x)-v^\star_t\right) dt  \right]^2 d\m(x) \rho_r dr.
\]
Moreover, let $\G_{K}$ as introduced in Definition 7.12  and put
\[\U=\F+\G_{K}:\YY\to\R_+.\]

\begin{theorem}\begin{enumerate}
\item
The functional $\U$ is Lipschitz continuous and semiconvex. It admits a unique downward gradient flow in $\YY$ as well as in $\ol\XX$.
\item
For all $\X\in\XX^{geo}$
\[\U(\X)=0\quad\Longleftrightarrow\quad \X=\M^{n,{K}}.\]
\end{enumerate}
\end{theorem}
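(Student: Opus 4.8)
The plan is to establish both assertions by combining the analogous results already proven separately for $\F$ and for $\G_K$. For part (i), I would invoke Theorem~\ref{theoF}(\ref{flipsemiconv}), which gives that $\F$ is Lipschitz continuous and $\kappa_\F$-convex with $\kappa_\F=-\sup_{r>0}[r\rho_r]$ and $\Lip(\F)\le\int_0^\infty r\rho_r\,dr$, together with Theorem~\ref{theoG}(i), which gives that $\G_K$ is semiconvex and Lipschitz continuous on $\YY_K^{per}$ (globally Lipschitz since $K>0$). The sum of two Lipschitz functions is Lipschitz, and the sum of a $\kappa_\F$-convex and a $\kappa_\G$-convex function is $(\kappa_\F+\kappa_\G)$-convex — this is immediate from the defining inequality, since the second-order estimates add. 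Hence $\U=\F+\G_K$ is Lipschitz continuous and semiconvex on $\YY$. Existence and uniqueness of the downward gradient flow in $\YY$ then follows from the abstract gradient flow theorem (the $\kappa$-concave version applied to $-\U$, using that $\YY$ is a complete geodesic space of nonnegative curvature by Theorem~\ref{YY-geo}). For the flow in $\ol\XX$: by Corollary~\ref{olisgeo}, $\ol\XX$ is itself a complete geodesic space of nonnegative curvature, and $\U$ restricted to $\ol\XX$ remains Lipschitz and semiconvex, so the same abstract theorem applies. One should note here that $K>0$ was chosen precisely so that $\M^{n,K}$ has volume $1$, i.e. $K=[(n+1)c_{n+1}]^{2/n}$, and that on $\ol\XX$ we do not leave $\YY_K^{per}$ issues unaddressed — but since we only need $\U$ on $\ol\XX$ and the model space lies in $\XX^{geo}\subset\ol\XX$, this is not an obstacle; alternatively one restricts attention to $\YY_K^{per}$, a closed convex (hence geodesically convex) subset, where $\G_K$ is genuinely finite and locally Lipschitz.

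For part (ii), the key observation is that $\F\ge0$ and $\G_K\ge0$ everywhere on $\YY$, so $\U(\X)=0$ if and only if \emph{both} $\F(\X)=0$ and $\G_K(\X)=0$. Now apply the two characterizations: for $\X\in\XX^{geo}$, Theorem~\ref{theoG}(iii) gives $\G_K(\X)=0\iff\X$ has curvature $\ge K$ in the sense of Alexandrov, while Theorem~\ref{theoF}(\ref{globminbal}) gives that $\F(\X)=0$ forces $\X$ to be balanced with volume growth $v_r(x)=v^\star_r$ for all $r\ge0$ and $\m$-a.e. $x$, where $v^\star$ is exactly the volume growth of $\M^{n,K}$. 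So the task reduces to: \emph{a geodesic metric measure space of Alexandrov curvature $\ge K$ whose volume growth coincides with that of $\M^{n,K}$ must equal $\M^{n,K}$.} The converse direction — that $\M^{n,K}$ itself has $\U=0$ — is trivial, since it is balanced with the prescribed $v^\star$ (so $\F=0$) and has constant sectional curvature $K\ge K$ (so $\G_K=0$).

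The main obstacle is this rigidity step. The natural route is to use the Bishop--Gromov volume comparison theorem together with the Alexandrov curvature lower bound $\ge K$: a finite-dimensional Alexandrov space of curvature $\ge K$ satisfies $v_r(x)\le v^\star_r$ for balls in the model $\M^{n,K}$ where $n$ is the Hausdorff dimension, with \emph{equality in the volume comparison characterizing the space as the model space form} (the rigidity case of Bishop--Gromov, valid in the Alexandrov setting). This is the same mechanism used in the final paragraph of the proof of Theorem~\ref{thm2.15}'s surrounding discussion. Two points require care. First, one must know that the space is finite-dimensional with Hausdorff dimension exactly $n$; this should follow from the volume growth asymptotics $v_r^\star\sim c_n r^n$ as $r\to0$ together with the local structure of Alexandrov spaces (the volume growth exponent pins down the dimension). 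Second, the Bishop--Gromov rigidity statement in Alexandrov geometry gives that the ball $B_r(x)$ is isometric to the $r$-ball in $\M^{n,K}$ for all $r$; taking $r$ up to the diameter and using that the total volume is $1=\mathrm{vol}(\M^{n,K})$ forces global isometry as metric measure spaces. I would cite the Alexandrov-space versions of these comparison and rigidity results rather than reprove them. A subtlety worth flagging: the hypothesis $\X\in\XX^{geo}$ guarantees the space is genuinely geodesic (not merely a length space up to null sets), which is what licenses the application of the Lebedeva--Petrunin criterion \cite{lp} behind Theorem~\ref{theoG}(iii) and the classical comparison geometry; without it the quadruple-type condition $\G_K=0$ would only give an ``almost everywhere'' curvature bound, insufficient for rigidity.
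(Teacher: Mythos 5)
Your proposal is correct and follows essentially the same route as the paper: part (i) by summing the Lipschitz and semiconvexity bounds from Theorems \ref{theoF} and \ref{theoG}, and part (ii) by splitting $\U(\X)=0$ into $\F(\X)=0$ (volume growth equals $v^\star$, hence Hausdorff dimension $n$) and $\G_K(\X)=0$ (Alexandrov curvature $\ge K$), then concluding via the rigidity case of Bishop--Gromov volume comparison. Your additional remarks on the $\YY_K^{per}$ issue and on why $\X\in\XX^{geo}$ is needed are sensible elaborations of steps the paper leaves implicit.
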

\begin{proof}
(i) follows from Theorems \ref{theoG} and  \ref{theoF}.

(ii) Let $X$ be a representative of $\X$ with full support. According to Theorem \ref{theoG}, $\U(\X)=0$ implies that $X$ has curvature $\ge K$ in the sense of Alexandrov,
and according to Theorem \ref{theoF} it implies that the volume growth of $X$ is given by $v^\star$.
Thus in particular, $X$ has Hausdorff dimension $n$. The lower curvature bound implies a Bishop-Gromov volume comparison estimate with equality if and only if $X$ coincides with the model space $\M^{n,{K}}$, \cite{bbi}, Thm. 10.6.8 and Exercise 10.6.12.
\end{proof}

\section{Addendum: The $L^{p,q}$-Distortion Distance}

As an addendum to the previous exposition we present a generalization of the fundamental distance used so far on the space of (isomorphism classes of) mm-spaces. The $L^{p}$-distortion distance turns out to be a particular case of the $L^{p,q}$-distortion distance for the choice $q=1$.
The metric and geometric properties of this more general distortion distance will be essentially the same as those for $q=1$. Some properties will become slightly less intuitive (for instance, the embedding via $\d\mapsto \d^q$ into the space $\YY$). 
Instead of overloading notations and proofs with additional technicalities we tried to keep the presentation as simple as possible by restricting to the most simple case $q=1$. The modifications for general $q\ge1$ will be summarized in the two subsequent sections.

Whereas the case $q=1$ is the most simple one and also the most natural one from the point of view of transportation theory and image analysis,  the case $q=2$ is the most relevant one from the point of view of metric geometry and Riemannian calculus.
Indeed, here geodesic interpolations are obtained by interpolations of \emph{squared distances}
\begin{itemize}
\item
which should be regarded as the metric counterpart to \emph{linear} interpolations of metric tensors in Riemannian geometry
\item
and which preserves  nonnegative pre-curvature as well as  nonpositive pre-curvature.
\end{itemize}

\subsection{Metric Properties}

\begin{definition}
	For any $p,q\in[1,\infty)$, the \emph{$L^{p,q}$-distortion distance}\index{distortion distance $\DD_p$} between two  metric measure spaces $(X_0,\d_0,\m_0)$ and $(X_1,\d_1,\m_1)$ is
	defined as
	\begin{eqnarray*}
	\lefteqn{\DD_{p,q}((X_0,\d_0,\m_0),(X_1,\d_1,\m_1))}\\
&=& \inf \Bigg\{  \bigg( \int_{X_0\times X_1} \int_{X_0\times X_1}
		\left| \d^q_0(x_0,y_0)-\d^q_1(x_1,y_1) \right|^p d\ol{\m}
		(x_0,x_1)d\ol{\m}(y_0,y_1)\bigg)^{1/p} \spec \ol{\m}\in \Cpl(\m_0,\m_1)
		\Bigg\}.
	\end{eqnarray*}
Analogously, the $L^{\infty,q}$-distortion distance $\DD_{\infty,q}((X_0,\d_0,\m_0),(X_1,\d_1,\m_1))$ can be defined.
 However, this will be of minor interest and the case $p=\infty$ will be excluded  from the subsequent discussions.
\end{definition}

Similarly as in Lemma \ref{optcoupl1} one verfies that for each $p,q\in [1,\infty)$ and for each pair of metric measure spaces $(X_0,\d_0,\m_0)$ and $(X_1,\d_1,\m_1)$, the infimum in the
			definition of $\DD_{p,q}$ will be attained. That is, there exists a measure $\ol\m\in\Cpl(\m_0,\m_1)$
-- again called \emph{optimal coupling} -- such
			that
\begin{equation}\label{optimal}
				\DD_{p,q}((X_0,\d_0,\m_0),(X_1,\d_1,\m_1))=  \bigg( \int_{X_0\times X_1} \int_{X_0\times X_1}
				\left| \d_0^q(x_0,y_0)-\d_1^q(x_1,y_1) \right|^p  d\ol{\m}(x_0,x_1) d\ol{\m}(y_0,y_1)
				\bigg)^{1/p}.
		\end{equation}

The existence of optimal couplings was the key argument in the proof of Lemma 1.10. Thus we may conclude in the same manner as before:

\begin{lemma}
For any $p,q\in[1,\infty)$ and any metric measure spaces $(X_0,\d_0,\m_0)$ and $(X_1,\d_1,\m_1)$ the assertion
\[\DD_{p,q}((X_0,\d_0,\m_0),(X_1,\d_1,\m_1))=0\]
is equivalent to any of the assertions (ii), (iii) and (iv) of Lemma 1.10. 
In particular,  equivalence classes with respect to
$\DD_{p,q}$
neither depend on $p$ nor on $q$. 
\end{lemma}

The $L^{p,q}$-distortion distance obviously satisfies the triangle inequality (same proof as for Lemma 1.9) and it is finite between any pair of mm-spaces which have finite $L^{pq}$-size. According to the previous lemma, it vanishes only for pairs of mm-spaces which are isomorphic.

\begin{corollary}\label{d-is-metric-q}
For all $p,q\in[1,\infty)$,
$\DD_{p,q}$ is a   metric on $\XX_{pq}$.
\end{corollary}

As in the case $q=1$, we will see that convergence w.r.t.  $\DD_{p,q}$ is {\lq essentially independent\rq}  of $p$.
For this purpose, let us introduce the 
 $L^{0,q}$-distortion distance %$\DD_{0,q}$
\[
	\DD_{0,q}(\X_0,\X_1)= \inf \bigg\{ \epsilon >0\spec
{\ol\m}\otimes{\ol\m} \bigg( \Big\{(x_0,x_1,y_0,y_1)\spec |{\d^q_0}(x_0,y_0)-\d^q_1(x_1,y_1)|>\epsilon\Big\}
\bigg)
\le\epsilon, \ \ol\m\in\Cpl(\m_0,\m_1)
\bigg\}.
\]

\begin{lemma}
For all $p,q\in[1,\infty)$, every point $\X_\infty$ and every sequence $(\X_n)_{n\in\N}$ in $\XX_{pq}$ the following statements are equivalent:
\begin{enumerate}
\item
$\DD_{p,q}(\X_n,\X_\infty)\to0$ as $n\to\infty$;
\item
$\DD_{0,q}(\X_n,\X_\infty)\to0$  and
$\size_{pq}(\X_n)\to \size_{pq}(\X_\infty)$ as $n\to\infty$.
\end{enumerate}
\end{lemma}
The {\it proof} is exactly the same as that of Proposition 2.1.
There also exist quantitative estimates between $L^{p,q}$-distortion distances for varying parameters $p$ and $q$.

\begin{lemma}\label{lp-d-dd-q} For all $1\leq p\leq p' <\infty$ and  all $1\leq q\leq q' <\infty$
	\begin{enumerate}
		\item
			\quad $ \DD_{p} \leq  \DD_{p,q}^{1/q}\leq\DD_{p,q'}^{1/q'}$
		\item\quad
			 $\DD_{0,q}^{1+1/p}\le \DD_{p,q}\leq \DD_{p',q}$.
		\item
			Restricted to the space $\{\X\in \XX \spec \diam(\X)\leq L\}$ for a given $L\in \R_+$:\\
			\quad $(qL^q)^{-(p'-p)} \cdot\DD_{p',q}^{p'}\leq  \DD_{p,q}^p \leq \Big(1+(qL^{q})^p\Big)\cdot\DD_{0,q}$ \quad 
and\quad  $\frac q{q'}L^{1-q/q'}\DD_{p,q'}\leq  \DD_{p,q}\leq   q L^{q-1}\cdot\DD_{p}$
	\end{enumerate}
\end{lemma}

\begin{proof}
(i) This follows from the basic inequality $|\d_0-d_1|\le |\d_0^q-d_1^q|^{1/q}\leq |\d_0^{q'}-d_1^{q'}|^{1/q'}$ valid for all positive real numbers $\d_0,d_1$ and all $1\le q\le q'$.\quad 
(ii) Simple applications of Markov and Jensen inequalities (same as for (ii) in Prop. 2.6, now with $\d^q$ in the place of $\d$ and $p'$ in the place of $p$). \quad
(iii) For the first estimate, we follow the argumentation of Prop. 2.6 and note that the diameter bound implies that $|\d_0^q-d_1^q|\le q L^q$. The second inequality follows from the fact that
$|\d_0^q-d_1^q|\le q L^{q-1}|\d_0-d_1|$.
\end{proof}

\begin{corollary}
For every sequence $(\X_n)_{n\in\N}$ in $\XX_0$ with uniformly bounded diameters, for every $\X_\infty\in \XX_0$  and all $p,q\in[1,\infty)$, the following are equivalent:
\begin{enumerate}
\item
$\X_n\to\X_\infty$ w.r.t.
$\DD_{0}$;

\item
$\X_n\to\X_\infty$ w.r.t.
$\DD_{0,q}$;

\item
$\X_n\to\X_\infty$ w.r.t.
$\DD_{p,q}$;

\item
$\X_n\to\X_\infty$ w.r.t.
$\DD_{p}$.

\end{enumerate}
    \end{corollary}

\begin{remark}
For all $q<q'$, since $ \DD_{p,q}^{1/q}\leq\DD_{p,q'}^{1/q'}$, the space $\XX_{pq'}$ is a subset of $\XX_{pq}$. Conversely, the space
$\big(\XX_{pq}, \DD_{p,q}\big)$ is isometrically embedded into $\big(\XX_{pq'}, \DD_{p,q'}\big)$ via 
\begin{equation*} \iota_q: \  [X,\d,\m] \ \mapsto \ [ X,\d^{q/q'},\m]. 
\end{equation*}
Indeed, for each complete, separable metric $\d$ on $X$ also $\d^{q/q'}$ is a complete separable metric on $X$ and 
\[\DD_{p,q'}\Big((X_0,\d_0^{q/q'},\m_0),(X_1,\d^{q/q'}_1,\m_1)\Big)=\DD_{p,q}\Big((X_0,\d_0,\m_0),(X_1,\d_1,\m_1)\Big).\]
\end{remark}

\subsection{Geometric Properties of the Space $\big(\XX_{pq}, \DD_{p,q}\big)$ for $p=2$}

\begin{theorem}\label{thmoptcoupl-q} For all $p,q\in[1,\infty]$, $\big(\XX_{pq}, \DD_{p,q}\big)$ is a geodesic space. 

For  each pair of mm-spaces $\X_0, \X_1\in \XX_p$ and each optimal coupling
  $\ol{\m}$ of them the family of metric
			measure spaces ${\X}_t=[{X_0\times X_1},{\d}_t,\ol{\m}]$, $ t\in (0,1)$,
					with
			\[
				{\d}_t\left((x_0,x_1),(y_0,y_1)\right):=\Big( (1-t)\d_0^q(x_0,y_0)+t \d_1^q(x_1,y_1)\Big)^{1/q}
			\]
			defines a geodesic $({\X}_t)_{0\leq t \leq 1}$ w.r.t. $\DD_{p,q}$ connecting $\X_0$ and $\X_1$.
	
If $p>1$ then each geodesic $({\X}_t)_{0\leq t\leq 1}$ w.r.t. $\DD_{p,q}$ is of this form.
			That is,  there exists an optimal coupling $\ol{\m}$ of the measures $\m_0, \m_1$, defined on the product space of  representatives of the endpoints, such that for each $t\in (0,1)$ a representative of the
			 isomorphism class
			$\X_t$ is given by $(X_0\times X_1,\d_t,\m)$ with $\d_t:=\big((1-t)\d^q_0+t\d^q_1\big)^{1/q}$.

\end{theorem}

\begin{proof}
The claim can be proven exactly as Theorem 3.1, now with $\d^q_0$ and $\d^q_1$ in the place of $\d_0$ and $\d_1$, resp. The only place where  the fact was used that the latter are metrics is to verify that $\d_t$ is a metric.
But also for general $q\ge1$,  $\d_t$ as defined above is  a metric on $X_0\times X_1$. Indeed, it is the $l_q$-product of the metrics $(1-t)^{1/q}\d_0$ on $X_0$ and $t^{1/q}\d_1$ on $X_1$.
\end{proof}

\begin{corollary} In the case $p>1$, if the initial point $\X_0$ of a geodesic $({\X}_t)_{t\in [0,1]}$ w.r.t. $\DD_{p,q}$  has no atoms then each inner point ${\X}_t$, $t\in (0,1)$, of the geodesic has no atoms.
\end{corollary}

More detailed geometric properties can be derived in the case $p=2$. All these results are proven in exatly the same way as for $q=1$.

\begin{theorem}
For each $q\in [1,\infty)$,	$(\XX_{2q},\DD_{2,q})$ is a geodesic space of nonnegative curvature in the sense of Alexandrov:
both the triangle comparison and the quadruple comparison property are satisfied. 

The metric completion	$(\ol\XX_{2,q},\DD_{2,q})$ of $(\XX_{2q},\DD_{2,q})$ is a complete length space of nonnegative curvature in the sense of Alexandrov.
\end{theorem}

\begin{theorem} For each $q\in [1,\infty)$,
 the space $\big(\XX_{2q}, \DD_{2,q}\big)$
			 is the cone over its unit sphere
 $(\XX^1_{2,q},\DD_{2,q}^{(1)})$. The latter is a geodesic space  with  curvature $\geq 1$: both the triangle and the quadruple comparison property are satisfied.

The completion  $\ol\XX_{2,q}$ is the cone over its unit sphere $\ol\XX^1_{2,q}$ (which is the completion of $\XX^1_{2,q}$).
 The latter
is a complete length space  with  curvature $\geq 1$
			in the sense of Alexandrov.
\end{theorem}

In order to identify the elements in $\ol\XX_{2,q}$ as pseudo mm-spaces, let us consider for given $q\ge1$ the map
\begin{equation} \iota_q: \  [X,\d,\m] \ \mapsto \ \auf X,\d^{q},\m\zu. \label{q-iso}
\end{equation}

It is an  isometric embedding of 
$\big(\XX_{2q}, \DD_{2,q}\big)$ into $\big(\YY, \DD\big)$ and thus also estends to an embedding of the completion.

\begin{theorem} For each $q\in [1,\infty)$,
 the space  $\iota_q\big(\ol\XX_{2,q}\big)$  conincides with the space of homomorphism classes of gauged measure spaces
$\auf X,\f,\m\zu $ for which $\f^{1/q}$ satisfies the triangle inequality $\m^2$-almost everywhere.

In other words, each element in $\ol\XX_{2,q}$ can be represented as a pseudo metric measure space $(X,\d,\m)$
with 
 $\auf X,\d^{q},\m\zu\in \YY$.
\end{theorem}

\begin{proof}
We follow the argumentation in the proofs of Lemma 5.14, Corollary 5.16, and Theorem 5.19.
Surjectivity is proven as before. It remains to check why the $\m^2$-a.e. triangle inequality is preserved under convergence.
The crucial point is that in the proof of Lemma 5.14 (ii), finally, convergence of $\d^q_n\to\d^q_\infty$ w.r.t. 
$L^2_s(X^2_\infty,\m^2_\infty)$
will be reduced to $\m_\infty^2$-a.e. convergence which makes the whole argument independent of $q$.
\end{proof}
The embedding \eqref{q-iso} is the key for a detailed understanding of the space $\ol\XX_{2,q}$, using all the properties of the space $\YY$ derived in Chapters 5 and 6, and it allows to use the tangential structure of the latter to study gradient flows as performed in Chapters 7 and 8.

\subsection{Geometric Properties of Intermediate Spaces %$\big(X_0\times X_1,\d_t,\ol\m)$  
w.r.t.  $\DD_{p,q}$ for $q=2$}

With respect to the parameter $p$, only the value $p=2$ plays a specific role in the analysis of the $L^{p,q}$-distortion distance. It is the only value of $p$ for which $\big(\XX_{pq}, \DD_{p,q}\big)$ becomes a space of nonnegative curvature in the sense of Alexandrov -- independent of $q$.

With respect to $q$, two values are of interest. The value $q=1$ is  the most natural one from the point of view of transportation theory and image analysis. And, of course, it also leads to the most simple formulas.

  The value $q=2$ is the only value for which 
geodesic interpolations of spaces with nonnegative (or nonpositive) pre-curvature are again spaces with
nonnegative (or nonpositive, resp.) pre-curvature -- independent of $p$.
Moreover,
 geodesic interpolations of distances in the case $q=2$ may be regarded as the metric counterpart to
 linear interpolation of metric tensors in Riemannian geometry.
Let us  illustrate the latter aspect by two fundamental examples.

\begin{example}
Let mm-spaces $(X_i,\d_i,\m_i)$ for $i=0,1$ be given with $X_i$ a compact subset of $\R^{n_i}$, $\m_i=c_i\Leb^{n_i}$, 
and let $\d_i$ be induced by a symmetric positive semidefinite matrix $g_i\in\R^{n_i\times n_i}$, that is,
\[ \d_i(x,y)=\sqrt{(x-y)\cdot g_i\cdot (x-y)} .\]
Then the $t$-intermediate metric $\d_t=\sqrt{(1-t)\d_0+t\d_1}$ on $\R^n$ with $n=n_0+n_1$ is induced by the 
symmetric positive semidefinite matrix \[g_t=(1-t)g_0 + t g_1\in\R^{n\times n}\]
being a block matrix with $(1-t)g_0$ in the upper left corner, $tg_1$ in the lower right corner and 0's elsewhere.
This holds true independent of the choice of the optimal coupling $\ol\m$.
Typically, $\supp(\ol\m)$ will be low dimensional subset of $\R^n$. For instance, if $n_0=n_1$ then for `nice' measures $\m_0, \m_1$ we expect that $\supp(\bar\m)$ is $n_0$-dimensional.
\end{example}

\begin{example}\label{riemannian}
Let  $n$-dimensional Riemannian manifolds $M_0$ and $M_1$ be given with Riemannian tensors $g_0$ and $g_1$. For $i=0,1$, let $\m_i$ and $\d_i$ denote the induced Riemannian volume measure and Riemannian distance, resp., and assume that the distances $\d_0$ and $\d_1$ are complete. Moreover, assume that an optimal coupling $\ol\m$ of the mm-spaces
 $(M_0,\d_0,\m_0)$ and  $(M_1,\d_1,\m_1)$
is given as $\ol\m=(\Id,\phi)_*\m_0$ in terms of a diffeomorphism $\phi: M_0\to M_1$.
Then for each $t\in(0,1)$, a representative of the $t$-intermediate space is given by $(M_0, \d_t,\m_0)$ with
\begin{equation}
 d_t=\sqrt{(1-t)\d^2_0+t \phi^*\d_1^2}.
\end{equation}
Thus the \emph{length metric} $\d_t^*$    induced by the metric $\d_t$ on $M_0$ coincides with the \emph{Riemannian distance} induced by the metric tensor
\begin{equation}
 g_t=(1-t)g_0+t \phi^*g_1.
\end{equation}
Here $\phi^*g_1$ denotes the pull back of the metric tensor $g_1$ by means of $\phi$ (which  is a metric tensor on $M_0$).
\end{example}

Besides its analogy to linear interpolation of metric tensors in the Riemannian case, the other main reason to regard geodesic interpolations w.r.t. the $L^{p,q}$-distance for $q=2$ is that this interpolation preserves suitable {\lq curvature bounds\rq} (rough versions of nonnegative and nonpositive curvature in the sense of Alexandrov).

\begin{definition}
(i) Given $\kappa\in\R$, we say that a metric space $(X,\d)$ has pre-curvature $\ge\kappa$ iff
each point $x\in X$ has a neighborhood $X_x$ which can be isometrically embedded into some complete length space $(\hat X_x,\hat\d_x)$ which has
curvature $\ge\kappa$ in the sense of Alexandrov.

(ii) Similarly, we say that $(X,\d)$ has pre-curvature $\le\kappa$ iff every point has a neighborhood which admits an isometric embedding
into some complete length space of curvature
 $\le\kappa$ in the sense of Alexandrov.

(iii) We say that $(X,\d)$ has pre-curvature locally bounded from below (or locally bounded from above) if
 every point has a neighborhood which admits an isometric embedding
into some complete length space of curvature
 bounded from below (or from above, resp.) in the sense of Alexandrov.

(iv) A metric measure space $(X,\d,\m)$  is said to have pre-curvature $\ge0$ (or $\le0$ or locally bounded from above or locally bounded from below) iff this is true for the metric space
$(\supp(m),\d)$.
\end{definition}

\begin{theorem}
For any $p\in[1,\infty)$, within the geodesic space $\big(\XX_{p2}, \DD_{p,2}\big)$, the following sets are (strongly) convex:
\begin{itemize}
\item
the set of mm-spaces with pre-curvature $\ge0$ 
\item
the set of mm-spaces with pre-curvature $\le0$ 
\item
the set of mm-spaces with pre-curvature locally bounded from below 
\item
the set of mm-spaces with pre-curvature locally bounded from above.
\end{itemize}
\end{theorem}

The claim of the theorem is a straightforward consequence of the following lemma.

\begin{lemma}
Let mm-spaces $(X_0,\d_0,\m_0)$ and  $(X_1,\d_1,\m_1)$
be given with pre-curvature $\ge\kappa_0$ and $\ge\kappa_1$, resp.
Then for each $t\in(0,1)$ and each $p\in[1,\infty)$, any $t$-intermediate space w.r.t. $\DD_{p,2}$ has  pre-curvature $\ge\kappa_t$ 
with $\kappa_t:=\min\{\frac1{1-t}\kappa_0,\frac1t\kappa_1,0\}$.

Similarly, if the spaces $(X_0,\d_0,\m_0)$ and  $(X_1,\d_1,\m_1)$ have pre-curvature $\le\kappa_0$ and $\le\kappa_1$, resp., then
any $t$-intermediate space w.r.t. $\DD_{p,2}$ has  pre-curvature $\le\kappa^*_t$ 
with $\kappa^*_t:=\max\{\frac1{1-t}\kappa_0,\frac1t\kappa_1,0\}$.
\end{lemma}

\begin{proof}
Let mm-spaces $(X_0,\d_0,\m_0)$ and  $(X_1,\d_1,\m_1)$
with pre-curvature $\ge\kappa_0$ and $\ge\kappa_1$, resp., be given and fix
 $t\in(0,1)$ as well as $p\in[1,\infty)$. Without restriction, assume that the measures $\m_0$ and $\m_1$ have full support.
Moreover, choose an $t$-intermediate space, say represented as $(X_0\times X_1, \d_t,\ol\m)$, and fix a 
point $x=(x_0,x_1)$ in the support of $\ol\m$.
Choose neighborhoods $X_0'$ and $X_1'$ of $x_0$ and $x_1$, resp., which can be embedded isometrically into 
 suitable complete length spaces $(\hat X_0,\hat\d_0)$ and $(\hat X_0,\hat\d_0)$, resp., with
curvature $\ge\kappa_0$ and $\ge\kappa_1$, resp., in the sense of Alexandrov.
Rescaling the metric by the factor $\sqrt{1-t}$ or $\sqrt t$ thus yields that
$(X_0', {\sqrt{1-t}}\d_0)$ can be isometrically embedded into $(\hat X_0, {\sqrt{1-t}}\hat\d_0)$ 
which is a complete length space of curvature $\ge  \frac1{1-t}\kappa_0$ in the sense of Alexandrov.
Similarly,
$(X_1', {\sqrt{t}}\d_1)$ can be isometrically embedded into $(\hat X_1, {\sqrt{t}}\hat\d_1)$ 
which is a complete length space of curvature $\ge  \frac1{t}\kappa_1$ in the sense of Alexandrov.

Thus the product space $(X_0'\times X_1',\d_t)=(X_0', {\sqrt{1-t}}\d_0)\times (X_1', {\sqrt{t}}\d_1)$ 
can be isometrically embedded into  a complete length space of curvature $\ge  \kappa_t=\min\{\frac1{1-t}\kappa_0,\frac1t\kappa_1,0\}$
 in the sense of Alexandrov.
Since $X_0'\times X_1'$ is a neighborhood of the given point $x=(x_0,x_1)$, this proves the  claim concerning lower bounds for the pre-curvature.

Exactly the same argumentation also proves the claim concerning upper bounds for the pre-curvature.
\end{proof}

\begin{remarks}
\begin{enumerate}
\item
If $(X,\d)$ is a complete length space then obviously it has pre-curvature $\ge\kappa$ (or $\le\kappa$) if and only if it has curvature $\ge\kappa$ (or $\le\kappa$, resp.) in the sense of Alexandrov.
\item If  a  metric space $(X,\d)$ has pre-curvature $\ge\kappa$ then for each point in $X$ there exists a neighborhood $X'$ such that for each quadruple of points 
$x_0,x_1,x_2,x_3\in X'$
\begin{equation}\label{lebpet+}
\bigg(\sum_{1\le i\le 3} \cosh\Big(\sqrt{-\kappa}\d(x_0,x_i)\Big)\bigg)^2\ge3+2\sum_{1\le i<j\le 3}\cosh\Big(\sqrt{-\kappa}\d(x_i,x_j)\Big)
\end{equation}
provided $\kappa<0$.
In the case $\kappa=0$ the latter has to be read as 
\begin{equation*}
3\sum_{1\le i\le 3} \d^2(x_0,x_i)\ge\sum_{1\le i<j\le 3}\d^2(x_i,x_j)
\end{equation*} 
and 
in the case $\kappa>0$ as 
\begin{equation*}
\bigg(\sum_{1\le i\le 3} \cos\Big(\sqrt{\kappa}\d(x_0,x_i)\Big)\bigg)^2\le3+2\sum_{1\le i<j\le 3}\cos\Big(\sqrt{\kappa}\d(x_i,x_j)\Big).
\end{equation*}
If $(X,\d)$ is a complete length space also the converse holds true. Indeed, according to Lebedeva and Petrunin \cite{lp} every complete length space 
which locally satisfies \eqref{lebpet+} has  curvature $\ge\kappa$ in the sense of Alexandrov.
\item
 If  a  metric space $(X,\d)$ has pre-curvature $\le0$ then for each point in $X$ there exists a neighborhood $X'$ such that for each quadruple of points 
$x_1,x_2,x_3,x_4\in X'$
\begin{equation}\label{beni}
\d^2(x_1,x_2)+\d^2(x_2,x_3)+\d^2(x_3,x_4)+\d^2(x_4,x_1)\ge\d^2(x_1,x_3)+\d^2(x_2,x_4).
\end{equation}
If $(X,\d)$ is a complete length space also the converse holds true. Indeed, according to  Berg and Nikolaev \cite{bn} every complete length space 
which locally satisfies \eqref{beni} has  curvature $\le0$ in the sense of Alexandrov.

\item
In general, the property of being a length space of curvature $\ge0$ (or $\le0$) is not preserved by geodesic interpolations simply because the $t$-intermediate points 
$(X_0\times X_1, \d_t,\ol\m)$
typically are not length spaces.
The most natural strategy to overcome this, might be to replace the metric $\d_t$ by the length metric $\d_t^*$ induced by $\d_t$.
This, however, will change both, the property of being an $t$-intermediate point interpolating between $\d_0$ and $\d_1$  w.r.t.  $\DD_{p,2}$
and the bound on the pre-curvature for $\d_t$.
\end{enumerate}
\end{remarks}

\subsection{Existence of Transport Maps}
In this section, finally, we address the challenging uniqueness problem for optimal couplings and the challenging  existence problem for transport maps. 
Due to the quadratic dependency of the cost functional on the coupling, the minimization problem for the $ \DD_{p,q}$-distance is far more complicated than the minimization problem for the classical $W_p$-distance. So far, only partial results are available. However, under symmetry assumptions, a full characterization is possible.
Throughout the sequel, we will restrict ourselves to the case $p=q=2$. 
 Our main result is:
 
\wichtig{Let $(X_0,\d_0,\m_0)$ and  $(X_1,\d_1,\m_1)$ be two rotationally invariant metric measure spaces, both equipped with the Euclidean distance and with probability measures which are absolutely continuous w.r.t~the $n$-dimensional Lebesgue measure. Then every optimal coupling of them w.r.t.~the $L^{2,2}$-distortion distance $\DD_{2,2}$ is given by a transport map. It is obtained by identifying the two barycenters of the spaces and then monotonically re-arranging the radial distributions. The map (and thus the optimal coupling) is unique up to composition with rotations of the second space.}

To get started, we formulate some auxiliary results.

\begin{lemma} Let $\m_0,\m_1\in \prob(\R^n)$ be given with barycenters $z_i:=\int x\,d\m_i(x)\in\R^n$ and put $\m_i':=(T_i)_*\m_i$ with $T_i(x):=x-z_i$. 
Then for a  probability measure $\bar \m\in \prob(\R^{2n})$ the following are equivalent:
\begin{itemize}
\item[(i)] the measure $\bar \m$ is a minimizer of 
$$\int_{\R^{2n}}\int_{\R^{2n}}\Big| |x_0-y_0|^2- |x_1-y_1|^2\Big|^2\, d\bar m(x_0,x_1)\, d\bar \m(y_0,y_1)
$$
among all couplings of $\m_0$ and $\m_1$; 
\item[(ii)] 
 the measure $\bar \m':=\big(T_0,T_1\big)_*\bar \m$ is a minimizer of 
$$\int_{\R^{2n}}\int_{\R^{2n}}\Big| |x_0-y_0|^2- |x_1-y_1|^2\Big|^2\, d\bar \m'(x_0,x_1)\, d\bar m'(y_0,y_1)
$$
among all couplings of $\m_0'$ and $\m_1'$;
\item[(iii)] 
 the measure $\bar \m':=\big(T_0,T_1\big)_*\bar \m$ is a maximizer of 
 $$\int_{\R^{2n}}|x_0|^2\cdot |x_1|^2\, d\bar m'(x_0,x_1)
 +
2 \int_{\R^{2n}}\int_{\R^{2n}}\langle x_0,y_0\rangle\cdot \langle x_1,y_1\rangle\, d\bar \m'(x_0,x_1)\, d\bar \m'(y_0,y_1)
$$
among all couplings of $\m_0'$ and $\m_1'$.
\end{itemize}
\end{lemma}

\begin{proof} The equivalence of (i) and (ii) is obvious. To proceed, observe first that (ii) is equivalent to
\begin{itemize}
\item[(ii')] 
 the measure $\bar \m'$ is a maximizer of 
\begin{equation}\label{cost22}\int_{\R^{2n}}\int_{\R^{2n}} |x_0-y_0|^2 \cdot |x_1-y_1|^2\, d\bar \m'(x_0,x_1)\, d\bar \m'(y_0,y_1)
\end{equation}
among all couplings of $\m_0'$ and $\m_1'$ 
\end{itemize}
(which simply follows by squaring out the integrand in (ii) and putting aside the terms which depend only on $x_0,y_0$ or only on $x_1,y_1$).
Squaring out the integrand in \eqref{cost22} provides nine terms. Two of them lead to a constant contribution: $|x_0|^2\cdot |y_1|^2$ and $|x_1|^2\cdot |y_0|^2$; four of them lead to  vanishing integrals (since the measures $\m_0'$ and $\m_1'$ are centered): 
$-2 |x_0|^2\, \langle x_1,y_1\rangle$, $-2 |y_0|^2\, \langle x_1,y_1\rangle$, 
$-2 |x_1|^2\, \langle x_0,y_0\rangle$, and
$-2 |y_1|^2\, \langle x_0,y_0\rangle$. The remaining three terms yield twice the integrands in (iii).
\end{proof}
\begin{lemma} Assume that $\m_0$ and $\m_1$ are centered, rotationally invariant probability measures on $\R^n$. Denote the respective radial distributions by $\nu_0$ and $\nu_1$ such that
$\nu_i\big([0,r]\big)=\m_i\big( \bar B_r(0)\big)$ for all $r$, and 
denote by $\sigma$ the normalized uniform distribution on ${\mathbb S}^n$. 
For $i=0,1$, write $x_i=r_i\,\varphi_i, y_i=s_i\,\psi_i$ with $\varphi_i,\psi_i\in {\mathbb S}^n$ and $r,s\in\R_+$.

(i)
Then
$$d\m_i(x_i)=d\nu_i(r)\,d\sigma(\varphi_i),$$
and every coupling $\bar \m$ of $\m_0$ and $\m_1$ can be represented as 
$$d\bar \m( x_0,x_1)=\, d\bar\nu_{\sigma_0,\sigma_1}(r_0,r_1)d\bar\sigma(\varphi_0,\varphi_1)$$
where $\bar\sigma$ denotes some coupling of $\sigma$ with itself,  and for each $\sigma_0,\sigma_1$ the measure $\bar\nu_{\sigma_0,\sigma_1}$ denotes some coupling of $\nu_0$ and $\nu_1$.

(ii) The measure $\bar \m$ is a maximizer of 
$$\int_{\R^{2n}}|x_0|^2\cdot |x_1|^2\, d\bar m(x_0,x_1)$$
among all couplings of $\m_0$ and $\m_1$ if and only if for $\bar\sigma$-almost every $\varphi_0,\varphi_1$ the measure   $\bar\nu_{\sigma_0,\sigma_1}$ is a maximizer of  
$$\int_{\R^2_+} r_0^2\,r_1^2\, d\bar\nu_{\sigma_0,\sigma_1}(r_0,r_1)$$
among all couplings of $\nu_0$ and $\nu_1$.

(iii) For any $a>0$,  the functional 
$$\int_{\R^2_+} r_0^a\,r_1^a\, d\bar\nu(r_0,r_1)$$
has a unique maximizer among all couplings $\bar\nu$ of $\nu_0$ and $\nu_1$, given by monotone re-arrangement. If $\nu_0$ is absolutely continuous, then this is given by a map:
$$\bar\nu=\big( {\rm Id}, T\big)_*\nu_0$$
with $T:=F_1\circ F_0^{-1}$ where $F_i(r):=\nu_i([0,r])$.

(iv) The measure $\bar \m$ is a maximizer of 
$$\int_{\R^{2n}}\int_{\R^{2n}}\langle x_0,y_0\rangle\cdot \langle x_1,y_1\rangle\, d\bar \m(x_0,x_1)\, d\bar \m(y_0,y_1)
$$
among all couplings of $\m_0$ and $\m_1$ if and only if 
$$\bar\sigma=\big( {\rm Id}, U\big)_*\sigma$$
for some orthogonal map $U:{\mathbb S}^n\to {\mathbb S}^n$, and if for $\bar\sigma$-almost every $\varphi_0,\varphi_1$ the measure   $\bar\nu_{\sigma_0,\sigma_1}$ is a maximizer of  
$$\int_{\R^2_+} r_0\,r_1\, d\bar\nu_{\sigma_0,\sigma_1}(r_0,r_1)$$
among all couplings of $\nu_0$ and $\nu_1$.
\end{lemma}

\begin{proof} (i) and (ii) are obvious. In (iii),  by monotone re-parametrization $r\mapsto r^{1/a}$, the general case can be reduced to the case $a=1$. In this case, maximizers of the given integral are minimizers of the quadratic cost $\int |r_0-r_1|^2d\bar\nu(r_0,r_1)$. 
For this transport problem, it is well-known that there exists a unique optimizer and that this is given by monotone re-arrangement. Moreover, it is given in terms of the map $T:=F_1\circ F_0^{-1}$ if the first marginal is absolutely continuous, \cite{Vi03}.
 
To see (iv), consider
\begin{eqnarray*}\lefteqn{
\int_{\R^{2n}}\int_{\R^{2n}}\langle x_0,y_0\rangle\, \langle x_1,y_1\rangle\, d\bar m(x_0,x_1)\, d\bar \m(y_0,y_1)}\\
&=&\int_{({\mathbb S}^n)^2}\int_{({\mathbb S}^n)^2} \bigg[\int_{\R_+^2} r_0 r_1d\bar\nu_{\varphi_0,\varphi_1}(r_0,r_1)\bigg]\cdot
\bigg[\int_{\R_+^2} s_0 s_1d\bar\nu_{\psi_0,\psi_1}(s_0,s_1)\bigg]
\\
&&\qquad\qquad\qquad\qquad\cdot 
\langle \varphi_0,\psi_0\rangle\, \langle \varphi_1,\psi_1\rangle\, d\bar \sigma(\varphi_0,\varphi_1)\, d\bar\sigma(\psi_0,\psi_1)\\
&=&\int_{({\mathbb S}^n)^2}\int_{({\mathbb S}^n)^2} \bigg[\int_{\R_+^2} r_0 r_1d\bar\nu_{\varphi_0,\varphi_1}(r_0,r_1)\bigg]\cdot
\bigg[\int_{\R_+^2} s_0 s_1d\bar\nu_{\psi_0,\psi_1}(s_0,s_1)\bigg]
\\
&&\qquad\qquad\cdot\frac12\bigg[\langle \varphi_0,\psi_0\rangle^2+\langle \varphi_1,\psi_1\rangle^2-\big|
\langle \varphi_0,\psi_0\rangle-  \langle \varphi_1,\psi_1\rangle
\Big|^2\bigg] 
\, d\bar \sigma(\varphi_0,\varphi_1)\, d\bar\sigma(\psi_0,\psi_1).
\end{eqnarray*}
Now according to (iii),
\begin{equation}\label{mixed7}\int_{({\mathbb S}^n)^2}\int_{({\mathbb S}^n)^2} \bigg[\int_{\R_+^2} r_0 r_1d\bar\nu_{\varphi_0,\varphi_1}(r_0,r_1)\bigg]\cdot
\bigg[\int_{\R_+^2} s_0 s_1d\bar\nu_{\psi_0,\psi_1}(s_0,s_1)\bigg]\cdot
\langle \varphi_0,\psi_0\rangle^2\, d\bar \sigma(\varphi_0,\varphi_1)\, d\bar\sigma(\psi_0,\psi_1)\end{equation} is maximal if and only if 
$\bar\nu_{\varphi_0,\varphi_1}=\bar\nu$
for $\bar\sigma$-a.e.~$(\varphi_0,\varphi_1)\in ({\mathbb S}^n)^2$
as well as
$\bar\nu_{\psi_0,\psi_1}=\bar\nu$
for $\bar\sigma$-a.e.~$(\psi_0,\psi_1)\in ({\mathbb S}^n)^2$
where $\bar\nu:=\big( {\rm Id}, T\big)_*\nu_0$ as defined in (iii). 
In this case, independent of the choice of the coupling $\bar\sigma$, the quantity in \eqref{mixed7} coincides with,
$$\bigg[\int_{\R_+^2} r_0 r_1d\bar\nu(r_0,r_1)\bigg]^2\cdot \int_{({\mathbb S}^n)^2}\int_{({\mathbb S}^n)^2}\langle \varphi_0,\psi_0\rangle^2\, d\bar \sigma(\varphi_0,\varphi_1)\, d\bar\sigma(\psi_0,\psi_1)=\bigg[\int_{\R_+^2} r_0 r_1d\bar\nu(r_0,r_1)\bigg]^2\cdot \frac1n\,.
$$
Analogously, for the term with $\langle \varphi_1,\psi_1\rangle^2$  in the place of $\langle \varphi_0,\psi_0\rangle^2$.
It remains to consider the integral
\begin{eqnarray}\nonumber\lefteqn{-\frac12 \int_{({\mathbb S}^n)^2}\int_{({\mathbb S}^n)^2}\bigg[\int_{\R_+^2} r_0 r_1d\bar\nu_{\varphi_0,\varphi_1}(r_0,r_1)\bigg]\cdot
\bigg[\int_{\R_+^2} s_0 s_1d\bar\nu_{\psi_0,\psi_1}(s_0,s_1)\bigg]}\\
&&\qquad\qquad\qquad\qquad\cdot
\bigg|
\langle \varphi_0,\psi_0\rangle-  \langle \varphi_1,\psi_1\rangle
\bigg|^2
\, d\bar \sigma(\varphi_0,\varphi_1)\, d\bar\sigma(\psi_0,\psi_1).
\label{mixed9}
\end{eqnarray}
Since each of the terms $\int r_0 r_1d\bar\nu_{\varphi_0,\varphi_1}(r_0,r_1)$ and $\int s_0 s_1d\bar\nu_{\psi_0,\psi_1}(s_0,s_1)$ is positive, the maximum in \eqref{mixed9} is attained if and only if 
$$\langle \varphi_0,\psi_0\rangle= \langle \varphi_1,\psi_1\rangle\qquad \text{for }\bar\sigma\otimes\bar\sigma\text{-a.e.~}(\varphi_0,\varphi_1,\psi_0,\psi_1)\in ({\mathbb S}^n)^4.
$$ 
This obviously is equivalent to the fact that  
$$|\varphi_0-\psi_0|^2= |\varphi_1-\psi_1|^2\qquad \text{for }\bar\sigma\otimes\bar\sigma\text{-a.e.~}(\varphi_0,\varphi_1,\psi_0,\psi_1)\in ({\mathbb S}^n)^4
$$ 
which in turn is equivalent to the fact that  $\bar\sigma$ is a minimizer of 
$$\int_{({\mathbb S}^n)^2}\int_{({\mathbb S}^n)^2} \Big| |\varphi_0-\psi_0|^2- | \varphi_1-\psi_1|^2\Big|^2\, d\bar \sigma(\varphi_0,\varphi_1)\, d\bar\sigma(\psi_0,\psi_1)$$
 among all couplings of $\sigma$ with itself. According to Lemmas  9.2 and 1.10, this vanishing of the $L^{2,2}$-distortion distance is the case if and only if the coupling is induced by an isometry of  ${\mathbb S}^n$.
\end{proof}

Putting together these results, we have proven our main result in this section.
\begin{theorem} 
 Let absolutely continuous probability measures $\m_0$ and $\m_1$ on  $\R^n$ be given, each of them being rotationally invariant around its barycenter $z_0$ or $z_1$, resp. That is,
 $(U_i)_*\m_i=\m_i$
 for each $U\in O(n)$ and $i=0,1$ where $U_i(x):=U(x-z_i)+z_i$.
 
 (i) Then every $\bar \m\in\Cpl(\m_0,\m_1)$ which minimizes 
$$\int_{\R^{2n}}\int_{\R^{2n}} \Big| |x_0-y_0|^2- |x_1-y_1|^2\Big|^2d\bar \m(x_0,x_1) \, d\bar \m(y_0,y_1)$$   
 is given as
$$\bar \m =\big({\rm Id},\Phi\big)_*\m_0$$
in terms of a \wichtig{transport map} $\Phi: \R^n\to\R^n$.

(ii) The transport map $\Phi$ (and thus also the optimal coupling $\bar \m$) is \wichtig{unique} up to composition with rotations. That is, if 
$\bar \m' =\big({\rm Id},\Phi'\big)_*\m_0$ is another optimal coupling then 
$$\Phi'=U_1\circ \Phi$$
for some  $U\in O(n)$ and $U_1(x):=U(x-z_1)+z_1$.

(iii) The transport map is \wichtig{constructed} as  follows: For $i=0,1$, let $\nu_i$ be the radial distribution of $\m_i$ around $z_i$,
and let $F_i$ be the respective distribution function, i.e.
$$F_i(r):=\nu_i\big([0,r]\big):=\m_i\big(\bar B_r(z_i)\big).$$
Then the map $T:=F_1\circ F_0^{-1}: \R_+\to\R_+$ pushes forward $\nu_0$ to $\nu_1$ (``monotone re-arrangement''):
$$T_*\nu_0=\nu_1.$$
Moreover, for every $U\in O(n)$, a transport map $\Phi:\R^n\to\R^n$ with the requested properties is given by
$$\Phi\big(z_0+r\varphi):=z_1+ T(r)\, U(\varphi)\qquad (\forall r\in\R_+, \varphi\in{\mathbb S}^n).$$
\end{theorem}

\printindex
\vfill

\bibliographystyle{amsalpha}
\bibliography{refsos}

\end{document}